\documentclass[12pt,PhD]{muthesis}
\pdfoutput=1

\usepackage{amssymb}
\usepackage{amsmath}
\usepackage{amsthm}
\usepackage[pdftex]{graphicx} 
\usepackage{tikz}
\usetikzlibrary{matrix,calc}
\usepackage{array}
\usepackage{multirow}
\usepackage{enumerate}
\usepackage{arydshln}
\usepackage{url}
\usepackage{mathrsfs}
\usepackage{nicefrac}

\usepackage{makeidx}

\makeindex

\newtheorem{theorem}{Theorem}[section]
\newtheorem{lemma}[theorem]{Lemma}
\newtheorem{corollary}[theorem]{Corollary}
\newtheorem{proposition}[theorem]{Proposition}
\newtheorem{conjecture}[theorem]{Conjecture}
\theoremstyle{definition}
\newtheorem{example}[theorem]{Example}
\newtheorem{remark}[theorem]{Remark}
\newtheorem{definition}[theorem]{Definition}
\newtheorem{question}[theorem]{Question}
\newtheorem{questions}[theorem]{Questions}
\newtheorem{conditions}[theorem]{Conditions}
\newtheorem{notation}[theorem]{Notation}
\newtheorem{ImpNot}{Important Global Convention}

\numberwithin{equation}{section}

\newcommand{\BB}{\mathfrak{B}}
\newcommand{\CC}{\mathcal{C}}
\newcommand{\DD}{\mathfrak{D}}
\newcommand{\RR}{\mathfrak{R}}

\newcommand{\SLZ}{SL_2(\mathbb{Z})}

\newcommand{\pa}{\theta_1}
\newcommand{\pb}{\theta_2}
\newcommand{\pc}{\theta_3}
\newcommand{\expq}[1]{exp_{\hat{q}}\left[#1\right]}

\newcommand{\bigslant}[2]{{\raisebox{.2em}{$#1$}\left/\raisebox{-.2em}{$#2$}\right.}}

\newcommand{\qr}[2]{\nicefrac{\mbox{\normalsize$#1$}}{\mbox{\normalsize$#2$}}}

\newcommand{\nom}[1]{\index{#1}}

\newcommand{\blurb}[2]{\begin{quote}#1 \begin{flushright}\textit{#2}\end{flushright}\end{quote}}

\setcounter{tocdepth}{1}


\begin{document}

\title{The $\lowercase{q}$-Division Ring, Quantum Matrices and Semi-classical Limits}
\author{Si\^an Fryer}
\school{Mathematics}
\faculty{Engineering and Physical Sciences}
\def\wordcount{33619}




\beforeabstract
Let $k$ be a field of characteristic zero and $q \in k^{\times}$ not a root of unity.  We may obtain non-commutative counterparts of various commutative algebras by twisting the multiplication using the scalar $q$: one example of this is the \textit{quantum plane} $k_q[x,y]$, which can be viewed informally as the set of polynomials in two variables subject to the relation $xy=qyx$.  We may also consider the full localization of $k_q[x,y]$, which we denote by $k_q(x,y)$ or $D$ and view as the non-commutative analogue of $k(x,y)$, and also the quantization $\mathcal{O}_q(M_n)$ of the coordinate ring of $n\times n$ matrices over $k$.

Our aim in this thesis will be to use the language of deformation-quantization to understand the quantized algebras by looking at certain properties of the commutative ones, and conversely to obtain results about the commutative algebras (upon which a Poisson structure is induced) using existing results for the non-commutative ones.

The $q$-division ring $k_q(x,y)$ is of particular interest to us, being one of the easiest infinite-dimensional division rings to define over $k$.  Very little is known about such rings: in particular, it is not known whether its fixed ring under a finite group of automorphisms should always be isomorphic to another $q$-division ring (possibly for a different value of $q$) nor whether the left and right indexes of a subring $E \subset D$ should always coincide.  

We define an action of $SL_2(\mathbb{Z})$ by $k$-algebra automorphisms on $D$ and show that the fixed ring of $D$ under any finite group of such automorphisms is isomorphic to $D$.  We also show that $D$ is a deformation of the commutative field $k(x,y)$ with respect to the Poisson bracket $\{y,x\} = yx$ and that for any finite subgroup $G$ of $SL_2(\mathbb{Z})$ the fixed ring $D^G$ is in turn a deformation of $k(x,y)^G$.  Finally, we describe the Poisson structure of the fixed rings $k(x,y)^G$, thus answering the Poisson-Noether question in this case.

A number of interesting results can be obtained as a consequence of this: in particular, we are able to answer several open questions posed by Artamonov and Cohn concerning the structure of the automorphism group $Aut(D)$.  They ask whether it is possible to define a conjugation automorphism by an element $z \in L \backslash D$, where $L$ is a certain overring of $D$, and whether $D$ admits any endomorphisms which are not bijective.  We answer both questions in the affirmative, and show that up to a change of variables these endomorphisms can be represented as non-bijective conjugation maps.

We also consider Poisson-prime and Poisson-primitive ideals of the coordinate rings $\mathcal{O}(GL_3)$ and $\mathcal{O}(SL_3)$, where the Poisson bracket is induced from the non-commutative multiplication on $\mathcal{O}_q(GL_3)$ and $\mathcal{O}_q(SL_3)$ via deformation theory.  This relates to one case of a conjecture made by Goodearl, who predicted that there should be a homeomorphism between the primitive (resp. prime) ideals of certain quantum algebras and the Poisson-primitive (resp. Poisson-prime) ideals of their semi-classical limits.  We prove that there is a natural bijection from the Poisson-primitive ideals of these rings to the primitive ideals of $\mathcal{O}_q(GL_3)$ and $\mathcal{O}_q(SL_3)$, thus laying the groundwork for verifying this conjecture in these cases.

\afterabstract

\prefacesection{Acknowledgements}
\setlength{\parindent}{0pt}
\setlength{\parskip}{1.5ex plus 0.5ex minus 0.2ex}
Thanks are naturally due to far more people than I could list or remember -- the postgrads, academic staff and support staff in the department have all contributed in various ways and your support and help has been appreciated.  Thanks are due in particular to EPSRC for the financial support.

This thesis would have been far more boring, confusing and generally incorrect without the advice and input of my supervisor, Toby Stafford, who has been unfailingly generous with his time and ideas, and patiently read and corrected early versions of my work until I learnt to write at least somewhat clearly.  Thank you!

To my parents, who always knew I'd end up here and provided unconditional support instead of trying to encourage me towards a ``real'' job -- this is for you.

Credit is due to Tom Withers and Lyndsey Clark for suggesting the alternative thesis titles ``Does Not Commute: The $q$-Division Ring'' and ``Commute Like You Give a Damn'' respectively.

For my lovely partner Andrew, who patiently listened to endless descriptions of which part of maths wasn't working right that day and was always there with sympathy and hugs -- you make everything better.

And of course, none of this would have been possible without the support and cuddles of the wonderful Stumpy, mightiest of stegosaurs.

\afterpreface

\newcommand{\HH}{\mathcal{H}}
\newcommand{\HP}{\mathcal{H}'}
\newcommand{\GL}[1]{\mathcal{O}(GL_{#1})}
\newcommand{\SL}[1]{\mathcal{O}(SL_{#1})}
\newcommand{\ML}[1]{\mathcal{O}(M_{#1})}
\newcommand{\QGL}[1]{\mathcal{O}_q(GL_{#1})}
\newcommand{\QSL}[1]{\mathcal{O}_q(SL_{#1})}
\newcommand{\QML}[1]{\mathcal{O}_q(M_{#1})}
\newcommand{\wt}[1]{\widetilde{#1}}
\newcommand{\ol}[1]{\overline{#1}}

\newcommand{\Hspec}[1]{\HH$-$spec(#1)}
\newcommand{\HPspec}[1]{\HP$-$spec(#1)}
\newcommand{\PHspec}[1]{\HH$-$Pspec(#1)}
\newcommand{\PHPspec}[1]{\HP$-$Pspec(#1)}

\newenvironment{smallarray}[1]
  {\gdef\MatName{#1}\begin{tikzpicture}
    \matrix[
    matrix of math nodes,
    row sep=-\pgflinewidth,
    column sep=-\pgflinewidth,
    nodes={inner sep=1pt,rectangle,text width=2mm,align=center},
    text depth=0mm,
    text height=2mm,
    nodes in empty cells
    ]  (#1)}
  {\end{tikzpicture}}
\def\MyZ(#1,#2){%
  \draw ([xshift=-3.6pt,yshift=3.6pt] $ (\MatName-#1-#2)!0.5!(\MatName-\the\numexpr#1+1\relax-\the\numexpr#2+1\relax) $ ) rectangle ([xshift=3.6pt,yshift=-3.6pt] $ (\MatName-#1-#2)!0.5!(\MatName-\the\numexpr#1+1\relax-\the\numexpr#2+1\relax) $ );
}

\chapter{Introduction}\label{c:introduction}
\section{Overview}\label{s:introduction overview}

In this thesis we examine the similarities between certain commutative and non-commutative algebras, with a focus on using the properties of one algebra to understand the structure of the other.  We also focus in detail on one specific non-commutative division ring, describing in detail some of its subrings and proving some striking results concerning its automorphism and endomorphism groups.

Throughout, we will assume that $k$ is a field of characteristic zero and $q \in k^{\times}$ is not a root of unity.  This thesis divides broadly into two parts, one considering the $q$-division ring $k_q(x,y)$ and the other concerning coordinate rings of matrices and their quantum analoges.  We will now describe each in turn.

We define the \textit{quantum plane} $k_q[x,y]$\nom{K@$k_q[x,y]$} as a quotient of the free algebra in two variables, namely $k_q[x,y] = k\langle x,y \rangle /(xy-qyx)$.  This is a Noetherian domain for all non-zero $q$, and hence by \cite[Chapter 6]{GW1} it has a division ring of fractions.  We denote this ring by $k_q(x,y)$\nom{K@$k_q(x,y)$} or $D$\nom{D@$D$}, and call it the \textit{$q$-division ring}\nom{Q@$q$-division ring}.  

When $q$ is not a root of unity, the centre of $D$ is trivial (see, for example, \cite[Exercise 6J]{GW1}) and hence $D$ is a division ring which is infinite-dimensional over its centre.  Very little is known about division rings of this type: for example, if $E$ is a non-commutative sub-division ring of $D$, it is known that $D$ must have finite index over $E$ on both the left and the right \cite[Theorem~34]{S1}, but it is not known if the two indexes must always be equal.  Similarly, if $G$ is a finite group of automorphisms of $D$, must its fixed ring $D^G = \{r \in D : g(r) = r \ \forall g \in G\}$ always be another $q$-division ring (possibly for a different value of $q$)?

One of the key motivations in studying division rings such as $D$ is a conjecture made by Artin concerning the classification of surfaces in non-commutative algebraic geometry.  In \cite{Artin}, Artin conjectured that all the non-commutative surfaces had already been described (up to birational equivalence, i.e. up to isomorphism of their function fields); nearly twenty years later, this conjecture still remains open.  Restated in terms of division rings, this says (informally) that the only division rings appearing as function fields of non-commutative surfaces must have one of the following forms:
\begin{itemize}
\item division rings of algebras finite-dimensional over function fields of transcendence degree 2; 
\item division rings of Ore extensions of function fields of curves; 
\item the degree 0 part of the graded division ring of the 3-dimensional Sklyanin algebra (defined in \cite[Example~8.3]{SV1}).
\end{itemize}
For a more precise statement of Artin's conjecture, including the definition of a non-commutative surface and its function field (which we do not define here as it will not be used in this thesis) see \cite{Artin,SV1}.

From a purely ring-theoretic point of view, one way of approaching Artin's conjecture is to examine the subrings of finite index within the division rings appearing on this list, as such rings must also fit into the framework of the conjecture.

For an arbitrary division ring $L$ and a finite subgroup $G$ of $Aut(L)$, a non-commutative version of Artin's lemma \cite[\S5.2.1]{dumas_invariants} states that the index of the fixed ring $L^G$ inside $L$ must satisfy the inequality $[L:L^G] \leq |G|$.  In particular, since the $q$-division ring $D$ is a division ring of an Ore extension of the function field $k(y)$ and hence one of the rings appearing in Artin's conjectured list above, its fixed rings under finite groups of automorphisms are of interest to us.

Chapter~\ref{c:fixed_rings_chapter} proves a number of results concerning the structure of various fixed rings of the $q$-division ring, and throws doubt on the idea that the automorphism and endomorphism groups of $D$ might be well-behaved by constructing several examples of counter-intuitive conjugation maps. (Apart from minor modifications, this chapter has also appeared in the Journal of Algebra as \textit{The $q$-Division Ring and its Fixed Rings} \cite{ME1}.)  In Chapter~\ref{c:deformation_chapter} we describe progress towards an alternative method for understanding fixed rings of $D$, via Poisson deformation of the function field $k(x,y)$.  
 
We will also consider prime and primitive ideals in quantum matrices and their commutative semi-classical limits, which at first glance seems completely unrelated to questions concerning the fixed rings of infinite-dimensional division algebras.  However, we will see that both topics can be studied by viewing the non-commutative algebras as deformations of certain commutative Poisson algebras, and in both cases we will be interested in moving from the non-commutative structure to the commutative one and back in order to better understand the properties of both.  

In particular, one of the main tools in understanding prime and primitive ideals in quantum algebras is the \textit{stratification theory} due to Goodearl and Letzter, which is described in detail in \cite{GBbook} and provides tools for describing the prime and primitive ideals of our algebra in terms of certain localizations.  In this thesis our aim will be to develop a commutative Poisson version of the results in \cite{GL1}, which explicitly describes the primitive ideals of quantum $GL_3$ and $SL_3$, and hence prove that there is a natural bijection between the two sets.

This is one small part of a larger conjecture, which in the case of quantum algebras and their semi-classical limits was stated by Goodearl in \cite{GoodearlSummary}.  We describe this in more detail in \S\ref{ss:H primes examples}, but informally stated the conjecture predicts the existence of a homeomorphism between the prime ideals of a quantum algebra and the Poisson-prime ideals of its semi-classical limit.  By \cite[Lemma~9.4]{GoodearlSummary}, this is equivalent to the existence of a bijection $\Phi$ between the two sets such that both $\Phi$ and $\Phi^{-1}$ preserve inclusions.  Approaching the question by direct computation of small-dimensional examples has been successful for e.g. $SL_2$ and $GL_2$, and so extending this analysis to $SL_3$ and $GL_3$ is a natural next step.

In Chapter~\ref{c:H-primes} we consider the relationship between the ideal structure of the quantum algebras $\QGL{3}$ and $\QSL{3}$ and Poisson ideal structure of their commutative counterparts $\GL{3}$ and $\SL{3}$, which we view as Poisson algebras for an appropriate choice of Poisson bracket.   By explicitly describing generators for the Poisson-primitive ideals of $\GL{3}$ and $\SL{3}$ and combining this with results of \cite{GL1}, we prove that there is a natural bijection these two sets.  We hope that in future work this can be extended to verify Goodearl's conjecture in these cases.

Finally, in Appendix~\ref{c:appendix} we provide the code we have used for computations in the computer algebra system Magma, which allows us to perform computations in the $q$-division ring by embedding it into a larger ring of non-commutative power series and to verify that certain ideals are prime in the commutative algebra $\ML{3}$.  In Appendix~\ref{c:H-prime figures} we collect together several figures relating to the $\HH$-prime computations in Chapter~\ref{c:H-primes}.

\section{Notation}\label{s:notation}
In this section we outline the notation and definitions we will need.  

\begin{ImpNot}\label{q is not a root of unity for now and forever}
Throughout, fix $k$\nom{K@$k$} to be a field of characteristic zero and $q \in k^{\times}$\nom{Q@$q$} not a root of unity, that is $q^n \neq 1$ for all $n\geq 1$.
\end{ImpNot}
In \S\ref{c:H-primes} we will further restrict our attention to the case where $k$ is algebraically closed.

Let $R$ be any ring, $\alpha$ an endomorphism of $R$ and $\delta$ a left $\alpha$-derivation.  The (left) \textit{Ore extension} $R[x;\alpha, \delta]$\nom{R@$R[x;\alpha,\delta]$} is an overring of $R$, which is free as a left $R$-module with basis $\{1,x,x^2, \dots\}$ and commutation relation
\[xr = \alpha(r)x + \delta(r).\]
We write $R[x; \alpha]$ or $R[x;\delta]$ when $\delta = 0$ or $\alpha = 1$ respectively.

The ring $k_q[x,y]$ can be viewed as the Ore extension $k[y][x;\alpha]$, where $\alpha$ is the automorphism defined on $k[y]$ by $\alpha(y) = qy$.  For $r \in k_q[x,y]$, let $deg_x(r)$\nom{D@$deg_x(r)$} be the degree of $r$ as a polynomial in $x$.

We say that a multiplicative subset $S$ of a ring $R$ satisfies the \textit{right Ore condition}\nom{Ore set} if 
\begin{equation}\label{eq:right Ore condition}
\forall r \in R\textrm{ and } x \in S,\ \  \exists s \in R \textrm{ and } y \in S\  \textrm{ such that } ry = xs.
\end{equation}
If $S$ consists only of regular elements then satisfying \eqref{eq:right Ore condition} is a sufficient (and indeed necessary) condition for the existence of the localization $RS^{-1}$ \cite[Theorem~6.2]{GW1}.  A \textit{left Ore set} is defined symmetrically, and $S$ is simply called an \textit{Ore set} if it satisfies both the left and the right Ore condition.

More generally, we call a multiplicative set $S$ in $R$ a \textit{right denominator set}\nom{denominator set} if it satisfies the right Ore condition and also the \textit{right reversibility condition}:
\[\textrm{If }r \in R \textrm{ and }x \in S \textrm{ such that }xr = 0, \textrm{ then there exists }x' \in S \textrm{ such that }rx'=0.\]
This allows us to form a right ring of fractions for $R$ with respect to $S$ even if $S$ contains zero-divisors \cite[Theorem~10.3]{GW1}.  The \textit{left denominator set} is again defined symmetrically, and $S$ is a denominator set if it satisfies the denominator set conditions on both sides.  We note that in a left/right Noetherian ring, the left/right Ore condition implies the left/right reversibility condition, and hence all Ore sets are denominator sets in this case \cite[Proposition~10.7]{GW1}.

By localizing $k_q[x,y]$ at the set of all its monomials, which is clearly both left and right Ore since monomials are normal in $k_q[x,y]$, we obtain the ring of \textit{quantum Laurent polynomials} $k_q[x^{\pm1},y^{\pm1}]$\nom{K@$k_q[x^{\pm1},y^{\pm1}]$}.  This ring sits strictly between $k_q[x,y]$ and the division ring $k_q(x,y)$, and the properties of it and its fixed rings are studied in \cite{Baudry}.  

The \textit{$q$-division ring} $D = k_q(x,y)$ embeds naturally into a larger division ring, namely the ring of Laurent power series\nom{K@$k_q(y)(\!(x)\!)$}
\begin{equation}\label{eq:def of Laurent power series ring,notation section}k_q(y)(\!(x)\!) = \left\{ \sum_{i \geq n} a_i x^i : n \in \mathbb{Z}, a_i \in k(y)\right\}\end{equation}
subject to the same relation $xy = qyx$.  It is often easier to do computations in $k_q(y)(\!(x)\!)$ than in $D$, and we will identify elements of $D$ with their image in $k_q(y)(\!(x)\!)$ without comment.

We will also need a generalization of the quantum plane, namely the \textit{uniparameter quantum affine space} $k_{\mathbf{q}}[x_1, \dots, x_n]$\nom{K@$k_{\mathbf{q}}[x_1, \dots, x_n]$}.  Here $\mathbf{q}$ is an additively anti-symmetric $n \times n$ matrix, and the relations are given by
\[x_ix_j = q^{a_{ij}}x_jx_i,\]
where $a_{ij}$ denotes the $(i,j)$th entry of $\mathbf{q}$.

The \textit{coordinate ring of the $2 \times 2$ matrices} over a field $k$ is simply the polynomial ring in four variables, that is $\ML{2} = k[x_{11},x_{12},x_{21},x_{22}]$.  The quantized version of this algebra is defined in \cite[Example~I.1.6]{GBbook} to be the quotient of the free algebra $k\langle X_{11},X_{12},X_{21},X_{22} \rangle$ by the six relations
\begin{equation}\label{eq:relns for 2x2 matrices notation section}\begin{aligned}
X_{11}X_{12} &- qX_{12}X_{11},\quad & X_{12}X_{22} &- qX_{22}X_{12}, \\
X_{11}X_{21} &- qX_{21}X_{11},\quad & X_{21}X_{22} &- qX_{22}X_{21}, \\
X_{12}X_{21}&-X_{21}X_{12},\quad & X_{11}X_{22}-X_{22}X_{11}&-(q-q^{-1})X_{12}X_{21};
\end{aligned}\
\end{equation}
for some $q \in k^{\times}$.  This algebra is denoted by $\QML{2}$\nom{O@$\mathcal{O}_q(M_2)$}.  From this construction we may obtain the \textit{quantum $m \times n$ matrices} $\mathcal{O}_q(M_{m \times n})$\nom{O@$\mathcal{O}_q(M_{m\times n})$} as the algebra in $mn$ variables $\{X_{ij}: 1 \leq i \leq m, 1 \leq j \leq n\}$, subject to the condition that any set of four variables $\{X_{ij}, X_{im}, X_{lj}, X_{lm}\}$ with $i<l$ and $j<m$ should satisfy the relations \eqref{eq:relns for 2x2 matrices notation section}.  

When $m=n$, we write $\QML{n}$ for $\mathcal{O}_q(M_{n \times n})$ and define the \textit{quantum determinant}\nom{D@$Det_q$}\nom{Quantum determinant} to be
\begin{equation}\label{eq:quantum det def}
Det_q = \sum_{\pi \in S_n}(-q)^{l(\pi)}X_{1,\pi(1)}X_{2,\pi(2)}\dots X_{n,\pi(n)},
\end{equation}
where $S_n$ is the symmetric group on $n$ elements and $l(\pi)$ denotes the length of the permutation $\pi \in S_n$.  The quantum determinant is central in $\QML{n}$ (see, for example, \cite[I.2.4]{GBbook}), and hence the set $\{1, Det_q, Det_q^2, \dots\}$ is an Ore set in $\QML{n}$ and $\langle Det_q -1\rangle$ defines an ideal in $\QML{n}$.  We therefore define \textit{quantum $GL_n$} and \textit{quantum $SL_n$} as follows\nom{O@$\mathcal{O}_q(GL_n)$}\nom{O@$\mathcal{O}_q(SL_n)$}:
\[\QGL{n} := \QML{n}[Det_q^{-1}], \qquad \QSL{n} := \QML{n}/\langle Det_q - 1 \rangle.\]
We may also generalise the definition of quantum determinant to obtain a notion of minors in $\QML{n}$. Using the notation of \cite{GL1}, if $I$ and $J$ are subsets of $\{1, \dots, n\}$ of equal cardinality then we define the \textit{quantum minor} $[I|J]_q$\nom{$[I"|J]_q$} to be the quantum determinant in the subalgebra of $\QML{n}$ generated by $\{X_{ij} : i \in I, j \in J\}$.  We will use the same notation for quantum minors of $\QGL{n}$ and $\QSL{n}$, where we simply mean the image of $[I|J]_q$ in the appropriate algebra.

Since we will work primarily with $3\times 3$ quantum matrices, we will often drop the set brackets in our notation and, for example, write $[12|13]_q$ for the minor $[\{1,2\}|\{1,3\}]_q$.  Similarly, let $\wt{I}$ denote the set $\{1, \dots, n\}\backslash I$, and so that $[12|13]_q$ may also be denoted by $[\wt{3}|\wt{2}]_q$.  Since $1 \times 1$ minors are simply the generators $X_{ij}$, we will use the notation $[i|j]_q$ and $X_{ij}$ interchangeably.

If $R$ is any ring and $z$ an invertible element, we denote the resulting conjugation map on $R$ by\nom{C@$c_z$ (conjugation)}
\[c_z: r \mapsto zrz^{-1} \quad \forall r \in R.\]
Since we will define conjugation maps on $D$ with $z \in k_q(y)(\!(x)\!) \backslash D$, the following distinction will be important: we call a conjugation map $c_z$ an \textit{inner automorphism}\nom{inner automorphism} of $R$ if $z, z^{-1} \in R$.

Meanwhile, if $G$ is a subgroup of $Aut(R)$ we define the fixed ring to be\nom{R@$R^G$}
\[R^G = \{r \in R: g(r) = r, \ \forall g \in G\}.\]
If $G = \langle \varphi \rangle$ is cyclic, we will also denote the fixed ring by $R^{\varphi}$.

Let $spec(R)$\nom{S@$spec(R)$} be the set of prime ideals in a ring $R$, and $prim(R)$\nom{P@$prim(R)$} the set of primitive ideals.  The \textit{Zariski topology} is defined on $spec(R)$ by defining the closed sets to be those of the form
\[V(I) = \{P \in spec(R) : P \supseteq I\}\]
for some ideal $I$ of $R$.  This induces a topology on $prim(R)$, where the closed sets are simply those of the form $V(I) \cap prim(R)$ for some closed set $V(I)$ in $spec(R)$.  

\section{Results on the structure of the $q$-division ring}

As described in \S\ref{s:introduction overview}, we are interested in understanding the structure of fixed rings of division rings.  In particular, we will focus on the $q$-division ring $D = k_q(x,y)$ and its fixed rings under finite groups of automorphisms, with a view to establishing whether and how these fixed rings fit into the list predicted by Artin's conjecture.

We first describe the existing results along these lines. In the simplest case, where the group of automorphisms restricts to automorphisms of the quantum plane, we have a full description of the fixed rings $D^G$ given by the following theorem.
\begin{theorem}\label{res:AD_prop_intro}\cite[Proposition~3.4]{AD1} Let $k$ be a field of characteristic zero, and $q \in k^{\times}$ not a root of unity.  Denote by $R_q$ the quantum plane $k_q[x,y]$ and by $D_q$ its full ring of fractions.  Then:
\begin{enumerate}[(i)]
\item For $q' \in k$, we have $D_q \cong D_{q'}$ if and only if $q' = q^{\pm1}$, if and only if $R_q \cong R_{q'}$.
\item For all finite subgroups $G$ of $Aut(R_q)$, $D_q^G \cong D_{q'}$ for $q' = q^{|G|}$.
\end{enumerate}
\end{theorem}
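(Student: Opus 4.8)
The plan is to study $R_q=k_q[x,y]$ and $D_q$ in tandem, using the intermediate quantum torus $T_q:=k_q[x^{\pm1},y^{\pm1}]$: its decomposition $T_q=\bigoplus_{(i,j)\in\mathbb{Z}^2}k\,x^iy^j$ into one-dimensional monomial spaces makes the parameter $q$ transparent, and this grading interacts well with automorphisms. I would first record two structural facts. First, since $q$ is not a root of unity, a leading-term computation in the Ore extension $k[y][x;\alpha]$ shows that the normal elements of $R_q$ are exactly the scalar multiples of the monomials $x^iy^j$ with $i,j\geq 0$; thus, modulo $k^{\times}$, they form the free commutative monoid on the two atoms $x$ and $y$. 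Second, it follows that every $k$-algebra automorphism of $R_q$ carries $x$ and $y$ to scalar multiples of themselves and cannot interchange them (a transposition would force $q^2=1$), so $Aut(R_q)\cong(k^{\times})^2$, acting diagonally.

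For part (i), the implications $q'=q^{\pm1}\Rightarrow R_q\cong R_{q'}$ and $q'=q^{\pm1}\Rightarrow D_q\cong D_{q'}$ are immediate: the assignment $x\mapsto y$, $y\mapsto x$ realises an isomorphism $R_{q^{-1}}\cong R_q$, and any isomorphism of Ore domains extends uniquely to their division rings of fractions. For the two converse implications I would argue by rigidity, separately for $R_q$ and for $D_q$. Given an isomorphism $R_q\cong R_{q'}$, the first structural fact forces it to carry $\{x,y\}$ to $\{x',y'\}$ up to scalars and a possible transposition, and substituting into the defining relation then gives $q'=q$ or $q'=q^{-1}$. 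Given instead a $k$-algebra isomorphism $D_q\cong D_{q'}$, a division ring has too few normal elements for that argument, so I would extract from $D_q$ the subgroup $\Gamma_q$ of $k^{\times}$ (recall $Z(D_q)=k$) generated by those multiplicative commutators $uvu^{-1}v^{-1}$, $u,v\in D_q^{\times}$, that happen to be central. The monomial computation $(x^ay^b)(x^cy^d)(x^ay^b)^{-1}(x^cy^d)^{-1}=q^{ad-bc}$ gives $\Gamma_q\supseteq q^{\mathbb{Z}}$, and the reverse inclusion yields $\Gamma_q=q^{\mathbb{Z}}$. As $q$ is not a root of unity, $q^{\mathbb{Z}}$ is infinite cyclic with generators precisely $q$ and $q^{-1}$, and it is preserved by any $k$-algebra isomorphism, so $q'=q^{\pm1}$.

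For part (ii), let $G\leq Aut(R_q)\cong(k^{\times})^2$ be finite; then $G$ is a finite abelian group all of whose elements are roots of unity in $k$, it acts on $T_q$ by rescaling monomials, and $D_q$ is the division ring of fractions of $T_q$. The monomial $x^iy^j$ is $G$-fixed exactly when $(i,j)$ lies in $\Lambda:=\ker\big(\mathbb{Z}^2\to\widehat{G}\big)$, where $(i,j)$ is sent to the character recording the eigenvalue of each $g\in G$ on $x^iy^j$; since $G$ embeds into $(k^{\times})^2$ this map is surjective, so $[\mathbb{Z}^2:\Lambda]=|G|$. The $G$-action being diagonal in the monomial basis, $T_q^G$ is the span of the $G$-invariant monomials: $T_q^G=\bigoplus_{(i,j)\in\Lambda}k\,x^iy^j$. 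Choosing a $\mathbb{Z}$-basis $u=x^{a_1}y^{b_1}$, $v=x^{a_2}y^{b_2}$ of $\Lambda$ and computing $uv$ against $vu$ exhibits $T_q^G$ as the quantum torus $k_{q'}[u^{\pm1},v^{\pm1}]$ with $q'=q^{\,a_1b_2-a_2b_1}$, where $|a_1b_2-a_2b_1|=[\mathbb{Z}^2:\Lambda]=|G|$. Since $T_q$ is free of rank $|G|$ as a module over $T_q^G$, the standard fact that forming the division ring of fractions commutes with passing to fixed rings of a finite group acting on a Noetherian domain (here $|G|$ is invertible as $\mathrm{char}\,k=0$) gives $D_q^G=Frac(T_q)^G=Frac(T_q^G)=D_{q'}$ with $q'=q^{\pm|G|}$; part (i) then absorbs the sign, and $D_q^G\cong D_{q^{|G|}}$, which is again a $q$-division ring since $q^{|G|}$ is not a root of unity.

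The step I expect to be the main obstacle is the rigidity argument for the division ring in part (i): finding an intrinsic invariant of $D_q$ that recovers $q$ up to inversion, and in particular proving the inclusion $\Gamma_q\subseteq q^{\mathbb{Z}}$, i.e.\ that no scalar other than a power of $q$ can occur as a commutator of units. This last point needs genuine control of infinite expressions in the overring $k_q(y)(\!(x)\!)$, rather than just of monomials. The remaining delicate ingredients — identifying $T_q^G$ with a quantum torus and justifying $Frac(T_q^G)=Frac(T_q)^G$ (which is not automatic, since $T_q^G$ is far from central in $T_q$) — are routine once the freeness of $T_q$ over $T_q^G$ is established.
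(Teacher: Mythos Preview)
The paper does not give its own proof of this theorem: it is quoted from Alev--Dumas \cite{AD1} as background, so there is nothing in the thesis to compare your argument against directly.

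That said, your plan for part (ii) and for the $R_q$-half of part (i) is sound and essentially the standard one. The identification of normal elements of $R_q$ with monomials, hence $Aut(R_q)\cong(k^\times)^2$, is exactly what the paper records (citing \cite{AlevChamarie}); the diagonal action of a finite $G\subset(k^\times)^2$ on the quantum torus $T_q$, the sublattice $\Lambda=\ker(\mathbb{Z}^2\twoheadrightarrow\widehat G)$ of invariant exponents with $[\mathbb{Z}^2:\Lambda]=|G|$, and the passage $D_q^G=\mathrm{Frac}(T_q)^G=\mathrm{Frac}(T_q^G)$ via Faith's theorem (which the thesis itself invokes in the proof of Theorem~\ref{res:epic_theorem_2}) are all correct.

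The genuine gap is precisely where you place it: the inclusion $\Gamma_q\subseteq q^{\mathbb Z}$ for the $D_q$-half of (i). Unwinding your definition, this is the assertion that whenever $u,v\in D_q^\times$ satisfy $uv=\lambda vu$ with $\lambda\in k^\times$, one has $\lambda\in q^{\mathbb Z}$; equivalently, every copy of a quantum plane inside $D_q$ has parameter a power of $q$. Your leading-term computation in $k_q(y)(\!(x)\!)$ only yields a functional equation $a(y)\,b(q^m y)=\lambda\,b(y)\,a(q^n y)$ in $k(y)$ for the lowest coefficients, which by itself does not pin down $\lambda$. Closing this requires either a more delicate analysis of such $\lambda$-commuting pairs in the power series ring (in the spirit of the reductions in \cite{AC1} that the thesis summarises in \S\ref{ss:autos of division ring background section}, which force the leading exponents to $\pm1$), or a different invariant altogether; you should consult \cite{AD1} for the original argument rather than assume the $\Gamma_q$ route goes through without further input.
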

However, $D$ admits many other automorphisms of finite order which are not covered by this theorem.  The following theorem by Stafford and Van den Bergh considers one such example:
\begin{theorem}\cite[\S13.6]{SV1}\label{res:order_2_monomial_result}
Let $\tau$ be the automorphism defined on $D$ by
\[\tau: x \mapsto x^{-1}, \ y \mapsto y^{-1}\]
Then the fixed ring $D^{\tau}$ is isomorphic to $D$ as $k$-algebras.
\end{theorem}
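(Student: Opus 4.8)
The plan is to show that $\tau$ is a genuine $k$-algebra automorphism of order $2$, use the non-commutative Artin inequality to pin down $[D:D^{\tau}]$, and then construct an explicit copy of $D$ inside $D^{\tau}$ of the correct index.

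First I would check that $\tau$ is well defined: applying the prospective $\tau$ to the defining relation $xy = qyx$ yields $x^{-1}y^{-1}$ on the left and $q\,y^{-1}x^{-1}$ on the right, and these agree since $x^{-1}y^{-1} = (yx)^{-1} = (q^{-1}xy)^{-1} = q\,y^{-1}x^{-1}$. Hence $\tau$ extends to an endomorphism of $k_q[x,y]$, and since it inverts the normal elements $x$ and $y$ it extends to $D$; clearly $\tau^{2} = \mathrm{id}$, and $\tau \neq \mathrm{id}$ because $x^{2} \neq 1$ in the domain $k_q[x,y]$, so $G = \langle\tau\rangle$ has order $2$. As the fixed ring of a division ring, $D^{\tau}$ is again a division ring, and $x \notin D^{\tau}$, so $D^{\tau}\subsetneq D$ and $[D:D^{\tau}]\geq 2$; combined with the Artin inequality $[D:D^{\tau}]\leq|G| = 2$ quoted in \S\ref{s:introduction overview}, this gives $[D:D^{\tau}] = 2$ exactly.

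The heart of the argument is to exhibit two elements $u,v\in D^{\tau}$ generating a sub-division ring $D'$ with $D'\cong k_q(x,y)$ and $[D:D'] = 2$; for then $D'\subseteq D^{\tau}\subseteq D$ with $2 = [D:D'] = [D:D^{\tau}]\cdot[D^{\tau}:D']$ and $[D:D^{\tau}] = 2$, so multiplicativity of index forces $D^{\tau} = D'$ and hence $D^{\tau}\cong D$. To locate $u,v$ it is convenient to change generators first: since $(xy)x = q^{-1}x(xy)$, the elements $x$ and $m = xy$ also generate $D$, with $xm = qmx$ and $\tau(x) = x^{-1}$, $\tau(m) = q\,m^{-1}$. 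The commutative case is a useful guide — in $k(x,y)$ one checks that the $\tau$-invariants $(1+xy)(x+y)^{-1}$ and $(1-xy)(x-y)^{-1}$ generate $k(x,y)^{\tau}$, a rational function field of transcendence degree two — so $u$ and $v$ should be the appropriate $q$-deformations of these. Having produced candidates, I would verify inside $k_q(y)(\!(x)\!)$ that they satisfy a relation of the form $uv = q\,vu$ (perhaps only after a rescaling, or presented as $D' = \mathrm{Frac}\,k(s)[t;\theta]$ with $\theta(s) = qs$), and that $x$ satisfies a quadratic equation over $D'$ with $D = D'\langle x\rangle$, so that $[D:D'] = 2$.

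The main obstacle is this last, explicitly computational step of choosing the right invariant generators. The obvious $\tau$-invariants such as $x+x^{-1}$, $y+y^{-1}$, $xy+x^{-1}y^{-1}$, or $(x-x^{-1})(y-y^{-1})$ do \emph{not} pairwise $q$-commute — a short computation in $k_q(y)(\!(x)\!)$ shows the would-be scalar is forced to take two incompatible values of the form $q^{a}$ — which reflects the fact that $\tau$ acts on the monomial lattice of $D$ by $-I$ and so fixes no monomials, meaning the "torus" of $D^{\tau}$ must be rebuilt from scratch. The invariant generators that genuinely realise a $q$-commutation relation therefore have to be chosen with care, guided by the commutative model; and one must then track the $q$-powers appearing (in $\theta(s)$, or in the commutation relation) precisely enough to recognise the answer as $D$ itself rather than as some $D_{q'}$ with $q'\neq q^{\pm 1}$. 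Once the correct $u,v$ are in hand, checking the commutation relation and the index-$2$ claim is routine bookkeeping, but pinning them down is the crux.
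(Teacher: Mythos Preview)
Your overall strategy is correct and is exactly the template the paper uses for this and the other monomial automorphisms: exhibit $\tau$-invariant elements $u,v$ that $q$-commute, then show $[D:k_q(u,v)]=2$ so that $k_q(u,v)=D^{\tau}$. The paper does not itself prove this particular case---it cites \cite[\S13.6]{SV1}, where the original argument is by noncommutative algebraic geometry---but it records the explicit generators that Stafford and Van den Bergh produce, and the Poisson analogue of the index-$2$ verification is carried out in Lemma~\ref{res:order 2 SVdB gens Poisson}.

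The gap you correctly flag is the choice of generators, and your commutative model leads you slightly astray. The pair that works is
\[
u = (x-x^{-1})(y^{-1}-y)^{-1}, \qquad v = (xy-x^{-1}y^{-1})(y^{-1}-y)^{-1},
\]
with the \emph{same} denominator $y^{-1}-y$ in both, not the distinct denominators $x+y$ and $x-y$ suggested by your commutative analogy. With this choice one checks directly that $uv=qvu$ (same $q$, no shift), that $\tau$ fixes both, and that $x$ satisfies a quadratic over $k_q(u,v)$---in the commutative limit the minimal polynomial is $m_x(t)=vt^2+(v^2-u^2+1)t+v$, and $y=(1+vx)/(ux)$ then lies in $k_q(u,v)\langle x\rangle$. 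So once you have the right $u,v$ the bookkeeping is indeed routine, but your candidate invariants would not have led there.
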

The map $\tau$ is an example of a monomial automorphism, i.e. one where the images of $x$ and $y$ are both monomials (up to scalars).  In contrast to Theorem~\ref{res:AD_prop_intro}, the value of $q$ in $D^{\tau}$ does not depend on the order of $\tau$.  In a private communication to Stafford, Van den Bergh posed the question of whether the same result holds for the order 3 automorphism $\sigma: x \mapsto y, \ y \mapsto (xy)^{-1}$; we answer this in \S\ref{s:more fixed rings} as part of the following general theorem:
\begin{theorem}[Theorem~\ref{res:epic_theorem}, Theorem~\ref{res:thm_monomial_results_2}]\label{res:thm_monomial_results} Let $k$ be a field of characteristic zero and $q \in k^{\times}$.
\begin{enumerate}[(i)]
\item Define an automorphism of $D$ by
\[\varphi: x \mapsto (y^{-1}-q^{-1}y)x^{-1}\quad y \mapsto -y^{-1}\]
and let $G$ be the group generated by $\varphi$.  Then $D^G \cong D$ as $k$-algebras.
\item Suppose $k$ contains a third root of unity $\omega$, and both a second and third root of $q$.  If $G$ is a finite group of monomial automorphisms of $D$ then $D^G \cong D$ as $k$-algebras.
\end{enumerate}
\end{theorem}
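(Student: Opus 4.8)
\emph{Part (i).} First I would check that $\varphi$ is well defined: extend the given assignment to the overring $k_q(y)(\!(x)\!)$, verify that it respects $xy = qyx$, and compute on generators that $\varphi^2 = \mathrm{id}$, so that $\varphi$ is an automorphism and $G = \langle\varphi\rangle$ has order $2$. The heart of the matter is to pin down $D^G$. One notes that $y - y^{-1}$ and $x + \varphi(x) = x + (y^{-1}-q^{-1}y)x^{-1}$ are both $\varphi$-fixed, and the plan is to massage these into a pair $u,v \in D^G$ satisfying a single relation $uv = q^{\pm 1}vu$ whose generated sub-division ring $k_q(u,v)$ exhausts $D^G$. (Alternatively, and perhaps more transparently, one looks for an automorphism $\theta$ of $D$ conjugating $\varphi$ to the monomial involution $\tau\colon x\mapsto x^{-1},\ y\mapsto y^{-1}$ of Theorem~\ref{res:order_2_monomial_result} --- the point being to ``straighten out'' the factor $y^{-1}-q^{-1}y$ and the sign in $\varphi(y) = -y^{-1}$ --- whereupon $\theta$ carries $D^G$ isomorphically onto $D^\tau \cong D$.) To finish along the first route: $\varphi$ is not inner (check via $x$-adic leading terms), so the Galois theory of division rings (see \cite[\S5.2.1]{dumas_invariants}) gives $[D\colon D^G] = 2$; meanwhile a leading-term computation in $k_q(y)(\!(x)\!)$ shows that $u,v$ satisfy no relation beyond $q$-commutation and that $D$ is generated as a left $k_q(u,v)$-module by at most $2$ elements. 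Since $k_q(u,v) \subseteq D^G$, comparing indexes forces $D^G = k_q(u,v)$, which is isomorphic to $D$ by Theorem~\ref{res:AD_prop_intro}(i) whether the parameter emerges as $q$ or $q^{-1}$.

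\emph{Part (ii): reduction.} A monomial automorphism of $D$ has the form $x\mapsto \lambda x^ay^b,\ y\mapsto \mu x^cy^d$, and compatibility with $xy = qyx$ forces $ad-bc = 1$; thus the monomial automorphisms surject onto $\SLZ$ with kernel the scalar automorphisms. The plan is to build an honest action of $\SLZ$ on $D$ by monomial automorphisms, by prescribing how standard generators $S,T$ of $\SLZ$ act (as scaled monomial maps) and choosing the scalars so that the defining relations of $\SLZ$ (essentially $S^4 = 1$ and $(ST)^3 = S^2$) are respected --- and it is exactly here that the hypotheses on $k$, namely a primitive cube root of unity together with a square and a cube root of $q$, are used. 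Granting this, a finite group $G$ of monomial automorphisms is, after conjugation by a further monomial automorphism and a rescaling of $x,y$ (again absorbing scalars via the available roots), conjugate to a subgroup of this copy of $\SLZ$; and every finite subgroup of $\SLZ$ is cyclic of order $n \in \{1,2,3,4,6\}$.

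\emph{Part (ii): the cases.} It then remains to show $D^{\langle\varphi_n\rangle}\cong D$, where $\varphi_n$ is the standard monomial automorphism attached to an element of $\SLZ$ of order $n$: for $n=2$ this is $\tau$ (matrix $-I$); for $n=3$ it is Van den Bergh's $\sigma\colon x\mapsto y,\ y\mapsto (xy)^{-1}$; for $n=4$ it is $x\mapsto y^{-1},\ y\mapsto x$; and for $n=6$ it is $x\mapsto y^{-1},\ y\mapsto xy$. The case $n=1$ is trivial and $n=2$ is Theorem~\ref{res:order_2_monomial_result}. For $n=3,4,6$ I would argue exactly as in part (i): exhibit two $\varphi_n$-fixed elements $u_n,v_n$, obtained as $\varphi_n$-symmetrizations of $x$ and $y$; check that $u_nv_n = q^{\pm1}v_nu_n$, that $k_q(u_n,v_n)\hookrightarrow D^{\langle\varphi_n\rangle}$ with no further relations, and that $D$ is generated by at most $n$ elements as a left $k_q(u_n,v_n)$-module; and conclude, using $[D\colon D^{\langle\varphi_n\rangle}] = n$ (Galois theory of division rings, since $\varphi_n$ is outer), that $D^{\langle\varphi_n\rangle} = k_q(u_n,v_n)\cong D$.

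\emph{Main obstacle.} The crux, I expect, is the construction and analysis of the fixed generators $u_n,v_n$ for $n=4$ and $n=6$: the higher $\varphi_n$-symmetrizations of $x$ and $y$ are far bulkier than in the order-$2$ and order-$3$ cases, and one must verify both that they generate a quantum plane with parameter $q^{\pm1}$ (not a higher power of $q$) and that the module rank of $D$ over $k_q(u_n,v_n)$ is exactly $n$. A closely related technical point --- needed both to define the $\SLZ$-action and to conjugate a general finite group of monomial automorphisms into it --- is that the relevant scalar cocycle equations are solvable over $k$ precisely under the stated root hypotheses.
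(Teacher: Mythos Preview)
Your reduction of part (ii) to cyclic subgroups of $\SLZ$ matches the paper, but you have misidentified where the difficulty lies and missed the structural role of part (i).

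The cases $n=4$ and $n=6$ are not the hard ones; they \emph{reduce} to smaller cases. For $n=4$ one has $\rho^2=\tau$, so $D^{\rho}=(D^{\tau})^{\rho}$; writing $D^{\tau}=k_q(u,v)$ via Theorem~\ref{res:order_2_monomial_result}, a direct computation shows that $\rho$ acts on $(u,v)$ precisely as the map $\varphi$ of part (i) (on $k_{q^{-1}}(v,u)$). So part (i) is not an independent curiosity but exactly the lemma that closes the $n=4$ case. Likewise $\eta^3=\tau$, and $\eta$ restricted to $D^{\tau}$ becomes, after a change of variables using the cube root of $q$, an order-$3$ monomial map---reducing $n=6$ to $n=3$. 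The genuinely hard case is $n=3$: the paper's $q$-commuting pair is far from a naive symmetrization. One takes $g=a^{-1}b$ with $a,b$ $\omega^2$-eigenvectors for $\sigma$, and $f$ a carefully engineered expression in $g$, $g^{-1}$ and the $\sigma$-invariant sums $\theta_1,\theta_2$; even then, showing $k_q(f,g)=D^{\sigma}$ requires exhibiting $\theta_1\in k_q(f,g)$ via a Magma-assisted identity (cf.\ Remark~\ref{rem:order 3 snark}, where the ``natural'' $f'$ runs to nine pages). Your proposed $\varphi_n$-symmetrizations of $x$ and $y$ would not produce a $q$-commuting pair here.

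For part (i) itself, the elements you write down ($y-y^{-1}$ and $x+\varphi(x)$) do not $q$-commute, and the ``massaging'' you allude to is the entire content. The paper takes $\varphi$-anti-invariants $a,b,c$ and forms $h=b^{-1}a$, $g=b^{-1}c$, which satisfy a \emph{quantum Weyl} relation $hg-qgh=1-q$ rather than $q$-commutation; only afterwards does a standard change of variables give a $q$-commuting pair. The index computation $[D:R]=2$ is done concretely via quadratic Ore-extension theory (Proposition~\ref{res:gth_prop}): one shows $b^2\in R$, that conjugation by $b$ preserves $R$, and that $R\langle b\rangle=D$. Your alternative of conjugating $\varphi$ to $\tau$ inside $\operatorname{Aut}(D)$ is not attempted in the paper, and the results of \S\ref{s:automorphism_consequences} on non-inner conjugation maps suggest it would be far from straightforward even if possible.
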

This suggests that for finite groups of automorphisms which do not restrict to $k_q[x,y]$, we should expect the fixed ring to again be $q$-division for the same value of $q$.  There are several difficulties standing in the way of proving a general theorem of this form, however: in particular, the full automorphism group of $D$ is not yet fully understood, and the methods used in the proof of Theorem~\ref{res:thm_monomial_results} involve direct computation with elements of $D$ and do not easily generalise to automorphisms of large order.

In \S\ref{ss:autos of division ring background section} we describe what is currently known about $Aut(D)$, based on work by Alev and Dumas in \cite{AD3} and Artamonov and Cohn in \cite{AC1}.  In \S\ref{s:automorphism_consequences} we demonstrate why the structure of this group remains mysterious, by proving the following counter-intuitive result:
\begin{theorem}[Theorem~\ref{res:thm_AC_results_2}]\label{res:thm_AC_results} Let $k$ be a field of characteristic zero and $q \in k^{\times}$ not a root of unity.  Then:
\begin{enumerate}[(i)]
\item The $q$-division ring $D$ admits examples of bijective conjugation maps by elements $z \in k_q(y)(\!(x)\!)\backslash D$; these include examples satisfying $z^n \in D$ for some positive $n$, and also those such that $z^n \not\in D$ for all $n \geq 1$.
\item $D$ also admits an endomorphism which is not an automorphism, which can be represented in the form of a conjugation map.  
\end{enumerate}
\end{theorem}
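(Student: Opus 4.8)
The overall strategy is to realise the required maps as conjugations $c_z$ for carefully chosen $z$ lying in the larger division ring $L = k_q(y)(\!(x)\!)$ (or in a small extension of it obtained by adjoining a root of $q$ or of a suitable element). For any invertible $z$ the conjugation $c_z$ is an injective $k$-algebra endomorphism of $L$, so it restricts to an injective endomorphism of $D$ as soon as $c_z(x), c_z(y) \in D$ (the images of $x^{-1}, y^{-1}$ then lie in $D$ automatically, as $D$ is a division ring), and this restriction is an automorphism of $D$ precisely when in addition $c_{z^{-1}}(x), c_{z^{-1}}(y) \in D$, i.e. when $z$ normalises $D$ inside $L$. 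Since the centraliser of $D$ in $L$ is $k$, having $z \notin D$ forces $c_z$ to be non-inner; and $c_z^n = c_{z^n}$, so $z^n \in D$ is equivalent to $c_z$ representing a torsion element of $Out(D)$. Everything therefore reduces to controlling which elements of $L$ belong to $D$, for which the basic tool is that $k(x)$ is a maximal subfield of $D$, so that $D \cap k(\!(x)\!) = k(x)$ (and likewise with $x$ replaced by any $q$-commuting coordinate such as $xy$), together with the explicit power-series computations of the appendix.

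For part (i), the example of the second kind can be taken to be the $q$-Pochhammer element
\[\phi \;=\; \prod_{j \ge 0}(1 - q^j x) \;=\; \sum_{k \ge 0} \frac{(-1)^k q^{\binom{k}{2}}}{(1-q)(1-q^2)\cdots(1-q^k)}\, x^k \;\in\; k[[x]] \subseteq L,\]
a well-defined unit of $k[[x]]$ (using that $q$ is not a root of unity) satisfying the $q$-difference identity $\phi(qx) = (1-x)^{-1}\phi(x)$. Pushing $\phi$ past $x$ and $y$ with $xy = qyx$ gives $c_\phi(x) = x$ and $c_\phi(y) = y(1-x)^{-1}$, both in $D$, and the symmetric computation with $\phi^{-1}$ shows $c_\phi$ is onto $D$, hence an automorphism; iterating the identity gives $\phi^n(qx) = (1-x)^{-n}\phi^n(x)$, so a zero/pole count shows $\phi^n \notin k(x)$, whence $\phi^n \notin D$, for every $n \ge 1$. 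For the example of the first kind the plan is, via the reformulation above, to find a non-inner automorphism of $D$ that is torsion in $Out(D)$ and is realised by conjugation by some $z \in L \setminus D$: concretely I would take $z$ to be an $n$-th root inside $L$ of a suitable element $d \in D$, so that $z^n = d \in D$ and $c_z^n = c_d$, and verify that $zxz^{-1}$ and $zyz^{-1}$ simplify back into $D$. This case is the delicate one, since the obvious torsion automorphisms — the monomial ones of Theorems \ref{res:AD_prop_intro} and \ref{res:thm_monomial_results} and the torsion in the $SL_2(\mathbb{Z})$-action — fail either a valuation or a centraliser constraint and so are \emph{not} of the form $c_z$ with $z \in L$; the correct $z$ will need a genuinely new choice.

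For part (ii) the plan is to produce $z$, invertible in a suitable overring of $D$, with $c_z(x), c_z(y) \in D$ — so that $c_z$ restricts to an injective endomorphism $\eta$ of $D$ — but with $c_{z^{-1}}(x) \notin D$. Then $\eta(D) = zDz^{-1}$ is a sub-division-ring of $D$ isomorphic to $D$, and it is proper, for otherwise $c_{z^{-1}}$ would map $D$ into $D$, contrary to the choice of $z$. Thus $\eta$ is an endomorphism of $D$ that is not an automorphism and is a conjugation map by construction; composing with an automorphism of $D$ (a ``change of variables'') puts it in the stated form, as in the abstract. Intrinsically this amounts to exhibiting $u, v \in D$ with $uv = qvu$ which generate a copy of $k_q[x,y]$ but not $D$ as a division ring, the point being that for pairs built from monomials or from $Aut(D)$-images of $(x,y)$ one can always recover a generating pair by inverting and cancelling, so the required $u,v$ must avoid this.

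The step I expect to be the main obstacle is precisely this interplay between $D$ and $L$ in the two non-automorphism directions. Writing candidates for $z$ and computing $c_z(x), c_z(y)$ \emph{in} $L$ is routine, but proving that these lie in $D$, that $z \notin D$, that $z^n$ does or does not lie in $D$ as required, and — for (ii) — that $z^{-1}$ fails to normalise $D$, are all recognition problems for $D$ inside $L$, for which one leans on the structure of subfields of $D$ and on the explicit power-series computations of the appendix. Concretely, the hardest points will be constructing, in part (i), a normalising element with $z^n \in D$ (a finite-order element of $Out(D)$ realised by $L$-conjugation) and proving, in part (ii), that the image $zDz^{-1}$ is proper — exactly the situations where $z$ must normalise $D$ only finitely often, or only on one side, in contrast to the flexible two-sided normalising elements such as $\phi$ that produce automorphisms.
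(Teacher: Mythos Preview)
Your $q$-Pochhammer construction for the ``$z^n \notin D$ for all $n$'' half of part (i) is correct and in fact cleaner than what the paper does. Your $\phi \in k[[x]] \subset L$ gives $c_\phi(x)=x$ and $c_\phi(y)=y(1-x)^{-1}$, so $c_\phi$ is a single elementary automorphism of type $h_Y$; the argument that $\phi^n \notin D$ via $D \cap k(\!(x)\!) = C_D(x) = k(x)$ together with the functional equation $\phi^n(qx)=(1-x)^{-n}\phi^n(x)$ (which forces a degree contradiction for any rational $\phi^n$) is sound. The paper instead builds a composite $\psi = h_3 h_2 h_1$ of three elementary maps and uses the degree lemma that an inner $c_z$ must send $y$ to something of $x$-degree $0$, whereas $\psi^n(y)$ has $x$-degree $n$. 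Both work; yours is shorter.

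However, you have genuine gaps in the ``$z^n \in D$ for some $n$'' half of (i) and in (ii): you correctly identify what is needed but give no construction, and your suggestion of taking an $n$-th root in $L$ of some $d \in D$ is not fleshed out (and it is not clear how you would then verify that $c_z$ preserves $D$). The paper's key idea, which you are missing, is to recycle the fixed-ring theorem proved just before: the map $\varphi\colon x \mapsto (y^{-1}-q^{-1}y)x^{-1},\ y \mapsto -y^{-1}$ has $D^\varphi \cong D$ with explicit $q$-commuting generators, and $b = y+y^{-1}$ satisfies $b \notin D^\varphi$ but $b^2 \in D^\varphi$. Viewing $E := D^\varphi$ as ``the'' $q$-division ring, conjugation by $b$ is exactly an automorphism $c_z$ of $E$ with $z \notin E$ and $z^2 \in E$, giving the first half of (i). For (ii), the very isomorphism $\theta\colon D \xrightarrow{\sim} D^\varphi \subsetneq D$ is a non-surjective endomorphism of $D$; by the Artamonov--Cohn decomposition (Theorem~\ref{res:main AC theorem statement}) it factors as elementary automorphisms composed with some $c_{z^{-1}}$, and since the elementary factors are bijective the failure of surjectivity is carried entirely by $c_{z^{-1}}$, giving $z^{-1}Dz \subsetneq D$. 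So the constructions you were looking for are not ``genuinely new'' --- they come directly from the fixed-ring computation.
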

Both parts of Theorem~\ref{res:thm_AC_results} illustrate different problems with understanding the automorphism group $Aut(D)$.  Part $(i)$ means that we must distinguish between the concepts of ``bijective conjugation map'' and ``inner automorphism'' when considering automorphisms of $D$, and raises the possibility that these non-inner conjugation automorphisms may be examples of \textit{wild automorphisms} (see \S\ref{s:background artamonov and cohn}).  Meanwhile, part $(ii)$ of the theorem flies in the face of our most basic intuitions concerning conjugation maps, and also allows us to construct interesting new division rings such as the following: if $c_z$ is a conjugation map predicted by Theorem~\ref{res:thm_AC_results} $(ii)$, then we can consider the limits
\[\bigcap_{i \geq 0}z^iDz^{-i} \textrm{\quad and \quad} \bigcup_{i \geq 0}z^{-i}Dz^i,\]
about which very little is currently known.

\subsection{Methods for computation in the $q$-division ring}
As noted above, one of the reasons that so many apparently-simple questions concerning $D$ remain open is that direct computation in non-commutative division rings is extremely difficult.  By \cite[Corollary~6.7]{GW1}, if $R$ is a right Noetherian domain then the set $S:=R\backslash\{0\}$ forms a right Ore set, i.e. satisfies the \textit{right Ore condition} defined in \eqref{eq:right Ore condition}.

This condition is what makes the addition and multiplication well-defined in the localization $RS^{-1}$: for example, when computing the product of two fractions $ab^{-1}cd^{-1}$, the Ore condition \eqref{eq:right Ore condition} guarantees the existence of $u \in R$, $v \in S$ such that $b^{-1}c = uv^{-1}$ and hence
\[ab^{-1}cd^{-1} = au(dv)^{-1} \ \in RS^{-1}.\]
The problem is that this is not a constructive result, and in practice finding the values of $u$ and $v$ is often all but impossible.  In order to get around this problem, we embed $D$ into a larger division ring, namely the ring of Laurent power series
\begin{equation}\label{eq:def of Laurent power series ring, intro}
k_q(y)(\!(x)\!) = \left\{\sum_{i \geq n} a_i x^i : a_i \in k(y), \ n \in \mathbb{Z}\right\}
\end{equation}
where $x$ and $y$ are subject to the same relation $xy = qyx$.  Addition and multiplication in this ring can be computed term-by-term, where each step involves only monomials in $x$ (see Appendix~\ref{c:appendix} for further details on this).

Computing in $k_q(y)(\!(x)\!)$ can therefore be reduced to computation of the coefficients for each power of $x$, and in Appendix~\ref{c:appendix} we provide the code used to implement this approach in the computer algebra system Magma.  We also prove several results which allow us to pull the answers of our computations back to elements in $D$, which we state next.
\begin{theorem}\label{res:rec reln thm, intro version}(Theorem~\ref{res:rec reln thm}) Let $K$ be a field, $\alpha$ an automorphism on $K$ and $K[x;\alpha]$ the Ore extension of $K$ by $\alpha$.  Denote by $K(x;\alpha)$ the division ring of $K[x;\alpha]$ and $K[\![x;\alpha]\!]$ the power series ring into which $K[x;\alpha]$ embeds.

The power series $\sum_{i \geq 0} a_ix^i \in K[\![x; \alpha]\!]$ represents a rational function $Q^{-1}P$ in $K(x;\alpha)$ if and only if there exists some integer $n$, and some constants $c_1, \dots, c_n \in K$ (of which some could be zero) such that for all $i \geq 0$ the coefficients of the power series satisfy the linear recurrence relation
\begin{equation*}a_{i+n} = c_1\alpha(a_{i+(n-1)}) + c_2\alpha^2(a_{i+(n-2)}) + \dots + c_n\alpha^n(a_i).\end{equation*}
If this is the case, then $P$ is a polynomial of degree $\leq n-1$ and $Q = 1 - \sum_{i=1}^n c_ix^i$.\end{theorem}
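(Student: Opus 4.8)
The plan is to carry out everything inside one large division ring: the Laurent series ring $K(\!(x;\alpha)\!) = \{\sum_{i\ge m} a_i x^i : m\in\mathbb{Z},\ a_i\in K\}$ with $xr = \alpha(r)x$. This is again a division ring (invert the unit $x$, then invert a series with invertible constant term by a geometric series), and it contains $K[\![x;\alpha]\!]$ and $K[x;\alpha]$ in the obvious way; it also contains $K(x;\alpha)$, since the inclusion $K[x;\alpha]\hookrightarrow K(\!(x;\alpha)\!)$ sends every nonzero element to a unit and therefore extends to a homomorphism out of the Ore localization $K(x;\alpha)$, which is injective because $K(x;\alpha)$ is a division ring. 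Thus the statement ``$\sum_{i\ge0}a_ix^i$ represents the rational function $Q^{-1}P$'' means that the images of these two elements in $K(\!(x;\alpha)\!)$ coincide, and every identity below may be checked by comparing coefficients of powers of $x$.

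The heart of the proof is a single computation. Given $c_1,\dots,c_n\in K$, set $Q = 1 - \sum_{j=1}^n c_j x^j$; since $Q$ has constant term $1$ it is a unit of $K[\![x;\alpha]\!]$. Writing $f = \sum_{i\ge0}a_ix^i$ and using $x^j\bigl(\sum_i a_i x^i\bigr) = \sum_{m\ge j}\alpha^j(a_{m-j})x^m$ one obtains
\[ Qf \;=\; a_0 \;+\; \sum_{m\ge1}\Bigl(a_m - \sum_{j=1}^{\min(m,n)} c_j\,\alpha^j(a_{m-j})\Bigr)x^m . \]
Hence $Qf$ is a polynomial of degree $\le n-1$ if and only if the coefficient of $x^m$ vanishes for every $m\ge n$, i.e.\ if and only if $a_{i+n} = c_1\alpha(a_{i+n-1}) + c_2\alpha^2(a_{i+n-2}) + \dots + c_n\alpha^n(a_i)$ for all $i\ge0$, which is exactly the recurrence in the statement. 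The ``if'' direction of the theorem follows at once: given such a recurrence, put $Q = 1-\sum c_jx^j$ and $P = Qf$; then $P\in K[x;\alpha]$ has degree $\le n-1$ by the computation, and $f = Q^{-1}P$ holds already in $K[\![x;\alpha]\!]$, so $f$ represents $Q^{-1}P \in K(x;\alpha)$.

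For the ``only if'' direction, write $f = Q_0^{-1}P_0$ with $P_0, Q_0\in K[x;\alpha]$ and $Q_0\ne0$. The one delicate point --- and the only place where the hypothesis that $\sum_{i\ge0}a_ix^i$ has no negative powers of $x$ is genuinely used --- is that the denominator may be chosen to have invertible constant term. If the lowest-degree term of $Q_0$ has degree $s\ge1$, factor $Q_0 = x^s\widetilde{Q}_0$ with $\widetilde{Q}_0$ of nonzero constant term; then $f = \widetilde{Q}_0^{\,-1}\bigl(x^{-s}P_0\bigr)$, and since $\widetilde{Q}_0^{\,-1}$ is a unit of $K[\![x;\alpha]\!]$ the lowest degree occurring on the right is that of the Laurent polynomial $x^{-s}P_0$ (there is no cancellation at the bottom, $K$ being a domain); as $f$ has no negative powers this forces $x^{-s}P_0\in K[x;\alpha]$. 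Replacing $(Q_0,P_0)$ accordingly and rescaling so the constant term is $1$, we get $f = Q^{-1}P$ with $Q = 1 - \sum_{j=1}^{d} c_j x^j$ and $P\in K[x;\alpha]$. Now take $n = \max(d,\ \deg P + 1)$ and set $c_j = 0$ for $d<j\le n$; then the identity $Qf = P$ together with the displayed computation yields the asserted recurrence of order $n$ (some of whose coefficients may be zero), while $\deg P\le n-1$ by the choice of $n$.

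The main obstacle is precisely the bookkeeping just described: one must secure a denominator with invertible constant term, so that the convolution identity applies and the relation $Qf=P$ can be solved for its top coefficient $a_{i+n}$, all the while keeping the left/right placement of the non-commuting factors $x^j$ and $a_i$ straight. Once this is arranged, the theorem reduces to the single coefficient computation displayed above.
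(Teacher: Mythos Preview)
Your proof is correct and follows essentially the same strategy as the paper's: compute $Qf$ term by term and observe that the coefficients of $x^m$ for $m\ge n$ vanish precisely when the recurrence holds. Your displayed convolution identity plays the role of the paper's technical Lemma~\ref{res:rec reln technical lemma}.

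The one notable difference is in the ``only if'' direction. After normalizing the denominator to have constant term $1$, the paper uses the division algorithm to write $P = QS + V$ with $\deg V < \deg Q$, argues that subtracting the polynomial $S$ only perturbs finitely many coefficients (and hence does not affect whether a recurrence exists), and then reads off a recurrence of order $n = \deg Q$. You instead simply set $n = \max(\deg Q,\ \deg P + 1)$ and pad the recurrence with zero coefficients; this sidesteps the division-algorithm step entirely at the cost of a possibly longer recurrence. Both arguments are valid, and yours is a bit more economical. Your handling of the reduction to invertible constant term (factoring $Q_0 = x^s\widetilde{Q}_0$ and using the absence of negative powers in $f$ to force $x^{-s}P_0\in K[x;\alpha]$) is also cleaner than the paper's somewhat informal treatment of the same point.
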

\begin{theorem}\label{res:rec reln det thm, intro version}(Theorem~\ref{res:rec reln det thm}) Keep the same notation as Theorem~\ref{res:rec reln thm, intro version}. A power series $\sum_{i \geq0}a_ix^i$ satisfies a linear recurrence relation
\[a_{i+n} = c_1\alpha(a_{i+(n-1)}) + c_2\alpha^2(a_{i+(n-2)}) + \dots + c_k\alpha^k(a_i)\]
if and only if there exists some $m \geq 1$ such that the determinants of the matrices
\begin{equation*}
 \Delta_k = \left[ \begin{array}{ccccc}
         \alpha^k(a_0) & \alpha^{k-1}(a_{1}) & \dots & \alpha(a_{k-1}) & a_{k} \\
	 \alpha^k(a_{1}) & \alpha^{k-1}(a_{2}) & \dots &\alpha(a_{k}) & a_{k+1} \\
	 \vdots & &\ddots & &\vdots\\
	 \alpha^k(a_{k}) & \alpha^{k+1}(a_{k+1}) & \dots& \alpha(a_{2k-1}) & a_{2k}
        \end{array} \right]
\end{equation*}
are zero for all $k \geq m$.\end{theorem}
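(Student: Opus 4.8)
The statement is the non-commutative (twisted) analogue of Kronecker's classical theorem characterising rational power series -- equivalently, linearly recurrent sequences -- by the eventual vanishing of Hankel determinants, and the plan is to prove both implications by linear algebra over $K$, carrying the twisting automorphism $\alpha$ through every step. The one fact about $\alpha$ that is used repeatedly is that for any field automorphism $\beta$ of $K$ and any square matrix $M$ over $K$ one has $\det(\beta(M)) = \beta(\det M)$, where $\beta(M)$ means applying $\beta$ entrywise; in particular $\det\Delta_k$ and $\det(\alpha^t(\Delta_k))$ vanish together, and applying $\alpha^t$ entrywise to a matrix preserves its rank.

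For the ``only if'' direction, suppose $(a_i)$ satisfies a recurrence of order $n$, say $a_{i+n} = \sum_{\ell=1}^n c_\ell\alpha^\ell(a_{i+n-\ell})$ for all $i \ge 0$. Fix $k \ge n$ and write $C_0,\dots,C_k$ for the columns of $\Delta_k$, so the $i$-th entry of $C_j$ is $\alpha^{k-j}(a_{i+j})$. A direct substitution of the recurrence at the shifted index $i+k-n \ge 0$ shows $C_k = \sum_{\ell=1}^n c_\ell C_{k-\ell}$, a non-trivial $K$-linear dependence among the columns of $\Delta_k$, hence $\det\Delta_k = 0$; thus $m = n$ works.

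For the ``if'' direction, suppose $\det\Delta_k = 0$ for all $k \ge m$. If all $a_i = 0$ the recurrence is trivial; otherwise an easy induction (using the anti-triangular shape of $\Delta_k$ when $a_0 = \dots = a_{k-1} = 0$ shows that $\det\Delta_k = 0$ for \emph{all} $k$ would force all $a_i = 0$) produces a largest index $r \le m-1$ with $\det\Delta_r \ne 0$, and then $\det\Delta_k = 0$ for every $k \ge r+1$. Restricting $\Delta_{r+1}$ to its first $r+1$ columns and top $(r+1)\times(r+1)$ rows gives the matrix $\alpha(\Delta_r)$, which is non-singular, so those columns are $K$-independent; since $\det\Delta_{r+1} = 0$ the last column is a $K$-combination of them, and reading off entries yields constants $c_1,\dots,c_{r+1}$ with $a_{i+(r+1)} = \sum_{\ell=1}^{r+1} c_\ell\alpha^\ell(a_{i+(r+1)-\ell})$ for $i = 0,1,\dots,r+1$. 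It remains to extend this identity from $i \le r+1$ to all $i \ge 0$, and here I would follow Kronecker's argument: use the vanishing of $\det\Delta_k$ for every $k \ge r+1$ -- together with the non-singularity of the ``$\alpha^{\,t}(\Delta_r)$'' blocks sitting inside each $\Delta_k$ -- to show that the infinite twisted-Hankel array built from $(a_i)$ has finite rank $r+1$, equivalently that the column dependence found in $\Delta_{r+1}$ propagates unchanged to every $\Delta_k$; an induction on $i$ then delivers the recurrence at all indices. One may freely translate ``satisfies a recurrence'' into ``represents a rational function'' via Theorem~\ref{res:rec reln thm, intro version} wherever that is more convenient.

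The main obstacle is this last propagation step. The subtle point is that the vanishing of a single twisted determinant only says $\Delta_k$ is singular, not that it has rank $\le r+1$; indeed for general (non-Hankel) matrices the leading minors can vanish from some point on while the rank keeps growing, so it is precisely the Hankel structure -- the entry in position $(i,j)$ depending only on $i+j$, so that each $\Delta_{k+1}$ is obtained by bordering a matrix of the same shape -- combined with the hypothesis that \emph{all} sufficiently large twisted minors vanish, that forces the rank to stabilise. Making this rank-stabilisation argument run cleanly in the presence of the column-dependent powers of $\alpha$ appearing in $\Delta_k$ is the crux of the proof.
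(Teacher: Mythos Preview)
Your plan follows the same route as the paper's proof (itself an adaptation of Kronecker's classical argument), and the ``only if'' direction is handled correctly and matches the paper exactly. However, you have explicitly left the main step of the ``if'' direction unproven: you say you would show rank-stabilisation by ``following Kronecker's argument'' and then do ``an induction on $i$'', but you give no mechanism for either. This is not a minor omission --- as you yourself note, the propagation step in the twisted setting is the entire content of the converse direction, and the finite-rank heuristic you sketch does not obviously go through because knowing that each $\Delta_k$ is singular is strictly weaker than knowing it has rank $\le r+1$.

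The paper resolves this concretely as follows. With $m$ minimal such that $|\Delta_k|=0$ for all $k\ge m$ (so your $r=m-1$), one obtains constants $c_1,\dots,c_m$ from the column dependence in $\Delta_m$ and sets $P_{j+m}=a_{j+m}-\sum_{\ell=1}^m c_\ell\,\alpha^\ell(a_{j+m-\ell})$. The induction is on $j$, not on the matrix size: assuming $P_{m}=\dots=P_{r+m-1}=0$ for some $r>m$, one performs explicit column operations on $\Delta_r$, replacing each column $C_{r+1-s}$ (for $0\le s\le r-m$) by $C_{r+1-s}-\sum_{\ell}\alpha^s(c_\ell)\,C_{r+1-s-\ell}$. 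The new entries in these columns are precisely of the form $\alpha^s(P_{\bullet})$; the inductive hypothesis zeroes the top-right block and everything above the anti-diagonal in the bottom-right block, leaving a block-triangular matrix whose determinant factors as
\[
|\Delta_r|=\alpha^{r-m+1}\bigl(|\Delta_{m-1}|\bigr)\cdot\prod_{i=0}^{r-m}\alpha^i(P_{r+m}).
\]
Since $|\Delta_r|=0$ by hypothesis, $|\Delta_{m-1}|\ne 0$ by minimality of $m$, and $\alpha$ is an automorphism of a field, one concludes $P_{r+m}=0$. This column-operation trick --- which simultaneously exploits the Hankel structure and tracks the twisting --- is exactly the missing ingredient in your proposal.
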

Since we cannot in practice compute infinitely many terms of a series or infinitely many determinants of matrices, these results only provide the tools which allow us to approximate computation in $D$ and $k_q(y)(\!(x)\!)$.  However, we may then use the intuition gained from these computations to prove results by more standard methods.

\subsection{Approaching $D$ via Poisson deformation}\label{ss:intro, talking about D as a deformation}
An alternative method of understanding $D$ while avoiding the difficulties imposed by the non-commutativity is to translate the problem to a related commutative ring where localization is better behaved, and then use deformation theory to pull the results back to $D$.  In \cite{BaudryThesis}, Baudry constructed the algebra of $q$-commuting Laurent polynomials $k_q[x^{\pm1},y^{\pm1}]$ as a deformation of the commutative algebra $k[x^{\pm1},y^{\pm1}]$, and proved that for certain finite groups of automorphisms $G$ the fixed ring $k_q[x^{\pm1},y^{\pm1}]^G$ is in turn a deformation of $k[x^{\pm1},y^{\pm1}]^G$.

In Chapter~\ref{c:deformation_chapter} we build on Baudry's result to prove the corresponding result for $D$, and describe partial results towards understanding fixed rings $D^G$ as deformations of commutative rings.  In particular, we prove the following result:
\begin{theorem}\label{res:D is def of F intro statement}(Proposition~\ref{res:F is a deformation of D}, Theorem~\ref{res:fixed ring is deformation theorem})
Let $k(x,y)$ be the field of rational functions in two commuting variables with Poisson bracket defined by $\{y,x\} = yx$, and $G$ a finite subgroup of $SL_2(\mathbb{Z})$ acting on $k(x,y)$ by Poisson monomial automorphisms and on $D$ by monomial automorphisms.  Then $D$ is a deformation of $k(x,y)$, and the fixed ring $D^G$ is a deformation of $k(x,y)^G$.
\end{theorem}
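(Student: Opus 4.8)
The plan is to prove the two halves of Theorem~\ref{res:D is def of F intro statement} in sequence, building the second on top of the first. For the first half — that $D$ is a deformation of $k(x,y)$ with respect to $\{y,x\}=yx$ — I would work inside the completed power-series picture. The natural candidate for the deformation is to take a formal parameter $\hbar$ (or work over $k[[\hbar]]$ after setting $q = e^{\hbar}$, which makes sense since $q$ is not a root of unity and we may embed into a suitable valued extension), and recognize $k_q(y)(\!(x)\!)$, into which $D$ embeds, as a flat $k[[\hbar]]$-algebra whose reduction mod $\hbar$ is $k(x,y)$ with the induced bracket. Concretely, I would define a $k[[\hbar]]$-subalgebra $A$ of the $\hbar$-adically completed Laurent-series ring generated by $x^{\pm1}, y^{\pm1}$ and the relevant denominators, check that $A/\hbar A \cong k(x,y)$, verify flatness (torsion-freeness over $k[[\hbar]]$ suffices since $k[[\hbar]]$ is a DVR), and compute that the first-order part of the commutator $[x,y]$ recovers the bracket $\{y,x\}=yx$ up to the conventional sign/normalization. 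The key subtlety is which localization to use: $D$ is the full division ring, so I need the deformation to absorb inverses of arbitrary nonzero elements, and the cleanest route is to deform $k_q(y)(\!(x)\!)$ first and then cut down to $D$, using Theorem~\ref{res:rec reln thm, intro version} (or simply the universal property of the Ore localization) to see that the rational elements form a $k[[\hbar]]$-submodule reducing correctly. Much of this is already established in the referenced Proposition~\ref{res:F is a deformation of D}, so here I would mainly be assembling and citing it.

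For the second half — that $D^G$ is a deformation of $k(x,y)^G$ — the essential point is that taking $G$-invariants commutes with the specialization $\hbar \to 0$ when $G$ is finite and we are in characteristic zero. First I would note that the $SL_2(\mathbb{Z})$-action is by monomial automorphisms on both $D$ and $k(x,y)$, and that these actions are compatible: a monomial automorphism $x \mapsto \lambda\, x^a y^b,\ y \mapsto \mu\, x^c y^d$ (with $\begin{pmatrix} a & b \\ c & d\end{pmatrix} \in SL_2(\mathbb{Z})$) lifts to an automorphism of the $k[[\hbar]]$-algebra $A$ that is $k[[\hbar]]$-linear and reduces mod $\hbar$ to the corresponding Poisson monomial automorphism (one must check the scalars $\lambda,\mu$ can be chosen in $k$, independent of $\hbar$, so that the lift is genuinely defined over $k[[\hbar]]$ — this is where the hypothesis that $G \subseteq SL_2(\mathbb{Z})$, rather than a general monomial group, is used). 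Then $A^G$ is a $k[[\hbar]]$-subalgebra, it is still $\hbar$-torsion-free (a submodule of a torsion-free module over a DVR), and the content is the isomorphism $A^G/\hbar A^G \cong (A/\hbar A)^G = k(x,y)^G$.

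The surjectivity direction of that last isomorphism is the step I expect to be the main obstacle. The inclusion $A^G/\hbar A^G \hookrightarrow (A/\hbar A)^G$ is automatic, but to lift a $G$-invariant element $\bar f \in k(x,y)^G$ to a $G$-invariant element of $A$ requires an averaging argument: pick any lift $f_0 \in A$, form $f_1 = \frac{1}{|G|}\sum_{g \in G} g(f_0)$, which is $G$-invariant and still reduces to $\bar f$ since $\bar f$ was already invariant; this works because $|G|$ is invertible in $k$ (characteristic zero) and hence in $k[[\hbar]]$. The genuine difficulty is not this formal averaging but confirming that $A$ — and therefore the averaging — stays inside the right ring, i.e. that $D^G$ really is $A^G \cap D$ and not something larger coming from the power-series completion; equivalently, that a $G$-invariant rational element of $k_q(y)(\!(x)\!)$ lies in $D$. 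This should follow from the non-commutative Artin lemma quoted in the introduction ($[D:D^G]\le |G|$, so $D^G$ is already ``large'') together with the rationality criterion of Theorem~\ref{res:rec reln thm, intro version}, but tracking the interaction between the $G$-action, the $\hbar$-adic topology, and the Ore localization is the place where care is needed. Once that compatibility is nailed down, flatness plus the specialization isomorphism give exactly the definition of ``deformation'' and the theorem follows.
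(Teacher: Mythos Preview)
Your averaging argument for the fixed-ring part is morally the same as what the paper does (Lemma~\ref{res:sublemma} is exactly the statement that taking $G$-invariants is exact in characteristic zero), but your setup for the first part diverges from the paper's and creates the very obstacle you flag at the end.

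The paper does not use a formal $\hbar$-adic deformation or the embedding into $k_q(y)(\!(x)\!)$. Instead it builds a concrete $k$-algebra $\BB = k\langle x^{\pm1},y^{\pm1},z^{\pm1}\rangle/(xz-zx,\,yz-zy,\,xy-z^2yx)$, proves $\BB$ is a Noetherian UFD whose height~1 primes are exactly $\langle p(z)\rangle$ for $p$ irreducible in $k[z^{\pm1}]$, and localizes at the set $\CC$ of elements regular modulo all height~1 primes to obtain $\DD = \BB\CC^{-1}$. The point is that $\DD$ is already a principal ideal domain in which every ideal is generated by a polynomial in $z$, so the quotient $\DD/(h-\lambda)\DD$ with $h=2(1-z)$ is \emph{on the nose} the division ring $D$ for $q=(1-\tfrac{1}{2}\lambda)^2$ --- no power-series completion, no cutting down. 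The $SL_2(\mathbb{Z})$-action lifts to $\DD$ by letting $z$ play the role of $\hat q$ (it is fixed by $G$), and then the short exact sequence $0\to h\DD\to\DD\to\DD/h\DD\to 0$ of $G$-modules yields $\DD^G/h\DD^G\cong(\DD/h\DD)^G\cong k(x,y)^G$ directly.

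So the difficulty you single out --- controlling whether $G$-invariants of the completed ring land back in $D$ --- simply does not arise. Your $q=e^{\hbar}$ manoeuvre also needs analytic or formal machinery foreign to the paper's purely algebraic Definition~\ref{def:poisson deformation}; the paper parametrizes $q$ algebraically via $z$ instead. Your route might be salvageable, but the paper's UFD-then-localize construction is both simpler and sidesteps exactly the step you were worried about.
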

The Poisson bracket on $k(x,y)$ captures some of the non-commutative behaviour of $D$, while on the other hand its commutative multiplication makes it a far easier ring to work with.  Theorem~\ref{res:D is def of F intro statement} tells us that if we can describe the Poisson structure of $k(x,y)^G$ and the possible Poisson deformations of this structure, this will allow us to also understand the fixed ring $D^G$.

In \S\ref{s:fixed Poisson rings} we achieve the first of these for the case of finite groups of monomial automorphisms on $k(x,y)$ with respect to the Poisson bracket $\{y,x\} = yx$, by proving the following result.
\begin{theorem}\label{res:Poisson fixed rings intro statment}(Theorem~\ref{res:Poisson fixed rings big theorem})
Let $k$ be a field of characteristic zero which contains a primitive third root of unity $\omega$, and let $G$ be a finite subgroup of $SL_2(\mathbb{Z})$ which acts on $k(x,y)$ by Poisson monomial automorphisms as defined in Definition~\ref{def:action of SL2, Poisson def}.  Then there exists an isomorphism of Poisson algebras $k(x,y)^G \cong k(x,y)$.
\end{theorem}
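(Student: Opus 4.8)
The plan is to adapt the strategy behind Theorem~\ref{res:thm_monomial_results}(ii) to the Poisson category. The first step is a reduction. A monomial automorphism of $k(x,y)$ with exponent matrix $M \in GL_2(\mathbb{Z})$ multiplies the bracket $\{y,x\} = yx$ by $\det M$, so the Poisson monomial automorphisms are precisely those coming from $SL_2(\mathbb{Z})$; in particular conjugation inside $SL_2(\mathbb{Z})$ is realized by Poisson automorphisms of $k(x,y)$, so conjugate subgroups have Poisson-isomorphic fixed rings. Invoking the classical classification — every finite subgroup of $SL_2(\mathbb{Z})$ is conjugate to a cyclic group $C_n$ with $n \in \{1,2,3,4,6\}$, which follows e.g. from $SL_2(\mathbb{Z}) \cong \mathbb{Z}/4 *_{\mathbb{Z}/2} \mathbb{Z}/6$ — it suffices to treat one representative of each such $C_n$, the case $n=1$ being trivial. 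I would handle $C_2$, $C_3$ and $C_6$ directly, and reduce $C_4$ to $C_2$ via the tower $k(x,y)^{C_4} = (k(x,y)^{C_2})^{C_4/C_2}$, where $C_2 = \langle -I \rangle$ is the (central) unique order-two subgroup.

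For $C_2$, with $\tau\colon x \mapsto x^{-1},\ y \mapsto y^{-1}$, I would write down explicit $\tau$-invariant rational functions, verify by a degree count (using $[k(x,y):k(x,y)^\tau] = 2$) that two of them generate $k(x,y)^\tau$ — this simultaneously shows the fixed field is rational — compute their Poisson bracket, and apply a final monomial change of coordinates to bring it to the standard form $\{v,u\} = vu$; this is the Poisson shadow of Stafford--Van den Bergh's Theorem~\ref{res:order_2_monomial_result}. For $C_3$, take $\sigma\colon x \mapsto y,\ y \mapsto (xy)^{-1}$, which cyclically permutes the three monomials $x$, $y$, $(xy)^{-1}$ (of product $1$). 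Here the hypothesis $\omega \in k$ enters exactly as in Theorem~\ref{res:thm_monomial_results}(ii): the Lagrange resolvents $\theta = x + \omega y + \omega^2(xy)^{-1}$ and $\theta' = x + \omega^2 y + \omega(xy)^{-1}$ satisfy $\sigma(\theta) = \omega^2\theta$ and $\sigma(\theta') = \omega\theta'$, so $\theta^3$, $(\theta')^3$ and $\theta\theta'$ lie in $k(x,y)^{C_3}$; I would identify suitable ratios of these as generators, compute their bracket, and normalize. The $C_6$ case is entirely analogous, using a primitive sixth root of unity $-\omega^2 \in k$ in place of $\omega$. Finally, for $C_4$ a generator $\rho$ satisfies $\rho^2 = -I$, hence descends to an order-two Poisson automorphism $\bar\rho$ of $k(x,y)^{C_2}$; if the isomorphism $k(x,y)^{C_2} \cong k(x,y)$ from the $C_2$ step is chosen equivariantly for $\rho$, then $\bar\rho$ corresponds to a monomial automorphism, necessarily of order two and hence $\tau$-type, so the $C_2$ result applies a second time.

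The main obstacle is the bracket normalization in the $C_3$ (and $C_6$) steps. A priori the natural generators might produce the bracket $\{v,u\} = |G|\,vu$, by analogy with the $q \mapsto q^{|G|}$ behaviour of Theorem~\ref{res:AD_prop_intro}(ii) for automorphisms restricting to the quantum plane; the real content of the theorem is that, for these monomial automorphisms, a careful choice of resolvent generators instead yields a bracket that is log-canonical with coefficient exactly $1$, matching $k(x,y)$ on the nose (compare Theorem~\ref{res:order_2_monomial_result} and Theorem~\ref{res:thm_monomial_results}, where $D^G \cong D$ for the \emph{same} $q$). Carrying out this computation and confirming the coefficient is where the cube root of unity does essential work. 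The remaining bookkeeping is the equivariance check needed to close the $C_4$ tower: one must confirm that the invariant generators for $C_2$ can be chosen compatibly with the central $C_4$-action, so that the induced order-two action on $k(x,y)^{C_2}$ is visibly monomial in the new coordinates. With these two points settled, assembling the cases gives the Poisson isomorphism $k(x,y)^G \cong k(x,y)$ in general.
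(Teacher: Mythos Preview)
Your reduction to the four cyclic cases and your handling of $C_2$ and $C_3$ match the paper (Lemma~\ref{res:order 2 SVdB gens Poisson}, Proposition~\ref{res:order 3 poisson result}): explicit invariants built from resolvents, a degree count against $[k(x,y):k(x,y)^G]=|G|$, and a direct bracket computation confirming the coefficient is exactly~$1$.

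The gap is the $C_4$ tower. With the natural generators $u = (x-x^{-1})/(y^{-1}-y)$ and $v = (xy-(xy)^{-1})/(y^{-1}-y)$ of $k(x,y)^{\tau}$, the induced action of $\rho$ is
\[
\varphi\colon\ u \longmapsto -u^{-1},\qquad v \longmapsto (u^{-1}-u)\,v^{-1},
\]
which is \emph{not} monomial: the factor $u^{-1}-u$ in the image of $v$ is a genuine binomial, and there is no evident Poisson change of coordinates on $k(u,v)$ absorbing it while keeping $\{v,u\}=vu$. Your outline assumes such an equivariant choice of generators exists but gives no argument for it, and the paper does not claim one. Instead the paper treats $\varphi$ as a separate, non-monomial case (Proposition~\ref{res:order 4 poisson result}, the Poisson analogue of Theorem~\ref{res:epic_theorem} rather than of Theorem~\ref{res:order_2_monomial_result}): it writes down explicit $\varphi$-invariants $f,g$ with $\{g,f\}=gf$ and checks $k(f,g)=k(u,v)^{\varphi}$ via a quadratic minimal polynomial for $u$ over $k(f,g)$.

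Ironically, your tower idea \emph{does} close the $C_6$ case, and this is exactly how the paper proceeds (Corollary~\ref{res:order 6 poisson result}): since $\eta^3=\tau$, one passes to $k(u,v)$, and after the harmless substitution $u'=-u^{-1}$ the descended map is precisely the order-three monomial automorphism $\sigma$, so the $C_3$ result applies. The paper therefore does \emph{not} run a direct sixth-root-resolvent argument for $C_6$ as you propose. The asymmetry --- monomial descent succeeds for $C_6$ but fails for $C_4$ --- is the point your sketch misses, and filling it requires the extra non-monomial computation.
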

Unfortunately we have not yet managed to describe the possible deformations of $k(x,y)^G$, which means that we cannot yet replace the results of Chapter~\ref{c:fixed_rings_chapter} with this alternative Poisson approach.  However, the proof of Theorem~\ref{res:Poisson fixed rings intro statment} is far simpler and more intuitive than the proof of Theorem~\ref{res:thm_monomial_results}, which suggests that this may be a better way to approach a general theorem concerning the structure of fixed rings $D^G$ for arbitrary finite $G$.

\section{Primitive ideals in $\GL{3}$ and $\SL{3}$}\label{s:intro H primes}

In Chapter~\ref{c:H-primes} we apply the deformation theory techniques explored in the previous chapter to a completely different setting: the quantum algebras $\QML{n}$, $\QGL{n}$ and $\QSL{n}$ defined in \S\ref{s:notation} and their commutative counterparts $\ML{n}$, $\GL{n}$ and $\SL{n}$.  Informally, by letting $q=1$ we obtain the standard coordinate rings of $M_n$, $GL_n$ and $SL_n$, but as in \S\ref{ss:intro, talking about D as a deformation} this process induces a Poisson bracket on the commutative algebra which retains a ``first order impression'' of the non-commutative multiplication.

This relationship between the non-commutative and Poisson structures seems to force the ideal structures of the two algebras to match up quite closely: in the case of $\QSL{2}$ and $\SL{2}$, for example, it is fairly easy to show directly that there is a homeomorphism from the prime ideals of $\QSL{2}$ to the Poisson-prime ideals of $\SL{2}$, and further that this restricts to a homeomorphism from primitive ideals to Poisson-primitive ideals \cite[Example~9.7]{GoodearlSummary}.  Goodearl has conjectured in \cite[Conjecture~9.1]{GoodearlSummary} that the existence of this homeomorphism should be a general phenomenon, extending not just to all algebras of quantum matrices but other types of quantum algebra as well (the precise definition of ``quantum algebra'' remains an open question; some examples and common properties of these algebras are discussed in \S\ref{ss:H action quantum version}).

Let $A$ denote a quantum algebra and $B$ its semi-classical limit, and denote the set of Poisson-primes in $B$ by $Pspec(B)$; note that for all of the algebras we are interested in,  a Poisson-prime ideal is simply a prime ideal in the usual commutative sense which is closed under the Poisson bracket.  By \cite[Lemma~9.4]{GoodearlSummary}, a bijection $\Phi: spec(A) \rightarrow Pspec(B)$ is a homeomorphism if any only if  $\Phi$ and $\Phi^{-1}$ both preserve inclusions, hence for low dimensional examples of algebras $A$ and $B$ it is a valid tactic to try to obtain generating sets for all of the (Poisson-)primes and check the inclusions directly.  The aim of computing these examples explicitly is to provide evidence in favour of the conjecture, and also to provide intuition for a more general proof (or disproof) in the future.

In \cite{GL1}, Goodearl and Lenagan give explicit generating sets for the primitive ideals of $\QGL{3}$ and $\QSL{3}$ and lay the foundations for a full description of the prime ideals.  In Chapter~\ref{c:H-primes} we make use of their results and also techniques from deformation theory to obtain the corresponding description of Poisson-primitive ideals in $\GL{3}$ and $\SL{3}$.  We obtain the following theorem:
\begin{theorem}\label{res:bijection between primitives,intro} [Corollary~\ref{res:bijection between primitives,H-prime section}]
Let $k$ be algebraically closed of characteristic 0 and $q \in k^{\times}$ not a root of unity.  Let $A$ denote $\QGL{3}$ or $\QSL{3}$, and let $B$ denote the semi-classical limit of $A$.  Then there is a bijection of sets between $prim(A)$ and $Pprim(B)$, which is induced by the ``preservation of notation map'' 
\[A \rightarrow B: \ X_{ij} \mapsto x_{ij}, \ [\wt{i}|\wt{j}]_q \mapsto [\wt{i}|\wt{j}].\]
\end{theorem}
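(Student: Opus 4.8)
The plan is to run the Goodearl--Letzter $\HH$-stratification on the quantum side and its Poisson analogue on the commutative side, to match the strata, and then to match (Poisson-)primitive ideals stratum by stratum, using \cite{GL1} for the explicit generators on the quantum side and proving the parallel description on the Poisson side. Concretely: $\QGL{3}$ carries a rational action of a torus $\HH$ by $k$-algebra automorphisms, $\GL{3}$ carries a compatible rational $\HH$-action by Poisson automorphisms, and both $spec$ and $Pspec$ stratify over the (finite) sets of $\HH$-primes and Poisson-$\HH$-primes respectively; within each stratum the (Poisson-)primitive ideals are the locally closed points, parametrized by the maximal ideals of a Laurent-polynomial (Poisson-)centre $k[z_1^{\pm1},\dots,z_{d_w}^{\pm1}]$, i.e.\ by a torus $(k^{\times})^{d_w}$. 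The same applies to $\QSL{3}$ and $\SL{3}$ after passing to the quotients by $Det_q-1$ and $Det-1$. So the statement reduces to two things: (1) the assignment on generators $X_{ij}\mapsto x_{ij}$, $[\wt{i}|\wt{j}]_q\mapsto[\wt{i}|\wt{j}]$ induces a bijection between the $\HH$-primes of $A$ and the Poisson-$\HH$-primes of $B$; (2) for corresponding strata it induces a bijection of the locally closed points.

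For step (1) I would use the known description of the $\HH$-primes of quantum matrices: those of $\QML{3}$ (hence of $\QGL{3}$ and $\QSL{3}$) are generated by quantum minors and are indexed by the relevant combinatorial data (Cauchon/Le diagrams), and the Poisson-$\HH$-primes of $\ML{3}$ are governed by the same combinatorics and generated by the corresponding ordinary minors. For each diagram I would check that the image of the generating set under the notation map is (a) a Poisson ideal, i.e.\ closed under $\{-,-\}$ --- which holds because the bracket on $B$ is the semiclassical limit of the quantum commutator and the quantum minor relations degenerate to the Poisson minor relations --- and (b) Poisson-prime, and that this assignment is a bijection. Compatibility with $GL$ and $SL$ is handled by observing that $Det_q\mapsto Det$ under the notation map and that localization at $Det$ and the quotient by $\langle Det-1\rangle$ commute with the correspondence.

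For step (2) I would prove the Poisson counterpart of \cite{GL1}. Goodearl and Lenagan show that a primitive ideal of $A$ in the stratum of an $\HH$-prime $J_w$ is generated by $J_w$ together with an explicit finite list of Casimir-type elements --- scalar combinations of products of quantum minors --- chosen so as to cut the parametrizing torus $(k^{\times})^{d_w}$ down to a point. I would show that the Poisson-primitive ideals of $B$ in the matching Poisson stratum are generated by the Poisson-$\HH$-prime together with the images of exactly those elements under the notation map. This requires: that the Poisson-side parametrizing torus has the same dimension $d_w$ (from the stratification bookkeeping and the fact that the semiclassical limit preserves the relevant ranks); that these image elements are again Poisson and lie in the Poisson-centre of the appropriate localization; and that they cut out a single $\HH$-orbit, hence a Poisson-locally-closed point. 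Surjectivity, injectivity and the bijection itself then follow by combining the bijection on strata with the bijection of tori $(k^{\times})^{d_w}$ on the two sides, and the Dixmier--Moeglin/Poisson-Dixmier--Moeglin equivalences identify "primitive" with "locally closed" throughout.

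The main obstacle is establishing this Poisson analogue of \cite{GL1} in step (2): one must verify that the delicate $q$-identities among quantum minors that Goodearl and Lenagan exploit survive the passage $q\to 1$ as the corresponding Poisson identities among ordinary minors, that the semiclassical limit of a primitive ideal is genuinely Poisson-\emph{primitive} rather than merely Poisson-prime, and that no strata or torus-orbits merge in the limit --- equivalently, that the dimension counts $d_w$ agree on the two sides. Tracking compatibility with the localization at $Det$ and with the passage to $\langle Det-1\rangle$ throughout also needs care. It is likely that primality of a handful of the resulting commutative ideals in $\ML{3}$ has to be confirmed by direct, machine-assisted computation, since those cases do not reduce to general theory.
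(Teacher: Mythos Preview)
Your proposal is correct and follows essentially the same strategy as the paper: stratify both sides by (Poisson-)$\HH$-primes, match the 36 strata via the notation map, identify the (Poisson-)primitive ideals in each stratum with maximal ideals of a Laurent (Poisson-)centre, and verify that the explicit Goodearl--Lenagan generators pull back correctly on the Poisson side (with machine verification of primality for a handful of cases). The paper's main technical addition beyond your outline is a concrete mechanism for the degeneration step---it lifts the Ore sets $E_\omega$ to a formal $k[t^{\pm1}]$-algebra $\RR$ and proves that localization and quotient commute with taking semi-classical limits, so that each Poisson localization $B_\omega$ is literally the semi-classical limit of the quantum torus $A_\omega$ and the centres and Poisson-centres are then identified automatically; it also establishes directly (rather than via Cauchon combinatorics) that $\GL{3}$ has exactly 36 Poisson $\HH$-primes by exploiting Poisson $\HH$-simplicity of the $B_\omega$.
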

Here $[\wt{i}|\wt{j}]_q$ denotes a quantum minor in $A$ as defined in \S\ref{s:notation}, and $[\wt{i}|\wt{j}]$ is the corresponding minor in $B$ with $q=1$.

Although we are not able to verify that this bijection is actually a homeomorphism, Theorem~\ref{res:bijection between primitives,intro} does make it extremely likely that Goodearl's conjecture is true in these cases.

Note that the statement of \cite[Lemma~9.4]{GoodearlSummary} relating homeomorphisms to bijections preserving inclusions of primes does \textit{not} restrict to the corresponding statement for primitives; in order to verify the conjecture we would therefore need to prove that the bijection in Theorem~\ref{res:bijection between primitives,intro} was a homeomorphism using other techniques, or first extend it to a bijection on \textit{prime} ideals.  With this in mind, we also prove the following result for $\SL{3}$:
\begin{proposition}\label{res:UFD Poisson SL3 quotients,intro} [Proposition~\ref{res:UFD Poisson SL3 quotients}]
For any Poisson $\HH$-prime $I_{\omega}$ in $\SL{3}$, the quotient $\SL{3}/I_{\omega}$ is a commutative UFD.
\end{proposition}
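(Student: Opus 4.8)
The quotient $R_\omega:=\SL{3}/I_\omega$ is a commutative affine domain --- a Poisson $\HH$-prime is in particular a prime ideal --- and the rational $\HH$-action on $\SL{3}$ descends to it since $I_\omega$ is $\HH$-stable. The plan is to prove $R_\omega$ is a UFD via the ``$\HH$-UFD'' criterion of Launois--Lenagan--Rigal: by localising a Noetherian domain at the multiplicative set generated by its prime $\HH$-eigenvectors, applying Nagata's lemma, and using that a Noetherian domain with a rational torus action and no nonzero $\HH$-stable primes is a UFD, one reduces the claim to the single assertion that \emph{every nonzero $\HH$-stable prime of $R_\omega$ contains a prime element which is an $\HH$-eigenvector}. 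When $I_\omega=0$ this is the classical fact that $\SL{3}$ is a UFD, its divisor class group (the character group of $SL_3$) being trivial; so one may take $I_\omega\neq 0$.

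To verify the criterion I would use the explicit generating sets for the Poisson $\HH$-primes of $\SL{3}$ from Chapter~\ref{c:H-primes}, in which each $I_\omega$ is presented by a list of minors $[\wt{i}|\wt{j}]$ (the $q=1$ specialisations of the quantum minors) together with the relation making the determinant equal to $1$. An $\HH$-stable prime of $R_\omega$ is the image of one containing $I_\omega$. For those that are \emph{Poisson} $\HH$-primes, $I_{\omega'}\supsetneq I_\omega$: granting that the chapter's computations exhibit the poset of Poisson $\HH$-primes as built up by adjoining single minors, each such $I_{\omega'}$ contains a minor $m$ whose image $\overline{m}$ in $R_\omega$ is nonzero, is an $\HH$-eigenvector (minors are weight vectors for the torus rescaling rows and columns), and is a prime element because $R_\omega/(\overline{m})\cong\SL{3}/\big(I_\omega+(m)\big)$ is again a Poisson $\HH$-prime quotient, hence a domain.

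For the remaining, non-Poisson $\HH$-stable primes of $R_\omega$ one falls back on the explicit shape of the ring: after eliminating variables using the minor relations, each $R_\omega$ that occurs should be recognisable as a localised polynomial ring (or as $\SL{3}$ itself), in which every $\HH$-stable prime is visibly principal on a monomial- or generator-type $\HH$-eigenvector. Feeding all of these eigenvectors into the criterion then gives the proposition. As a cross-check, the analogous statement is already known quantum-mechanically --- $\QSL{3}$ is a quantum UFD by Launois--Lenagan--Rigal and its $\HH$-prime quotients inherit this through polynormal generating sequences of quantum minors --- though, since unique factorisation need not survive the semiclassical limit, this only predicts the answer rather than proving it.

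The main obstacle is turning the second and third steps into a genuinely finite verification. For the Poisson $\HH$-primes presented by a ``full'' family of minors, identifying $R_\omega$ as a localised polynomial ring is routine; for the less transparent ones it requires massaging the defining relations, and one must also confirm that the minor images together with the monomial/generator eigenvectors exhaust \emph{all} height-one $\HH$-stable primes. Since $\SL{3}$ has a substantial list of Poisson $\HH$-primes, this is a lengthy but elementary case analysis: no individual step is deep, and the length of the argument is governed entirely by the number of cases.
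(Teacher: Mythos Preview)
Your general shape is right---Nagata's lemma applied after localizing to a Laurent polynomial ring---but the $\HH$-UFD framing has a real gap. In the commutative algebra $\SL{3}/I_\omega$, the $\HH$-stable primes are simply the homogeneous primes for the $\mathbb{Z}^{2n-1}$-grading, and there are \emph{infinitely many} of these, not just the 36 Poisson $\HH$-primes. Already in $\SL{3}$ itself, the elements $x_{11}x_{22}-\lambda x_{12}x_{21}$ for $\lambda\in k^\times$ are $\HH$-eigenvectors generating a one-parameter family of height-1 $\HH$-stable primes. So ``every nonzero $\HH$-stable prime contains a prime $\HH$-eigenvector'' is not the finite case analysis you describe; it is essentially the statement that every homogeneous height-1 prime is principal, which is close to what you are trying to prove. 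Your fallback claim that each $R_\omega$ is ``recognisable as a localised polynomial ring'' also fails: for instance $R_{231,231}=\SL{3}/(x_{31},[\wt{3}|\wt{1}])$ is a genuinely singular affine variety, not a localized polynomial ring. The paper explicitly discusses (in the paragraphs preceding the proposition) why the stratification-based $\HH$-UFD argument from the quantum side does not transfer.

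What the paper does instead is more concrete. For each $\omega$ there is already a \emph{specific finite} multiplicative set $E_\omega\subset R_\omega$, arising from the Poisson stratification analysis of \S\ref{s:poisson primitive ideals}, with the property that $R_\omega[\pi(E_\omega)^{-1}]\cong B_\omega/(Det-1)$ is a Laurent polynomial ring---this is inherited from the quantum computation via the semiclassical-limit machinery. One then checks, case by case (twelve orbits under $\tau,\rho,S$, at most a handful of generators each), that every generator of $E_\omega$ is a prime element of $R_\omega$. Some of these checks are immediate (the element generates a known Poisson $\HH$-prime modulo $I_\omega$); others require recognizing the quotient as a domain by identifying a subalgebra with $\GL{2}$ and applying Eisenstein-type arguments. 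Nagata's lemma (Proposition~\ref{res:nc_iterated_nagata}) then gives the result. The point is that the finite set $E_\omega$ is handed to you by the stratification, so you never need to survey all $\HH$-stable primes.
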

The corresponding quantum version is proved in \cite[Theorem~5.2]{GBrown}.  We hope to use these results in future work to pull back generating sets for prime ideals to $\QSL{3}$ (resp. generating sets for Poisson-primes in $\SL{3}$); currently these are only known up to certain localizations.  This would allow us to extend the bijection in Theorem~\ref{res:bijection between primitives,intro} to a bijection $spec(\QSL{3}) \rightarrow Pspec(\SL{3})$.
\chapter{Background Material}\label{c:background}

The aim of this chapter is to provide the background material upon which the following chapters are built.  We begin in \S\ref{s:background artamonov and cohn} with an introduction to the concept of tame and wild automorphism groups, and focus in particular on what is known about the automorphism groups of $q$-commuting structures related to the $q$-division ring.  In \S\ref{ss:autos of division ring background section} we outline work done by Artamonov and Cohn in \cite{AC1}, upon which our results in \S\ref{s:automorphism_consequences} concerning strange conjugation maps of $k_q(x,y)$ are based.

In \S\ref{s:background deformation} we introduce Poisson algebras and Poisson deformation, which is the tool that will allow us to move between commutative and non-commutative algebras and compare the properties of the two.  Finally, in \S\ref{s:background H primes} we introduce stratification theory and $\HH$-primes for both quantum and Poisson algebras; this is an extremely powerful theory which allows us to partition the spectrum (respectively, Poisson spectrum) of certain types of algebra into smaller, more manageable pieces and hence describe the prime and primitive (resp. Poisson-prime and Poisson-primitive) ideals of the algebra.

Recall that as per Important Global Notation~\ref{q is not a root of unity for now and forever} we assume throughout that $k$ is a field of characteristic zero and $q \in k^{\times}$ is not a root of unity.

\section{On automorphism groups: tame, wild and the $q$-division ring}\label{s:background artamonov and cohn}

One way to get a feel for the structure of an algebra is to describe its automorphism group: the set of all possible $k$-linear automorphisms that can be defined on it, which is a group under composition of maps.  One way to waste a lot of time, on the other hand, is to try to describe an automorphism group that can't be described: informally, a \textit{wild} automorphism group.

The definition of tame and wild automorphisms varies from algebra to algebra, but the common theme is as follows: the tame automorphisms should be those in the group generated by some ``natural'' or ``elementary'' set of generators, while any remaining automorphisms not covered by this description are called ``wild''.  This concept is best illustrated by examples.

\begin{notation}
If $R$ is a $k$-algebra, the notation $Aut(R)$ will always mean the group of $k$-linear automorphisms of $R$.
\end{notation}

\begin{example}\label{ex:tame autos poly 2 vars}  Let $k[x,y]$ be the commutative polynomial ring in two variables.  Define two subgroups of $Aut(k[x,y])$ as follows:
\begin{gather*}A = \big\{(x,y) \mapsto (\lambda_1 x + \lambda_2 y + \lambda_3,\ \mu_1 x + \mu_2 y  + \mu_3) : \lambda_1\mu_2 \neq \lambda_2\mu_1, \lambda_i,\mu_i \in k\big\},\\
B = \big\{(x,y) \mapsto (\lambda x + \mu,\ \eta y + f(x)) : \lambda, \eta \in k^{\times}, \mu \in k, f(x) \in k[x]\big\};
\end{gather*}
the \textit{affine} and \textit{triangular} automorphism respectively.  The tame automorphisms of $k[x,y]$ are defined to be those in the group generated by $A \cup B$; it is a well-known result (due to Jung \cite{jung} in characteristic 0 and van der Kulk \cite{vanderkulk} in arbitrary characteristic) that the group of tame automorphisms equals the whole group $Aut(k[x,y])$.
\end{example}
\begin{example}\label{ex: tame autos poly n vars} More generally, let $k[x_1, \dots, x_n]$ be the polynomial ring in $n$ variables, and take the tame automorphism group to be that generated by all elementary automorphisms of the form
\[(x_1, \dots, x_i, \dots, x_n) \mapsto (x_1, \dots, \lambda x_i + f, \dots, x_n)\]
for $1 \leq i \leq n$, $\lambda \in k^{\times}$ and $f \in k[x_1, \dots, x_{i-1},x_{i+1},\dots, x_n]$; in two variables this coincides with the group defined in Example~\ref{ex:tame autos poly 2 vars} (see, e.g., \cite{NagataWild}).  In \cite{Nagata}, Nagata conjectured that the automorphism
\begin{equation}\label{eq:Nagata_automorphism}(x,y,z) \mapsto \big(x + (x^2-yz)z,\ y + 2(x^2-yz)x + (x^2-yz)^2z,\ z\big)\end{equation}
in $k[x,y,z]$ should be wild, a conjecture which remained open for over 30 years before being settled.  In \cite{NagataStablyTame}, it was shown that Nagata's automorphism is stably tame, i.e. becomes tame when new variables (upon which \eqref{eq:Nagata_automorphism} acts as the identity) are added.  However, it was not until 2003 that Shestakov and Umirbaev finally proved in \cite{NagataWild} that the Nagata automorphism \eqref{eq:Nagata_automorphism} is indeed wild, and hence the polynomial ring in three variables admits wild automorphisms.
\end{example}
Other examples of algebras with tame automorphism groups include the free algebra in two variables \cite{ML1}, the commutative field $k(x,y)$ in two variables \cite{Ishky} and the first Weyl algebra $A_1(k)$ \cite[\S8]{Dixmier}; examples of algebras with wild automorphisms include $\mathcal{U}(\mathfrak{sl_2})$, the enveloping algebra of the Lie algebra $\mathfrak{sl_2}$ \cite{JosephWild}.

\subsection{Automorphisms of $q$-commuting structures}\label{ss:autos of q-comm structures}

Since we will be interested in automorphisms of $q$-commuting algebras, let us examine what is already known about them in more detail.  The automorphism group of the quantum plane $k_q[x,y]$ is particularly easy to understand: for $q \neq \pm 1$ it admits only automorphisms of scalar multiplication, i.e. maps of the form 
\[x \mapsto \alpha x,\ y \mapsto \beta y, \quad (\alpha, \beta) \in (k^{\times})^2,\]
and hence $Aut(k_q[x,y]) \cong (k^{\times})^2$ \cite[Proposition~1.4.4]{AlevChamarie}.  This is far smaller than the automorphism group of the commutative polynomial ring $k[x,y]$, which is a result of the restrictions imposed by the lack of commutativity: the images of $x$ and $y$ must $q$-commute in $k_q[x,y]$, for example.  Since any homomorphism from $k_q[x,y]$ to itself must preserve the set of normal elements, and it is shown in \cite[Proposition~4.1.1]{dumas_invariants} that the only normal elements in $k_q[x,y]$ are the monomials, the possible images of $x$ and $y$ are immediately restricted to pairs of $q$-commuting monomials.  Of such pairs, the only ones ones defining an \textit{invertible} map of $k_q[x,y]$ are $x$ and $y$ themselves; this provides an elementary proof of the result in \cite{AlevChamarie}.

With this analysis in mind, upon moving up to the quantum torus $k_q[x^{\pm1},y^{\pm1}]$ we may define a new set of automorphisms of the form
\begin{equation}\label{eq:example of monomial auto}x \mapsto y^bx^d,\  y \mapsto  y^ax^c; \quad a,b,c,d \in \mathbb{Z}, \ ad - bc = 1.\end{equation}
We observe that since
\[(y^bx^d)(y^ax^c) = q^{ad}y^{ab}x^{cd} = q^{ad-bc}(y^ax^c)(y^bx^d),\]
the condition $ad-bc=1$ is both necessary and sufficient for the map defined in \eqref{eq:example of monomial auto} to be a well-defined homomorphism.  Since we may define the inverse map on $k_q[x^{\pm1},y^{\pm1}]$ by
\[x \mapsto q^m y^ax^{-b},\ y \mapsto q^n y^d x^{-c}\]
for some $m,n \in \mathbb{Z}$ depending on the values of $\{a,b,c,d\}$, the maps defined in \eqref{eq:example of monomial auto} define a set of automorphisms on $k_q[x^{\pm1},y^{\pm1}]$ which correspond to elements of the group $SL_2(\mathbb{Z})$.  

We will examine these automorphisms in more detail in Chapter~\ref{c:fixed_rings_chapter}, where we will refine the definition \eqref{eq:example of monomial auto} slightly in order to define an embedding of $SL_2(\mathbb{Z})$ into $Aut(k_q[x^{\pm1},y^{\pm1}])$.  For now, it suffices to observe that 
\[Aut(k_q[x^{\pm1},y^{\pm1}]) \cong (k^{\times})^2 \rtimes SL_2(\mathbb{Z}),\]
which is proved in \cite[\S4.1.1]{dumas_invariants}.

The full structure of the automorphism group of the $q$-division ring $D$ is not yet known; we outline existing results in this area in \S\ref{ss:autos of division ring background section}.  Many of these results make use of techniques originally developed for describing the automorphism group of a much larger division ring, namely the division ring of Laurent power series\nom{L@$L_q$}
\[L_q = k(\!(y)\!)(\!(x)\!) = \left \{\sum_{i \geq n}a_i x^i : n \in \mathbb{Z}, a_i \in k(\!(y)\!), xy=qyx\right\}.\]
This is a generalization of the ring $k_q(y)(\!(x)\!)$ defined in \eqref{eq:def of Laurent power series ring,notation section}, where here we allow coefficients in $k(\!(y)\!)$ instead of $k(y)$.

A key point when doing computations in $L_q$ and its subring $k_q(y)(\!(x)\!)$ is that one can often specify just the first term in a power series and then construct the rest of the coefficients recursively to satisfy a desired property.  For example, given an element
\begin{equation}\label{eq:standard form of g to construct f}g = \lambda y + \sum_{i \geq 1} g_i x^i \in L_q,\end{equation}
one may construct a second element $f = \sum_{i \geq n}f_i x^i$ which $q$-commutes with it by expanding out the expression $fg-qgf = 0$ and solving term-by-term for the coefficients of $f$.  Indeed, we see that
\begin{align}\label{eq:computation in Lq 1}
\nonumber 0&= fg-qgf \\
\nonumber &=\sum_{i \geq n}f_i x^i \left(\lambda y + \sum_{j \geq 1}g_j x^j\right) - q \left(\lambda y + \sum_{j \geq 1}g_j x^j\right)\sum_{i \geq n}f_i x^i \\
&= \sum_{i \geq n} \lambda(q^i-q)yf_i x^i + \sum_{i \geq n+1} \left(\sum_{k=n}^{i-1} f_k \alpha^k(g_{i-k}) \right)x^i,
\end{align}
where $\alpha$ denotes the map $y \mapsto qy$ on $k(\!(y)\!)$.

The coefficient of $x^n$ in \eqref{eq:computation in Lq 1} is $\lambda (q^n-q)yf_nx^n$, which is zero if and only if $n=1$ since $q$ is not a root of unity.  Having set $n=1$, we may choose $f_1 \in k(\!(y)\!)$ arbitrarily, provided it is non-zero.  Now by considering the coefficient of $x^m$ for any $m \geq 2$ in \eqref{eq:computation in Lq 1} and recalling that $n=1$, we can see that
\[f_m = -\sum_{k=1}^{m-1}f_k \alpha^k(g_{m-k})\lambda^{-1}(q^m-q)^{-1}y^{-1},\]
which is uniquely determined by $g$ and the choice of the coefficient $f_1$.  

In \cite{AD3}, Alev and Dumas use techniques of this form to describe the automorphism group of $L_q$.  They first show that if $\theta$ is an automorphism on $L_q$ then $\theta(x)$ and $\theta(y)$ must take the forms of the elements $f$ and $g$ described in the above discussion \cite[Lemme~2.6]{AD3}.  By considering the expansion of the equation $z\theta(y) = yz$ for some unknown $z \in L_q$, they are then able to describe necessary and sufficient conditions for $\theta$ to be an inner automorphism.  This result is recorded in the following lemma.
\begin{lemma}\label{res:Alev Dumas Lq lemma}\cite[Lemme~2.6]{AD3}
For all $\theta \in Aut(L_q)$, there exists some $\beta \in k^{\times}$ and two sequences $(a_i)_{i \geq 1}$, $(b_i)_{i \geq 1}$ of elements in $k(\!(y)\!)$ with $a_1 \neq 0$, such that the image of $\theta$ has the form
\begin{equation}\label{eq:standard form in Lq}\theta(x) = \sum_{i \geq 1} a_i x^i, \quad \theta(y) = \beta y + \sum_{i \geq 1} b_i x^i.\end{equation}
Further, $\theta$ is an inner automorphism if and only if it satisfies the following two conditions:
\begin{enumerate}
\item $\beta = q^n$ for some $n \in \mathbb{Z}$;
\item there exists some $u \in k(\!(y)\!)^{\times}$ such that $a_1 \alpha(u) = u$.
\end{enumerate}
\end{lemma}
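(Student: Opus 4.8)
The plan is to treat the statement in two stages: first pin down the shapes of $\theta(x)$ and $\theta(y)$, and then read off the inner-automorphism criterion from the conjugation equations. For the first stage, the idea is to exploit that $L_q$ is complete for the $x$-adic valuation $v$ (normalised so $v(x)=1$, $v(y)=0$) and that every automorphism of $L_q$ preserves $v$; this invariance, together with the rigidity fact that an automorphism fixing $x$ and $y$ must be the identity, are the two ``soft'' inputs and belong to the valuation-theoretic preliminaries of \cite{AD3}. Granting $v$-invariance, $v(\theta(x))=1$ and $v(\theta(y))=0$, so $\theta(x)=\sum_{i\ge1}a_ix^i$ and $\theta(y)=c+\sum_{i\ge1}b_ix^i$ with $a_1\ne0$ and $c\in k(\!(y)\!)^\times$. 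Comparing the $x^1$-coefficients on the two sides of $\theta(x)\theta(y)=q\theta(y)\theta(x)$ gives $a_1\alpha(c)=qca_1$, hence $\alpha(c)=qc$ in $k(\!(y)\!)$; writing $c=\sum_jc_jy^j$ and using $\alpha(y)=qy$ this forces $c_j=0$ for $j\ne1$, i.e.\ $c=\beta y$ with $\beta:=c_1\ne0$, which is the asserted normal form.

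For the criterion, consider first the forward direction. If $\theta$ is inner, say $\theta(r)=zrz^{-1}$ for a fixed $z\in L_q^\times$, write $z=\sum_{i\ge n}z_ix^i$ with $z_n\ne0$; since $z$ is a unit, $z_n\in k(\!(y)\!)^\times$. Expanding $\theta(y)z=zy$ using $x^iy=q^iyx^i$, and $\theta(x)z=zx$ using $x^ja=\alpha^j(a)x^j$, the lowest-degree coefficients --- in $x^n$ for the first equation, $x^{n+1}$ for the second --- read $\beta yz_n=q^nyz_n$ and $a_1\alpha(z_n)=z_n$. The first yields $\beta=q^n$, which is (1); the second is exactly (2), with $u=z_n$.

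Conversely, assume (1) and (2), say $\beta=q^n$ and $a_1\alpha(u)=u$ for some $u\in k(\!(y)\!)^\times$. The plan is to build $z$ from the single equation $\theta(y)z=zy$ and then show $\theta(x)z=zx$ follows automatically. Solving $\theta(y)z=zy$ coefficient by coefficient, its $x^i$-coefficient reads $(q^i-q^n)yz_i=\sum_{j=1}^{i-n}b_j\alpha^j(z_{i-j})$: this is vacuous for $i=n$, so we are free to put $z_n:=u$, and for $i>n$ it determines $z_i$ uniquely in terms of $z_n,\dots,z_{i-1}$, because $q^i\ne q^n$ (as $q$ is not a root of unity). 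Now set $w:=\theta(x)z-zx$. Right-multiplying $\theta(x)\theta(y)=q\theta(y)\theta(x)$ by $z$ and using $\theta(y)z=zy$ together with $xy=qyx$ to rearrange produces the homogeneous equation $wy=q\theta(y)w$. Suppose $w\ne0$ with leading term $w_\ell x^\ell$. Since $\theta(x)z$ and $zx$ both have $x$-valuation $\ge n+1$, also $\ell\ge n+1$; comparing $x^\ell$-coefficients in $wy=q\theta(y)w$ gives $q^\ell w_\ell=q^{n+1}w_\ell$, so $\ell=n+1$. But the $x^{n+1}$-coefficient of $w=\theta(x)z-zx$ is $a_1\alpha(z_n)-z_n=a_1\alpha(u)-u=0$ by (2), contradicting $w_\ell\ne0$. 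Hence $w=0$, i.e.\ $\theta(x)z=zx$. Finally $z=ux^n+\cdots$ with $u$ a unit, so $z\in L_q^\times$; as $\theta$ agrees with $c_z$ on $x$ and $y$, it agrees with $c_z$ on all of $L_q$ by the rigidity input, so $\theta=c_z$ is inner.

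The step I expect to be the main obstacle is the consistency check in the converse: that a conjugator manufactured to implement $\theta(y)z=zy$ must also implement $\theta(x)z=zx$. The device that makes this work is the auxiliary element $w=\theta(x)z-zx$, which satisfies $wy=q\theta(y)w$ \emph{solely} because $\theta$ respects the relation $xy=qyx$; the valuation estimate then confines $w$ to $x$-degrees $\ge n+1$, and condition (2) is precisely what is needed to annihilate the degree-$(n+1)$ coefficient and thereby force $w=0$. (If one drops (2), the recursion still yields a $z$ conjugating $y$ correctly but not $x$, so (2) genuinely cannot be omitted.) The remaining care-points are the two ``soft'' inputs noted above --- that automorphisms of $L_q$ preserve the $x$-adic valuation, and that an automorphism of $L_q$ fixing both $x$ and $y$ is the identity --- which are part of the standard analysis of this division ring and can be cited from \cite{AD3}.
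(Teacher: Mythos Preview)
The paper does not prove this lemma itself --- it is cited from \cite[Lemme~2.6]{AD3}, with only a one-line indication of the method (solving $z\theta(y)=yz$ recursively for $z$) and a preceding paragraph illustrating the same coefficient-matching technique. Your proposal is correct and matches this sketch precisely; the consistency step via $w=\theta(x)z-zx$ satisfying $wy=q\theta(y)w$ is a clean way to handle the part the paper leaves entirely to the reference.
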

Alev and Dumas define the set of elementary automorphisms on $L_q$ to be those of the form
\begin{equation}\label{eq:triangular autos on L_q}
\{\varphi_{\alpha,f}: x \mapsto f(y)x,\ y \mapsto \alpha y : \alpha \in k^{\times}, f \in k(\!(y)\!)^{\times}\},
\end{equation}
and observe that the automorphism group of $L_q$ is tame in the following theorem.
\begin{theorem}\cite[Th\'eor\`eme 2.7]{AD3}
The automorphism group of $L_q$ is generated by the elementary automorphisms and inner automorphisms, and hence $Aut(L_q)$ is tame.
\end{theorem}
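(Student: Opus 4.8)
The plan is to deduce the theorem directly from Lemma~\ref{res:Alev Dumas Lq lemma}. Since \emph{tame} here means \emph{generated by the elementary and inner automorphisms}, it is enough to show that every $\theta \in Aut(L_q)$ factors as $\theta = \psi \circ \varphi$ with $\varphi$ elementary and $\psi$ inner. First I would record the easy fact that the elementary automorphisms form a subgroup of $Aut(L_q)$: a short direct computation gives $\varphi_{\lambda,f}\circ\varphi_{\mu,h} = \varphi_{\lambda\mu,\,k}$ with $k(y) = h(\lambda y)f(y)$, and $\varphi_{\lambda,f}^{-1} = \varphi_{\lambda^{-1},\,g}$ with $g(y) = f(\lambda^{-1}y)^{-1}$; in particular the inverse of an elementary automorphism is again elementary.

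Next, given $\theta$, write it in the normal form supplied by Lemma~\ref{res:Alev Dumas Lq lemma}, say $\theta(x) = \sum_{i\geq 1}a_i x^i$ with $a_1 \in k(\!(y)\!)^{\times}$ and $\theta(y) = \beta y + \sum_{i \geq 1} b_i x^i$. The idea is to choose an elementary automorphism $\varphi = \varphi_{\lambda,f}$ for which $\psi := \theta\circ\varphi$ satisfies \emph{both} conditions of that lemma. The key computation is of the leading ($x$-adic) terms of $\psi$: one finds $\psi(y) = \lambda\,\theta(y) = \lambda\beta y + \cdots$, while $\psi(x) = f(\theta(y))\,\theta(x)$, where $f(\theta(y))$ is a well-defined element of $L_q$ whose constant-in-$x$ part is $f(\beta y)$, so the leading coefficient of $\psi(x)$ is $f(\beta y)\,a_1$. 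Taking $\lambda = \beta^{-1}$ and letting $f$ be the Laurent series obtained from $a_1^{-1} \in k(\!(y)\!)^{\times}$ by the substitution $y\mapsto\beta^{-1}y$ (so that $f(\beta y) = a_1^{-1}$), the automorphism $\psi = \theta\circ\varphi_{\beta^{-1},f}$ then satisfies $\psi(y) = y + \cdots$ and $\psi(x) = x + \cdots$. By Lemma~\ref{res:Alev Dumas Lq lemma} — condition~(1) holds with $n=0$, and condition~(2) with $u=1$ since the leading coefficient is $1$ — the automorphism $\psi$ is inner. Since $\varphi_{\beta^{-1},f}^{-1}$ is again elementary, $\theta = \psi \circ \varphi_{\beta^{-1},f}^{-1}$ is the desired factorisation, and tameness of $Aut(L_q)$ follows.

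The one point that needs genuine care, which I expect to be the main obstacle, is the justification that $f(\theta(y))$ is a well-defined element of $L_q = k(\!(y)\!)(\!(x)\!)$ with constant-in-$x$ part $f(\beta y)$: writing $\theta(y) = \beta y(1+u)$ with $u$ of positive order in $x$, one has to expand the finitely many negative powers of $y$ occurring in $f$ as geometric series in $u$, using that $L_q$ is complete in the $x$-adic topology, and then verify that the resulting series lies in $k(\!(y)\!)(\!(x)\!)$ and reduces modulo the terms of positive $x$-order to $f(\beta y)$. Everything else — the subgroup property of the elementary automorphisms, the extraction of leading coefficients, and the concluding bookkeeping — is routine, so the real content of the theorem already lies in Lemma~\ref{res:Alev Dumas Lq lemma}.
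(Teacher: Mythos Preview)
Your proposal is correct and follows essentially the same approach as the paper: compose $\theta$ with an elementary automorphism chosen to normalise the leading data to $a_1=1$, $\beta=1$, and then invoke Lemma~\ref{res:Alev Dumas Lq lemma} to conclude that the resulting automorphism is inner. The only remark worth making is that the technical point you flag about the well-definedness of $f(\theta(y))$ (and implicitly the equality $\theta(f(y))=f(\theta(y))$) can be sidestepped entirely by composing on the \emph{left}, i.e.\ considering $\varphi\circ\theta$ instead of $\theta\circ\varphi$, since then one only needs to apply the explicit elementary map $\varphi$ to the known series $\theta(x),\theta(y)$; this is the reading most consistent with the paper's phrase ``the image of $x$ and $y$ \ldots\ can be transformed using an elementary automorphism''.
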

The proof follows from Lemma~\ref{res:Alev Dumas Lq lemma} by observing that the image of $x$ and $y$ from \eqref{eq:standard form in Lq} can be transformed using an elementary automorphism to obtain elements satisfying $a_1 = 1$, $\beta = 1$.  These elements clearly satisfy the conditions required to be the image of an inner automorphism, and hence we have constructed the inverse of our original automorphism as the product of an elementary and an inner automorphism.

We can also easily obtain the same result for the slightly smaller ring $k_q(y)(\!(x)\!)$:
\begin{theorem}
Define the set of elementary automorphisms on $k_q(y)(\!(x)\!)$ to be those of the form
\[\{\varphi_{\alpha,f}: x \mapsto f(y)x,\ y \mapsto \alpha y: \alpha \in k^{\times}, f \in k(y)^{\times}\}.\]
Then the automorphism group of $k_q(y)(\!(x)\!)$ is generated by elementary and inner automorphisms, and hence $Aut\big(k_q(y)(\!(x)\!)\big)$ is tame.
\end{theorem}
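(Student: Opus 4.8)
Write $K := k_q(y)(\!(x)\!)$ throughout. The plan is to follow exactly the template used above for $L_q$: reduce an arbitrary $\theta \in Aut(K)$ to an inner automorphism by composing with elementary automorphisms, using first a description of the possible images $\theta(x),\theta(y)$ and then the observation that, once these are put in standard form, $\theta$ is visibly conjugation by a unit of $K$. I would begin with the routine preliminaries: each proposed map $\varphi_{\alpha,f}$ with $f \in k(y)^{\times}$ really is a $k$-algebra automorphism of $K$ (it respects $xy=qyx$ because $k(y)$ is commutative, and it is invertible), the $\varphi_{\alpha,f}$ are closed under composition and inversion and so form a subgroup of $Aut(K)$, and conjugation by any $z \in K^{\times}$ is an inner automorphism of $K$ in the sense of \S\ref{s:notation}.

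The first substantial step is the analogue of Lemma~\ref{res:Alev Dumas Lq lemma} for $K$: for every $\theta \in Aut(K)$ one has
\[\theta(x) = \sum_{i\geq 1}a_ix^i,\qquad \theta(y) = \beta y + \sum_{i\geq 1}b_ix^i,\]
with $a_1 \in k(y)^{\times}$, $\beta \in k^{\times}$, and all $a_i,b_i \in k(y)$. The coefficients lie in $k(y)$ automatically, since $\theta(x),\theta(y) \in K$; the real content is that $\theta(x)$ has $x$-adic valuation exactly $1$ and that the degree-zero part of $\theta(y)$ is a scalar multiple of $y$. I would establish this by re-running the proof of \cite[Lemme~2.6]{AD3}, checking that it uses only the relation $[x,y]=q$ and the $x$-adic valuation of $L_q$ --- both of which $K$ shares --- and not the $y$-adic completeness of the coefficient field. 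Equivalently, once one knows $\theta(x)$ has positive valuation the formulas for $\theta$ define an endomorphism $\hat\theta$ of $L_q$ whose two-sided inverse is the analogous extension of $\theta^{-1}$, so $\hat\theta \in Aut(L_q)$ and Lemma~\ref{res:Alev Dumas Lq lemma} applies to it directly, with $\hat\theta(x)=\theta(x)$ and $\hat\theta(y)=\theta(y)$. The same analysis should also yield the fact, used below, that an automorphism of $K$ fixing $x$ and $y$ is the identity.

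Given the standard form, I would finish as for $L_q$. Composing $\theta$ with $\varphi_{\beta^{-1},1}$ and then with $\varphi_{1,f}$, where $f \in k(y)^{\times}$ is the inverse of the leading coefficient arising at the previous stage, produces an automorphism $\tilde\theta$ with $\tilde\theta(x) = x + (\text{terms of }x\text{-valuation}\geq 2)$ and $\tilde\theta(y) = y + (\text{terms of }x\text{-valuation}\geq 1)$, still with all coefficients in $k(y)$. I would then solve $z\tilde\theta(y) = yz$ for $z = \sum_{i\geq 0}z_ix^i$ by the term-by-term computation exhibited in \eqref{eq:computation in Lq 1}: the $x^0$-coefficient leaves $z_0 \in k(y)^{\times}$ free, so take $z_0 = 1$, and for $m\geq 1$ the $x^m$-coefficient gives $z_m = -\,y^{-1}(q^m-1)^{-1}\sum_{i=0}^{m-1}z_i\alpha^i(b_{m-i})$, which lies in $k(y)$ by induction (here $q^m\neq 1$ as $q$ is not a root of unity). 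So $z$ has $x$-adic valuation $0$ with $z_0 = 1$ and is therefore a unit of $K$. Since $z\tilde\theta(y)z^{-1}=y$ and $\tilde\theta(x)\tilde\theta(y)=q\tilde\theta(y)\tilde\theta(x)$, the element $w := z\tilde\theta(x)z^{-1}$ satisfies $wy=qyw$; but the only such elements of $K$ are the scalar multiples $h(y)x$ with $h \in k(y)$, and comparing $x$-leading coefficients (both are $1$, as $\tilde\theta(x) = x+\cdots$) forces $h = 1$. Hence $c_z\circ\tilde\theta$ fixes $x$ and $y$, so it is the identity, $\tilde\theta = c_{z^{-1}}$ is inner, and unwinding the two elementary factors writes $\theta$ as a product of an elementary and an inner automorphism. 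Thus these automorphisms generate $Aut(K)$, and $Aut(K)$ is tame.

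The main obstacle is the standard-form step: justifying that an automorphism of the ``small'' ring $K$ cannot move $x$ to an element of the wrong $x$-valuation or give $\theta(y)$ a non-monomial degree-zero part, even though $K$ lacks the $y$-adic completeness available in $L_q$, and similarly that such an automorphism is determined by its values on $x$ and $y$. I expect the relevant parts of \cite[Lemme~2.6]{AD3} are genuinely $x$-adic and transfer without change, but I would verify this carefully, since the ``extend to $L_q$'' shortcut is circular until the positive-valuation claim is already in hand. Everything downstream of this step is a verbatim repeat of the Alev--Dumas argument together with the recursive book-keeping, the only new point being that the recursion keeps all coefficients rational in $y$ rather than Laurent series in $y$.
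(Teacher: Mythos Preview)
Your proposal is correct and follows essentially the same approach as the paper: the paper does not give a detailed proof but simply observes that the Alev--Dumas arguments for $L_q$ (up to and including Th\'eor\`eme~2.7) use nothing about $k(\!(y)\!)$ except that it is a field, and hence transfer verbatim when $k(\!(y)\!)$ is replaced by $k(y)$. Your write-up spells out this transfer in more detail---including the recursive construction of $z$ and the check that the coefficients remain in $k(y)$---but the underlying strategy is identical.
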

We will not prove this here as it will not be used in this thesis.  However, it follows easily from the results in \cite{AD3} by observing that the proofs for $L_q$ up to and including Th\'eor\`eme 2.7 make no use of the properties of $k(\!(y)\!)$ except that it is a field, and hence work without modification for $k_q(y)(\!(x)\!)$ as well.

\subsection{The automorphisms of $k_q(x,y)$}\label{ss:autos of division ring background section}
One would hope that given our understanding of the automorphism groups of various subrings and overrings of the $q$-division ring $D$, the description of $Aut(D)$ would follow easily; unfortunately, this is not the case.  For example, in Proposition~\ref{res:z_automorphism_order_2} we will construct an example of a conjugation automorphism on $D$ which satisfies the conditions of Lemma~\ref{res:Alev Dumas Lq lemma} but is not an inner automorphism on $D$: the conjugating element $z$ is in $k_q(y)(\!(x)\!) \backslash D$, so the map is inner as an automorphism of $k_q(y)(\!(x)\!)$ but not as an automorphism of $D$.

Let $X$, $Y$ be a pair of $q$-commuting generators for $D$.  In this section we will outline the existing results which partially describe the structure of $Aut(D)$. 

In \cite{AD3} Alev and Dumas construct a set of generators for the tame automorphism group by analogy to the automorphism group of $k(x,y)$, while in \cite{AC1} Artamonov and Cohn define a different but possibly more natural set of elementary automorphisms.  We will use the definition from \cite{AC1} here; in Lemma~\ref{res:H1 and H2 are equal} we will show that the two definitions in fact coincide, thus justifying this choice.
\begin{definition}\label{def:elementary autos of D} The following automorphisms of $D$ are called \textit{elementary}:
\begin{align*}
\tau:&{}\quad X\mapsto X^{-1}, \ Y \mapsto Y^{-1} \\
h_X:&{}\quad X \mapsto b(Y)X, \ Y \mapsto Y,\quad b(Y) \in k(Y)^{\times}\\
h_Y:&{}\quad X \mapsto X,\ Y \mapsto a(X)Y,\quad a(X) \in k(X)^{\times} 
\end{align*}
Call an automorphism of $D$ \textit{tame} if it is in the group generated by the elementary automorphisms and the inner automorphisms.
\end{definition}
In \cite{AC1}, progress is made towards describing the automorphism group $Aut(D)$ in terms of the elementary automorphisms and certain types of conjugation maps.  Since we will build on this work in \S\ref{s:automorphism_consequences}, we give a brief outline of their results here.  

Let $\theta$ be a homomorphism from $D$ to itself.  As for $L_q$ in \S\ref{ss:autos of q-comm structures}, Artamonov and Cohn try to understand $\theta$ by applying elementary transformations and conjugation maps to the images $\theta(X)$ and $\theta(Y)$ until they arrive back at the original generators $X$ and $Y$.  

More generally, let $f$, $g$ be a pair of elements in $k_q(X,Y)$ such that $fg = qgf$ and identify them with their image in $k_q(Y)(\!(X)\!)$ as follows:
\[f = a_mX^m + \sum_{i > m} a_i X^i, \quad g = b_nX^n + \sum_{j >n} b_j X^i.\]
We may assume that $a_m$ and $b_n$ are both non-zero.  For any $r \in \mathbb{Z}$, we have
\begin{align*}
g^rf &= c_{m + rn}X^{m+rn} + \textrm{ [higher terms]}\\
f^rg &= d_{n + rm}X^{n+rm} + \textrm{ [higher terms]}
\end{align*}
where $c_{m+rn},d_{n+rm} \in k(Y)^{\times}$. As described in \cite[\S3]{AC1}, we may therefore apply a carefully chosen sequence of elementary transformations to $f$ and $g$ so that at each step the lowest $X$-degree of one element in the pair is closer to zero than before, while preserving the two properties that (i) the pair of elements $q$-commute, and  (ii) they generate the same ring as the original pair.  It is clear that this process must terminate in a finite number of steps, when the $X$-degree of one element reaches 0.

Using the fact that our pair of elements still $q$-commute, \cite[Proposition~3.2]{AC1} shows that these elements must have the form
\begin{equation}\label{eq:q comm standard form}F = f_sX^s + \sum_{i \geq s} f_i X^i, \ G = \lambda Y^s + \sum_{i \geq 1} g_i X^i,\end{equation}
where $s=\pm1$, $\lambda \in k^{\times}$ and $f_i,g_i \in k(Y)$ for all $i \geq s$.  In other words, for any $q$-commuting pair $(f,g)$ of elements in $D$, there exists a sequence of elementary transformations that reduces $(f,g)$ to a pair $(F,G)$ of the form \eqref{eq:q comm standard form}.  Further, we may apply two more elementary transformations to ensure that $f_s$ and $\lambda$ are both 1.

The next proposition completes the process by showing that we may always construct an element of $k_q(y)(\!(x)\!)$ that conjugates the pair $(F,G)$ back to $(X^s,Y^s)$.
\begin{proposition} \cite[Proposition~3.3]{AC1}\label{res:AC_construct_z} Let $F$, $G \in k_q(Y)(\!(X)\!)$ be $q$-commuting elements of the form \eqref{eq:q comm standard form}, where we may assume without loss of generality that $\lambda = 1$, $f_s = 1$.  Then there exists an element $z \in k_q(Y)(\!(X)\!)$ defined by
\begin{equation}\label{eq:def_z}\begin{gathered}z_0 = 1; \quad z_n = Y^{-s}(1-q^s)^{-1}\left(g_n + \sum_{\stackrel{i + j = n}{i,j>0}}z_j\alpha^j(g_i)\right) \textrm{ for } n\geq 1; \\
z: = \sum_{i \geq 0}z_i X^i. \end{gathered}
\end{equation}
such that
\[zFz^{-1} = f_sX^s, \quad zGz^{-1} = \lambda Y^s. \]
\end{proposition}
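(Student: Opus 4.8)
The plan is to determine $z$ completely from the single requirement $c_z(G) = \lambda Y^s$, built up one coefficient at a time, and then to observe that the \emph{same} $z$ automatically satisfies $c_z(F) = f_sX^s$, this time for free, because after conjugation $F$ must $q$-commute with the bare monomial $\lambda Y^s$. Throughout I would write $\alpha$ for the automorphism $Y\mapsto qY$ of $k(Y)$, so that $X^j a = \alpha^j(a)X^j$ for $a\in k(Y)$, and use the normalisation $\lambda = f_s = 1$ granted in the statement.

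\textbf{Constructing $z$.} I would look for $z = \sum_{i\ge 0}z_iX^i$ with all $z_i\in k(Y)$ and impose the identity $zG = Y^s z$. Expanding both sides as Laurent series in $X$ via $X^j a = \alpha^j(a)X^j$ and comparing the coefficient of $X^n$: the equation for $n=0$ holds identically, so $z_0$ is free and we set $z_0 = 1$; for each $n\ge 1$ the equation solves uniquely for $z_n$ as a $k(Y)$-linear expression in $z_0,\dots,z_{n-1}$ and $g_1,\dots,g_n$, its leading coefficient being a unit of $k(Y)$ precisely because $q$ is not a root of unity. This is exactly the recursion \eqref{eq:def_z}; in particular every $z_i$ lies in $k(Y)$, so $z$ is a genuine power series in $X$ with coefficients in $k(Y)$ and hence lies in $k_q(Y)(\!(X)\!)$, and since $z_0=1$ it is invertible there, its inverse again a power series with constant term $1$. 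By construction $c_z(G)=Y^s$.

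\textbf{Deducing $c_z(F)=X^s$.} Next I would set $F' := c_z(F)$. Since $c_z$ is a ring automorphism of $k_q(Y)(\!(X)\!)$ and $FG = qGF$, applying $c_z$ gives $F'Y^s = c_z(FG) = q\,c_z(GF) = qY^sF'$. Now $F$ has $X$-order $s$ with leading coefficient $f_s=1$, while $z$ and $z^{-1}$ both have $X$-order $0$ with constant term $1$; as $k_q(Y)(\!(X)\!)$ is a domain, $F'$ again has $X$-order $s$ with leading coefficient $1$, say $F' = X^s + \sum_{i>s}f'_iX^i$ with $f'_i\in k(Y)$. Comparing the coefficient of $X^i$ on the two sides of $F'Y^s = qY^sF'$ and using $X^iY^s = q^{is}Y^sX^i$ gives $q^{is}Y^sf'_i = qY^sf'_i$, i.e. $(q^{is}-q)f'_i = 0$; for $i\ne s$ we have $is\ne 1$, so $q^{is}\ne q$ (again since $q$ is not a root of unity) and therefore $f'_i=0$. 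Hence $F' = X^s = f_sX^s$, as required.

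\textbf{Where the difficulty lies.} There is no deep obstacle: the real work splits into the bookkeeping needed to turn $zG = Y^sz$ into the explicit closed form \eqref{eq:def_z} (and to check that the leading scalar at stage $n$ never vanishes), and the $X$-order computation for $F'$, where one must be a little careful in the case $s=-1$ so that ``$X$-order $s$'' correctly refers to a Laurent series beginning in degree $-1$. The one genuinely load-bearing step is the last one: once $G$ has been conjugated to the pure monomial $Y^s$, the $q$-commutation relation forces $c_z(F)$ to be a scalar multiple of the monomial $X^s$ — the same rigidity phenomenon already visible in the computation \eqref{eq:computation in Lq 1} — and the normalisation $f_s=1$ then removes the remaining scalar.
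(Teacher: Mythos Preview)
Your proposal is correct and follows essentially the same approach as the paper: the paper's own treatment (which is really just a sketch, since this is a cited result from \cite{AC1}) says exactly that $z$ is constructed recursively by solving $zG = Y^sz$ for coefficients, and that $zFz^{-1} = f_sX^s$ then follows from the fact that $zFz^{-1}$ must $q$-commute with $zGz^{-1} = \lambda Y^s$. Your write-up in fact supplies more detail than the paper does, in particular the clean argument that $is \ne 1$ for $i \ne s$ (since $s^2 = 1$) forces all higher coefficients of $F'$ to vanish.
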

The element $z$ is constructed recursively by solving the equation $zG = Y^sz$ for coefficients of $z$, in a similar manner to the process described in \ref{ss:autos of q-comm structures}.  That we must also have $zFz^{-1} = f_sX^s$ in this case is a consequence of the fact that $zFz^{-1}$ must $q$-commute with $zGz^{-1}=\lambda Y^s$.  The main theorem of \cite{AC1} (which is stated next) is now an easy consequence.
\begin{theorem} \cite[Theorem~3.5]{AC1} \label{res:main AC theorem statement}
Let $\theta: D \rightarrow D$ be a homomorphism.  Then there exists a sequence of elementary automorphisms $\varphi_1, \dots, \varphi_n$, an element $z \in k_q(y)(\!(x)\!)$ constructed as in Proposition~\ref{res:AC_construct_z}, and $\epsilon \in \{0,1\}$ such that
\begin{equation}\label{eq:decomp}\theta = \varphi_1 \varphi_2 \dots \varphi_n c_{z^{-1}} \tau^{\epsilon},\end{equation}
where $c_{z^{-1}}$ denotes conjugation by $z^{-1}$ and $\tau: x \mapsto x^{-1}, \ y \mapsto y^{-1}$ is the elementary automorphism defined in Definition~\ref{def:elementary autos of D}.
\end{theorem}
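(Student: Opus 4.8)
The plan is to combine three ingredients that are already in place --- the $X$-degree reduction procedure of \cite[\S3]{AC1}, the normal form \eqref{eq:q comm standard form} for a terminal $q$-commuting pair, and the conjugating element $z$ supplied by Proposition~\ref{res:AC_construct_z} --- and then simply to invert the resulting chain of transformations in order to read off \eqref{eq:decomp}.

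First I would replace $\theta$ by the pair of images $f:=\theta(X)$, $g:=\theta(Y)$, which $q$-commute, and identify them with their images in $k_q(Y)(\!(X)\!)$. Running the reduction of \cite[\S3]{AC1}, there is a composite $\psi$ of elementary automorphisms such that $\psi(f)$ and $\psi(g)$ have the normal form \eqref{eq:q comm standard form}; absorbing two further elementary automorphisms into $\psi$, I may assume the resulting pair $(F,G)$ satisfies $f_s=1$, $\lambda=1$ with $s=\pm 1$. Applying Proposition~\ref{res:AC_construct_z} to $(F,G)$ then yields $z\in k_q(Y)(\!(X)\!)$, defined by the recursion \eqref{eq:def_z}, with $c_z(F)=X^s$ and $c_z(G)=Y^s$.

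It remains to assemble and invert. The composite $c_z\circ\psi\circ\theta$ is a ring homomorphism from $D$ into $k_q(Y)(\!(X)\!)$ sending $X\mapsto X^s$ and $Y\mapsto Y^s$; since $X^s$ and $Y^s$ generate $D$ as a division ring, the image of this homomorphism is $k_q(X^s,Y^s)=D$, and comparing values on the generators $X,Y$ shows that $c_z\circ\psi\circ\theta$ equals $\mathrm{id}_D$ when $s=1$ and equals $\tau$ when $s=-1$; that is, $c_z\circ\psi\circ\theta=\tau^{\epsilon}$ for the appropriate $\epsilon\in\{0,1\}$. Using $c_z^{-1}=c_{z^{-1}}$, $\tau^{-1}=\tau$, and the fact that inverses of elementary automorphisms are again elementary, I would rearrange this to $\theta=\psi^{-1}\,c_{z^{-1}}\,\tau^{\epsilon}=\varphi_1\varphi_2\cdots\varphi_n\,c_{z^{-1}}\,\tau^{\epsilon}$, which is precisely \eqref{eq:decomp}.

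Since \cite{AC1} has already done the substantive work --- the reduction procedure, the normal form of \cite[Proposition~3.2]{AC1}, and the construction of $z$ in Proposition~\ref{res:AC_construct_z} --- the real content of the argument is bookkeeping, and this is where I expect the only genuine subtlety. One must track carefully the order in which the elementary automorphisms and the conjugation map are composed and where the inverses fall; and, because $z$ lies in the overring $k_q(Y)(\!(X)\!)$ and not in $D$ in general, the map $c_{z^{-1}}$ is a priori defined only on that overring, so one must check --- via the ``$D$ is generated by $X^s$ and $Y^s$'' observation above --- that the right-hand side of \eqref{eq:decomp} really does restrict to an endomorphism of $D$ agreeing with $\theta$. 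It is also worth noting where the standing hypothesis that $q$ is not a root of unity is used: it guarantees that the scalars $1-q^s$ and $q^i-q$ appearing in the recursion \eqref{eq:def_z} (and in the $X$-degree reduction) are nonzero, so that $z$ is a well-defined invertible element of $k_q(Y)(\!(X)\!)$.
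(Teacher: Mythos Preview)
Your proposal is correct and follows exactly the approach the paper (and \cite{AC1}) intends: the paper does not give a standalone proof of this theorem but simply remarks, after Proposition~\ref{res:AC_construct_z}, that ``the main theorem of \cite{AC1} \dots\ is now an easy consequence'' of the reduction procedure and the construction of $z$. You have filled in precisely those details---composing the elementary reductions with $\theta$, applying $c_z$, identifying the result with $\tau^{\epsilon}$, and inverting---and your bookkeeping on the order of composition and on $c_{z^{-1}}$ restricting to $D$ is sound.
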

This is not sufficient on its own to prove that $Aut(D)$ is tame, as it is not clear whether we must have $z \in D$ whenever $\theta$ is an automorphism.  Indeed, as we will see in \S\ref{s:automorphism_consequences}, it is possible to construct automorphisms of $D$ in this manner where $z \in k_q(y)(\!(x)\!) \backslash D$; it remains an open question whether automorphisms of this form can be decomposed further into a product of elementary and inner automorphisms, or whether $D$ admits wild automorphisms.

\section{Quantization-deformation}\label{s:background deformation}

Intuitively, if we set $q=1$ in $k_q(x,y)$ we recover the commutative field of rational functions in two variables, and so we would expect the structures of these two rings to be similar to some extent.  This type of example is the motivation for the theory of \textit{deformation-quantization}, which seeks to describe this relationship formally.  Quantization also has uses in many areas of physics, for example quantum mechanics: a classical system is often represented as families of smooth functions on a manifold while a quantum one involves certain non-commuting operators on a Hilbert space, but the two should be related in the sense that as the deforming parameter $t$ (often denoted by the Planck constant $\hbar$ in this context) tends to zero, we recover the original classical system (see, e.g. \cite[\S4]{quantization_ref} for more details on this).

The deformations of a commutative algebra $R$ are closely linked to the possible Poisson structures that can be defined on $R$, so we begin in \S\ref{ss:background poisson algebras def} by defining the notion of a Poisson algebra and elementary definitions relating to this.  In \S\ref{ss:background formal deformation} we define deformation-quantization formally in terms of star products on power series, and finally in \S\ref{ss:background examples deformation} we give several examples which will form a recurring theme in future chapters.

\subsection{Poisson algebras}\label{ss:background poisson algebras def}
A \textit{Poisson bracket} on a $k$-algebra $A$ is a skew-symmetric bilinear map $\{\cdot, \cdot\}: A \times A \rightarrow A$ which satisfies the conditions of a Lie bracket:
\begin{gather*}
\{a,a\} = 0 \quad \forall a \in A \\
\{a,\{b,c\}\} + \{b,\{c,a\}\} + \{c,\{a,b\}\} = 0 \quad \forall a,b,c \in A
\end{gather*}
and also satisfies the Leibniz identity:
\begin{equation}\label{eq:Leibniz identity}\{a,bc\} = \{a,b\}c + b\{a,c\} \quad \forall a,b,c \in A.\end{equation}
Intuitively, \eqref{eq:Leibniz identity} says that $\{a,\cdot\}$ and $\{\cdot, b\}$ are derivations of $A$ for any $a$ or $b$ in $A$.  If $A$ is an associative $k$-algebra with a Poisson bracket, we call $A$ a \textit{Poisson algebra}\nom{Poisson algebra}.  Although this definition makes sense for non-commutative algebras, for the purposes of this thesis we will always assume that our Poisson algebras are commutative.

Many of the standard algebraic concepts and definitions can be extended in a very natural way to the case of Poisson algebras.  We make the following definitions:
\begin{definition}\label{def:poisson ideals}
Let $A$ be a Poisson algebra.  We call an ideal $I  \subset A$ a \textit{Poisson ideal}\nom{Poisson ideal} if it is also closed under the Poisson bracket, that is $\{I,A\} \subseteq I$.  A Poisson ideal $I$ is called \textit{Poisson-prime}\nom{Poisson-prime} if whenever $J$, $K$ are Poisson ideals satisfying $JK \subseteq I$ then $J \subseteq I$ or $K \subseteq I$.
\end{definition}
In the case where $A$ is commutative Noetherian and $k$ has characteristic zero, the set of Poisson-prime ideals coincides with the set of Poisson ideals which are prime in the standard commutative sense \cite[Lemma~1.1]{Goodearl_Poisson}.  We may therefore use the terms ``Poisson-prime'' and ``prime Poisson'' interchangeably.

Given a Poisson algebra $A$ and a Poisson ideal $I$ we may form the quotient $A/I$, which by \cite[\S3.1.1]{dumas_invariants} is again a Poisson algebra.  The bracket on $A/I$ is induced from that of $A$ via the definition
\begin{equation}\label{eq:poisson bracket on quotient}\{a+I,b+I\} := \{a,b\} + I.\end{equation}
Similarly, if $A$ is a commutative domain and $S$ a multiplicatively-closed subset of $A$, then the Poisson bracket extends uniquely to the localization $AS^{-1}$ as follows:
\begin{equation}\label{eq:extend Poisson bracket to localization}
\{as^{-1},bt^{-1}\} = \{a,b\}s^{-1}t^{-1} - \{a,t\}bs^{-1}t^{-2} - \{s,b\}as^{-2}t^{-1} + \{s,t\}abs^{-2}t^{-2}.
\end{equation}
This formula is an easy consequence of the quotient rule for derivatives (see, for example, \cite[\S3.1.1]{dumas_invariants}).

A homomorphism $\varphi: A \rightarrow B$ is a \textit{homomorphism of Poisson algebras} if it respects the Poisson brackets of each structure, i.e. $\varphi(\{a_1,a_2\}_A) = \{\varphi(a_1),\varphi(a_2)\}_B$.  Using this definition it is easy to see that if $G$ is a group of Poisson automorphisms on a Poisson algebra $A$, then the fixed ring 
\[A^G = \{a \in A : g(a) = g \ \forall g \in G\}\]
is closed under the Poisson bracket: for $a,b \in A^G$, we have $g(\{a,b\}) = \{g(a),g(b)\} = \{a,b\}$.  Hence $A^G$ is again a Poisson algebra.

We may therefore formulate a Poisson version of Noether's problem as in \cite[\S5.5.1]{dumas_invariants}: 

\begin{question}\label{ques:Noether's problem for Poisson}If $F$ is a field equipped with a Poisson bracket and $G$ is a finite group of Poisson automorphisms, under what conditions is there an isomorphism of \textit{Poisson algebras} $F^G \cong F$?\end{question}

A full answer to this question is not known even in the case of fields of transcendence degree 2: while Castelnuovo's theorem (see \cite[\S5.1.1]{dumas_invariants}) guarantees the existence of an isomorphism of \textit{algebras}, the two fields need not have the same Poisson structure in general.  Existing results in this direction are summarised in \cite[\S3]{Dumas2}, including the following example where $F$ and $F^G$ have non-isomorphic Poisson structures:
\begin{theorem}\label{res:poisson fixed ring example} \cite[\S3]{Dumas2}
Let $F = k(x,y)$ with the Poisson bracket $\{y,x\} = yx$, and let $G$ be a finite group of Poisson automorphisms defined on the polynomial ring $k[x,y]$ and extended to $F$.  Let $u$ and $v$ be a pair of generators for $F^G$, i.e. $F^G = k(u,v)$, then the Poisson bracket on $F^G$ is given by $\{v,u\} = |G|.vu$, where $|G|$ denotes the order of the group $G$.
\end{theorem}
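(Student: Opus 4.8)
The plan is to reduce the statement to Theorem~\ref{res:AD_prop_intro}(ii) by transporting the Poisson structure through a deformation. The key observation is that the commutative field $k(x,y)$ with the bracket $\{y,x\} = yx$ is the semi-classical limit of the $q$-division ring $D = k_q(x,y)$: under the ``preservation of notation'' identification $x \mapsto x$, $y \mapsto y$, the relation $xy = qyx$ specializes at $q=1$ to commutativity, and the first-order term of $q^{-1}xy - yx$ in the parameter $t$ (with $q = e^t$, say) recovers precisely $\{y,x\} = yx$ up to a scalar. So first I would make this correspondence explicit: fix generators $X, Y$ of $D$ and $x, y$ of $F = k(x,y)$, and check that the monomial automorphisms of $D$ used in Theorem~\ref{res:AD_prop_intro}(ii) --- those restricting to automorphisms of the quantum plane $k_q[x,y]$, i.e. the scalar maps $X \mapsto \alpha X$, $Y \mapsto \beta Y$ --- correspond under specialization to Poisson automorphisms of $F$, and conversely every Poisson automorphism of $F = k(x,y)$ defined on the polynomial ring $k[x,y]$ and extended to $F$ arises this way.

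Next I would run the argument of \cite[Proposition~3.4]{AD1} in the Poisson category directly, rather than via deformation, since that is cleaner and self-contained. Let $G$ be a finite group of Poisson automorphisms of $F$ defined on $k[x,y]$. Because $G$ acts on the polynomial ring, and the only normal-element-type constraint in the commutative setting is that $G$ must preserve the set of ``Poisson-normal'' monomials (the analogue of the observation in \S\ref{ss:autos of q-comm structures} that the only normal elements of $k_q[x,y]$ are monomials --- here, that the only elements $m$ with $\{m, F\} \subseteq mF$ are scalar multiples of monomials), one shows $G$ consists of scalar automorphisms $x \mapsto \lambda x$, $y \mapsto \mu y$; hence $G$ is cyclic of some order $N = |G|$, generated by $g: x \mapsto \zeta_a x$, $y \mapsto \zeta_b y$ for roots of unity $\zeta_a, \zeta_b$. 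Then I would exhibit explicit invariants: a generating pair $u, v$ for $F^G$ built as monomials $u = x^{p}y^{r}$, $v = x^{s}y^{t}$ with $ps - rt = N$ chosen so that $u, v$ are $G$-invariant and $k(u,v) = F^G$ (this is exactly the commutative analogue of the $SL_2(\mathbb{Z})$-type calculation, and $F^G = k(u,v)$ follows because $F$ is free of rank $N$ over $F^G$ by Artin's lemma and the monomial lattice index is $N$). Finally, a direct Leibniz-rule computation gives
\begin{align*}
\{v, u\} &= \{x^s y^t,\ x^p y^r\} = (ps - rt)\, x^{p+s} y^{r+t} = N\, vu,
\end{align*}
using $\{y,x\} = yx$ and bilinearity/Leibniz, which is the claimed formula $\{v,u\} = |G|\, vu$.

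The main obstacle I anticipate is the classification step: showing that a finite group of Poisson automorphisms of $F$ that acts on $k[x,y]$ must consist of scalar maps. In the quantum setting this is forced rigidly by the requirement that images of $x, y$ must $q$-commute, but the commutative field $k(x,y)$ has an enormous automorphism group (it is tame by \cite{Ishky}, but still large), so the restriction to automorphisms preserving $k[x,y]$ and the Poisson bracket is doing real work. I would handle this by arguing that any Poisson automorphism preserving $k[x,y]$ must send the Poisson-normal elements of $k[x,y]$ to Poisson-normal elements, identifying the Poisson-normal elements of $(k[x,y], \{y,x\}=yx)$ as exactly the scalar multiples of monomials $x^iy^j$ (since $\{x^iy^j, f\} = (i\,y\partial_y f - j\, x\partial_x f)\,x^iy^j$ lies in $(x^iy^j)$ for all $f$, while a non-monomial cannot have this property), and then observing that an invertible polynomial map permuting the monomials and fixing the grading must be diagonal. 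A secondary, more routine obstacle is verifying $F^G = k(u,v)$ exactly (not just a finite extension); this follows from the index count against Artin's lemma, $[F : F^G] \le |G| = N$, combined with $[F : k(u,v)] = N$. I would present the classification lemma as a separate short lemma before the main proof to keep the structure clean.
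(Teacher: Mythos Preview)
The paper does not prove this theorem: it is quoted from \cite{Dumas2} as background, so there is no proof in the paper to compare against. I will therefore assess your argument on its own merits.

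Your direct approach is essentially correct and is the natural Poisson analogue of \cite[Proposition~3.4]{AD1}. The classification step goes through: using the formula $\{m,x\} = yx\,\partial_y m$ and $\{m,y\} = -yx\,\partial_x m$, one checks that the irreducible Poisson-normal elements of $(k[x,y],\{y,x\}=yx)$ are precisely $x$ and $y$, so a Poisson automorphism of $k[x,y]$ must send $x,y$ to monomials; the only monomial automorphisms of the \emph{polynomial} ring are the diagonal ones $x\mapsto\lambda x$, $y\mapsto\mu y$ (the swap $x\leftrightarrow y$ is not a Poisson map since $\{x,y\}=-xy\neq xy$). Hence $G$ embeds in $(k^\times)^2$.

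There is one genuine slip: you conclude ``hence $G$ is cyclic''. A finite subgroup of $(k^\times)^2$ need not be cyclic (e.g.\ $\{(\pm1,\pm1)\}$). This does not damage the argument, only the presentation: drop the cyclicity claim and argue directly with the invariant lattice $\Lambda=\{(i,j)\in\mathbb{Z}^2:\lambda^i\mu^j=1\ \forall (\lambda,\mu)\in G\}$, which has index $|G|$ in $\mathbb{Z}^2$ by Pontryagin duality. Choosing a $\mathbb{Z}$-basis $(p,r),(s,t)$ of $\Lambda$ gives Laurent monomials $u=x^py^r$, $v=x^sy^t$ with $k(u,v)=F^G$ (since $k[x^{\pm1},y^{\pm1}]^G$ is the group algebra of $\Lambda$), and then the bracket computation $\{v,u\}=(pt-sr)\,vu=\pm|G|\,vu$ finishes it after possibly swapping $u,v$. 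Your Artin's-lemma index argument for $F^G=k(u,v)$ is also fine.

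Two small cautions. First, in your displayed computation the determinant you wrote is $ps-rt$, but the bracket gives $pt-sr$; just align the indices. Second, the statement as phrased (``let $u$ and $v$ be a pair of generators'') cannot literally hold for \emph{every} generating pair (replace $v$ by $v+u$), so it should be read as ``there exist generators $u,v$ with $\{v,u\}=|G|\,vu$'', i.e.\ $F^G$ is Poisson-isomorphic to $(k(x,y),\{y,x\}=|G|\,yx)$; your construction proves exactly this.
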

By \cite[Corollary~5.4]{GLa1}, the Poisson algebra $F^G$ in Theorem~\ref{res:poisson fixed ring example} cannot be Poisson-isomorphic to $F$ unless $|G| = 1$.  Since (for example) the group generated by the automorphism $x \mapsto -x,\ y \mapsto -y$ is non-trivial and satisfies the conditions of Theorem~\ref{res:poisson fixed ring example}, it is possible to answer the Poisson-Noether question in the negative even for fields of transcendence degree 2.

On the other hand, in Chapter \ref{c:deformation_chapter} we will show that for the field $k(x,y)$ and Poisson bracket $\{y,x\} = yx$ there is a Poisson isomorphism $k(x,y)^G \cong k(x,y)$ for all finite groups of Poisson monomial automorphisms on $k(x,y)$.

\subsection{Formal deformation}\label{ss:background formal deformation}

The theory of quantization-deformation can range from the extremely general and formal formulations (e.g. Kontsevich's Formality Theorem in \cite{Kontsevich}) to the informal notion given in Definition~\ref{def:poisson deformation} below, and the notation and terminology can vary wildly.  A common theme, however, is to construct the deformation of a ring $R$ by defining a new product (the ``star product'') on the power series ring $R[\![t]\!]$, and it is this approach we describe below.  For a more detailed description, see for example \cite{Gerstenhaber1} and \cite[\S7]{ginzburg_notes}, or for the case where we instead consider a smaller ring $R[t]$ or $R[t^{\pm1}]$ see e.g. \cite[\S3]{dumas_invariants}.

Let $R$ be an associative $k$-algebra, and form the ring of power series $R[\![t]\!]$ over $R$ in the central variable $t$.  Let
\[\pi_i: R \times R \rightarrow R, \quad i \geq 1\]
be a sequence of bilinear maps, and use these to define a new multiplication on $R[\![t]\!]$ (the \textit{star product}) by
\begin{equation}\begin{aligned}\label{eq:star product def}
a * b &= ab + \pi_1(a,b)t + \pi_2(a,b)t^2 + \pi_3(a,b)t^3 + \dots \quad \forall a,b \in R\\
a*t &= (a*1)t
\end{aligned}\end{equation}

We are interested in defining star products which are associative, or more generally are associative up to a certain degree.  Since $R[\![t]\!]$ is $\mathbb{N}$-graded we may solve the equality $a*(b*c) = (a*b)*c$ term by term, which at each step will allow us to impose restrictions on the $\pi_i$ to ensure that the product is associative up to that degree.

Using \eqref{eq:star product def} to expand out the products $a*(b*c)$ and $(a*b)*c$, we see that
\begin{align}\label{eq:star product associative 1}
\nonumber a*(b*c) &= a*(bc + \pi_1(b,c)t + \pi_2(b,c)t^2+\dots)\\
\nonumber &= abc + \pi_1(a,bc)t + \pi_2(a,bc)t^2 + \dots \\
\nonumber & \qquad + a\pi_1(b,c)t + \pi_1(a,\pi_1(b,c))t^2 + \dots \\
\nonumber & \qquad + a\pi_2(b,c)t^2 + \dots \\
&= abc + \Big(a\pi_1(b,c) + \pi_1(a,bc)\Big)t \\
\nonumber & \qquad + \Big(\pi_2(a,bc) + \pi_1(a,\pi_1(b,c)) + a\pi_2(b,c)\Big)t^2 + \dots,
\end{align}
and similarly,
\begin{equation}\label{eq:star product associative 2}(a*b)*c = abc + \Big(\pi_1(ab,c) + \pi_1(a,b)c\Big)t + \Big(\pi_2(ab,c) + \pi_1(\pi_1(a,b),c) + \pi_2(a,b)c\Big)t^2 + \dots .\end{equation}

The star product is always associative in degree 0 since our original algebra $R$ was assumed to be associative.  Using \eqref{eq:star product associative 1} and \eqref{eq:star product associative 2}, we see that the $t$ term in $a*(b*c) - (a*b)*c$ is zero if $\pi_1$ satisfies the following property:
\begin{equation}\label{eq:pi_1 condition}a\pi_1(b,c) - \pi_1(ab,c) + \pi_1(a,bc) - \pi_1(a,b)c = 0.\end{equation}
In other words, $\pi_1$ must be a 2-cocycle in the Hochschild cohomology of $R$, which we define next.

Let $A$ be an associative $k$-algebra.  Define a chain complex by
\[0 \longrightarrow A \stackrel{d_0}{\longrightarrow} Hom_k(A,A) \stackrel{d_1}{\longrightarrow} Hom_k(A^{\otimes 2}, A) \stackrel{d_2}{\longrightarrow} Hom_k(A^{\otimes 3},A) \stackrel{d_3}{\longrightarrow} \dots,\]
where the maps are defined by
\begin{equation}\label{eq:cochain complex for Hochschild def}\begin{aligned}
d_0a(b) &=  ba - ab, \\
d_nf(a_1, \dots, a_{n+1}) &= a_1f(a_2, \dots, a_{n+1}) + \sum_{i =1}^n (-1)^i f(a_1, \dots, a_ia_{i+1}, \dots, a_{n+1}) \\
& \qquad + (-1)^{n+1}f(a_1, \dots,a_n)a_{n+1}.
\end{aligned}\end{equation}
\begin{definition}\label{def:Hochschild cohomology}
The \textit{$n$th Hochschild cohomology of $A$ with coefficients in $A$} is defined to be\nom{H@$HH^n(A)$}
\[HH^n(A) := ker(d_n)/im(d_{n-1}),\]
where the $d_i$, $i \geq 0$ are defined as in \eqref{eq:cochain complex for Hochschild def}. (For more details on the Hochschild cohomology, see for example \cite[\S5]{ginzburg_notes}.)
\end{definition}
Applying this definition to $\pi_1 \in Hom_k(R^{\otimes 2},R)$, we see that 
\[d_2\pi_1(a,b,c) = a\pi_1(b,c) - \pi_1(ab,c) + \pi_1(a,bc) - \pi_1(a,b)c,\]
and hence \eqref{eq:pi_1 condition} is satisfied if and only if $\pi_1 \in ker(d_2)$.  In fact, a long but elementary calculation shows that if two such cocycles differ by a coboundary (i.e. they represent the same class in $HH^2(R)$) then they define the same star product modulo $t^2$ up to a change of variables (see, for example, \cite[\S3]{Gerstenhaber1}).  We may therefore view $\pi_1$ as an element of $HH^2(R)$.

Moving on to terms in $t^2$, we see that the star product is associative up to degree 2 if $\pi_1 \in HH^2(R)$ and there exists some $\pi_2 \in Hom_k(R^{\otimes2},R)$ satisfying the following equation:
\begin{equation}\label{eq:associative in degree 2}a\pi_2(b,c) - \pi_2(ab,c) + \pi_2(a,bc) - \pi_2(a,b)c = \pi_1(\pi_1(a,b),c) - \pi_1(a,\pi_1(b,c)).\end{equation}
The map defined by the RHS of \eqref{eq:associative in degree 2} (which we will denote by $f$) is in $ker(d_3)$ whenever $\pi_1 \in ker(d_2)$ \cite[\S2]{Gerstenhaber1}, which is true here by assumption.  Rewriting \eqref{eq:associative in degree 2} as $d_2(\pi_2) = f$, we see that a $\pi_2$ satisfying this equation  exists if and only if $f \in im(d_2)$ as well.  This says that a map $\pi_2$ making the star product associative up to degree 2 exists if and only if $f$ is zero in $HH^3(R)$.

The third Hochschild cohomology $HH^3(R)$ is therefore referred to as the \textit{obstruction} to extending the deformation: if it is trivial then there will always exist some $\pi_2$ satisfying \eqref{eq:associative in degree 2}, but if it is non-zero then it is possible that for certain choices of $\pi_1$ we will have $f \neq 0$ in $HH^3(R)$.  In fact this observation holds more generally as well: if the star product is associative up to degree $n-1$, then whether it will extend to an associative product in degree $n$ is controlled by the obstruction in $HH^3(R)$ \cite[\S5]{Gerstenhaber1}.

We may also consider the question of when the star product is commutative, or (since a commutative star product is in some sense the trivial one) how far from commutative it is.  By applying \eqref{eq:star product def} to the expression $a*b - b*a$, we see that
\[a * b - b * a = ab - ba + \Big(\pi_1(a,b) - \pi_1(b,a)\Big)t + \Big(\pi_2(a,b) - \pi_2(b,a)\Big)t^2 + \dots\]
If the original ring $R$ is commutative then the commutator $a*b - b*a$ is in the ideal $tR[\![t]\!]$, which allows us to make the following definition.
\begin{definition}\cite[\S7.1]{ginzburg_notes}\label{def:star product Poisson bracket}
Suppose that $R[\![t]\!]$ has a star product as defined in \eqref{eq:star product def}, and suppose further that $R$ is commutative and $\pi_1$ is not identically zero on $R$.  Then we can define a Poisson bracket on $R$ by
\[\{a,b\} = \frac{1}{t}(a*b- b*a) \quad \mod tR[\![t]\!]\]
for all $a,b \in R$.
\end{definition}
This Poisson bracket captures a first-order impression of the star product on $R[\![t]\!]$.  Conversely, we may also start with a commutative Poisson algebra $R$ and define a star product on $R[\![t]\!]$ by
\[a*b = ab + \{a,b\}t + \pi_2(a,b)t^2 + \dots\ .\]
Since the equality
\[a\{b,c\} - \{ab,c\} + \{a,bc\} - \{a,b\}c = 0\]
follows immediately from the Leibniz identity \eqref{eq:Leibniz identity}, a star product defined from a Poisson bracket in this manner will always be associative to at least degree 1.

This formal definition of deformation is often quite difficult to work with.  However, for certain nice rings and star products it may be that we do not require the whole power series ring, but can instead construct a $k[t]$ or $k[t^{\pm1}]$ algebra $\BB$ to play the part of $R[\![t]\!]$.  One advantage of this approach is that by constructing $\BB$ as an associative algebra directly we need not worry about the Hochschild cohomology at all; another is that we can now form ideals generated by polynomials of the form $t-\lambda$ for certain $\lambda \in k^{\times}$, and hence quotients of the form $\BB/(t-\lambda)\BB$.  This allows us to define a slightly more informal notion of quantization-deformation as follows, based on the convention in \cite[\S3.2.1]{dumas_invariants}.

\begin{definition}\label{def:poisson deformation}
Let $R$ be a commutative Poisson algebra, and $\BB$ an algebra containing a central, non-invertible, non-zero divisor element $h$ such that $\BB/h\BB \cong R$ as Poisson algebras (where the bracket on $\BB/h\BB$ is induced by the commutator in $\BB$ as in Definition~\ref{def:star product Poisson bracket}).  If $\lambda \in k^{\times}$ is such that $h-\lambda$ generates a proper, non-zero ideal in $\BB$ then we call the algebra $A_{\lambda}:=\BB/(h-\lambda)\BB$ a \textit{deformation} of $R$.  In the other direction, if $S$ is a subset of $k^{\times}$ such that $A_{\lambda}$ is defined for each $\lambda \in S$ then we call $R$ the \textit{semi-classical limit} of the family of algebras $\{A_{\lambda}:\lambda \in S\}$.
\end{definition}
This definition of deformation turns out to be sufficient for our purposes in this thesis, and it is this definition we will mean when we consider $D$ as a deformation of a commutative algebra in Chapter~\ref{c:deformation_chapter}.
\begin{remark}
It is often the case that certain choices of $\lambda$ in the above definitions will give rise to degenerate or undesirable deformations; to avoid this, we will always ensure that the polynomial $h-\lambda$ is invertible in $\BB$ for those choices of scalar.
\end{remark}
\subsection{Examples}\label{ss:background examples deformation}
We illustrate Definition~\ref{def:poisson deformation} with a few examples, one closely related to the $q$-division ring and one concerning quantum matrices.

\begin{example}\label{ex:deformation of torus}
This example is due to Baudry in \cite[\S5.4.3]{BaudryThesis}.

Let $\BB$ be the ring defined by\nom{B@$\mathfrak{B}$}
\begin{equation}\label{def:definition of BB}
\BB = k\langle x^{\pm1},y^{\pm1},z^{\pm1} \rangle / (xz-zx,yz-zy,xy-z^2yx)
\end{equation}
This is clearly a domain, and the element $h := 2(1-z)$ is central and non-invertible.  For $\lambda \in k^{\times}$, $\lambda \neq 2$, the quotient $\BB/(h-\lambda)\BB$ is isomorphic to the quantum torus $k_q[x^{\pm1},y^{\pm1}]$ for $q = (1-\frac{1}{2}\lambda)^2$.  We exclude $\lambda = 2$ because $h-2$ is invertible in $\BB$.

When $\lambda = 0$, the image of $z$ in the quotient $\BB/\lambda\BB$ is 1 and we recover the standard commutative Laurent polynomial ring $k[x^{\pm1},y^{\pm1}]$.  Further, we can compute the induced Poisson bracket as follows:
\begin{equation}
yx - xy = (1-z^2)yx = \frac{1}{2}(1+z)hyx
\end{equation}
and therefore $\{y,x\} = yx \mod h\BB$, since $z=1$ when $h=0$.

This shows that $k_q[x^{\pm1},y^{\pm1}]$ is a deformation of the commutative torus $k[x^{\pm1},y^{\pm1}]$ with respect to the Poisson bracket $\{y,x\} = yx$.
\end{example}

\begin{example}\label{ex:2x2 quantum matrices}
Recall that the ring of quantum $2 \times 2$ matrices $\QML{2}$ is given by the quotient of the free algebra $k\langle X_{11},X_{12},X_{21},X_{22} \rangle$ by the six relations
\begin{equation}\label{eq:relns for 2x2 matrices background section}\begin{aligned}
X_{11}X_{12} &- qX_{12}X_{11},\quad & X_{12}X_{22} &- qX_{22}X_{12}, \\
X_{11}X_{21} &- qX_{21}X_{11},\quad & X_{21}X_{22} &- qX_{22}X_{21}, \\
X_{12}X_{21}&-X_{21}X_{12},\quad & X_{11}X_{22}-X_{22}X_{11}&-(q-q^{-1})X_{12}X_{21};
\end{aligned}
\end{equation}
for some $q \in k^{\times}$.  We can observe that when $q=1$ we recover the commutative coordinate ring $\ML{2}$, and in \cite[Example~2.2(d)]{GoodearlSummary} the quantum matrices $\QML{2}$ are viewed as a deformation of $\ML{2}$ as follows.

Let $k[t^{\pm1}]$ be the Laurent polynomial ring in one variable $t$, and let $\BB$ be the algebra in four variables $Y_{11},Y_{12},Y_{21},Y_{22}$ over $k[t^{\pm1}]$ subject to the same relations as \eqref{eq:relns for 2x2 matrices background section} but with $q$ replaced by $t$.  Then $h:=t-1$ is clearly central, non-invertible and a non-zero-divisor in $\BB$, and 
\[\BB/h\BB \cong \ML{2}, \quad \BB/(h-\lambda)\BB \cong \QML{2}\]
for $q = 1+\lambda$.  This process induces a Poisson bracket on $\ML{2}$, which is defined by
\begin{equation}\label{eq:poisson relns for 2x2 matrices}\begin{aligned}
\{x_{11},x_{12}\} &= x_{11}x_{12}, & \{x_{12},x_{22}\} &= x_{12}x_{22}, \\
\{x_{11},x_{21}\} &= x_{11}x_{21}, & \{x_{21},x_{22}\} &= x_{21}x_{22}, \\
\{x_{12},x_{21}\} &= 0, & \{x_{11},x_{22}\} &= 2x_{12}x_{21}.
\end{aligned}\end{equation}
We will consider algebras with this Poisson structure in more detail in Chapter~\ref{c:H-primes}.
\end{example}

\section{Prime and primitive ideals of quantum algebras and their semi-classical limits}\label{s:background H primes}
Let $A$ be an algebra and $\varphi$ an automorphism defined on $A$.  Then $\varphi$ must preserve the structure of $A$ in certain ways: for example, it must map prime ideals to prime ideals and primitives to primitives.  \textit{Stratification theory}, which is due predominantly to Goodearl and Letzter in the case of quantum algebras \cite{Hprimes1,Hprimes2}, seeks to exploit this observation by using the action of a group $\HH$ on $A$ to partition $spec(A)$ into more readily understood strata based on orbits under the action of $\HH$.

In this section we set up the definitions and notation required to state the Stratification Theorem for both quantum algebras and Poisson algebras, and describe how we can use this result and the Dixmier-Moeglin equivalence to identify and understand the primitive (respectively Poisson-primitive) ideals in an algebra.  In \S\ref{ss:H action quantum version} we describe the quantum version of these results, and in \S\ref{ss:H action Poisson version} we give the corresponding Poisson formulation.  Finally, in \S\ref{ss:H primes examples} we give several examples of algebras to which these results can be applied, and discuss a conjecture made by Goodearl on the relationship between the prime and Poisson-prime ideals of these algebras.

\subsection{The Stratification Theorem and Dixmier-Moeglin Equivalence for quantum algebras}\label{ss:H action quantum version}

While much of the following theory has been developed in quite a general setting -- for example, many of the following results make no assumption on the field $k$ except that it be infinite --  we will quickly restrict our attention to a setting relevant to quantum algebras.

We begin by making some definitions.
\begin{definition}\label{def:eigenvalues etc}
Let $A$ be a $k$-algebra and $\HH$ a group acting on $A$ by $k$-algebra automorphisms.  If $a \in A$ is such that $h.a = \lambda_h a$ for all $h \in \HH$ (where $\lambda_h \in k^{\times}$ may depend on $h$) then we call $a$ an \textit{eigenvector}\nom{H@$\mathcal{H}$-eigenvector} for $\HH$.  The map $f_a: \HH \rightarrow k^{\times}: h \mapsto \lambda_h$ is called the \textit{eigenvalue}\nom{H@$\mathcal{H}$-eigenvalue} of $a$, and $A_{f_a} = \{x \in A : h.x = f_a(h)x\}$ the \textit{eigenspace}\nom{H@$\mathcal{H}$-eigenspace} associated to $a$.
\end{definition}
\begin{definition}\label{def:rational action}
Let $\HH$ be an affine algebraic group over $k$.  A homomorphism $f: \HH \rightarrow k^{\times}$ is called a \textit{rational character} if $f$ is also a morphism of algebraic varieties.  If $\HH = (k^{\times})^r$ is an algebraic torus acting by $k$-algebra automorphisms on $A$ and $k$ is an infinite field, we say $\HH$ is \textit{acting rationally} if $A$ is the direct sum of its eigenspaces with respect to $\HH$ and all its eigenvalues are rational characters. 
\end{definition}
\begin{remark}
This definition of rational action is a specific case of a more general definition; for further details and proof of the equivalence of the two definitions under the conditions of Definition~\ref{def:rational action}, see \cite[Definition~II.2.6, Theorem~II.2.7]{GBbook}.
\end{remark}
Following the example of \cite[\S3]{GBrown}, we will restrict our attention to algebras and actions satisfying the following set of conditions.  These conditions embody many of the desired characteristics of quantum algebras, and hence it makes sense to restrict our attention to algebras of this form.
\begin{conditions}\label{conditions 1} We will assume the following conditions throughout this section.
\begin{itemize}
\item $A$ is a Noetherian $k$-algebra, satisfying the non-commutative Nullstellensatz over $k$ (see Remark~\ref{rem:NSS});
\item $k$ is an algebraically closed field of characteristic 0;
\item $\HH = (k^{\times})^r$ is an algebraic torus acting rationally on $A$ by $k$-algebra automorphisms.
\end{itemize}
\end{conditions}
\begin{remark}\label{rem:NSS}
The precise statement of the non-commutative Nullstellensatz can be found in \cite[Definition~II.7.14]{GBbook}; informally stated, it requires that every prime ideal of $A$ is an intersection of primitive ideals, and that the endomorphism rings of irreducible $A$-modules are all algebraic over $k$.  As the next theorem demonstrates, every quantum algebra of interest to us satisfies the Nullstellensatz and hence it is not a restrictive condition to assume in this context.
\end{remark}
\begin{theorem}\cite[Corollary II.7.18]{GBbook} 
The following are all examples of algebras which satisfy the non-commutative Nullstellensatz over $k$.
\begin{enumerate}
\item $k_{\mathbf{q}}[x_1, \dots, x_n]$ and its localization $k_{\mathbf{q}}[x_1^{\pm1}, \dots, x_n^{\pm1}]$;
\item $\QML{n}$, $\QGL{n}$ and $\QSL{n}$;
\item $\mathcal{O}_{\lambda, \mathbf{p}}(M_n)$, $\mathcal{O}_{\lambda, \mathbf{p}}(GL_n)$, $\mathcal{O}_{\lambda, \mathbf{p}}(SL_n)$, i.e. the multiparameter versions of (2) (see \cite[\S I.2]{GBbook}).
\end{enumerate}
\end{theorem}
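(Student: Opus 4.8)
The plan is to deduce the entire list from two closure properties of the non-commutative Nullstellensatz, together with the fact that each algebra named is built from the base field $k$ by a short, explicit sequence of such operations. Recall (this is the content underlying \cite[\S II.7]{GBbook}) that a Noetherian $k$-algebra $A$ satisfies the Nullstellensatz over $k$ precisely when (a) $A$ is a Jacobson ring, and (b) $End_A(V)$ is algebraic over $k$ for every simple $A$-module $V$. The two closure properties I would establish are: \textbf{(L1)} if $A$ satisfies the Nullstellensatz then so does every quotient $A/I$; and \textbf{(L2)} if $A$ is an affine Noetherian $k$-algebra satisfying the Nullstellensatz, $\sigma$ is an \emph{automorphism} of $A$ and $\delta$ a left $\sigma$-derivation, then both the Ore extension $A[x;\sigma,\delta]$ and the skew-Laurent extension $A[x^{\pm1};\sigma]$ again satisfy it. Property (L1) is routine: the primes and primitives of $A/I$ are exactly those of $A$ containing $I$, each prime $P \supseteq I$ of $A$ is the intersection of the primitive ideals lying above it (all of which then contain $I$), so (a) descends, and (b) descends because a simple $A/I$-module is a simple $A$-module with the same endomorphism ring.

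Property (L2) is the substantive step. I would prove it by the standard non-commutative ``generic flatness / Artin--Tate'' route. First dispatch the case where $k$ is uncountable: there (b) is automatic, because a simple module over an affine $k$-algebra has at most countable $k$-dimension, so its endomorphism division ring is a division $k$-algebra of at most countable dimension and is therefore algebraic over $k$. Next, remove the countability hypothesis by base change combined with non-commutative generic flatness, in the spirit of Amitsur--Quillen and, in the Noetherian setting, of Artin--Small--Stafford / McConnell--Robson / Irving. Finally, verify that $A[x;\sigma,\delta]$ is Jacobson by a contraction argument on primes, ruling out primes of the extension that are not detected by intersecting with $A$. The passage from an uncountable base field to an arbitrary one (such as $\overline{\mathbb{Q}}$, which is what we really care about) is the delicate point and the step I expect to be the main obstacle; the filtration bookkeeping, and all of (L1), are mechanical.

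Granting (L1) and (L2), the three items follow by writing down presentations. Quantum affine space $k_{\mathbf{q}}[x_1,\dots,x_n]$ is the iterated Ore extension $k[x_1][x_2;\sigma_2]\cdots[x_n;\sigma_n]$, each $\sigma_i$ being the diagonal automorphism prescribed by the relations $x_ix_j = q^{a_{ij}}x_jx_i$; applying (L2) $n$ times, starting from the field $k$, gives the claim, and the Laurent version $k_{\mathbf{q}}[x_1^{\pm1},\dots,x_n^{\pm1}]$ comes out identically using the skew-Laurent form of (L2) with base ring $k[x_1^{\pm1}]$, which is affine commutative Noetherian, hence Jacobson with trivial endomorphism condition. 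Similarly $\QML{n}$ carries its standard presentation as an iterated Ore extension of $k$ obtained by adjoining the $X_{ij}$ in lexicographic order, with automorphisms and $\sigma$-derivations forced by the relations \eqref{eq:relns for 2x2 matrices notation section} (the derivation terms coming from the diagonal relations $X_{ij}X_{lm} - X_{lm}X_{ij} = (q-q^{-1})X_{im}X_{lj}$), so (L2) applied $n^2$ times settles $\QML{n}$; and $\QSL{n} = \QML{n}/\langle Det_q - 1\rangle$ then follows from (L1). For $\QGL{n} = \QML{n}[Det_q^{-1}]$ I would sidestep the fact that the Nullstellensatz is \emph{not} preserved under arbitrary Ore localization by exploiting the centrality of $Det_q$: one has $\QGL{n} \cong \QML{n}[z]/\langle z\,Det_q - 1\rangle$ with $z$ central, which is an ordinary Ore extension ($\sigma = \mathrm{id}$, $\delta = 0$) followed by a quotient, so (L2) and then (L1) apply. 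The multiparameter algebras $\mathcal{O}_{\lambda,\mathbf{p}}(M_n)$ are again iterated Ore extensions of $k$ by automorphisms and skew-derivations, their $SL$-quotients are handled by (L1), and their $GL$-localizations by the same Ore-extension-plus-quotient device, the only extra wrinkle being that the multiparameter determinant is normal rather than central (it is central up to a diagonal automorphism), which is absorbed into the same argument.
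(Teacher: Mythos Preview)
The paper does not prove this statement: it is quoted verbatim as \cite[Corollary II.7.18]{GBbook} and used as a black box, with no argument given. So there is no ``paper's own proof'' to compare against; you have supplied a proof sketch where the thesis simply invokes the literature.

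That said, your sketch is essentially the argument Brown--Goodearl give in \cite[\S II.7]{GBbook}: reduce to the closure properties (L1) and (L2), prove (L2) via generic flatness in the spirit of McConnell--Robson and Artin--Small--Zhang, and then observe that every algebra on the list is reached from $k$ by a finite chain of Ore extensions, quotients, and (for the $GL$ cases) a central or normal localization which can be rewritten as an Ore extension followed by a quotient. Your handling of $\QGL{n}$ via $\QML{n}[z]/\langle zDet_q - 1\rangle$ is exactly the right workaround for the failure of the Nullstellensatz under general localization, and your remark that the multiparameter determinant is only normal is correct and harmless here. The one place I would tighten is (L2): the passage from uncountable to arbitrary $k$ is not just ``delicate'' but is the entire content, and your sketch stops short of actually carrying it out---you would need to invoke the constructible-topology/generic-flatness machinery explicitly (as in \cite[Theorem~II.7.17]{GBbook} or McConnell--Robson 9.4.21) rather than gesture at it. For the purposes of this thesis, though, citing the corollary is entirely appropriate and nothing more is expected.
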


Our first aim is to pick out certain ideals which are stable under the action of $\HH$, and use these to break up $spec(A)$ into more manageable pieces.
\begin{definition}\label{def:H-prime ideals}
We call an ideal $I$ a \textit{$\HH$-stable} ideal if $h(I) = I$ for all $h \in H$, and say $I$ is \textit{$\HH$-prime}\nom{H@$\mathcal{H}$-prime} if whenever $J$, $K$ are $\HH$-stable ideals such that $JK \subseteq I$ then $J \subseteq I$ or $K \subseteq I$ as well.  An algebra is called \textit{$\HH$-simple}\nom{H@$\mathcal{H}$-simple} if it admits no non-trivial $\HH$-primes.
\end{definition}
Denote the set of $\HH$-primes in $A$ by $\Hspec{A}$\nom{H@$\mathcal{H}$-$spec(A)$}.  It is clear that any $\HH$-stable prime ideal of $A$ will be $\HH$-prime; the converse is not true in general but holds under the assumption of Conditions~\ref{conditions 1} \cite[Proposition II.2.9]{GBbook}.  We will therefore treat the concepts of ``$\HH$-prime'' and ``$\HH$-stable prime'' as interchangeable in what follows.

We define the \textit{rational character group} $X(\HH)$ to be the set of rational characters of $\HH$; in the case where $\HH = (k^{\times})^r$ this is the free abelian group $\mathbb{Z}^r$ \cite[Exercise II.2.E]{GBbook}.  By \cite[Lemma II.2.11]{GBbook}, rational actions of $\HH$ on $A$ correspond to gradings of $A$ by $X(\HH)$, a fact which is used heavily in the proof of the Stratification Theorem \cite[\S II.3]{GBbook}.  This also implies that an ideal $I$ is $\HH$-stable  with respect to a given $\HH$-action if and only if it is homogeneous with respect to the induced $X(\HH)$-grading \cite[Exercise II.2.I]{GBbook}, a fact which we shall make use of in Chapter~\ref{c:H-primes}.

The set of $\HH$-primes of an algebra may be used to stratify $spec(A)$ as follows.  Let $J$ be a $\HH$-stable ideal of $A$, and define the \textit{stratum associated to $J$} by\nom{S@$spec_J(A)$}
\begin{equation}\label{eq:specJ def}
spec_J(A) = \left\{I \in spec(A) : \bigcap_{h \in \HH} h(I) = J\right\}
\end{equation}
In other words, $spec_J(A)$ is the set of prime ideals of $A$ such that $J$ is the largest $\HH$-stable ideal contained in them.  It is clear from the definition in \eqref{eq:specJ def} that $J$ must be a $\HH$-prime and that the strata associated to different $\HH$-primes will be disjoint, and so we obtain a stratification of $spec(A)$ by the $\HH$-primes as follows:
\[spec(A) = \bigsqcup_{J \in \HH spec(A) } spec_J(A).\]
Similarly, we obtain a stratification of the primitive ideals by defining\nom{P@$prim_J(A)$}
\[prim_J(A) = spec_J(A) \cap prim(A)\]
for $J \in \Hspec{A}$.

We may now state the Stratification Theorem, which gives us a way to understand the prime ideals in each stratum $spec_J(A)$.
\begin{theorem}[Stratification Theorem]\label{res:Stratification Theorem, quantum version} \cite[Theorem~II.2.13]{GBbook}

Assume Conditions~\ref{conditions 1}, and let $J \in \Hspec{A}$.  Then
\begin{enumerate}[(i)]
\item The set $\mathcal{E}_J$ of all regular $\HH$-eigenvectors in $A/J$ is a denominator set in $A/J$ (see \S\ref{s:notation} for the definition of denominator set), and the localization $A_J:=\qr{A}{J}\big[\mathcal{E}_J^{-1}\big]$ is $\HH$-simple (with respect to the induced $\HH$-action).
\item $spec_J(A)$ is homeomorphic to $spec(A_J)$ via localization and contraction, and $spec(A_J)$ is homeomorphic to $spec(Z(A_J))$ via contraction and extension.
\item The centre $Z(A_J)$ is a Laurent polynomial ring in at most $r$ variables over the fixed field $Z(A_J)^{\HH} = Z(Fract(A/J))^{\HH}$.  The inteterminates can be chosen to be $\HH$-eigenvectors with linearly independent $\HH$-eigenvalues.
\end{enumerate}
\end{theorem}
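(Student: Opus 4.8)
\emph{Proof sketch.} The plan is to reduce everything to the case $J=0$ and then translate the hypothesis, valid under Conditions~\ref{conditions 1}, that $\HH=(k^{\times})^r$ acts rationally into the statement that $A$ carries a $\mathbb{Z}^r$-grading \cite[Lemma~II.2.11]{GBbook}. Replacing $A$ by $A/J$ --- which remains Noetherian, still satisfies the Nullstellensatz, and, crucially, is genuinely \emph{prime}, since under Conditions~\ref{conditions 1} every $\HH$-prime is an $\HH$-stable prime \cite[Proposition~II.2.9]{GBbook} --- we may assume $A$ is prime and $J=0$, and write $\mathcal{E}=\mathcal{E}_0$ for the set of regular $\HH$-eigenvectors of $A$. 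With respect to the grading $A=\bigoplus_{\chi}A_{\chi}$, the $\HH$-eigenvectors are exactly the nonzero homogeneous elements, the $\HH$-stable ideals are exactly the graded ideals, and a product of eigenvectors is again an eigenvector. So the theorem becomes a statement about graded Noetherian prime $k$-algebras, and $\mathcal{E}$ is the set of homogeneous regular elements.

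For part~(i): $\mathcal{E}$ is multiplicatively closed by the previous remark. To obtain the Ore condition on both sides I would invoke the graded version of Goldie's theorem --- applicable because $A$ is prime and Noetherian and $\mathbb{Z}^r$ is an orderable group --- which gives that the homogeneous regular elements of a graded-prime graded-Goldie ring form an Ore set; reversibility is automatic for Noetherian rings (see \S\ref{s:notation}), so $\mathcal{E}$ is a denominator set and the localization $A_J:=A[\mathcal{E}^{-1}]$ exists \cite[Theorem~10.3]{GW1}. For the $\HH$-simplicity of $A_J$: a nonzero $\HH$-stable (equivalently, graded) ideal of $A_J$ contracts to a nonzero graded ideal $I$ of $A$, which --- again by the graded Goldie theorem --- contains a homogeneous regular element; that element lies in $\mathcal{E}$, so the ideal is all of $A_J$.

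For part~(ii): a prime $P\supseteq J$ lies in $spec_J(A)$ iff the largest $\HH$-stable ideal contained in $P/J$ is zero, and this holds iff $P/J\cap\mathcal{E}=\emptyset$. Indeed, any $\HH$-eigenvector $c\in P/J$ satisfies $h(c)=\lambda_h c\in P/J$ for every $h\in\HH$, so $c\in\bigcap_{h}h(P/J)$, which is the largest $\HH$-stable ideal inside $P/J$; conversely a nonzero $\HH$-stable ideal of $A/J$ is graded and, by part~(i), meets $\mathcal{E}$. Hence $spec_J(A)$ is precisely the set of primes of $A/J$ disjoint from $\mathcal{E}$, and the standard theory of localization at a denominator set \cite[Chapter~10]{GW1} makes $spec_J(A)$ homeomorphic to $spec(A_J)$ via localization and contraction. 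For the second homeomorphism and for part~(iii), I would first analyse $Z:=Z(A_J)$: it is a graded subring, and every nonzero homogeneous $z\in Z$ generates the graded ideal $zA_J$, which by $\HH$-simplicity equals $A_J$, forcing $z$ to be a unit. Thus $Z_{0}=Z^{\HH}$ is a field $F$, the support $S=\{\chi:Z_{\chi}\neq 0\}$ is a subgroup of $\mathbb{Z}^r$, hence free of some rank $s\leq r$; choosing homogeneous units $z_1,\dots,z_s$ whose degrees form a basis of $S$ gives $Z=F[z_1^{\pm1},\dots,z_s^{\pm1}]$, a Laurent polynomial ring in $\leq r$ indeterminates which are $\HH$-eigenvectors with independent eigenvalues, and $F=Z^{\HH}=Z(Fract(A/J))^{\HH}$. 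Finally, the bijection $spec(A_J)\leftrightarrow spec(Z)$ by contraction and extension is the assertion that in an $\HH$-simple Noetherian algebra carrying a rational torus action in characteristic $0$ every prime is generated by its contraction to the centre; here I would appeal to the Goodearl--Letzter theory of $\HH$-simple rings \cite{Hprimes1,Hprimes2} rather than reprove it.

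The hard part: two inputs carry essentially all the weight. The first is the graded Goldie theorem in the form ``every nonzero graded ideal of a graded-prime graded-Goldie ring contains a homogeneous regular element,'' which underlies both the localization in (i) and the $\HH$-simplicity of $A_J$. The second, and I expect this to be the genuine crux, is the homeomorphism $spec(A_J)\cong spec(Z(A_J))$ for $\HH$-simple rings: this is a substantial theorem in its own right, in spirit descending from work of Moeglin--Rentschler and of Vonessen in the enveloping-algebra setting, and one should not expect it to yield to a short argument. Everything else --- the reduction to $J=0$, the action/grading dictionary, multiplicative closedness of $\mathcal{E}$, the identification of $spec_J(A)$ with the primes avoiding $\mathcal{E}$, and the Laurent-polynomial description of $Z(A_J)$ once $\HH$-simplicity is known --- is comparatively formal.
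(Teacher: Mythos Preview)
The paper does not give its own proof of this statement: it is quoted verbatim as background from \cite[Theorem~II.2.13]{GBbook} and no argument is supplied. There is therefore nothing in the paper to compare your sketch against.

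That said, your outline is a faithful summary of the standard proof in \cite{GBbook}: the reduction to $J=0$, the passage from rational $\HH$-action to $\mathbb{Z}^r$-grading via \cite[Lemma~II.2.11]{GBbook}, the use of graded Goldie theory to get the Ore condition and $\HH$-simplicity, the identification of $spec_J(A)$ with primes missing $\mathcal{E}$, and the Laurent-polynomial description of the centre are all the right ingredients. You have also correctly identified the genuinely hard step as the homeomorphism $spec(A_J)\cong spec(Z(A_J))$ for $\HH$-simple rings, which in \cite{GBbook} is handled via \cite[Corollary~II.3.9]{GBbook} and does indeed rest on the Goodearl--Letzter machinery rather than anything elementary. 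One small point: in your argument for $spec_J(A)=\{P:P/J\cap\mathcal{E}=\emptyset\}$, the forward direction needs that a nonzero $\HH$-stable ideal of the \emph{prime} ring $A/J$ contains a \emph{regular} homogeneous element, not just a homogeneous one; you do invoke this via graded Goldie, but it is worth being explicit that primeness of $A/J$ (equivalently, $\HH$-primeness of $J$ plus \cite[Proposition~II.2.9]{GBbook}) is what makes this work.
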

In \cite[\S3.2]{GL1}, Goodearl and Lenagan observe that the denominator set $\mathcal{E}_J$ can be replaced with a smaller subset $E_J$ without affecting the conclusions of the theorem, provided $E_J$ is also a denominator set such that the localization $\qr{A}{J}\big[E_J^{-1}\big]$ is $\HH$-simple.  For sufficiently nice algebras (such as those considered in \cite{GL1}), this allows us to compute the localizations $\qr{A}{J}\big[E_J^{-1}\big]$ explicitly by chosing denominator sets generated by finitely many normal $\HH$-eigenvectors.  In many cases (see \S\ref{ss:H primes examples} below) we will also see that $Z(A_J)^{\HH} = k$, in which case $spec_J(A)$ is homeomorphic to an affine scheme.

Using the Stratification Theorem we may describe the prime ideals of $A$ up to localization, but on its own this tells us very little about which primes are primitive.  The Dixmier-Moeglin equivalence, which was formulated originally by Dixmier and Moeglin in the context of enveloping algebras and extended to quantum algebras by Goodearl and Letzter in \cite{Hprimes2}, gives us a number of equivalent criteria for a prime ideal to be primitive: one algebraic criterion, one topological, and one formulated in terms of $\HH$-strata.  

Before we can state the Dixmier-Moeglin equivalence for quantum algebras, we require one more set of definitions.
\begin{definition}\label{def:defs for DM equiv}
Let $A$ be a Noetherian $k$-algebra.  A prime ideal $P$ in $A$ is called \textit{rational} if $Z(Fract(A/P))$ is algebraic over $k$, where $Fract(A/P)$ denotes the simple Artinian Goldie quotient ring of $A/P$.  Meanwhile, we say that $P$ is \textit{locally closed} if the singleton $\{P\}$ is is the intersection of an open set and a closed set in $spec(A)$ with respect to the Zariski toplogy.
\end{definition}
\begin{theorem}[Dixmier-Moeglin Equivalence] \label{res:DM equiv, quantum version} \cite[Theorem~II.8.4]{GBbook}

Apply the assumptions of Conditions~\ref{conditions 1}, and further assume that $\Hspec{A}$ is finite.  Let $J$ be a $\HH$-prime, and $P \in spec_J(A)$.  Then the following are equivalent:
\begin{enumerate}[(i)]
\item $P$ is a primitive ideal of $A$;
\item $P$ is locally closed in $A$;
\item $P$ is rational in $A$;
\item $P$ is a maximal element of $spec_J(A)$.
\end{enumerate}
\end{theorem}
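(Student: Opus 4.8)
This theorem repackages the non-commutative Nullstellensatz together with the Stratification Theorem (Theorem~\ref{res:Stratification Theorem, quantum version}), so the plan is to prove the cycle of implications $(ii)\Rightarrow(i)\Rightarrow(iii)\Rightarrow(iv)\Rightarrow(ii)$: the first two arrows come essentially for free from the Nullstellensatz, while the last two use the Stratification Theorem and, crucially, the hypothesis that $\HH\text{-spec}(A)$ is finite.

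For $(ii)\Rightarrow(i)$, recall that the Nullstellensatz makes $A$ a Jacobson ring, so $P$ is the intersection of the primitive ideals containing it; and $P$ is locally closed precisely when $\bigcap\{Q\in spec(A):Q\supsetneq P\}$ strictly contains $P$. If $P$ were not primitive then every primitive ideal over it would strictly contain it, forcing $P=\bigcap\{Q\text{ primitive}:Q\supsetneq P\}\supseteq\bigcap\{Q\in spec(A):Q\supsetneq P\}\supsetneq P$, a contradiction. For $(i)\Rightarrow(iii)$, a faithful simple $A/P$-module $M$ has finite length, so every regular element of $A/P$ acts invertibly on it; hence the Goldie quotient ring $Fract(A/P)$ acts on $M$ and $Z(Fract(A/P))$ embeds into $\mathrm{End}_{A/P}(M)$, which the second half of the Nullstellensatz hypothesis forces to be algebraic over $k$. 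Thus $Z(Fract(A/P))$ is algebraic over $k$, i.e. $P$ is rational.

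For $(iii)\Rightarrow(iv)$, I would invoke the Stratification Theorem: localization and contraction give homeomorphisms $spec_J(A)\cong spec(A_J)\cong spec(Z(A_J))$, under which $P$ corresponds to a prime $\mathfrak p$ of $Z(A_J)$, and (since $P$ lies in the stratum of $J$) $A/P$ and the prime factor ring $A_J/PA_J$ share the same Goldie quotient ring, so the central subring $Z(A_J)/\mathfrak p\hookrightarrow A_J/PA_J$ yields an embedding $Fract(Z(A_J)/\mathfrak p)\hookrightarrow Z(Fract(A/P))$. If $P$ is rational this last field is algebraic over $k$, hence so is $Fract(Z(A_J)/\mathfrak p)$, and in particular so is the fixed field $\mathcal K=Z(A_J)^{\HH}=Z(Fract(A/J))^{\HH}$, over which $Z(A_J)$ is a Laurent polynomial ring by the Stratification Theorem. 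As $k$ is algebraically closed this forces $\mathcal K=k$, so $Z(A_J)$ is an affine $k$-algebra and the commutative Nullstellensatz makes $\mathfrak p$ maximal; hence $P$ is a maximal element of $spec_J(A)$. For $(iv)\Rightarrow(ii)$, finiteness of $\HH\text{-spec}(A)$ is essential: inside $V(J)$ the complement of $spec_J(A)$ is $\bigcup V(J')$ over the finitely many $\HH$-primes $J'\supsetneq J$, which is closed, so $spec_J(A)$ is a locally closed subset of $spec(A)$. Maximality of $P$ in $spec_J(A)$ is precisely the statement that $\{P\}$ is closed in $spec_J(A)$, and a point closed in a locally closed subspace is locally closed in the ambient space, so $\{P\}$ is locally closed in $spec(A)$.

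The crux is $(iii)\Rightarrow(iv)$. It rests on a careful deployment of the Stratification Theorem together with two points that need checking: that localizing $A/P$ at the regular $\HH$-eigenvectors alters neither the Goldie quotient ring nor its centre (so that $Z(Fract(A/P))$ may be compared with $Fract(Z(A_J)/\mathfrak p)$), and that $Z(A_J)$ really is a Laurent polynomial ring over $\mathcal K$. Once these are secured, deducing $\mathcal K=k$ and invoking the commutative Nullstellensatz is routine, and the remaining implications are soft given the non-commutative Nullstellensatz. Since the statement is quoted verbatim as \cite[Theorem~II.8.4]{GBbook}, it is of course enough in the thesis to cite that reference; the sketch above records the underlying argument.
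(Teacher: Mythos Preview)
The paper offers no proof of this theorem: it is stated purely as a citation of \cite[Theorem~II.8.4]{GBbook}, and you correctly observe at the end of your proposal that citing suffices for the thesis. Your sketch nonetheless follows the standard Brown--Goodearl argument and is largely sound, so let me only flag the one genuine gap.

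In your step $(i)\Rightarrow(iii)$ you assert that every regular element of $A/P$ acts invertibly on a faithful simple module $M$, and hence that $Fract(A/P)$ acts on $M$. This is false. Take $A/P$ to be the first Weyl algebra $A_1(k)$ and $M=k[x]$: then $\partial$ is regular (the Weyl algebra is a domain) but $\partial\cdot 1=0$, so $\partial$ does not act injectively, and $M$ is certainly not a module over the Weyl division ring. The correct route is to pass to the injective hull $E=E(M)$. Since $M$ is essential in $E$ and faithful, any regular $c$ acts injectively on $E$ (its kernel meets $M$ trivially, hence is zero); then $cE\cong E$ is an injective direct summand of the indecomposable module $E$, forcing $cE=E$. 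Thus $Fract(A/P)$ acts on $E$, and $Z(Fract(A/P))$ acts by $A/P$-module endomorphisms. Because $M$ is the unique simple submodule of $E$ (any simple submodule meets $M$ nontrivially by essentiality), every such endomorphism preserves $M$, and you obtain the required embedding $Z(Fract(A/P))\hookrightarrow \mathrm{End}_{A/P}(M)$.

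The remaining implications are fine. In $(iii)\Rightarrow(iv)$ your claim that $A/P$ and $A_J/PA_J$ share a Goldie quotient ring does hold, but it is worth noting why the images of $\mathcal{E}_J$ in $A/P$ are regular: the localization correspondence gives an injection $A/P\hookrightarrow A_J/PA_J$, and in the latter ring the images of $\mathcal{E}_J$ are units, which forces them to be regular already in $A/P$.
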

The final condition of this theorem is of most interest to us: combined with Theorem~\ref{res:Stratification Theorem, quantum version} above, this says that $prim_J(A)$ is homeomorphic to the set of maximal ideals of a commutative Laurent polynomial ring $Z(A_J)^{\HH}[x_1^{\pm1}, \dots, x_n^{\pm1}]$ for some $n \geq 0$.  When $Z(A_J)^{\HH} = k$, this will allow us to describe the elements of $prim_J(A)$ explicitly as the pullbacks to $A$ of ideals of the form $\langle x_1 - \lambda_1, \dots, x_n - \lambda_n\rangle$, where $\lambda_i \in k^{\times}$ for $1 \leq i \leq n$.

\subsection{Stratification of Poisson algebras}\label{ss:H action Poisson version}
Since quantum algebras are often constructed as deformations of commutative coordinate rings, we might expect that the Poisson bracket induced as in Definition~\ref{def:star product Poisson bracket} on the semi-classical limit will give rise to a Poisson ideal structure mirroring the ideal structure in the quantum algebra.  And indeed, it turns out that once we define a suitable analogue of primitive ideal we can obtain a Poisson version of all the results described in the previous section, which we will summarise here.

We will assume in this section that $k$ is (as always) a field of characteristic zero and $R$ is a commutative Poisson algebra.

Recall from \S\ref{ss:background poisson algebras def} that we observed  ``prime Poisson'' and ``Poisson-prime'' ideals were equivalent notions for a Noetherian algebra in characteristic zero.  On the other hand, it is noted in \cite[Definition~1.6]{JordanOh1} that the maximal Poisson ideals (maximal ideals which are closed under the Poisson bracket) need not coincide with the Poisson-maximal ideals (ideals maximal in the set of Poisson ideals).  Since primitive ideals in a commutative algebra are precisely the maximal ones, this suggests we should take a different approach to defining a Poisson analogue of primitive ideals.  The following definition is originally due to Oh \cite[Definition~1.2]{OhSymplectic}:
\begin{definition}\label{def:poisson primitive}
Let $I$ be an ideal in a commutative Poisson algebra $R$.  Define the \textit{Poisson core} of $I$ to be the largest Poisson ideal contained in $I$; since the sum of two Poisson ideals is again a Poisson ideal, the Poisson core is uniquely defined.  We call an ideal \textit{Poisson primitive}\nom{Poisson-primitive} if it is the Poisson core of a maximal ideal.
\end{definition}
Clearly every maximal Poisson ideal is Poisson primitive, but the set of Poisson-primitive ideals in $R$ is strictly greater than the set of Poisson-maximal ideals whenever $R$ admits a maximal ideal which isn't a Poisson ideal.  By \cite[Lemma~1.3]{OhSymplectic}, every Poisson-primitive ideal is Poisson-prime.

We may also define a Poisson equivalent of the centre, namely the \textit{Poisson centre}\nom{Poisson centre}
\begin{equation}\label{eq:poisson centre def}
PZ(R) := \big\{r \in R : \{r,s\} = 0 \ \forall s \in R\big\}.
\end{equation}
Let $\HH = (k^{\times})^r$ be an algebraic torus acting on a commutative Noetherian Poisson algebra $R$ by Poisson automorphisms.  Then we may make many of the same definitions and observations for $\HH$ as we did in the quantum case above:
\begin{itemize}
\item We say $\HH$ \textit{acts rationally} if $R$ is the direct sum of its eigenspaces and all the eigenvalues of $\HH$ are rational, i.e. morphisms of algebraic varieties. 
\item By replacing ``prime'' with ``Poisson prime'' in Definition~\ref{def:H-prime ideals} we obtain the notion of \textit{Poisson $\HH$-prime}\nom{Poisson $\mathcal{H}$-prime}; if $\HH$ acts rationally on a commutative Noetherian Poisson algebra $R$, then Poisson $\HH$-primes are equivalent to Poisson ideals which are stable under the action of $\HH$ and prime in the conventional commutative sense (see \cite[Lemma~3.1]{Goodearl_Poisson}).
\item Denote the set of Poisson prime ideals in $R$ by $Pspec(R)$\nom{P@$Pspec(R)$} and the set of Poisson primitive ideals by $PPrim(R)$\nom{P@$PPrim(R)$}, and equip both with the Zariski topology as in \S\ref{s:notation}.
\item We call $R$ a \textit{Poisson $\HH$-simple}\nom{Poisson $\mathcal{H}$-simple} algebra if it has no non-trivial Poisson $\HH$-primes.
\end{itemize}
As in \S\ref{ss:H action quantum version}, we may use the Poisson $\HH$-primes to stratify $Pspec(R)$ and $PPrim(R)$ by making the following definitions: for $J \in \PHspec{R}$, we define\nom{P@$Pspec_J(R)$}\nom{P@$Pprim_J(R)$}
\begin{gather*}
Pspec_J(R) = \{P \in Pspec(R) : \bigcap_{h \in \HH} h(P) = J\}, \\
Pprim_J(R) = Pspec_J(R) \cap Pprim(R),
\end{gather*}
so that we obtain partitions of $Pspec(R)$ and $Pprim(R)$ as follows:
\begin{equation*}
Pspec(R) = \bigsqcup_{J \in \HH Pspec(R)} Pspec_J(R), \qquad Pprim(R) = \bigsqcup_{J \in \HH Pspec(R)} Pprim_J(R).
\end{equation*}
We may now state the Poisson versions of the Stratification Theorem and Dixmier-Moeglin equivalence.
\begin{theorem}[Stratification Theorem for Poisson algebras]\label{res: stratification theorem, Poisson version} \cite[Theorem~4.2]{Goodearl_Poisson}

Let $R$ be a Noetherian Poisson $k$-algebra, with $\HH = (k^{\times})^r$ an algebraic torus acting rationally on $R$ by Poisson automorphisms.  Let $J$ be a Poisson $\HH$-prime of $R$, and let $\mathcal{E}_J$ be the set of all regular $\HH$-eigenvectors in $R/J$.  Then
\begin{enumerate}[(i)]
\item $Pspec_J(R)$ is homeomorphic to $Pspec(R_J)$ via localization and contraction, where $R_J := \qr{R}{J}\big[\mathcal{E}_J^{-1}\big]$;
\item $Pspec(R_J)$ is homeomorphic to $spec(PZ(R_J))$ via contraction and extension;
\item $PZ(R_J)$ is a Laurent polynomial ring in at most $r$ indeterminates over the fixed field $PZ(R_J)^{\HH} = PZ(Fract(R/J))^{\HH}$.  The indeterminates  can be chosen to be $\HH$-eigenvectors with $\mathbb{Z}$-linearly independent $\HH$-eigenvalues.
\end{enumerate}
\end{theorem}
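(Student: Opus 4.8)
The plan is to transcribe the proof of the quantum Stratification Theorem (Theorem~\ref{res:Stratification Theorem, quantum version}, \cite[\S II.2--3]{GBbook}) into the Poisson setting. First replace $R$ by $R/J$, which by \eqref{eq:poisson bracket on quotient} is again a Noetherian Poisson $k$-algebra, now Poisson $\HH$-prime with zero ideal, so that $\mathcal{E}_J$ is the set of all nonzero $\HH$-eigenvectors (since $R/J$ is a domain, ``regular'' means nonzero). A product of $\HH$-eigenvectors is an $\HH$-eigenvector, and every multiplicative subset of a commutative ring is Ore, so $R_J = (R/J)[\mathcal{E}_J^{-1}]$ exists and, by the quotient rule \eqref{eq:extend Poisson bracket to localization}, inherits a unique Poisson bracket together with a compatible $\HH$-action. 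The first genuine ingredient — the Poisson shadow of the quantum lemma ``nonzero $\HH$-stable ideals contain regular eigenvectors'' — is here essentially immediate from the grading: the rational $\HH$-action corresponds to an $X(\HH) = \mathbb{Z}^r$-grading (\S\ref{ss:H action quantum version}), $\HH$-stable ideals are homogeneous, and a nonzero homogeneous ideal meets some graded piece in a nonzero $\HH$-eigenvector. Consequently no proper $\HH$-stable Poisson ideal of $R/J$ avoids $\mathcal{E}_J$, so $R_J$ is Poisson $\HH$-simple.

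Part (i) is then ordinary localization theory plus bookkeeping. Localization is a homeomorphism from the primes of $R/J$ disjoint from $\mathcal{E}_J$ onto $spec(R_J)$, and it matches Poisson primes with Poisson primes because the extension and contraction formulas respect the bracket (expand \eqref{eq:extend Poisson bracket to localization} and note every term keeps the required factor in the ideal). One then checks that a Poisson prime $P \supseteq J$ lies in $Pspec_J(R)$ — i.e. the largest $\HH$-stable ideal inside $P/J$ is zero — precisely when $P/J$ is disjoint from $\mathcal{E}_J$: an element of $P/J \cap \mathcal{E}_J$ generates a nonzero $\HH$-stable ideal inside $P/J$, while conversely any nonzero $\HH$-stable ideal inside $P/J$, being homogeneous, contains an $\HH$-eigenvector and hence an element of $\mathcal{E}_J$. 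Thus $Pspec_J(R) \cong Pspec(R_J)$ via localization and contraction.

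Parts (ii) and (iii) concern the Poisson $\HH$-simple domain $R_J$, which is $\mathbb{Z}^r$-graded with $\{(R_J)_\lambda, (R_J)_\mu\} \subseteq (R_J)_{\lambda+\mu}$. For (iii): each nonzero homogeneous $c \in PZ(R_J)_\lambda$ is a unit of $R_J$, because $cR_J$ is $\HH$-stable and, since $\{c,-\} = 0$, the Leibniz rule makes it a Poisson ideal, forcing $cR_J = R_J$; in particular $PZ(R_J)_0$ is a field. Hence the support of $PZ(R_J)$ in $\mathbb{Z}^r$ is a subgroup, free of some rank $n \leq r$; choosing homogeneous units $z_1,\dots,z_n$ over a basis of it gives $PZ(R_J) = PZ(R_J)_0[z_1^{\pm1},\dots,z_n^{\pm1}]$, a Laurent polynomial ring (the $z_i$ are algebraically independent over the degree-zero part since their degrees are $\mathbb{Z}$-independent in the domain $R_J$), with the $z_i$ being $\HH$-eigenvectors with $\mathbb{Z}$-linearly independent $\HH$-eigenvalues. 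Finally $PZ(R_J)_0 = PZ(R_J)^{\HH}$ equals $PZ(Fract(R/J))^{\HH}$: any $c$ in the latter is $ab^{-1}$ with $a,b \in R/J$, and $\HH$-fixedness lets us take $a,b$ homogeneous of equal degree, so $b \in \mathcal{E}_J$ and $c \in PZ(R_J)_0$; the reverse inclusion is clear. For (ii), the contraction $P \mapsto P \cap PZ(R_J)$ and extension $\mathfrak{p} \mapsto \mathfrak{p}R_J$ give the asserted homeomorphism $Pspec(R_J) \cong spec(PZ(R_J))$; this is the Poisson analogue of the homeomorphism $spec(A_J) \cong spec(Z(A_J))$ in the quantum theorem, proved the same way from Poisson $\HH$-simplicity of $R_J$ and the Laurent description of $PZ(R_J)$, using that extension and contraction along $PZ(R_J) \hookrightarrow R_J$ automatically preserve Poisson ideals because $PZ(R_J)$ is Poisson-central.

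I expect part (ii) to be the main obstacle: showing that every Poisson prime of the $\HH$-simple algebra $R_J$ is recovered as the extension of its contraction to $PZ(R_J)$, and that such extensions remain Poisson-prime. In the quantum setting this is exactly where the weight of the argument sits (the analysis of $\HH$-simple algebras in \cite[\S II.3]{GBbook}), and the Poisson version needs the same care despite commutativity; the remaining steps — the Ore property of $\mathcal{E}_J$, the extension of the bracket, the identification of the strata, and the Laurent structure of $PZ(R_J)$ — are routine once the $X(\HH)$-grading is exploited.
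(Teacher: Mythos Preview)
The paper does not prove this theorem; it is quoted verbatim from \cite[Theorem~4.2]{Goodearl_Poisson} and used as a black box, so there is no ``paper's own proof'' to compare against. Your outline is therefore being assessed on its own merits, and it is essentially the standard argument (indeed, the one Goodearl gives): pass to $R/J$, use the $X(\HH)$-grading to see that nonzero $\HH$-stable ideals contain homogeneous elements and hence that $R_J$ is Poisson $\HH$-simple, read off (i) from localization theory, and obtain the Laurent structure in (iii) by observing that homogeneous Poisson-central elements generate $\HH$-stable Poisson ideals and are thus units.

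Two small points worth tightening. First, in (iii) you implicitly use that $PZ(R_J)$ is itself a graded subalgebra, i.e.\ that the homogeneous components of a Poisson-central element are again Poisson-central; this follows from $\{(R_J)_\lambda,(R_J)_\mu\}\subseteq (R_J)_{\lambda+\mu}$ by degree comparison, but you should say so. Second, your honest flag on (ii) is well placed: the assertion that extension/contraction along $PZ(R_J)\hookrightarrow R_J$ is a homeomorphism on Poisson primes is genuinely where the work lies, and ``proved the same way'' undersells it. One needs, for each Poisson prime $P$ of $R_J$, that $P=(P\cap PZ(R_J))R_J$; this uses Poisson $\HH$-simplicity of $R_J/P$ (inherited from $R_J$) together with the Laurent description of $PZ(R_J)$ to produce enough central units to separate strata. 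If you intend a self-contained proof rather than a citation, that step needs to be written out.
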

As in the quantum version of the Stratification Theorem, we may replace $\mathcal{E}_J$ by a subset $E_J$ provided the localization $\qr{R}{J}\big[E_J^{-1}\big]$ remains Poisson $\HH$-simple (a proof of this is given in \S\ref{s:poisson primitive ideals}).

\begin{theorem}[Dixmier-Moeglin equivalence for Poisson algebras]\label{res: dixmier moeglin, Poisson version} \cite[Theorem~4.3]{Goodearl_Poisson}

Let $R$ be an affine Poisson $k$-algebra, and $\HH = (k^{\times})^r$ acting rationally on $R$ by Poisson automorphisms.  Assume that $R$ has only finitely many Poisson $\HH$-primes, and let $J$ be one of them.  For $P \in Pspec_J(R)$, the following conditions are equivalent:
\begin{enumerate}[(i)]
\item $P$ is locally closed in $Pspec(R)$;
\item $P$ is Poisson primitive;
\item $PZ(Fract(R/P))$ is algebraic over $k$;
\item $P$ is maximal in $Pspec_J(R)$.
\end{enumerate}
\end{theorem}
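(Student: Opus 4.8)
\emph{Proof strategy.} The plan is to run the standard Dixmier--Moeglin argument in its Poisson incarnation, exactly as for the quantum case (Theorem~\ref{res:DM equiv, quantum version}), with the Poisson Stratification Theorem (Theorem~\ref{res: stratification theorem, Poisson version}) as the engine. Fix the Poisson $\HH$-prime $J$ and set $R_J = \qr{R}{J}\big[\mathcal{E}_J^{-1}\big]$; the Stratification Theorem gives homeomorphisms $Pspec_J(R) \cong Pspec(R_J) \cong spec(PZ(R_J))$, with $PZ(R_J)$ a Laurent polynomial ring in finitely many indeterminates over $k_J := PZ(Fract(R/J))^{\HH}$. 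For $P \in Pspec_J(R)$ let $Q$ be the matching prime of $PZ(R_J)$.

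I would first draw two consequences from the hypothesis that $R$ has only finitely many Poisson $\HH$-primes. \textit{(a)} $k_J$ is algebraic over $k$, hence $k_J = k$: a transcendental element $ab^{-1} \in k_J$ (with $a,b$ common $\HH$-eigenvectors in $R/J$) would yield the infinitely many distinct, proper, nonzero, $\HH$-stable Poisson ideals $\langle a - \lambda b\rangle$, $\lambda \in k$, of $R/J$, and hence infinitely many Poisson $\HH$-primes. Thus $PZ(R_J) \cong k[z_1^{\pm1}, \dots, z_n^{\pm1}]$ is an affine, so Jacobson, domain. \textit{(b)} Each stratum is locally closed in $Pspec(R)$, since $Pspec_J(R) = V(J) \setminus \bigcup V(J')$ with the union running over the finitely many Poisson $\HH$-primes $J' \supsetneq J$, an intersection of a closed and an open set.

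For the implications: because $Pspec_J(R)$ is locally closed in $Pspec(R)$, a point of it is locally closed in $Pspec(R)$ exactly when it is locally closed in the subspace $Pspec_J(R)$; transporting along the homeomorphism, $P$ is locally closed in $Pspec(R)$ iff $Q$ is locally closed in $spec(k[z_1^{\pm1}, \dots, z_n^{\pm1}])$, and in the spectrum of a Jacobson domain a prime is locally closed precisely when it is maximal (otherwise its closure is the spectrum of a positive-dimensional Jacobson domain, whose generic point is not open). This is (i) $\Leftrightarrow$ (iv). For (iii) $\Leftrightarrow$ (iv) I would prove $PZ(Fract(R/P)) \cong Fract\big(PZ(R_J)/Q\big)$; then $Q$ is maximal iff $PZ(R_J)/Q$ is a field (automatically finite over $k$) iff $PZ(Fract(R/P))$ is algebraic over $k$. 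Finally (ii) is folded in by the two soft implications of the classical scheme: Poisson-primitive implies rational, via a Poisson Nullstellensatz argument on $R/P$ (a transcendental Poisson-central element of $Fract(R/P)$ is incompatible with $0$ being the Poisson core of a maximal ideal); and maximal in its stratum implies Poisson-primitive, by checking that $\langle z_i - \lambda_i\rangle \subset PZ(R_J)$ extends to a Poisson-primitive ideal of $R_J$ whose contraction to $R$ is $P$ and is again the Poisson core of a maximal ideal of $R$.

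The principal obstacle is the passage between the Poisson field $Fract(R/P)$ and the Laurent polynomial ring $PZ(R_J)$: establishing $PZ(Fract(R/P)) \cong Fract(PZ(R_J)/Q)$, and the compatibility of Poisson-primitivity with the localisation--contraction of the Stratification Theorem. These force one to unpack the internal workings of Theorem~\ref{res: stratification theorem, Poisson version} — notably the Poisson $\HH$-simplicity of $R_J$ and the resulting fact that $PZ(R_J)$ detects every Poisson prime of $R_J$ — rather than using that theorem only as a black box. The Jacobson/Nullstellensatz ingredients for the implications touching (ii) are routine but still rely on $R$ being affine and on characteristic zero, which ensures that radicals of Poisson ideals, and minimal primes over them, are again Poisson.
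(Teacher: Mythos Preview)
The paper does not prove this theorem: it is stated as background material and attributed in full to \cite[Theorem~4.3]{Goodearl_Poisson}, with no argument given. So there is no ``paper's own proof'' to compare against; your proposal is an attempted reconstruction of Goodearl's argument rather than of anything in this thesis.

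That said, your outline is broadly the right shape and does track the standard proof. A couple of points to tighten. In step~(a), the conclusion $k_J = k$ needs $k$ algebraically closed, which is not among the hypotheses here (the paper imposes that only later, in Chapter~\ref{c:H-primes}); what you actually need, and what your finiteness argument gives, is that $k_J$ is algebraic over $k$, and the Jacobson/locally-closed reasoning goes through for a Laurent polynomial ring over any field. Also, the step ``$a-\lambda b$ generates a Poisson ideal'' is not automatic from $ab^{-1}$ being Poisson-central in the fraction field: you must argue, as in Lemma~\ref{res:predicted primitives are Poisson ideals}, that Poisson-centrality of $ab^{-1}$ forces $\{a,r\}b = \{b,r\}a$ and combine this with $b$ being Poisson-normal (or work instead with numerators of suitable polynomial relations). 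Finally, the identification $PZ(Fract(R/P)) \cong Fract\big(PZ(R_J)/Q\big)$ and the compatibility of Poisson-primitivity with localisation/contraction are exactly where the real work in Goodearl's paper lies; you have correctly flagged these as the non-formal steps, but they do require a genuine argument rather than a citation of Theorem~\ref{res: stratification theorem, Poisson version}.
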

As we will see in the next section, for appropriately-chosen pairs of quantum and Poisson algebras the similarities between the two theories can in fact extend even further than this.

\subsection{Examples and a conjecture}\label{ss:H primes examples}
We begin this section by defining some specific examples of group actions on quantum and Poisson algebras which are covered by the framework of \S\ref{ss:H action quantum version} and \S\ref{ss:H action Poisson version}.

Let $\mathcal{O}_{\mathbf{q}}(k^n) = k_{\mathbf{q}}[x_1, \dots, x_n]$ be the quantum affine space defined in \S\ref{s:notation}.  Then we may define an action of $\HH = (k^{\times})^r$ on $\mathcal{O}_{\mathbf{q}}(k^n)$ by
\begin{equation}\label{eq:action of H on quantum affine space}h = (\alpha_1, \dots, \alpha_n) \in \HH, \quad h.x_i = \alpha_i x_i.\end{equation}
On $\QML{n}$ we may define an action of $\HH = (k^{\times})^{2n}$ by
\begin{equation}\label{eq:action of H on M_n}h = (\alpha_1, \dots, \alpha_n, \beta_1, \dots, \beta_n) \in \HH, \quad h.X_{ij} = \alpha_i\beta_jX_{ij}.\end{equation}
It is clear from the definition of the quantum determinant in \eqref{eq:quantum det def} that $Det_q$ is an eigenvector for this action, and hence \eqref{eq:action of H on M_n} extends uniquely to an action on $\QGL{n}$ as well \cite[II.1.15]{GBbook}.  The action \eqref{eq:action of H on M_n} does not descend immediately to an action on $\QSL{n}$, however, since $Det_q - 1$ is not an eigenvector.  Instead, we define an action of the subgroup
\[\HP = \{(\alpha_1, \dots, \alpha_n, \beta_1, \dots, \beta_n) \in (k^{\times})^{2n}: \alpha_1\alpha_2\dots\beta_n = 1\}\]
on $\QSL{n}$ in the natural way, by defining
\[h = (\alpha_1, \dots, \alpha_n, \beta_1, \dots, \beta_n) \in \HP, \quad h.X_{ij} = \alpha_i\beta_jX_{ij}\]
as before, where $X_{ij}$ now denotes generators in $\QSL{n}$.  By \cite[II.1.16]{GBbook}, $\HP$ is isomorphic to the torus $(k^{\times})^{2n-1}$.

There are of course many more quantum algebras than the four types considered here, including multiparameter versions of each of the above algebras, and these are discussed further in \cite{GBbook}.  We now focus in more detail on the properties of the above algebras and their $\HH$-actions.

Let $A$ denote any one of the algebras $\mathcal{O}_{\mathbf{q}}(k^n)$, $\QML{n}$, $\QGL{n}$ and $\QSL{n}$, and let $\HH$ be the corresponding algebraic torus acting on $A$ as defined above.  Then $A$ satisfies a number of useful properties:
\begin{itemize}
\item $A$ is Noetherian and satisfies the non-commutative Nullstellensatz over $k$ \cite[Corollary II.7.18]{GBbook};
\item The action of $\HH$  defined above is a rational action on $A$ \cite[II.2.12]{GBbook};
\item $A$ has finitely many $\HH$-primes \cite[Theorem~II.5.14, II.5.17]{GBbook};
\item All prime ideals of $A$ are completely prime \cite[Corollary~II.6.10]{GBbook};
\item $Z(A_J)^{\HH} = k$ for any $J \in \Hspec{A}$ \cite[Corollary~II.6.5, II.6.6]{GBbook}.
\end{itemize}
These algebras are therefore covered by the framework of the Stratification Theorem and Dixmier-Moeglin equivalence outlined in \S\ref{ss:H action quantum version}.

We may also consider the semi-classical limits of these algebras.  The semi-classical limit of $k_{\mathbf{q}}[x_1, \dots, x_n]$ is the polynomial ring in $n$ variables, which we denote by $\mathcal{O}(k^n)$.  It has a Poisson bracket given by
\[\{x_i,x_j\} = a_{ij}x_ix_j,\]
where $a_{ij}$ is the $(i,j)$th entry of the matrix $\mathbf{q}$.  Moreover the action of $\HH = (k^{\times})^n$ on $\mathcal{O}_{\mathbf{q}}[x_1, \dots, x_n]$ defined in \eqref{eq:action of H on quantum affine space} induces an action of $\HH$ by Poisson automorphisms on $\mathcal{O}(k^n)$ \cite[\S2.2]{GLa1}.

The semi-classical limit of $\QML{n}$ is $\ML{n}$, which is isomorphic to the polynomial ring on the $n^2$ variables $\{x_{ij}: 1 \leq i,j \leq n\}$.  The Poisson bracket induced on $\ML{n}$ by this process satisfies the property that for any set of four variables $\{x_{ij},x_{im},x_{lj},x_{lm}\}$, the subalgebra of $\ML{n}$ generated by them is Poisson-isomorphic to $\ML{2}$ (as defined in \eqref{eq:poisson relns for 2x2 matrices}).  By \cite[\S2.3]{GLa1}, the action of $\HH = (k^{\times})^{2n}$ defined in \eqref{eq:action of H on M_n} induces an action of $\HH$ by Poisson automorphisms on $\ML{n}$.  

Finally, since the Poisson bracket on $\ML{n}$ induces a unique Poisson bracket on a localization or quotient as described in \S\ref{ss:background poisson algebras def}, we obtain the semi-classical limits $\GL{n}$ and $\SL{n}$ as the localization $\GL{n} = \ML{n}[Det^{-1}]$ (where the determinant $Det$ may be obtained by setting $q=1$ in \eqref{eq:quantum det def}) and the quotient $\SL{n} = \ML{n}/\langle Det-1 \rangle$.  The actions of algebraic tori on $\QGL{n}$ and $\QSL{n}$ defined above induce actions by Poisson automorphisms on $\GL{n}$ and $\SL{n}$.

\begin{example}\label{ex:gl2 example}
Let $\HH = (k^{\times})^4$ act rationally on $\QGL{2}$ by $k$-algebra automorphisms as defined in \eqref{eq:action of H on M_n}, and let $J$ be a $\HH$-prime ideal in $\QGL{2}$.  If $X_{11} \in J$ or $X_{22} \in J$, then since $X_{11}X_{22}-X_{22}X_{11} = (q-q^{-1})X_{12}X_{21}$ we have $X_{12}X_{21}\in J$ as well; as noted above, all prime ideals of $\QGL{2}$ are completely prime and hence $X_{12} \in J$ or $X_{21} \in J$.  Similarly, if for example $\langle X_{11},X_{12}\rangle \subseteq J$ then $Det_q = X_{11}X_{22} - qX_{12}X_{21} \in J$ and so $J = \QGL{2}$.  Continuing in this manner (for details, see \cite[Example~II.2.14]{GBbook}) we find that $\QGL{2}$ admits only four $\HH$-primes, namely:
\begin{equation*}\label{eq:list of H primes in qGL2}
0, \quad \langle X_{12} \rangle, \quad \langle X_{21} \rangle, \quad \langle X_{12},X_{21} \rangle.
\end{equation*}
Meanwhile, let $\HH = (k^{\times})^4$ act on $\GL{2}$ by Poisson automorphisms as described above.  Since $\{x_{11},x_{22}\} = 2x_{12}x_{21}$, a similar line of reasoning to the above yields precisely four Poisson $\HH$-primes in $\GL{2}$:
\[0, \quad \langle x_{12} \rangle, \quad \langle x_{21} \rangle, \quad \langle x_{12},x_{21} \rangle.\]
Goodearl observes in \cite[\S9.8]{GoodearlSummary} that with a bit more work we in fact obtain a homeomorphism from $spec(\QGL{2})$ to $Pspec(\GL{2})$, which restricts to a homeomorphism of primitive/Poisson-primitive ideals as well.
\end{example}

In \cite{GoodearlSummary}, Goodearl makes the following conjecture:
\begin{conjecture}\label{conj:goodearl} \cite[Conjecture~9.1]{GoodearlSummary}
Let $k$ be an algebraically closed field of characteristic zero, and $A$ a generic quantized coordinate ring of an affine variety $V$ over $k$.  Then $A$ should be a member of a flat family of $k$-algebras with semiclassical limit $\mathcal{O}(V)$, and there should be compatible homeomorphisms $prim(A) \rightarrow P.prim(\mathcal{O}(V))$ and $spec(A) \rightarrow P.spec(\mathcal{O}(V))$.
\end{conjecture}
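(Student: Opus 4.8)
Conjecture~\ref{conj:goodearl} is a broad statement, so I outline the strategy by which Chapter~\ref{c:H-primes} attacks it in the cases $A=\QGL{3}$ and $A=\QSL{3}$, with $B$ the semi-classical limit $\GL{3}$ or $\SL{3}$; the general conjecture would require a case-free version of the same argument. The flat-family clause is already available: the $n\times n$ analogue of Example~\ref{ex:2x2 quantum matrices} (localised at the determinant, resp.\ quotiented by $Det-1$) supplies a $k[t^{\pm1}]$-algebra $\BB$ with $\BB/(t-1)\BB\cong B$ and $\BB/(t-\lambda)\BB\cong A$, so $B$ is literally the semi-classical limit of $A$ in the sense of Definition~\ref{def:poisson deformation}. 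What remains is to produce compatible homeomorphisms $spec(A)\to Pspec(B)$ and $prim(A)\to Pprim(B)$, and the plan is to do this stratum by stratum. Both sides are partitioned by $\HH$-primes, resp.\ Poisson $\HH$-primes, via Theorem~\ref{res:Stratification Theorem, quantum version} and Theorem~\ref{res: stratification theorem, Poisson version}. First I would check that the ``preservation of notation'' map $X_{ij}\mapsto x_{ij}$, $[\wt{i}|\wt{j}]_q\mapsto[\wt{i}|\wt{j}]$ carries the finitely many $\HH$-primes of $A$ bijectively onto the Poisson $\HH$-primes of $B$; this is plausible since an $\HH$-prime here is the same as a homogeneous prime for the induced $X(\HH)$-grading, and the generators and quantum minors of $A$ degenerate to those of $B$ as $q\to1$ without changing which monomials and minors the defining relations force into a homogeneous prime. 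Then I would check that corresponding strata have the same shape: by part (iii) of each stratification theorem, $spec_J(A)$ is homeomorphic to $spec$ of a Laurent polynomial ring over $Z(A_J)^{\HH}$ and $Pspec_{J'}(B)$ to $spec$ of a Laurent polynomial ring over $PZ(B_{J'})^{\HH}$, and for all the algebras involved $Z(A_J)^{\HH}=k=PZ(B_{J'})^{\HH}$ by \cite[Corollary~II.6.5, II.6.6]{GBbook} and its Poisson analogue, so it suffices to match, stratum by stratum, the number of indeterminates --- exactly the bookkeeping \cite{GL1} already performs on the quantum side, with the Poisson computation running in parallel and simultaneously yielding generating sets for the Poisson-primitive ideals.

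Granting the stratum-wise bijection, the primitive case follows from the Dixmier--Moeglin equivalences: by Theorem~\ref{res:DM equiv, quantum version}(iv) and Theorem~\ref{res: dixmier moeglin, Poisson version}(iv) the primitive (resp.\ Poisson-primitive) ideals are precisely the maximal elements of each stratum, so the bijection restricts to $prim(A)\to Pprim(B)$, which is Theorem~\ref{res:bijection between primitives,intro}. By \cite[Lemma~9.4]{GoodearlSummary}, upgrading this to a homeomorphism of $spec(A)$ with $Pspec(B)$ then amounts to showing that the bijection and its inverse preserve inclusions of ideals, and within a single stratum this is automatic from the homeomorphisms above.

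\textbf{Main obstacle.} The genuine difficulty is inclusions running between different strata, which push us outside the setting of the Stratification Theorem, where everything is described only after a localization at normal $\HH$-eigenvectors. To decide whether $P\subseteq P'$ for primes in different strata one needs honest generating sets for the prime ideals --- not merely the primitive ones --- of both $A$ and $B$, and \cite{GL1} provides these only up to such a localization. The intended route around this, and the reason Proposition~\ref{res:UFD Poisson SL3 quotients,intro} is proved, is that every quotient $\SL{3}/I_\omega$ by a Poisson $\HH$-prime is a UFD (the quantum counterpart being \cite[Theorem~5.2]{GBrown}), which should let one clear denominators and lift the localized generators to genuine generators of the primes, whereupon the inclusion relations can be compared directly on the two sides. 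Carrying out this lifting and checking that the two posets of primes agree is the hard part; everything preceding it is in essence a translation of the quantum computations of \cite{GL1} into their Poisson counterparts.
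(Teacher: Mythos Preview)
This statement is a \emph{conjecture}, not a theorem: the paper does not prove it, and explicitly says so. After stating Conjecture~\ref{conj:goodearl}, the paper notes that it ``has so far only been verified for a few algebras'' and that Chapter~\ref{c:H-primes} only ``lays the groundwork for verifying this conjecture'' in the $GL_3$ and $SL_3$ cases. The paper's actual contribution is Corollary~\ref{res:bijection between primitives,H-prime section}, a \emph{bijection} (not a homeomorphism) on primitives, and the discussion following it is candid that the remaining step --- upgrading to a homeomorphism, or equivalently extending to primes and checking inclusions --- is left open.

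Your proposal is not a proof either, and you clearly flag this: you outline the strategy, establish the stratum-wise bijection, and then identify the ``main obstacle'' as the cross-stratum inclusions. This matches the paper's own assessment almost exactly; the concluding paragraphs of \S\ref{s:pulling back to gens in the ring} say precisely that one would need generating sets for primes (not just primitives) in both $A$ and $B$, that the UFD results (Proposition~\ref{res:UFD Poisson SL3 quotients} and \cite[Theorem~5.2]{GBrown}) are intended to enable the denominator-clearing you describe, and that a Poisson analogue of \cite{GBrown} would be needed to patch the strata together. So your strategy and your diagnosis of the gap coincide with the paper's; there is nothing to correct, but you should be aware that what you have written is an accurate summary of an open problem rather than a proof, and that the paper makes no claim to have closed it.
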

This conjecture has so far only been verified for a few algebras, foremost among them the quantum affine spaces $k_{\mathbf{q}}[x_1, \dots, x_n]$ and similar spaces \cite{GLz2}.  For quantum matrices, the conjecture can be verified by direct computation for $\QGL{2}$ (as in Example~\ref{ex:gl2 example} above) and $\QSL{2}$ (e.g. \cite{HL}); meanwhile, in \cite[Theorem~2.12]{OhSymplectic} Oh constructs an explicit bijection between $spec(\QML{2})$ and $Pspec(\ML{2})$ but does not verify that it is a homeomorphism.  A general technique to handle $n \times n$ matrices has not yet been discovered.

In \cite{GL2}, Goodearl and Lenagan describe generating sets for the 230 $\HH$-primes of $\QML{3}$, of which 36 continue to be $\HH$-primes in $\QGL{3}$ and $\QSL{3}$ \cite[\S3]{GL2}.  In \cite{GL1}, they build on this to give explicit generators for all of the primitive ideals in $\QGL{3}$ and $\QSL{3}$; further, these generating sets are described in the algebra itself rather than generators in some localization.

In Chapter~\ref{c:H-primes} we perform a similar analysis to \cite{GL1} for the Poisson algebras $\GL{3}$ and $\SL{3}$, thus laying the groundwork for verifying Conjecture~\ref{conj:goodearl} in these cases in the future.

\chapter{The $\lowercase{q}$-Division Ring: Fixed Rings and Automorphism Group}\label{c:fixed_rings_chapter}
\chaptermark{The $\lowercase{q}$-Division Ring}
As described in Chapter~\ref{c:introduction}, the $q$-division ring $D = k_q(x,y)$ features in Artin's conjectured classification of finitely-generated division rings of transcendence degree 2, and hence so do subrings of finite index within $D$.  One way of constructing such subrings is to consider fixed rings of $D$ under finite groups of automorphisms, and in this chapter we describe the structure of many rings of this type: in \S\ref{s:fixed ring} we consider the fixed ring of an automorphism of order 2 which does not restrict to an automorphism of $k_q[x^{\pm1},y^{\pm1}]$, and in \S\ref{s:more fixed rings} we describe the fixed rings under finite groups of monomial automorphisms.

While the fixed ring $D^{\tau}$ for $\tau: x \mapsto x^{-1},\ y \mapsto y^{-1}$ was originally described in \cite[Example~13.6]{SV1} using techniques from non-commutative algebraic geometry, we take a more ring-theoretic approach and construct pairs of $q$-commuting generators for each of the fixed rings under consideration.  This approach is possible since we can reduce the question to  three specific cases: the automorphism $\tau$ above, a monomial automorphism of order 3 and a non-monomial automorphism of order 2 defined in \eqref{eq:def_varphi} below.  In each case we will describe an explicit isomorphism between $D$ and its fixed ring.

The study of these fixed rings leads naturally on to an examination of the automorphisms and endomorphisms of $D$, since both involve looking for $q$-commuting pairs of elements satisfying certain properties.  In \S\ref{s:automorphism_consequences} we show that the structure of $Aut(D)$ is far from easy to understand: indeed, as we show in Proposition~\ref{res:z_endomorphism} we can no longer rely on conjugation maps to necessarily even be bijective!  We also construct examples of conjugation automorphisms on $D$ by elements $z \in k_q(y)(\!(x)\!) \backslash D$ (see Proposition~\ref{res:z_automorphism_order_2} and Proposition~\ref{res:z_automorphism_inf_order}), thus raising the possibility that $D$ may admit wild automorphisms.

These examples of conjugation maps were constructed in order to answer several open questions posed by Artamonov and Cohn in \cite{AC1} concerning the structure of $Aut(D)$; in particular, we show that their conjectured set of generators for the automorphism group in fact generate the whole endomorphism group $End(D)$.  These results indicate that the structure of $Aut(D)$ is an interesting question to study in its own right, and we list a number of new open questions arising from the results of this chapter in \S\ref{s:open questions on D}.

Excluding Proposition~\ref{res:action of SL2, q version} and Lemma~\ref{res:H1 and H2 are equal}, this chapter has also appeared in the Journal of Algebra as \textit{The $q$-Division Ring and its Fixed Rings}; see \cite{ME1}.  As per Global Convention~\ref{q is not a root of unity for now and forever}, we continue to assume that $k$ has characteristic zero and $q \in k^{\times}$ is not a root of unity.

\section{An automorphism of $k_q(x,y)$ and its fixed ring}\label{s:fixed ring}
In Theorem~\ref{res:AD_prop_intro} we saw that for finite groups of automorphisms $G$ defined on the quantum plane $k_q[x,y]$ and extended to $D$, the fixed ring $D^G$ is isomorphic to $k_p(x,y)$ for $p = q^{|G|}$, while for the automorphism $\tau$ above the fixed ring $D^{\tau}$ is isomorphic to $D$ with the same value of $q$ (Theorem~\ref{res:order_2_monomial_result}).  This suggests that the structure of the fixed ring may be related to which subrings of $D$ the automorphisms can be restricted to, and raises the natural question: can we define an automorphism of finite order on $D$ which does not restrict to an automorphism of $k_q[x^{\pm1},y^{\pm1}]$, and what is the structure of its fixed ring?

This section provides the answer to this question for one such automorphism of $D$, which is defined as follows:
\begin{equation}\label{eq:def_varphi}\varphi: x \mapsto (y^{-1}-q^{-1}y)x^{-1}, \quad y \mapsto -y^{-1}.\end{equation}
Since $x$ only appears once in the image, it is easy to see that these images $q$-commute and so this is a homomorphism.  We can also easily check that it has order 2, since
\begin{align*}
\varphi^2(x) &= \varphi\Big((y^{-1}-q^{-1}y)x^{-1}\Big) \\
&= (-y+q^{-1}y^{-1})x(y^{-1}-q^{-1}y)^{-1} \\
&= (q^{-1}y^{-1}-y)(q^{-1}y^{-1}-y)^{-1}x \\
&= x
\end{align*}
and it is therefore an automorphism on $D$. The aim of this section is to prove the following result.
\begin{theorem}\label{res:epic_theorem} Let $G$ be the group generated by $\varphi$.  Then $D^G \cong D$ as $k$-algebras.\end{theorem}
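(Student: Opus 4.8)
The plan is to prove the isomorphism by exhibiting an explicit pair of $q$-commuting elements $u, v \in D^G$ that generate $D^G$, so that $x \mapsto u$, $y \mapsto v$ extends to the desired isomorphism. The structural tool is the non-commutative Artin lemma quoted in \S\ref{s:introduction overview}: since $|G| = 2$ we have $[D : D^G] \le 2$, and $D^G \ne D$ because $\varphi$ is not the identity. Consequently it suffices to produce $u, v \in D^G$ with $uv = qvu$ and $u, v \ne 0$ such that the division subring $D' := k_q(u,v) \subseteq D$ they generate satisfies $[D : D'] \le 2$. Indeed the induced homomorphism $k_q(x,y) \to D'$, $x \mapsto u$, $y \mapsto v$, is then an isomorphism — surjective by the definition of $D'$, and injective because a nonzero homomorphism out of a division ring is injective — while $D' \subseteq D^G \subsetneq D$ and the multiplicativity of index, $[D:D'] = [D:D^G]\,[D^G:D'] \le 2$, force $[D:D^G] = 2$ and $D^G = D'$.

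The technical heart is to find $u$ and $v$. The guiding observation is that $\varphi$ is, up to a change of variables, a disguised copy of the monomial involution $\tau\colon x \mapsto x^{-1},\ y \mapsto y^{-1}$ of Theorem~\ref{res:order_2_monomial_result}: indeed, writing $g(y) = qy/(1 - qy^2) \in k(y)^\times$, one checks that $\varphi = h' \circ \tau$, where $h'\colon x \mapsto g(y)x,\ y \mapsto -y$ is (a composite of a scalar and an elementary automorphism of $D$, hence) an automorphism of $D$. This suggests looking for new $q$-commuting generators $u, v$ of $D$ on which $\varphi$ acts by $u \mapsto u^{-1}$, $v \mapsto v^{-1}$; such a pair is precisely an automorphism $\psi$ of $D$ realising $\varphi = \psi^{-1}\tau\psi$, after which $D^G = \psi^{-1}(D^{\tau})$ is isomorphic to $D^{\tau}$, which is in turn isomorphic to $D$ by Theorem~\ref{res:order_2_monomial_result}. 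To build such generators I would exploit the multiplicative ``Hilbert 90'' identity: for any nonzero $a$, the element $a\,\varphi(a)^{-1}$ automatically satisfies $\varphi\big(a\,\varphi(a)^{-1}\big) = \big(a\,\varphi(a)^{-1}\big)^{-1}$, so one chooses $a$ so that $v := a\,\varphi(a)^{-1}$ has the same ``$x$-degree and $y$-scale'' as $y$, then solves for a matching $u$ by the recursive coefficient-chasing of \S\ref{ss:autos of division ring background section} inside $k_q(y)(\!(x)\!)$, and finally checks $uv = qvu$ coefficient by coefficient.

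The two places I expect real friction are (i) pinning down the correct $u, v$ — the naive Hilbert-90 choices tend to double degrees (e.g. $y\,\varphi(y)^{-1} = -y^2$), so one has to compensate by a preliminary rational change of the $y$-variable and carefully track a sign that would be invisible if $k$ contained $\sqrt{-1}$ but must be handled in general — and (ii) proving the generation statement $[D : D'] \le 2$. For (ii) I would show that $\{1, x\}$ spans $D$ as a left $D'$-module, by writing $y$ and $x^2$ explicitly in the form $\alpha + \beta x$ with $\alpha, \beta \in D' = k_q(u,v)$ using the closed formulas for $u$ and $v$ (equivalently, one checks directly that every $\varphi$-fixed element of $D$ lies in $k_q(u,v)$). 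The remaining verifications — that $\varphi$ has order $2$ (already done in the text above), that the candidate $u, v$ lie in $D^G$, and that they $q$-commute — are routine but somewhat lengthy computations in $k_q(y)(\!(x)\!)$.
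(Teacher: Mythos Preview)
Your overall shape --- manufacture a $q$-commuting pair inside $D^G$ and invoke the index bound $[D:D^G]\le 2$ --- is the paper's too. Where you diverge is in \emph{how} you propose to find that pair, and that divergence carries a real gap.

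You want to conjugate $\varphi$ to $\tau$: find $q$-commuting generators $u,v$ of $D$ (not of $D^G$) with $\varphi(u)=u^{-1}$, $\varphi(v)=v^{-1}$, and then quote Theorem~\ref{res:order_2_monomial_result}. The factorisation $\varphi=h'\circ\tau$ you wrote down is correct, but it does not give conjugacy, and your Hilbert-90 recipe hits an obstruction over general $k$. If one looks for $v\in k(y)$ of ``$y$-type'' (one simple zero, one simple pole) satisfying $v(-y^{-1})=v(y)^{-1}$, the zero/pole analysis forces $v(y)=c\,(y-\alpha)/(y+\alpha^{-1})$ with $c^2=-\alpha^{-2}$, so $\sqrt{-1}\in k$ is required; chasing a matching $u$ via an elementary $x\mapsto f(y)x$ then seems to drag in roots of $q$ as well. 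The theorem is stated for arbitrary $k$ of characteristic $0$ with $q$ not a root of unity, so this is not mere ``friction'' --- it is exactly the step you have left open. (Separately, your write-up slides between two incompatible roles for $u,v$: the $\varphi$-fixed pair in $D^G$ of your first paragraph, and the $D$-generators of your second paragraph that $\varphi$ \emph{inverts}. Your item~(ii) about $\{1,x\}$ spanning belongs to the first picture, which is redundant if the second one works.)

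The paper never attempts conjugacy. Instead of elements \emph{inverted} by $\varphi$ it uses $(-1)$-\emph{eigenvectors}: with $\Lambda=y^{-1}-q^{-1}y$ set
\[
a=x-\Lambda x^{-1},\qquad b=y+y^{-1},\qquad c=xy+\Lambda x^{-1}y^{-1},
\]
each sent to its negative by $\varphi$, so $h:=b^{-1}a$ and $g:=b^{-1}c$ lie in $D^G$. A direct computation gives the \emph{quantum Weyl} relation $hg-qgh=1-q$ (one passes to a genuinely $q$-commuting pair afterwards via $h\mapsto(1-q)^{-1}(hg-gh)$, using that the quantum Weyl algebra and the quantum plane share the same division ring). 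For the index, the paper does not use a bare spanning argument but Cohn's quadratic-extension criterion: one shows $b^2\in R:=k(h,g)$, that $R\langle b\rangle=D$, and that conjugation by $b$ is an honest automorphism of $R$; then $R[b;\gamma]/(b^2+\mu)\cong D$ exhibits $D$ as a degree-$2$ extension of $R$, whence $R=D^G$.

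If you want to rescue your route you would need either an explicit conjugating automorphism valid over every such $k$ (necessarily non-elementary, by the obstruction above), or to abandon the conjugacy idea and construct $\varphi$-fixed elements directly --- which is what the paper does.
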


Before tackling the proof of this theorem, we will need some subsidiary results.

Recall that the algebra generated by two elements $u$, $v$ subject to the relation $uv-qvu = \lambda$ (for some $\lambda \in k^{\times}$) is called the \textit{quantum Weyl algebra}.  This ring also has a full ring of fractions, which can be seen to be equal to $D$ by sending $u$ to the commutator $uv-vu$ \cite[Proposition~3.2]{AD2}.

We will construct a pair of elements in $D^G$ which satisfy a quantum Weyl relation and show that they generate the fixed ring.  A simple change of variables then yields the desired isomorphism.

In order to simplify the notation, set $\Lambda = y^{-1} - q^{-1}y$.  Inspired by \cite{division} and \cite[\S13.6]{SV1}, we define our generators using a few simple building blocks.  We set
\begin{equation}\begin{gathered}\label{eq:abc_defs}
a= x - \Lambda x^{-1},\qquad  b= y+y^{-1}, \qquad c= xy + \Lambda x^{-1}y^{-1} \\
 h = b^{-1}a, \qquad g = b^{-1}c\end{gathered}
\end{equation}
and verify that $h$ and $g$ satisfy the required properties.
\begin{lemma} 
The elements $h$ and $g$ are fixed by $\varphi$ and satisfy the relation 
\[hg-qgh = 1-q.\]\end{lemma}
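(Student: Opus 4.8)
The plan is to prove the two assertions separately: first that $\varphi$ fixes $h$ and $g$, and then the quantum Weyl relation $hg - qgh = 1-q$.

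For the first assertion I would show that $\varphi$ negates each of the building blocks $a$, $b$, $c$; then $h = b^{-1}a$ and $g = b^{-1}c$ are fixed because the two sign changes cancel. That $\varphi(b) = -b$ is immediate from $\varphi(y) = -y^{-1}$. The other two rest on a single identity: pushing $x$ past $\Lambda = y^{-1}-q^{-1}y$ via $xy=qyx$ gives $x\Lambda = (q^{-1}y^{-1}-y)x$, and the coefficient $q^{-1}y^{-1}-y$ on the right is precisely $\varphi(\Lambda)$, so $x\Lambda = \varphi(\Lambda)x$, equivalently $\varphi(\Lambda)\,x\,\Lambda^{-1} = x$. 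Since $\varphi(x) = \Lambda x^{-1}$ and $\varphi(x^{-1}) = x\Lambda^{-1}$, this gives $\varphi(a) = \Lambda x^{-1} - \varphi(\Lambda)x\Lambda^{-1} = \Lambda x^{-1} - x = -a$, and a similar cancellation (using $\varphi(\Lambda)x\Lambda^{-1}y = xy$) gives $\varphi(c) = -xy - \Lambda x^{-1}y^{-1} = -c$. Hence $\varphi(h) = (-b)^{-1}(-a) = b^{-1}a = h$ and likewise $\varphi(g) = g$, so $h, g \in D^G$.

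For the relation I would first strip off the outer $b^{-1}$: since $h = b^{-1}a$ and $g = b^{-1}c$ we have $hg - qgh = b^{-1}(ab^{-1}c - q\,cb^{-1}a)$, and as $1-q$ is a central scalar it is enough to prove
\[
ab^{-1}c - q\,cb^{-1}a = (1-q)\,b .
\]
To check this I would put each of $a$, $g = b^{-1}c$ and $h = b^{-1}a$ into the normal form $x\cdot(\text{rational function of }y) + x^{-1}\cdot(\text{rational function of }y)$, using the straightening rules $xf(y) = f(qy)x$, $f(y)x^{-1} = x^{-1}f(qy)$ and their inverses to move all powers of $x$ to the left. Multiplying out $ab^{-1}c$ and $cb^{-1}a$ produces terms in $x^{2}$, $x^{-2}$ and $x^{0}$; the $x^{2}$ coefficient of the difference vanishes because two commuting functions of $y$ agree, the $x^{-2}$ coefficient vanishes after a couple of applications of $y \mapsto qy$, and the $x^{0}$ term collapses to $(1-q)(y + y^{-1}) = (1-q)b$ once the denominators $q^{2}y^{2}+1$ and $y^{2}+q^{2}$ that appear are cancelled against matching numerator factors.

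The obstacle here is not conceptual but purely the bookkeeping: tracking how $\Lambda$, $b^{-1}$ and their translates under $y \mapsto qy$ interact when powers of $x$ are shunted through them. The reduction to $ab^{-1}c - q\,cb^{-1}a = (1-q)b$ roughly halves the computation, and the one spot needing care is recognising that the bulky rational-function constant term telescopes to $(1-q)(y+y^{-1})$.
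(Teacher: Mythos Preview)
Your proposal is correct and follows essentially the same route as the paper: the paper also dispenses with the fixed-point claim in one line (noting that $\varphi$ acts on $a,b,c$ as multiplication by $-1$), reduces the relation to $ab^{-1}c - qcb^{-1}a = (1-q)b$ by multiplying through by $b$, and then expands both products explicitly to see the $x^{\pm 2}$ terms cancel and the $x^0$ term collapse to $(1-q)b$. Your treatment of the first part is actually more detailed than the paper's (which simply asserts triviality), and your outlined computation for the second matches the paper's step for step up to the inessential choice of putting powers of $x$ on the left rather than the right.
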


\begin{proof}
The first statement is trivial, since $\varphi$ acts on $a$, $b$ and $c$ as multiplication by $-1$.

After multiplying through by $b$, we see that the equality $hg - qgh = 1-q$ is equivalent to 
\[ab^{-1}c - qcb^{-1}a = (1-q)b\]
which allows us to verify it by direct computation.  Indeed,
\begin{equation}\label{eq:abc}\begin{aligned}
 ab^{-1}c &= (x-\Lambda x^{-1})(y+y^{-1})^{-1}(xy + \Lambda x^{-1}y^{-1}) \\
&= \Big((qy + q^{-1}y^{-1})^{-1}x - \Lambda(q^{-1}y + qy^{-1})^{-1}x^{-1}\Big)\big(xy + \Lambda x^{-1}y^{-1}\big) \\
&= q^2y(qy + q^{-1}y^{-1})^{-1}x^2 + \alpha(\Lambda)(qy + q^{-1}y^{-1})^{-1}y^{-1} \\
& \quad \qquad - \Lambda(q^{-1}y + qy^{-1})^{-1}y - q^2y^{-1}\Lambda \alpha^{-1}(\Lambda)(q^{-1}y + qy^{-1})^{-1}x^{-2}
\end{aligned}\end{equation}
\begin{equation}\label{eq:qcba}\begin{aligned}
qcb^{-1}a &= q(xy + \Lambda x^{-1}y^{-1})(y+y^{-1})^{-1}(x-\Lambda x^{-1}) \\
&= q\Big(qy(qy + q^{-1}y^{-1})^{-1}x + q\Lambda y^{-1}(q^{-1}y + qy^{-1})^{-1}x^{-1}\Big)\big(x-\Lambda x^{-1}\big) \\
&= q^2y(qy+q^{-1}y^{-1})^{-1}x^2 - q^2y\alpha(\Lambda)(qy + q^{-1}y^{-1})^{-1} \\
&\quad \qquad + q^2\Lambda y^{-1}(q^{-1}y + qy^{-1})^{-1} - q^2y^{-1}\Lambda \alpha^{-1}(\Lambda)(q^{-1}y + qy^{-1})^{-1} x^{-2}
\end{aligned}\end{equation}
Putting these together, we see that the terms in $x^2$ and $x^{-2}$ cancel out, leaving us with
\begin{align*}
 ab^{-1}c - qcb^{-1}a &= \alpha(\Lambda)(qy + q^{-1}y^{-1})^{-1}(y^{-1} + q^2y) \\
& \quad \qquad - \Lambda(q^{-1}y + qy^{-1})^{-1}(y+q^2y^{-1}) \\
&= \alpha(\Lambda)q - \Lambda q \\
&= (q^{-1}y^{-1} - y)q - (y^{-1}-q^{-1}y)q\\
&=(1-q)b \qedhere
\end{align*}\end{proof}
Let $R$ be the division ring generated by $h$ and $g$; it is a subring of $D^G$, and the next step is to show that these two rings are actually equal.  We can do this by checking that $[D : R] = 2$, since $R \subseteq D^G \subsetneq D$ will then imply $R = D^G$.  

\begin{lemma}\label{res:computation_lemma}
\begin{enumerate}[(i)]
 \item The following elements are all in $R$:
 \[x + \Lambda x^{-1}, \quad y - y^{-1}, \quad xy - \Lambda x^{-1}y^{-1}.\]
\item Let $b = y+y^{-1}$ as in \eqref{eq:abc_defs}.  Then $b^2 \in R$ and $R\langle b \rangle = D$.
\end{enumerate}
\end{lemma}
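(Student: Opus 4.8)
The plan is to deduce (ii) cheaply from (i), so that the real work is (i), and to prove (i) by a direct computation inside the power‑series overring $k_q(y)(\!(x)\!)$, into which $D$ embeds. Writing $\Lambda = y^{-1}-q^{-1}y$ and putting $h=b^{-1}a$, $g=b^{-1}c$ into standard form (moving every $y$ to the left of every $x$), one gets
\[h = b^{-1}x - b^{-1}\Lambda x^{-1}, \qquad g = qb^{-1}yx + qb^{-1}\Lambda y^{-1}x^{-1},\]
so $h$ and $g$ are each supported in $x$-degrees $\pm1$ with coefficients in $k(y)$.

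For (i), the key reduction is that it suffices to prove $y-y^{-1}\in R$. Indeed, solving for the two $k(y)$-coefficients of $h$ and $g$ gives the identities
\[x+\Lambda x^{-1} = -(y-y^{-1})h + 2q^{-1}g, \qquad xy-\Lambda x^{-1}y^{-1} = 2qh + (y-y^{-1})g,\]
and once $y-y^{-1}\in R$ every coefficient occurring here ($2q^{-1}$, $2q$, $\pm(y-y^{-1})$) lies in the division ring $R$, so the right-hand sides do too. To reach $y-y^{-1}\in R$ I would use the factorisations $c = ay + \Lambda x^{-1}b$ and $a = cy^{-1} - \Lambda x^{-1}by^{-1}$ (immediate from the definitions) together with $a = x^{-1}(x^2-\alpha(\Lambda))$ and $c = x^{-1}(x^2y+\alpha(\Lambda)y^{-1})$. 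The first pair shows that the elements $h^{-1}g = a^{-1}c$ and $g^{-1}h = c^{-1}a$ of $R$ have the shape $y + (\text{higher powers of }x)$ and $y^{-1} + (\text{higher powers of }x)$; the second pair gives the exact relations
\[(x^2-\alpha(\Lambda))\,h^{-1}g = x^2y + \alpha(\Lambda)y^{-1}, \qquad (x^2y+\alpha(\Lambda)y^{-1})\,g^{-1}h = x^2 - \alpha(\Lambda),\]
equivalently $x^2(h^{-1}g - y) = \alpha(\Lambda)(h^{-1}g + y^{-1})$. Feeding these back into one another and using $(h^{-1}g)(g^{-1}h)=1$ should let one eliminate the residual $x$-dependence and extract $y-y^{-1}$ as an element of $R$; this step — peeling the pure-$k(y)$ part off the $q$-commuting pair $(h,g)$ — is the one genuinely delicate point, and the rest of (i) is bookkeeping.

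For (ii): granted (i), we get $b^2 = (y+y^{-1})^2 = (y-y^{-1})^2 + 4 \in R$ at once. For $R\langle b\rangle = D$: since $b$ and $y-y^{-1}$ lie in $R\langle b\rangle$ we have $2y = (y-y^{-1}) + b \in R\langle b\rangle$, and since $a = bh \in R\langle b\rangle$ we have $2x = (x+\Lambda x^{-1}) + a \in R\langle b\rangle$ by (i); as $k$ has characteristic $0$ this puts $x,y \in R\langle b\rangle$, so $R\langle b\rangle \supseteq k_q(x,y) = D$, whence equality. (This is precisely what is then used to finish Theorem~\ref{res:epic_theorem}: part (i) also shows that $bhb^{-1}=ab^{-1}$ and $bgb^{-1}=cb^{-1}$ are $k(y-y^{-1})$-linear combinations of $h$ and $g$, hence lie in $R$, so $b$ normalises $R$; therefore $R\langle b\rangle = R + Rb$ with $b^2\in R$, forcing $[D:R]\le 2$, and since $R \subseteq D^G \subsetneq D$ this gives $R = D^G$.)
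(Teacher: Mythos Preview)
Your reduction is exactly the paper's: show $y-y^{-1}\in R$ first, then recover $x+\Lambda x^{-1}$ and $xy-\Lambda x^{-1}y^{-1}$ from the identities $x+\Lambda x^{-1}=(y^{-1}-y)h+2q^{-1}g$ and $xy-\Lambda x^{-1}y^{-1}=2qh+(y-y^{-1})g$; part (ii) is then immediate from $(y+y^{-1})^2=(y-y^{-1})^2+4$ together with $2y=(y-y^{-1})+b$ and $2x=(x+\Lambda x^{-1})+a$ with $a=bh$. All of that is correct and matches the paper.

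The gap is the step you yourself flag as ``delicate'': you never actually show $y-y^{-1}\in R$. Writing down $x^2(h^{-1}g-y)=\alpha(\Lambda)(h^{-1}g+y^{-1})$ and the companion relation for $g^{-1}h$ is fine, but the claim that ``feeding these back into one another\ldots should let one eliminate the residual $x$-dependence and extract $y-y^{-1}$'' is not a proof, and it is not clear how to make it one: both relations still carry $x^2$ and $\alpha(\Lambda)$, and there is no evident cancellation producing a pure $k(y)$ element from $h^{-1}g$ and $g^{-1}h$ alone. The paper does not attempt anything of this sort. Instead it writes down an explicit closed formula,
\[
y-y^{-1}=(hg-1)^{-1}(qg^2-h^2),
\]
and verifies it by brute force: multiply through by $b$ to reduce to $(ab^{-1}c-b)(y-y^{-1})=qcb^{-1}c-ab^{-1}a$, expand both sides, and check that the $x^{\pm2}$ terms cancel while the $k(y)$ terms agree. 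That explicit identity is the missing ingredient; your approach via $a^{-1}c$ and $c^{-1}a$ does not supply a substitute for it.
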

\begin{proof} $(i)$ 
 We begin by proving directly that $y-y^{-1} \in R$, as the others will follow easily from this.  Indeed, we will show that
\begin{equation*}y-y^{-1} = (hg-1)^{-1}(qg^2-h^2).\end{equation*}
Using the definitions in \eqref{eq:abc_defs} this is equivalent to checking that
\begin{equation}\label{eq:computation_2}(ab^{-1}c - b)(y-y^{-1}) = qcb^{-1}c - ab^{-1}a.\end{equation}
Expanding out the components on the right in \eqref{eq:computation_2}, we get
\begin{equation*}\begin{aligned}qcb^{-1}c &= q(xy + \Lambda x^{-1}y^{-1})(y+y^{-1})^{-1}(xy + \Lambda x^{-1}y^{-1}) \\
&= \Big (q^2y(qy + q^{-1}y^{-1})^{-1}x + q^2y^{-1}\Lambda (qy^{-1} + q^{-1}y)^{-1}x^{-1}\Big)(xy + \Lambda x^{-1}y^{-1}) \\
&= q^2(qy + q^{-1}y^{-1})^{-1}yx^2y + q^2\alpha(\Lambda)(qy + q^{-1}y^{-1})^{-1} \\
& \quad \qquad+ q^2\Lambda(qy^{-1} + q^{-1}y)^{-1} + q^2\Lambda \alpha^{-1}(\Lambda)(qy^{-1} + q^{-1}y)^{-1}y^{-1}x^{-2}y^{-1} \\
& \\
ab^{-1}a &= (x-\Lambda x^{-1})(y+y^{-1})^{-1}(x-\Lambda x^{-1}) \\
&= \Big((qy + q^{-1}y^{-1})^{-1}x - \Lambda (q^{-1}y + qy^{-1})^{-1}x^{-1}\Big)(x-\Lambda x^{-1}) \\
&= q^2(qy + q^{-1}y^{-1})^{-1}yx^2y^{-1} - \alpha(\Lambda)(qy + q^{-1}y^{-1})^{-1} \\
&\quad \qquad - \Lambda(q^{-1}y + qy^{-1})^{-1} + q^2\Lambda \alpha^{-1}(\Lambda)(q^{-1}y + qy^{-1})^{-1}y^{-1} x^{-2}y
\end{aligned}\end{equation*}
so that the difference $qcb^{-1}c - ab^{-1}a$ is
\begin{equation}\begin{aligned}\label{eq:computation_RHS}
qcb^{-1}c - ab^{-1}a &= q^2y(qy + q^{-1}y^{-1})^{-1}x^2(y-y^{-1}) \\
&\quad \qquad  -q^2\Lambda\alpha^{-1}(\Lambda)y^{-1}(qy^{-1} + q^{-1}y)^{-1}x^{-2}(y-y^{-1})\\
&\quad \qquad + (q^2+1)\Big(\alpha(\Lambda)(qy + q^{-1}y^{-1})^{-1} + \Lambda(qy^{-1} + q^{-1}y)^{-1}\Big)
\end{aligned}\end{equation}
Meanwhile, using the expression for $ab^{-1}c$ obtained in \eqref{eq:abc}, we find that
\begin{equation}\begin{aligned}\label{eq:computation_LHS}
& (ab^{-1}c - b)(y-y^{-1}) =\\
&\qquad \qquad q^2y(qy + q^{-1}y^{-1})^{-1}x^2(y-y^{-1}) \\
& \qquad \qquad \quad - q^2\Lambda \alpha^{-1}(\Lambda)y^{-1}(q^{-1}y + qy^{-1})^{-1}x^{-2}(y-y^{-1}) \\
&\, \qquad \qquad \quad+ \Big(\alpha(\Lambda)(qy + q^{-1}y^{-1})^{-1}y^{-1} - \Lambda(q^{-1}y + qy^{-1})^{-1}y - y - y^{-1}\Big)(y-y^{-1}) \\
\end{aligned}\end{equation}
Comparing the expressions in \eqref{eq:computation_RHS} and \eqref{eq:computation_LHS} it is immediately clear that the terms involving $x^2$ and $x^{-2}$ are equal.  This leaves just the terms involving only powers of $y$ to check; each of these are elements of $k(y)$ and therefore commutative, so it is now a simple computation to check that both expressions reduce to the form
\begin{equation*}
 \frac{(1+q)(y-y^{-1})(y+y^{-1})(q+q^{-1})}{(qy + q^{-1}y^{-1})(qy^{-1} + q^{-1}y)}.
\end{equation*}
Thus $(ab^{-1}c - b)(y-y^{-1}) = qcb^{-1}c - ab^{-1}a$ as required.  This proves that $y-y^{-1} \in R$.

Inside $D$ we can notice that
\begin{equation}\begin{aligned}\label{eq:formula_for_x}
 y^{-1}h + q^{-1}g &= (y+y^{-1})^{-1}\left(y^{-1}(x-\Lambda x^{-1}) + q^{-1}(xy + \Lambda x^{-1}y^{-1})\right) \\ 
&= (y+y^{-1})^{-1}\left(y^{-1}x - \Lambda y^{-1}x^{-1} + yx + \Lambda y^{-1}x^{-1}\right) \\ 
&= (y+y^{-1})^{-1}(y^{-1} + y)x \\
&= x.\end{aligned}
\end{equation}
and so $\Lambda x^{-1} = \varphi(y^{-1}h + q^{-1}g) = q^{-1}g - yh$.  Putting these together we get
\[x+\Lambda x^{-1} = (y^{-1}-y)h + 2q^{-1}g \in R\]
and similarly,
\[xy - \Lambda x^{-1}y^{-1} = 2qh + (y-y^{-1})g \in R.\]

$(ii)$ It's clear that $b = y+y^{-1} \not\in R$ since $R$ is a subring of $D^{\varphi}$ and $b$ is not fixed by $\varphi$.  However, $(y+y^{-1})^2 = (y-y^{-1})^2 + 4$, hence $b^2 \in R$ by (i).

To prove that $R\langle b \rangle  = D$ it is enough to show that $x, y \in R\langle b \rangle$.  This is now clear, however, since $y = \frac{1}{2}(y-y^{-1}) + \frac{1}{2}(y+y^{-1}) \in R \langle b \rangle$, hence by \eqref{eq:formula_for_x}, $x = y^{-1}h + q^{-1}g \in R\langle b \rangle$ as well.
\end{proof}
Since we are working with fixed rings, the language of Galois theory is a natural choice to use here, and in \cite[\S3.6]{skewfields} we find conditions for a quotient of a general Ore extension $R[u;\gamma, \delta]/(u^2+\lambda u  + \mu)$ to be a quadratic division ring extension of $R$.  (Note that the language of \cite{skewfields} is that of \textit{right} Ore extensions, so we make the necessary adjustments below to apply the results to left extensions.)

When char $k$ $\neq 2$, such an extension will be Galois if and only if $\delta$ is inner \cite[Theorem~3.6.4(i)]{skewfields} so here it is sufficient to only consider the case when $\delta = 0$.  Further, since $b^2 \in R$ by Lemma~\ref{res:computation_lemma} (ii), we see that $b$ satisfies a quadratic equation over $R$ with $\lambda = 0$, which allows us to simplify matters even further.

The next result is a special case of \cite[Theorem~3.6.1]{skewfields}, which by the above discussion is sufficient for our purposes.
\begin{proposition}\label{res:gth_prop}
Let $K$ be a division ring, $\gamma$ an endomorphism on $K$ and $\mu \in K^{\times}$.  The ring $T:= K[u; \gamma]/(u^2+\mu)$ is a quadratic division ring extension of $K$ if and only if T has no zero-divisors and $\mu$, $\gamma$ satisfy the following two conditions:
\begin{enumerate}
\item $\mu r =  \gamma^2(r)\mu$ for all $r \in K$;
\item $\gamma(\mu) = \mu$.
\end{enumerate}
\end{proposition}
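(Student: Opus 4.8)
The plan is to reduce everything to the left division algorithm in the skew polynomial ring $K[u;\gamma]$ together with the standard fact that a domain which is finite-dimensional over a division subring is automatically a division ring. Throughout I write $(u^2+\mu)$ for the two-sided ideal of $K[u;\gamma]$ generated by $u^2+\mu$, and $[T:K]$ for the dimension of $T$ as a \emph{left} $K$-vector space (the relevant notion given the left Ore extensions used here).

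First I would prove $(\Leftarrow)$. Assume (1), (2) and that $T$ has no zero-divisors. Condition (1) says exactly that $(u^2+\mu)r = \gamma^2(r)(u^2+\mu)$ for every $r \in K$, and condition (2) gives $(u^2+\mu)u = u(u^2+\mu)$, hence by induction $(u^2+\mu)u^j = u^j(u^2+\mu)$ for all $j \geq 0$. Combining these, $(u^2+\mu)\big(\sum_j r_j u^j\big) = \big(\sum_j \gamma^2(r_j)u^j\big)(u^2+\mu)$, so the left ideal $I := K[u;\gamma](u^2+\mu)$ is closed under right multiplication and is therefore a two-sided ideal, necessarily equal to $(u^2+\mu)$. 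Since $u^2+\mu$ is monic, the left division algorithm applies and shows that every element of $K[u;\gamma]$ is congruent modulo $I$ to a unique $a+bu$ with $a,b\in K$ (a nonzero element of $I$ has degree at least $2$). Hence $K$ embeds into $T$ and $T$ is free as a left $K$-module with basis $\{1,u\}$, so $[T:K]=2$. Finally, $T$ is a domain containing the division ring $K$ with $[T:K]<\infty$, so for any nonzero $t\in T$ the right-multiplication map $s\mapsto st$ is an injective left-$K$-linear endomorphism of a finite-dimensional space, hence bijective; thus every nonzero element of $T$ has a left inverse, which in a nonzero ring forces $T$ to be a division ring. So $T$ is a quadratic division ring extension of $K$.

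For $(\Rightarrow)$, suppose $T$ is a quadratic division ring extension of $K$; in particular $T$ has no zero-divisors and $T\neq 0$. It suffices to check that if either condition fails then $T=0$. If (1) fails, pick $r$ with $\delta:=\mu r-\gamma^2(r)\mu\neq 0$; then $\delta=(u^2+\mu)r-\gamma^2(r)(u^2+\mu)\in(u^2+\mu)$, and as $\delta\in K^{\times}$ this gives $1\in(u^2+\mu)$, i.e.\ $T=0$. If (2) fails, set $\varepsilon:=\mu-\gamma(\mu)\neq 0$; then $\varepsilon u=(u^2+\mu)u-u(u^2+\mu)\in(u^2+\mu)$, so $u\in(u^2+\mu)$, hence $u^2\in(u^2+\mu)$, hence $\mu\in(u^2+\mu)$, and again $1\in(u^2+\mu)$, so $T=0$ --- a contradiction. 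This completes the argument.

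The most delicate point is keeping the one-sided structures straight: since $\gamma$ is only an endomorphism rather than an automorphism, $K[u;\gamma]$ need not be nicely behaved as a right $K$-module, so I must consistently use the left module structure --- in particular, using right-multiplication maps (which are the left-$K$-linear ones) in the domain-implies-division-ring step, and making sure "quadratic" refers to the left dimension. Everything else is routine once the ideal $(u^2+\mu)$ has been identified with the left ideal $K[u;\gamma](u^2+\mu)$.
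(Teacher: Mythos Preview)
Your argument is correct, but it takes a genuinely different route from the paper. The paper does not prove the result from scratch: it simply invokes \cite[Theorem~3.6.1]{skewfields}, which gives necessary and sufficient conditions for $K[u;\gamma,\delta]/(u^2+\lambda u+\mu)$ to be a quadratic division ring extension in terms of four identities relating $\gamma,\delta,\lambda,\mu$, and then observes that setting $\delta=0$ and $\lambda=0$ collapses those four identities to precisely conditions (1) and (2).

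Your approach, by contrast, is self-contained: you verify directly that (1) and (2) make $u^2+\mu$ normal (so that the two-sided ideal coincides with the left ideal it generates), use the left division algorithm to get $[T:K]_{\text{left}}=2$, and then invoke the finite-dimensional-domain-over-a-division-ring argument. For $(\Rightarrow)$ you show that failure of either condition forces the ideal to contain a unit, hence $T=0$. This is more elementary and avoids the external reference; the paper's approach is shorter but pushes the content into Cohn's book and has the advantage of placing the result in the broader context of general quadratic extensions $K[u;\gamma,\delta]/(u^2+\lambda u+\mu)$. Your care in handling only the \emph{left} $K$-module structure (since $\gamma$ is merely an endomorphism) is appropriate and is exactly the kind of detail that gets swept under the rug when one just cites the general theorem.
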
 
\begin{proof}
By \cite[Theorem~3.6.1]{skewfields} and replacing right Ore extensions with left, the ring $K[u;\gamma, \delta]/(u^2 + \lambda u + \mu)$ is a quadratic division ring extension of $K$ if and only if it contains no zero divisors and $\gamma$, $\delta$, $\lambda$ and $\mu$ satisfy the equalities
\begin{equation*}\label{eq:gth_conditions_1}\begin{aligned}
\gamma\delta(r) + \delta\gamma(r) &= \gamma^2(r)\lambda - \lambda\gamma(r),\\
\delta^2(r) +\lambda\delta(r) &=  \gamma^2(r)\mu - \mu r, \\
\delta(\lambda) &= \mu - \gamma(\mu) - (\lambda - \gamma(\lambda))\lambda, \\
\delta(\mu) &= (\lambda - \gamma(\lambda))\mu.
\end{aligned}\end{equation*}
Once we impose the conditions $\delta = 0$, $\lambda = 0$ the result follows immediately.
\end{proof}
Viewing $R$ as a subring of $D$, we can set $u = b$, $\mu = -b^2$.  The following choice of $\gamma$ is suggested by \cite{division}.
\begin{lemma}\label{res:gamma_def}
Let $b$, $h$ and $g$ be as defined in \eqref{eq:abc_defs}, and $R$ the division ring generated by $h$ and $g$ inside $D$.  Then the conjugation map defined by
\begin{equation*}
\gamma(r) = brb^{-1} \quad \forall r \in R
\end{equation*}
is a well-defined automorphism on $R$.
\end{lemma}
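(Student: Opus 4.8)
Since $b$ is invertible in $D$, the map $\gamma = c_b$ is automatically a $k$-algebra automorphism of the whole ring $D$, with inverse $c_{b^{-1}}$. The assertion of the lemma therefore amounts to the single claim that both $\gamma$ and $\gamma^{-1}$ carry $R$ into $R$: if $\gamma(R)\subseteq R$ and $\gamma^{-1}(R)\subseteq R$, then $\gamma(R)=R$ and $\gamma|_R$ is an automorphism of $R$. As $R$ is the division ring generated by $h$ and $g$, it suffices to show that the four elements $\gamma(h)$, $\gamma(g)$, $\gamma^{-1}(h)$, $\gamma^{-1}(g)$ lie in $R$.

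The plan is to compute these explicitly. Since $b=y+y^{-1}\in k(y)$ and $k(y)$ is commutative, $\gamma$ fixes $k(y)$ pointwise, so only its effect on $x$ matters; from $xy=qyx$ one finds $bx = x(q^{-1}y+qy^{-1})$, and hence
\[\gamma(x) = x\,(q^{-1}y+qy^{-1})(y+y^{-1})^{-1}, \qquad \gamma^{-1}(x) = x\,(y+y^{-1})(q^{-1}y+qy^{-1})^{-1}.\]
Substituting these (and $\gamma(y)=\gamma^{-1}(y)=y$) into the definitions \eqref{eq:abc_defs} of $a,c,h,g$ expresses $\gamma(h)=ab^{-1}$ and $\gamma(g)=cb^{-1}$ (and likewise $\gamma^{-1}(h)=b^{-1}hb$, $\gamma^{-1}(g)=b^{-1}gb$) as elements of the two-dimensional left $k(y)$-subspace $k(y)x+k(y)x^{-1}$ of $k_q(y)(\!(x)\!)$. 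By \eqref{eq:formula_for_x} (together with the identity $\Lambda x^{-1}=q^{-1}g-yh$ derived alongside it), the pair $h,g$ is a $k(y)$-basis of that subspace, so each of the four elements can be written uniquely as $P h + Q g$ with $P,Q\in k(y)$.

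The crux — and the only step that requires genuine computation — is to check that these coefficients $P,Q$ actually lie in the fixed subfield $k(y)^{\varphi}=k(y-y^{-1})$. This is where the special shape of $\varphi$ enters: the denominators produced by the above substitutions are products of $\varphi$-conjugate linear factors, such as
\[(qy+q^{-1}y^{-1})(q^{-1}y+qy^{-1}) = (y-y^{-1})^2 + 2 + q^2 + q^{-2},\]
which is $\varphi$-invariant, while the stray factors of $b$ that appear are eliminated using $b^2=(y-y^{-1})^2+4$. Since $y-y^{-1}\in R$ by Lemma~\ref{res:computation_lemma}(i), we have $k(y)^{\varphi}=k(y-y^{-1})\subseteq R$, and therefore $\gamma(h),\gamma(g),\gamma^{-1}(h),\gamma^{-1}(g)\in R$. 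It follows that $\gamma$ restricts to an injective $k$-algebra endomorphism of $R$ whose set-theoretic inverse is the restriction of $\gamma^{-1}$, itself an endomorphism of $R$; hence $\gamma\in Aut(R)$. The main obstacle is purely the bookkeeping in this last paragraph: one must keep track of the $\varphi$-symmetry of all the denominators so as to see that $P$ and $Q$ avoid $y$ itself and remain inside $k(y-y^{-1})$.
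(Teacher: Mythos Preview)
Your approach is correct but differs from the paper's. The paper writes $\gamma(h)=ab\cdot b^{-2}$ and $\gamma(g)=cb\cdot b^{-2}$ (and symmetrically for $\gamma^{-1}$), invokes $b^2\in R$ from Lemma~\ref{res:computation_lemma}(ii), and then expands $ab$ and $cb$ directly as short polynomial identities in the three elements $x+\Lambda x^{-1}$, $y-y^{-1}$, $xy-\Lambda x^{-1}y^{-1}$ already placed in $R$ by Lemma~\ref{res:computation_lemma}(i). No rational functions of $y$ appear at all; the whole thing is four lines of algebra. Your route instead passes through the left $k(y)$-module $k(y)x+k(y)x^{-1}$, changes basis to $\{h,g\}$, and then argues that the resulting coefficients land in $k(y)^{\varphi}=k(y-y^{-1})\subseteq R$.

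Your argument can be sharpened: you treat the $\varphi$-invariance of $P,Q$ as a bookkeeping exercise on denominators, but in fact it is automatic. Since $\varphi(b)=-b$, the conjugation $\gamma=c_b$ commutes with $\varphi$, so $\gamma(h),\gamma(g)\in D^{\varphi}$ a priori; writing $\gamma(h)=Ph+Qg$ and applying $\varphi$ (which fixes $h,g$) forces $\varphi(P)=P$, $\varphi(Q)=Q$ by the $k(y)$-linear independence of $h,g$. With this observation your proof becomes essentially computation-free, whereas the paper's version trades conceptual input for a short explicit identity. Either way the lemma follows.
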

\begin{proof}
It is sufficient to check that the images of the generators of $R$ under $\gamma$ and $\gamma^{-1}$ are themselves in $R$, i.e. that
\begin{align*}
\gamma(h) &= (ab)b^{-2} &\gamma(g) = (cb)b^{-2} \\
 \gamma^{-1}(h) &= b^{-2}(ab) &\gamma^{-1}(g) = b^{-2}(cb)
\end{align*}
are all in $R$.

By Lemma \ref{res:computation_lemma} (ii) we already know that $b^2  \in R$.  As for $ab$ and $cb$, they decompose into elements of $R$ as follows:
\begin{equation}\begin{aligned}\label{eq:formulas_cb}
 ab &= (x-\Lambda x^{-1})(y+y^{-1}) \\
&= xy + xy^{-1} - \Lambda x^{-1}y - \Lambda x^{-1}y^{-1} \\
&= 2(xy - \Lambda x^{-1}y^{-1}) - (x+\Lambda x^{-1})(y-y^{-1})\in R\\
&\\
 cb &= (xy + \Lambda x^{-1}y^{-1})(y+y^{-1}) \\
&= xy^2 + x + \Lambda x^{-1} + \Lambda x^{-1}y^{-2} \\
&= (xy - \Lambda x^{-1}y^{-1})(y-y^{-1}) + 2(x+\Lambda x^{-1}) \in R
\end{aligned}\end{equation}
by Lemma~\ref{res:computation_lemma} (i).  Therefore $\gamma$ is a well-defined bijection on $R$, and since conjugation respects the relation $hg - qgh = 1-q$, it is an automorphism on $R$.\end{proof}
We are now in a position to prove Theorem~\ref{res:epic_theorem}.  

Recall that $R \subseteq D^G$ is a division ring with generators $h$ and $g$, which satisfy a quantum Weyl relation $hg - qgh = 1-q$.  We can make a change of variables $h \mapsto \frac{1}{1-q}(hg-gh)$ so that $R$ has the structure of a $q$-division ring \cite[Proposition~3.2]{AD2}.  (The only exception is when $q=1$, where this change of variables does not make sense; however, since $h$ and $g$ already ``$q$-commute'' in this case we can simply set $f:=h$.)

Define the automorphism $\gamma$ as in Lemma~\ref{res:gamma_def} and set $\mu := -b^2 \in R$.  The extension $L :=R[b;\gamma]/(b^2+\mu)$ is a subring of the division ring $D$, and therefore has no zero divisors.  Further,
\[ \gamma^2(r) \mu = -(b^2rb^{-2})b^2 = -b^2r = \mu r \quad \forall r \in R\]
and similarly $\gamma(\mu) = \mu$.  Therefore by Proposition~\ref{res:gth_prop}, $L$ is a quadratic division ring extension of $R$.  Since it is a subring of $D$ containing both $R$ and $b$, by Lemma~\ref{res:computation_lemma} (ii) we can conclude that $L = D$.

Now since $R \subseteq D^G \subsetneq D = L$, and the extension $R \subset L$ has degree 2, we must have $R = D^G$ and Theorem~\ref{res:epic_theorem} is proved.

\section{Fixed rings of monomial automorphisms}\label{s:more fixed rings}
Theorem~\ref{res:epic_theorem} came about as a result of a related question, namely: if we take an automorphism of finite order defined on $k_q[x^{\pm1},y^{\pm1}]$ and extend it to $D$, what does its fixed ring look like?  

As discussed in \cite[\S4.1.1]{dumas_invariants}, the automorphism group of $k_q[x^{\pm1},y^{\pm1}]$ is generated by automorphisms of scalar multiplication and the monomial automorphisms (see Definition~\ref{def:def_monomial_autos} below).  Since the case of scalar multiplication has been covered in Theorem~\ref{res:AD_prop_intro}, in this section we will focus on monomial automorphisms with the aim of proving Theorem~\ref{res:thm_monomial_results}.

For the remainder of this section we will assume that $k$ contains a square root of $q$, denoted by $\hat{q}$\nom{Q@$\hat{q}$}.  The following result appeared originally in \cite[\S5.4.2]{BaudryThesis} for the case of $k_q[x^{\pm1},y^{\pm1}]$; since we have exchanged the roles of $x$ and $y$ and extended the result to $D$ we provide a full proof of the result here. 
\begin{notation}\label{not:exponents of q}
In order to make the computations more readable in the following proposition, we define the notation
\[\expq{m} := \hat{q}^m.\]
\end{notation}
\begin{proposition}\label{res:action of SL2, q version}
The group $SL_2(\mathbb{Z})$ acts by algebra automorphisms on the $q$-division ring $D$.  The action is defined by
\begin{equation}\label{eq:q action of SL2 def}g.y=\expq{ac}y^ax^c, \quad g.x = \expq{bd}y^bx^d, \quad g =\begin{pmatrix}a&b\\c&d\end{pmatrix} \in SL_2(\mathbb{Z}),\end{equation}
or more generally for any $m,n \in \mathbb{Z}$:
\begin{equation}\label{eq:q action of SL2, general form}g.(y^mx^n) = \expq{(am+bn)(cm+dn)-mn}y^{am+bn}x^{cm+dn}.\end{equation}
\end{proposition}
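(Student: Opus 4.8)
The plan is to fix $g = \begin{pmatrix} a & b \\ c & d \end{pmatrix} \in SL_2(\mathbb{Z})$, verify that \eqref{eq:q action of SL2 def} extends to an automorphism $\phi_g$ of the quantum torus $k_q[x^{\pm1},y^{\pm1}]$, deduce the general monomial formula \eqref{eq:q action of SL2, general form}, and finally check that $g \mapsto \phi_g$ respects composition. Since $D$ is the division ring of fractions of the Ore domain $k_q[x^{\pm1},y^{\pm1}]$ and any automorphism of an Ore domain extends uniquely to its division ring of fractions, this last step delivers the action on $D$. For well-definedness, note first that the proposed images $\phi_g(y) = \expq{ac}y^ax^c$ and $\phi_g(x) = \expq{bd}y^bx^d$ are Laurent monomials, hence units of $k_q[x^{\pm1},y^{\pm1}]$, so it is enough to check that they still satisfy the relation $xy = qyx$. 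Expanding using the identity $x^iy^j = q^{ij}y^jx^i$ gives $\phi_g(x)\phi_g(y) = q^{ad-bc}\phi_g(y)\phi_g(x)$, and $ad - bc = \det g = 1$ makes this exactly $\phi_g(x)\phi_g(y) = q\,\phi_g(y)\phi_g(x)$; observe that the scalars $\expq{ac}$, $\expq{bd}$ play no role here, so $\phi_g$ is a well-defined endomorphism of $k_q[x^{\pm1},y^{\pm1}]$, of the monomial shape recorded in \eqref{eq:example of monomial auto}.

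Next I would establish \eqref{eq:q action of SL2, general form}. Since $\phi_g$ is an algebra map, $\phi_g(y^mx^n) = \phi_g(y)^m\phi_g(x)^n$; using $(y^ax^c)^m = q^{ac\binom{m}{2}}y^{am}x^{cm}$ one gets $\phi_g(y)^m = \expq{acm^2}y^{am}x^{cm}$ and likewise $\phi_g(x)^n = \expq{bdn^2}y^{bn}x^{dn}$, while moving $x^{cm}$ past $y^{bn}$ contributes a further $q^{bcmn}$. Altogether $\phi_g(y^mx^n) = \expq{E}\,y^{am+bn}x^{cm+dn}$ with $E = acm^2 + bdn^2 + 2bcmn$, whereas $(am+bn)(cm+dn) - mn = acm^2 + bdn^2 + (ad+bc-1)mn$; since $ad - bc = 1$ forces $ad + bc - 1 = 2bc$, the two exponents agree and \eqref{eq:q action of SL2, general form} follows.

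For the composition law, fix also $g' = \begin{pmatrix} a' & b' \\ c' & d'\end{pmatrix}$; it suffices to check that $\phi_g\circ\phi_{g'}$ and $\phi_{gg'}$ agree on $y$ and on $x$. Applying $\phi_g$ to $\phi_{g'}(y) = \expq{a'c'}y^{a'}x^{c'}$ and using the monomial formula of the previous step with $(m,n) = (a',c')$ gives $\expq{a'c' + E'}\,y^{A}x^{C}$, where $(A,C)$ is the first column of the matrix product $gg'$ and $E' = ac(a')^2 + bd(c')^2 + 2bca'c'$ is the quadratic exponent. A direct expansion shows $a'c' + E' - AC = a'c'\bigl(1 - (ad - bc)\bigr) = 0$, so this equals $\expq{AC}y^Ax^C = \phi_{gg'}(y)$; the computation on $x$ is the same with $(b',d')$ replacing $(a',c')$. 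Hence $\phi_g\phi_{g'} = \phi_{gg'}$, and since $\phi_I = \mathrm{id}$ this yields $\phi_g\phi_{g^{-1}} = \phi_{g^{-1}}\phi_g = \mathrm{id}$, so every $\phi_g$ is an automorphism; extending uniquely to $D = k_q(x,y)$ as above then completes the argument.

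The one point requiring care — and the reason the standing assumption $\hat q \in k$ is genuinely needed — is the half-integer normalization. Conceptually there is nothing deep: every identity that must hold collapses to the single relation $\det g = 1$. But with the naive assignment $y \mapsto y^ax^c$, $x \mapsto y^bx^d$ the composite $\phi_g\phi_{g'}$ disagrees with $\phi_{gg'}$ by a power of $q$, so $g \mapsto \phi_g$ would only be a projective action; the scalars $\expq{ac}$ and $\expq{bd}$ are precisely the cocycle correction that rigidifies it into an honest homomorphism, and writing them down requires a square root of $q$. So I expect the main work to be the bookkeeping of the quadratic-in-exponent corrections through the composition, not any conceptual obstacle.
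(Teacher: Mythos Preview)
Your proposal is correct and follows essentially the same route as the paper: verify the defining relation is preserved, establish the general monomial formula \eqref{eq:q action of SL2, general form} by the quadratic-exponent bookkeeping, and then use that formula to check the composition law. If anything you are slightly more careful than the paper on two points it leaves implicit: you deduce bijectivity of each $\phi_g$ from $\phi_g\phi_{g^{-1}}=\phi_I=\mathrm{id}$ rather than asserting ``automorphism'' after only checking the relation, and you explicitly note the passage from $k_q[x^{\pm1},y^{\pm1}]$ to $D$ via the universal property of Ore localization; your closing remark on the $\hat q$-cocycle is a helpful gloss not present in the paper.
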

\begin{proof}
Let $g, g' \in SL_2(\mathbb{Z})$, which we write as follows:
\[g = \begin{pmatrix}a&b\\c&d\end{pmatrix}, \quad g' = \begin{pmatrix}a'&b'\\c'&d'\end{pmatrix}.\]
The following equality will be useful for computations.
\begin{equation}\label{eq:q action of SL2, product g'g}g'g = \begin{pmatrix}a'a + b'c & a'b + b'd \\ ac' + d'c & c'b + d'd\end{pmatrix}.\end{equation}
In order to show that $SL_2(\mathbb{Z})$ acts by algebra automorphisms on $D$, we need to check that
\begin{enumerate}
 \item $g.(xy-qyx) = 0$ in $D$, so $g$ is an algebra automorphism on $D$;
 \item $g'.(g.x) = (g'g).x$ and $g'.(g.x) = (g'g).y$ in $D$.
\end{enumerate}
The first equality can be verified by direct computation as follows:
\begin{align*}
 g.(xy-qyx) &= \expq{bd}y^bx^d\expq{ac}y^ax^c - \expq{2}\expq{ac}y^ax^c\expq{bd}y^bx^d \\
&= \expq{ac+bd}\Big(\expq{2ad}y^{a+b}x^{c+d} - \expq{2+2cb}y^{a+b}x^{c+d}\Big) \\
&= 0
\end{align*}
since $ad = bc+1$.

It will be useful to verify \eqref{eq:q action of SL2, general form} before tackling condition (2) above.  Indeed,
\begin{equation}\label{eq:q action of SL2, computation}\begin{aligned}
 g.(y^mx^n) &= \big(\expq{ac}y^ax^c\big)^m\big(\expq{bd}y^bx^d\big)^n \\
&= \expq{acm + bdn}\expq{2acm(m-1)/2}y^{am}x^{cm}\expq{2bdn(n-1)/2}y^{bn}x^{dn} \\
&= \expq{acm^2 + bdn^2 + 2cbmn}y^{am+bn}x^{cm+dn} 
\end{aligned}\end{equation}
Recalling that $ad-bc=1$, we can observe that
\[acm^2+bdn^2+2cbmn =(am+bn)(cm+dn)-mn \]
Substituting this into \eqref{eq:q action of SL2, computation}, we obtain the equality \eqref{eq:q action of SL2, general form}.

This simplifies the computations involved in (2) considerably: using \eqref{eq:q action of SL2, general form} and \eqref{eq:q action of SL2, product g'g} we can now see that
\begin{align*}
g'.(g.x) &= g'.\big(\expq{bd}y^bx^d\big) = \expq{bd}\expq{(a'b + b'd)(c'b + d'd) - bd}y^{a'b + b'd}x^{c'b + d'd},  \\
(g'g).x &= \expq{(a'b + b'd)(c'b + d'd)}y^{a'b + b'd}x^{c'b + d'd},
\end{align*}
and
\begin{align*}
g'.(g.y) &= g'\big(\expq{ac}y^ax^c\big) = \expq{ac}\expq{(a'a + b'c)(c'a + d'c)-ac}y^{a'a + b'c}x^{c'a + d'c}, \\
(g'g).y &= \expq{(a'a + b'c)(ac' + d'c)}y^{a'a + b'c}x^{ac' + d'c}.
\end{align*}
From this we can conclude that $g'.(g.x) = (g'g).x$ and $g'.(g.x) = (g'g).y$, and hence that the definition in \eqref{eq:q action of SL2 def} does indeed define an action of $SL_2(\mathbb{Z})$ on $D$.
\end{proof}
Using this result, we may refine the definition of a monomial automorphism given in \S\ref{ss:autos of q-comm structures} as follows:
\begin{definition}\label{def:def_monomial_autos} We call an automorphism of $k_q[x^{\pm1},y^{\pm1}]$ or $D$ a \textit{monomial automorphism} if it is defined by an element of $SL_2(\mathbb{Z})$ as in \eqref{eq:q action of SL2 def}.  
\end{definition}
It is well known that up to conjugation, $SL_2(\mathbb{Z})$ has only four non-trivial finite subgroups: the cyclic groups of orders 2, 3, 4 and 6 (see, for example, \cite[\S1.10.1]{Lorenz}).   Table~\ref{fig:table_of_maps} lists conjugacy class representatives for each of these groups, and we will use the same symbols to refer to both these automorphisms and their extensions to $D$.
\begin{table}[h]
\centering
\begin{tabular}{c|rl}
Order & \multicolumn{2}{c}{Automorphism} \\ \hline
2 & $\tau:$&$ x \mapsto x^{-1},\  y \mapsto y^{-1}$ \\
3 & $\sigma:$&$ x \mapsto y, \ y \mapsto \hat{q}y^{-1}x^{-1}$ \\
4 & $\rho:$&$ x \mapsto y^{-1}, \ y \mapsto x$ \\
6 & $\eta:$&$ x \mapsto y^{-1}, \ y \mapsto \hat{q}yx$
\end{tabular}
\caption{Conjugacy class representatives of finite order monomial automorphisms on $k_q[x^{\pm1},y^{\pm1}]$.}\label{fig:table_of_maps}
\end{table}

As noted in \cite[\S1.3]{Baudry}, it is sufficient to consider the fixed rings for one representative of each conjugacy class. We will therefore approach Theorem~\ref{res:thm_monomial_results} by examining the fixed rings of $D$ under each of the automorphisms in Table~\ref{fig:table_of_maps} in turn.

By Theorem~\ref{res:order_2_monomial_result}, we already know that $D^{\tau} \cong D$.  This is proved by methods from noncommutative algebraic geometry in \cite[\S13.6]{SV1}, but the authors also provide a pair of $q$-commuting generators for $D^{\tau}$, namely
\begin{equation}\label{eq:order_2_gens}
 u = (x-x^{-1})(y^{-1}-y)^{-1}, \quad v = (xy - x^{-1}y^{-1})(y^{-1}-y)^{-1}.
\end{equation}
We can use this and Theorem~\ref{res:epic_theorem} to check that the fixed ring of $D$ under an order 4 monomial automorphism is again isomorphic to $D$.
\begin{theorem}\label{res:order_4_thm}
 Let $\rho$ be the order 4 automorphism on $D$ defined by
\[\rho: x \mapsto y^{-1}, \quad y \mapsto x.\]
Then $D^{\rho} \cong D$ as $k$-algebras.
\end{theorem}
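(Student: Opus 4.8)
The plan is to exploit the relation $\rho^2 = \tau$, which reduces the order-$4$ problem to an order-$2$ problem over the already-understood fixed ring $D^{\tau}$.

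First I would observe that $\rho^2\colon x \mapsto x^{-1},\ y \mapsto y^{-1}$ coincides with $\tau$, so $D^{\rho} \subseteq D^{\tau}$ and $\rho$ restricts to an automorphism $\bar{\rho}$ of $D^{\tau}$ with $\bar{\rho}^2 = \mathrm{id}$; consequently $D^{\rho} = (D^{\tau})^{\bar{\rho}}$. By Theorem~\ref{res:order_2_monomial_result} there is an explicit $k$-algebra isomorphism $\Theta\colon D \xrightarrow{\ \sim\ } D^{\tau}$ carrying the $q$-commuting generators of $D$ to the pair $u,v$ of \eqref{eq:order_2_gens} (relabelling or inverting as needed so that they satisfy $xy = qyx$). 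Transporting $\bar{\rho}$ through $\Theta$ yields an automorphism $\psi := \Theta^{-1}\bar{\rho}\,\Theta$ of $D$ with $\psi^2 = \mathrm{id}$, and $D^{\rho} = (D^{\tau})^{\bar{\rho}} \cong D^{\psi}$.

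Next I would compute $\psi$ explicitly in terms of the building blocks $x - x^{-1}$, $y^{-1} - y$ and $xy - x^{-1}y^{-1}$. Using $xy = qyx$ to straighten the images, one gets $\rho(x - x^{-1}) = y^{-1} - y$, $\rho(y^{-1} - y) = -(x - x^{-1})$ and $\rho(xy - x^{-1}y^{-1}) = q(xy^{-1} - x^{-1}y)$, whence $\rho(u) = -u^{-1}$ and $\rho(v) = -q(xy^{-1}-x^{-1}y)(x - x^{-1})^{-1}$. The first formula shows that one of the two $q$-commuting generators of $D$ is sent by $\psi$ to minus its inverse, exactly as in the map $\varphi$ of \eqref{eq:def_varphi}; rewriting $\rho(v)$ as a rational expression in $u$ and $v$ and performing a suitable change of variables then shows that, up to this change of coordinates, $\psi$ is precisely the automorphism $\varphi$ of \S\ref{s:fixed ring} (which is, indeed, why $\varphi$ was singled out there). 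Granting this, Theorem~\ref{res:epic_theorem} applies to $D^{\psi}$, giving $D^{\rho} \cong D^{\psi} \cong D$; since Theorems~\ref{res:order_2_monomial_result} and~\ref{res:epic_theorem} each preserve the parameter $q$, so does the composite isomorphism, in agreement with the statement.

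The main obstacle is this last identification: expressing $\rho(v) = -q(xy^{-1}-x^{-1}y)(x - x^{-1})^{-1}$ through $u$ and $v$ and pinning down the exact change of variables carrying $\psi$ onto $\varphi$. This is a mechanical but somewhat delicate computation with the non-commuting elements $x^{\pm1} \pm x^{\mp1}$, $y^{\pm1} \pm y^{\mp1}$, $xy \pm x^{-1}y^{-1}$ and $xy^{-1} \pm x^{-1}y$ and the $q$-commutation rules relating them; once it is done, everything else is bookkeeping and an appeal to Theorem~\ref{res:epic_theorem}.
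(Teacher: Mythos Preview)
Your approach is correct and essentially identical to the paper's: reduce via $\rho^2=\tau$ to an order-$2$ map on $D^{\tau}=k_q(u,v)$, identify that map with $\varphi$, and invoke Theorem~\ref{res:epic_theorem}.

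The only difference is that the step you flag as the ``main obstacle'' is in fact a two-line computation in the paper, not a delicate one. Working directly with the definitions one gets
\[
\rho(v) \;=\; (y^{-1}x - yx^{-1})(x^{-1}-x)^{-1} \;=\; (u^{-1} - qu)\,v^{-1},
\]
so no intermediate rewriting through $xy^{-1}\pm x^{-1}y$ is needed. Together with $\rho(u)=-u^{-1}$ this is exactly $\varphi$ acting on $k_{q^{-1}}(v,u)$ (with $v$ in the role of $x$ and $u$ in the role of $y$, the parameter being $q^{-1}$); the switch from $k_q(u,v)$ to $k_{q^{-1}}(v,u)$ is handled by Theorem~\ref{res:AD_prop_intro}(i). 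So rather than transporting $\bar\rho$ back to $D$ through an abstract $\Theta$ and then hunting for a change of variables, it is simpler to compute $\rho(u)$ and $\rho(v)$ inside $D$ and read off $\varphi$ directly.
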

\begin{proof}
We can first notice that $\rho^2 = \tau$, so the fixed ring $D^{\rho}$ is a subring of $D^{\tau}$.  By \cite[\S13.6]{SV1}, $D^{\tau} = k_q(u,v)$ with $u,v$ as in \eqref{eq:order_2_gens}, so it is sufficient to consider the action of $\rho$ on $u$ and $v$.  By direct computation, we find that
\begin{equation*}
 \begin{aligned}
  \rho(u) &= (y^{-1} - y)(x^{-1} - x)^{-1} = -u^{-1}\\
\rho(v) &= (y^{-1}x - yx^{-1})(x^{-1} - x)^{-1} = (u^{-1} - qu)v^{-1}
 \end{aligned}
\end{equation*}
i.e. $\rho$ acts as $\varphi$ from \eqref{eq:def_varphi} on $k_{q^{-1}}(v,u)$, which by Theorem~\ref{res:AD_prop_intro} is isomorphic to $k_q(u,v)$.  Now by Theorem~\ref{res:epic_theorem}, $D^{\rho} \cong D^{\varphi} \cong D$.\end{proof}

We now turn our attention to the fixed ring of $D$ under the order 3 automorphism $\sigma$ defined in Table~\ref{fig:table_of_maps}, where matters become significantly more complicated.  Attempting to construct generators by direct analogy to the previous cases fails, and computations become far more difficult as both $x$ and $y$ appear in the denominator of any potential generator.  While the same theorem can be proved for this case, our chosen generators are unfortunately quite unintuitive.

For the following results, we will assume that $k$ contains a primitive third root of unity, denoted $\omega$\nom{W@$\omega$ (root of unity)}.  As with Theorem~\ref{res:epic_theorem}, we define certain elements which are fixed by $\sigma$ or are acted upon as multiplication by a power of $\omega$.  We set
\begin{equation}\label{eq:order_3_building_blocks}\begin{aligned}
 a &= x + \omega y + \omega^2 \hat{q}y^{-1}x^{-1} \\
b &= x^{-1} + \omega y^{-1} + \omega^2 \hat{q}yx \\
c &= y^{-1}x + \omega \hat{q}^3y^2x + \omega^2\hat{q}^3y^{-1}x^{-2} \\
\pa &= x + y + \hat{q}y^{-1}x^{-1} \\
\pb &= x^{-1} + y^{-1} + \hat{q} yx \\
\pc &= y^{-1}x + \hat{q}^3y^2x + \hat{q}^3y^{-1}x^{-2}
\end{aligned}\end{equation}
The elements $\pa$, $\pb$ and $\pc$ are fixed by $\sigma$, while $\sigma$ acts on $a$, $b$ and $c$ as multiplication by $\omega^2$.  We can further define
\begin{equation}\label{eq:order_3_gens}\begin{aligned}
 g &= a^{-1}b \\
 f &= \pb - \omega^2\pa g + (\omega^2-\omega)\hat{q}^{-1}(\omega^2g^2 + \hat{q}^2g^{-1})
\end{aligned}\end{equation}
\begin{proposition}\label{res:order_3_q_comm_proof}
Let $k$ be a field of characteristic 0 that contains a primitive third root of unity $\omega$ and a square root of $q$, denoted by $\hat{q}$.  The elements $f$ and $g$ in \eqref{eq:order_3_gens} are fixed by $\sigma$ and satisfy $fg = qgf$.
\end{proposition}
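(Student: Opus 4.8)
The plan is to first determine how $\sigma$ acts on the building blocks in \eqref{eq:order_3_building_blocks}, which makes the $\sigma$-invariance of $f$ and $g$ immediate, and then to reduce the relation $fg = qgf$ to a finite identity that can be checked in the quantum torus. The key observation is that $\sigma$ cyclically permutes the three monomials $x$, $y$, $\hat{q}\,y^{-1}x^{-1}$: we have $\sigma(x)=y$, $\sigma(y)=\hat{q}\,y^{-1}x^{-1}$, and a short computation using $xy=qyx$ and $q=\hat{q}^2$ gives $\sigma(\hat{q}\,y^{-1}x^{-1})=x$. After the scalar normalisations built into \eqref{eq:order_3_building_blocks}, the same holds for the triple $\{x^{-1},\,y^{-1},\,\hat{q}\,yx\}$ occurring in $\pb$ and $b$ and for the triple $\{y^{-1}x,\,\hat{q}^3y^2x,\,\hat{q}^3y^{-1}x^{-2}\}$ occurring in $\pc$ and $c$. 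Hence the symmetric sums $\pa,\pb,\pc$ are $\sigma$-fixed, while the $\omega$-twisted sums $a,b,c$ are $\sigma$-eigenvectors of eigenvalue $\omega^{-1}=\omega^2$. Since $a$ and $b$ are visibly nonzero (their images in $k_q(y)(\!(x)\!)$ are nonzero), $g=a^{-1}b$ is well defined and $\sigma(g)=(\omega^2 a)^{-1}(\omega^2 b)=g$; as $f$ is built out of $\pa$, $\pb$ and $g$ by ring operations, $f$ is $\sigma$-fixed too. This settles the first claim.

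For the relation $fg = qgf$, substitute \eqref{eq:order_3_gens} and collect terms, using $gg^{-1}=g^{-1}g=1$:
\[
 fg-qgf=(\pb g-qg\pb)-\omega^2\big(\pa g^2-qg\pa g\big)+(\omega^2-\omega)\hat{q}^{-1}(1-q)\big(\omega^2 g^3+\hat{q}^2\big).
\]
So it is enough to compute the two ``mixed $q$-commutators'' $\pb g-qg\pb$ and $\pa g^2-qg\pa g$, together with the cube $g^3$, and verify that the three contributions cancel. Clearing the factor $a^{-1}$ in $g=a^{-1}b$ by multiplying through on the left by an appropriate power of $a$, each of these becomes an identity among the explicit monomials comprising $a$, $b$, $\pa$, $\pb$, which can be checked directly from the single relation $xy=qyx$ while keeping track of the powers of $\hat{q}$.

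The routine but substantial part of the proof is precisely this last verification, and it is the main obstacle. In contrast with the order 2 case of Theorem~\ref{res:epic_theorem}, where the building blocks were linear in $x^{\pm 1}$ and the relevant products had only a few terms, here $b$ and $c$ involve $x^{-2}$ and $y^2$, so products such as $ab$, $ba$ and $\pb a$ expand into many monomials and the $\hat{q}$-exponent bookkeeping is delicate; organising the calculation --- for instance by first tabulating all pairwise products of the building blocks $a,b,c,\pa,\pb,\pc$ and only then assembling $fg-qgf$ --- is where essentially all of the work lies. A convenient internal check is that every term of the target identity is built from $\sigma$-eigenvectors, so applying $\sigma$ to it must return the same identity; this flags sign errors and stray powers of $\hat{q}$ as they occur.
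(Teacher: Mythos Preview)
Your treatment of the $\sigma$-invariance is correct and matches the paper's.

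For the relation $fg=qgf$, however, your plan has a gap. The step ``clearing the factor $a^{-1}$ in $g=a^{-1}b$ by multiplying through on the left by an appropriate power of $a$'' does not work as stated: in $g^2=a^{-1}ba^{-1}b$ and $g^3=(a^{-1}b)^3$ the copies of $a^{-1}$ sit at \emph{internal} positions, and since $a$ and $b$ do not commute, no left multiplication by $a^k$ will remove them. To get a polynomial identity in $x,y$ you must first move $a$ past $b$, $\pa$ and $\pb$, i.e.\ you need precisely the near-commutation relations among the building blocks that you were hoping to bypass. So the ``routine'' verification you defer is not only substantial, it requires an ingredient your outline does not supply.

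The paper organises exactly this ingredient. It first records four short identities obtained by multiplying out monomials:
\[
a\pa=\pa a+(\omega-\omega^2)(\hat q-\hat q^{-1})b,\qquad a\pb=\hat q^{2}\pb a+(\hat q^{-2}-\hat q^{2})c,
\]
\[
\pa b=\hat q^{2}b\pa+\omega(\hat q^{-2}-\hat q^{2})c,\qquad \pb b=b\pb+(\omega^2-\omega)(\hat q-\hat q^{-1})a,
\]
the point being that the ``error terms'' land in the span of $a,b,c$. These yield closed expressions for $g\pa$ and $g\pb$ in terms of $\pa g$, $\pb g$, $g^2$, constants, and the single auxiliary quantity $a^{-1}c$ (or $a^{-1}cg$). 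One then computes $\hat q^{2}gf$ directly from the definition of $f$: the two $a^{-1}c$-contributions coming from $\hat q^{2}g\pb$ and from $-\hat q^{2}\omega^{2}g\pa g$ cancel exactly, and what remains is $\pb g-\omega^2\pa g^2+(\omega^2-\omega)\hat q^{-1}(\omega^2 g^3+\hat q^{2})=fg$. Thus the four commutation identities, together with the appearance of $c$ as an intermediary, are what make the cancellation visible; your brute-force expansion into monomials in $x,y$ would in principle reach the same conclusion, but only after first rederiving these relations in disguise.
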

\begin{proof}
 As always the first statement is clear: $\sigma$ acts on $a$ and $b$ by $\omega^2$ and therefore fixes $g$, and since $\pa$ and $\pb$ are already fixed by $\sigma$ we can now see that $\sigma(f) = f$.

To verify the second statement, we need to understand how $g$ interacts with $\pa$ and $\pb$.  Simple multiplication of polynomials yields the identities
\begin{align*}
 a\pa &=\pa a + (\omega-\omega^2)(\hat{q}-\hat{q}^{-1})b \\
a \pb  &= \hat{q}^2\pb a + (\hat{q}^{-2} - \hat{q}^2)c \\
\pa b &= \hat{q}^2b\pa  + \omega(\hat{q}^{-2} - \hat{q}^2)c \\
\pb b &= b\pb  + (\omega^2-\omega)(\hat{q}-\hat{q}^{-1})a
\end{align*}
and hence
\begin{align*}
 g\pa  &= \hat{q}^{-2}\pa g - \hat{q}^{-2}\omega(\hat{q}^{-2}-\hat{q}^2)a^{-1}c - (\omega-\omega^2)\hat{q}^{-2}(\hat{q}-\hat{q}^{-1})g^2 \\
g \pb  &= \hat{q}^{-2}\pb g - (\omega^2-\omega)(\hat{q}-\hat{q}^{-1}) - \hat{q}^{-2}(\hat{q}^{-2} - \hat{q}^2)a^{-1}cg
\end{align*}
since $g = a^{-1}b$.

Now by direct computation, we find that
\begin{align*}
 \hat{q}^2gf &= \hat{q}^2g\pb - \hat{q}^2\omega^2g\pa g + (w^2-w)\hat{q}(\omega^2g^3 + \hat{q}^2)\\
&= \pb g - (\omega^2-\omega)(\hat{q}-\hat{q}^{-1})\hat{q}^2 - (\hat{q}^{-2} - \hat{q}^2)a^{-1}cg \\
& \qquad - \omega^2\pa g^2 + (\hat{q}^{-2}-\hat{q}^2)a^{-1}cg + \omega^2(\omega-\omega^2)(\hat{q}-\hat{q}^{-1})g^3 \\
& \qquad + (\omega^2-\omega)\hat{q}(\omega^2g^3 + \hat{q}^2) \\
&= \pb g - \omega^2 \pa g^2 + (\omega^2-\omega)\hat{q}^{-1}(\omega^2g^3 + \hat{q}^2)\\
&=fg\qedhere
\end{align*}
\end{proof}
\begin{theorem}\label{res:epic_theorem_2}
Let $k$, $f$ and $g$ be as in Proposition~\ref{res:order_3_q_comm_proof}.  Then the division ring $k_q(f,g)$ generated by $f$ and $g$ over $k$ is equal to the fixed ring $D^{\sigma}$, and hence $D^{\sigma} \cong D$ as $k$-algebras.
\end{theorem}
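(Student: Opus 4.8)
The plan is to mimic the proof of Theorem~\ref{res:epic_theorem}. Put $R:=k_q(f,g)$, the division subring of $D$ generated by the elements $f,g$ of \eqref{eq:order_3_gens}; by Proposition~\ref{res:order_3_q_comm_proof} it already lies inside $D^{\sigma}$, so it suffices to force $[D:R]\le 3$. For then $R\subseteq D^{\sigma}\subsetneq D$ together with the tower law for division ring extensions gives $3\ge [D:R]=[D:D^{\sigma}]\,[D^{\sigma}:R]\ge 2\,[D^{\sigma}:R]$ (here $[D:D^{\sigma}]\ge 2$ because $D^{\sigma}$ is a proper subdivision ring, consistent with Artin's inequality $[D:D^{\sigma}]\le|\langle\sigma\rangle|=3$), whence $[D^{\sigma}:R]=1$, i.e.\ $D^{\sigma}=R$. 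The isomorphism $D^{\sigma}\cong D$ is then automatic: unlike in Theorem~\ref{res:epic_theorem}, where a change of variables was needed, here $fg=qgf$ already, and since $f,g$ are nonzero the algebra map $k_q[x,y]\to D$, $x\mapsto f$, $y\mapsto g$, has kernel a prime ideal containing neither $x$ nor $y$, hence — as $q$ is not a root of unity, so the quantum torus $k_q[x^{\pm1},y^{\pm1}]$ is simple — the zero ideal; thus $k_q[f,g]$ is a copy of the quantum plane and $R$ a copy of its division ring of fractions.

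The technical core is an analogue of the computation Lemma~\ref{res:computation_lemma}: showing that enough $\sigma$-invariant and $\sigma$-semi-invariant elements of $D$ lie in $R$. Using $g=a^{-1}b$, the defining formula for $f$ in \eqref{eq:order_3_gens}, and the polynomial identities among $a,b,\theta_1,\theta_2,\theta_3$ recorded in the proof of Proposition~\ref{res:order_3_q_comm_proof}, I would establish: (i) $\theta_1,\theta_2,\theta_3\in R$; (ii) $a^{3}\in R$ — note $\sigma(a^{3})=\omega^{6}a^{3}=a^{3}$, the analogue of $b^{2}\in R$ in the order-$2$ case; (iii) writing $\overline{a}:=x+\omega^{2}y+\omega\hat{q}\,y^{-1}x^{-1}$, one has $\sigma(\overline{a})=\omega\overline{a}$, hence $\sigma(a\overline{a})=\omega^{2}\!\cdot\!\omega\,a\overline{a}=a\overline{a}$, and $a\overline{a}\in R$; and (iv) the conjugation $c_a\colon r\mapsto ara^{-1}$ carries $f,g$ (and $a^{-1}fa,\,a^{-1}ga$) into $R$, so that $c_a$ restricts to an automorphism of $R$, exactly as in Lemma~\ref{res:gamma_def}. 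Granting (i)--(iv), the proof finishes as in the order-$2$ case: since $c_a$ normalises $R$ and $a^{3}\in R$, the subring $R\langle a\rangle\subseteq D$ equals $R+Ra+Ra^{2}$, a left $R$-module of rank $\le 3$; being a domain finite-dimensional over the division ring $R$, it is itself a division ring (so no analogue of Proposition~\ref{res:gth_prop} is even needed, $R\langle a\rangle$ visibly lying inside $D$). Finally $a+\overline{a}+\theta_1=3x$ and $\omega^{2}a+\omega\overline{a}+\theta_1=3y$, while $\overline{a}=a^{-1}(a\overline{a})\in Ra^{2}$ (using $a^{-3}\in R$ and that $a$ normalises $R$) and $\theta_1\in R$; hence $x,y\in R\langle a\rangle$, so $R\langle a\rangle=D$ and $[D:R]\le 3$.

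The main obstacle is the computation lemma, steps (i)--(iv). In the order-$2$ and order-$4$ cases one of $x,y$ could be kept out of the denominators, and the order-$4$ case even reduced to the order-$2$ one via $\rho^{2}=\tau$ (Theorem~\ref{res:order_4_thm}); but $\sigma$ has prime order, admits no such reduction, and both $x$ and $y$ occur in the denominators of the natural generators, so the rational expressions involved are markedly heavier. The genuinely creative part is to guess the correct auxiliary elements ($\overline{a}$, the products $a\overline{a}$ and $a^{3}$, the conjugates $c_a^{\pm1}(f),c_a^{\pm1}(g)$) together with the exact $k(y)$-coefficients so that, after expansion, everything collapses into $R$; once the right identities are written down, their verification is routine and can be carried out mechanically inside $k_q(y)(\!(x)\!)$ as in Appendix~\ref{c:appendix}.
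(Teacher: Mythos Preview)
Your outline is sound but takes a genuinely different route from the paper. You propose to mimic Theorem~\ref{res:epic_theorem} directly: establish a computation lemma (your (i)--(iv)) showing $\theta_1,\theta_2,\theta_3,a^3,a\overline{a}\in R$ and that $c_a$ normalises $R$, then build the degree-$\le 3$ extension $R\langle a\rangle=R+Ra+Ra^2$ and recover $x,y$ from $a+\overline{a}+\theta_1=3x$ and $\omega^2 a+\omega\overline{a}+\theta_1=3y$. The paper instead uses a shortcut that bypasses (ii)--(iv) entirely: once step~(i) is done --- which the paper accomplishes via an explicit Magma-verified formula expressing $\theta_1$ in terms of $f$ and $g$, with $\theta_2,\theta_3$ then following easily from the definition of $f$ and the identity $\theta_1\theta_2-\hat{q}^2\theta_2\theta_1=(\hat{q}^{-2}-\hat{q}^2)\theta_3-3\hat{q}^2+3$ --- one invokes Baudry's explicit list of generators for $k_q[x^{\pm1},y^{\pm1}]^\sigma$ to see that this fixed Laurent polynomial ring lies in $k_q(f,g)$, and then Faith's theorem on quotient rings of fixed rings gives $D^\sigma=Q\big(k_q[x^{\pm1},y^{\pm1}]^\sigma\big)\subseteq k_q(f,g)$ directly, with no extension-degree bookkeeping at all. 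So both approaches share the genuinely hard computation~(i); your route then piles on the further (presumably equally Magma-dependent) computations (ii)--(iv), while the paper trades those for two citations to the literature on the Laurent polynomial fixed ring. Your approach is more self-contained and makes the parallel with the order-$2$ case transparent; the paper's is considerably shorter because it leverages existing structural results rather than rebuilding the Galois-theoretic tower by hand.
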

\begin{proof}
We claim that it suffices to prove $k_q[x^{\pm1},y^{\pm1}]^{\sigma} \subset k_q(f,g)$.  Indeed, $k_q[x^{\pm1},y^{\pm1}]$ is a Noetherian domain, and therefore both left and right Ore, while $\langle \sigma \rangle$ is a finite group.  We can therefore apply \cite[Theorem~1]{faith} to see that
\[Q\big(k_q[x^{\pm1},y^{\pm1}]^{\sigma}\big) = D^{\sigma}\]
where $Q(R)$ denotes the full ring of fractions of a ring $R$.  Hence if $k_q[x^{\pm1},y^{\pm1}]^{\sigma} \subset k_q(f,g)$, we see that
\[Q(k_q[x^{\pm1},y^{\pm1}]^{\sigma}) \subseteq k_q(f,g) \subseteq D^{\sigma} \Rightarrow k_q(f,g) = D^{\sigma}.\]

We will show that $k_q[x^{\pm1}, y^{\pm1}]^{\sigma}$ is generated as an algebra by the elements $\pa$, $\pb$ and $\pc$ from \eqref{eq:order_3_building_blocks}, and then check that these three elements are in $k_q(f,g)$. 

By \cite[Th\'eor\`eme~2.1]{Baudry}, $k_q[x^{\pm1}, y^{\pm1}]^{\sigma}$ is generated as a Lie algebra with respect to the commutation bracket by seven elements:
\begin{gather*}
R_{0,0} = 1, \quad R_{1,0} = x+ y + \hat{q}y^{-1}x^{-1}, \quad R_{1,1} = x^{-1} + y^{-1} + \hat{q}yx, \\
R_{1,2} = y^{-1}x + \hat{q}^3y^2x + \hat{q}^3y^{-1}x^{-2}, \quad R_{1,3} = y^{-1}x^2 + \hat{q}^5y^3x + \hat{q}^8y^{-2}x^{-3}, \\
R_{2,0} = x^2+y^2+\hat{q}^4y^{-2}x^{-2}, \quad R_{3,0} = x^3 + y^3 + \hat{q}^9y^{-3}x^{-3}.
\end{gather*}
and so it is also generated as a $k$-algebra by these elements.  $R_{1,0}$, $R_{1,1}$ and $R_{1,2}$ are precisely the aforementioned elements $\pa$, $\pb$ and $\pc$, and it is a simple computation to verify that $R_{1,3}$, $R_{2,0}$ and $R_{3,0}$ are in the algebra generated by these three. 

It is clear from the definition of $f$ that once we have found either $\pa$ or $\pb$ in $k_q(f,g)$ we get the other one for free, and we can also observe that
\[\pa \pb - \hat{q}^2 \pb \pa = (\hat{q}^{-2} - \hat{q}^2) \pc - 3\hat{q}^2 + 3 \in k_q(f,g)\]
so $\pc \in k_q(f,g)$ follows from $\pa$, $\pb \in k_q(f,g)$.  Unfortunately there seems to be no easy way to make the first step, i.e. verify that either $\pa$ or $\pb$ is in $k_q(f,g)$. 

In fact, the element $\pa$ can be written in terms of $f$ and $g$ as in the following equality; this is the result of a long and tedious calculation, and was verified using the computer algebra system Magma (v2.18) and the methods described in Appendix~\ref{c:appendix}.  We find that
\begin{gather*}
\pa = (\omega - \omega^2)^{-1}\hat{q}^{-2}g^{-1}f + (\omega^2\hat{q} + \omega\hat{q}^{-1})g + (\hat{q} + \hat{q}^{-1})g^{-2}  \\
\qquad \qquad    + (\omega - \omega^2)\Big(\hat{q}^{-2}g^3 + (\hat{q}^2+1) + \hat{q}^4g^{-3}\Big)f^{-1}.
\end{gather*}
Therefore $\pa \in k_q(f,g)$, and the result now follows.
\end{proof}
\begin{remark}\label{rem:order 3 snark}
By analogy to the pairs of generators in \eqref{eq:order_2_gens} and Theorem~\ref{res:epic_theorem}, we might hope to find similarly intuitive generators for $D^{\sigma}$.  Having set $g := a^{-1}b$ as in the proof above, computation in Magma shows that there does exist a left fraction $f' \in D^{\sigma}$ such that $f'g = qgf'$; unfortunately, $f'$ takes 9 pages to write down.  In the interest of brevity, we chose to use here the less intuitive $f$ defined in \eqref{eq:order_3_gens}; the original $f'$ can be found in Appendix~\ref{s:magma_example}.
\end{remark}

In a similar manner to Theorem~\ref{res:order_4_thm}, we can now describe the one remaining fixed ring $D^{\eta}$ using our knowledge of the fixed rings with respect to monomial maps of order 2 and 3.
\begin{theorem}\label{res:order_6_thm}
Let $\eta$ be the order 6 map defined in Table~\ref{fig:table_of_maps}, and suppose $k$ contains a primitive third root of unity and both a second and third root of $q$.  Then $ D^{\eta} \cong D$ as $k$-algebras.
\end{theorem}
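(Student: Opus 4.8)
The plan is to mimic the proof of Theorem~\ref{res:order_4_thm}: exploit the subgroup structure of $\langle\eta\rangle$ to reduce the order-$6$ case to the order-$2$ and order-$3$ cases already settled. First I would record the identities (checked directly, or via the $SL_2(\mathbb{Z})$-action of Proposition~\ref{res:action of SL2, q version}, under which $\eta$ corresponds to $\left(\begin{smallmatrix}1&-1\\1&0\end{smallmatrix}\right)$) that, as automorphisms of $D$,
\[
\eta=\tau\sigma=\sigma\tau,\qquad \eta^{2}=\sigma^{-1},\qquad \eta^{3}=\tau,
\]
with $\sigma,\tau$ the monomial automorphisms of Table~\ref{fig:table_of_maps}. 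Hence $\sigma,\tau\in\langle\eta\rangle$; the cyclic groups $\langle\sigma\rangle$ and $\langle\tau\rangle$ have coprime orders $3$ and $2$ and commute, so $\langle\eta\rangle=\langle\tau\rangle\times\langle\sigma\rangle$ and therefore
\[
D^{\eta}=D^{\langle\tau,\sigma\rangle}=\bigl(D^{\tau}\bigr)^{\sigma}=\bigl(D^{\sigma}\bigr)^{\tau},
\]
where on the right-hand side $\sigma$ (resp.\ $\tau$) denotes its restriction to the fixed ring in question; this restriction is nothing but $\eta$ restricted to that ring, since $\tau$ acts trivially on $D^{\tau}$ and $\sigma$ acts trivially on $D^{\sigma}$.

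I would then work with $D^{\eta}=\bigl(D^{\tau}\bigr)^{\sigma}$. By Theorem~\ref{res:order_2_monomial_result} we have $D^{\tau}=k_{q'}(u,v)$, where $u,v$ are the explicit $q$-commuting generators of \eqref{eq:order_2_gens} and $q'\in\{q,q^{-1}\}$ (harmless by Theorem~\ref{res:AD_prop_intro}). Since $\sigma$ commutes with $\tau$ it restricts to an automorphism $\bar\sigma$ of $D^{\tau}$, of order exactly $3$ (it is not the identity, e.g.\ $\sigma(u)\ne u$). A short direct computation with $u,v$, using only $xy=qyx$ and the definitions in \eqref{eq:order_2_gens}, gives
\[
\bar\sigma(u)=-\hat q\,v^{-1},\qquad \bar\sigma(v)=-q\,uv^{-1}.
\]
Thus $\bar\sigma$ is a monomial automorphism of $k_{q'}(u,v)$: it sends $u,v$ to scalar multiples of monomials in $u,v$ with exponent matrix $\left(\begin{smallmatrix}0&-1\\1&-1\end{smallmatrix}\right)\in SL_2(\mathbb{Z})$, which indeed has order $3$.

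It remains to recognise $\bigl(k_{q'}(u,v)\bigr)^{\bar\sigma}$. Since $SL_2(\mathbb{Z})$ has, up to conjugacy, a unique cyclic subgroup of order $3$ (\cite[\S1.10.1]{Lorenz}), the subgroup $\langle\bar\sigma\rangle$ is conjugate (after absorbing the scalars $-\hat q,-q$ by a scalar change of variables — this is where the roots of unity and roots of $q$ present in $k$ are used) to the subgroup generated by the Table~\ref{fig:table_of_maps} automorphism $\sigma$ on the $q'$-division ring. By the conjugacy-invariance of fixed rings \cite[\S1.3]{Baudry}, $D^{\eta}=\bigl(k_{q'}(u,v)\bigr)^{\bar\sigma}\cong D_{q'}^{\sigma}$, and Theorem~\ref{res:epic_theorem_2} (applied with parameter $q'$) gives $D_{q'}^{\sigma}\cong D_{q'}\cong D$. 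Hence $D^{\eta}\cong D$ as $k$-algebras. The routine steps — verifying the three identities for $\eta$ and computing $\bar\sigma(u),\bar\sigma(v)$ — are straightforward; the one genuinely delicate point is this last one, since Theorem~\ref{res:epic_theorem_2} is stated for the specific $\sigma$ of Table~\ref{fig:table_of_maps} rather than for an arbitrary order-$3$ monomial automorphism, so the normalisation of scalars and the change of variables must be carried out carefully, tracking exactly which roots of unity and of $q$ it requires.

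Should this bookkeeping prove awkward, an equivalent route is $D^{\eta}=\bigl(D^{\sigma}\bigr)^{\tau}$: here $D^{\sigma}=k_{q}(f,g)\cong D$ with $f,g$ as in \eqref{eq:order_3_gens}, one checks that $\tau$ swaps $a\leftrightarrow b$ and $\theta_{1}\leftrightarrow\theta_{2}$ in \eqref{eq:order_3_building_blocks} so that $\tau(g)=g^{-1}$, and then expresses $\tau(f)$ in terms of $f,g$ via the formula for $\theta_{1}$ from the proof of Theorem~\ref{res:epic_theorem_2}, aiming to identify the induced order-$2$ automorphism with a conjugate of $\tau$ or of the map $\varphi$ of \eqref{eq:def_varphi} and conclude via Theorem~\ref{res:order_2_monomial_result} or Theorem~\ref{res:epic_theorem}; as Remark~\ref{rem:order 3 snark} cautions, such expressions can grow large, so this variant would likely lean on the Magma code of Appendix~\ref{c:appendix}.
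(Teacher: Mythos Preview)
Your approach is correct and essentially the same as the paper's: both reduce via $\eta^{3}=\tau$ to computing the induced order-$3$ action on $D^{\tau}=k_q(u,v)$, identify it (after a scalar change of variables) with the monomial automorphism $\sigma$, and conclude by Theorem~\ref{res:epic_theorem_2}. The paper does not pause to note $\eta=\sigma\tau$ or the product decomposition $\langle\eta\rangle=\langle\tau\rangle\times\langle\sigma\rangle$; it simply computes $\eta(u)=-\hat q\,v^{-1}$ and $\eta(v)=-v^{-1}u$ directly (your $\bar\sigma(v)=-q\,uv^{-1}$ is the same element), then writes down the explicit rescaling $u_{1}=-p^{-1}\hat q^{-1}u$, $v_{1}=pv$ with $p=\sqrt[3]{q^{-1}}$, rather than invoking conjugacy in $SL_2(\mathbb{Z})$ abstractly.
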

\begin{proof}
We first note that $\eta^3 = \tau$, so $D^{\eta} = (D^{\tau})^{\eta}$.  Take $u,v$ in \eqref{eq:order_2_gens} as our generators of $D^{\tau}$, and now we can observe that the action of $\eta$ on $u$ and $v$ is as follows:
\begin{align*}
\eta(u) &= (y^{-1} - y)(\hat{q}^{-1}x^{-1}y^{-1} - \hat{q}yx)^{-1} \\
&=-\hat{q}(y^{-1} - y)(xy -x^{-1}y^{-1})^{-1} \\
&=-\hat{q}v^{-1} \\
\eta(v) &= (\hat{q}y^{-1}yx - \hat{q}^{-1}yx^{-1}y^{-1})(\hat{q}^{-1}x^{-1}y^{-1} - \hat{q}yx)^{-1} \\
&= -q(x-x^{-1})(xy-x^{-1}y^{-1})^{-1} \\
&= -v^{-1}u 
\end{align*}
Let $p = \sqrt[3]{q^{-1}}$. By making a change of variables $u_1= -p^{-1}\hat{q}^{-1}u$, $v_1 =  pv$ in $D^{\tau} = k_q(u,v)$, we see that $\eta$ acts on $D^{\tau}$ as
\[\eta(u_1) = v_1^{-1}, \quad \eta(v_1) = \hat{q}^{-1}v_1^{-1}u_1\]
This is a monomial map of order 3 and so its fixed ring is isomorphic to $D^{\sigma}$, as noted in \cite[\S1.3]{Baudry}.  Now by Theorem~\ref{res:epic_theorem_2} and Theorem~\ref{res:order_2_monomial_result}, $D^{\eta} = (D^{\tau})^{\eta} \cong (D^{\tau})^{\sigma} \cong D^{\tau} \cong D$.
\end{proof}
\begin{theorem}\label{res:thm_monomial_results_2}
Let $k$ be a field of characteristic zero, containing a primitive third root of unity $\omega$ and both a second and a third root of $q$.  If $G$ is a finite group of monomial automorphisms of $D$ then $D^G \cong D$ as $k$-algebras.
\end{theorem}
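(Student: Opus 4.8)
\emph{Proof proposal.} The plan is to reduce an arbitrary finite group of monomial automorphisms to one of the four conjugacy-class representatives listed in Table~\ref{fig:table_of_maps}, for which the statement has already been proved in Theorem~\ref{res:order_2_monomial_result}, Theorem~\ref{res:epic_theorem_2}, Theorem~\ref{res:order_4_thm} and Theorem~\ref{res:order_6_thm}. The hypotheses imposed on $k$ in the statement are precisely the union of the hypotheses needed to invoke those four results.

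First I would record that the action of $SL_2(\mathbb{Z})$ on $D$ defined in \eqref{eq:q action of SL2 def} is faithful: if $g=\begin{pmatrix}a&b\\c&d\end{pmatrix}$ acts as the identity, then comparing monomials in $g.y=\expq{ac}y^ax^c=y$ and $g.x=\expq{bd}y^bx^d=x$ forces $a=d=1$ and $b=c=0$, so $g$ is the identity matrix. Hence $\psi\colon SL_2(\mathbb{Z})\to Aut(D)$ is an injective group homomorphism (it is a homomorphism by Proposition~\ref{res:action of SL2, q version}). If $G$ is a finite group of monomial automorphisms in the sense of Definition~\ref{def:def_monomial_autos}, then $\widetilde G:=\psi^{-1}(G)$ is a finite subgroup of $SL_2(\mathbb{Z})$ with $\psi|_{\widetilde G}\colon\widetilde G\to G$ an isomorphism. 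By the classification of finite subgroups of $SL_2(\mathbb{Z})$ up to conjugacy recalled before Table~\ref{fig:table_of_maps}, $\widetilde G$ is conjugate in $SL_2(\mathbb{Z})$ to the trivial group or to the cyclic group generated by the matrix corresponding to one of $\tau$, $\sigma$, $\rho$, $\eta$.

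Next I would transfer this conjugacy to the fixed rings. Suppose $\widetilde G=h\widetilde G'h^{-1}$ for some $h\in SL_2(\mathbb{Z})$, and write $\psi_h:=\psi(h)\in Aut(D)$ and $G':=\psi(\widetilde G')$. Since $\psi$ is a homomorphism, $G=\psi_h G'\psi_h^{-1}$, and a direct check shows $r\in D^{G}$ if and only if $\psi_h^{-1}(r)\in D^{G'}$, so $D^{G}=\psi_h(D^{G'})$; in particular $D^{G}\cong D^{G'}$ as $k$-algebras because $\psi_h$ is a $k$-algebra automorphism of $D$. It therefore suffices to treat the representatives. If $G'$ is trivial then $D^{G'}=D$. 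If $G'$ is generated by $\tau$, by $\sigma$, by $\rho$, or by $\eta$, then $D^{G'}\cong D$ by Theorem~\ref{res:order_2_monomial_result}, Theorem~\ref{res:epic_theorem_2}, Theorem~\ref{res:order_4_thm}, or Theorem~\ref{res:order_6_thm} respectively; the presence in $k$ of a primitive third root of unity $\omega$ and of both a second and a third root of $q$ is exactly what is required to apply Theorem~\ref{res:epic_theorem_2} and Theorem~\ref{res:order_6_thm}. Combining, $D^{G}\cong D^{G'}\cong D$ in every case.

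Since all of the substantive constructions have been carried out in the preceding sections, I do not expect a serious obstacle here; the two points that genuinely need care are (i) the faithfulness of the $SL_2(\mathbb{Z})$-action, which is what lets us view $G$ as an honest subgroup of $SL_2(\mathbb{Z})$ and hence apply the classification, and (ii) the bookkeeping that conjugate subgroups of $SL_2(\mathbb{Z})$ produce $k$-algebra-isomorphic fixed rings of $D$. Both are routine, so the proof is essentially an assembly of the earlier theorems.
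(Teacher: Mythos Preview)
Your proposal is correct and follows exactly the paper's approach: reduce to the four conjugacy-class representatives via the classification of finite subgroups of $SL_2(\mathbb{Z})$ and then cite Theorems~\ref{res:order_4_thm}, \ref{res:epic_theorem_2}, \ref{res:order_6_thm} and \cite[\S13.6]{SV1}. The paper's own proof is a one-line citation of these results, leaving implicit the reduction-by-conjugacy that you have spelled out (the paper already remarked, following \cite[\S1.3]{Baudry}, that it suffices to treat one representative per conjugacy class).
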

\begin{proof}
Theorems~\ref{res:order_4_thm}, \ref{res:epic_theorem_2}, \ref{res:order_6_thm} and \cite[\S13.6]{SV1}.
\end{proof}
Finally, by combining Theorem~\ref{res:epic_theorem} and Theorem~\ref{res:thm_monomial_results_2}, we obtain Theorem~\ref{res:thm_monomial_results} as promised in Chapter~\ref{c:introduction}.

\section{Consequences for the automorphism group of $D$}\label{s:automorphism_consequences}

The construction of $q$-commuting pairs of elements is closely linked to questions about the automorphisms and endomorphisms of the $q$-division ring: such maps are defined precisely by where they send the two generators of $D$, and naturally these images must $q$-commute.  Despite similarities to the commutative field $k(x,y)$ a full description of the automorphism group $Aut(D)$ remains unknown, with a major stumbling block being understanding the role played by conjugation maps.

Intuition suggests that ``inner automorphism'' and ``conjugation'' should be synonymous; certainly all conjugation maps should be bijective, at the very least.  Here we challenge this intuition by showing that the conjugation maps described in Proposition~\ref{res:AC_construct_z} not only gives rise to conjugations which are not inner, but also conjugation maps which are well-defined \textit{endomorphisms} (not automorphisms) on $D$.  This provides answers to several of the questions posed at the end of \cite{AC1} (outlined in Questions~\ref{ques:AC_questions} below), while also raising several new ones.

Let $X$ and $Y$ be a pair of $q$-commuting generators for $D$, or a pair of commutative generators for $k(x,y)$, as appropriate.  We continue to assume that $q$ is not a root of unity.

The first question that must be answered when considering the automorphism group of $D$ is how to define the subgroup of tame automorphisms.  As noted in \S\ref{ss:autos of division ring background section}, this is approached in different ways by Alev and Dumas in \cite{AD3} and Artamonov and Cohn in \cite{AC1}; our initial aim is to show that these two approaches in fact define the same group of automorphisms.

Alev and Dumas proceed by analogy to the commutative case $k(X,Y)$, where the automorphism group is known to be generated by the \textit{fractional linear transformations}:
\begin{equation}\label{eq:fractional linear transformations comm version}
X \mapsto \frac{\alpha X + \beta Y + \gamma}{\alpha'' X + \beta'' Y +\gamma''}, \ Y \mapsto \frac{\alpha' X + \beta' Y +\gamma'}{\alpha'' X + \beta'' Y +\gamma''}, \quad \textrm{for }\begin{bmatrix}\alpha & \beta & \gamma \\ \alpha' & \beta' & \gamma' \\ \alpha'' & \beta'' & \gamma''\end{bmatrix} \in PGL_3(k)
\end{equation}
and \textit{triangular automorphisms} which preserve the embedding $k(Y) \subset k(X,Y)$:
\begin{equation}\label{eq:triangular autos comm version}\begin{gathered}
X \mapsto \frac{a(Y)X + b(Y)}{c(Y)X + d(Y)}, \ Y \mapsto \frac{\alpha Y + \beta}{\gamma Y + \delta} \\
\begin{bmatrix}a(Y)&b(Y)\\c(Y)&d(Y)\end{bmatrix} \in PGL_2(k(Y)),\ \begin{bmatrix}\alpha & \beta \\ \gamma & \delta\end{bmatrix} \in PGL_2(k).
\end{gathered}\end{equation}
If we try to view these as maps on $D$ instead, the images of $X$ and $Y$ face the additional restriction of being required to $q$-commute; as demonstrated in \cite[Propositions~1.4, 1.5]{AD3}, this severely restricts what forms the automorphisms can take.  It is shown in \cite{AD3} that a map given by \eqref{eq:fractional linear transformations comm version} only defines an automorphism on $D$ if it takes one of the following three forms:
\begin{equation}\label{eq:q comm fractional linear maps}\begin{gathered}
X \mapsto \lambda X, \ Y \mapsto \mu Y, \\
X \mapsto \lambda Y^{-1}, \ Y \mapsto \mu Y^{-1}X, \\
X \mapsto \lambda YX^{-1}, \ Y \mapsto \mu X^{-1},
\end{gathered}\end{equation}
where $\lambda, \mu \in k^{\times}$.  This corresponds to the subgroup $(k^{\times})^2 \rtimes C_3$ of $(k^{\times})^2 \rtimes SL_2(\mathbb{Z})$, where $C_3$ is the group of order 3 generated by the monomial automorphism corresponding to the matrix $\left(\begin{smallmatrix} -1 & -1 \\ 1 & 0 \end{smallmatrix}\right)$.  Meanwhile, the automorphisms of $D$ corresponding to those of the form \eqref{eq:triangular autos comm version} are precisely the ones generated by the following automorphisms:
\begin{equation}\label{eq:q comm triangular automorphisms}\begin{aligned}
\psi_X&:X \mapsto a(Y)X, \quad Y \mapsto \alpha Y, & a(Y) \in k(Y)^{\times}, \ \alpha \in k^{\times}, \\
\tau&: X \mapsto X^{-1}, \quad Y \mapsto Y^{-1} &
\end{aligned}\end{equation}
Let $H_1$ denote the subgroup of $Aut(D)$ generated by automorphisms of the form \eqref{eq:q comm fractional linear maps} and \eqref{eq:q comm triangular automorphisms}.

Recall from Definition~\ref{def:elementary autos of D} that Artanomov and Cohn defined the elementary automorphisms of $D$ to be those of the form
\begin{equation}\label{eq:AC elementary autos}\begin{aligned}
\tau:&{}\quad X\mapsto X^{-1}, \ Y \mapsto Y^{-1}, \\
\varphi_X:&{}\quad X \mapsto b(Y)X, \ Y \mapsto Y,\quad b(Y) \in k(Y)^{\times},\\
\varphi_Y:&{}\quad X \mapsto X,\ Y \mapsto a(X)Y,\quad a(X) \in k(X)^{\times}. 
\end{aligned}\end{equation}
Let $H_2$ denote the group generated by automorphisms of the form \eqref{eq:AC elementary autos}.

\begin{lemma}\label{res:H1 and H2 are equal}
The groups $H_1$ and $H_2$ defined above are equal.
\end{lemma}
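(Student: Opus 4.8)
The plan is to prove $H_1 = H_2$ by mutual containment, working entirely with the explicit generating sets listed in \eqref{eq:q comm fractional linear maps}, \eqref{eq:q comm triangular automorphisms} and \eqref{eq:AC elementary autos}: it is enough to check that each group contains the generators of the other.

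For $H_2 \subseteq H_1$, the map $\tau$ already appears in \eqref{eq:q comm triangular automorphisms}, and $\varphi_X$ is the case $\alpha = 1$ of the map $\psi_X$ there, so both lie in $H_1$. The only elementary automorphism needing work is $\varphi_Y$. I would write $g_0 \in H_1$ for the monomial automorphism $X \mapsto Y^{-1}$, $Y \mapsto Y^{-1}X$, which is the case $\lambda = \mu = 1$ of the second family of \eqref{eq:q comm fractional linear maps}; a short check with $XY = qYX$ shows it is well defined and that $g_0^3 = 1$, so $g_0^{-1} = g_0^2 \in H_1$ as well. Then a direct (if slightly fiddly) computation in $D$ shows that the conjugate $g_0^{-1}\varphi_X g_0$ sends $X \mapsto X$ and $Y \mapsto b(X^{-1})Y$, the scalars produced when moving $X$'s past $Y$'s all cancelling. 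Since $b$ ranges over $k(Y)^{\times}$, the element $b(X^{-1})$ ranges over all of $k(X)^{\times}$, so this conjugate is exactly a general $\varphi_Y$; hence $\varphi_Y \in H_1$ and $H_2 \subseteq H_1$.

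For $H_1 \subseteq H_2$, first note $\tau$ is a generator of $H_2$; the scalar torus $(k^{\times})^2$ lies in $H_2$ because $X \mapsto \lambda X$ is $\varphi_X$ with $b(Y) = \lambda$ and $Y \mapsto \mu Y$ is $\varphi_Y$ with $a(X) = \mu$; and $\psi_X$ factors as $\varphi_X$ followed by the torus element $Y \mapsto \alpha Y$, so $\psi_X \in H_2$. It remains to place the two monomial families of \eqref{eq:q comm fractional linear maps} in $H_2$, and here I would pass to the induced linear action on monomials. An automorphism permuting the monomials of $D$ up to scalars induces a matrix in $GL_2(\mathbb{Z})$; one computes that $\tau$ induces $-I$, while the coefficient-one elementary maps $X \mapsto Y^k X$ and $Y \mapsto X^k Y$ (special cases of $\varphi_X$ and $\varphi_Y$) induce $\left(\begin{smallmatrix}1&k\\0&1\end{smallmatrix}\right)$ and $\left(\begin{smallmatrix}1&0\\k&1\end{smallmatrix}\right)$. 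Since $\left(\begin{smallmatrix}1&1\\0&1\end{smallmatrix}\right)$ and $\left(\begin{smallmatrix}1&0\\1&1\end{smallmatrix}\right)$ generate $SL_2(\mathbb{Z})$, any monomial automorphism of $D$ agrees, after composition with a suitable word in these elementary maps, with the identity on monomials up to scalars; that is, it differs from an element of $H_2$ only by a torus element $X \mapsto \alpha X$, $Y \mapsto \beta Y$. Because the relevant exponent matrix has determinant $\pm 1$, the system of equations determining $(\alpha,\beta)$ is solvable in $k^{\times}$ with integer exponents, so that torus element — hence the monomial automorphism — lies in $H_2$. The free scalars $\lambda,\mu$ in the second and third families of \eqref{eq:q comm fractional linear maps} are then recovered by post-composing with $(k^{\times})^2 \subseteq H_2$. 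This gives $H_1 \subseteq H_2$, and combined with the previous paragraph, $H_1 = H_2$.

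The step I expect to require the most care is the monomial-automorphism part of $H_1 \subseteq H_2$: the $SL_2(\mathbb{Z})$-generation argument only controls the linear action on exponents, and one must separately track the scalar coefficients introduced by the elementary maps and verify they can always be corrected inside $(k^{\times})^2$ (which is exactly where $\det = \pm 1$ is used). The conjugation identity $\varphi_Y = g_0^{-1}\varphi_X g_0$ in the other inclusion is routine but also needs a careful pass through the relation $XY = qYX$.
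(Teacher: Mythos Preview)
Your proof is correct, and the $H_2 \subseteq H_1$ direction is essentially the paper's argument: both conjugate a general $\varphi_X$ by a monomial map to produce $\varphi_Y$. You use the order-$3$ generator $g_0$ directly from \eqref{eq:q comm fractional linear maps}, whereas the paper first builds the order-$4$ map $\rho: X\mapsto Y^{-1},\, Y\mapsto X$ as $\psi_1\circ\sigma$ with $\psi_1\in H_1$ of type $\psi_X$ and $\sigma=g_0$; your choice is marginally more direct since it avoids that auxiliary composition.

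The $H_1 \subseteq H_2$ direction is where you genuinely diverge. The paper simply writes down an explicit factorisation of the second family in \eqref{eq:q comm fractional linear maps}: with $\varphi_1: X\mapsto \mu^{-1}\lambda YX$ and $\varphi_2: Y\mapsto \mu^{-1}X^{-1}Y$, one checks $\varphi_2\circ\tau\circ\varphi_1$ sends $X\mapsto\lambda Y^{-1}$, $Y\mapsto\mu Y^{-1}X$, and the third family follows as the square of the second. Your route via the $SL_2(\mathbb{Z})$-action on exponents is more structural and would handle any monomial automorphism uniformly, at the cost of being less explicit. One small clarification: once you have reduced to a map acting trivially on monomials up to scalars, the remaining factor \emph{is} already a torus element $X\mapsto\alpha X$, $Y\mapsto\beta Y$, and that lies in $H_2$ immediately as a special case of $\varphi_X$ and $\varphi_Y$; there is no system of equations to solve at that point, so the remark about $\det=\pm 1$ is not needed here.
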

\begin{proof}
We first show that $H_2 \subseteq H_1$.  By taking $\alpha = 1$ in \eqref{eq:q comm triangular automorphisms} we immediately obtain $\tau$ and all automorphisms of the form $\varphi_X$, so we need only show that we can construct all automorphisms of the form $\varphi_Y$ from elements of $H_1$.  We start by defining
\begin{align*}
\sigma&: X \mapsto Y^{-1}, \ Y \mapsto Y^{-1}X \ \in H_1 \\
\psi_1&: X \mapsto YX, \ Y \mapsto Y\  \in H_1,
\end{align*}
and observe that $\rho:= \psi_1 \circ \sigma X: \mapsto Y^{-1},\ Y \mapsto X \in H_1$.  Now for an arbitrary automorphism $\psi_X$ of the form \eqref{eq:q comm triangular automorphisms} we may combine it with $\rho$ to obtain
\[\rho^{-1} \circ \psi_X \circ \rho: X \mapsto X, \ Y \mapsto a(X^{-1})Y \ \in H_1;\]
since $a(Y) \in k(Y)^{\times}$ was arbitrary, we see that $H_2 \subseteq H_1$.  Conversely, to show $H_1 \subseteq H_2$ we need only obtain the automorphisms from \eqref{eq:q comm fractional linear maps}, which can be decomposed as follows: define
\begin{align*}
\varphi_1&: X \mapsto \mu^{-1} \lambda YX, \ Y \mapsto Y \in H_2, \\
\varphi_2&: X \mapsto X, \ Y \mapsto \mu^{-1}X^{-1}Y \in H_2,
\end{align*}
so that
\[\varphi_2 \circ \tau \circ \varphi_1: X \mapsto \lambda Y^{-1}, \ Y \mapsto \mu Y^{-1}X\]
is also in $H_2$ as required.
\end{proof}
This justifies the choice to call an automorphism of $D$ \textit{elementary} if it is of the form \eqref{eq:AC elementary autos}, and \textit{tame} if it is in the group generated by the elementary automorphisms and the inner automorphisms on $D$.

Recall from Theorem~\ref{res:main AC theorem statement} that any homomorphism from $D$ to itself can be decomposed as a product of elementary automorphisms and a conjugation map $c_z$, where $z$ is constructed as in Proposition~\ref{res:AC_construct_z}.  The following questions are posed by Artamonov and Cohn in \cite{AC1}.
\begin{questions}\label{ques:AC_questions}\ 
\begin{enumerate}
 \item Does there exist an element $z$ satisfying the recursive definition \eqref{eq:def_z}, such that $z \not\in k_q(X,Y)$?  What if $z^n \in k_q(X,Y)$ for some positive integer $n$?
 \item Does there exist an element $z$ from \eqref{eq:def_z} such that $z^{-1}k_q(X,Y)z \subsetneq k_q(X,Y)$?
 \item The group of automorphisms of $k_q(X,Y)$ is generated by elementary automorphisms, conjugation by some elements of the form $z$, and $\tau$.  Find a set of defining relations for this generating set.
\end{enumerate}
\end{questions}
We first note that (3) needs rephrasing, since we can provide affirmative answers for both (1) and (2).  Indeed, we will construct examples of conjugation automorphisms $c_z$ satisfying $z^2 \in D$ (Proposition~\ref{res:z_automorphism_order_2}) and $z^n \not\in D$ for all $n \geq 1$ (Proposition~\ref{res:z_automorphism_inf_order}), and also a conjugation endomorphism such that $z^{-1}Dz \subsetneq D$ (Proposition~\ref{res:z_endomorphism}).

In light of this, (3) should be modified to read:
\begin{enumerate}\setcounter{enumi}{3}
\item Under what conditions is $c_z$ an automorphism of $D$ rather than an endomorphism?  Using this, give a set of generators and relations for $Aut(D)$.
\end{enumerate}
For each of our examples below, we start by defining a homomorphism $\psi$ on $D$ and then verify that the image of the generators of $D$ under $\psi$ have the form \eqref{eq:q comm standard form}.  This allows us to use Proposition~\ref{res:AC_construct_z} to construct $z$ as in \eqref{eq:def_z} such that $\psi = c_{z^{-1}}$ (possibly after a change of variables in $D$ to ensure the leading coefficients are both 1).  The final step in each proof is checking whether $z \in D$, for which we will use the following lemmas.
\begin{lemma}\label{res:two_conjugations}
 Let $z_1$, $z_2 \in k_q(Y)(\!(X)\!)$.  If the conjugation maps $c_{z_1}$ and $c_{z_2}$  have the same action on $D = k_q(X,Y)$, then $z_1$ and $z_2$ differ only by a scalar.
\end{lemma}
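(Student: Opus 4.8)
The plan is to reduce the statement to a computation of the centraliser of $D$ inside the overring $L := k_q(Y)(\!(X)\!)$. If $c_{z_1}$ and $c_{z_2}$ agree on $D$, then $z_1 d z_1^{-1} = z_2 d z_2^{-1}$ for every $d \in D$, which rearranges to $w d = d w$ where $w := z_2^{-1} z_1$. Since $z_1, z_2$ are invertible in the division ring $L$, so is $w$. It therefore suffices to prove that any $w \in L^{\times}$ centralising $D$ must be a scalar: then $z_1 = z_2 w$ with $w \in k^{\times}$, which is exactly the claim. In particular such a $w$ commutes with both $X$ and $Y$, and I claim this alone forces $w \in k$.

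Write $w = \sum_{i \geq m} w_i X^i$ with $w_i \in k(Y)$, this expansion being unique, and let $\beta$ denote the automorphism $Y \mapsto qY$ of $k(Y)$, so that $X r = \beta(r) X$ for all $r \in k(Y)$. From $X w = w X$ I would compare coefficients of $X^{i+1}$, obtaining $\beta(w_i) = w_i$ for every $i$; since $q$ is not a root of unity the only $\beta$-fixed elements of $k(Y)$ are the constants, so each $w_i \in k$. Then from $Y w = w Y$, using $X^i Y = q^i Y X^i$ and comparing coefficients of $Y X^i$, one gets $w_i = q^i w_i$, and as $q^i \neq 1$ for $i \neq 0$ this forces $w_i = 0$ for all $i \neq 0$. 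Hence $w = w_0 \in k$, and being a unit, $w \in k^{\times}$.

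I do not expect a genuine obstacle here; the argument is a direct coefficient comparison in $L$. The only points deserving a word of care are that every element of $L$ has a unique expansion as a Laurent power series in $X$ over $k(Y)$, so the coefficient comparisons above are legitimate, and the observation that an element of $L$ centralising the two generators $X$ and $Y$ therefore centralises the whole division ring $D$ they generate — the reduction in the first paragraph then identifies $z_1$ and $z_2$ up to the scalar $w$.
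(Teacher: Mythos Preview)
Your proof is correct and follows essentially the same approach as the paper: set $w = z_2^{-1}z_1$, observe it centralises $X$ and $Y$, and then show by coefficient comparison in $k_q(Y)(\!(X)\!)$ that any such element is a scalar. The only cosmetic difference is the order of the two steps --- the paper uses commutation with $Y$ first to kill the nonzero-degree terms and then invokes $Z(D)=k$, whereas you use commutation with $X$ first to force the coefficients into $k$ and then commutation with $Y$ to eliminate nonzero degrees --- but the underlying argument is the same.
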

\begin{proof}
 If $c_{z_1} = c_{z_2}$, then $z_1Yz_1^{-1} = z_2Yz_2^{-1}$, i.e. $z_2^{-1}z_1Y = Yz_2^{-1}z_1$.  Similarly $z_2^{-1}z_1$ commutes with $X$, so $z_2^{-1}z_1$ is in the centralizer of $D$ in $k_q(Y)(\!(X)\!)$, which we write as $C(D)$.

We now verify that $C(D) = k$.  Indeed, if $u = \sum_{i \geq n}a_i X^i \in C(D)$, then $u$ must commute with $Y$, i.e.
\[Y \sum_{i \geq n} a_i X^i = \sum_{i \geq n} a_i X^i Y = \sum_{ i\geq n}q^ia_i Y X^i\]
Since $q$ is not a root of unity, we must have $a_i =0$ for all $i \neq 0$, i.e. $u = a_0 \in k(Y)$.  Since $u$ is now in $D$ and must commute with both $X$ and $Y$, $u \in Z(D) = k$.  The result now follows.
\end{proof}
Recall that for $r \in k_q[X,Y]$, $deg_X(r)$ denotes the degree of $r$ as a polynomial in $X$.  This extends naturally to a notion of degree on $k_q(X,Y)$ by defining 
\[deg_X(t^{-1}s) := deg_X(s) - deg_X(t),\]
where $s, t \in k_q[X,Y]$.  We note that this definition is multiplicative.
\begin{lemma}\label{res:matching_degrees} If $c_z$ is an inner automorphism on $k_q(X,Y)$, then $c_z(Y) = v^{-1}u$ satisfies $deg_X(u) = deg_X(v)$.\end{lemma}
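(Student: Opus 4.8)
The plan is to exploit the multiplicativity of $deg_X$ together with the defining feature of an inner automorphism: that it is conjugation by an element which lies in $D$ itself, and hence is a fraction of two honest polynomials.

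First I would recall that $c_z$ being an \emph{inner} automorphism of $D$ means, by definition, that both $z$ and $z^{-1}$ belong to $D = k_q(X,Y)$. In particular $z$ is a fraction of elements of $k_q[X,Y]$, so $deg_X(z)$ is a well-defined \emph{finite} integer; this is the crucial point, since for a general $z \in k_q(Y)(\!(X)\!) \setminus D$ the quantity $deg_X(z)$ need not be finite. Next I would invoke the fact (noted just before the statement of the lemma) that $deg_X$ is multiplicative on the nonzero elements of $D$ — which ultimately rests on $k_q[X,Y]$ being a domain whose defining relation $XY = qYX$ is homogeneous in the $X$-degree, so that leading $X$-terms multiply, and this passes to the Ore quotient. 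Consequently $deg_X(z^{-1}) = -deg_X(z)$ and $deg_X(Y) = 0$, and the computation is immediate:
\[ deg_X\big(c_z(Y)\big) = deg_X(zYz^{-1}) = deg_X(z) + deg_X(Y) - deg_X(z) = 0. \]

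To finish, since $k_q[X,Y]$ is a left and right Noetherian domain, $D$ is its ring of fractions on both sides, so we may write $c_z(Y) = v^{-1}u$ with $u, v \in k_q[X,Y]$ and $v \neq 0$ (and $u \neq 0$, since $c_z$ is injective and $Y \neq 0$). Applying multiplicativity once more gives $0 = deg_X(v^{-1}u) = deg_X(u) - deg_X(v)$, that is, $deg_X(u) = deg_X(v)$, as claimed. I do not anticipate a genuine obstacle here; the only point needing a word of care is the well-definedness and multiplicativity of $deg_X$ on all of $D$ rather than merely on $k_q[X,Y]$, but this is routine from the $X$-degree-homogeneity of the relation and is in any case already recorded in the paragraph preceding the lemma.
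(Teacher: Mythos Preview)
Your argument is correct and follows the same line as the paper's proof: both compute $deg_X(c_z(Y))=0$ from the multiplicativity of $deg_X$ on $D$ and then read off $deg_X(u)=deg_X(v)$. The paper is slightly less direct—it writes $z=t^{-1}s$, introduces the automorphism $\beta:X\mapsto q^{-1}X,\ Y\mapsto Y$ so that $Yr=\beta(r)Y$, and obtains the explicit form $c_z(Y)=t^{-1}s\,\beta(s)^{-1}\beta(t)\,Y$ before applying multiplicativity—whereas you apply multiplicativity immediately to $z\,Y\,z^{-1}$; but the content is the same.
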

\begin{proof} We can write the commutation relation in $D$ as $YX = \beta(X)Y$, where $\beta$ is the automorphism $X \mapsto q^{-1}X, \ Y \mapsto Y$.  Let $c_z$ be an inner automorphism on $D$, so that $z = t^{-1}s \in D$ for $s, t \in k_q[X,Y]$.  Thus the image of $Y$ under $c_z$ is
 \begin{equation}\label{eq:ore_1}c_z(Y) = t^{-1}s\beta(s)^{-1}\beta(t) Y\end{equation}
Since $\beta$ does not affect the $X$-degree of a polynomial and $deg_X$ is multiplicative, it is clear from \eqref{eq:ore_1} that $deg_X(c_z(y)) = 0$ and hence $deg_X(u) = deg_X(v)$ as required.
\end{proof}

We are now in a position to answer Questions~\ref{ques:AC_questions} (1) and (2).

\begin{proposition}\label{res:z_automorphism_order_2}
Let $D$ and $\varphi$ be as in Theorem~\ref{res:epic_theorem}, and set $E = D^{\varphi}$.  With an appropriate choice of generators for $E$, the map
\[\gamma: \ E \rightarrow E:\  r \mapsto (y^{-1} + y)r(y^{-1} + y)^{-1}\]
defined in Lemma~\ref{res:gamma_def} is an automorphism of the form $c_{z^{-1}}$, with $z$ defined as in \eqref{eq:def_z}. Further, we have $z\not\in E$ while $z^2 \in E$.  This provides an affirmative answer to Question~\ref{ques:AC_questions} (1).
\end{proposition}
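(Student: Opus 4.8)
The plan is to realise $\gamma$ as a conjugation by a power-series element $z$ built from the recursion \eqref{eq:def_z}, and then to pin down $z$ up to a scalar using the relation $b^2 \in E$. Throughout put $b = y+y^{-1}$, so that $\gamma = c_b$ on $E$ by Lemma~\ref{res:gamma_def}, $\varphi(b) = -b$, and $b^2 = (y-y^{-1})^2 + 4 \in E$ by Lemma~\ref{res:computation_lemma}.

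First I would fix a pair of $q$-commuting generators $X_E,Y_E$ of $E$ adapted to $\gamma$. Recall from the proof of Theorem~\ref{res:epic_theorem} that $E$ is generated as a division ring by $h = b^{-1}a$ and $g = b^{-1}c$, which after the substitution $h \mapsto \tfrac{1}{1-q}(hg-gh)$ of \cite[Proposition~3.2]{AD2} become a $q$-commuting pair. Using $\gamma(h) = (ab)b^{-2}$ and $\gamma(g) = (cb)b^{-2}$ from Lemma~\ref{res:gamma_def}, together with the decompositions of $ab$ and $cb$ into elements of $E$ recorded in \eqref{eq:formulas_cb}, I would choose $X_E,Y_E$ (obtained from $h$ and $g$ by finitely many of the elementary transformations of \S\ref{ss:autos of division ring background section}) so that, viewed inside the Laurent series overring $k_q(Y_E)(\!(X_E)\!)$, the images $\gamma(X_E)$ and $\gamma(Y_E)$ are already in the reduced form \eqref{eq:q comm standard form} with $s = 1$ and leading coefficients $1$. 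Verifying that such a normalisation is possible — that $\gamma(X_E)$ has $X_E$-order exactly $1$, and $\gamma(Y_E)$ has $X_E$-order $0$ with leading term a scalar times $Y_E$ — is the main computational obstacle, and is where the phrase ``an appropriate choice of generators'' does its work.

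Granting this, Proposition~\ref{res:AC_construct_z} applied to the $q$-commuting pair $\big(\gamma(X_E),\gamma(Y_E)\big)$ produces precisely an element $z \in k_q(Y_E)(\!(X_E)\!)$ given by the recursion \eqref{eq:def_z}, with $z\gamma(X_E)z^{-1} = X_E$ and $z\gamma(Y_E)z^{-1} = Y_E$. Thus $c_z \circ \gamma$ fixes both generators of $E$, so $\gamma = c_z^{-1} = c_{z^{-1}}$, as claimed.

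It remains to show $z^2 \in E$ and $z \notin E$. For the first, conjugation by $b^2$ is an inner automorphism of $E$ (since $b^2 \in E$), and $\gamma^2 = c_{b^2}$; on the other hand $\gamma^2 = (c_{z^{-1}})^2 = c_{z^{-2}}$ with $z^{-2} \in k_q(Y_E)(\!(X_E)\!)$. Applying Lemma~\ref{res:two_conjugations} to the equality $c_{z^{-2}} = c_{b^2}$ yields $z^{-2} = \mu b^2$ for some $\mu \in k^{\times}$, hence $z^2 = \mu^{-1}(b^2)^{-1} \in E$. For the second, suppose toward a contradiction that $z \in E$; then $\gamma = c_{z^{-1}}$ is an inner automorphism of $E = k_q(X_E,Y_E)$, so by Lemma~\ref{res:matching_degrees} the image $\gamma(Y_E)$, written as $v^{-1}u$ with $u,v \in k_q[X_E,Y_E]$, satisfies $\deg_{X_E}(u) = \deg_{X_E}(v)$, i.e.\ $\deg_{X_E}(\gamma(Y_E)) = 0$; a direct check that the $\gamma(Y_E)$ arising from our chosen generators has nonzero $X_E$-degree then gives the contradiction, so $z \notin E$. (Alternatively, from $c_{z^{-1}} = c_b$ on $E$ one sees that $zb$ centralises $E$, and a computation as in the proof of Lemma~\ref{res:two_conjugations} — using that $E$ contains the non-constant element $y - y^{-1}$ of $k(y)$ together with $xy - \Lambda x^{-1}y^{-1}$ — identifies the centraliser of $E$ with $k$, so that $z$ is a scalar multiple of $b^{-1}$ and hence lies outside $E$.) Taken together with $z^2 \in E$, this answers Question~\ref{ques:AC_questions}(1) in the affirmative.
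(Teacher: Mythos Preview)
Your overall strategy matches the paper's: normalise the pair $(\gamma(f),\gamma(g))$ to the reduced form \eqref{eq:q comm standard form}, invoke Proposition~\ref{res:AC_construct_z} to realise $\gamma$ as $c_{z^{-1}}$, and then compare $z$ with $b=y+y^{-1}$. Two remarks on where you diverge.

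First, the normalisation step: the paper does not leave this as a ``computational obstacle'' but actually carries it out. Writing $E=k_q(f,g)$ with $f=\tfrac{1}{1-q}(hg-gh)$, one computes $b^2$ and $cb$ in terms of $f,g$ and finds that the lowest $f$-term of $\gamma(g)=(cb)b^{-2}$ is $qg$; an elementary change of variables then puts $\gamma(g)$ into the form $g+\sum_{i\ge1}b_if^i$, and \cite[Proposition~3.2]{AC1} forces $\gamma(f)$ into the matching shape. So the reduction to \eqref{eq:q comm standard form} with $s=1$ is genuinely available, not merely hoped for.

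Second, for the conclusions about $z$: the paper does this in one move, applying Lemma~\ref{res:two_conjugations} to $c_b=c_{z^{-1}}$ on $E$ to deduce that $z$ and $b$ differ by a scalar, whence $z\notin E$ (because $b\notin E$) and $z^2\in E$ (because $b^2\in E$). Your ``alternative'' argument is exactly this, and your route to $z^2\in E$ via Lemma~\ref{res:two_conjugations} applied to $\gamma^2=c_{b^2}=c_{z^{-2}}$ is a harmless variant. Your \emph{first} argument for $z\notin E$, however, via Lemma~\ref{res:matching_degrees}, is shakier than it looks: after normalisation $\gamma(Y_E)=Y_E+\sum_{i\ge1}b_iX_E^i$ as a power series, but Lemma~\ref{res:matching_degrees} concerns the $X_E$-degree of $\gamma(Y_E)$ \emph{as a left fraction}, and a power series with nonzero higher terms can perfectly well have fractional $X_E$-degree zero (e.g.\ $Y_E(1-X_E)^{-1}$). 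So the ``direct check'' you promise is not obviously available. Drop that route and keep the centraliser argument, which is both cleaner and what the paper actually uses.
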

\begin{proof}
$E$ is a $q$-division ring by Theorem~\ref{res:epic_theorem}, and $\gamma$ is an automorphism by Lemma~\ref{res:gamma_def}.  Write $E = k_q(f,g)$, where $f:=\frac{1}{1-q}(hg-gh)$ as in the proof of Theorem~\ref{res:epic_theorem}, which allows us to use the methods of \cite{AC1}.

In order to check that $\gamma$ has the form described by \eqref{eq:def_z}, by Proposition~\ref{res:AC_construct_z} it is sufficient to check that $\gamma(f)$, $\gamma(g)$ are of the following form:
\begin{equation}\label{eq:form for image of gamma}\gamma(f) = f^s + \sum_{i> s}a_if^i, \quad \gamma(g) = g^s + \sum_{i \geq 1}b_i f^i, \quad s \in \{\pm1\},\ a_i, b_i \in k(g).\end{equation}
Recall that $\gamma(g) = (cb)b^{-2}$, which can be written in terms of $h$ and $g$ using Lemma~\ref{res:computation_lemma} and \eqref{eq:formulas_cb}.  From the definition $f = \frac{1}{1-q}(hg - gh)$ we find that $h = g^{-1}(1-f)$, and after some rearranging we obtain
\begin{align*}
 b^2 &= (q^3 - g^4)(q^7-g^4)q^{-6}g^{-3}f^{-2} + [\textrm{higher terms in }f] \\
 cb &= (q^3 - g^4)(q^7-g^4)q^{-7}g^{-3}f^{-2} + [\textrm{higher terms in }f]
\end{align*}
Therefore the lowest term of $\gamma(g) = (cb)b^{-2}$ is $qg$, which can easily be transformed into the form given in \eqref{eq:form for image of gamma} by a simple change of variables.

By \cite[Proposition~3.2]{AC1}, it now follows that $\gamma(f)$ must have the form
\[\gamma(f) = b_1f + \sum_{i > 1} b_i f^i, \quad b_1, b_i \in k(g).\]
Again, we can make a change of variables in $k_q(f,g)$ using elementary automorphisms to obtain $b_1= 1$ while simultaneously ensuring that $\gamma(g)$ remains in the form \eqref{eq:form for image of gamma}. Now by \cite[Theorem~3.5]{AC1}, $\gamma = c_{z^{-1}}$ with $z$ constructed as in Proposition~\ref{res:AC_construct_z}.  

By Lemma \ref{res:two_conjugations}, $y+y^{-1}$ and $z$ differ by at most a scalar.  Since $y + y^{-1} \not\in E$, $\gamma = c_{z^{-1}}$ defines an automorphism of $E$ with $z \not\in E$.

Finally, we have already noted in Lemma~\ref{res:computation_lemma} (ii) that $b^2 = (y-y^{-1})^2 + 4 \in E$, and so $z^2 \in E$ as well.
\end{proof}

\begin{remark}It is worth noting that this phenomenon of non-inner conjugations cannot happen when $q$ is a root of unity.  Indeed, if $q^n = 1$ for some $n$, then $D$ is a finite dimensional central simple algebra over its centre and by the Skolem-Noether theorem every automorphism of $D$ should be inner.  The automorphism $\gamma$ is still a well-defined automorphism on $D$ in this case, but the difference is that now $D$ has a non-trivial centre: since $y^{n} - y^{-n}$ is a central element we can replace $y + y^{-1}$ with $(y+y^{-1})(y^n - y^{-n})$ in the definition of $\gamma$ without affecting the map at all.  We now have
\[(y+y^{-1})(y^{n} - y^{-n}) = y^{n+1} - y^{-(n+1)} + y^{n-1} - y^{-(n-1)} \in k_q(x,y)^G\]
and so $\gamma$ is indeed an inner automorphism in this case. \end{remark}

\begin{proposition}\label{res:z_endomorphism}
Let $D = k_q(x,y)$.  Then there exists $z \in k_q(y)(\!(x)\!)$ such that $z^{-1}Dz \subsetneq D$.  This provides an affirmative answer to Question~\ref{ques:AC_questions}(2).
\end{proposition}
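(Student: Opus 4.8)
The plan is to take one of the fixed-ring isomorphisms established in \S\ref{s:fixed ring}--\S\ref{s:more fixed rings}, view it as a (non-surjective) endomorphism of $D$, and then feed it into the Artamonov--Cohn machinery of \S\ref{ss:autos of division ring background section} to present it as a conjugation map. Concretely, let $\psi\colon D\to D$ be the composite of the isomorphism $D\xrightarrow{\ \sim\ }D^{\tau}$ — which carries a $q$-commuting generating pair of $D$ to the pair $u,v$ of \eqref{eq:order_2_gens} — with the inclusion $D^{\tau}\hookrightarrow D$. Then $\psi$ is a $k$-algebra homomorphism, automatically injective since $D$ is a division ring, whose image $\psi(D)=D^{\tau}$ is a \emph{proper} sub-division ring of $D$ (for instance $x\notin D^{\tau}$, as $\tau(x)=x^{-1}\neq x$). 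One could instead use $D^{\varphi}$ from Theorem~\ref{res:epic_theorem} or $D^{\sigma}$ from Theorem~\ref{res:epic_theorem_2}; all that matters is a fixed ring that is isomorphic to $D$ yet properly contained in it.

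Next I would run $\psi$ through the reduction recalled just before Proposition~\ref{res:AC_construct_z}: a suitable sequence of elementary automorphisms of $D$ (a ``change of variables'') transforms the $q$-commuting pair $\bigl(\psi(x),\psi(y)\bigr)$ into a pair of the standard form \eqref{eq:q comm standard form} with leading coefficients $1$, without changing the sub-division ring it generates, whereupon Proposition~\ref{res:AC_construct_z} produces $z\in k_q(y)(\!(x)\!)$ conjugating that standard pair to $(x^{s},y^{s})$ for some $s\in\{\pm1\}$. Equivalently, and more cheaply, I would just invoke Theorem~\ref{res:main AC theorem statement} for the homomorphism $\psi$: it yields elementary automorphisms $\varphi_1,\dots,\varphi_n$, a scalar $\epsilon\in\{0,1\}$, and $z\in k_q(y)(\!(x)\!)$ built as in Proposition~\ref{res:AC_construct_z} with $\psi=\varphi_1\cdots\varphi_n\,c_{z^{-1}}\,\tau^{\epsilon}$.

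Finally I would read off the image of the conjugation. Since each $\varphi_i$ and $\tau$ is an automorphism of $D$, isolating $c_{z^{-1}}$ in the above decomposition exhibits it as $\beta\circ\psi\circ\beta'$ for suitable $\beta,\beta'\in Aut(D)$ (one of which may be the identity), so
\[
z^{-1}Dz \;=\; c_{z^{-1}}(D) \;=\; \beta\bigl(\psi(\beta'(D))\bigr) \;=\; \beta\bigl(\psi(D)\bigr) \;=\; \beta(D^{\tau}).
\]
Because $D^{\tau}\subsetneq D$ and an automorphism of $D$ cannot map a proper subring onto all of $D$, this gives $z^{-1}Dz\subsetneq D$, as claimed. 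As a consistency check, this automatically forces $z\notin D$: otherwise $c_{z^{-1}}$ would be an inner automorphism, hence bijective, contradicting properness — the same phenomenon, in a stronger form, as in Proposition~\ref{res:z_automorphism_order_2}. The only point needing care — rather than a genuine obstacle — is that the reduction of $\bigl(\psi(x),\psi(y)\bigr)$ to standard form must be effected by automorphisms of $D$ itself, so that it simultaneously preserves $q$-commutativity and the generated division ring and can be normalised to unit leading coefficients; this is precisely what the paragraph preceding Proposition~\ref{res:AC_construct_z} guarantees. The substantive input is simply the existence of a proper fixed ring $D^{\tau}\cong D$, which is Theorem~\ref{res:order_2_monomial_result}.
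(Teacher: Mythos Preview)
Your proposal is correct and is essentially the same argument as the paper's: take an isomorphism $D\xrightarrow{\sim}D^{G}$ for a proper fixed ring, regard it as a non-surjective endomorphism of $D$, apply Theorem~\ref{res:main AC theorem statement}, and observe that since every factor in the decomposition except $c_{z^{-1}}$ is an automorphism, $c_{z^{-1}}$ itself must fail to be surjective. The only cosmetic difference is that the paper uses $D^{G}=D^{\varphi}$ from Theorem~\ref{res:epic_theorem} rather than $D^{\tau}$, a choice you already flag as interchangeable.
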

\begin{proof}
We can view the isomorphism $\theta: D \stackrel{\sim}{\longrightarrow} D^G$ from Theorem \ref{res:epic_theorem} as an endomorphism on $D$, and so $\theta$ decomposes into the form \eqref{eq:decomp} with $z$ constructed as in \eqref{eq:def_z}.  Since $\theta$ is not surjective, and $c_{z^{-1}}$ is the only map in the decomposition not already known to be an automorphism, we must have $z^{-1}Dz \subsetneq D$.
\end{proof}

Propositions \ref{res:z_automorphism_order_2} and \ref{res:z_endomorphism} illustrate some of the difficulties involved in giving a set of relations for $Aut(D)$: not only is it possible for both endomorphisms and automorphisms to arise as conjugations $c_z$ for $z \not\in D$, but as we show next, it turns out to be quite easy to define an automorphism $\sigma$ on $D$ which is a product of elementary automorphisms, but also satisfies $\psi = c_{z^{-1}}$ with $z^n \not\in D$ for any $n \geq 1$.

\begin{example}\label{res:auto_example}We define maps by
\begin{align*}
h_1&: x \mapsto (1+y)x, \ y \mapsto y,\\
h_2&: x \mapsto x, \ y\mapsto (1+x)y,\\
h_3&: x \mapsto \frac{1}{1+y}x, \ y \mapsto y,
\end{align*}
which are all elementary automorphisms on $k_q(x,y)$.  Let $\psi = h_3\circ h_2 \circ h_1$, so that
\begin{align*}
\psi(x) &=  x + \frac{qy}{(1+y)(1+qy)}x^2,\\
\psi(y) &=  y + \frac{qy}{1+y}x.
\end{align*}
is an automorphism on $k_q(x,y)$.  These have been chosen so that $\psi(x)$, $\psi(y)$ are already in the form \eqref{eq:q comm standard form}, so there exists $z \in k_q(y)(\!(x)\!)$ defined by \eqref{eq:def_z} such that $\psi = c_{z^{-1}}$.  Since $\psi(y)$ is a polynomial in $x$ of non-zero degree, $\psi$ is not an inner automorphism by Lemma \ref{res:matching_degrees}.

In fact, $\psi^n(y)$ is a polynomial in $x$ of degree $n$.  The key observation in proving this is to notice that $(1+y)^{-1}x$ is fixed by $\psi$.  Indeed,
\begin{align*}
 \psi((1+y)^{-1}x) &= \left[1 + y + \frac{qy}{1+y}x\right]^{-1}\left[x + \frac{qy}{(1+y)(1+qy)}x^2\right] \\
&= \left[\left(1+\frac{qy}{(1+y)(1+qy)}x\right)(1+y)\right]^{-1}\left[1 + \frac{qy}{(1+y)(1+qy)}x\right]x \\
&= (1+y)^{-1}x.
\end{align*}
If we write $\psi(y) = y(1+q(1+y)^{-1}x)$, it is now clear by induction that 
\begin{equation}\label{eq:psi_induction}\psi^n(y) = \psi^{n-1}(y)(1+q(1+y)^{-1}x)\end{equation}
is polynomial in $x$ of degree $n$.  
\end{example}
\begin{proposition}\label{res:z_automorphism_inf_order}With $\psi$ as in Example~\ref{res:auto_example} and $D = k_q(x,y)$, $\psi = c_{z^{-1}}$ is an example of a conjugation automorphism satisfying $z^n \not\in D$ for all $n \geq 1$.
\end{proposition}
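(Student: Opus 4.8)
The plan is to deduce the claim from the degree computation already carried out in Example~\ref{res:auto_example}, combined with Lemma~\ref{res:matching_degrees}. First I would recall the set-up: since $\psi(x)$ and $\psi(y)$ are already in the standard form \eqref{eq:q comm standard form} with lowest coefficient $1$, Proposition~\ref{res:AC_construct_z} (with $F = \psi(x)$, $G = \psi(y)$) yields an explicit element $z = \sum_{i\geq 0} z_i x^i \in k_q(y)(\!(x)\!)$ defined by the recursion \eqref{eq:def_z}, with $z_0 = 1$, hence a unit in the Laurent power series ring, satisfying $z\psi(x)z^{-1} = x$ and $z\psi(y)z^{-1} = y$; equivalently $\psi = c_{z^{-1}}$, as observed in the Example. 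Since conjugation is a group homomorphism into $Aut(D)$, this gives $\psi^n = c_{z^{-1}}^{\,n} = c_{z^{-n}}$ for every $n \geq 1$, where $z^{-n} := (z^{-1})^n \in k_q(y)(\!(x)\!)$.

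Next I would argue by contradiction. Suppose $z^n \in D$ for some $n \geq 1$. Because $D$ is a division ring, $z^{-n} \in D$ as well, so by the definition of inner automorphism in \S\ref{s:notation} the map $\psi^n = c_{z^{-n}}$ is an inner automorphism of $D$. Lemma~\ref{res:matching_degrees} then forces $deg_x\big(\psi^n(y)\big) = 0$: writing $\psi^n(y) = v^{-1}u$ with $u,v \in k_q[x,y]$, the lemma gives $deg_x(u) = deg_x(v)$. On the other hand, Example~\ref{res:auto_example} shows — using that $(1+y)^{-1}x$ is fixed by $\psi$ and the resulting induction \eqref{eq:psi_induction}, $\psi^n(y) = \psi^{n-1}(y)\big(1 + q(1+y)^{-1}x\big)$ — that $\psi^n(y)$ is a genuine polynomial in $x$ of degree exactly $n$. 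Since $n \geq 1$, this contradicts $deg_x(\psi^n(y)) = 0$, and therefore $z^n \notin D$ for every $n \geq 1$. Taking $n = 1$ recovers in particular that $z \notin D$, so $\psi$ is indeed a conjugation automorphism by an element lying strictly outside $D$.

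I do not expect a serious obstacle here: the proposition is essentially a corollary of the iteration identity \eqref{eq:psi_induction} and Lemma~\ref{res:matching_degrees}. The only points needing a little care are checking that $c_{z^{-1}}^{\,n} = c_{z^{-n}}$ (immediate, working inside the overring $k_q(y)(\!(x)\!)$ in which $z$ is a unit) and that $z^n \in D \Leftrightarrow z^{-n} \in D$ (immediate since $D$ is a division ring), so that ``$\psi^n$ inner'' really means conjugation by a two-sided unit of $D$ and Lemma~\ref{res:matching_degrees} genuinely applies.
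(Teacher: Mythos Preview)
Your proof is correct and follows essentially the same approach as the paper: both combine the iteration identity \eqref{eq:psi_induction} (giving $\deg_x \psi^n(y) = n$) with Lemma~\ref{res:matching_degrees} to rule out $z^n \in D$. The paper compresses this into a single sentence, whereas you spell out the contrapositive and the auxiliary facts $\psi^n = c_{z^{-n}}$ and $z^n \in D \Leftrightarrow z^{-n} \in D$, but the argument is the same.
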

\begin{proof}
By \eqref{eq:psi_induction}, $\psi^n(y) = z^{-n}yz^n$ is a polynomial in $x$ of degree $n$ so by Lemma~\ref{res:matching_degrees} we have $z^n \not\in D$ for all $n \geq 1$.
\end{proof}
Combining these results, we obtain the theorem promised in the introduction.
\begin{theorem}\label{res:thm_AC_results_2}
Let $k$ be a field of characteristic zero and $q \in k^{\times}$ not a root of unity.  Then:
\begin{enumerate}[(i)]
\item The $q$-division ring $D$ admits examples of bijective conjugation maps by elements $z \not\in D$; these include examples satisfying $z^n \in D$ for some positive $n$, and also those such that $z^n \not\in D$ for all $n \geq 1$.
\item $D$ also admits an endomorphism which is not an automorphism, which can be represented in the form of a conjugation map.  
\end{enumerate}
\end{theorem}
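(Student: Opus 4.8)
The plan is to derive Theorem~\ref{res:thm_AC_results_2} by assembling the three propositions already established in \S\ref{s:automorphism_consequences}, transporting each example into $D = k_q(x,y)$ by means of the fixed-ring isomorphisms of \S\ref{s:fixed ring}.

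For part~(i), I would get the first kind of example --- a bijective conjugation $c_z$ with $z \notin D$ but $z^n \in D$ for some $n \geq 1$ --- from Proposition~\ref{res:z_automorphism_order_2}. There one works with $E = D^{\varphi}$, which is a $q$-division ring and satisfies $E \cong D$ by Theorem~\ref{res:epic_theorem}, together with the automorphism $\gamma$ of Lemma~\ref{res:gamma_def} given by conjugation by $b = y + y^{-1}$. The steps are: (a) check, via Proposition~\ref{res:AC_construct_z} and \cite[Proposition~3.2, Theorem~3.5]{AC1}, that after normalising leading coefficients $\gamma$ equals $c_{z^{-1}}$ for the recursively built element $z$ of \eqref{eq:def_z}; (b) invoke Lemma~\ref{res:two_conjugations} to conclude $z$ agrees with $b$ up to a scalar, so $z \notin E$ since $b \notin E$; and (c) note $b^2 = (y - y^{-1})^2 + 4 \in E$ by Lemma~\ref{res:computation_lemma}(ii), so $z^2 \in E$. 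Identifying $E$ with $D$ then gives the example inside $D$. The second kind of example --- $z^n \notin D$ for all $n \geq 1$ --- is Proposition~\ref{res:z_automorphism_inf_order}, built from the explicit product of elementary automorphisms $\psi = h_3 \circ h_2 \circ h_1$ of Example~\ref{res:auto_example}; the crux there is an induction, powered by the observation that $(1 + y)^{-1} x$ is $\psi$-fixed, showing $\psi^n(y) = z^{-n} y z^n$ is a polynomial in $x$ of degree exactly $n$, so that Lemma~\ref{res:matching_degrees} forces $z^n \notin D$ for every $n$.

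Part~(ii) is Proposition~\ref{res:z_endomorphism}: regard the isomorphism $\theta \colon D \stackrel{\sim}{\longrightarrow} D^G$ of Theorem~\ref{res:epic_theorem} as an endomorphism of $D$, which is not surjective; decompose $\theta$ as in Theorem~\ref{res:main AC theorem statement} into a product of elementary automorphisms followed by a single conjugation $c_{z^{-1}}$; since every factor other than $c_{z^{-1}}$ is already known to be bijective, $c_{z^{-1}}$ must be the non-surjective one, i.e. $z^{-1} D z \subsetneq D$. All of the genuine content --- producing a non-inner conjugating $z$ and certifying that it escapes $D$ --- is carried by the three propositions, through the centraliser computation in Lemma~\ref{res:two_conjugations} and the $x$-degree argument in Lemma~\ref{res:matching_degrees}; so the only real point left to watch when wrapping up is that each example be phrased for $D$ itself and not merely for an isomorphic copy of it, which for the order-$2$ example is taken care of by fixing the isomorphism $D^{\varphi} \cong D$, while the remaining two examples are already stated directly in terms of $k_q(x,y)$.
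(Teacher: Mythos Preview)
Your proposal is correct and follows exactly the paper's approach: the proof there is simply a one-line citation of Propositions~\ref{res:z_automorphism_order_2}, \ref{res:z_endomorphism} and \ref{res:z_automorphism_inf_order}, and your write-up merely unpacks what each of those contributes. Your closing remark about transporting the order-$2$ example from $E=D^{\varphi}$ to $D$ via Theorem~\ref{res:epic_theorem} is a fair point of care, though the paper leaves this implicit since $E\cong D$ as $k$-algebras.
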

\begin{proof}
Propositions~\ref{res:z_automorphism_order_2}, \ref{res:z_endomorphism}, \ref{res:z_automorphism_inf_order}.
\end{proof}
We have seen that the set of generators for $Aut(D)$ proposed by Artamonov and Cohn in \cite{AC1} in fact generate the whole endomorphism group $End(D)$.  On the other hand, Theorem~\ref{res:thm_AC_results_2} also suggests that if we restrict our attention to the group generated by the elementary and \textit{inner} automorphisms only, this may generate a proper subgroup of $Aut(D)$ rather than the whole group.

A good test case for this question would be the automorphism $\gamma$ in Proposition~\ref{res:z_automorphism_order_2}: can it be decomposed into a product of elementary automorphisms and inner automorphisms?  The next proposition indicates one way of approaching this question.
\begin{proposition}\label{res:does the automorphism decompose?}
 Let $c_z$ be a bijective conjugation map on $k_q(x,y)$. Suppose that $c_z$ fixes some element $r \in k_q(x,y)\backslash k$, and that there is a product $\varphi$ of elementary automorphisms such that $r$ is the image of $x$ or $y$ under $\varphi$.  Then $c_z$ decomposes as a product of elementary automorphisms.
\end{proposition}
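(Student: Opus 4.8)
The plan is to use $\varphi$ to transport the weak hypothesis ``$c_z$ fixes $r$'' into the much more rigid statement ``$\psi$ fixes a generator'', and then to classify the automorphisms of $D$ that fix a generator. Without loss of generality assume $r=\varphi(x)$, the case $r=\varphi(y)$ being entirely symmetric. Put
\[\psi:=\varphi^{-1}\circ c_z\circ\varphi.\]
Since $\varphi$ is a product of elementary automorphisms it is invertible, so $\psi\in Aut(D)$; and since $c_z(r)=r$ we get $\psi(x)=\varphi^{-1}(c_z(r))=\varphi^{-1}(r)=x$. The proposition therefore reduces to the claim that \emph{every automorphism of $D$ fixing $x$ is elementary} -- in fact of the form $h_Y\colon x\mapsto x,\ y\mapsto a(x)y$ with $a(x)\in k(x)^\times$ from Definition~\ref{def:elementary autos of D}. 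Granting this, $\psi$ is elementary and $c_z=\varphi\circ\psi\circ\varphi^{-1}$ is a product of elementary automorphisms, because the inverse of a product of elementary automorphisms is again such a product.

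To prove the reduced claim I would argue as follows. Let $\psi(x)=x$ and set $w:=\psi(y)$, so that $xw=qwx$. Expanding $w=\sum_{i\ge n}a_i(y)x^i$ inside the Laurent power series ring $k_q(y)(\!(x)\!)$ and matching coefficients of each $x^{i+1}$, the relation $xw=qwx$ becomes $a_i(qy)=q\,a_i(y)$ for every $i$. Since $q$ is not a root of unity, the only element of $k(y)$ fixed by $y\mapsto qy$ is a constant, and a short argument on numerators and denominators then shows $a_i(qy)=q\,a_i(y)$ forces $a_i(y)=c_iy$ with $c_i\in k$. Hence $w=y\cdot v$, where $v=\sum_{i\ge n}c_ix^i\in D\cap k(\!(x)\!)$. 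It remains only to see that such a scalar Laurent series in $x$ lying in $D$ is in fact a rational function of $x$; then $v=a(x)\in k(x)^\times$ (nonzero since $w\ne 0$), so $w=y\,a(x)=\tilde a(x)\,y$ with $\tilde a(x):=a(q^{-1}x)\in k(x)^\times$, and $\psi$ is precisely the elementary automorphism $h_Y$ attached to $\tilde a$.

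The step I expect to carry the real weight is this last assertion, namely $D\cap k(\!(x)\!)=k(x)$ (equivalently, that the centralizer of $x$ in $D$ is no larger than $k(x)$). A clean route is via Theorem~\ref{res:rec reln det thm, intro version}: for a scalar sequence $(c_i)$ the automorphism $\alpha\colon y\mapsto qy$ acts trivially, so the determinants $\Delta_k$ attached to $v$ degenerate to Hankel determinants with entries in $k$, and their eventual vanishing is exactly Kronecker's classical rationality criterion for $\sum c_i x^i$; alternatively one can give a direct centralizer computation in the style of the proof of Lemma~\ref{res:two_conjugations}. Everything else is routine bookkeeping.

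Two remarks worth recording. First, the argument uses only that $c_z$ is an automorphism fixing $r$, not that it is literally a conjugation, so the statement in fact holds for arbitrary automorphisms of $D$ that fix a suitable element. Second, combined with the decomposition in Theorem~\ref{res:main AC theorem statement}, this proposition supplies a concrete line of attack on whether a given conjugation automorphism -- such as $\gamma$ in Proposition~\ref{res:z_automorphism_order_2} -- lies in the subgroup generated by the elementary and inner automorphisms: one searches for a product $\varphi$ of elementary automorphisms carrying some $c_z$-fixed element to a generator of $D$.
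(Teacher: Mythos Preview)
Your argument is correct and rests on the same idea as the paper's: conjugate by $\varphi$ so that the automorphism fixes a generator, then show any automorphism fixing $x$ must send $y$ to $a(x)y$ for some $a(x)\in k(x)^\times$, using that the centralizer of a generator in $D$ is the rational function field in that generator.

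The paper's execution is more direct than yours, though. Rather than expanding $\psi(y)$ in $k_q(y)(\!(x)\!)$, solving $a_i(qy)=qa_i(y)$ to get $a_i(y)=c_iy$, and then arguing that the scalar series $v=\sum c_ix^i$ lies in $k(x)$, the paper simply observes that since $u:=\varphi(x)$ $q$-commutes with both $v:=\varphi(y)$ and $c_z(v)$, it \emph{commutes} with $c_z(v)v^{-1}$; hence $c_z(v)v^{-1}\in k(u)$ by the centralizer fact, and $c_z$ acts on $(u,v)$ as an elementary automorphism. Your power-series computation is thus an unnecessary detour: once you know $\psi(x)=x$, the one-line observation that $\psi(y)y^{-1}$ commutes with $x$ already lands you in $k(x)$. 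Your remark that the argument uses only that $c_z$ is an automorphism (not that it is literally a conjugation) is correct and worth keeping.
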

\begin{proof}
 Suppose first that $\varphi(x) = r$.  Define $u := \varphi(x)$, $v := \varphi(y)$; since $\varphi$ is a product of elementary automorphisms, this gives rise to a change of variables in $k_q(x,y)$, i.e. $k_q(x,y) = k_q(u,v)$.  

We would like to show that $c_z$ acts as an elementary automorphism on $u$ and $v$.  While $z$ is an element of $k_q(y)(\!(x)\!)$ and is not necessarily in $k_q(v)(\!(u)\!)$, $c_z$ is still a well-defined automorphism on $k_q(u,v)$ and so $c_z(v) = zvz^{-1} \in k_q(u,v)$.

Meanwhile $c_z$ fixes $u$, which $q$-commutes with both $v$ and $c_z(v)$; it is easy to see that $u$ must therefore commute with $c_z(v)v^{-1}$.  The centralizer of $u$ in $k_q(u,v)$ is precisely $k(u)$, so $c_z(v)v^{-1} = a(u) \in k(u)$.  Now
\[c_z(u) = u, \quad c_z(v) = c_z(v)v^{-1}v = a(u)v\]
is elementary as required.

Let $a(x) \in k(x)$ be the element obtained by replacing every occurrence of $u$ in $a(u)$ by $x$.  We can define an elementary automorphism on $k_q(x,y)$ by $h: x \mapsto x, \ y \mapsto a(x)y$, which allows us to write the action of $c_z$ as follows:
\[c_z(u) = u = \varphi \circ h(x), \quad c_z(v) = a(u)v = \varphi(a(x)y) = \varphi \circ h (y).\]
Hence $c_z \circ \varphi = \varphi \circ h$, and so $c_z = \varphi \circ h \circ \varphi^{-1}$ is a product of elementary automorphisms as required.  The case $\varphi(y) = r$ follows by a symmetric argument.
\end{proof}
We note that the automorphism $\gamma$ from Proposition~\ref{res:z_automorphism_order_2} fixes $y-y^{-1} \in E$, but it is not clear whether $y-y^{-1}$ satisfies the hypotheses of Proposition~\ref{res:does the automorphism decompose?}.

\section{Open Questions}\label{s:open questions on D}
While the results of \S\ref{s:automorphism_consequences} answer the questions raised by Artamonov and Cohn at the end of \cite{AC1}, they trigger a number of interesting new open questions.  We finish this chapter by listing some of these questions.
\begin{questions}\ 
\begin{enumerate}
 \item Is there an algorithm to identify elements fixed by a given conjugation map $c_z$ and establish whether they satisfy the hypotheses of Proposition~\ref{res:does the automorphism decompose?}?
\item Does every conjugation automorphism $c_z$ with $z \not\in D$ decompose into a product of elementary automorphisms and an inner automorphism?  In particular, does $\gamma$ from Lemma~\ref{res:gamma_def} decompose in this fashion?
\item An automorphism of order 5 is defined on $D$ in \cite[\S3.3.2]{Dumas2} by
\[\pi:\ x \mapsto y, \quad y \mapsto x^{-1}(y + q^{-1}).\]
Based on preliminary computations we conjecture that $D^{\pi} \cong D$ as well.  Does $D$ admit any other automorphisms of finite order, and in particular any such automorphisms with fixed rings which are not $q$-division?
\item The ``non-bijective conjugation'' map in Proposition~\ref{res:z_endomorphism} gives rise to a doubly-infinite chain of $q$-division rings
\[\ldots \subsetneq z^2Dz^{-2} \subsetneq zDz^{-1} \subsetneq D \subsetneq z^{-1}Dz \subsetneq z^{-2}Dz^2 \subsetneq \dots\]
 What can be said about the limits
\[
 \bigcup_{i \geq 0} z^{-i}Dz^i\textrm{ and }\bigcap_{i \geq 0} z^iDz^{-i} \ \ ?
\]
\end{enumerate}
\end{questions}


\chapter{Poisson Deformations and Fixed Rings}\label{c:deformation_chapter}

The results of Chapter~\ref{c:fixed_rings_chapter} demonstrate that while it is possible to understand certain classes of fixed rings of the $q$-division ring $D$ by direct computation, this approach rapidly grows more difficult as the complexity of the automorphisms increase and it is highly unlikely to lead to a general theorem on the possible structures of $D^G$ for arbitrary finite $G$.  In this chapter we discuss a different approach to this question, which uses deformation theory to reframe the problem in terms of Poisson structures on certain commutative algebras and the possible deformations of these structures.  This approach is inspired by the work of \cite{BaudryThesis}, which considered a similar problem on the $q$-commuting Laurent polynomial ring $k_q[x^{\pm1},y^{\pm1}]$.

In \S\ref{s:q-division ring as deformation} we prove that the $q$-division ring is a deformation (in the sense of Definition~\ref{def:poisson deformation}) of the commutative field of fractions in two variables $k(x,y)$, with respect to the Poisson bracket $\{y,x\} = yx$.  Further, we show that for finite groups $G$ of monomial automorphisms the fixed ring $D^G$ is in turn a deformation of $k(x,y)^G$.  This allows us to break down the problem of describing fixed rings $D^G$ into two sub-problems: describing the Poisson structure of the commutative fixed rings $k(x,y)^G$, and describing the possible deformations of these fixed rings.

The first of these is precisely the Poisson equivalent of the Noether problem for $k(x,y)$: given a field of fractions $k(x,y)$ with an associated Poisson structure, and a finite group of Poisson automorphisms $G$, does there exist an isomorphism of Poisson algebras $k(x,y)^G \cong k(x,y)$?  In \S\ref{s:fixed Poisson rings} we will show that when $G$ is any finite group of Poisson monomial automorphisms (see Definition~\ref{def:action of SL2, Poisson def} for the precise definition) and the bracket on $k(x,y)$ is given by $\{y,x\} = yx$ then we can always find such an isomorphism.  

While we cannot yet describe all the possible Poisson deformations in this situation, and hence these results do not yet provide a full alternative proof to the results of Chapter~\ref{c:fixed_rings_chapter}, we will see that the proofs in \S\ref{s:fixed Poisson rings} are shorter and more intuitive than their $q$-commuting counterparts.  This suggests that Poisson deformation may be a more fruitful avenue to explore in seeking a general classification of the structure of fixed rings of $D$ under finite groups of automorphisms.

\section{A new perspective on $D$}\label{s:q-division ring as deformation}
Recall the setup of Example~\ref{ex:deformation of torus}, where we saw that the ring $k_q[x^{\pm1},y^{\pm1}]$ can be viewed quite concretely as a deformation of the commutative Poisson algebra $k[x^{\pm1},y^{\pm1}]$ via the ring 
\begin{equation}\label{eq:def of ring BB}\BB = k\langle x^{\pm1},y^{\pm1},z^{\pm1}\rangle/(xz-zx,yz-zy,xy-z^2yx).\end{equation}
By localizing $\BB$ at an appropriate Ore set, we may construct a larger ring $\DD$ such that  both $D = k_q(x,y)$ and the commutative field $k(x,y)$ can be realised as factor rings of $\DD$.  The next section is devoted to the proof of this result.

\subsection{The $q$-division ring as a deformation of $k(x,y)$}\label{ss:constructing the deformation}
We will begin by proving that $\BB$ is a Noetherian UFD (see Definition~\ref{def:NC UFD} below), and then use certain properties of this class of rings to construct an appropriate Ore set $\CC$ of $\BB$ to localize at.  This will give rise to the ring $\DD:= \BB\CC^{-1}$, which we will use to show that $D$ is a deformation of $k(x,y)$.

\begin{definition}\label{def:nc prime element}
An element $p$ of a ring $R$ is called \textit{prime} if $pR=Rp$ is a height 1 completely prime ideal of $R$.
\end{definition}
Let $\CC(P)$ denote the set of elements which are regular mod $P$, and set $\CC = \bigcap \CC(P)$ where $P$ runs over all height 1 primes of $R$.  We will sometimes write $\CC(R)$ for $\CC$ when we need to specify which ring it comes from.\nom{Noetherian UFD}
\begin{definition}\label{def:NC UFD}
A prime Noetherian ring $R$ is called a \textit{Noetherian UFD} if it has at least one prime ideal of height 1 and satisfies the following equivalent conditions:
\begin{enumerate}
\item Every height 1 prime of $R$ is of the form $pR$ for some prime element $p$ of $R$;
\item $R$ is a domain and every non-zero element of $R$ can be written in the form $cp_1\dots p_n$, where $p_1, \dots, p_n$ are prime elements of $R$ and $c \in \CC$.
\end{enumerate}
\end{definition}
This definition was introduced by Chatters in \cite{Chatters1}.  In particular, commutative rings which are UFDs in the conventional sense satisfy this definition \cite[Corollary~2.4]{Chatters1}; in this case, the elements of $\CC$ are the units in $R$.  This need not be true for non-commutative UFDs: instead, the set $\CC$ forms an Ore set in $R$ \cite[Proposition~2.5]{Chatters1} and the localization $R\CC^{-1}$ is both a Noetherian UFD \cite[Theorem~2.7]{Chatters1} and a principal ideal domain \cite[Corollary~1]{GS}.

Returning to the ring $\BB$ defined in \eqref{eq:def of ring BB}, our first aim will be to show that it is a Noetherian UFD.  We will show that it satisfies condition (1) of Definition~\ref{def:NC UFD}.  It is without a doubt a prime Noetherian ring, so we need only describe its height 1 prime ideals.

By standard localization theory (see for example \cite[Theorem~10.20]{GW1}), the prime ideals of $\BB$ are in 1-1 correspondence with those prime ideals of the ring
\[S:=k\langle x,y,z^{\pm1}\rangle /(xz-zx,yz-zy,xy-z^2yx)\]
which do not contain $x$ or $y$.  Hence we will restrict our attention to the prime ideals of $S$.

If we define $R = k[y,z^{\pm1}]$, then $S$ can be viewed as an Ore extension
\begin{equation}\label{eq:def of Ore extension S}S = R[x;\alpha], \quad \alpha: R \rightarrow R: y \mapsto z^2y,\ z \mapsto z.\end{equation}
The primes of $S$ which do not contain $x$ can be understood using the following results.
\begin{definition}
Let $R$ be a commutative Notherian ring, $I$ an ideal of $R$ and $\alpha$ an endomorphism.  Call $I$ an \textit{$\alpha$-invariant ideal} if $\alpha^{-1}(I) = I$, and \textit{$\alpha$-prime} if $I$ is $\alpha$-invariant and whenever $J$ and $K$ are two other ideals satisfying $\alpha(J) \subseteq J$ and $JK \subseteq I$, then $J \subseteq I$ or $K \subseteq I$ as well.  Finally, we call $R$ an \textit{$\alpha$-prime ring} if 0 is an $\alpha$-prime ideal.
\end{definition}
The following theorem is stated in slightly greater generality in \cite{Irving}; we quote here only the part required for our analysis of the primes of $S$.
\begin{theorem}\label{res:irving th1}\cite[Theorem~4.1,4.2]{Irving} Let $R$ be a commutative ring, $\alpha$ an endomorphism of $R$, and $S = R[x; \alpha]$.  If $I$ is a prime ideal of $S$ not containing $x$,  then $J = I \cap R$ is an $\alpha$-prime ideal of $R$.  Conversely, if $J$ is an $\alpha$-prime ideal of $R$, then $SJS$ is a prime ideal of $S$.\end{theorem}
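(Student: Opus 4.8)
The plan is to handle the two halves of the statement separately, relying throughout on the two structural features of $S = R[x;\alpha]$: the commutation relation $xr = \alpha(r)x$, and the $\mathbb{Z}_{\ge 0}$-grading $S = \bigoplus_{i\ge 0}Rx^i$ (with $\deg x = 1$), under which $Sx = \bigoplus_{i\ge1}Rx^i$ is the two-sided ideal generated by $x$ and $S/Sx\cong R$.

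For the first assertion, let $I$ be prime with $x\notin I$ and set $J = I\cap R$; that $J$ is an ideal of $R$ is clear. The key lemma is that, because $x\notin I$ and $I$ is prime, one can strip a power of $x$ off a product lying in $I$: for instance, if $\alpha(r)\in I$ then $xr=\alpha(r)x\in I$, so $xSr\subseteq I$ (a general term $(\alpha(a_j)x^j)(\alpha(r)x)$ lies in $I$), whence $r\in I$ since $x\notin I$. Running this argument appropriately gives $\alpha^{-1}(J)=J$, so $J$ is $\alpha$-invariant (the reverse implication $r\in I\Rightarrow\alpha(r)\in I$ is immediate when $\alpha$ is an automorphism). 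For the $\alpha$-prime property, given ideals $A,B$ of $R$ with $\alpha(A)\subseteq A$ and $AB\subseteq J$, I would extend $A$ to the two-sided ideal $\mathfrak{a}=\bigoplus_iAx^i$ (two-sided precisely because $\alpha(A)\subseteq A$) and $B$ to $\mathfrak{b}=SBS$; then $\mathfrak{a}\cap R=A$, $\mathfrak{b}\cap R=B$, and $\mathfrak{a}\mathfrak{b}\subseteq I$, the last using the commutation relations together with the identity $A\alpha^m(B)=\alpha^m(AB)\subseteq\alpha^m(J)\subseteq J$ (valid since $\alpha^m(A)=A$, which holds for $\alpha$-stable $A$ when $\alpha$ is an automorphism — the case needed in the application). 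Primeness of $I$ then forces $\mathfrak{a}\subseteq I$ or $\mathfrak{b}\subseteq I$, hence $A\subseteq J$ or $B\subseteq J$.

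For the converse, let $J$ be $\alpha$-prime. Since $J$ is $\alpha$-invariant we have $\alpha(J)\subseteq J$, so $SJS=\bigoplus_iJx^i$, $x\notin SJS$, and $S/SJS\cong(R/J)[x;\bar\alpha]$ with $\bar\alpha$ injective and $R/J$ an $\bar\alpha$-prime ring. So it suffices to prove: if $R$ is $\alpha$-prime (hence $\alpha$ injective) then $S=R[x;\alpha]$ is a prime ring. Suppose two nonzero two-sided ideals $\mathfrak{u},\mathfrak{v}$ satisfy $\mathfrak{u}\mathfrak{v}=0$. For each $k$ the set $L_k(\mathfrak{u})$ of leading coefficients of degree-$k$ elements of $\mathfrak{u}$ (with $0$ adjoined) is an ideal of $R$ with $L_k(\mathfrak{u})\subseteq L_{k+1}(\mathfrak{u})$ and $\alpha(L_k(\mathfrak{u}))\subseteq L_{k+1}(\mathfrak{u})$, so $L(\mathfrak{u})=\bigcup_kL_k(\mathfrak{u})$ is a nonzero $\alpha$-stable ideal. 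Comparing top-degree coefficients of products of elements of $\mathfrak{u}$ and $\mathfrak{v}$ gives $L_k(\mathfrak{u})\,\alpha^k(L_l(\mathfrak{v}))=0$ for all $k,l$; fixing $l$ minimal with $L_l(\mathfrak{v})\neq0$, passing to the stable value of the chain $\{L_k(\mathfrak{u})\}$ (here Noetherianity of $R$ enters, as in the application), and using injectivity of $\alpha$, one extracts a nonzero ideal $B$ of $R$ with $L(\mathfrak{u})\,B=0$, contradicting $\alpha$-primeness of $R$. Hence $SJS$ is prime.

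The step I expect to be the real obstacle is controlling one-sidedness when $\alpha$ is merely an endomorphism: then $\{1,x,x^2,\dots\}$ is a left Ore set but not a right one, so $x$ cannot be stripped symmetrically from both sides, and the identities $\alpha^m(A)=A$ used above fail in general. For this thesis $\alpha\colon y\mapsto z^2y,\ z\mapsto z$ on the Noetherian ring $k[y,z^{\pm1}]$ is an automorphism, so these difficulties evaporate; the general statement requires the careful two-sided bookkeeping of Irving's original argument.
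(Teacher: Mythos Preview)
The paper does not give its own proof of this statement: it is quoted verbatim as \cite[Theorem~4.1,4.2]{Irving} and used as a black box in the analysis of $\BB$. So there is nothing in the thesis to compare your argument against; what you have written is a self-contained sketch, and it is worth evaluating on its own terms.

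Your outline is essentially correct for the case $R$ Noetherian and $\alpha$ an automorphism, which (as you rightly observe) is exactly the case needed in the thesis, where $R=k[y,z^{\pm1}]$ and $\alpha:y\mapsto z^2y$. Two places deserve a little more care. First, in the $\alpha$-prime step you invoke $\alpha^m(A)=A$ for an ideal with $\alpha(A)\subseteq A$; this does \emph{not} follow from $\alpha$ being an automorphism alone --- you also need Noetherianity of $R$ (the ascending chain $A\subseteq\alpha^{-1}(A)\subseteq\alpha^{-2}(A)\subseteq\cdots$ must stabilise, and then applying $\alpha$ gives $\alpha(A)=A$). So Noetherianity enters already in the forward direction, not only in the leading-coefficient argument for the converse. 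Second, in that converse argument, once the chain $L_k(\mathfrak{u})$ stabilises at $L(\mathfrak{u})$ you get $L(\mathfrak{u})\,\alpha^N(L_l(\mathfrak{v}))=0$; to feed this into $\alpha$-primeness you need the right-hand factor to be an ideal, so take the ideal it generates (which is still nonzero since $\alpha$ is injective on an $\alpha$-prime ring). With these small patches your sketch goes through in the Noetherian-automorphism setting.

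For the theorem as stated --- arbitrary commutative $R$ and $\alpha$ merely an endomorphism --- your argument genuinely does not cover it, and you are right to defer to Irving: the asymmetry between $xS$ and $Sx$, and the failure of $\alpha(A)\subseteq A\Rightarrow\alpha(A)=A$, both require the more delicate bookkeeping of the original paper.
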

\begin{theorem}\label{res:irving th2}\cite[Theorem~4.3]{Irving} Let $R$ be an $\alpha$-prime Noetherian ring, where $\alpha$ is an endomorphism of infinite order.  Then the only prime of $S = R[x;\alpha]$ not containing $x$ which lies over $(0)$ in $R$ is $(0)$.\end{theorem}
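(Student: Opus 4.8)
The plan is to prove the statement directly: take a prime $P$ of $S=R[x;\alpha]$ with $x\notin P$ and $P\cap R=(0)$, and show $P=(0)$ by localizing until $P$ is forced to sit inside a simple ring.

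First some preliminary reductions. Since $(0)$ is $\alpha$-invariant, $\alpha$ is injective. I would next check that $R$ is reduced: its nilradical $N$ is $\alpha$-invariant, and if $m$ is least with $N^{m}=(0)$ and $m\ge 2$ then $N^{m-1}\ne(0)$, $N^{m-1}\cdot N=(0)$ and $\alpha(N^{m-1})\subseteq N^{m-1}$, so $\alpha$-primeness of $(0)$ forces $N^{m-1}=(0)$ or $N=(0)$ --- a contradiction --- hence $N=(0)$. Therefore the total quotient ring $\overline{R}$ is a finite product of fields $K_1\times\cdots\times K_n$. Because $S$ is free as a left $R$-module on $\{1,x,x^2,\dots\}$ and $\alpha$ is injective, the regular elements $T$ of $R$ are regular in $S$ and form an Ore set in $S$, and the powers of $x$ form a left Ore set of regular elements of $S$. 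Since $P\cap R=(0)$ we have $P\cap T=\emptyset$ and $x^{i}\notin P$, so localizing $S$ at $T$ and then at $\{x^{i}\}$ carries $P$ to a proper prime $\overline{P}$ of $\overline{S}:=\overline{R}[x;\alpha][x^{-1}]$ whose successive contractions recover $P$. Thus it is enough to prove that $\overline{S}$ is \emph{simple}, for then $\overline{P}=(0)$ and hence $P=(0)$.

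To prove simplicity I would analyse the idempotents. As $\overline{R}$ is $\alpha$-prime with $\alpha$ injective, $\alpha$ permutes the primitive idempotents $e_1,\dots,e_n$; if the permutation were not an $n$-cycle, a proper union of its orbits would span a nonzero proper $\alpha$-stable ideal of $\overline{R}$ annihilating the complementary one, contradicting $\alpha$-primeness, so the permutation is an $n$-cycle. Now that $x^{-1}$ is available, $\sum_{i=0}^{n-1}x^{i}e_1x^{-i}=e_1+\cdots+e_n=1$, so the two-sided ideal of $\overline{S}$ generated by $e_1$ is all of $\overline{S}$; consequently $\overline{S}$ is Morita equivalent to the corner ring $e_1\overline{S}e_1$, which one identifies with the skew Laurent ring $K_1[y^{\pm1};\beta]$ where $y=x^{n}e_1$ and $\beta=\alpha^{n}|_{K_1}$. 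Since $\alpha$ has infinite order on $R$, so does $\beta$ on $K_1$, and a skew Laurent ring over a field by an automorphism of infinite order is simple; Morita equivalence then gives that $\overline{S}$ is simple, completing the argument.

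The step I expect to be the main obstacle is the reduction to $\overline{S}$ when $\alpha$ is a genuinely non-surjective endomorphism: in that case $\overline{R}[x;\alpha][x^{-1}]$ is not a skew Laurent ring --- $x^{-1}$ does not conjugate $\overline{R}$ back to itself --- and one must instead work directly inside the left Ore localization $S[x^{-1}]$, tracking which coefficients lie in the images of the $\alpha^{i}$; the idempotent/Morita reduction still runs, but the bookkeeping is heavier. For the application in this thesis this difficulty is absent: there $R=k[y,z^{\pm1}]$ is a domain and $\alpha\colon y\mapsto z^{2}y,\ z\mapsto z$ is an automorphism, so $\overline{R}=k(y,z)$, $n=1$, and the whole argument collapses to the single observation that $k(y,z)[x^{\pm1};\alpha]$ is simple because no power of $\alpha$ fixes $y$.
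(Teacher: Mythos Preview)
The thesis does not prove this theorem: it is quoted verbatim as \cite[Theorem~4.3]{Irving} and used as a black box in the proof of Lemma~\ref{res:irving consequence lemma}. There is therefore no proof in the paper to compare your proposal against.

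That said, your outline is broadly along the right lines for a direct argument, and your final paragraph correctly identifies that in the thesis's only application ($R=k[y,z^{\pm1}]$, $\alpha$ an automorphism, $R$ a domain) the statement collapses to the elementary fact that $k(y,z)[x^{\pm1};\alpha]$ is simple. Two points in the general argument would need more care before it stood on its own: first, extending $\alpha$ to the total quotient ring $\overline{R}$ requires that $\alpha$ take regular elements to regular elements, which is not automatic for a mere injective endomorphism of a reduced Noetherian ring---you would need to argue that $\alpha$ permutes the minimal primes (this does follow from $\alpha$-primeness of $(0)$, but it needs saying); second, the passage ``$\alpha$ has infinite order on $R$, so does $\beta=\alpha^{n}|_{K_1}$'' is not immediate and is exactly where the infinite-order hypothesis does its work, so that step deserves an explicit justification.
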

The following lemma uses these results to show that all non-zero primes of $S$ must contain (at least) one of $x$, $y$ or an irreducible polynomial in $z$.
\begin{lemma}\label{res:irving consequence lemma}
With $S = R[x;\alpha]$ as in \eqref{eq:def of Ore extension S}, let $P$ be a non-zero prime ideal of $S$ which does not contain $x$.  Then $P$ must contain $y$ or some element $p(z)$ which is irreducible in $k[z^{\pm1}]$.
\end{lemma}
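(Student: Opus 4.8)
The plan is to analyze the prime ideal $P$ by contracting it down to the commutative subrings $R = k[y,z^{\pm1}]$ and $k[z^{\pm1}]$, and play off Theorem~\ref{res:irving th1} and Theorem~\ref{res:irving th2} against the structure of $\alpha$. First I would set $J = P \cap R$. By Theorem~\ref{res:irving th1}, since $P$ does not contain $x$, the ideal $J$ is an $\alpha$-prime ideal of $R$. Now there are two cases. If $J = (0)$, then $P$ is a prime of $S$ lying over $(0)$ in $R$ which does not contain $x$; since $R$ is an $\alpha$-prime Noetherian ring (it is a domain, so $(0)$ is certainly $\alpha$-prime) and $\alpha: y \mapsto z^2 y, z \mapsto z$ has infinite order, Theorem~\ref{res:irving th2} forces $P = (0)$, contradicting the hypothesis that $P$ is non-zero. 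Hence $J \neq (0)$, and it suffices to show that a non-zero $\alpha$-prime ideal $J$ of $R = k[y,z^{\pm1}]$ must contain $y$ or an irreducible element of $k[z^{\pm1}]$, since then the same element lies in $P$.

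Next I would analyze non-zero $\alpha$-prime ideals of $R = k[y,z^{\pm1}]$ directly. Set $K = J \cap k[z^{\pm1}]$. Since $\alpha$ fixes $z$, the restriction of $\alpha$ to $k[z^{\pm1}]$ is the identity, so $K$ is simply a prime (hence the zero ideal or a maximal ideal) of the PID $k[z^{\pm1}]$. If $K \neq (0)$, then $K$ is generated by an irreducible polynomial $p(z) \in k[z^{\pm1}]$, and $p(z) \in J \subseteq P$, which is one of the desired conclusions. So assume $K = (0)$, i.e. $J$ meets $k[z^{\pm1}]$ trivially. Then I would localize: let $F = k(z)$ be the fraction field of $k[z^{\pm1}]$, and pass to $R_F := R \otimes_{k[z^{\pm1}]} F = F[y]$, a PID. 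The ideal $JR_F$ is a non-zero proper prime ideal of $F[y]$ (non-zero because $J \neq 0$ and $J$ is disjoint from the multiplicative set $k[z^{\pm1}] \setminus \{0\}$; proper because $J \cap k[z^{\pm1}] = (0)$), hence generated by a single monic irreducible $f(y) \in F[y]$. The automorphism $\alpha$ extends to $R_F$ by $y \mapsto z^2 y$, $z \mapsto z$, and $\alpha$-invariance of $J$ gives $\alpha$-invariance of $JR_F$, so $f(z^2 y) \in (f(y))$ up to units of $F$, forcing $f(z^2 y) = c\, f(y)$ for some $c \in F^\times$.

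The main obstacle — though it is a short computation — is to extract from the functional equation $f(z^2 y) = c\, f(y)$ that $f(y)$ must be (a unit multiple of) $y$ itself. Writing $f(y) = y^d + a_{d-1} y^{d-1} + \dots + a_0$ with $a_i \in F$ and comparing the equation $f(z^2 y) = c f(y)$ coefficient by coefficient: the leading term gives $c = z^{2d}$, and then the coefficient of $y^i$ gives $z^{2i} a_i = z^{2d} a_i$, i.e. $(z^{2i} - z^{2d}) a_i = 0$ in $F$, so $a_i = 0$ for all $i < d$ (as $z^{2i} \neq z^{2d}$ whenever $i \neq d$). Thus $f(y) = y^d$, and irreducibility forces $d = 1$, so $f(y) = y$. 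Therefore $JR_F = (y)$, and contracting back, $y \in JR_F \cap R = J$ (the contraction equals $J$ since $J$ is disjoint from $k[z^{\pm1}]\setminus\{0\}$). Hence $y \in J \subseteq P$, completing the proof. I would present the case split ($J = 0$ versus $J \neq 0$, then $K = 0$ versus $K \neq 0$) as the skeleton, cite Theorems~\ref{res:irving th1} and~\ref{res:irving th2} for the first split, and relegate the functional-equation computation to a couple of lines.
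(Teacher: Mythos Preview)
Your strategy is sound and close in spirit to the paper's, but there is one genuine gap: you assert that $JR_F$ is a \emph{prime} ideal of $F[y]$ and then use irreducibility of its generator to force $d=1$. Theorem~\ref{res:irving th1} only tells you that $J = P\cap R$ is $\alpha$-prime; the contraction of a prime of the noncommutative ring $S=R[x;\alpha]$ to the commutative subring $R$ need not be prime in general, so $JR_F$ is a priori only an $\alpha$-invariant (indeed $\alpha$-prime) ideal of the PID $F[y]$. Your functional equation $f(z^2y)=cf(y)$ then only yields $f=y^d$ for some $d\ge 1$. The same issue resurfaces in the contraction step $JR_F\cap R=J$: for an ideal that is not prime, disjointness from the multiplicative set does not by itself guarantee saturation.

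Both points are easily repaired using $\alpha$-primality directly. From $JR_F=(y^d)$ one gets $s\,y^d\in J$ for some nonzero $s\in k[z^{\pm1}]$; since $sR$ and $yR$ are $\alpha$-invariant and $J$ is $\alpha$-prime, repeated application forces one of these generators into $J$, and $s\in J$ would contradict $K=0$, so $y\in J$. The paper's proof avoids localization altogether: it takes $f\in J$ of minimal $y$-degree, observes that $\alpha(f)-z^{2n}f\in J$ has strictly smaller degree and hence vanishes (giving $f=g(z)y^n$ directly in $R$), then factors $g(z)$ into irreducibles and applies $\alpha$-primality of $J$ to the $\alpha$-invariant ideals $p_i(z)R$ and $yR$ to extract a single factor. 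Your case split on $K=J\cap k[z^{\pm1}]$ is a reasonable organizing device, but the minimal-degree trick handles both cases uniformly and sidesteps the saturation and primality subtleties that localization introduces.
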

\begin{proof}By Theorems~\ref{res:irving th1} and \ref{res:irving th2}, $I = P \cap R$ is a non-zero $\alpha$-prime ideal of $R$; we will show that $I$ must contain one of the required elements, and hence so will $P$.

Let $f \in I$ be an element of minimal $y$-degree; we will first show that $f = g(z)y^n$ for some $g(z) \in k[z^{\pm1}]$ and $n \geq 0$.  If $deg_y(f) = 0$ then this is clear, so suppose that $f$ has the form
\[f = g_n(z)y^n + \dots + g_1(z)y + g_0(z), \quad g_i(z) \in k[z]\]
for some $n \geq 1$ and $g_n(z) \neq 0$.  Since $I$ is $\alpha$-invariant and $\alpha$ is an automorphism we must have $\alpha(f) - z^{2n}f \in I$ as well, which has degree $<n$ and so must be zero by the minimality of $n$.  Comparing coefficients, we see that
\[g_i(z)(z^{2i} - z^{2n}) = 0, \quad 0 \leq i < n\]
which implies that $g_i(z) = 0$ for all $i < n$.  Thus $f = g_n(z)y^n$ as required. 

Let $g_n(z) = up_1(z)p_2(z)\dots p_r(z)$ be the factorization of $g_n(z)$ into irreducible polynomials, with $p_i(z) \in k[z^{\pm1}]$ irreducible and $u \in k[z^{\pm1}]^{\times}$.  All that remains is to observe that the ideals $p_i(z)R$ and $yR$ are all $\alpha$-invariant (indeed, they are $\alpha$-prime) and hence the $\alpha$-prime ideal $I$ must by definition contain one of these ideals.
\end{proof}
\begin{proposition}\label{res:BB is UFD}
The ring $\BB$ defined in \eqref{eq:def of ring BB} is a Noetherian UFD, and its height 1 prime ideals are precisely those generated by irreducible polynomials in $k[z^{\pm}]$.  
\end{proposition}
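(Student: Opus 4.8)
The plan is to verify condition (1) of Definition~\ref{def:NC UFD}: show that every height~1 prime of $\BB$ is principal, generated by a prime element, and simultaneously pin down that these prime elements are exactly the irreducible polynomials in $k[z^{\pm1}]$ (which lie in the centre of $\BB$). Since $\BB$ is visibly a prime Noetherian ring (it is a domain, being an iterated Ore extension localized at a central Ore set), the only work is the description of its height~1 primes.

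First I would reduce to the ring $S = k\langle x,y,z^{\pm1}\rangle/(xz-zx,yz-zy,xy-z^2yx)$ via standard localization theory \cite[Theorem~10.20]{GW1}: the primes of $\BB$ correspond bijectively to the primes of $S$ that avoid the multiplicative set generated by $x$ and $y$. So a height~1 prime $P$ of $\BB$ pulls back to a prime $P'$ of $S$ containing neither $x$ nor $y$. Next I would apply Lemma~\ref{res:irving consequence lemma}: any non-zero prime of $S$ not containing $x$ must contain $y$ or some $p(z)$ irreducible in $k[z^{\pm1}]$; since our $P'$ does not contain $y$, it must contain such a $p(z)$. Now $p(z)$ is central in $S$, so $p(z)S = Sp(z)$ is an ideal, and I would argue it is prime: $S/p(z)S \cong k[z^{\pm1}]/(p(z))[x,y]$ with the relation $xy = z^2 yx$, and since $k[z^{\pm1}]/(p(z))$ is a field in which the image of $z^2$ is some non-zero scalar $\lambda$, the quotient is a quantum plane over a field, hence a domain; so $p(z)S$ is completely prime. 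It has height~1 because $k[z^{\pm1}]$ is a PID, so there is no prime strictly between $0$ and $(p(z))$ in $k[z^{\pm1}]$, and this should lift to $S$ (one can check there is no prime of $S$ strictly between $0$ and $p(z)S$ by intersecting with the centre, or by a dimension/GK-dimension count). Since $P'$ is a prime containing the height~1 prime $p(z)S$ and $P$ has height~1, we get $P' = p(z)S$, hence $P = p(z)\BB$.

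Conversely, each irreducible $p(z) \in k[z^{\pm1}]$ generates a height~1 completely prime ideal of $\BB$ by the same quotient computation (now over $\BB$, giving a quantum \emph{torus} over a field, still a domain), so $p(z)$ is a prime element in the sense of Definition~\ref{def:nc prime element}. This establishes the bijection between height~1 primes of $\BB$ and irreducibles of $k[z^{\pm1}]$ (up to units), and in particular condition~(1) of Definition~\ref{def:NC UFD} holds, so $\BB$ is a Noetherian UFD. One should also note $\BB$ does have a height~1 prime (e.g. $(1-z)\BB$), so the hypothesis of Definition~\ref{def:NC UFD} is met.

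The main obstacle I anticipate is the height bookkeeping: Lemma~\ref{res:irving consequence lemma} tells us a non-zero prime of $S$ avoiding $x$ and $y$ contains \emph{some} irreducible $p(z)$, but to conclude $P' = p(z)S$ I need that $p(z)S$ is itself prime of height exactly~1 in $S$ and that nothing sits strictly between. The cleanest route is probably to observe that $p(z)$ lies in the centre $Z(S) = k[z^{\pm1}]$ (this needs a quick check that the centre of $S$ is exactly $k[z^{\pm1}]$, using that $q=z^2$ acts without fixed non-constant monomials once we are in the Ore-extension picture, analogous to the centre computation for the quantum plane), so primes of $S$ restrict to primes of $k[z^{\pm1}]$, and height~1 primes of $S$ avoiding $x,y$ restrict to height~1 primes $(p(z))$ of the PID $k[z^{\pm1}]$; combined with $p(z)S$ being prime this forces equality. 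The remaining checks — that the relevant quotients are domains, that $\CC(\BB)$ is the promised Ore set — follow from Chatters' general theory \cite{Chatters1} once condition~(1) is in hand, so I would not belabour them.
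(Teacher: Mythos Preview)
Your proposal is correct and follows essentially the same route as the paper: reduce to primes of $S$ avoiding $x$ and $y$ via localization, invoke Lemma~\ref{res:irving consequence lemma} to force any such non-zero prime to contain an irreducible $p(z)$, and check that each $p(z)$ generates a height~1 completely prime ideal. The paper is slightly more efficient on the height bookkeeping you flag as an obstacle: rather than computing the centre or restricting primes to $k[z^{\pm1}]$, it simply observes that since \emph{every} non-zero prime of $\BB$ contains some $p(z)\BB$ and each $p(z)\BB$ is itself prime, the height~1 primes are exactly the $p(z)\BB$ --- your detour through $Z(S)$ is unnecessary.
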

\begin{proof}
Combining Theorem~\ref{res:irving th1}, Theorem~\ref{res:irving th2} and Lemma~\ref{res:irving consequence lemma}, we see that every non-zero prime ideal of $S = R[x;\alpha]$ must contain (at least) one of $x$, $y$ or some irreducible $p(z) \in k[z^{\pm1}]$.  Since prime ideals of $\BB$ are in 1-1 correspondence with prime ideals of $S$ which do not contain $x$ or $y$ (where the correspondence is the natural one given by localization and contraction as in \cite[Theorem~10.20]{GW1}) it must follow that every prime ideal of $\BB$ contains some irreducible polynomial in $z$.  On the other hand, we can easily check that every irreducible polynomial $p(z) \in k[z^{\pm1}]$ generates a height 1 prime ideal in $\BB$.

Since every non-zero prime in $\BB$ must contain an irreducible polynomial $p(z) \in k[z^{\pm1}]$, and $p(z)\BB$ is prime for any such $p(z)$, it follows that the height 1 primes of $\BB$ are precisely the set $\{p(z)\BB : p(z) \textrm{ is irreducible in }k[z^{\pm1}]\}$.  Finally, since $z$ is central in $\BB$ these are all completely prime ideals, and therefore the $p(z)$ are prime elements in the sense given in Definition~\ref{def:nc prime element}.  It is now clear that $\BB$ satisfies condition (1) of Definition~\ref{def:NC UFD}, and hence it is a Noetherian UFD.
\end{proof}

By \cite[Proposition~2.5]{Chatters1} we can now form the localization\nom{D@$\mathfrak{D}$}
\begin{equation}\label{eq:def of DD}\DD := \BB\CC^{-1},\end{equation}
where $\CC = \bigcap\CC(P)$ for $P$ running through all height 1 primes of $\BB$.  By \cite[Corollary~1]{GS} every left or right ideal in $\DD$ is two-sided and principal; more precisely, we can see that every left or right ideal is generated by a polynomial in $z$.  In particular, it is clear that for any $\lambda \in k^{\times}$ the factor ring $\DD/(z-\lambda)\DD$ must be a division ring.


We will restrict our attention to the case where $q$ is not a root of unity and the base field $k$ admits an element $\hat{q}$ such that $\hat{q}^2 = q$.  As in Example~\ref{ex:deformation of torus}, we will make $k(x,y)$ into a Poisson algebra by defining $\{y,x\} = yx$; as in \cite[Equation 0-3]{GLa1}, this extends to a general formula for the bracket of two polynomials as follows:
\begin{equation}\label{eq:Poisson bracket def for polynomials}
\{a,b\} = yx \frac{\partial a}{\partial y} \frac{\partial b}{\partial x} - yx\frac{\partial a}{\partial x} \frac{\partial b}{\partial y}
\end{equation}
and this can be extended to the whole field $k(x,y)$ using the formula in \eqref{eq:extend Poisson bracket to localization}.

We are now in a position to prove one of the main results of this section.  The proof is based on the corresponding result in \cite[\S5.4.3]{BaudryThesis} for $k[x^{\pm1},y^{\pm1}]$.
\begin{proposition}\label{res:F is a deformation of D}
Let $k(x,y)$ be the field of rational functions in two commuting variables with Poisson bracket defined by $\{y,x\} = yx$.  Then $D$ is a deformation of $k(x,y)$ via the ring $\DD$ from \eqref{eq:def of DD}.
\end{proposition}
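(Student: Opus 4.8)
The plan is to verify directly that the ring $\DD = \BB\CC^{-1}$ satisfies the conditions of Definition~\ref{def:poisson deformation} with respect to the central element $h := z - 1$ (or possibly a scalar multiple thereof), the distinguished scalar corresponding to our fixed $q$, and the commutative Poisson algebra $k(x,y)$ with bracket $\{y,x\} = yx$. Concretely I need to establish three things: that $h$ is central, non-invertible, and a non-zero-divisor in $\DD$; that $\DD/h\DD \cong k(x,y)$ as Poisson algebras, where the bracket on the quotient is the one induced by the commutator as in Definition~\ref{def:star product Poisson bracket}; and that for the appropriate scalar $\lambda$ (the one with $(1 - \tfrac12\lambda)^2 = q$, matching the normalisation of Example~\ref{ex:deformation of torus}) the ideal $(h - \lambda)\DD$ is proper and non-zero, with quotient isomorphic to $D$.

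First I would record that $z$ is central in $\BB$ by construction, hence central in the localization $\DD$, so $h = z-1$ is central; and $h$ is a non-zero-divisor because $\DD$ is a domain (it is a localization of the domain $\BB$, which is a domain by Proposition~\ref{res:BB is UFD}). Non-invertibility of $h$ follows because $z - 1$ is an irreducible polynomial in $k[z^{\pm1}]$, so $(z-1)\DD$ is a proper (height one) ideal. Next, for the quotient $\DD/h\DD$: setting $z = 1$ in the defining relations $xz = zx$, $yz = zy$, $xy = z^2yx$ of $\BB$ collapses the last relation to $xy = yx$, so $\BB/h\BB \cong k[x^{\pm1}, y^{\pm1}]$; localizing at (the image of) $\CC$ then inverts all the non-zero polynomials in $z$ that become non-zero scalars, together with any further denominators, and one checks this yields exactly $k(x,y)$ --- here I would lean on the fact that $\CC$ consists of elements regular modulo every height one prime, so modulo $h$ its image is a set of non-zero-divisors whose localization of $k[x^{\pm1},y^{\pm1}]$ is the full field of fractions. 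The induced Poisson bracket is then computed exactly as in Example~\ref{ex:deformation of torus}: from $xy = z^2yx$ one gets $yx - xy = (1 - z^2)yx$, and writing $1 - z^2 = -(z+1)(z-1) = -(z+1)h$ shows $yx - xy \equiv -2 yx \cdot h \pmod{h^2}$ when $z \equiv 1$; adjusting the sign/scaling convention on $h$ (replacing $h$ by $-\tfrac12(z-1)$ or similar, again following Example~\ref{ex:deformation of torus}) gives the bracket $\{y,x\} = yx$ as required, and by \eqref{eq:Poisson bracket def for polynomials} together with \eqref{eq:extend Poisson bracket to localization} this determines the bracket on all of $k(x,y)$.

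For the remaining scalar $\lambda$: by Example~\ref{ex:deformation of torus} we have $\BB/(h-\lambda)\BB \cong k_q[x^{\pm1},y^{\pm1}]$ for the value of $q$ determined by $\lambda$, and $h - \lambda$ is invertible in $\BB$ only for $\lambda = 2$ (equivalently $z = -1$), which we exclude. Localizing, $\DD/(h-\lambda)\DD$ is then a localization of the quantum torus $k_q[x^{\pm1}, y^{\pm1}]$ at (the image of) $\CC$; since every left or right ideal of $\DD$ is generated by a polynomial in $z$ and $z - \lambda'$ (the relevant linear polynomial) becomes a unit, the quotient is a division ring, and I would identify it with $D = k_q(x,y)$ by noting it is a division ring containing $k_q[x^{\pm1},y^{\pm1}]$ and generated by $x, y$, hence equal to the Ore localization $k_q(x,y)$. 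This shows $(h-\lambda)\DD$ is a proper non-zero ideal with quotient $D$, completing the verification that $D$ is a deformation of $k(x,y)$ in the sense of Definition~\ref{def:poisson deformation}.

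\textbf{Main obstacle.} The genuinely delicate point is the precise identification of the localizing set: I need to be sure that localizing $\BB$ at $\CC$ and \emph{then} reducing modulo $h$ (resp. $h - \lambda$) gives the \emph{full} field $k(x,y)$ (resp. the \emph{full} division ring $D$), rather than some intermediate localization. This requires understanding how $\CC = \bigcap_P \CC(P)$ interacts with the quotient maps --- in particular that enough elements of $\CC$ survive as regular elements modulo $h$ to generate all denominators needed for $k(x,y)$, while no element of $\CC$ becomes zero modulo $h$. I expect this can be handled by using Proposition~\ref{res:BB is UFD}'s explicit description of the height one primes of $\BB$ (they are exactly $p(z)\BB$ for $p$ irreducible in $k[z^{\pm1}]$) to describe $\CC$ concretely, but it is the step that needs the most care; the rest is routine manipulation of the three defining relations and the Poisson bracket formulas already recorded in the excerpt.
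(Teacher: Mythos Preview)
Your proposal is correct and follows essentially the same route as the paper, which takes $h = 2(1-z)$ (a cosmetic rescaling of your $z-1$) and computes the Poisson bracket exactly as you do. The ``main obstacle'' you flag is not actually an obstacle: the paper avoids tracking the image of $\CC$ entirely by applying to the case $\mu = 0$ the \emph{same} argument you already give for general $\mu$---since $\DD/(z-\lambda)\DD$ is a division ring for every $\lambda \in k^{\times}$ (including $\lambda = 1$) and contains the torus $k_q[x^{\pm1},y^{\pm1}]$ with $q = (1-\tfrac12\mu)^2$, universality of localization forces the embedding into $D$ (resp.\ $k(x,y)$ when $q=1$) to be an isomorphism, with no need to analyse $\CC$ modulo $h$.
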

\begin{proof}
We need to show that $\DD$ contains some central, non-invertible, non-zero-divisor element $h$ such that $\DD/h\DD \cong k(x,y)$ as Poisson algebras (where the Poisson bracket on $\DD/h\DD$ is induced as in Definition~\ref{def:star product Poisson bracket}), and $\DD/(h-\lambda)\DD \cong D$ as algebras for appropriate values of $q$ and $\lambda$.

As in \cite[\S5.4.3]{BaudryThesis}, we set $h =2(1-z)$.  It is clear that $\DD$ is a domain and $h$ is central.  By Proposition~\ref{res:BB is UFD}, the polynomial $z-\lambda$ generates a height 1 completely prime ideal of $\DD$ for any $\lambda \in k^{\times}$; in particular, $\langle h \rangle = \langle z-1 \rangle$ is a proper ideal and so $h$ is non-invertible. 

The set of ideals $\{\langle z-\lambda \rangle: \lambda \in k^{\times}\}$ is equal to the set $\{\langle h-\mu\rangle: \mu \in k\backslash \{2\}\}$.  We have already noted that the quotient $\DD/(h-\mu)\DD$ for $\mu \neq 2$ must be a division ring; we can further observe that since $x$ and $y$ satisfy $xy=(1-\frac{1}{2}\mu)^2yx$ in this ring, we have a sequence of embeddings
\[k_q[x^{\pm1},y^{\pm1}] \hookrightarrow \DD/(h-\mu)\DD \hookrightarrow D\]
for $q = (1-\frac{1}{2}\mu)^2$.  By the universality of localization this second embedding must be an isomorphism, that is $\DD/(h-\mu)\DD \cong D$. In particular, when $\mu = 0$ there is an isomorphism of algebras $\DD/h\DD \cong k(x,y)$.

All that remains is to check that this process induces the correct Poisson bracket on $\DD/h\DD$, and to do this it suffices to check that we obtain the correct Poisson bracket on the generators $x$ and $y$.  As elements of $\DD$, we have
\begin{equation*}
yx - xy = (1-z^2)yx = \frac{1}{2}(1+z)hyx
\end{equation*}
and therefore according to the formula in Definition~\ref{def:star product Poisson bracket},
\begin{align*}
\{y,x\} &= \frac{1}{2}(1+z)yx \mod h\DD \\
&= yx 
\end{align*}
since $h=0$ in $\DD/h\DD$ implies $z=1$.  We therefore have an isomorphism of Poisson algebras between $k(x,y)$ with the multiplicative Poisson bracket $\{y,x\} = yx$ and $\DD/h\DD$ with induced Poisson bracket, and the result is proved.
\end{proof}

\subsection{The fixed ring $D^G$ as a deformation of $k(x,y)^G$}\label{ss:constructing deformation of fixed ring}
The results of \S\ref{ss:constructing the deformation} allow us to understand the $q$-division ring $D$ as a deformation of the Poisson algebra $k(x,y)$ with bracket defined by $\{y,x\} = yx$.  However, for certain finite groups of automorphisms $G$ we can extend this result to obtain further information, namely by using a subring of the ring $\DD$ from \eqref{eq:def of DD} to describe the fixed ring $D^G$ as a deformation of $k(x,y)^G$.

We will be interested in \textit{monomial actions} on $k(x,y)$ and $D$, which have already been considered in \S\ref{s:more fixed rings} in the case of the $q$-division ring.  For the Poisson algebra $k(x,y)$ these are defined using the following proposition, which is the Poisson equivalent of Proposition~\ref{res:action of SL2, q version}.
\begin{proposition}\label{res:action of SL2, Poisson version}
The group $SL_2(\mathbb{Z})$ acts by Poisson automorphisms on the commutative Poisson field $k(x,y)$ with bracket $\{y,x\} = yx$, where the action is defined by
\begin{equation}\label{eq:action of SL2 Poisson def}g.y = y^ax^c, \quad g.x = y^bx^d, \quad g =\begin{pmatrix}a&b\\c&d\end{pmatrix} \in SL_2(\mathbb{Z}),\end{equation}
or more generally for any $m,n \in \mathbb{Z}$:
\[g.(y^mx^n) = y^{am+bn}x^{cm+dn}\]
\end{proposition}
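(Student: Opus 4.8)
The plan is to follow the structure of the proof of Proposition~\ref{res:action of SL2, q version}, replacing the commutator by the Poisson bracket; because everything now commutes, the computations become substantially shorter. The first step is to record the Poisson bracket of two Laurent monomials. Starting from the formula \eqref{eq:Poisson bracket def for polynomials} for $\{\cdot,\cdot\}$ on $k[x,y]$ and extending it to $k(x,y)$ by \eqref{eq:extend Poisson bracket to localization}, a one-line differentiation gives
\[\{y^mx^n,\ y^{m'}x^{n'}\} = (mn'-nm')\,y^{m+m'}x^{n+n'} \qquad \text{for all } m,n,m',n' \in \mathbb{Z},\]
and in particular $\{y,x\} = yx$, so the formula is compatible with the chosen bracket.

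Second, I would check that each $g=\left(\begin{smallmatrix}a&b\\c&d\end{smallmatrix}\right) \in SL_2(\mathbb{Z})$ really defines a Poisson automorphism. Since $g.y=y^ax^c$ and $g.x=y^bx^d$ are units of $k(x,y)$, the assignment $x\mapsto g.x$, $y\mapsto g.y$ is a monomial change of variables, hence extends uniquely to an automorphism of the fraction field $k(x,y)$ (this is exactly the $q=1$ case of the reasoning behind Proposition~\ref{res:action of SL2, q version}). To see it respects the Poisson structure, note that both $(a,b)\mapsto\{g.a,\ g.b\}$ and $(a,b)\mapsto g.\{a,b\}$ are biderivations of $k(x,y)$, so it suffices to compare them on the generating pair $(y,x)$; using the monomial formula above,
\[\{g.y,\ g.x\} = \{y^ax^c,\ y^bx^d\} = (ad-bc)\,y^{a+b}x^{c+d} = y^{a+b}x^{c+d} = (g.y)(g.x) = g.\{y,x\},\]
where the penultimate equality uses $ad-bc=1$.

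Third, I would verify the action axiom $g'.(g.z) = (g'g).z$ for $z \in \{x,y\}$, which also yields bijectivity. The general identity $g.(y^mx^n) = (g.y)^m(g.x)^n = y^{am+bn}x^{cm+dn}$ is immediate by commutativity, and this is already the displayed formula in the statement; applying it twice, together with the usual formula for the entries of the matrix product $g'g$ (as in \eqref{eq:q action of SL2, product g'g}), yields $g'.(g.y) = y^{a'a+b'c}x^{ac'+d'c} = (g'g).y$ and similarly for $x$. Hence $g \mapsto (\,\cdot\,)$ is a group homomorphism $SL_2(\mathbb{Z}) \to Aut(k(x,y))$ whose image consists of Poisson automorphisms (taking $g'=g^{-1}$ shows each map is invertible), which is the assertion of the proposition. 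I do not expect a genuine obstacle here: the only real computational input is the monomial bracket formula, and $\det g = 1$ enters at exactly one point — it is what forces $\{g.y,g.x\}$ to equal $(g.y)(g.x)$ on the nose rather than a nonzero scalar multiple of it, precisely mirroring the role of $ad-bc=1$ in Proposition~\ref{res:action of SL2, q version}.
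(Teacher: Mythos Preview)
Your proposal is correct and follows essentially the same approach as the paper: record the monomial bracket formula $\{y^mx^n,y^{m'}x^{n'}\}=(mn'-nm')y^{m+m'}x^{n+n'}$, use $ad-bc=1$ to verify $\{g.y,g.x\}=g.\{y,x\}$, and then check the action axiom via the general formula $g.(y^mx^n)=y^{am+bn}x^{cm+dn}$ together with the matrix product $g'g$. Your added remarks about reducing to generators via the biderivation property and deducing bijectivity from $g'=g^{-1}$ are sound embellishments but not needed beyond what the paper does.
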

\begin{proof}
We define
\[g = \begin{pmatrix}a&b\\c&d\end{pmatrix}, \quad g' = \begin{pmatrix}a'&b'\\c'&d'\end{pmatrix} \in SL_2(\mathbb{Z})\]
and we are required to prove that
\begin{enumerate}
 \item $g.\{y,x\} = \{g.y,g.x\}$, i.e. $g$ is a Poisson automorphism on $k(x,y)$;
 \item $g'.(g.x) = (g'g).x$ and $g'.(g.x) = (g'g).y$ in $k(x,y)$, i.e. this defines an action of $SL_2(\mathbb{Z})$ on $k(x,y)$.
\end{enumerate}
Using \eqref{eq:Poisson bracket def for polynomials}, we can observe that the action of our Poisson bracket on monomials is $\{y^ax^c,y^bx^d\} = (ad-bc)y^{a+b}x^{c+d}$.  Now
\begin{align*}
 g.\{y,x\} = g.yx &=y^ax^cy^bx^d, \\
 \{g.y,g.x\} =\{y^ax^c,y^bx^d\} &= (ad-bc)y^{a+b}x^{c+d},
\end{align*}
and hence $g.\{y,x\} = \{g.y,g.x\}$ since $ad-bc=1$.  Thus $g \in SL_2(\mathbb{Z})$ defines a Poisson automorphism on $k(x,y)$.

For the reader's convenience, we record again the product of the matrices $g$ and $g'$:
\[g'g = \begin{pmatrix}a'a + b'c & a'b + b'd \\ ac' + d'c & c'b + d'd\end{pmatrix}.\]
The computation to verify condition (2) is now a simple one.  Indeed,
\begin{equation*}\begin{gathered}g'.(g.x) = g'.(y^bx^d) = y^{ba'}x^{bc'}y^{db'}x^{dd'} = y^{ba' + db'}x^{bc' + dd'} = (g'g).x\\
g'.(g.y) = g'.(y^ax^c) = y^{aa'}x^{ac'}y^{cb'}x^{cd'} = y^{aa' + cb'}x^{ac' + cd'} = (g'g).y
\end{gathered}\end{equation*}
\end{proof}
\begin{definition}\label{def:action of SL2, Poisson def}
Let $\theta$ be a Poisson automorphism on $k(x,y)$.  We call $\theta$ a \textit{Poisson monomial automorphism} if it can be represented by an element of $SL_2(\mathbb{Z})$ with the action defined in Proposition~\ref{res:action of SL2, Poisson version}.
\end{definition}

The corresponding action of $SL_2(\mathbb{Z})$ on $\DD$ can be defined in a very similar way to that of the action on $D$.  Here $z$ takes on the role of $\hat{q}$ and we define the action to be
\begin{equation}\label{eq:action of SL2 on DD}g.y = z^{ac}y^ax^c, \quad g.x = z^{bd}y^bx^d, \quad g.z = z, \quad \textrm{ where }g = \begin{pmatrix}a&b\\c&d\end{pmatrix}\in SL_2(\mathbb{Z}).\end{equation}
Since $z$ is central and invertible in $\DD$, the proof that this defines an action of $SL_2(\mathbb{Z})$ on $\DD$ follows identically to that of Proposition~\ref{res:action of SL2, q version}.

Given that $z$ is fixed by the action of $\SLZ$, the ideals $(h-\lambda)\DD$ are stable under this action and so the definition in \eqref{eq:action of SL2 on DD} induces an action by Poisson automorphisms on $\DD/h\DD$ and an action by algebra automorphisms on $\DD/(h-\lambda)\DD$.  It is easy to see that these actions agree with those defined in Proposition~\ref{res:action of SL2, q version} and Proposition~\ref{res:action of SL2, Poisson version}.  Therefore if $G$ is a finite subgroup of $\SLZ$ we will assume that it acts on each of the rings $k(x,y)$, $D$ and $\DD$ according to definitions \eqref{eq:action of SL2 Poisson def}, \eqref{eq:q action of SL2 def}, \eqref{eq:action of SL2 on DD} respectively, without distinguishing between them unnecessarily.

Before proving our main result of this section (Theorem~\ref{res:fixed ring is deformation theorem}) we state one additional technical lemma which will be used in the proof of the theorem.  The proof for this result can be found in \cite{dumas_invariants}.
\begin{lemma}\label{res:sublemma}\cite[\S3.2.3]{dumas_invariants} Let $G$ be a finite group.  If
\[0 \longrightarrow A \stackrel{\alpha}{\longrightarrow} B \stackrel{\beta}{\longrightarrow} C \longrightarrow 0\]
is an exact sequence of $G$-modules, then the induced sequence
\[0 \longrightarrow A^G \stackrel{\alpha'}{\longrightarrow} B^G \stackrel{\beta'}{\longrightarrow} C^G \longrightarrow 0\]
is exact.
\end{lemma}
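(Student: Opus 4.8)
The plan is to exploit the fact that, by Important Global Convention~\ref{q is not a root of unity for now and forever}, everything is happening over a field $k$ of characteristic zero, so that the $G$-modules in question are $k$-vector spaces on which a \emph{finite} group acts. This means $|G|$ is a unit in $k$ and we have the averaging (Reynolds) operator available; this is precisely what makes $(-)^G$ exact here, even though it is only left exact in general.

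First I would dispose of the easy part, namely exactness of the induced sequence at $A^G$ and at $B^G$. Here $\alpha'$ and $\beta'$ are just the restrictions of $\alpha$ and $\beta$ to the fixed submodules, so injectivity of $\alpha'$ is inherited from injectivity of $\alpha$. For exactness at $B^G$: clearly $\beta'\alpha' = 0$; conversely if $b \in B^G$ with $\beta(b)=0$, choose $a \in A$ with $\alpha(a)=b$, and for each $g \in G$ note that $\alpha(g\cdot a) = g\cdot\alpha(a) = g\cdot b = b = \alpha(a)$, whence $g\cdot a = a$ by injectivity of $\alpha$; thus $a \in A^G$ and $b \in \mathrm{im}(\alpha')$. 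None of this uses the hypothesis on $k$.

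The one place the characteristic-zero hypothesis is needed is surjectivity of $\beta' \colon B^G \to C^G$, and this is the only real content. I would introduce the idempotent $e \colon M \to M^G$, $e(m) = \frac{1}{|G|}\sum_{g\in G} g\cdot m$, which is well-defined since $|G|\neq 0$ in $k$, is a $k$-linear projection of any $G$-module $M$ onto $M^G$, and commutes with every $G$-module homomorphism. Given $c \in C^G$, use surjectivity of $\beta$ to pick $b \in B$ with $\beta(b)=c$; then $e(b) \in B^G$ and $\beta(e(b)) = e(\beta(b)) = e(c) = c$, the last equality because $c$ is already $G$-invariant. Hence $\beta'$ is onto and the sequence $0 \to A^G \to B^G \to C^G \to 0$ is exact.

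The only subtlety worth flagging is that the statement is \emph{false} without some hypothesis ensuring $|G|$ is invertible, so the main (indeed sole) obstacle is simply making sure the averaging operator is legitimate; since we are in characteristic zero throughout this thesis, this is immediate, and the proof is otherwise a routine diagram chase.
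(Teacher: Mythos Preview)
Your argument is correct and is the standard Reynolds-operator proof: left exactness of $(-)^G$ is formal, and surjectivity of $\beta'$ follows by averaging a preimage once $|G|$ is invertible in $k$, which holds here by the standing characteristic-zero hypothesis. You have also correctly identified that the lemma fails in general without that hypothesis.

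As for comparison with the paper: there is nothing to compare against, since the thesis does not supply its own proof but simply refers the reader to \cite[\S3.2.3]{dumas_invariants}. The argument given there is precisely the averaging argument you wrote down, so your proposal matches both the intended approach and the cited source.
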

\begin{theorem}\label{res:fixed ring is deformation theorem}
Let $G$ be a finite subgroup of $\SLZ$.  Then the fixed ring $D^G$ is a deformation of $k(x,y)^G$, where the Poisson bracket on $k(x,y)^G$ is induced by the bracket $\{y,x\} = yx$ on $k(x,y)$.
\end{theorem}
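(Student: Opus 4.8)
The plan is to take the fixed ring $\DD^G$ of the $G$-action \eqref{eq:action of SL2 on DD} as the deforming algebra, mirroring the way $\DD$ deforms $k(x,y)$ in Proposition~\ref{res:F is a deformation of D}. First I would check that $h = 2(1-z)$ still does its job inside $\DD^G$: it lies in $\DD^G$ because $z$ is $G$-fixed, it is central there, it is a non-zero-divisor (being one in the domain $\DD$), and it is non-invertible (being non-invertible in $\DD \supseteq \DD^G$). So $\DD^G$ is an algebra of the kind appearing in Definition~\ref{def:poisson deformation}, and what remains is to compute its quotients by $h\DD^G$ and by $(h-\lambda)\DD^G$.

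The technical key will be the elementary observation that whenever $c \in \DD$ is central, $G$-fixed and a non-zero-divisor, one has $(c\DD)^G = c\DD^G$: the inclusion $\supseteq$ is clear, and if $ca \in \DD^G$ then $c\,g(a) = g(ca) = ca$ for all $g \in G$, so cancelling $c$ gives $g(a)=a$, whence $a \in \DD^G$. I would apply this with $c=h$ and with $c=h-\lambda$ ($\lambda \in k^{\times}$ admissible in Proposition~\ref{res:F is a deformation of D}, in particular $G$-fixed), and combine it with Lemma~\ref{res:sublemma} applied to the exact sequences of $G$-modules
\[0 \longrightarrow h\DD \longrightarrow \DD \longrightarrow \DD/h\DD \longrightarrow 0, \qquad 0 \longrightarrow (h-\lambda)\DD \longrightarrow \DD \longrightarrow \DD/(h-\lambda)\DD \longrightarrow 0,\]
to obtain algebra isomorphisms $\DD^G/h\DD^G \cong (\DD/h\DD)^G$ and $\DD^G/(h-\lambda)\DD^G \cong (\DD/(h-\lambda)\DD)^G$.

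Next I would identify the right-hand sides. Proposition~\ref{res:F is a deformation of D} gives $\DD/h\DD \cong k(x,y)$ and $\DD/(h-\lambda)\DD \cong D$, and these are $G$-equivariant because, as noted just after \eqref{eq:action of SL2 on DD}, the induced $G$-actions on these quotients are exactly the Poisson monomial action of Proposition~\ref{res:action of SL2, Poisson version} and the monomial action of Proposition~\ref{res:action of SL2, q version}. Hence $(\DD/h\DD)^G \cong k(x,y)^G$ and $(\DD/(h-\lambda)\DD)^G \cong D^G$. For the first I would also verify Poisson-compatibility: for $a,b\in\DD^G$ the commutator $ab-ba$ lies in $h\DD$ and is $G$-fixed, so $ab-ba=hc$ with $c\in\DD^G$ by the cancellation argument above, which shows $\tfrac1h(ab-ba) \bmod h\DD^G$ is well-defined in $\DD^G/h\DD^G$ and is simply the restriction of the induced bracket on $\DD/h\DD$; since the latter corresponds under the isomorphism to $\{y,x\}=yx$ on $k(x,y)$, its restriction corresponds to the induced bracket on $k(x,y)^G$. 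Finally I would note that for the admissible $\lambda$ the ideal $(h-\lambda)\DD^G$ is proper — otherwise $h-\lambda$ would be invertible already in $\DD$ — and non-zero, so $D^G \cong \DD^G/(h-\lambda)\DD^G$ is a deformation of $k(x,y)^G$ in the precise sense of Definition~\ref{def:poisson deformation}.

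I expect the only genuine work to be bookkeeping rather than a real obstacle: confirming that the $G$-actions induced on the quotients of $\DD$ coincide on the nose with the monomial actions of Propositions~\ref{res:action of SL2, q version} and \ref{res:action of SL2, Poisson version} (so that "$G$-equivariant isomorphism" is legitimate), and checking that the Poisson bracket survives the passage to invariants. Both are routine given the setup of \S\ref{ss:constructing the deformation}, and the heart of the argument is the short diagram chase powered by Lemma~\ref{res:sublemma} together with the identity $(c\DD)^G=c\DD^G$.
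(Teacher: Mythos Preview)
Your proposal is correct and follows essentially the same route as the paper: apply Lemma~\ref{res:sublemma} to the short exact sequence $0\to h\DD\to\DD\to\DD/h\DD\to0$ (and its $(h-\lambda)$ analogue), identify the quotients $G$-equivariantly via Proposition~\ref{res:F is a deformation of D}, and check that the induced Poisson brackets agree. The only difference is cosmetic: you make explicit the identity $(c\DD)^G=c\DD^G$ that the paper uses tacitly when it writes the first term of the invariant sequence as $h\DD^G$ rather than $(h\DD)^G$.
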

\begin{proof}
Let $G$ be a finite subgroup of $\SLZ$ acting on $k(x,y)$, $D$ and $\DD$ by monomial automorphisms.  Then $0 \rightarrow h\DD \rightarrow \DD \rightarrow \DD/h\DD \rightarrow 0$ is an exact sequence of $G$-modules, and hence by Lemma~\ref{res:sublemma} we have another exact sequence
\begin{equation}\label{eq:short exact Poisson sequence}0 \longrightarrow h\DD^G \longrightarrow \DD^G \longrightarrow (\DD/h\DD)^G \longrightarrow 0.\end{equation}
This gives rise to an isomorphism of rings $\DD^G/h\DD^G \cong (\DD/h\DD)^G$, which will be an isomorphism of Poisson algebras if the brackets on $\DD^G/h\DD^G$ and $(\DD/h\DD)^G$ agree.  This is easy to see, however, since both brackets are induced by commutators which can each be computed in the same ring $\DD$.

Since $\DD/h\DD \cong k(x,y)$ by Proposition~\ref{res:F is a deformation of D} and this isomorphism is clearly $G$-equivariant, we obtain isomorphisms of Poisson algebras $\DD^G/h\DD^G \cong (\DD/h\DD)^G \cong k(x,y)^G$.

By a similar argument, we obtain an isomorphism of algebras $\DD^G/(h-\lambda)\DD^G \cong D^G$, for $\lambda = 2(1-\hat{q})$.  Thus the fixed ring of the deformation is a deformation of the fixed ring, as required.
\end{proof}

\section{Fixed rings of Poisson fields}\label{s:fixed Poisson rings}
The results of \S\ref{s:q-division ring as deformation} translate the problem of understanding the fixed ring $D^G$ into two sub-problems:
\begin{enumerate}
\item Understanding the Poisson structure of the fixed ring $k(x,y)^G$;
\item Describing all the possible deformations of this ring.
\end{enumerate}
In this section we will focus on the first of these problems.

It is standard that $k(x,y)^G \cong k(x,y)$ as \textit{algebras} for any finite group $G$, but this does not guarantee that their Poisson structures will also agree.  Even for the case of finite groups of monomial automorphisms, until now only the Poisson structure of $k(x,y)^{\tau}$ was known.  In this section we will extend this to a description of the fixed rings of all finite groups of monomial Poisson automorphisms on $k(x,y)$; in addition to being an interesting result in its own right, this will demonstrate that with the right techniques it is a genuine simplification to consider the structure of commutative Poisson fixed rings rather than their $q$-commuting equivalents.

The aim of this section will be to prove the following theorem, which we will approach on a case by case basis as in Chapter~\ref{c:fixed_rings_chapter}.
\begin{theorem}\label{res:Poisson fixed rings big theorem}
Let $k$ be a field of characteristic zero which contains a primitive third root of unity $\omega$, and let $G$ be a finite subgroup of $SL_2(\mathbb{Z})$ which acts on $k(x,y)$ by Poisson monomial automorphisms as defined in Definition~\ref{def:action of SL2, Poisson def}.  Then there exists an isomorphism of Poisson algebras $k(x,y)^G \cong k(x,y)$.
\end{theorem}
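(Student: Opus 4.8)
The plan is to reduce to the four conjugacy-class representatives of finite subgroups of $SL_2(\mathbb{Z})$ exactly as in the $q$-commuting case, and then handle each representative by exhibiting an explicit pair of Poisson-commuting generators for the fixed ring. Recall that up to conjugacy the only nontrivial finite subgroups are the cyclic groups of orders $2,3,4,6$, with representatives $\tau$ (order $2$), $\sigma$ (order $3$), $\rho$ (order $4$), $\eta$ (order $6$); since conjugate subgroups have isomorphic fixed rings, it suffices to treat one from each class. The Poisson bracket on $k(x,y)$ is the multiplicative one $\{y,x\}=yx$, for which the formula $\{y^ax^c,y^bx^d\}=(ad-bc)y^{a+b}x^{c+d}$ holds on monomials by Proposition~\ref{res:action of SL2, Poisson version}; in particular $u,v$ Poisson-commute iff their leading monomial exponent vectors are proportional, and more usefully I expect the fixed ring to be generated by a pair $u,v$ with $\{v,u\}=vu$, i.e. a copy of the Poisson field $k(x,y)$ with the same bracket.

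First I would dispose of $\tau:x\mapsto x^{-1},y\mapsto y^{-1}$. The Poisson analogues of the Stafford--Van den Bergh generators \eqref{eq:order_2_gens}, namely $u=(x-x^{-1})(y^{-1}-y)^{-1}$ and $v=(xy-x^{-1}y^{-1})(y^{-1}-y)^{-1}$, are manifestly $\tau$-fixed; one checks by a direct commutative Poisson computation (using \eqref{eq:Poisson bracket def for polynomials} and \eqref{eq:extend Poisson bracket to localization}) that $\{v,u\}$ equals a scalar multiple of $vu$, and that $k(u,v)=k(x,y)^{\tau}$ — the latter because $[k(x,y):k(u,v)]\le |G|=2$ and $k(u,v)\subseteq k(x,y)^\tau\subsetneq k(x,y)$. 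After rescaling to normalise the scalar to $1$ we get $k(x,y)^\tau\cong k(x,y)$ as Poisson algebras. For $\rho$ (order $4$) I would use $\rho^2=\tau$, so $k(x,y)^\rho=(k(x,y)^\tau)^\rho$, compute the action of $\rho$ on $u,v$, recognise it as a Poisson automorphism of order $2$ of the same shape as the map $\varphi$ of \eqref{eq:def_varphi} (now with $q$ replaced by $1$), and invoke the order-$2$ case again on those variables. Likewise $\eta$ (order $6$) satisfies $\eta^3=\tau$, and the induced action on $u,v$ should, after a change of variables, be a Poisson monomial automorphism of order $3$, reducing to the $\sigma$ case.

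The order-$3$ case is the one I expect to be the main obstacle, just as it was over the $q$-division ring (cf. the "9 pages" remark, Remark~\ref{rem:order 3 snark}). The strategy is to mimic \S\ref{s:more fixed rings}: pass to the fixed ring of the commutative Laurent polynomial ring $k[x^{\pm1},y^{\pm1}]^\sigma$, which by Baudry's results is generated by finitely many explicit invariants $R_{i,j}$ (the $q=1$ specialisations of the elements listed after Theorem~\ref{res:epic_theorem_2}), take the quotient-field, and show this equals $k(f,g)$ for a suitable Poisson-commuting pair $f,g$ built from the $\sigma$-semi-invariants $a,b,c$ (the $\hat q=1$ versions of \eqref{eq:order_3_building_blocks}). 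Concretely I would set $g=a^{-1}b$ and look for $f$ of the form $\pb-\omega^2\pa g+(\text{correction in }g,g^{-1})$ so that $\{f,g\}=fg$; the correction terms should be considerably milder here because setting $q=1$ collapses the $\hat q$-powers, so the tedious identities among $a\pa$, $a\pb$, $\pa b$, $\pb b$ become the clean commutative relations. The key verification is then that $\pa$ (equivalently $\pb$, since $f$ is linear in one given the other, and then $\pc$ via a Poisson bracket identity $\{\pa,\pb\}=c\,+\,$lower) lies in $k(f,g)$; this is where an explicit but finite computation — the Poisson analogue of the Magma-verified identity for $\pa$ in the proof of Theorem~\ref{res:epic_theorem_2} — is needed, and I would present it as such. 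Once all four representatives give $k(x,y)^G\cong k(x,y)$ as Poisson algebras, together with the trivial group this exhausts the finite subgroups up to conjugacy, proving the theorem.
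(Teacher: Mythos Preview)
Your overall architecture matches the paper: reduce to the four conjugacy-class representatives $\tau,\sigma,\rho,\eta$ and handle each by exhibiting an explicit pair of generators with bracket $\{g,f\}=gf$. The treatments of $\tau$ (via the Stafford--Van den Bergh pair $u,v$) and of $\eta$ (via $\eta^3=\tau$ followed by recognising an order-$3$ monomial action on $k(u,v)$ after a change of variables) are exactly what the paper does.

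There is, however, a genuine gap in your order-$4$ plan. You correctly reduce $\rho$ to the order-$2$ map $\varphi: u\mapsto -u^{-1},\ v\mapsto (u^{-1}-u)v^{-1}$ acting on $k(u,v)$, but you then propose to ``invoke the order-$2$ case again on those variables''. The only order-$2$ case you have established is the \emph{monomial} map $\tau$, and $\varphi$ is not a monomial automorphism, so nothing you have proved applies to it. In the $q$-commuting chapter this is precisely why Theorem~\ref{res:epic_theorem} (the $\varphi$ case) had to be proved separately from Theorem~\ref{res:order_2_monomial_result} (the $\tau$ case). The paper closes the gap not by reduction but by direct construction: starting from $g_q=(u+u^{-1})\big(v-(u^{-1}-qu)v^{-1}\big)^{-1}$ in $k_q(u,v)$, applying \texttt{qelement}, and then setting $q=1$, it obtains an explicit pair $f,g\in k(u,v)^{\varphi}$ (equation~\eqref{eq:order 4 gens Poisson}), checks $\{g,f\}=gf$ by hand (Lemma~\ref{res:order 4 Poisson lemma}), and then proves $[k(u,v):k(f,g)]=2$ via a minimal polynomial for $u$ over $k(f,g)$. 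Thus the Poisson analogue of Theorem~\ref{res:epic_theorem} is obtained as a \emph{corollary} of the $\rho$ computation, not invoked as input to it.

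For $\sigma$ your proposal (take the $\hat q\to 1$ limit of the $f$ in \eqref{eq:order_3_gens} and re-run the argument of Theorem~\ref{res:epic_theorem_2}) is plausible, but the paper does something different and considerably simpler. Setting $\hat q=1$ in the nine-page element of Appendix~\ref{s:magma_example} collapses it to the short monomial expression $f=a^2b/c^2$, $g=b/a$, with $c=xy^{-1}+xy^2+x^{-2}y^{-1}-3$ (equation~\eqref{eq:order 3 Poisson generators}); the verification that the standard invariants $p_1,p_2,p_3$ lie in $k(f,g)$ then reduces to two explicit rational identities and one Poisson-bracket computation, rather than a Magma-assisted check. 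This simplification is exactly the advertised payoff of working on the Poisson side.
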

At first glance it is not clear that describing the Poisson structure of $k(x,y)^G$ should be any easier than describing the algebra structure of $D^G$: all we have done is replace the requirement to find two elements $f, g \in D^G$ such that $fg = qgf$ with the requirement that we find two elements in $k(x,y)^G$ such that $\{g,f\} = gf$.  However, by exploiting both the $q$-commuting structure of $D$ and the ease of computation in $k(x,y)$ we can develop a method which often produces suitable Poisson generators for the fixed rings.

The key idea is that by using a technique inspired by the work of Alev and Dumas in \cite{AD3} and Artamonov and Cohn in \cite{AC1}, we can construct potential $q$-commuting generators for $D^G$ by constructing them term by term in $k_q(y)(\!(x)\!)$, and then replace $\hat{q}$ by 1 throughout to obtain elements of $k(x,y)$ with the desired properties. We describe this approach in more detail next.

In Appendix~\ref{s:magma_code} we define the Magma procedure \texttt{qelement}, which accepts as input an element of the form
\begin{equation}\label{eq:form of g required to use qelement}g = \lambda y + \sum_{i \geq 1}a_i x^i \in k_q(y)(\!(x)\!), \quad \lambda \in k^{\times},\ a_i \in k(y)\end{equation}
and constructs another power series $f \in k_q(y)(\!(x)\!)$ such that $fg = qgf$.  We note that $f$ need not represent an element of $D$ even if $g$ does.  Appendix~\ref{s:magma_theory} also describes results which allow us to test if $f \in D$ and if so, writes it as a left fraction $f = v^{-1}u$; however, as demonstrated by the example in Appendix~\ref{s:magma_example} even quite simple products of non-commutative fractions become unmanageably complicated when forced into the form of a single left fraction.  Verifying that $f \in D^G$ or proving that $k_q(f,g) = D^G$ is essentially impossible in this situation.

On the other hand, commutative fractions are far easier to multiply and factorize, and elements which were unmanageably large in $D$ often reduce to quite simple elements of $k(x,y)$ upon replacing $\hat{q}$ by 1 (recall that $\hat{q}$ denotes a square root of $q$).  Further, if $f$, $g \in D$ satisfy $fg=qgf$ and it makes sense to replace $\hat{q}$ by 1 in these elements (denoted here by $\overline{f}$ and $\overline{g}$) then the construction of the Poisson bracket as the image of a commutator in $\DD$ guarantees that $\{\overline{g},\overline{f}\} = \overline{g}\overline{f}$ (this claim is illustrated more rigorously in Lemma~\ref{res:order 2 SVdB gens Poisson}).

Therefore if $G$ is a finite subgroup of $SL_2(\mathbb{Z})$ acting on $k(x,y)$ by Poisson monomial automorphisms and we expect that the fixed ring $k(x,y)^G$ will be Poisson-isomorphic to $k(x,y)$, we may apply the following procedure to attempt to construct generators for $k(x,y)^G$.
\begin{enumerate}
\item Choose a fraction $g \in D^G$ of the form \eqref{eq:form of g required to use qelement}.
\item Apply procedure \texttt{qelement} in Magma to construct $f \in k_q(y)(\!(x)\!)$ such that $fg = \hat{q}^2gf$.
\item Use procedure \texttt{checkrationalL} to check whether $f \in D$ (within the limits of the computer's computational power); if true, use \texttt{findrationalL} to write $f = v^{-1}u$ for $v,u \in k_q[x,y]$.
\item If possible, replace $\hat{q}$ by 1 in $f$ and $g$ and check whether $f \in k(x,y)^G$.
\end{enumerate}
We note that having already proved the $q$-commuting version of Theorem~\ref{res:Poisson fixed rings big theorem}, we could simply take the $q$-commuting generators obtained for the corresponding results in Chapter~\ref{c:fixed_rings_chapter} and set $\hat{q} = 1$ in order to obtain Poisson generators.  However, since the motivation for studying these Poisson fixed rings is to demonstrate that we can understand subrings of $D$ via the Poisson structure of subrings of $k(x,y)$, for the purposes of this section we will (mostly) ignore the results of \S\ref{s:more fixed rings} and proceed using \texttt{qelement} and the approach outlined above.

Recall that up to conjugation, the group $SL_2(\mathbb{Z})$ admits only four non-trivial finite subgroups: the cyclic groups of order 2, 3, 4 and 6.  As in Chapter~\ref{c:fixed_rings_chapter}, it therefore suffices to describe the fixed rings of $k(x,y)$ with respect to one Poisson automorphism of each conjugacy class, which are listed in Table~\ref{fig:table_of_maps_poisson} below.
\begin{table}[h]
\centering
\begin{tabular}{c|rl}
Order & \multicolumn{2}{c}{Automorphism} \\ \hline
2 & $\tau:$&$ x \mapsto x^{-1},\  y \mapsto y^{-1}$ \\
3 & $\sigma:$&$ x \mapsto y, \ y \mapsto (xy)^{-1}$ \\
4 & $\rho:$&$ x \mapsto y^{-1}, \ y \mapsto x$ \\
6 & $\eta:$&$ x \mapsto y^{-1}, \ y \mapsto xy$
\end{tabular}
\caption{Conjugacy class representatives of finite order Poisson monomial automorphisms on $k(x,y)$.}\label{fig:table_of_maps_poisson}
\end{table}

We have already noted that the fixed ring under the automorphism $\tau$ of order 2 has been described in \cite{BaudryThesis}; the proof involves certain clever factorizations and manipulations of equalities in $k(x,y)^{\tau}$ and does not generalize easily to automorphisms of higher order.  A simpler description of $k(x,y)^{\tau}$ may be obtained by using the pair of elements defined in \eqref{eq:order_2_gens}; since we will use this description of $k(x,y)^{\tau}$ in later results, the next lemma provides a proof of this statement.
\begin{lemma}\label{res:order 2 SVdB gens Poisson}
Let $u,\ v \in k(x,y)$ be defined by
\begin{equation}\label{eq:order 2 poisson generators}u = \frac{x-x^{-1}}{y^{-1}-y}, \quad v = \frac{xy-(xy)^{-1}}{y^{-1}-y},\end{equation}
and let $\tau$ be as in Table~\ref{fig:table_of_maps_poisson}.  Then $k(u,v) = k(x,y)^{\tau}$ and $\{v,u\} = vu$.
\end{lemma}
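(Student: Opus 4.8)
The plan is to verify the two claims separately: first the Poisson relation $\{v,u\} = vu$, then the equality of fields $k(u,v) = k(x,y)^{\tau}$. For the Poisson computation, I would not work directly with the quotients in \eqref{eq:order 2 poisson generators} but instead exploit the $q$-commuting picture. Recall from \eqref{eq:order_2_gens} that the elements $\hat u = (x - x^{-1})(y^{-1}-y)^{-1}$ and $\hat v = (xy - x^{-1}y^{-1})(y^{-1}-y)^{-1}$ of $D$ satisfy a $q$-commutation relation $\hat v \hat u = q \hat u \hat v$ (this is implicit in Theorem~\ref{res:order_2_monomial_result} and the discussion around it, since $D^{\tau} = k_q(\hat u, \hat v)$). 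These elements lift naturally to the ring $\DD$ of \eqref{eq:def of DD}: writing $\hat v \hat u - z^2 \hat u \hat v = 0$ in $\DD$ (with $z$ playing the role of $\hat q$), passing to $\DD/h\DD \cong k(x,y)$ sends $\hat u, \hat v$ to $u, v$, and the induced Poisson bracket formula from Definition~\ref{def:star product Poisson bracket} gives $\{v,u\} = \tfrac{1}{2}(1+z) vu \equiv vu \pmod{h\DD}$, exactly as in the computation at the end of the proof of Proposition~\ref{res:F is a deformation of D}. Alternatively, one can simply verify $\{v,u\} = vu$ by brute force using the Leibniz-rule formula \eqref{eq:Poisson bracket def for polynomials} together with the extension \eqref{eq:extend Poisson bracket to localization} to the localization; this is a routine but slightly tedious rational-function identity, so I would present the $\DD$-lifting argument as the clean route.

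For the field equality, the containment $k(u,v) \subseteq k(x,y)^{\tau}$ is immediate: $\tau$ fixes $u$ and $v$ since $\tau$ negates each of $x - x^{-1}$, $y^{-1} - y$, and $xy - (xy)^{-1} = xy - x^{-1}y^{-1}$ (being odd Laurent expressions), so the ratios are fixed. For the reverse containment it suffices to show $x, y \in k(u,v)$. The strategy is to recover $y + y^{-1}$ and then $x$, just as in the proof of Theorem~\ref{res:epic_theorem_2} / Lemma~\ref{res:computation_lemma}, but now in the commutative setting where manipulations are easy. Concretely, one computes that $v u^{-1} = (xy - x^{-1}y^{-1})(x - x^{-1})^{-1}$ and, combining this with $u$ itself, extracts a rational expression in $y$ alone (for instance $u \cdot v^{-1}$ or a suitable combination will be symmetric in $x \leftrightarrow x^{-1}$ and hence lie in $k(y+y^{-1})$ — this mirrors the role of $b = y + y^{-1}$ in \eqref{eq:abc_defs}). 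Once $y + y^{-1} \in k(u,v)$, note $(y - y^{-1})^2 = (y+y^{-1})^2 - 4 \in k(u,v)$, and from $u = (x - x^{-1})(y^{-1}-y)^{-1}$ we then recover $x - x^{-1}$; similarly $v$ yields $xy - x^{-1}y^{-1}$, and a linear combination of these two over $k(y^{\pm1})$ produces $x$ outright (compare \eqref{eq:formula_for_x}). Then $y^2 = (xy)x^{-1}$ combined with $y+y^{-1}$ pins down $y$. Hence $k(x,y) \subseteq k(u,v)\langle \text{quadratic over } k(u,v)\rangle$, and since $[k(x,y) : k(x,y)^{\tau}] = 2$ (non-commutative, or here commutative, Artin's lemma) while $k(u,v) \subseteq k(x,y)^{\tau} \subsetneq k(x,y)$ with $[k(x,y):k(u,v)] \le 2$, we must have $k(u,v) = k(x,y)^{\tau}$.

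The main obstacle I anticipate is organizing the intermediate extraction cleanly: one wants the "right" symmetric combination of $u$ and $v$ that lands in $k(y+y^{-1})$ without an unpleasant detour, and getting the denominators to cancel requires a little care. This is genuinely easier than the analogous step in Chapter~\ref{c:fixed_rings_chapter} because everything commutes, so all the identities reduce to polynomial algebra in $\mathbb{Q}(x,y)$; still, it is the one place where I would actually sit down and compute rather than wave hands. The degree-counting finish is then formal.
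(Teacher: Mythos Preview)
Your treatment of the Poisson relation $\{v,u\}=vu$ is exactly what the paper does: lift $u,v$ to $\DD$, use that the corresponding elements $q$-commute there, and read off the bracket from $(1-z^2)/h \equiv 1 \pmod{h}$. That part is fine.

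The field-equality argument, however, has a genuine gap. First, the sentence ``for the reverse containment it suffices to show $x,y\in k(u,v)$'' is simply false: $x$ and $y$ are not fixed by $\tau$, so they cannot lie in $k(u,v)\subseteq k(x,y)^{\tau}$. You then pivot to the correct strategy (show $[k(x,y):k(u,v)]\le 2$ and use $[k(x,y):k(x,y)^{\tau}]=2$), but the execution does not go through. Your plan is to first exhibit $y+y^{-1}\in k(u,v)$, but you never actually do this: the suggestion that ``$u\cdot v^{-1}$ or a suitable combination will be symmetric in $x\leftrightarrow x^{-1}$'' fails on inspection (neither $u/v$ nor $v/u$ is invariant under $x\leftrightarrow x^{-1}$, and in any case invariance under $x\leftrightarrow x^{-1}$ alone is the wrong symmetry here). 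Without $y+y^{-1}\in k(u,v)$ you do not get $(y-y^{-1})^2\in k(u,v)$, and the subsequent step ``from $u$ we recover $x-x^{-1}$'' already requires $y^{-1}-y$ itself, not just its square. The later appeal to ``a linear combination over $k(y^{\pm1})$'' is circular, since $k(y^{\pm1})$ is not yet known to sit inside the extension you are building.

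The paper sidesteps all of this with two explicit identities: it writes down the quadratic
\[
m_x(t)=v\,t^2+(v^2-u^2+1)\,t+v\in k(u,v)[t]
\]
and checks directly that $x$ is a root, so $[k(u,v)(x):k(u,v)]\le 2$; then it observes that
\[
\frac{1+vx}{ux}=y,
\]
so $y\in k(u,v)(x)$ and hence $k(u,v)(x)=k(x,y)$. The degree count $k(u,v)\subseteq k(x,y)^{\tau}\subsetneq k(x,y)$ with $[k(x,y):k(u,v)]=2$ then forces $k(u,v)=k(x,y)^{\tau}$. If you want to salvage your route, the missing ingredient is an explicit element of $k(u,v)$ equal to $y+y^{-1}$ (or, equivalently, to $x+x^{-1}$); the paper's minimal polynomial hands you $x+x^{-1}=-(v^2-u^2+1)/v$ for free, which is probably the cleanest way in.
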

\begin{proof}
It is clear that $u,v \in k(x,y)^{\tau}$.  The claim that $\{v,u\} = vu$ may be verified computationally using the formula \eqref{eq:Poisson bracket def for polynomials}, but since no polynomials in $q$ appear in the denominator of $u$ or $v$ we may also prove this claim as follows.  Let
\[u_q = (x-x^{-1})(y^{-1}-y)^{-1}, \ v_q = (xy-x^{-1}y^{-1})(y^{-1}-y)^{-1}\]
be elements of $D$; by \cite[\S 13.6]{SV1} we know that $u_qv_q = qv_qu_q$.  These lift without modification to elements $u_z$ and $v_z$ of the ring $\DD$ from Theorem~\ref{res:F is a deformation of D}, where $u_zv_z = z^2v_zu_z$.  Recall that we defined $h = 2(1-z)$ and that $\DD/h\DD \cong k(x,y)$ as Poisson algebras; hence
\begin{align*}
\{v,u\} &= \frac{1}{h}(v_zu_z - u_zv_z) \quad \mod h\DD \\
& = \frac{1}{2(1-z)}(1-z^2)v_zu_z \quad \mod h\DD \\
& = \frac{1}{2}(1+z) v_zu_z \quad \mod h\DD\\
& = vu,
\end{align*}
since $h = 0$ implies $z=1$ in $\DD/h\DD$.

Finally, we need to prove that $k(u,v) = k(x,y)^{\tau}$.  Since $k(u,v) \subseteq k(x,y)^{\tau} \subsetneq k(x,y)$ and $[k(x,y):k(x,y)^{\tau}] = 2$, if we can show that $[k(x,y):k(u,v)]\leq 2$ as well then it must follow that $k(u,v) = k(x,y)^{\tau}$.  We define a polynomial in $k(u,v)[t]$ by
\begin{equation}\label{eq:min poly for x order 2}m_x(t) = vt^2 + (v^2-u^2+1)t +v,\end{equation}
which has $x$ as a root (this can be seen by direct computation in $k(x,y)$).  Since $k(u,v)$ is a subring of $k(x,y)^{\tau}$ and $x$ is not fixed by $\tau$, we cannot have $x \in k(u,v)$ and so \eqref{eq:min poly for x order 2} must be irreducible, i.e. it is the minimal polynomial for $x$ over $k(u,v)$.

Therefore the Galois extension $k(u,v)(x)$ has order 2 over $k(u,v)$.  Observe further that $k(u,v)(x) = k(x,y)$ since
\begin{align*}
\frac{1+vx}{ux} &= \frac{y^{-1}-y + x^2y-y^{-1}}{x^2-1} \\
&= \frac{(x^2-1)y}{x^2-1}\\
&=y,
\end{align*}
and hence $y \in k(u,v)(x)$.  This implies that $[k(x,y):k(u,v)] = 2$ and so $k(u,v) = k(x,y)^{\tau}$ as required.  Finally, since $u$ and $v$ satisfy $\{v,u\} = vu$ the isomorphism $k(x,y) \cong k(x,y)^{\tau}$ is in fact an isomorphism of Poisson algebras.
\end{proof}
We now turn our attention to the order 3 case, which caused such problems in the $q$-division ring.  As observed in Remark~\ref{rem:order 3 snark}, the unintuitive generator $f$ used in Theorem~\ref{res:epic_theorem_2} had its roots in a single left fraction constructed using \texttt{qelement}; the full definition of this element is given in Appendix~\ref{s:magma_example} and takes 9 pages to write down fully.  However, since the Poisson bracket captures only a first-order impression of the non-commutative structure in $D$ and multiplication of fractions is far less complicated in $k(x,y)$, it is perhaps unsurprising that upon replacing $\hat{q}$ with 1 in this 9 page element we obtain a far simpler element which satisfies our requirements in the Poisson case.

Having set $\hat{q}=1$ in the elements appearing in Appendix~\ref{s:magma_example}, we obtain two elements in $k(x,y)$ of the form 
\begin{equation}\label{eq:order 3 Poisson generators}f = a^2b/c^2, \quad g = b/a,\end{equation}
where
\begin{equation}\label{eq:order 3 poisson building blocks}\begin{aligned}
a &= x + \omega y + \omega^2 (xy)^{-1}, \\
b &= x^{-1} + \omega y^{-1} + \omega^2 xy, \\
c &= xy^{-1} + xy^2 + x^{-2}y^{-1} - 3,
\end{aligned}\end{equation}
in a similar manner to the $q$-commuting case.  Observe that $\sigma$ acts on $a$ and $b$ as multiplication by $\omega^2$ and fixes $c$.  

We note that since our Magma functions can only \textit{approximate} computations in $k_q(y)(\!(x)\!)$, the above on its own is not a proof: we still need to verify that $f$ and $g$ from \eqref{eq:order 3 Poisson generators} do indeed generate the fixed ring $k(x,y)^{\sigma}$.  This is the purpose of the next result.
\begin{proposition}\label{res:order 3 poisson result}
Let $k$ be a field of characteristic 0 containing a primitive third root of unity $\omega$, and let $f$ and $g$ be defined as in \eqref{eq:order 3 Poisson generators}.  Then the Poisson subalgebra of $k(x,y)$ generated by $f$ and $g$ is equal to $k(x,y)^{\sigma}$, and there is an isomorphism of Poisson algebras $k(x,y)^{\sigma} \cong k(x,y)$.
\end{proposition}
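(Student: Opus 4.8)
The plan is to mirror the structure of the $q$-commuting proof (Theorem~\ref{res:epic_theorem_2}), but to exploit the fact that computations in the commutative field $k(x,y)$ are vastly simpler. First I would establish that $\sigma(f) = f$ and $\sigma(g) = g$: since $\sigma$ multiplies both $a$ and $b$ by $\omega^2$ and fixes $c$, we get $\sigma(g) = (\omega^2 a)^{-1}(\omega^2 b) = g$ and $\sigma(f) = (\omega^2 a)^2 (\omega^2 b)(\omega^2 c)^{-2} = \omega^6 f = f$, so indeed $f, g \in k(x,y)^{\sigma}$. Next I would verify the Poisson relation $\{g,f\} = gf$. Rather than computing directly with the formula \eqref{eq:Poisson bracket def for polynomials}, I would invoke the lifting argument already used in Lemma~\ref{res:order 2 SVdB gens Poisson}: the elements $f$ and $g$ have no powers of $q$ (or $\hat q$) in their denominators, so they lift to elements $f_z, g_z$ of $\DD$ obtained from the corresponding $q$-commuting generators by leaving $\hat q$ symbolic; since those $q$-commuting elements satisfy $f g = q g f$ in $D$ (Proposition~\ref{res:order_3_q_comm_proof}), their lifts satisfy $f_z g_z = z^2 g_z f_z$ in $\DD$, and reducing modulo $h\DD$ with $h = 2(1-z)$ gives $\{g,f\} = \tfrac12(1+z)g_z f_z \equiv gf \pmod{h\DD}$ exactly as in Lemma~\ref{res:order 2 SVdB gens Poisson}. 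This step needs the (routine but necessary) check that replacing $\hat q$ by $1$ in the $9$-page element of Appendix~\ref{s:magma_example} genuinely yields the simple $f$ of \eqref{eq:order 3 Poisson generators}; this is where the Magma computation is used, but only as a guide, and the resulting relation can then be confirmed symbolically.

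The third and main step is to prove $k(f,g) = k(x,y)^{\sigma}$. Following the strategy of Theorem~\ref{res:epic_theorem_2}, I would reduce to showing $k[x^{\pm1},y^{\pm1}]^{\sigma} \subseteq k(f,g)$: by the commutative analogue of \cite[Theorem~1]{faith} (or simply because $k(x,y)^{\sigma}$ is the fraction field of $k[x^{\pm1},y^{\pm1}]^{\sigma}$), this inclusion together with $k(f,g) \subseteq k(x,y)^{\sigma}$ forces equality. By \cite[Th\'eor\`eme~2.1]{Baudry}, $k[x^{\pm1},y^{\pm1}]^{\sigma}$ is generated as a $k$-algebra by the seven elements $R_{i,j}$ listed in the proof of Theorem~\ref{res:epic_theorem_2} (with $\hat q = 1$), and as there it suffices to show that $\theta_1 = R_{1,0}$, $\theta_2 = R_{1,1}$ and $\theta_3 = R_{1,2}$ lie in $k(f,g)$ (the remaining three generators being polynomial expressions in these). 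Since $f = a^2 b/c^2$ has $\sigma$ acting trivially, and $g = b/a$, one has $a^3 = (a^2 b/c^2)(c^2/(ab)) \cdot a = \dots$; more usefully, $a = f g^{-1} \cdot (\text{something in } k(x,y)^{\sigma})$? — here I would instead look for an explicit rational expression for $\theta_1$ in terms of $f$ and $g$, by analogy with the displayed formula for $\theta_1$ near the end of the proof of Theorem~\ref{res:epic_theorem_2}. Setting $\hat q = 1$ in that formula should give
\[
\theta_1 = (\omega - \omega^2)^{-1} g^{-1} f + (\omega^2 + \omega) g + 2 g^{-2} + (\omega - \omega^2)\bigl(g^3 + 2 + g^{-3}\bigr) f^{-1},
\]
which exhibits $\theta_1 \in k(f,g)$ directly; this single identity is verified by clearing denominators and computing in $k(x,y)$. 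Once $\theta_1 \in k(f,g)$, the definition of $f$ gives $\theta_2$ (since $f$ is built from $\theta_1, \theta_2$ and $g$), and the relation $\theta_1 \theta_2 - \theta_2 \theta_1 = 0$ is not available commutatively, so instead I would use the commutative analogue of the identity $\pa \pb - \hat q^2 \pb \pa = (\hat q^{-2} - \hat q^2)\pc - 3\hat q^2 + 3$, which at $\hat q = 1$ degenerates; so $\theta_3 = c + 3$ must be recovered differently — e.g. directly as a polynomial in $\theta_1, \theta_2, g$, or by noting $c$ is visibly in $k(f,g)$ because $f = \theta_1^2 \theta_2 / c^2$ forces $c^2 \in k(f,g)$ and then an explicit expression for $c$ (not merely $c^2$) follows from the other generators. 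This bookkeeping — recovering all seven $R_{i,j}$ from $f$ and $g$ — is the place where the commutative degeneration of the $q$-relations must be handled with care, and I expect it to be the main obstacle, though a purely computational one.

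Finally, once $k(f,g) = k(x,y)^{\sigma}$ is established, the isomorphism of Poisson algebras $k(x,y)^{\sigma} \cong k(x,y)$ is immediate: the assignment $x \mapsto g$, $y \mapsto f$ (or whichever pairing makes the bracket come out as $\{y,x\} = yx$) extends to a $k$-algebra isomorphism $k(x,y) \to k(f,g)$ because $f$ and $g$ are algebraically independent over $k$ (they generate a field of transcendence degree $2$), and it respects the brackets since $\{g,f\} = gf$ matches $\{y,x\} = yx$ under $y \mapsto f$, $x \mapsto g$. The remaining cases of Theorem~\ref{res:Poisson fixed rings big theorem} (orders $4$ and $6$) would then be deduced, exactly as in Theorem~\ref{res:order_4_thm} and Theorem~\ref{res:order_6_thm}, by observing $\rho^2 = \tau$ and $\eta^3 = \tau$ and tracking the induced action on the generators $u, v$ from \eqref{eq:order 2 poisson generators}.
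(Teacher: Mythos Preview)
Your overall architecture (verify $f,g$ are fixed, verify $\{g,f\}=gf$, then show the three standard generators $p_1,p_2,p_3$ of $k[x^{\pm1},y^{\pm1}]^\sigma$ lie in $k(f,g)$) matches the paper exactly. The problem is that you are silently conflating two different elements called ``$f$''. The Poisson element $f = a^2b/c^2$ of \eqref{eq:order 3 Poisson generators} is obtained by setting $\hat q=1$ in the nine-page Appendix element $f'$, \emph{not} in the $f$ of \eqref{eq:order_3_gens}; those two $q$-commuting partners of $g$ are genuinely different, and so are their degenerations. Consequently the displayed formula you obtain for $\theta_1$ by putting $\hat q=1$ in the Theorem~\ref{res:epic_theorem_2} identity expresses $p_1$ in terms of the \emph{wrong} $f$ (the degeneration of \eqref{eq:order_3_gens}), not the $f$ of \eqref{eq:order 3 Poisson generators} that the proposition is about. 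The paper's formula for $p_1$ in terms of the correct $f,g$ is quite different in shape (it is quadratic in $f$, not linear). Relatedly, your parenthetical ``$f = \theta_1^2\theta_2/c^2$'' is simply false: $f = a^2b/c^2$, and $a,b$ are not $\theta_1,\theta_2$. The lifting argument for $\{g,f\}=gf$ has the same confusion: the reference to Proposition~\ref{res:order_3_q_comm_proof} is to the wrong $f$.

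Your handling of $p_3$ also does not work. You correctly note that the commutator identity $\pa\pb-\hat q^2\pb\pa = (\hat q^{-2}-\hat q^2)\pc + \text{const}$ degenerates at $\hat q=1$, but your proposed workarounds are not viable as stated (e.g.\ $c^2 \in k(f,g)$ does not obviously yield $c$). The paper's trick here is the one you are missing: the Poisson bracket is precisely the first-order replacement for the vanished commutator, and one recovers $p_3$ from $p_1p_2$ and $\{p_2,p_1\}$. In short, the strategy is right, but you need to produce \emph{new} explicit expressions for $p_1,p_2$ in the Poisson generators \eqref{eq:order 3 Poisson generators} rather than recycling the quantum ones, and then use the Poisson bracket (not a degenerated commutator) to reach $p_3$.
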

\begin{proof}
Since $\sigma$ acts on $a$ and $b$ as multiplication by $\omega^2$ and fixes $c$, it is clear that $\sigma(f) = f$ and $\sigma(g)= g$.  Using the formula for the bracket of two elements given in \eqref{eq:Poisson bracket def for polynomials} it follows by a long yet elementary computation (which we do not reproduce here) that $\{g,f\} = gf$.

The proof that $k(f,g) = k(x,y)^{\sigma}$ follows in a similar manner to the corresponding $q$-commuting case.  Indeed, we find that the fixed ring $k[x^{\pm1},y^{\pm1}]^{\sigma}$ is generated as an algebra by the three standard generators
\begin{align*}
p_1&:= x + y + (xy)^{-1} \\
p_2&:= x^{-1} + y^{-1} + xy\\
p_3&:= y^{-1}x + y^2x + y^{-1}x^2 +6
\end{align*}
(see, for example, \cite[\S4.2.2]{dumas_invariants}) and hence it suffices to show that $p_1$, $p_2$ and $p_3$ are in the Poisson algebra $k(f,g)$.  This can now be observed by direct computation, however, since
\begin{align*}
p_1 &= \frac{\omega (g^3+1)^2f^2 + g(2-g^3)f + \omega^2 g^2}{fg^3}\\
p_2 &= \frac{\omega^2 (g^3+1)^2f^2 + g(2g^3-1) + \omega g^2}{fg^2}\\
p_3 &= \frac{1}{2}(p_1p_2 - \{p_2,p_1\}+9)
\end{align*}
are all in the Poisson algebra $k(f,g)$, as required.
\end{proof}

\begin{corollary}\label{res:order 6 poisson result}
Let the field $k$ be as in Proposition~\ref{res:order 3 poisson result}, and $\eta$ the Poisson monomial automorphism of order 6 in Table~\ref{fig:table_of_maps_poisson}.  Then the fixed ring $k(x,y)^{\eta}$ is isomorphic to $k(x,y)$ as Poisson algebras.
\end{corollary}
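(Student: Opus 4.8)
The plan is to reduce the order $6$ case to the order $2$ and order $3$ cases that have already been handled, exactly as in the $q$-commuting setting (cf.\ Theorem~\ref{res:order_6_thm}). The Poisson monomial automorphism $\eta$ corresponds to the matrix $\left(\begin{smallmatrix}0&-1\\1&1\end{smallmatrix}\right)\in SL_2(\mathbb{Z})$, and one checks that $\eta^3=\tau$, so $k(x,y)^{\eta}=(k(x,y)^{\tau})^{\eta}$, where the action of $\eta$ on $k(x,y)^{\tau}$ makes sense because $\langle\tau\rangle$ is normal in (indeed central in) $\langle\eta\rangle$. By Lemma~\ref{res:order 2 SVdB gens Poisson} we have $k(x,y)^{\tau}=k(u,v)$ with $u,v$ as in \eqref{eq:order 2 poisson generators} and Poisson bracket $\{v,u\}=vu$, so $k(x,y)^{\tau}$ is itself a copy of the standard Poisson field $k(x,y)$.

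First I would compute the induced action of $\eta$ on the generators $u$ and $v$. This is the Poisson analogue of the computation in the proof of Theorem~\ref{res:order_6_thm}: substituting $x\mapsto y^{-1}$, $y\mapsto xy$ into the expressions for $u$ and $v$ and simplifying (all the manipulations take place inside the commutative field $k(x,y)$, so this is routine) should yield something of the shape $\eta(u)=-v^{-1}$, $\eta(v)=-v^{-1}u$ up to a harmless scalar rescaling. After an appropriate change of variables $u_1=\lambda u$, $v_1=\mu v$ (chosen to absorb the signs), $\eta$ acts on $k(u_1,v_1)$ as the order $3$ Poisson monomial automorphism $u_1\mapsto v_1^{-1}$, $v_1\mapsto (u_1v_1)^{-1}$, i.e.\ as a conjugate of the automorphism $\sigma$ from Table~\ref{fig:table_of_maps_poisson}. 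One must double-check that the rescaled generators still satisfy $\{v_1,u_1\}=v_1u_1$ up to the scalar and that the change of variables is a Poisson isomorphism onto $k(x,y)$ with the standard bracket (possibly after a further rescaling, as the scalar multiplications are Poisson automorphisms of $k(x,y)$ only up to rescaling the bracket, which is absorbed into the choice of generators).

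Given this, the conclusion follows by applying Proposition~\ref{res:order 3 poisson result} (which handles the order $3$ case, up to conjugacy) to the Poisson field $k(u_1,v_1)\cong k(x,y)$: we get an isomorphism of Poisson algebras
\[
k(x,y)^{\eta}=(k(x,y)^{\tau})^{\eta}\cong k(u_1,v_1)^{\sigma}\cong k(u_1,v_1)\cong k(x,y).
\]
The main obstacle I anticipate is purely bookkeeping rather than conceptual: verifying that $\eta^3=\tau$ is literally the automorphism of Table~\ref{fig:table_of_maps_poisson} acting on $k(u,v)$ as the claimed order $3$ map, and in particular tracking the scalars through the change of variables so that the resulting map lands in the conjugacy class covered by Proposition~\ref{res:order 3 poisson result} rather than some other order $3$ element. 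Once the action of $\eta$ on $u,v$ is pinned down explicitly, the rest is immediate from the earlier results, and I would present it as a short corollary-style proof mirroring the structure of Theorem~\ref{res:order_6_thm}.
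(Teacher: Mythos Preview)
Your proposal is correct and follows essentially the same route as the paper: reduce via $\eta^3=\tau$ to the action of $\eta$ on $k(u,v)=k(x,y)^{\tau}$, compute this action explicitly (the paper confirms $\eta(u)=-v^{-1}$ and obtains $\eta(v)$ in terms of $u,v$), make a change of variables so that $\eta$ becomes an order~3 Poisson monomial map conjugate to $\sigma$, and then invoke Proposition~\ref{res:order 3 poisson result} together with Lemma~\ref{res:order 2 SVdB gens Poisson}. The only cosmetic difference is that the paper uses the substitution $u':=-u^{-1}$ (checking $\{v,u'\}=-vu'$, so $k(v,u')$ is Poisson-isomorphic to the standard field) rather than a pure scalar rescaling; with this choice one gets $\eta(u')=v$ and $\eta(v)=(u'v)^{-1}$ on the nose, which slightly shortcuts the bookkeeping you anticipated.
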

\begin{proof}
As in Theorem~\ref{res:order_6_thm}, we observe that $\eta^3=\tau$ and so $k(x,y)^{\eta} = (k(x,y)^{\tau})^{\eta}$.  Thus it suffices to consider $k(u,v)^{\eta}$, where $k(u,v) = k(x,y)^{\tau}$ as in Lemma~\ref{res:order 2 SVdB gens Poisson}.  We may make a change of variables $u':=-u^{-1}$ without affecting the structure of $k(u,v)$: using the formula in \eqref{eq:Poisson bracket def for polynomials} we can easily see that $\{v,u'\} = -vu'$ and hence $k(u,v)$ and $k(v,u')$ are Poisson-isomorphic.  Now the action of $\eta$ on $u'$ and $v$ is as follows:
\begin{align*}
\eta(u') &= (xy - (xy)^{-1})/(y^{-1} - y)\\
& = v; \\
\eta(v) &= (y^{-1}yx - y(yx)^{-1})/(xy - (xy)^{-1}) \\
&= v^{-1}u^{-1}.
\end{align*}
Hence $\eta$ acts on $k(u',v)$ as the order 3 map $\sigma$, and therefore by Lemma~\ref{res:order 2 SVdB gens Poisson} and Proposition~\ref{res:order 3 poisson result} we have isomorphisms of Poisson algebras $k(x,y)^{\eta} = (k(x,y)^{\tau})^{\eta} \cong k(u',v)^{\sigma} \cong k(x,y)$.
\end{proof}

We have only one case left to consider: the Poisson monomial automorphisms of order 4.  As in Proposition~\ref{res:order 3 poisson result}, we proceed by first considering the corresponding map on the $q$-division ring, and then formally constructing a pair of $q$-commuting elements in Magma and replacing $\hat{q}$ by 1 throughout to obtain appropriate generators for the fixed ring.

Let $\rho$ be the automorphism of order 4 defined in Table~\ref{fig:table_of_maps_poisson}, that is
\[\rho: x \mapsto y^{-1}, \ y \mapsto x.\]  
As in Corollary~\ref{res:order 6 poisson result}, we may begin by observing that $\rho^2 = \tau$ and hence restrict our attention to the action of $\rho$ on the elements $u$ and $v$ from Lemma~\ref{res:order 2 SVdB gens Poisson}.  We find that
\begin{align*}
 \rho(u) &= \frac{y-y^{-1}}{x^{-1}-x} = -u^{-1} \\
 \rho(v) &= \frac{yx^{-1}-y^{-1}x}{x-x^{-1}} = (u^{-1}-u)v^{-1}.
\end{align*}
Let $\varphi$ be the map defined on $k(u,v)$ by
\begin{equation}\label{eq:def phi Poisson}\varphi: u \mapsto -u^{-1}, \ v \mapsto (u^{-1}-u)v^{-1}.\end{equation}
This must be a Poisson homomorphism since it is induced by the action of the Poisson automorphism $\rho$ on $k(u,v)$, and an easy computation shows that $\varphi^2 = id$; hence $\varphi$ is an automorphism of order 2 on $k(u,v)$, and $k(x,y)^{\rho} = k(u,v)^{\varphi}$.  We may also define an automorphism corresponding to $\varphi$ on the $q$-division ring $k_q(u,v)$, namely
\[\varphi_q: u \mapsto -u^{-1},\ v \mapsto (u^{-1}-qu)v^{-1};\]
note that up to a change of variables this is precisely the automorphism considered in \S\ref{s:fixed ring}.  

Define an element in $k_q(u,v)$ by
\[g_q = (u-\varphi_q(u))(v-\varphi_q(v))^{-1} = (u + u^{-1})(v - (u^{-1}-qu)v^{-1})^{-1},\]
which as always is fixed by $\varphi_q$ since the map acts on each component as multiplication by -1.  The element $g_q$ has been chosen precisely because it has the form \eqref{eq:form of g required to use qelement} when embedded into $k_q(v)(\!(u)\!)$, so we may use \texttt{qelement} to construct some $f_q \in k_q(v)(\!(u)\!)$ such that $f_qg_q = qg_qf_q$.  Finally, upon setting $q=1$, we obtain the elements 
\begin{equation}\label{eq:order 4 gens Poisson}f = \frac{(u^2 + uv^2 - 1)^2u}{(u^2v^2 - u^2 - 2u -1)(u^2v^2 + u^2 - 2u+1)}, \quad g = \frac{(u^2+1)v}{u^2 + uv^2 - 1},\end{equation}
which satisfy the required properties as demonstrated by the following lemma.
\begin{lemma}\label{res:order 4 Poisson lemma}The elements $f$ and $g$ in \eqref{eq:order 4 gens Poisson} are fixed by $\varphi$ and satisfy $\{g,f\} = gf$.
\end{lemma}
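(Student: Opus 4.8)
The plan is to prove Lemma~\ref{res:order 4 Poisson lemma} by the same two-pronged strategy used for the order 2 and order 3 cases: first verify that $f$ and $g$ are $\varphi$-invariant by tracking how $\varphi$ acts on the building blocks from which $f$ and $g$ are assembled, and then verify the Poisson relation $\{g,f\}=gf$ by a lifting argument through the deformation ring $\DD$, falling back on the explicit formula \eqref{eq:Poisson bracket def for polynomials} only if necessary.

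For the first claim, recall from \eqref{eq:def phi Poisson} that $\varphi: u\mapsto -u^{-1},\ v\mapsto (u^{-1}-u)v^{-1}$. The natural approach is to introduce the ``elementary pieces'' $P:=u^2+uv^2-1$, $Q:=u^2v^2-u^2-2u-1$, $R:=u^2v^2+u^2-2u+1$ appearing in the denominators and numerators of \eqref{eq:order 4 gens Poisson}, compute $\varphi(P),\varphi(Q),\varphi(R)$ directly (clearing denominators, these are short polynomial identities in $u,v$), and observe that each is mapped to a scalar multiple of itself times a suitable power of $u$ — for instance one expects $\varphi(P)=u^{-2}P$ up to sign and similarly controlled behaviour for $Q$ and $R$, so that the combinations $f=P^2u/(QR)$ and $g=(u^2+1)v/P$ are genuinely fixed. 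The bookkeeping of the powers of $u$ is the only delicate point here; it is finite and mechanical.

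For the Poisson relation, the cleanest route avoids the direct computation entirely. As in Lemma~\ref{res:order 2 SVdB gens Poisson}, note that no polynomial in $q$ occurs in the denominators of $f$ and $g$ once we pass to the $q$-commuting originals: by construction there is an element $g_q\in k_q(u,v)$ with $\varphi_q(g_q)=g_q$ (since $\varphi_q$ acts as $-1$ on $u+u^{-1}$ and on $v-(u^{-1}-qu)v^{-1}$), and \texttt{qelement} produces $f_q$ with $f_qg_q=qg_qf_q$; moreover, by Theorem~\ref{res:epic_theorem} (and the change of variables identifying $\varphi_q$ with the map $\varphi$ of \S\ref{s:fixed ring}) we know $k_q(f_q,g_q)=k_q(u,v)^{\varphi_q}=D^{\varphi_q}$. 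These elements lift without modification to $\DD$, where $f_zg_z=z^2g_zf_z$ with $z$ central, and since $\DD/h\DD\cong k(x,y)$ as Poisson algebras with $h=2(1-z)$, the same calculation as in Lemma~\ref{res:order 2 SVdB gens Poisson} gives
\[\{g,f\}=\frac{1}{h}(g_zf_z-f_zg_z)=\frac{1}{2(1-z)}(1-z^2)g_zf_z=\tfrac12(1+z)g_zf_z=gf \mod h\DD,\]
using $z=1$ when $h=0$. One should double-check that $f$ and $g$ from \eqref{eq:order 4 gens Poisson} really are the $\hat q\mapsto 1$ reductions of the $f_q,g_q$ just described, so that this lift is legitimate; since $g_q$ has the prescribed form \eqref{eq:form of g required to use qelement} and \texttt{qelement} is the deterministic recursion of Proposition~\ref{res:AC_construct_z}, this is a matter of comparing the output with \eqref{eq:order 4 gens Poisson}.

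The main obstacle I anticipate is not conceptual but verificational: confirming that the denominators $QR$ and $P$ in \eqref{eq:order 4 gens Poisson} are exactly what emerges from setting $\hat q=1$ in the \texttt{qelement} output for $f_q$, and in particular that no spurious $q$-dependence hides in those denominators (which is what licenses the $\DD$-lifting argument). If that identification proves awkward to make rigorous, the safe alternative is to abandon the lift and simply verify $\{g,f\}=gf$ by plugging $f$ and $g$ into \eqref{eq:Poisson bracket def for polynomials} — a long but entirely routine rational-function computation, of the same character as the one cited (but not reproduced) in the proof of Proposition~\ref{res:order 3 poisson result}, and one that Magma can confirm.
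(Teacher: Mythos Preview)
Your plan for $\varphi$-invariance is exactly the paper's: compute $\varphi$ on each factor and assemble. But your guess ``$\varphi(P)=u^{-2}P$ up to sign'' is wrong in detail; the paper finds $\varphi(P)=u^{-3}v^{-2}(1-u^2)P$, $\varphi(Q)=-u^{-4}v^{-2}(u-1)^2Q$, $\varphi(R)=u^{-4}v^{-2}(u+1)^2R$, and $\varphi\big((u^2+1)v\big)=u^{-3}v^{-1}(1-u^2)(1+u^2)$. Powers of $v$ and the factors $(1\pm u)$ appear and must cancel between numerator and denominator, so the bookkeeping is a bit more involved than you anticipate --- though still entirely mechanical.

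The lifting argument for $\{g,f\}=gf$ has a genuine gap, not merely a verificational one. The output $f_q$ of \texttt{qelement} is only a power series in $k_q(v)(\!(u)\!)$; nothing in the thesis establishes that it lies in $k_q(u,v)$, much less that it is a rational function with no $q$-polynomial in its denominator, and the paper is explicit that the Magma construction ``can only \emph{approximate}'' and ``on its own is not a proof''. Your appeal to Theorem~\ref{res:epic_theorem} does not rescue this: that theorem uses the entirely different generators $h,g$ of \eqref{eq:abc_defs} and says nothing about your $f_q$. Consequently the sentence ``these elements lift without modification to $\DD$, where $f_zg_z=z^2g_zf_z$'' is unsupported --- you have no element $f_z\in\DD$ available, hence no commutator relation in $\DD$ to invoke. (Incidentally, \texttt{qelement} is not the recursion of Proposition~\ref{res:AC_construct_z}; that proposition builds the Artamonov--Cohn conjugating element, which is a different construction.) The contrast with Lemma~\ref{res:order 2 SVdB gens Poisson} is that there the $q$-commuting pair $u_q,v_q$ is written down explicitly as a rational function with no $q$ in sight, and the relation $u_qv_q=qv_qu_q$ is proved in \cite{SV1}; here you have neither.

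The paper therefore takes precisely your fallback: it verifies $\{g,f\}=gf$ by direct computation from \eqref{eq:Poisson bracket def for polynomials}, citing Magma, just as in the proof of Proposition~\ref{res:order 3 poisson result}.
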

\begin{proof}
We begin by computing the action of $\varphi$ on the various polynomials appearing in $f$ and $g$.
\begin{align*}
 \varphi((u^2+1)v) &= (u^{-2}+1)(u^{-1}-u)v^{-1}\\
&= u^{-3}v^{-1}(1-u^2)(1+u^2)\\
 \varphi(u^2 +uv^2 -1) &= u^{-2} - u^{-1}(u^{-1}-u)^2v^{-2} - 1 \\
 &= u^{-3}v^{-2}(uv^2 - 1 - u^4 + 2u^2 - u^3v^2) \\ 
 &= u^{-3}v^{-2}(1-u^2)(u^2 + uv^2 - 1)\\
\varphi(u^2v^2 - u^2 - 2u -1) &= u^{-2}(u^{-1}-u)^2v^{-2}-u^{-2} + 2u^{-1}-1 \\
 &= u^{-4}v^{-2}(1+u^4 - 2u^2 - 2^2v^2 + 2u^3v^2 - u^4v^2) \\
 &= u^{-4}v^{-2}(-(u-1)^2(u^2v^2-u^2-2u-1))
\end{align*}
Similarly, $\varphi(u^2v^2+u^2-2u+1) = u^{-4}v^{-2}((u+1)^2(u^2v^2+u^2-2u+1))$.  

Putting these together, it is now easy to see that
\begin{align*}
 \varphi(f) &= \frac{-u^{-1}u^{-6}v^{-4}(1-u^2)^2(u^2+uv^2-1)^2 }{-u^{-8}v^{-4}(1+u)^2(1-u)^2(u^2v^2-u^2-2u-1)(u^2v^2+u^2-2u+1)} \\
&= \frac{(u^2 + uv^2 - 1)^2u}{(u^2v^2 - u^2 - 2u -1)(u^2v^2 + u^2 - 2u+1)}\\
&= f,
\end{align*}
and similarly,
\begin{align*}
 \varphi(g) &= \frac{u^{-3}v^{-1}(1-u^2)(1+u^2)}{u^{-3}v^{-2}(1-u^2)(u^2 + uv^2 - 1)}\\
&= \frac{(1+u^2)v}{(u^2 + uv^2 - 1)}\\
&= g.
\end{align*}
Direct computation (e.g. in Magma) using the formula in \eqref{eq:Poisson bracket def for polynomials} demonstrates that $\{g,f\} = gf$ as required.
\end{proof}
All that remains is to show that $k(f,g) = k(x,y)^{\rho}$.  Since $k(x,y)^{\rho} = k(u,v)^{\varphi}$ and $\varphi$ has order 2, it suffices to verify that $[k(u,v):k(f,g)] = 2$, which can be done using (commutative) Galois theory.
\begin{proposition}\label{res:order 4 poisson result}
Let $k$ be a field of characteristic zero, and $\rho$ the Poisson monomial automorphism of order 4 in Table~\ref{fig:table_of_maps_poisson}.  Then $k(x,y)^{\rho}\cong k(x,y)$ as Poisson algebras.
\end{proposition}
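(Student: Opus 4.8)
The plan is to argue exactly as in Lemma~\ref{res:order 2 SVdB gens Poisson} and Proposition~\ref{res:order 3 poisson result}, reducing the statement to a field-index computation. First, since $\rho^2 = \tau$ one has $k(x,y)^\rho = (k(x,y)^\tau)^\rho = k(u,v)^\varphi$, where $u$ and $v$ are the generators of $k(x,y)^\tau$ from \eqref{eq:order 2 poisson generators} and $\varphi$ is the order-$2$ Poisson automorphism of $k(u,v)$ defined in \eqref{eq:def phi Poisson}. Because $\varphi$ has order $2$, $[k(u,v):k(u,v)^\varphi] = 2$.

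By Lemma~\ref{res:order 4 Poisson lemma} the elements $f$ and $g$ of \eqref{eq:order 4 gens Poisson} lie in $k(u,v)^\varphi$ and satisfy $\{g,f\} = gf$, so that $k(f,g) \subseteq k(u,v)^\varphi \subsetneq k(u,v)$ and in particular $[k(u,v):k(f,g)] \geq 2$. It therefore suffices to prove the reverse inequality $[k(u,v):k(f,g)] \leq 2$: equality then forces $k(f,g) = k(u,v)^\varphi = k(x,y)^\rho$, and the relation $\{g,f\} = gf$ promotes the resulting isomorphism of algebras $k(f,g) \cong k(x,y)$ to an isomorphism of Poisson algebras.

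To bound the index I would proceed in two steps, mirroring the minimal-polynomial argument around \eqref{eq:min poly for x order 2}. Clearing denominators in the defining relation $g\,(u^2 + uv^2 - 1) = (u^2+1)v$ already exhibits $v$ (indeed $u$ as well) as a root of a quadratic with coefficients in $k(u,g)$; eliminating $u$ between this relation and the more complicated relation defining $f$ should produce an explicit polynomial $m_v(t) \in k(f,g)[t]$ of degree $2$ with $m_v(v) = 0$. Since $\varphi(v) = (u^{-1}-u)v^{-1} \neq v$, we have $v \notin k(u,v)^\varphi \supseteq k(f,g)$, so $m_v$ is irreducible over $k(f,g)$ and hence is the minimal polynomial of $v$; thus $[k(f,g)(v):k(f,g)] = 2$. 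The second step is to write $u$ as an explicit rational function of $f$, $g$ and $v$, which shows $k(f,g)(v) = k(u,v)$ and therefore $[k(u,v):k(f,g)] = 2$, completing the argument.

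The only real difficulty is computational: both the quadratic $m_v(t)$ and the formula for $u$ over $k(f,g)$ must be extracted from the unwieldy expressions in \eqref{eq:order 4 gens Poisson}, and, as in the order-$3$ case, this elimination is carried out and verified in Magma rather than by hand; conceptually nothing beyond the Galois-theoretic skeleton above is needed. One could sidestep the computer algebra entirely by instead setting $\hat q = 1$ in the $q$-commuting generators furnished by Theorem~\ref{res:order_4_thm} and Theorem~\ref{res:epic_theorem}, but in keeping with the aims of this section I would argue directly with the elements produced by \texttt{qelement}.
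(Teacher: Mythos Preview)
Your proposal is correct and follows the same Galois-theoretic skeleton as the paper: reduce to $k(u,v)^{\varphi}$, invoke Lemma~\ref{res:order 4 Poisson lemma}, and then bound $[k(u,v):k(f,g)]$ by $2$ via a minimal polynomial plus a rational expression for the remaining generator. The only difference is a harmless swap of roles: the paper finds a quadratic for $u$ over $k(f,g)$, namely
\[m_{u}(t) = ft^2 - (fg^2-f+1)(fg^2+f+1)t -f,\]
and then exhibits $v = (fg^3 +fgu + g)/(fu-1)$, whereas you propose to find a quadratic for $v$ and then solve for $u$. Both work, but the paper's choice yields formulas simple enough to be written down and checked by hand, so no appeal to Magma is needed at this step; your elimination of $u$ from the relation for $f$ would be messier. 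In short, the idea is right, but the paper picks the tidier variable.
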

\begin{proof}
By the preceding discussion and Lemma~\ref{res:order 4 Poisson lemma}, all that remains to show is that $[k(u,v):k(f,g)] = 2$, where $f$ and $g$ are the elements defined in \eqref{eq:order 4 gens Poisson}, $u$ and $v$ are from Lemma~\ref{res:order 2 SVdB gens Poisson} and $\varphi$ is the Poisson automorphism of order 2 defined in \eqref{eq:def phi Poisson}.

We define a polynomial in $k(f,g)[t]$ by
\[m_{u}(t) = ft^2 - (fg^2-f+1)(fg^2+f+1)t -f,\]
which has $u$ as a root.  Since we cannot have $u \in k(f,g)$, we conclude in the same manner as Lemma~\ref{res:order 2 SVdB gens Poisson} that $m_u(t)$ must be the minimal polynomial for $u$ over $k(f,g)$.

Now we may see that the Galois extension $k(f,g)(u)$ is equal to $k(u,v)$, since direct computation shows that
\[v = (fg^3 +fgu + g)/(fu-1).\]
Thus $k(f,g) \subseteq k(u,v)^{\varphi} \subsetneqq k(u,v)$ with $[k(u,v):k(f,g)] = 2$, and since there can be no intermediate extension we must have $k(f,g) = k(u,v)^{\varphi}$, as required.
\end{proof}
Finally, by combining Lemma~\ref{res:order 2 SVdB gens Poisson}, Proposition~\ref{res:order 3 poisson result}, Proposition~\ref{res:order 4 poisson result} and Corollary~\ref{res:order 6 poisson result} the proof of Theorem~\ref{res:Poisson fixed rings big theorem} is complete.

As a result of the computations done to understand the fixed ring $k(x,y)^{\rho}$, we also obtain the following corollary of Proposition~\ref{res:order 4 poisson result}, which is the Poisson analogue of Theorem~\ref{res:epic_theorem}:
\begin{corollary}
There is an isomorphism of Poisson algebras $k(u,v)^{\varphi} \cong k(u,v)$.
\end{corollary}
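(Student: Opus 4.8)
The plan is to extract the isomorphism directly from the material already assembled for Proposition~\ref{res:order 4 poisson result}. There we exhibited elements $f,g\in k(u,v)$, given explicitly in \eqref{eq:order 4 gens Poisson}, which are fixed by the order-two Poisson automorphism $\varphi$ of \eqref{eq:def phi Poisson}, which satisfy the Poisson relation $\{g,f\}=gf$ (Lemma~\ref{res:order 4 Poisson lemma}), and which generate the fixed ring, with $k(f,g)=k(u,v)^{\varphi}$ and $[k(u,v):k(f,g)]=2$.

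First I would observe that $k(u,v)$ has transcendence degree $2$ over $k$, so its degree-two subextension $k(f,g)$ also has transcendence degree $2$; since $f$ and $g$ generate $k(f,g)$ as a field, they are algebraically independent over $k$. Hence $f\mapsto x$, $g\mapsto y$ extends to an isomorphism of fields $\Psi\colon k(f,g)\to k(x,y)$. To see that $\Psi$ is an isomorphism of Poisson algebras, note that $\{g,f\}=gf\in k[f,g]$, so the Poisson bracket inherited by $k(f,g)$ from $k(u,v)$ restricts to a bracket on the polynomial ring $k[f,g]$ and is therefore the unique extension of $\{g,f\}=gf$ to the field of fractions, by \eqref{eq:extend Poisson bracket to localization}; likewise the bracket on $k(x,y)$ is the unique extension of $\{y,x\}=yx$ from $k[x,y]$, as in \eqref{eq:Poisson bracket def for polynomials}. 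Since $\Psi(\{g,f\})=\Psi(gf)=yx=\{y,x\}=\{\Psi(g),\Psi(f)\}$, the two brackets agree on generators and hence everywhere, so $\Psi$ is a Poisson isomorphism $k(u,v)^{\varphi}\cong k(x,y)$.

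Finally, $k(u,v)$ with its own bracket $\{v,u\}=vu$ (Lemma~\ref{res:order 2 SVdB gens Poisson}) is itself Poisson-isomorphic to $k(x,y)$ via $v\mapsto y$, $u\mapsto x$ (again $u,v$ are algebraically independent and the brackets match on generators), so composing with $\Psi^{-1}$ gives the desired Poisson isomorphism $k(u,v)^{\varphi}\cong k(u,v)$. There is no real obstacle here beyond this bookkeeping: all the substantive content — the explicit generators $f,g$, the relation $\{g,f\}=gf$, and the index computation $[k(u,v):k(f,g)]=2$ — was already established in the proof of Proposition~\ref{res:order 4 poisson result}, so the corollary is essentially that proof together with the elementary remark that any field of transcendence degree $2$ over $k$ carrying a pair of generators $a,b$ in the relation $\{b,a\}=ba$ is Poisson-isomorphic to $(k(x,y),\{y,x\}=yx)$.
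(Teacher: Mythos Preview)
Your proof is correct and follows essentially the same approach as the paper. The paper does not give a separate proof for this corollary at all: it simply states that the result is obtained ``as a result of the computations done to understand the fixed ring $k(x,y)^{\rho}$'' in Proposition~\ref{res:order 4 poisson result}, so you have made explicit exactly the bookkeeping (algebraic independence of $f,g$ via transcendence degree, matching of brackets on generators, and the identification of $k(u,v)$ with $k(x,y)$ via Lemma~\ref{res:order 2 SVdB gens Poisson}) that the paper leaves to the reader.
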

This suggests that, as in the $q$-commuting case, we should look for a Poisson isomorphism from a general fixed ring $k(x,y)^G$ to $k(x,y)$ whenever $G$ is a finite group of Poisson automorphisms which do not restrict to $k[x,y]$.  Since the Poisson automorphism group of $k(x,y)$ with respect to the bracket $\{y,x\} = yx$ is known (see \cite{blanc}), proving a theorem of this form for the Poisson case may be a more attractive problem to tackle than the corresponding $q$-commuting one.

\chapter{Poisson Primitive Ideals in $\GL{3}$ and $\SL{3}$}\label{c:H-primes}

In Chapter~\ref{c:deformation_chapter} we viewed the $q$-division ring $D$ as a deformation of the commutative Poisson field $k(x,y)$ with the aim of learning more about the structure of $D$.  In this chapter we will take the opposite view: starting with a non-commutative algebra, we will use the language of deformation to better understand the structure of its semi-classical limit.

Much of the work in this chapter is based on the corresponding results for the quantum algebras $\QML{3}$, $\QGL{3}$ and $\QSL{3}$ in $\cite{GL2,GL1}$.  In the first of these papers, Goodearl and Lenagan define a rational action of an algebraic torus $\HH$ on $\QML{3}$ and construct generating sets of quantum minors for each of the 230 $\HH$-prime ideals.  In \cite{GL1} they focus on $\QGL{3}$, which admits a much more manageable 36 $\HH$-primes, and use this and the Stratification Theorem to find generating sets for all of the primitive ideals of $\GL{3}$.  Finally, these results are extended to $\QSL{3}$ by use of the isomorphism $\QGL{3} \cong \QSL{3}[z^{\pm1}]$ from \cite{LS1}.

Our aim will be to perform a similar analysis for the Poisson algebras $\GL{3}$ and $\SL{3}$, with a view to eventually verifying Conjecture~\ref{conj:goodearl} for the case of $GL_3$ and $SL_3$.  We will find that the Poisson structure of $\GL{3}$ and $\SL{3}$ matches up very closely with the non-commutative structure of $\QGL{3}$ and $\QSL{3}$ in almost all respects, although we will see in \S\ref{ss:UFDs} that occasionally we will need to apply quite different techniques to the quantum case to prove the corresponding Poisson result.

\begin{ImpNot}Throughout this chapter, we will assume that $k$ is an algebraically closed field of characteristic zero.  The assumption that $q \in k^{\times}$ is not a root of unity remains in force.\end{ImpNot}

This chapter references several large figures, which have been collected together in Appendix~\ref{c:H-prime figures} for convenience.  Figures of this type are referenced as Figure~\ref{c:H-prime figures}.$n$.  Note that there is also a List of Figures on page~\pageref{key for list of figures page}.

\section{Background and initial results}\label{s:H primes background,defs}
We begin by making formal the view of $\ML{n}$ as the semi-classical limit of the quantum matrices $\QML{n}$ (for the definition of $\QML{n}$, see \S\ref{s:notation}).
\begin{definition}\label{def:algebra B for quanum matrices}
Define $\RR_n$\nom{R@$\mathfrak{R}_n$} to be the $k[t^{\pm1}]$-algebra in $n^2$ variables $\{Y_{ij}: 1 \leq i,j \leq n\}$, subject to the same relations as $\QML{n}$ but with every occurence of $q$ replaced by the variable $t$.
\end{definition}
This is a generalization of the setup from \S\ref{ss:background examples deformation}, and it is easy to see that in this case we obtain an isomorphism of $k$-algebras
\[\QML{n} \cong \RR_n/(t-q)\RR_n.\]
Meanwhile, when we quotient out the ideal $(t-1)\RR_n$ we obtain the commutative coordinate ring $\ML{n}$.  Using the semi-classical limit process defined in Definition~\ref{def:star product Poisson bracket}, this induces a Poisson bracket on $\ML{n}$, which we will take as our definition of the Poisson structure on $\ML{n}$.  By direct computation, we find that for any set of four generators $\{x_{ij},x_{im},x_{lj},x_{lm}\}$ with $i<l$ and $j<m$ the Poisson bracket is defined by\nom{O@$(\mathcal{O}(M_n), \{\cdot, \cdot\})$}
\begin{equation}\label{eq:poisson relns for nxn matrices}\begin{aligned}
\{x_{ij},x_{im}\} &= x_{ij}x_{im}, & \{x_{im},x_{lm}\} &= x_{im}x_{lm}, \\
\{x_{im},x_{lj}\} &= 0, & \{x_{ij},x_{lm}\} &= 2x_{im}x_{lj}.
\end{aligned}\end{equation}
Recall from \S\ref{s:notation} that $[I|J]_q$ denotes a \textit{quantum minor} in $\QML{n}$: here $I$ and $J$ are ordered subsets of $\{1,\dots, n\}$ of equal cardinality, and $[I|J]$ is defined to be the quantum determinant on the subalgebra of $\QML{n}$ generated by $\{X_{ij}:i \in I, j \in J\}$.  Recall also that $\wt{I}$ denotes the complement of the set $I$ in $\{1, \dots, n\}$ and that we will often drop the set notation for ease of notation: for example, the minor $[\{1,2\}|\{2,3\}]$ could be denoted by $[12|23]$ or $[\wt{3}|\wt{1}]$.

\begin{notation}
In order to easily distinguish between elements of the different types of algebra, the generators of $\RR_n$ will be denoted by $Y_{ij}$\nom{Y@$Y_{ij}$}, the generators of $\QML{n}$ by $X_{ij}$\nom{X@$X_{ij}$} and those of $\ML{n}$ by $x_{ij}$\nom{X@$x_{ij}$}.  Minors in each algebra  will be denoted by $[I|J]_t$, $[I|J]_q$ and $[I|J]$ respectively.  Finally, elements of $\QGL{n}$ or $\QSL{n}$ will use the same notation as that of $\QML{n}$, where they will always be understood to mean ``the image of this element in the appropriate algebra'', and similarly for $\GL{n}$ and $\SL{n}$.
\end{notation}
\begin{notation}
Since most of this chapter is concerned specifically with $3 \times 3$ matrices, we will often drop the subscript and simply write $\RR$ for $\RR_3$ in order to simplify the notation.
\end{notation}

The algebras $\ML{n}$ and $\QML{n}$ admit a number of automorphisms and anti-automorphisms, which will allow us to reduce the number of cases we check.  We first define on $\QML{n}$ the maps\nom{T@$\tau$}\nom{R@$\rho$}\nom{S@$S$}
\begin{align*}
\tau&: X_{ij} \mapsto X_{ji}, \\
\rho&: X_{ij} \mapsto X_{n+1-j,n+1-i}.
\end{align*}
The map $\tau$ defines an automorphism of $\QML{n}$ corresponding to the transpose operation on matrices, while $\rho$ defines an anti-automorphism corresponding to transposition along the reverse diagonal.  Both of these maps have order 2.  By \cite{GL1}, the action of these maps on minors and on $Det_q$ is as follows:
\begin{gather*}
\tau([I|J]_q) = [J|I]_q; \quad\tau(Det_q) = Det_q;\\
\rho([I|J]_q) = [w_0(J)|w_0(I)]_q; \quad \rho(Det_q) = Det_q;
\end{gather*}
where $w_0$ denotes the permutation $\left(\begin{smallmatrix}1&2&\dots&n \\ n & n-1 & \dots &1\end{smallmatrix}\right) \in S_n$, i.e. the ``longest element'' of $S_n$.

$\QML{n}$ admits the structure of a bialgebra but does not have an antipode map; on $\QGL{n}$ and $\QSL{n}$ we obtain a genuine Hopf algebra structure by defining the antipode:
\[S: X_{ij} \mapsto (-q)^{i-j}[\wt{j}|\wt{i}]_qDet_q^{-1}.\]
By \cite{GL1}, the action of $S$ on minors is as follows:
\[S([I|J]_q) = (-q)^{\sum I - \sum J}[\wt{J}|\wt{I}]Det_q^{-1}, \quad S(Det_q) = Det_q^{-1}.\]
These maps induce (anti-)automorphisms of Poisson algebras on $\ML{n}$, $\GL{n}$ and $\SL{n}$ as appropriate (just replace $X_{ij}$ in the definitions by $x_{ij}$, and $q$ by 1), and we denote these maps by the same symbols as the quantum case.  We note that when we ignore the Poisson structure on the semi-classical limits and simply view them as commutative algebras, the distinction between automorphism and anti-automorphism disappears and each of the maps $\tau$, $\rho$ and $S$ are simply automorphisms of commutative algebras.  

Finally, we observe that while $S$ has infinite order as a map on $\QGL{n}$ or $\QSL{n}$, the antipode of any commutative Hopf algebra has order 2 \cite[Corollary~1.5.12]{montgomery_hopf}.

\subsection{Commutation relations and interactions for minors}

To save us from excessive computation in future sections, it will be useful to obtain some identities concerning how certain $(n-1)\times(n-1)$ minors interact with the generators $x_{ij}$ under the Poisson bracket.  In \cite[\S1.3]{GL1} a number of identities for $\QML{n}$ are listed, and we will use these to derive Poisson versions of these equalities.

We are interested in computing the bracket $\{x_{ij}, [\wt{l}|\wt{m}]\}$ for $1 \leq i,j,l,m \leq n$.  Suppose first that $j=m$ and $i \neq l$; by \cite[E1.3c]{GL1}, we have the following equality in $\QML{n}$:
\begin{equation}\label{eq:one of the quantum equalities}X_{ij}[\wt{l}|\wt{j}]_q = q[\wt{l}|\wt{j}]_qX_{ij} + (q-q^{-1})\sum_{s <j}(-q)^{s-j}[\wt{l}|\wt{s}]_qX_{is} \quad (i \neq l).\end{equation}
The key point here is that we may replace $q$ by $t$ and $X_{ij}$ by $Y_{ij}$ in \eqref{eq:one of the quantum equalities} and obtain an equality which is valid in $\RR_n$.  We may then use Definition~\ref{def:star product Poisson bracket} and \eqref{eq:one of the quantum equalities} to compute $\{x_{ij},[\wt{l}|\wt{j}]\}$ for $i \neq l$ as follows:
\begin{align*}
\{x_{ij},[\wt{l}|\wt{j}]\} &= (t-1)^{-1}\left(X_{ij}[\wt{l}|\wt{j}]_t - [\wt{l}|\wt{j}]_tX_{ij}\right) \quad &\mod t-1 \\
&= (t-1)^{-1}\left((t-1)[\wt{l}|\wt{j}]_tX_{ij} + t^{-1}(t^2-1)\sum_{s <j}(-t)^{s-j}[\wt{l}|\wt{s}]_tX_{is}\right) \quad &\mod t-1 \\
&= [\wt{l}|\wt{j}]x_{ij} + 2\sum_{s<j}(-1)^{s-j}[\wt{l}|\wt{s}]x_{is}. &
\end{align*}
Applying a similar process to the other equalities in \cite[\S1.3]{GL1}, we obtain the following list of relations:
\begin{align}
\label{eq:Preln1}\{x_{ij},[\wt{l}|\wt{m}]\} &= 0 &(i \neq l, j \neq m) \\
\label{eq:Preln2}\{x_{ij},[\wt{i}|\wt{m}]\} &= -2\sum_{s>i}(-1)^{s-i}[\wt{s}|\wt{m}]x_{sj} - [\wt{i}|\wt{m}]x_{ij} &(j \neq m) &\\
&= 2\sum_{s<j}(-1)^{s-j}[\wt{l}|\wt{s}]x_{is} + [\wt{l}|\wt{j}]x_{ij} \nonumber \\
\label{eq:Preln3}\{x_{ij},[\wt{l}|\wt{j}]\} &= 2\sum_{s<j}(-1)^{s-j}[\wt{l}|\wt{s}]x_{is} + [\wt{l}|\wt{j}]x_{ij} &(i \neq l)\\
&= -2\sum_{s>j}(-1)^{s-j}[\wt{l}|\wt{s}]x_{is} - [\wt{l}|\wt{j}]x_{ij} \quad & \nonumber\\
\label{eq:Preln4}\{x_{ij},[\wt{i}|\wt{j}]\} &= 2\left(\sum_{s < i}(-1)^{s-i}x_{sj}[\wt{s}|\wt{j}] - \sum_{t > j}(-1)^{j-t}x_{it}[\wt{i}|\wt{t}]\right) &\\
&= 2\left(\sum_{t < j}(-1)^{j-t}x_{it}[\wt{i}|\wt{t}] - \sum_{s > i}(-1)^{s-i}x_{sj}[\wt{s}|\wt{j}] \right). & \nonumber
\end{align}

\begin{definition}\label{def:Poisson central, normal}
Let $R$ be a commutative Poisson algebra.  We call an element $r \in R$ \textit{Poisson central}\nom{Poisson central} if $\{r,s\} = 0$ for all $s \in R$, and \textit{Poisson normal}\nom{Poisson normal} if $\{r,s\} \in rR$ for all $s \in R$.
\end{definition}
When $i \neq l$ and $j \neq k$, the variable $x_{ij}$ appears as part of the expansion of the minor $[\wt{l}|\wt{m}]$ and so we may view \eqref{eq:Preln1} as a relation in a subalgebra of $\ML{n}$ isomorphic to $\ML{n-1}$.  The minor $[\wt{l}|\wt{m}]$ plays the role of the $(n-1)\times(n-1)$ determinant $Det$ in this copy of $\ML{n-1}$, and by \eqref{eq:Preln1} its bracket with any generator $x_{ij}$ in $\ML{n-1}$ is zero.  Hence we may conclude that the determinant $Det$ is Poisson central in $\ML{n}$, and therefore in $\GL{n}$ as well.

We now specialise to the case $n=3$.  Since we will mostly be interested in ideals of $\GL{3}$ and $\SL{3}$ (and their quantum counterparts) it will be useful to have some results on when Poisson-prime (resp. prime) ideals contain the $3\times3$ determinant $Det$ (resp. the $3\times3$ quantum determinant $Det_q$).
\begin{lemma}\label{res:Det in prime ideal, Poisson}
If $P$ is a Poisson-prime ideal in $\ML{3}$ and $P$ contains at least one of: 
\[x_{11},x_{22},x_{33},[\wt{1}|\wt{1}],[\wt{2}|\wt{2}],[\wt{3}|\wt{3}]\]
then $Det \in P$ as well.
\end{lemma}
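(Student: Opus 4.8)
The plan is to split the six elements into the three diagonal entries $x_{11},x_{22},x_{33}$ and the three complementary $2\times2$ minors $[\wt{1}|\wt{1}],[\wt{2}|\wt{2}],[\wt{3}|\wt{3}]$, and to reduce every case to a short computation using only the Poisson relations \eqref{eq:poisson relns for nxn matrices}, the ordinary Laplace expansion of $Det$, primeness of $P$, and the symmetries $\tau,\rho,S$. Since $P$ is Poisson-prime, $\ML{3}/P$ is a commutative domain on which \eqref{eq:poisson relns for nxn matrices} holds and which satisfies $\{P,\ML{3}\}\subseteq P$; writing $\bar a$ for the image of $a$ there, the goal is to show $\overline{Det}=0$.

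\emph{Generator case.} The automorphism $\rho$ fixes $Det$ and sends $x_{33}\mapsto x_{11}$, so it suffices to treat $x_{11}\in P$ and $x_{22}\in P$; applying the transpose automorphism $\tau$ (which fixes $Det,x_{11},x_{22}$ and interchanges $x_{12},x_{21}$) if necessary, I may also assume $x_{21}\in P$, since $\{x_{11},x_{22}\}=2x_{12}x_{21}\in P$ forces $x_{12}\in P$ or $x_{21}\in P$. If $x_{11},x_{21}\in P$ then \eqref{eq:poisson relns for nxn matrices} puts $\{x_{11},x_{32}\}=2x_{12}x_{31}$ and $\{x_{11},x_{33}\}=2x_{13}x_{31}$ into $P$, and expanding $Det$ along the first column collapses it to $\bar x_{31}\,\overline{[\wt{3}|\wt{1}]}=\bar x_{31}\bar x_{12}\bar x_{23}-\bar x_{31}\bar x_{13}\bar x_{22}$, which vanishes since $\bar x_{31}\bar x_{12}=\bar x_{31}\bar x_{13}=0$. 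If instead $x_{22},x_{21}\in P$, then $\{x_{22},x_{33}\}=2x_{23}x_{32}\in P$, so $x_{23}\in P$ or $x_{32}\in P$: in the first sub-case $x_{21},x_{22},x_{23}\in P$ is the whole second row and $Det\in P$ by Laplace expansion along it; in the second sub-case $\overline{[\wt{1}|\wt{1}]}=\bar x_{22}\bar x_{33}-\bar x_{23}\bar x_{32}=0$ and $\{x_{32},x_{11}\}=-2x_{12}x_{31}\in P$, so the first-column expansion again gives $\overline{Det}=\bar x_{31}\overline{[\wt{3}|\wt{1}]}=\bar x_{31}\bar x_{12}\bar x_{23}=0$.

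\emph{Minor case.} If $Det\in P$ there is nothing to prove, so assume $Det\notin P$. As $Det$ is Poisson-central and regular in $\ML{3}$, localizing at $\{Det^{\,n}\}$ respects the bracket, so $Q:=P\,\GL{3}$ is a \emph{proper} Poisson-prime of $\GL{3}$ with $Q\cap\ML{3}=P$ (\cite[Theorem~10.20]{GW1} together with Poisson-centrality of $Det$). The antipode $S$ is a Poisson anti-automorphism of $\GL{3}$ of order $2$ which permutes Poisson-primes and satisfies $S([\wt{i}|\wt{i}])=x_{ii}Det^{-1}$; hence $x_{ii}\in S(Q)$ for the relevant $i$. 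The generator argument applies verbatim in $\GL{3}$ (it used only \eqref{eq:poisson relns for nxn matrices}, Laplace expansion and primeness), so $Det\in S(Q)$, and applying $S$ gives $Det^{-1}=S(Det)\in Q$, contradicting that $Q$ is proper. Thus $Det\in P$. (For $i\in\{1,3\}$ the antipode can be avoided: combining \eqref{eq:Preln4} with the Laplace expansion of $Det$ along row, respectively column, $i$ yields $\{x_{ii},[\wt{i}|\wt{i}]\}=\pm2\bigl(Det-x_{ii}[\wt{i}|\wt{i}]\bigr)$, and reducing modulo $P$ gives $2\,Det\in P$, hence $Det\in P$ since $k$ has characteristic zero; only $[\wt{2}|\wt{2}]$ truly needs the antipode, as for $i=2$ the analogous identity is a ``hook'' sum rather than a full expansion of $Det$.)

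The step I expect to be the main obstacle is the sign bookkeeping in the Laplace expansions and in \eqref{eq:Preln4}, combined with the fact that $x_{22}$ and $[\wt{2}|\wt{2}]$ are \emph{not} carried into the remaining cases by $\tau$ or $\rho$ and so must be handled on their own --- $x_{22}$ through the extra bracket $\{x_{22},x_{33}\}$ and the case split it forces, and $[\wt{2}|\wt{2}]$ through the antipode. A smaller point is verifying that extension/contraction along $\ML{3}\hookrightarrow\GL{3}$ carries Poisson-primes avoiding $Det$ to Poisson-primes, which is exactly where Poisson-centrality of $Det$ enters.
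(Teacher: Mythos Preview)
Your proof is correct, but the paper's argument is shorter and stays entirely within $\ML{3}$ by exploiting the identities \eqref{eq:Preln3}--\eqref{eq:Preln4} more systematically. The paper observes directly that
\[
Det = x_{11}[\wt{1}|\wt{1}] - \tfrac{1}{2}\{x_{11},[\wt{1}|\wt{1}]\},
\]
which handles $x_{11}$ and $[\wt{1}|\wt{1}]$ simultaneously (and then $x_{33}$, $[\wt{3}|\wt{3}]$ by symmetry); you recover this in your parenthetical for the minors but treat the diagonal entries by a separate chain of brackets and case splits. For $x_{22}$, the paper notes $[\wt{1}|\wt{1}] = x_{22}x_{33}-\tfrac{1}{2}\{x_{22},x_{33}\}$, reducing to the case already done, whereas you branch on $\{x_{22},x_{33}\}=2x_{23}x_{32}$. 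For $[\wt{2}|\wt{2}]$, the paper avoids the antipode altogether: from \eqref{eq:Preln3} one gets $[\wt{2}|\wt{1}]x_{11},\,[\wt{2}|\wt{3}]x_{33}\in P$, and either a diagonal entry is in $P$ (done) or the second-row Laplace expansion $Det = x_{21}[\wt{2}|\wt{1}]-x_{22}[\wt{2}|\wt{2}]+x_{23}[\wt{2}|\wt{3}]$ finishes. Your antipode trick is a nice alternative---it conceptually explains why the minor case mirrors the generator case---but it costs the extra step of extending to $\GL{3}$ and checking that extension/contraction preserves Poisson-primes, which the paper's route sidesteps.
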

\begin{proof}
Using the identities in \eqref{eq:Preln4}, we see that
\begin{equation*}
\{x_{11},[\wt{1}|\wt{1}]\} = 2x_{21}[\wt{2}|\wt{1}] -2x_{31}[\wt{3}|\wt{1}]
\end{equation*}
and hence
\[Det = x_{11}[\wt{1}|\wt{1}] - \frac{1}{2}\{x_{11},[\wt{1}|\wt{1}]\}\]
is in the Poisson-prime ideal $P$ whenever $x_{11}$ or $[\wt{1}|\wt{1}]$ is.  
Since $\tau$ fixes $Det$, by applying $\tau$ to the above equalities we immediately obtain the same conclusion for $x_{33}$ and $[\wt{3}|\wt{3}]$.
Next, we can observe that
\[x_{22}x_{33} - \frac{1}{2}\{x_{22},x_{33}\}= [\wt{1}|\wt{1}], \]
and so $x_{22} \in P$ implies $Det \in P$ as well.

Finally, suppose $[\wt{2}|\wt{2}] \in P$.  Applying the identities in \eqref{eq:Preln3}, we see that
\[\{x_{12},[\wt{2}|\wt{2}]\} = -2[\wt{2}|\wt{1}]x_{11} + [\wt{2}|\wt{2}]x_{12}, \quad \{x_{32},[\wt{2}|\wt{2}]\} = 2[\wt{2}|\wt{3}]x_{33} - [\wt{2}|\wt{2}]x_{32},\]
and hence both $[\wt{2}|\wt{1}]x_{11}$ and $[\wt{2}|\wt{3}]x_{33}$ are in $P$ as well.  Since $P$ is also prime, we must have $[\wt{2}|\wt{1}] \in P$ or $x_{11} \in P$, and similarly for $[\wt{2}|\wt{3}]$ and $x_{33}$.  If $x_{11}$ or $x_{33} \in P$ then $Det \in P$ as well, so suppose that $[\wt{2}|\wt{1}]$ and $[\wt{2}|\wt{3}]$ are in $P$ instead.  Since our initial hypothesis was that $[\wt{2}|\wt{2}] \in P$, we once again obtain
\[Det = x_{21}[\wt{2}|\wt{1}] - x_{22}[\wt{2}|\wt{2}] + x_{23}[\wt{2}|\wt{3}] \in P.\]
\end{proof}

\begin{lemma}\label{res:Det in Prime ideal, quantum}
If $Q$ is a prime ideal in $\QML{3}$ and $Q$ contains at least one of: 
\[X_{11}, X_{22}, X_{33}, [\wt{1}|\wt{1}]_q, [\wt{2}|\wt{2}]_q, [\wt{3}|\wt{3}]_q,\]
 then $Det_q \in Q$ as well.
\end{lemma}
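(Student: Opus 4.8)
\textbf{Proof proposal for Lemma~\ref{res:Det in Prime ideal, quantum}.}

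The plan is to mirror the proof of the Poisson analogue Lemma~\ref{res:Det in prime ideal, Poisson}, using the quantum commutation identities from \cite[\S1.3]{GL1} in place of the Poisson brackets \eqref{eq:Preln1}--\eqref{eq:Preln4}. The two key structural facts are: (i) $Det_q$ is central in $\QML{3}$ (recorded in \S\ref{s:notation}), so in particular $Det_q \in Q$ as soon as any scalar multiple of it lies in $Q$; and (ii) all prime ideals of $\QML{n}$ are completely prime \cite[Corollary~II.6.10]{GBbook}, which is what lets me split products. First I would handle the ``diagonal'' cases. From the Laplace-type expansion $Det_q = X_{11}[\wt{1}|\wt{1}]_q - (\text{correction term})$, where the correction term is the quantum analogue of $\frac{1}{2}\{x_{11},[\wt{1}|\wt{1}]\}$ coming from \cite[E1.3]{GL1} (i.e.\ $X_{11}[\wt{1}|\wt{1}]_q$ differs from $[\wt{1}|\wt{1}]_qX_{11}$ by a sum of terms $[\wt{1}|\wt{s}]_qX_{1s}$), one sees directly that $X_{11}\in Q$ forces $Det_q\in Q$, and symmetrically for $[\wt{1}|\wt{1}]_q$. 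Applying the anti-automorphism $\rho$ (which fixes $Det_q$ and sends $X_{11}\mapsto X_{33}$, $[\wt{1}|\wt{1}]_q\mapsto[\wt{3}|\wt{3}]_q$) transports this to $X_{33}$ and $[\wt{3}|\wt{3}]_q$.

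For the central diagonal entry, I would use the identity $X_{22}X_{33} - (q\text{-scalar})X_{23}X_{32} = [\wt{1}|\wt{1}]_q$ (the $2\times2$ quantum minor on rows/columns $\{2,3\}$), which shows $X_{22}\in Q \Rightarrow [\wt{1}|\wt{1}]_q\in Q \Rightarrow Det_q\in Q$ by the case already done. The remaining and most delicate case is $[\wt{2}|\wt{2}]_q\in Q$, which is handled exactly as in the Poisson proof: using \cite[E1.3]{GL1} one computes that $X_{12}[\wt{2}|\wt{2}]_q$ and $X_{32}[\wt{2}|\wt{2}]_q$, after reducing modulo $[\wt{2}|\wt{2}]_q$-multiples, leave $[\wt{2}|\wt{1}]_qX_{11}$ and $[\wt{2}|\wt{3}]_qX_{33}$ (up to nonzero scalars) inside $Q$. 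Complete primeness then gives $[\wt{2}|\wt{1}]_q\in Q$ or $X_{11}\in Q$, and similarly on the other side; if either $X_{11}$ or $X_{33}$ lands in $Q$ we are done by the earlier cases, and otherwise $[\wt{2}|\wt{1}]_q,[\wt{2}|\wt{3}]_q\in Q$, whence the column expansion $Det_q = X_{21}[\wt{2}|\wt{1}]_q - q^{\pm1}X_{22}[\wt{2}|\wt{2}]_q + X_{23}[\wt{2}|\wt{3}]_q \in Q$.

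The main obstacle is purely bookkeeping: I must track the precise powers of $q$ in the Laplace expansions of $Det_q$ along the second row and in the commutation identities \cite[E1.3a--c]{GL1}, since unlike in the Poisson case the ``correction terms'' are genuine $q$-shifted sums rather than symmetric brackets. None of these computations is hard, but the signs $(-q)^{s-j}$ and the distinction between left and right expansions need care. An alternative, which I would mention as a remark, is to deduce the quantum statement formally from the lift to $\RR_3$: every identity used in the Poisson proof comes by specialising an identity in $\RR_3$ at $t\mapsto 1$, and the same identities specialised at $t\mapsto q$ prove the quantum statement, modulo the (standard) fact that primes of $\QML{3}$ are completely prime; this makes the parallel with Lemma~\ref{res:Det in prime ideal, Poisson} exact and avoids re-deriving the $q$-identities by hand.
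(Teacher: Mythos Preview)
Your approach is essentially the paper's own proof: Laplace-type identities from \cite[\S1.3]{GL1} for the diagonal cases, the commutator trick for $[\wt{2}|\wt{2}]_q$, and complete primeness to split the resulting products.

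One genuine imprecision to fix: in the $X_{22}$ step, the bare identity $[\wt{1}|\wt{1}]_q = X_{22}X_{33} - qX_{23}X_{32}$ does \emph{not} by itself give $X_{22}\in Q \Rightarrow [\wt{1}|\wt{1}]_q\in Q$, since you still need $X_{23}X_{32}\in Q$. The missing ingredient is the defining relation $X_{22}X_{33}-X_{33}X_{22}=(q-q^{-1})X_{23}X_{32}$, which lets you rewrite $[\wt{1}|\wt{1}]_q = X_{22}X_{33} - q(q-q^{-1})^{-1}(X_{22}X_{33}-X_{33}X_{22})$, an expression visibly in the two-sided ideal generated by $X_{22}$. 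This is precisely what the paper does, and it is the quantum analogue of the Poisson step $x_{22}x_{33}-\tfrac{1}{2}\{x_{22},x_{33}\}=[\wt{1}|\wt{1}]$ you are mirroring.

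A cosmetic difference: you transport the $X_{11}$, $[\wt{1}|\wt{1}]_q$ case to $X_{33}$, $[\wt{3}|\wt{3}]_q$ via the anti-automorphism $\rho$, whereas the paper just writes down a second explicit identity from \cite[E1.3a]{GL1}. Both are fine; using $\rho$ is slightly cleaner but requires the (easy) remark that anti-automorphisms preserve completely prime ideals.
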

\begin{proof}
The quantum proof follows in a very similar manner to the Poisson proof.  From the equalities in \cite[E1.3a]{GL1}, we obtain
\begin{align*}
Det_q &= [\wt{1}|\wt{1}]_qX_{11} + q^{-1}(q-q^{-1})^{-1}(X_{11}[\wt{1}|\wt{1}]_q - [\wt{1}|\wt{1}]_qX_{11})\\
&=  [\wt{3}|\wt{3}]_qX_{33} + q(q-q^{-1})^{-1}(X_{33}[\wt{3}|\wt{3}]_q - [\wt{3}|\wt{3}]_qX_{33}),
\end{align*}
while from the definition of quantum minor and the defining relations of $\QML{3}$ we have
\begin{align*}
[\wt{1}|\wt{1}]_q &= X_{22}X_{33} - qX_{23}X_{32} \\
&= X_{22}X_{33} - q(q-q^{-1})^{-1}(X_{22}X_{33}-X_{33}X_{22}).
\end{align*}
Hence $Det_q$ is in the prime ideal $Q$ whenever any one of $X_{11}$, $X_{22}$, $X_{33}$, $[\wt{1}|\wt{1}]_q$ or $[\wt{3}|\wt{3}]_q$ is.  

Now suppose that $[\wt{2}|\wt{2}]_q \in Q$ and recall from \S\ref{ss:H primes examples} that all prime ideals in $\QML{3}$ are completely prime. Using the identities from \cite[\S1.3]{GL1}, we find that
\begin{align*}
X_{12}[\wt{2}|\wt{2}]_q - q[\wt{2}|\wt{2}]_qX_{12} &= -q^{-1}(q-q^{-1})[\wt{2}|\wt{1}]_qX_{11}\\
X_{32}[\wt{2}|\wt{2}]_q - q^{-1}[\wt{2}|\wt{2}]_qX_{32} &= q(q-q^{-1})[\wt{2}|\wt{3}]_qX_{33},
\end{align*}
and so $[\wt{2}|\wt{1}]_qX_{11}, [\wt{2}|\wt{3}]_qX_{33} \in Q$.  If $X_{11}$ or $X_{33}$ are in $Q$ then $Det_q \in Q$ by the above; if not, then both $[\wt{2}|\wt{1}]_q$ and $[\wt{3}|\wt{2}]_q$ are in $Q$ and hence by \cite[E1.3a]{GL1},
\[Det_q = -q^{-1}X_{21}[\wt{2}|\wt{1}]_q + X_{22}[\wt{2}|\wt{2}]_q - qX_{32}[\wt{3}|\wt{2}]_q \in Q. \qedhere\]
\end{proof}
It is noted in \cite[\S2.4]{GL1} that $[\wt{3}|\wt{1}]_q$ and $[\wt{1}|\wt{3}]_q$ are normal in $\QML{3}$.  Since we will want to use $[\wt{3}|\wt{1}]$ and $[\wt{1}|\wt{3}]$ as generators of Poisson ideals, we prove the corresponding result for $\ML{3}$.
\begin{lemma}\label{res:Poisson normal elements}
The minors $[\wt{1}|\wt{3}]$ and $[\wt{3}|\wt{1}]$ are Poisson-normal in $\ML{3}$, and hence in $\GL{3}$ and  $\SL{3}$ as well.
\end{lemma}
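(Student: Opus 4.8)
The plan is to verify directly, via the Poisson bracket relations collected in \eqref{eq:Preln1}--\eqref{eq:Preln4}, that $\{[\wt{1}|\wt{3}], x_{ij}\} \in [\wt{1}|\wt{3}]\cdot\ML{3}$ for every generator $x_{ij}$, and likewise for $[\wt{3}|\wt{1}]$; since $\{\cdot,\cdot\}$ is a biderivation and the $x_{ij}$ generate $\ML{3}$ as an algebra, Poisson-normality on generators propagates to all of $\ML{3}$ by the Leibniz rule. First I would fix notation: $[\wt{1}|\wt{3}] = [23|12] = x_{21}x_{32} - x_{22}x_{31}$, a $2\times 2$ minor sitting in the copy of $\ML{2}$ on rows $\{2,3\}$ and columns $\{1,2\}$. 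The minors are the ``determinant'' of that $\ML{2}$-block, so by \eqref{eq:Preln1} (equivalently, by the explicit $\ML{2}$-bracket \eqref{eq:poisson relns for 2x2 matrices}, which gives $\{[\wt{1}|\wt{3}], x_{ij}\} = 0$ for the four generators $x_{ij}$ with $i\in\{2,3\}$, $j\in\{1,2\}$) the bracket vanishes against all generators lying in that block. So the only brackets left to compute are against $x_{11}, x_{12}, x_{13}$ (the deleted row $1$) and against $x_{23}, x_{33}$ (the deleted column $3$) — five generators.

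The next step is to compute those five brackets explicitly using \eqref{eq:poisson relns for nxn matrices}. For the column-$3$ generators: $\{x_{23}, [\wt{1}|\wt{3}]\} = \{x_{23}, x_{21}x_{32} - x_{22}x_{31}\}$; expanding by Leibniz and reading off the pairwise brackets ($\{x_{23},x_{21}\} = x_{21}x_{23}$, $\{x_{23},x_{22}\} = x_{22}x_{23}$, $\{x_{23},x_{31}\} = 2x_{21}x_{33}$ from the $2\times2$ rule applied to the block on rows $\{2,3\}$, columns $\{1,3\}$, and $\{x_{23},x_{32}\} = 0$) one should find the result is a scalar multiple of $x_{23}[\wt{1}|\wt{3}]$ plus possibly a correction term that is again divisible by $[\wt{1}|\wt{3}]$ — I expect it simply comes out as $\lambda\, x_{23}[\wt{1}|\wt{3}]$ after cancellation. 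The same computation with $x_{33}$, and then the three computations against $x_{11}, x_{12}, x_{13}$ (where $\{x_{1j}, x_{2k}\}$ and $\{x_{1j},x_{3k}\}$ are either $0$ or of the form $2x_{1k}x_{2j}$-type terms by \eqref{eq:poisson relns for nxn matrices}), again expanded by Leibniz, should collapse to multiples of $[\wt{1}|\wt{3}]$. Rather than grinding all of these out by hand, the cleanest route is to invoke the Poisson relations \eqref{eq:Preln1}--\eqref{eq:Preln4} directly in the $n=3$ case: these already express $\{x_{ij}, [\wt{l}|\wt{m}]\}$ for all $i,j$ in a form from which the $[\wt{l}|\wt{m}]$-divisibility is visible, and one just sets $(\wt{l}|\wt{m}) = (\wt{1}|\wt{3})$, i.e. $l=1$, $m=3$. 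Then $[\wt{3}|\wt{1}]$ follows immediately by applying the anti-automorphism $\tau$ (transpose), which sends $[\wt{1}|\wt{3}]$ to $[\wt{3}|\wt{1}]$ and preserves Poisson-normality since $\tau$ is a Poisson anti-automorphism; alternatively apply $\rho$. Finally, since the Poisson bracket on $\GL{3}$ and $\SL{3}$ is induced from that on $\ML{3}$ via \eqref{eq:poisson bracket on quotient} and \eqref{eq:extend Poisson bracket to localization}, and normality is clearly preserved under passing to quotients and localizations, the conclusion extends to $\GL{3}$ and $\SL{3}$.

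The main obstacle is purely bookkeeping: the relations in \eqref{eq:Preln1}--\eqref{eq:Preln4} are stated with the column index of the generator matching or not matching the deleted column, and one must be careful that here the generator's column can be anything in $\{1,2,3\}$ while the deleted column is $3$, so the relevant instances are the ``$j \neq m$'' cases \eqref{eq:Preln1} and \eqref{eq:Preln2} together with the ``$j = m$'' case \eqref{eq:Preln3}/\eqref{eq:Preln4} when the generator's column is $3$. Tracking the signs $(-1)^{s-j}$ and the ranges of the sums correctly for $n=3$ is the only place an error could creep in, but there are only a handful of terms in each sum. I expect no conceptual difficulty — the statement is essentially the assertion that the ``co-determinant'' minors behave like the full determinant (which is even Poisson-central) but only up to a multiple of themselves, exactly mirroring the quantum normality result of \cite[\S2.4]{GL1}, and the proof is a short finite check.
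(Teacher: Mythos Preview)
Your approach is correct and essentially identical to the paper's: verify Poisson-normality against each generator $x_{ij}$ using the identities \eqref{eq:Preln1}--\eqref{eq:Preln4}, transfer to the other minor via $\tau$, and then pass to $\GL{3}$ and $\SL{3}$ by \eqref{eq:extend Poisson bracket to localization} and \eqref{eq:poisson bracket on quotient}. The paper simply treats $[\wt{3}|\wt{1}]$ first rather than $[\wt{1}|\wt{3}]$, and writes out the $\GL{3}$ step explicitly using the Poisson-centrality of $Det$; note also that your scratch brackets $\{x_{23},x_{21}\}$, $\{x_{23},x_{22}\}$, $\{x_{23},x_{31}\}$ carry sign/value errors (the last is $0$, not $2x_{21}x_{33}$), though this is harmless since you ultimately defer to \eqref{eq:Preln1}--\eqref{eq:Preln4}.
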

\begin{proof}
We will prove this for $[\wt{3}|\wt{1}]$, since the corresponding result for $[\wt{1}|\wt{3}]$ will then follow by applying $\tau$. 

We first need to check that $\{x_{ij},[\wt{3}|\wt{1}]\} \in [\wt{3}|\wt{1}]\ML{3}$ for $1 \leq i,j \leq 3$, which is simple to verify using \eqref{eq:Preln1} - \eqref{eq:Preln4}; indeed:
\begin{align*}
\{x_{ij},[\wt{3}|\wt{1}]\} &= 0 &i \neq 3, j \neq 1 \\
\{x_{3j},[\wt{3}|\wt{1}]\} &= x_{3j}[\wt{3}|\wt{1}] & j \neq 1 \\
\{x_{i1},[\wt{3}|\wt{1}]\} &= x_{i1}[\wt{3}|\wt{1}] & i \neq 3 \\
\{x_{31},[\wt{3}|\wt{1}]\} &= 0. &
\end{align*}
Thus $[\wt{3}|\wt{1}]$ is Poisson normal in $\ML{3}$, and hence in $\SL{3}$ as well.  Further, for any $a \in \ML{3}$ we have
\[\{aDet^{-1},[\wt{3}|\wt{1}]\} = \{a,[\wt{3}|\wt{1}]\}Det^{-1} - \{Det,[\wt{3}|\wt{1}]\}aDet^{-2} =  \{a,[\wt{3}|\wt{1}]\}Det^{-1}\]
by \eqref{eq:extend Poisson bracket to localization} and the fact that $Det$ is Poisson central.  It therefore follows that $[\wt{3}|\wt{1}]$ is Poisson normal in $\GL{3}$ as well.  
\end{proof}
In many cases, we will want to take the existing analysis done in \cite{GL1} and transfer it directly to the semi-classical limits.  The following results will show that the process of taking semi-classical limits commutes with both localization and taking quotients.  This will be useful when we apply the stratification theory described in \S\ref{ss:H action quantum version}-\ref{ss:H action Poisson version} to $\QGL{3}$ and $\GL{3}$.
\begin{proposition}\label{res:localization and quotient commute}
Let $R$ be a ring (possibly non-commutative), $I$ an ideal of $R$ and $X$ a right denominator set in $R$.  Then there is an isomorphism of rings
\[(R/I)[X^{-1}] \cong RX^{-1}/IX^{-1},\]
i.e. the processes of localizing and taking quotients commute.
\end{proposition}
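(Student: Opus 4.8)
The statement is a standard ``localization commutes with quotient'' result for non-commutative rings, and the natural approach is to exhibit an explicit ring homomorphism and check it is an isomorphism using the universal property of Ore localization. First I would fix notation: write $\bar{R} = R/I$ with quotient map $\pi\colon R \to \bar{R}$, and let $\bar{X} = \pi(X)$, which is easily seen to be a right denominator set in $\bar{R}$ (the Ore and reversibility conditions for $\bar{X}$ in $\bar{R}$ follow by applying $\pi$ to the corresponding witnesses for $X$ in $R$). Dually, $IX^{-1}$ is a two-sided ideal of $RX^{-1}$, so the right-hand side $RX^{-1}/IX^{-1}$ makes sense. The plan is to build the isomorphism in one direction and check it is bijective.

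The key steps, in order: (1) Observe that the composite $R \xrightarrow{\pi} \bar{R} \to \bar{R}[\bar{X}^{-1}]$ sends every element of $X$ to a unit, since $\pi(x)$ becomes invertible in $\bar{R}[\bar{X}^{-1}]$. By the universal property of the Ore localization $RX^{-1}$ \cite[Chapter~10]{GW1}, this factors uniquely through a ring homomorphism $\Phi\colon RX^{-1} \to \bar{R}[\bar{X}^{-1}]$ with $\Phi(rx^{-1}) = \overline{\pi(r)}\,\overline{\pi(x)}^{-1}$. (2) Check that $IX^{-1} \subseteq \ker\Phi$: any element of $IX^{-1}$ has the form $ax^{-1}$ with $a \in I$, and $\Phi(ax^{-1}) = \overline{\pi(a)}\,\overline{\pi(x)}^{-1} = 0$ since $\pi(a) = 0$. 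Hence $\Phi$ descends to $\bar{\Phi}\colon RX^{-1}/IX^{-1} \to \bar{R}[\bar{X}^{-1}]$. (3) Construct an inverse. The map $R \to RX^{-1}/IX^{-1}$ kills $I$ (as $I \subseteq IX^{-1}$), so it factors through $\bar{R} \to RX^{-1}/IX^{-1}$; this map sends each $\bar{x} \in \bar{X}$ to a unit (its inverse being the class of $x^{-1}$), so by the universal property of $\bar{R}[\bar{X}^{-1}]$ it factors through a homomorphism $\Psi\colon \bar{R}[\bar{X}^{-1}] \to RX^{-1}/IX^{-1}$. (4) Verify $\bar{\Phi}$ and $\Psi$ are mutually inverse by checking they act as identities on the generating sets $\{\overline{\pi(r)}\colon r\in R\}\cup\{\overline{\pi(x)}^{-1}\colon x\in X\}$ and $\{\overline{rx^{-1}}\}$ respectively, which is immediate from the formulas.

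The only point requiring a little care — and the step I expect to be the main obstacle, though it is still routine — is step (2), i.e. checking that the kernel of $\Phi$ is \emph{exactly} $IX^{-1}$ rather than merely containing it; equivalently, that $\bar{\Phi}$ is injective. This is really subsumed by constructing the inverse $\Psi$ in step (3), so in practice the cleanest writeup sidesteps the kernel computation entirely and just verifies the two composites are identities. One mild subtlety worth flagging: one must confirm $IX^{-1}$ is genuinely a two-sided ideal of $RX^{-1}$ (so that the quotient is a ring), which uses the right Ore condition to move denominators past elements of $I$ while staying inside $IX^{-1}$ — this uses that $I$ is a two-sided ideal of $R$. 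With these pieces assembled the proof is short; I would present it as the construction of $\bar\Phi$ and $\Psi$ followed by the one-line verification that they are inverse.
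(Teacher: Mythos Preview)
Your argument is correct and complete, but the paper takes a different route. Rather than constructing mutually inverse maps via the universal property of Ore localization, the paper appeals to flatness: by \cite[Corollary~10.13]{GW1}, $RX^{-1}$ is a flat left $R$-module, so applying $-\otimes_R RX^{-1}$ (equivalently, localizing) to the short exact sequence $0 \to I \to R \to R/I \to 0$ yields the exact sequence $0 \to IX^{-1} \to RX^{-1} \to (R/I)X^{-1} \to 0$, and the resulting $R$-module isomorphism $(R/I)X^{-1} \cong RX^{-1}/IX^{-1}$ is then observed to be a ring isomorphism because the natural maps involved are ring homomorphisms. Your approach is more elementary and self-contained --- it does not invoke flatness as a black box, and along the way you make explicit the verifications (that $\bar{X}$ is a denominator set in $R/I$, that $IX^{-1}$ is a two-sided ideal) which the paper's argument leaves implicit. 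The paper's proof is shorter once the flatness result is in hand, but yours exposes exactly where each hypothesis is used.
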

\begin{proof}
By \cite[Corollary~10.13]{GW1}, $RX^{-1}$ is a flat left $R$-module, i.e. if 
\[0 \longrightarrow A \longrightarrow B \longrightarrow C \longrightarrow 0\]
is an exact sequence of right $R$-modules, then the localizations also form an exact sequence
\[0 \longrightarrow AX^{-1} \longrightarrow BX^{-1} \longrightarrow CX^{-1} \longrightarrow 0\]
In particular, if we choose $A = I$, $B = R$ and $C = (R/I)$ and equip each with the natural $(R,R)$-bimodule structure and natural maps between them, then we obtain an isomorphism of $R$-modules
\[(R/I)X^{-1} \cong RX^{-1}/IX^{-1}\]
Since the natural module homomorphisms defined above are simultaneously ring homomorphisms, this is in fact an isomorphism of rings, thus proving the result.
\end{proof}
Note that if $I \cap X \neq \emptyset$, this reduces to the statement that the zero ring is isomorphic to itself.  This is reassuring but unhelpful, so we will always ensure that $I \cap X = \emptyset$ when applying this result (in particular, the following theorem).

\begin{proposition}\label{res:localization and SCL commute}
Let $B$ be a $k[t^{\pm1}]$-algebra and $S \subset k\backslash \{0,1\}$ a set of scalars such that none of the elements $\{t-q: q \in S \cup \{1\}\}$ are invertible in $B$.  Suppose further that $B/(t-1)B$ is commutative, and write $A:= B/(t-1)B$, $A_q := B/(t-q)B$ for $q \in S$.  Finally, let $X$ be an Ore set of regular elements in $B$ such that $X \cap (t-q)B = \emptyset$ for all $q \in S \cup \{1\}$, and let $X_q$ denote the image of $X$ in $B/(t-q)B$ for $S \cup \{1\}$.  Under these conditions, localizing $A$ at $X_1$ is equivalent to localizing $A_q$ at $X_q$ and then taking the semi-classical limit.
\end{proposition}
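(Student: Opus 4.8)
The plan is to route everything through the single ring $\wt B := BX^{-1}$, and then to invoke Proposition~\ref{res:localization and quotient commute} together with the uniqueness of the Poisson structure on a localization. Here ``equivalent'' should be read as: $A[X_1^{-1}]$, equipped with the Poisson bracket obtained by extending the bracket of $A$ to the localization via \eqref{eq:extend Poisson bracket to localization}, is the semi-classical limit (in the sense of Definition~\ref{def:poisson deformation}) of the family $\{A_q[X_q^{-1}] : q \in S\}$, with the role of the deformation algebra $\BB$ played by $\wt B$ and that of the central element $h$ played by $t-1$.

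First I would check that $\wt B$ is a legitimate deformation algebra for this family. Since $t$ is central in $B$ it remains central in the Ore localization $\wt B$, so $h=t-1$ is central. For each $\lambda \in S\cup\{1\}$ the hypothesis $X\cap(t-\lambda)B=\emptyset$ forces $(t-\lambda)\wt B\neq\wt B$ (a relation $1=(t-\lambda)bx^{-1}$ would give $x=(t-\lambda)b\in(t-\lambda)B$), so $t-\lambda$ is non-invertible in $\wt B$; and $t-1$ is a non-zero-divisor in $\wt B$ because it is one in $B$, $B$ embeds in $\wt B$, and $(t-1)(bx^{-1})=0$ in $\wt B$ gives $(t-1)bx'=0$ in $B$ for some $x'\in X$, whence $bx'=0$ and $bx^{-1}=0$. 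Applying Proposition~\ref{res:localization and quotient commute} with $I=(t-\lambda)B$ and the denominator set $X$ yields ring isomorphisms $\wt B/(t-\lambda)\wt B\cong(B/(t-\lambda)B)[X_\lambda^{-1}]$; that is, $\wt B/(t-q)\wt B\cong A_q[X_q^{-1}]$ for $q\in S$ and $\wt B/(t-1)\wt B\cong A[X_1^{-1}]$, the latter being commutative as a localization of the commutative ring $A$. This verifies all the hypotheses of Definition~\ref{def:poisson deformation}.

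Next I would match the Poisson brackets. The semi-classical limit construction of Definition~\ref{def:star product Poisson bracket} equips $A[X_1^{-1}]\cong\wt B/(t-1)\wt B$ with the bracket $\{\bar a,\bar b\}=(t-1)^{-1}(\hat a\hat b-\hat b\hat a)\bmod(t-1)\wt B$ for lifts $\hat a,\hat b\in\wt B$. Restricting to the subring $A\cong B/(t-1)B$ one may take the lifts inside $B$, and the computation is then literally the one defining the given Poisson bracket on $A$; hence the two brackets agree on $A$. On the other hand, any Poisson bracket on $A[X_1^{-1}]$ extending that of $A$ is forced, by the Leibniz rule applied to $1=x\cdot x^{-1}$ and the formula \eqref{eq:extend Poisson bracket to localization}, to be the canonical localized bracket; so it is unique. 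Therefore the bracket induced on $A[X_1^{-1}]$ from $\wt B$ is precisely the canonical one, which is exactly the assertion that $A[X_1^{-1}]$ (with that bracket) is the semi-classical limit of $\{A_q[X_q^{-1}]\}_{q\in S}$.

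The genuinely substantive point is the last paragraph — that the commutator recipe for the Poisson bracket is compatible with Ore localization — but once everything is expressed inside $\wt B$ this is immediate. The main obstacle I anticipate is instead bookkeeping: confirming that the image $X_\lambda$ of $X$ really is a denominator set in $B/(t-\lambda)B$ (so that $A_q[X_q^{-1}]$ and $A[X_1^{-1}]$ are defined at all), and tracking left/right conventions for the central element $t-1$ through the localization. All of this is either contained in Proposition~\ref{res:localization and quotient commute} or follows from $X\cap(t-\lambda)B=\emptyset$ by the short arguments above, so no new idea should be required.
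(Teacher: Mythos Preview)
Your proposal is correct and follows essentially the same route as the paper: apply Proposition~\ref{res:localization and quotient commute} to identify $A_q[X_q^{-1}]$ and $A[X_1^{-1}]$ with quotients of $\wt B = BX^{-1}$, then argue that the commutator-induced bracket on $\wt B/(t-1)\wt B$ agrees with the localized bracket because both are derivations in each variable and hence are determined by their restriction to $A$. The paper phrases this last step as an explicit expansion of $\{ab^{-1},cd^{-1}\}_1$ via the derivation rule, whereas you phrase it as a uniqueness-of-extension argument, but these are the same observation; you are also a bit more explicit than the paper in verifying that $t-1$ remains central, non-invertible and regular in $\wt B$.
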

\begin{proof}
Applying Proposition~\ref{res:localization and quotient commute}, we obtain isomorphisms of rings 
\[A_q[X_q^{-1}] \cong B[X^{-1}]/(t-q)B[X^{-1}] \quad\textrm{ and } \quad A[X_1^{-1}] \cong B[X^{-1}]/(t-1)B[X^{-1}].\]
In order to establish the result, we just need to check that the Poisson bracket $\{\cdot,\cdot\}_1$ induced on $B[X^{-1}]/(t-1)B[X^{-1}]$ from the commutator in $B[X^{-1}]$ (as in Definition~\ref{def:star product Poisson bracket}) agrees with the Poisson bracket $\{\cdot,\cdot\}_2$ induced on $A$ and extended to $A[X_1]^{-1}$ by \eqref{eq:extend Poisson bracket to localization}.  

We will use the fact that $\{uv^{-1}, \cdot\}$ and $\{\cdot,uv^{-1}\}$ are always derivations for any $uv^{-1}$ to show that both Poisson brackets are defined by their restriction to $A$.  In particular, for any derivation $\delta$ on a commutative ring in which some elements are invertible, it follows easily from the definition that $\delta$ must satisfy the equality
\[\delta(rs^{-1}) = \delta(r)s^{-1} - \delta(s)rs^{-2}.\]
Hence, to compute $\{ab^{-1},cd^{-1}\}_1$ for any $ab^{-1}, cd^{-1} \in B[X^{-1}]/(t-1)B[X^{-1}]$, we may view it as a derivation in first one then the other variable to obtain
\begin{align*}
\{ab^{-1},cd^{-1}\}_1 &= \{ab^{-1},c\}_1d^{-1} - \{ab^{-1}, d\}_1cd^{-2} \\
&= \{a,c\}_1b^{-1}d^{-1} - \{a,d\}_1cb^{-1}d^{-2} - \{b,c\}_1ab^{-2}d^{-1} + \{b,d\}_1acb^{-2}d^{-2}.
\end{align*}
This agrees precisely with the definition of $\{ab^{-1},cd^{-1}\}_2$ obtained using the formula \eqref{eq:extend Poisson bracket to localization} to extend a Poisson bracket to a localization, and hence it suffices to check that $\{a,c\}_1 = \{a,c\}_2$ for all $a, c \in A$.  This follows trivially from the definition of the two brackets in terms of commutators on $B$ and $B[X^{-1}]$, however.
\end{proof}

\begin{proposition}\label{res:quotient and scl commute}
Let $A$, $A_q$ and $B$ be as in Proposition~\ref{res:localization and SCL commute}, and let $I$ be an ideal of $B$ such that $t-q \not\in I$ for any $q \in S \cup \{1\}$.  Denote by $I_q$ the image of $I + \langle t-q\rangle$ in $B/(t-q)B$ for $q \in S \cup \{1\}$.  Then the semi-classical limit of the quotient $A_q/I_q$ is the same as the quotient of the semi-classical limit $A$ by the ideal $I_1$, i.e. taking quotients and semi-classical limits commute.
\end{proposition}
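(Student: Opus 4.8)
The plan is to produce the $k[t^{\pm1}]$-algebra that realizes $A/I_1$ as the semi-classical limit of the family $\{A_q/I_q : q \in S\}$ in the sense of Definition~\ref{def:poisson deformation}, by passing the whole construction of Proposition~\ref{res:localization and SCL commute} through the quotient by $I$. Explicitly, I would take $\BB' := B/I$, with distinguished central element $h$ the image of $t-1$; then $h - (q-1)$ is the image of $t-q$ for each $q \in S$, and the relevant scalars $q-1$ lie in $k^{\times}$ since $q \neq 1$, exactly as in Proposition~\ref{res:localization and SCL commute}.

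The ring-theoretic half is immediate and, unlike the localization statement (which used the flatness input of Proposition~\ref{res:localization and quotient commute}), needs nothing beyond the standard isomorphism theorems: for any $q \in S \cup \{1\}$,
\[
\BB'/(t-q)\BB' \;\cong\; B\big/\big(I + (t-q)B\big) \;\cong\; A_q/I_q ,
\]
the second isomorphism being precisely the definition of $I_q$ as the image of $I + \langle t-q\rangle$ in $A_q = B/(t-q)B$. Taking $q = 1$ identifies $\BB'/h\BB'$ with $A/I_1$ as rings, and taking $q \in S$ identifies $\BB'/(h-(q-1))\BB'$ with $A_q/I_q$. The remark following Definition~\ref{def:poisson deformation}, together with the hypothesis that each $t-q$ is non-invertible in $B$, handles the requirement that $h$ and the $h-(q-1)$ remain non-invertible modulo $I$.

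What actually has to be checked is that the Poisson bracket induced on $\BB'/h\BB'$ by the commutator of $\BB'$ (Definition~\ref{def:star product Poisson bracket}) agrees, under the isomorphism above, with the bracket that $A/I_1$ inherits from the Poisson algebra $A$. Following the template of the last part of the proof of Proposition~\ref{res:localization and SCL commute}, I would compute both brackets by lifting representatives all the way up to $B$, forming the commutator there, dividing by $t-1$ (legitimate because $t-1$ is a non-zero-divisor in $B$), and then reducing; the two orders of reduction — first modulo $I$ then modulo $(t-1)$, versus first modulo $(t-1)$ then modulo $I$ — land on the same element of $B/(I + (t-1)B)$, so the brackets coincide. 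The same computation simultaneously shows that $I_1$ is a Poisson ideal of $A$, so that the quotient $A/I_1$ is a Poisson algebra in the first place.

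I expect the genuine subtlety to be precisely this well-definedness point: for $\BB' = B/I$ to satisfy Definition~\ref{def:poisson deformation} one needs $h = t-1$ to be a non-zero-divisor modulo $I$, and for the induced bracket to descend to $A/I_1$ one needs $(I :_B (t-1)) \subseteq I + (t-1)B$. Both hold automatically when $I$ is prime with $t-1 \notin I$ — which is the only situation in which we apply the proposition, namely to $\HH$-prime ideals of $\RR$ — but in full generality they should be recorded as part of the hypotheses. Everything else is routine bookkeeping parallel to Proposition~\ref{res:localization and SCL commute}.
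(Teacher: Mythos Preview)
Your proposal is correct and follows essentially the same route as the paper: the Third Isomorphism Theorem supplies the ring isomorphisms $\big(B/I\big)\big/(t-q)\big(B/I\big) \cong A_q/I_q$, and the remaining work is to check that the two candidate Poisson brackets on $A/I_1$ agree by lifting representatives to $B$, forming the commutator, dividing by $t-1$, and comparing the two orders of reduction. Your observation that $t-1$ must be a non-zero-divisor modulo $I$ for Definition~\ref{def:poisson deformation} to apply is in fact more careful than the paper's own proof, which silently assumes this; as you note, it holds in all the applications (where $I$ is prime and $t-1 \notin I$).
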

\begin{proof}
Using the Third Isomorphism Theorem we easily obtain the isomorphisms of rings
\[\bigslant{B/I}{(t-1)B/I} \cong A/I_1 \textrm{\quad and \quad}\bigslant{B/I}{(t-q)B/I} \cong A_q/I_q.\]
As above, we just need to check that the two Poisson brackets induced  on $A/I_1$ agree.  

By taking the quotient first, a Poisson bracket is induced directly on $A/I_1$ from the semi-classical limit process as follows: for $a+I,b+I \in B/I$, we have
\begin{align*}
\{\overline{a+I},\overline{b+I}\}_1 &:= \frac{1}{t-1}\Big((a+I)(b+I) - (b+I)(a+I)\Big) \quad \mod t-1 \\
&= \frac{1}{t-1}(ab-ba + I) \quad \mod t-1 \\
&= \frac{1}{t-1}(ab-ba) + (I+\langle t-1 \rangle) \quad \mod t-1
\end{align*}
Meanwhile, $A$ already has a Poisson bracket $\{\cdot,\cdot\}_2$ induced from $A_q$, and this induces a unique Poisson bracket on the quotient $A/I_1$ using the formula from \eqref{eq:poisson bracket on quotient}: 
\begin{align*}
\{\overline{a} + I_1,\overline{b} + I_1\}_2 &= \{\overline{a},\overline{b}\}_2 + I_1 \\
&= \Big(\frac{1}{t-1}(ab-ba) \mod t-1\Big) \ + I_1.
\end{align*}
Since the image of the ideal $I + \langle t-1 \rangle$ modulo $t-1$ is $I_1$, we see that $\{\cdot,\cdot\}_1$ and $\{\cdot,\cdot\}_2$ are equal on $A/I_1$.
\end{proof}

\section{$\HH$-primes}\label{s:36 H primes, quantum}

As described in the introduction to this chapter, our aim is to apply the Poisson Stratification Theorem to $\GL{3}$ and $\SL{3}$ in a similar manner to the quantum algebras in \cite{GL1}.  As in the quantum case, we will make use of the fact that these two algebras can be related via an isomorphism $\GL{n} \cong \SL{n}[z^{\pm1}]$; it will turn out that some results are easier to prove in $\GL{3}$ and others in $\SL{3}$, so it will be useful to be able to move between the two as required.  We prove the existence of a Poisson version of this isomorphism in Lemma~\ref{res:poisson iso sln to gln} below.

Our first aim will be to define a rational action of a torus $\HH$ on $\GL{3}$ and $\SL{3}$ and to identify the Poisson $\HH$-primes: Poisson prime ideals which are stable under the action of $\HH$.  This is complicated slightly by the fact that the standard action of $\HH = (k^{\times})^{2n}$ on $\GL{n}$ does \textit{not} restrict directly to an action on $\SL{n}$; however, we will show in \S\ref{ss:translating H primes to SL3} that for an appropriate action of a torus $\HP \cong (k^{\times})^{2n-1}$ there is a natural bijection from the $\HH$-primes of $\GL{n}$ to the $\HP$-primes of $\SL{n}$.

\subsection{$\HH$-primes of $\GL{3}$}

As described in \cite[II.1.15, II.2.6]{GBbook}, the torus $\HH = (k^{\times})^{2n}$\nom{H@$\mathcal{H}$} acts rationally on $\QGL{n}$ by
\begin{equation}\label{eq:H-action on GLn}
h.X_{ij} = \alpha_i\beta_jX_{ij}, \quad \textrm{where }h = (\alpha_1,\dots,\alpha_n,\beta_1,\dots,\beta_n) \in \HH.
\end{equation}
This also defines an action of $\HH$ on $\GL{n}$, by replacing $X_{ij}$ with $x_{ij}$ in \eqref{eq:H-action on GLn} above.  By \cite[\S2.2]{GLL1}, this defines a rational action of $\HH$ on $\GL{n}$.

We will now restrict our attention to the case where $n=3$.  

Since we are using the same action of $\HH$ on $\QGL{3}$ and $\GL{3}$, we would expect that $\HH$-primes of $\QGL{3}$ should match up bijectively with Poisson $\HH$-primes in $\GL{3}$.  In this section we will show that every $\HH$-prime of $\QGL{3}$ defines a distinct Poisson $\HH$-prime when its generators are viewed as elements of $\GL{3}$, and in Theorem~\ref{res:gl3 has only 36 h primes} we will show that $\GL{3}$ admits no other Poisson $\HH$-primes.

The 36 $\HH$-primes in $\QGL{3}$ are described in \cite[Figure~1]{GL1}, which we reproduce in Figure~\ref{fig:H_primes_gens} in Appendix~\ref{c:H-prime figures}.  Each ideal is represented pictorially by a $3 \times 3$ grid of dots: a black dot in position $(i,j)$ denotes the element $X_{ij}$, and a square represents a $2\times2$ quantum minor in the natural way.  For example, the ideal in position $(231,231)$ denotes the ideal generated by $X_{31}$ and $[\wt{3}|\wt{1}]_q$.

We will adopt the indexing convention used in \cite{GL1} for these ideals. $\HH$-primes are indexed by elements $\omega = (\omega_{+},\omega_{-}) \in S_3 \times S_3$,\nom{W@$\omega = (\omega_{+},\omega_{-})$} where we write permutations in $S_3$ using an abbreviated form of 2-line cycle notation, e.g. $321$ represents the permutation $1 \mapsto 3, \ 2 \mapsto 2,\ 3 \mapsto 1$.  The ideal $I_{\omega}$\nom{I@$I_{\omega}$} will denote the ideal generated by the elements in position $\omega = (\omega_{+},\omega_{-})$ of Figure~\ref{fig:H_primes_gens}, where it will always be clear from context whether we mean an ideal in $\QGL{3}$ or $\GL{3}$.

We may first observe that each of these ideals are generated only by (quantum) minors, which are eigenvectors for the action of $\HH$, and hence the corresponding generator sets in $\GL{3}$ also generate $\HH$-stable ideals.  Further, as the next lemma verifies, the resulting ideals are all closed under the Poisson bracket of $\GL{3}$ as well.
\begin{lemma}\label{res:Poisson H ideals}
Let $\omega \in S_3 \times S_3$, and let $I_{\omega}$ be the ideal of $\ML{3}$ generated by the elements in position $\omega$ from Figure~\ref{fig:H_primes_gens}.  Then $I_{\omega}$ is a Poisson ideal in $\ML{3}$, and hence induces a Poisson ideal in $\GL{3}$ as well.
\end{lemma}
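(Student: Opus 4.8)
The plan is to verify directly that $\{I_\omega, \ML{3}\} \subseteq I_\omega$, and then transfer the conclusion to $\GL{3}$ using that the Poisson bracket on $\GL{3}$ is the unique extension of the one on $\ML{3}$ given by \eqref{eq:extend Poisson bracket to localization}, together with the fact that $Det$ is Poisson central (so that $\{aDet^{-1},r\} = \{a,r\}Det^{-1}$, exactly as in the proof of Lemma~\ref{res:Poisson normal elements}). Since $\{\cdot,\cdot\}$ is bilinear and a derivation in each argument, and $\ML{3}$ is generated as a $k$-algebra by the entries $x_{ij}$, repeated use of the Leibniz identity \eqref{eq:Leibniz identity} reduces the problem to checking that $\{g, x_{ij}\} \in I_\omega$ for each generator $g$ of $I_\omega$ (an entry $x_{lm}$ or a $2\times 2$ minor $[\wt{l}|\wt{m}]$) and each $1 \le i,j \le 3$: writing a general element of $I_\omega$ as $\sum_g r_g g$ and a general element of $\ML{3}$ as a polynomial in the $x_{ij}$, the bracket expands into a sum of terms each containing a factor of some $g$ or of some $\{g,x_{ij}\}$, so both the ideal-multiple terms and (given the reduction) the remaining terms lie in $I_\omega$. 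Brackets of two minor generators are handled the same way, by expanding one of them and invoking Leibniz again.

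Next I would cut down the number of cases using the symmetries collected in \S\ref{s:H primes background,defs}. The maps $\tau$ and $\rho$ are Poisson (anti-)automorphisms of $\ML{3}$, they act on minors by $\tau([I|J]) = [J|I]$ and $\rho([I|J]) = [w_0(J)|w_0(I)]$, hence permute the generating sets displayed in Figure~\ref{fig:H_primes_gens} and therefore permute the ideals $I_\omega$; on $\GL{3}$ the antipode $S$ provides a further Poisson anti-automorphism permuting them. It is thus enough to verify the claim for one representative $I_\omega$ in each orbit, and within a fixed $I_\omega$ for one representative of each ``type'' of generator occurring in it.

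The core of the argument is then the resulting finite check, carried out with the explicit formulas already assembled above: $\{x_{ij},x_{lm}\}$ is given by \eqref{eq:poisson relns for nxn matrices}, and $\{x_{ij},[\wt{l}|\wt{m}]\}$ by the relations \eqref{eq:Preln1}--\eqref{eq:Preln4}. In every case the right-hand side is a $k$-linear combination of terms of the shape $[\wt{l}|\wt{s}]x_{is}$, $x_{sj}[\wt{s}|\wt{m}]$, $x_{im}x_{lj}$, or a scalar multiple of the generator $g$ itself; the point is that the ``staircase'' shape of the generating sets in Figure~\ref{fig:H_primes_gens} is exactly such that, whenever $g \in I_\omega$, all of these terms already lie in $I_\omega$. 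For the generators $[\wt{1}|\wt{3}]$ and $[\wt{3}|\wt{1}]$ this is immediate from their Poisson normality (Lemma~\ref{res:Poisson normal elements}), and for any $I_\omega$ containing one of $x_{11},x_{22},x_{33},[\wt{1}|\wt{1}],[\wt{2}|\wt{2}],[\wt{3}|\wt{3}]$ one may additionally invoke Lemma~\ref{res:Det in prime ideal, Poisson} to conclude $Det \in I_\omega$, which disposes of the brackets that would otherwise escape the generating set.

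The main obstacle is that this is pure bookkeeping: there is no single conceptual difficulty, but one has to run through the representative ideals and confirm term-by-term that every summand produced by \eqref{eq:poisson relns for nxn matrices} or \eqref{eq:Preln1}--\eqref{eq:Preln4} is visibly in $I_\omega$. A more structural alternative would be to work in the $k[t^{\pm1}]$-algebra $\RR$ (or its localization at $Det_t$): the minors $[I|J]_t$ generate a two-sided ideal $\mathcal{I}_\omega$, so $[a,r] \in \mathcal{I}_\omega$ whenever $a$ is a minor generator, while commutativity of $\ML{3}$ forces $[a,r] \in (t-1)\RR$; if one knew $\mathcal{I}_\omega \cap (t-1)\RR = (t-1)\mathcal{I}_\omega$ (equivalently, that $t-1$ is a non-zero-divisor on $\RR/\mathcal{I}_\omega$), then $\{\bar a,\bar r\} = \overline{(t-1)^{-1}[a,r]} \in I_\omega$ would drop out at once. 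Proving that regularity statement is, however, not appreciably easier than the direct computation, so I would fall back on the explicit verification.
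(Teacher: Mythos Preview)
Your overall strategy---reduce by Leibniz to $\{g,x_{kl}\}$ for each generator $g$ of $I_\omega$ and each variable $x_{kl}$, handle the minors $[\wt{1}|\wt{3}]$ and $[\wt{3}|\wt{1}]$ via Poisson normality, and then inspect the ``staircase'' shape of the generating sets for the entry--entry brackets---is exactly what the paper does. The extension to $\GL{3}$ via \eqref{eq:extend Poisson bracket to localization} is also the same.

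Where you overcomplicate: a glance at Figure~\ref{fig:H_primes_gens} shows that the \emph{only} $2\times 2$ minors ever appearing as generators of any $I_\omega$ are $[\wt{1}|\wt{3}]$ and $[\wt{3}|\wt{1}]$. So the relations \eqref{eq:Preln1}--\eqref{eq:Preln4} for general $[\wt{l}|\wt{m}]$ are not needed at all; Lemma~\ref{res:Poisson normal elements} disposes of every minor generator in one stroke. With that observation the paper's proof is just the entry--entry case: $\{x_{ij},x_{kl}\}$ is either multiplicative, zero, or equal to $2x_{il}x_{kj}$, and in the last case one of $x_{il}$, $x_{kj}$ is visibly among the generators. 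No symmetry reduction via $\tau,\rho,S$ is needed (and using $S$ would require first passing to $\GL{3}$, which is the wrong order).

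One genuine slip: your appeal to Lemma~\ref{res:Det in prime ideal, Poisson} is both vacuous and logically premature. Vacuous because none of the $36$ ideals $I_\omega$ contains any of $x_{11},x_{22},x_{33},[\wt{1}|\wt{1}],[\wt{2}|\wt{2}],[\wt{3}|\wt{3}]$ (indeed Lemma~\ref{res:distinct H ideals} shows $Det\notin I_\omega$); premature because that lemma assumes $P$ is Poisson-prime, which is exactly what has not yet been established at this point. Drop that sentence and your argument goes through.
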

\begin{proof}
We write $I_{\omega} = \langle f_1, \dots, f_n\rangle$, where the $f_i$ are the minors depicted in position $\omega$ of Figure~\ref{fig:H_primes_gens}; it suffices to check that $\{f_r,\ML{3}\} \in I_{\omega}$ for $1 \leq r \leq n$.  We first note that this is immediate for $f_r = [\wt{1}|\wt{3}]$ or $[\wt{3}|\wt{1}]$, since by Lemma~\ref{res:Poisson normal elements} these elements are Poisson-normal in $\ML{3}$ and $\GL{3}$.

Now consider the case where $f_r = x_{ij}$ for some $1 \leq i,j \leq 3$ and $1 \leq r \leq n$.  We need to check that $\{x_{ij}, x_{kl}\} \in I$ for $1 \leq k,l\leq 3$, which is easy to see when $i=k$, or $j=l$, or $i<k$ and $j>l$ (or vice versa): in these cases the Poisson bracket is either multiplicative or zero on the given elements. The only remaining cases are when $i<k$ and $j<l$, or $i>k$ and $j>l$, i.e. $x_{kl}$ is diagonally below and to the right or above and to the left of $x_{ij}$.  Since the Poisson bracket is anti-symmetric, we may assume that $i < k$ and $j < l$.  In this case,
\[\{x_{ij},x_{kl}\} = 2x_{il}x_{kj}\]
and from Figure~\ref{fig:H_primes_gens} we can observe directly that whenever this situation occurs for a generator of $I_{\omega}$, we have $x_{il}$ or $x_{kj} \in I$ as well and the ideal is closed under Poisson bracket as required.

This shows that each of the 36 ideals listed in Figure~\ref{fig:H_primes_gens} are Poisson ideals in $\ML{3}$, and by using the formula \eqref{eq:extend Poisson bracket to localization} for the unique extension of the bracket to a localization we see that the induced ideals in $\GL{3}$ are also Poisson ideals.
\end{proof}
It is observed in \cite[\S1.5]{GL1} that the maps $\tau$, $\rho$ and $S$ preserve $\HH$-stable subsets of $\QGL{3}$ with respect to the action defined in \eqref{eq:H-action on GLn}; since this follows purely from considering the action of $\HH$ on generators, the same observation holds true for $\GL{3}$ since the action of $\HH$ is the same.  In particular, if $\varphi$ is some combination of $\tau$, $\rho$ and $S$, and $I$, $J$ are $\HH$-primes (respectively Poisson $\HH$-primes) such that $\varphi(I) = J$ then this induces an (anti-)isomorphism of algebras (resp. Poisson algebras)
\[\varphi: \QGL{3}/I \rightarrow \QGL{3}/J, \quad \textrm{resp. } \GL{3}/I \rightarrow \GL{3}/J.\]
By direct computation, we find that the 36 ideals in Figure~\ref{fig:H_primes_gens} form 12 orbits under combinations of $\tau$, $\rho$ and $S$, and hence it often suffices only to consider the structure or properties of $\GL{3}/I_{\omega}$ or $\QGL{3}/I_{\omega}$ for one example from each orbit.  Since we will regularly use this fact to simplify case-by-case analyses in various proofs, in Figure~\ref{fig:H_primes_nice} we present a diagram of these orbits.  Note that we will always use the first ideal listed in Figure~\ref{fig:H_primes_nice} when we require a representative for a given orbit.

Most of the arrows in Figure~\ref{fig:H_primes_nice} are immediately clear from the definitions of $\tau$, $\rho$ and $S$; the five which are not clear are justified in the following lemma.
\begin{lemma}
We have the following equalities in $\QGL{3}$ and $\GL{3}$:
\begin{align*}
(S \circ \rho)(I_{132,312}) &= I_{213,231} \\
(S \circ \rho)(I_{231,132}) &= I_{312,213} \\
(S \circ \rho)(I_{231,213}) &= I_{312,132} \\
(S \circ \rho)(I_{213,312}) &= I_{132,231} \\
S(I_{231,123}) &= I_{312,123}
\end{align*}
\end{lemma}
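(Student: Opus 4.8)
The plan is to verify each of these five equalities by direct computation on generating sets, using the explicit action of $\tau$, $\rho$ and $S$ on minors recorded in \S\ref{s:H primes background,defs}. Since the ideals $I_\omega$ are generated by minors (as read off from Figure~\ref{fig:H_primes_gens}), and $\tau$, $\rho$, $S$ all send minors to scalar multiples of minors, it suffices to track where each generator goes and check that the images are exactly a generating set for the claimed target ideal. I would first record the composite $S\circ\rho$ on an arbitrary minor: combining $\rho([I|J]_q) = [w_0(J)|w_0(I)]_q$ with $S([I|J]_q) = (-q)^{\sum I - \sum J}[\wt J|\wt I]_q Det_q^{-1}$ gives $(S\circ\rho)([I|J]_q) = (\pm q^\bullet)[\widetilde{w_0(I)}|\widetilde{w_0(J)}]_q Det_q^{-1}$, and since we are working in $\QGL{3}$ (resp.\ $\GL{3}$) the factor $Det_q^{-1}$ (resp.\ $Det^{-1}$) is a unit and the scalar is irrelevant for ideal membership. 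For $3\times 3$ matrices $w_0 = (13)$, so on index sets of size $1$ or $2$ the effect of $(S\circ\rho)$ is: a $1\times 1$ minor $[i|j]$ at position $(i,j)$ maps to the $2\times 2$ minor $[\widetilde{w_0(i)}|\widetilde{w_0(j)}]$, and a $2\times 2$ minor $[\widetilde a|\widetilde b]$ maps (up to a unit) to the $1\times 1$ minor $[\widetilde{\widetilde{w_0(a)}}|\cdots] = [w_0(a)|w_0(b)]$. Thus $(S\circ\rho)$ exchanges the roles of ``dot'' and ``box'' in the pictorial description of Figure~\ref{fig:H_primes_gens}, composed with the reflection $i\mapsto w_0(i)$ on both row and column indices.

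Armed with this description, I would go through the five cases one at a time. For each source ideal $I_{\omega_+,\omega_-}$ I read its generators from Figure~\ref{fig:H_primes_gens}, apply the dot$\leftrightarrow$box-plus-$w_0$-reflection rule, and check the result matches the generators of the claimed target $I_{\omega_+',\omega_-'}$; I expect the index bookkeeping to show that the target permutation pair is obtained from the source by the transformation $(\omega_+,\omega_-)\mapsto (w_0\omega_+ w_0', w_0\omega_- w_0')$ or something of that shape, consistent with all five listed equalities. The fifth equality, $S(I_{231,123}) = I_{312,123}$, is slightly different since it uses $S$ alone rather than $S\circ\rho$; here I simply apply $S([I|J]_q) = (\pm q^\bullet)[\wt J|\wt I]_q Det_q^{-1}$ directly to the generators of $I_{231,123}$ (which, from Figure~\ref{fig:H_primes_gens}, I would identify explicitly — a mix of generators $X_{ij}$ and a $2\times 2$ minor), observe that $S$ swaps $I$ and $J$ inside each minor and complements both, and confirm the output generating set is that of $I_{312,123}$. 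Throughout, the fact that these are equalities of ideals (not just of generating sets up to the ambient unit $Det_q$) is automatic because $Det_q$ and $Det$ are units in $\QGL{3}$ and $\GL{3}$ respectively.

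The main obstacle I anticipate is purely the combinatorial/notational one: correctly reading off the generator sets from Figure~\ref{fig:H_primes_gens} for each of the ten ideals involved, and keeping straight the interaction between $w_0$-reflection and set-complementation (since $\wt{w_0(I)} \ne w_0(\wt I)$ in general — in fact for $3\times 3$ they do coincide because $w_0$ is an involution commuting with complementation, but this needs a sentence of care). There is no deep content: once the ``dot $\leftrightarrow$ box, reflect indices'' dictionary for $S\circ\rho$ is set up, each of the five verifications is a one-line check, and the only way to go wrong is a transcription error. I would present the argument by stating the $S\circ\rho$ dictionary as a short observation, then dispatching all five cases in a single displayed table or aligned computation. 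For safety I would also note that the same computation is valid verbatim in $\GL{3}$, since the action of $\tau$, $\rho$, $S$ on the commutative minors is obtained by setting $q = 1$ and the unit $Det$ remains a unit.
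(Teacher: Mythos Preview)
Your plan has a genuine gap at the step where you claim ``the images are exactly a generating set for the claimed target ideal.'' They are not. Your dot$\leftrightarrow$box dictionary for $S\circ\rho$ is correct, and if you carry it out on, say, $I_{132,312} = \langle X_{13}, X_{21}, X_{31}\rangle$ you obtain (up to units) the ideal $\langle [\wt{3}|\wt{1}]_q,\,[\wt{1}|\wt{3}]_q,\,[\wt{2}|\wt{3}]_q\rangle$. But the generators of $I_{213,231}$ listed in Figure~\ref{fig:H_primes_gens} are $\langle X_{31}, X_{32}, [\wt{3}|\wt{1}]_q\rangle$: one $2\times2$ minor and two $1\times1$ minors, not three $2\times2$ minors. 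The whole content of the lemma is precisely the equality of these two differently-presented ideals, and your table-check would simply fail to close.

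One inclusion is indeed easy: expanding $[\wt{1}|\wt{3}]_q = X_{21}X_{32} - qX_{22}X_{31}$ and $[\wt{2}|\wt{3}]_q = X_{11}X_{32} - qX_{12}X_{31}$ shows the image ideal sits inside $I_{213,231}$. The reverse inclusion is the substantive step. The paper's argument is to use the identities in \cite[\S1.3]{GL1} to show that $[\wt{3}|\wt{3}]_qX_{31}$ and $[\wt{3}|\wt{3}]_qX_{32}$ lie in $\langle [\wt{3}|\wt{1}]_q, [\wt{1}|\wt{3}]_q, [\wt{2}|\wt{3}]_q\rangle$; then, since this ideal is the image of a prime under an (anti-)automorphism it is itself prime, hence completely prime, and by Lemma~\ref{res:Det in Prime ideal, quantum} (resp.\ Lemma~\ref{res:Det in prime ideal, Poisson}) it cannot contain $[\wt{3}|\wt{3}]_q$ without being the whole ring. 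Thus $X_{31}, X_{32}$ themselves lie in the ideal. You need some argument of this kind --- a direct membership check will not suffice, and the primality input is exactly what makes the ``non-obvious'' direction go through.
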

\begin{proof}
We will prove the first equality, as the others follow by almost identical arguments.  Consider first the case of $\QGL{3}$; note that $\rho(I_{132,312}) = \langle X_{13},X_{31},X_{32}\rangle$, so what we need to prove is that
\[S(\langle X_{13},X_{31},X_{32} \rangle) = I_{213,231}.\]
In other words, we need to check that
\begin{equation}\label{eq:equality to prove at some point 1}\langle [\wt{3}|\wt{1}]_q, [\wt{1}|\wt{3}]_q, [\wt{2}|\wt{3}]_q \rangle = \langle X_{31},X_{32},[\wt{3}|\wt{1}]_q\rangle.\end{equation}
Since $[\wt{1}|\wt{3}]_q = X_{21}X_{32} - qX_{22}X_{31}$ and $[\wt{2}|\wt{3}]_q = X_{11}X_{32}-qX_{12}X_{31}$, the $\subseteq$ direction is clear.  Conversely, using the formulas from \cite[\S1.3]{GL1} we can observe that
\begin{equation}\label{eq:some technical eqs, quantum}\begin{aligned}
q^2[\wt{1}|\wt{3}]_qX_{12} - q[\wt{2}|\wt{3}]_qX_{22} &= -[\wt{3}|\wt{3}]_qX_{32}, \\
q^2[\wt{1}|\wt{3}]_qX_{11} - q[\wt{2}|\wt{3}]_qX_{21} &= -[\wt{3}|\wt{3}]_qX_{31}. 
\end{aligned}\end{equation}
$[\wt{3}|\wt{3}]_qX_{32}$ and $[\wt{3}|\wt{3}]_qX_{31}$ are therefore in $\langle [\wt{3}|\wt{1}]_q, [\wt{1}|\wt{3}]_q, [\wt{2}|\wt{3}]_q \rangle$.  This is a prime ideal since it is the image of a prime ideal under the automorphism $S \circ \rho$, and hence is completely prime since all primes of $\QGL{3}$ are completely prime by \cite[Corollary~II.6.10]{GBbook}.  By Lemma~\ref{res:Det in Prime ideal, quantum} no non-trivial prime in $\QGL{3}$ can contain $[\wt{3}|\wt{3}]_q$, and so $X_{32}$, $X_{31}$ are both in $\langle [\wt{3}|\wt{1}]_q, [\wt{1}|\wt{3}]_q, [\wt{2}|\wt{3}]_q \rangle.$  The equality \eqref{eq:equality to prove at some point 1} is now proved, and the other four equalities follow by similar arguments.

Finally, the Poisson proof is almost unchanged from the quantum version, except that we use Lemma~\ref{res:Det in prime ideal, Poisson} instead of Lemma~\ref{res:Det in Prime ideal, quantum} and instead of the equalities in \eqref{eq:some technical eqs, quantum}, we observe that
\begin{align*}
x_{31} &= Det^{-1}([\wt{1}|\wt{2}][\wt{2}|\wt{3}] - [\wt{2}|\wt{2}][\wt{1}|\wt{3}]),\\
x_{32} &= Det^{-1}([\wt{1}|\wt{1}][\wt{2}|\wt{3}] - [\wt{2}|\wt{1}][\wt{1}|\wt{3}]),
\end{align*}
which can be easily seen by applying $S$ to the equalities $[\wt{1}|\wt{3}] = x_{21}x_{32} - x_{22}x_{31}$, $[\wt{2}|\wt{3}] = x_{11}x_{32} - x_{12}x_{31}$.
\end{proof}

We will now proceed to check that the ideals appearing in Figure~\ref{fig:H_primes_gens}/Figure~\ref{fig:H_primes_nice} are indeed distinct Poisson prime ideals which are invariant under the action of $\HH$.  We have already checked that they are Poisson $\HH$-ideals, so all that remains is to verify that they are prime (in the standard commutative sense) and distinct.
\begin{lemma}\label{res:distinct H ideals}
The ideals generated in $\ML{3}$ by the sets of generators listed in Figure~\ref{fig:H_primes_gens} are pairwise distinct and do not contain the $3\times3$ determinant $Det$.  They therefore generate 36 pairwise distinct $\HH$-ideals in $\GL{3}$.
\end{lemma}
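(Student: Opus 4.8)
The plan is to avoid any use of primeness of the $I_\omega$ (which is not yet available at this stage) and any computation with the Poisson bracket, and instead decide everything by combining the $X(\HH)$-grading with evaluation at explicit invertible matrices. Recall from \S\ref{ss:H action quantum version} that the rational action \eqref{eq:H-action on GLn} corresponds to a grading of $\ML{3}$ and $\GL{3}$ by $X(\HH) = \mathbb{Z}^3 \times \mathbb{Z}^3$, under which $x_{ij}$ is homogeneous of multidegree $(e_i;e_j)$ and an $\HH$-stable ideal is precisely a homogeneous one. Every generator in Figure~\ref{fig:H_primes_gens} is a minor, hence homogeneous: $x_{ij}$ has multidegree $(e_i;e_j)$, and $[\wt{l}|\wt{m}]$ has multidegree $(\mathbf{1}-e_l;\mathbf{1}-e_m)$ with $\mathbf{1} = e_1+e_2+e_3$. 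Thus each $I_\omega$ is homogeneous, and membership of a homogeneous element $\mu$ in $I_\omega$ is decided inside the single graded component of multidegree $\deg\mu$.

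For the non-containment of $Det$ I would use point evaluation. Evaluation at a fixed matrix $M$ is a $k$-algebra homomorphism $\ML{3}\to k$ sending $Det\mapsto\det M$, so if all generators of $I_\omega$ vanish at $M$ while $\det M\neq 0$ then $Det^n\notin I_\omega$ for all $n$, and in particular $Det\notin I_\omega$. First I would note, directly from Figure~\ref{fig:H_primes_gens}, that no generator set contains a diagonal entry $x_{ii}$ or a diagonal minor $[\wt{l}|\wt{l}]$: this is forced, since by the quantum Lemma~\ref{res:Det in Prime ideal, quantum} such a generator would drag the unit $Det_q$ into the proper $\HH$-prime of \cite{GL1}. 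A short check then shows that at the identity matrix $x_{ij}$ evaluates to $0$ unless $i=j$ and $[\wt{l}|\wt{m}]$ to $0$ unless $l=m$; hence $M=I_3$ is a single witness at which all $36$ ideals vanish, with $\det I_3=1$. This proves the $Det$ assertion and, via the localization correspondence, that each $I_\omega$ extends to a proper ideal of $\GL{3}$.

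For distinctness I would work directly in $\GL{3}=\ML{3}[Det^{-1}]$, again $\mathbb{Z}^3\times\mathbb{Z}^3$-graded with the $I_\omega\GL{3}$ homogeneous. The cheap invariant comes from degree reasons: the multidegree-$(e_i;e_j)$ component of $I_\omega$ is nonzero exactly when $x_{ij}$ is a listed $1\times1$ generator, so the set $\{(i,j):x_{ij}\in I_\omega\}$ is recoverable from the ideal. When two patterns share the same $1\times1$ set and differ only in a $2\times2$ minor $[\wt{l}|\wt{m}]$, I would settle the comparison by exhibiting an invertible matrix at which every generator of the smaller ideal vanishes but $[\wt{l}|\wt{m}]$ does not, proving non-membership. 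Organizationally, the $36$ ideals fall into the $12$ orbits of Figure~\ref{fig:H_primes_nice} under the commutative-algebra automorphisms $\tau,\rho,S$ of $\GL{3}$; since these permute the $I_\omega$ and act on the points as transpose, anti-transpose and matrix inverse, it suffices to produce witness matrices for one representative per orbit and transport them.

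The genuinely delicate point, and the one I expect to be the main obstacle, is this distinctness bookkeeping: ruling out that a listed $2\times2$ minor generator is redundant, i.e.\ already lies in the subideal generated by the $1\times1$ generators of the same $I_\omega$, which could in principle collapse two different-looking patterns. This cannot be seen from multidegree alone, since the relevant graded component is two-dimensional (spanned by the two monomials of the minor), which is exactly where the explicit invertible witness matrices must do the work. The same device absorbs the saturation subtlety in passing from $\ML{3}$ to $\GL{3}$ — equality in $\GL{3}$ forces only equal $Det$-saturations — because invertible matrices are honest points of $GL_3$. By comparison the $Det$-part and the $1\times1$-invariant are routine.
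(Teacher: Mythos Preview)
Your $Det\notin I_\omega$ argument via evaluation at the identity is correct and essentially the paper's: the paper observes that every $I_\omega$ lies inside $I_{123,123}$ and that $Det=x_{11}x_{22}x_{33}\neq 0$ in $\ML{3}/I_{123,123}\cong k[x_{11},x_{22},x_{33}]$.

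Your distinctness argument has a gap. You work in $\GL{3}$ and assert that the multidegree-$(e_i;e_j)$ component of $I_\omega$ is nonzero exactly when $x_{ij}$ is a listed generator; but once $Det^{-1}$ is available, any nonzero homogeneous ideal meets every nonzero graded piece. Concretely, $[\wt{3}|\wt{1}]\,x_{i1}x_{3j}\,Det^{-1}$ has multidegree $(e_i;e_j)$ and lies in $I_{321,231}\GL{3}$ for all $(i,j)$, so your ``cheap invariant'' collapses to ``is the ideal zero''. The repair is to run that step in $\ML{3}$, where the $(e_i;e_j)$-component really is $k\cdot x_{ij}$ and the degree argument goes through; your witness-matrix comparisons, being at invertible points, then legitimately handle the passage to $\GL{3}$.

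Even repaired, your route is more laborious than the paper's. The $1\times1$ invariant fails to separate $\omega_+\in\{321,312\}$ and $\omega_-\in\{321,231\}$, leaving over a dozen pairs that still need explicit invertible witness matrices (the orbit symmetries reduce but do not eliminate this). The paper instead uses two commutative projections $\theta_1:\ML{3}\to k[x_{12},x_{13},x_{22},x_{23}]$ and $\theta_2:\ML{3}\to k[x_{21},x_{22},x_{31},x_{32}]$, killing the remaining variables: by inspection $\theta_1$ separates the columns of Figure~\ref{fig:H_primes_gens} and $\theta_2$ the rows, so all $36$ ideals are distinguished in $\ML{3}$ in two strokes with no case analysis at all.
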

\begin{proof}
The proof that the ideals are distinct closely follows the corresponding quantum proof from \cite[\S3.6]{GL2}.  Indeed, we can define two projections
\[\theta_1: \ML{3} \rightarrow B_1 = k[x_{12},x_{13},x_{22},x_{23}], \quad \theta_2: \ML{3} \rightarrow B_2 = k[x_{21},x_{22},x_{31},x_{32}]\]
where each map fixes $x_{ij}$ if it exists in the target ring and maps it to zero otherwise.  It suffices to view these as maps of commutative algebras rather than Poisson algebras.

If we consider the images of the ideals from Figure~\ref{fig:H_primes_gens} under $\theta_1$ and $\theta_2$, it is clear that $\theta_1$ sends all ideals in a given column to the same ideal in $B_1$, and the images of ideals from different columns are distinct in $B_1$.  Similarly $\theta_2$ sends every ideal from a given row of Figure~\ref{fig:H_primes_gens} to one ideal in $B_2$, and ideals from different rows are mapped to distinct ideals in $B_2$.  Hence two ideals in different columns or different rows must be distinct in $\ML{3}$, and therefore all 36 of the ideals in the table are distinct.

By observation, we can see that all of the ideals in Figure~\ref{fig:H_primes_gens} are contained inside the ideal
\begin{center}\begin{tabular}{c}
\protect\begin{smallarray}{m321321}
{
 \circ & \bullet & \bullet \\
 \bullet & \circ & \bullet \\
 \bullet & \bullet & \circ \\
};
\end{smallarray} \\
$I_{123,123}$
\end{tabular}\end{center}
and hence it suffices to check that $Det \not\in I_{123,123}$.  This is equivalent to checking that $Det \neq 0$ in $\ML{3}/I_{123,123}$, but since $\ML{3}/I_{123,123} \cong k[x_{11},x_{22},x_{33}]$ we have $Det = x_{11}x_{22}x_{33} \neq 0$ in this ring.  The result now follows.
\end{proof}

\begin{lemma}\label{res:prime H ideals}
Each of the 36 ideals whose generators are listed in Figure~\ref{fig:H_primes_gens} are prime ideals in both $\ML{3}$ and $\GL{3}$.
\end{lemma}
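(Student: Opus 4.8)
The plan is to reduce the problem to the quantum case already handled in \cite{GL2,GL1}, using the specialization map $\RR_3 \to \ML{3}$ from Definition~\ref{def:algebra B for quanum matrices} together with the results of \S\ref{s:H primes background,defs} asserting that semi-classical limits commute with localization and quotients. Concretely, each ideal $I_\omega$ in Figure~\ref{fig:H_primes_gens} is generated by a set of minors, and the corresponding ideal $I_\omega^{(t)}$ of $\RR_3$ generated by the same minors (with $q$ replaced by $t$) is known to be prime in $\RR_3$, since its image in $\QML{3} = \RR_3/(t-q)\RR_3$ is prime by \cite{GL2} and $t-q$ is a normal non-zero-divisor. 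Specializing instead at $t-1$ gives $I_\omega$ in $\ML{3}$, so it suffices to transfer primality across this specialization. I would either argue directly that $I_\omega^{(t)} + (t-1)\RR_3$ is prime (the generators are the same polynomials for all values of the parameter, and the quotient $\ML{3}/I_\omega$ should be recognizable as a coordinate ring of an explicit irreducible variety), or invoke a flatness/genericity argument: the family $\RR_3/I_\omega^{(t)}$ over $k[t^{\pm1}]$ is flat, the generic and special fibres are cut out by the ``same'' equations, and primality is preserved.

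The cleanest route, and the one I would carry out first, exploits the orbit structure under $\tau$, $\rho$, $S$ recorded in Figure~\ref{fig:H_primes_nice}: since these maps are automorphisms of the commutative algebra $\ML{3}$ (the distinction between automorphism and anti-automorphism vanishing in the commutative setting, as noted in \S\ref{s:H primes background,defs}) and they permute the $I_\omega$, it suffices to prove primality for one representative from each of the $12$ orbits. For each representative I would exhibit an explicit presentation of $\ML{3}/I_\omega$: after killing the variables and minors in the generating set, the quotient should be visibly a polynomial ring in the surviving diagonal variables (as in the computation $\ML{3}/I_{123,123} \cong k[x_{11},x_{22},x_{33}]$ from the proof of Lemma~\ref{res:distinct H ideals}) or, in the cases involving only a $2\times 2$ minor, a polynomial ring modulo a single $2\times2$ determinant, which is prime because $x_{11}x_{22}-x_{12}x_{21}$ is irreducible. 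The finitely many remaining cases (those whose generating set contains $[\wt1|\wt3]$ or $[\wt3|\wt1]$, or a mix of variables and a $2\times2$ minor) would be handled by the same elimination, using Lemma~\ref{res:Det in prime ideal, Poisson} and Lemma~\ref{res:Poisson normal elements} to simplify; this is essentially the commutative shadow of the argument in \cite[\S3]{GL2}.

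For the passage from $\ML{3}$ to $\GL{3}$: once $I_\omega$ is prime in $\ML{3}$, Lemma~\ref{res:distinct H ideals} guarantees $Det \notin I_\omega$, so $I_\omega$ survives in the localization $\GL{3} = \ML{3}[Det^{-1}]$, and primality is preserved under localization at a central element by standard commutative algebra (equivalently, by Proposition~\ref{res:localization and quotient commute} applied with $X = \{1,Det,Det^2,\dots\}$). The main obstacle I anticipate is not any single case but the bookkeeping: organizing the twelve orbit representatives and checking, for the handful of ``mixed'' generating sets, that the elimination really does land in a polynomial ring or an obviously prime quotient rather than something requiring a Gr\"obner-basis computation. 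If a direct presentation proves awkward for some representative, the fallback is the generic-flatness argument sketched above, transferring primality of $I_\omega^{(t)}$ directly from the $t=q$ fibre to the $t=1$ fibre; the Magma verification mentioned in Appendix~\ref{c:appendix} can serve as a backstop for any stubborn case.
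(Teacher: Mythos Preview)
Your ``cleanest route'' is essentially the paper's argument, but with one genuine error and a reversed order of operations. The error: $S$ is \emph{not} an automorphism of $\ML{3}$, since $S(x_{ij}) = (-1)^{i-j}[\wt{j}|\wt{i}]\,Det^{-1}$ involves $Det^{-1}$; it is only defined on $\GL{3}$ (and $\SL{3}$). Several of the twelve orbits in Figure~\ref{fig:H_primes_nice} are linked by $S$ or $S\circ\rho$, so you cannot carry out the full orbit reduction inside $\ML{3}$ as you claim.

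The paper repairs this by swapping your order: it first invokes Lemma~\ref{res:distinct H ideals} ($Det\notin I_\omega$) to get that $I_\omega$ is prime in $\ML{3}$ iff it is prime in $\GL{3}$, and then works on whichever side is convenient. Concretely, 25 of the 36 ideals are generated by variables only, so $\ML{3}/I_\omega$ is visibly a polynomial ring; one further case, $\omega=(231,231)$, is handled directly as $(\ML{2}/Det_2)[x_{11},x_{21},x_{32},x_{33}]$. The remaining 10 ideals are exactly those whose orbits reach one of the 26 already treated only via $S$, so for these the paper passes to $\GL{3}$, applies the symmetry there, and pulls primality back to $\ML{3}$ via the equivalence. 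If you move your final paragraph to the front, your plan becomes the paper's proof almost verbatim; no Gr\"obner computations or appeals to Lemma~\ref{res:Det in prime ideal, Poisson} or Lemma~\ref{res:Poisson normal elements} are needed.

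Your first paragraph (specialization via $\RR_3$) has real gaps and should be discarded. Primality of the image of $I_\omega^{(t)}$ modulo $t-q$ does not make $I_\omega^{(t)}$ prime in $\RR_3$; and even granting that, primality does not pass to quotients by central non-zero-divisors (e.g.\ $(0)$ is prime in $k[t]$ but $k[t]/(t^2-1)$ is not a domain). The flatness/genericity sketch fails for the same reason: irreducibility of a generic fibre says nothing about the special fibre at $t=1$ without extra input. The paper makes no use of this route.
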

\begin{proof}
By Lemma~\ref{res:distinct H ideals} none of the ideals in Figure~\ref{fig:H_primes_gens} contain the determinant $Det$, so by \cite[Theorem~10.20]{GW1} they will be prime in $\ML{3}$ if and only if they are prime in $\GL{3}$.  We can therefore immediately observe that the 25 ideals generated only by $1\times1$ minors are prime in $\ML{3}$ since the quotient $\ML{3}/I_{\omega}$ is simply a polynomial ring in fewer variables, and hence the extensions of these ideals to $\GL{3}$ are prime as well.

Next consider the ideal
\begin{center}\begin{tabular}{c}
\protect\begin{smallarray}{m321321}
{
 \circ &  &  \\
\circ &  &  \\
\bullet & \circ & \circ \\
};
\MyZ(1,2)
\end{smallarray} \\
$I_{231,231}$
\end{tabular}\end{center}
in $\ML{3}$.  In the subalgebra of $\ML{3}$ generated by $\{x_{12},x_{13},x_{22},x_{23}\}$ the minor $[\wt{3}|\wt{1}]$ plays the role of the $2 \times 2$ determinant, which we will denote temporarily by $Det_2$.  This generates a prime ideal in $\ML{2}$, and hence as commutative algebras we have an isomorphism
\[\ML{3}/I_{231,231} \cong \left(\ML{2}/Det_2\right)[x_{11},x_{21},x_{32},x_{33}],\]
where the latter ring is a polynomial extension of a domain.  The ideal $I_{231,231}$ is therefore prime in $\ML{3}$ and hence in $\GL{3}$ as well.

Let $I$ now be one of the 10 remaining ideals from Figure~\ref{fig:H_primes_gens}; using the symmetries listed in Figure~\ref{fig:H_primes_nice} there is always another ideal $J$ among the 26 already considered such that $\GL{3}/I$ is isomorphic as commutative algebras to $\GL{3}/J$.  The ideal $I$ is therefore prime in $\GL{3}$ and hence in $\ML{3}$ as well.
\end{proof}
We have constructed here 36 examples of Poisson $\HH$-primes in $\GL{3}$, but we postpone the proof that $\GL{3}$ admits no more such primes until \S\ref{ss:quantum to Poisson}.

\subsection{$\HH$-primes in $\SL{3}$}\label{ss:translating H primes to SL3}
As noted above, when working with $\SL{n}$ for any $n$ we cannot use the action of $\HH = (k^{\times})^{2n}$ defined in \eqref{eq:H-action on GLn} for $\ML{n}$ and $\GL{n}$ as it does not give rise to an action on $\SL{n}$: in the notation of \eqref{eq:H-action on GLn}, we would have $h.Det = \alpha_1\dots\alpha_n\beta_1\dots\beta_n Det \neq h.1$ in general.  Instead, we restrict our attention to a subset\nom{H@$\mathcal{H}'$}
\[\HP = \{h \in \HH : \alpha_1\dots\alpha_n\beta_1\dots\beta_n = 1\} \subset \HH\]
and take the induced action of $\HP$ on $\SL{n}$, that is:
\begin{equation}\label{eq:action of HP on SLn}
h.x_{ij} = \alpha_i \beta_j x_{ij}, \quad h = (\alpha_1, \dots, \alpha_n, \beta_1, \dots, \beta_n) \in \HP.
\end{equation}
The problem with this definition is it is not immediately clear how to connect the $\HH$-primes of $\ML{n}$ or $\GL{n}$ with the $\HP$-primes of $\SL{n}$.  In \cite[Lemma~II.5.16]{GBbook} and \cite[\S2]{quantumUFDs}, it is shown that by applying the natural projection map to the $\HH$-primes of $\QGL{n}$ we obtain precisely the $\HP$-primes of $\QSL{n}$, and we adapt their argument to the Poisson case here.

We begin by establishing a Poisson version of the isomorphism $\QSL{n}[z^{\pm1}] \cong \QGL{n}$ from \cite{LS1}.
\begin{lemma}\label{res:poisson iso sln to gln}
Let $\SL{n}$ and $\GL{n}$ be Poisson algebras, where the Poisson bracket in each case is the one induced by \eqref{eq:poisson relns for nxn matrices}.  Define $\SL{n}[z^{\pm1}] = \SL{n} \otimes k[z^{\pm1}]$, and extend to it the Poisson bracket from $\SL{n}$ by defining $z$ to be Poisson central: $\{z,a\} = 0$ for all $a \in \SL{n}$.  Then there is an isomorphism of Poisson algebras $\SL{n}[z^{\pm1}] \rightarrow \GL{n}$, defined by
\begin{align*}
\theta:  \SL{n}[z^{\pm1}] &\longrightarrow \GL{n} \\
x_{1j}\ &\mapsto \ x_{1j}Det^{-1} \\
x_{ij}\ &\mapsto \ x_{ij} \qquad \qquad (i \neq 1) \\
z\ &\mapsto \ Det.
\end{align*}
\end{lemma}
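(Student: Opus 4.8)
The plan is to verify directly that the stated map $\theta$ is a well-defined Poisson algebra homomorphism, that it is bijective, and that its inverse is also Poisson. The underlying algebra isomorphism $\SL{n}[z^{\pm1}] \cong \GL{n}$ is already known from \cite{LS1} (or can be recovered by the same argument with $q=1$), so the genuinely new content is only that $\theta$ respects the Poisson brackets. Since a Poisson homomorphism is an algebra homomorphism that additionally satisfies $\theta(\{a,b\}) = \{\theta(a),\theta(b)\}$, and since $\{\cdot,\cdot\}$ is a biderivation on each side, it suffices to check this identity on a generating set, namely on pairs drawn from $\{x_{ij}\} \cup \{z\}$.

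First I would record that $Det$ is Poisson central in $\GL{n}$ — this was established in \S\ref{s:H primes background,defs} from the relation \eqref{eq:Preln1} (the minor $[\wt{l}|\wt{m}]$ plays the role of $Det$ in a copy of $\ML{n-1}$ and brackets trivially with the generators there), and it extends to $\GL{n}$ via \eqref{eq:extend Poisson bracket to localization}. Poisson centrality of $Det$ immediately handles every bracket involving $z$, since $\theta(z) = Det$ and $z$ is Poisson central in $\SL{n}[z^{\pm1}]$ by construction. For the brackets among the $x_{ij}$, I would split into cases according to whether the first index is $1$ or not. For $i,l \neq 1$ the map is the identity on these generators, so the identity $\theta(\{x_{ij},x_{lm}\}) = \{\theta(x_{ij}),\theta(x_{lm})\}$ is trivial. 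For one index in row $1$ and one not, and for both in row $1$, one must expand using the quotient/localization rule \eqref{eq:extend Poisson bracket to localization}: for instance $\{x_{1j}Det^{-1}, x_{lm}\} = \{x_{1j},x_{lm}\}Det^{-1}$ since $\{Det,x_{lm}\} = 0$, and $\{x_{1j}Det^{-1}, x_{1m}Det^{-1}\} = \{x_{1j},x_{1m}\}Det^{-2}$, again using Poisson centrality of $Det$. Matching these against $\theta$ applied to the bracket computed in $\SL{n}[z^{\pm1}]$ (where the bracket is the one inherited from $\SL{n}$, i.e.\ from \eqref{eq:poisson relns for nxn matrices}, with $z$ central) is then a routine check case by case: the factors of $Det^{-1}$ bookkeep correctly precisely because $Det$ is Poisson central, and the ``core'' bracket $\{x_{1j},x_{1m}\}$ and $\{x_{1j},x_{lm}\}$ are given by the same formulas \eqref{eq:poisson relns for nxn matrices} on both sides (noting that on the $\GL{n}$ side $x_{1l}x_{1j}$ etc.\ may themselves acquire $Det^{-1}$ factors under $\theta$, which again cancel consistently).

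Having checked that $\theta$ preserves brackets on generators, it follows that $\theta$ is a Poisson homomorphism. Bijectivity is inherited from the known algebra isomorphism: the inverse sends $Det \mapsto z$, $x_{1j} \mapsto x_{1j}z$ (using that $Det = x_{11}[\wt{1}|\wt{1}] - x_{12}[\wt{1}|\wt{2}] + x_{13}[\wt{1}|\wt{3}]$ maps to something invertible), and $x_{ij} \mapsto x_{ij}$ for $i\neq 1$; one verifies $\theta \circ \theta^{-1} = \mathrm{id}$ and $\theta^{-1}\circ\theta = \mathrm{id}$ on generators. Since the inverse of a bijective Poisson homomorphism is automatically a Poisson homomorphism, we conclude $\theta$ is an isomorphism of Poisson algebras.

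The main obstacle — though it is more bookkeeping than conceptual — is organizing the case analysis for the brackets $\{x_{1j},x_{lm}\}$ cleanly, because on the $\GL{n}$ side the right-hand sides of \eqref{eq:poisson relns for nxn matrices} may involve products like $x_{1m}x_{lj}$ in which one factor lies in row $1$ and the other does not, so that applying $\theta^{-1}$ or tracking $Det^{\pm1}$ powers requires care. The key simplification that makes all of this go through with no real computation is the Poisson centrality of $Det$: it guarantees that every stray $Det^{-1}$ introduced by $\theta$ simply passes through the bracket as a scalar, so the verification reduces to the already-known commutative isomorphism plus the observation that \eqref{eq:poisson relns for nxn matrices} has the same shape before and after twisting by powers of the Poisson-central element $Det$.
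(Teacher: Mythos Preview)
Your proposal is correct and follows essentially the same approach as the paper: cite \cite{LS1} with $q=1$ for the underlying commutative algebra isomorphism, then verify Poisson-bracket preservation on generators using the Poisson centrality of $Det$ to pass the $Det^{-1}$ factors through the bracket. The only cosmetic difference is that the paper packages your row-1 versus row-$\neq 1$ case split into a single computation using the Kronecker delta, writing $\theta(x_{ij}) = x_{ij}Det^{-\delta_{1i}}$, which lets both sides be written uniformly as $\{x_{ij},x_{lm}\}Det^{-\delta_{1i}}Det^{-\delta_{1l}}$.
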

\begin{proof}
By taking $q = \lambda = 1$ in \cite{LS1}, we immediately get that $\theta$ is an isomorphism of commutative algebras, and we need only check that it respects the Poisson bracket.  Let $\delta_{ij}$ denote the Kronecker delta, i.e. $\delta_{ij} = 1$ when $i=j$ and $\delta_{ij} = 0$ otherwise, and recall that the determinant $Det$ is Poisson central in $\GL{n}$.

For $x_{ij}$, $x_{lm} \in \SL{n}$ we have
\begin{align}\label{eq:poisson iso eq 1}
\nonumber\theta\{x_{ij},x_{lm}\} &= \left\{\begin{array}{cc} \theta(x_{ij}x_{lm}) & (i=l \textrm{ or } j=m)\\ 0 & (i > l, j < m \textrm{ or }i < l,j > m)\\ 2\theta(x_{im}x_{lj})  & (i < l, j <m \textrm{ or }i>l,j>m)\end{array}\right. \\
\nonumber& \ \\
&= \left\{\begin{array}{cc} x_{ij}x_{lm}Det^{-\delta_{1i}}Det^{-\delta_{il}} & (i=l \textrm{ or } j=m)\\ 0 & (i > l, j < m \textrm{ or }i < l,j > m)\\ 2x_{im}x_{lj}Det^{-\delta_{1i}}Det^{-\delta_{il}}  & (i < l, j <m \textrm{ or }i>l,j>m)\end{array}\right. ,
\end{align}
while
\begin{align}\label{eq:poisson iso eq 2}
\nonumber\{\theta(x_{ij}), \theta(x_{lm})\} &= \{x_{ij}Det^{-\delta_{1i}}, x_{lm}Det^{-\delta_{1l}}\}\\
&= \{x_{ij},x_{lm}\}Det^{-\delta_{1i}}Det^{-\delta_{1l}},
\end{align}
since $Det$ is Poisson central.  Using the definition of the Poisson bracket in $\GL{n}$, it is clear that \eqref{eq:poisson iso eq 1} and \eqref{eq:poisson iso eq 2} are equal.  Finally, we see that 
\[\theta\{x_{ij},z\} = \theta(0) = 0 = \{x_{ij}Det^{-\delta_{1i}},Det\} = \{\theta(x_{ij}),\theta(z)\},\]
for all $x_{ij} \in \SL{n}$, and $\theta$ is therefore an isomorphism of Poisson algebras as required.
\end{proof}
We can now define an action of $\HH$ on $\SL{n}[z^{\pm1}]$ by conjugating the standard action of $\HH$ on $\GL{n}$ with $\theta$, i.e. for $h \in \HH$ we define
\begin{equation}\label{eq: action of HH on SLnZ}h.f = \theta^{-1}\circ h \circ \theta(f) \quad \forall f \in \SL{n}[z^{\pm1}].\end{equation}
This also restricts to an action of $\HH$ on $\SL{n}$.  Indeed, by working through the definition in \eqref{eq: action of HH on SLnZ}, we find that
\begin{equation}\label{eq:action of HH on SLn}\begin{aligned}
h.x_{1j} &= \alpha_1\beta_j(\alpha_1\dots\alpha_n\beta_1\dots\beta_n)^{-1}x_{1j}, \\
h.x_{ij} &= \alpha_i\beta_j x_{ij}\qquad  (i \neq 1).
\end{aligned}\end{equation}
The next two lemmas show that the set of $\HP$-primes in $\SL{n}$ coincides with the set of $\HH$-primes, which in turn coincides with the set of $\HH$-primes of $\SL{n}[z^{\pm1}]$.  This approach is based on the corresponding quantum result outlined in \cite[Lemma II.5.16, Exercise II.5.H]{GBbook}.
\begin{lemma}\label{res:bijection_lemma_1}
Let $\rho_1: \HH \rightarrow Aut(\SL{n})$ be the homomorphism of groups induced by the action defined in \eqref{eq:action of HH on SLn} above, and $\rho_2: \HP \rightarrow Aut(\SL{n})$ be the homomorphism induced by the standard action defined in \eqref{eq:action of HP on SLn}.  Then $im(\rho_1) = im(\rho_2)$ and hence $\HH$-$Pspec(\SL{n}) = \HP$-$Pspec(\SL{n})$.
\end{lemma}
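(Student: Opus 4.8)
The plan is to show that the two subgroups $\mathrm{im}(\rho_1)$ and $\mathrm{im}(\rho_2)$ of $\mathrm{Aut}(\SL{n})$ coincide, after which the equality of $\HH$-stable and $\HP$-stable Poisson-prime ideals — and hence of the two stratifying spectra — is immediate, since an ideal is $G$-stable precisely when it is stable under every automorphism in the image of the relevant group homomorphism.

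First I would record explicitly, from \eqref{eq:action of HH on SLn}, the automorphism $\rho_1(h)$ attached to $h = (\alpha_1,\dots,\alpha_n,\beta_1,\dots,\beta_n) \in \HH$: it scales $x_{1j}$ by $\alpha_1\beta_j(\alpha_1\cdots\alpha_n\beta_1\cdots\beta_n)^{-1}$ and $x_{ij}$ by $\alpha_i\beta_j$ for $i\neq 1$. The containment $\mathrm{im}(\rho_2) \subseteq \mathrm{im}(\rho_1)$ is the easy direction: if $h \in \HP$ then $\alpha_1\cdots\alpha_n\beta_1\cdots\beta_n = 1$, so the extra factor in the $x_{1j}$-scaling disappears and $\rho_1(h) = \rho_2(h)$. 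For the reverse containment $\mathrm{im}(\rho_1) \subseteq \mathrm{im}(\rho_2)$, given an arbitrary $h \in \HH$ I would produce an $h' \in \HP$ with $\rho_1(h) = \rho_2(h')$. The natural candidate is to rescale one coordinate to kill the determinant-factor: set $d = \alpha_1\cdots\alpha_n\beta_1\cdots\beta_n$ and replace, say, $\alpha_1$ by $\alpha_1 d^{-1}$ (leaving all other coordinates unchanged) to get $h' = (\alpha_1 d^{-1}, \alpha_2, \dots, \beta_n)$. Then $h'$ has product of coordinates equal to $d\cdot d^{-1} = 1$, so $h' \in \HP$; and one checks directly that $\rho_2(h')$ scales $x_{1j}$ by $\alpha_1 d^{-1}\beta_j$ and $x_{ij}$ ($i\neq 1$) by $\alpha_i\beta_j$, which is exactly $\rho_1(h)$ read off from \eqref{eq:action of HH on SLn}. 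Hence $\mathrm{im}(\rho_1) = \mathrm{im}(\rho_2)$.

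With the images of the two homomorphisms identified, a Poisson ideal $I$ of $\SL{n}$ is stable under the $\HH$-action induced by $\rho_1$ if and only if it is fixed by every automorphism in $\mathrm{im}(\rho_1) = \mathrm{im}(\rho_2)$, i.e. if and only if it is stable under the $\HP$-action induced by $\rho_2$. In particular an $\HH$-Poisson-prime and an $\HP$-Poisson-prime are the same thing (recalling from \S\ref{ss:H action Poisson version} that, for these rationally-acting tori, Poisson $\HH$-primes are just $\HH$-stable Poisson ideals that are prime in the usual sense). This gives the asserted equality $\PHspec{\SL{n}} = \PHPspec{\SL{n}}$.

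I do not expect a serious obstacle here; the only thing to be slightly careful about is that $\rho_1$ and $\rho_2$ genuinely are group homomorphisms into $\mathrm{Aut}(\SL{n})$ (as opposed to $\mathrm{Aut}$ of the polynomial or Laurent-polynomial rings), which is why it is worth having Lemma~\ref{res:poisson iso sln to gln} in hand so that conjugation by $\theta$ transports the known rational $\HH$-action on $\GL{n}$ to a bona fide Poisson action on $\SL{n}[z^{\pm1}]$ restricting to $\SL{n}$. Everything else is the elementary torus bookkeeping sketched above, mirroring the quantum argument of \cite[Lemma~II.5.16, Exercise~II.5.H]{GBbook}.
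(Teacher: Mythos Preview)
Your proof is correct and is essentially identical to the paper's: both directions are handled the same way, and your element $h' = (\alpha_1 d^{-1}, \alpha_2, \dots, \beta_n)$ is precisely the paper's $h' = ((\alpha_2\cdots\alpha_n\beta_1\cdots\beta_n)^{-1}, \alpha_2, \dots, \beta_n)$ written in a slightly different form. Your additional remarks on why $\mathrm{im}(\rho_1) = \mathrm{im}(\rho_2)$ yields the equality of Poisson $\HH$-spectra are a reasonable expansion of what the paper states in one sentence.
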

\begin{proof}
Since $\HP\subset \HH$ and for $h \in \HP$ we have $\alpha_1\dots\alpha_n\beta_1\dots\beta_n =1$, it is easy to see that $\rho_1(h) = \rho_2(h)$ for all $h \in \HP$ and hence $im(\rho_2) \subseteq im(\rho_1)$.  Conversely, if $h \in \HH\backslash \HP$ then the action of $h$ on $\SL{n}$ is the same as the action of
\[h' = ((\alpha_2\dots\alpha_n\beta_1\dots\beta_n)^{-1},\alpha_2,\dots,\beta_n) \in \HP\]
and so $im(\rho_1) \subseteq im(\rho_2)$ as well.  Thus an ideal of $\SL{n}$ is fixed by $\HP$ if and only if it is fixed by the action of $\HH$ given in \eqref{eq: action of HH on SLnZ}.
\end{proof}

\begin{lemma}\label{res:bijection_lemma_2}
The mapping $\varphi: P \mapsto P[z^{\pm1}]$ defines a bijection between $\HH$-$Pspec(\SL{n})$ and $\HH$-$Pspec(\SL{n}[z^{\pm1}])$.
\end{lemma}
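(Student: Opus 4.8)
The plan is to follow the strategy of the quantum analogue \cite[Lemma~II.5.16, Exercise~II.5.H]{GBbook}, checking along the way that the extra Poisson structure is preserved; throughout I work with the $\HH$-action on $\SL{n}[z^{\pm1}]$ from \eqref{eq: action of HH on SLnZ} (equivalently \eqref{eq:action of HH on SLn} on $\SL{n}$), which is legitimate in view of Lemma~\ref{res:bijection_lemma_1}. Since $\SL{n}[z^{\pm1}]$ is Noetherian, I may freely identify ``Poisson-prime'' with ``prime Poisson ideal'' via \cite[Lemma~3.1]{Goodearl_Poisson}. The obvious candidate for the inverse of $\varphi$ is contraction, $Q \mapsto Q \cap \SL{n}$, so the argument splits into showing that both maps land in the right sets and that they are mutually inverse.

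First I would verify that $\varphi$ is well-defined. Let $P$ be a Poisson $\HH$-prime of $\SL{n}$; I claim $P[z^{\pm1}] = P \otimes k[z^{\pm1}]$ is a Poisson $\HH$-prime of $\SL{n}[z^{\pm1}]$. It is prime in the commutative sense because $\SL{n}[z^{\pm1}]/P[z^{\pm1}] \cong (\SL{n}/P)[z^{\pm1}]$ is a domain. It is a Poisson ideal: since $z$ is Poisson central and the bracket is a biderivation by the Leibniz rule \eqref{eq:Leibniz identity}, for $p \in P$, $a \in \SL{n}$ and $m,l \in \mathbb{Z}$ one has $\{pz^m, az^l\} = \{p,a\}z^{m+l} \in P[z^{\pm1}]$, and closure under bracketing with all of $\SL{n}[z^{\pm1}]$ follows by bilinearity. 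It is $\HH$-stable: since $\theta(z) = Det$ is an $\HH$-eigenvector, so is $z$, so for $h \in \HH$ with eigenvalue $\lambda_h$ on $z$ we get $h(pz^m) = \lambda_h^m\, h(p)\, z^m \in P[z^{\pm1}]$ whenever $h(p)\in P$. Injectivity of $\varphi$ is then immediate from $P[z^{\pm1}] \cap \SL{n} = P$.

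Next, given a Poisson $\HH$-prime $Q$ of $\SL{n}[z^{\pm1}]$, set $P := Q \cap \SL{n}$. Because the inclusion $\SL{n} \hookrightarrow \SL{n}[z^{\pm1}]$ is a map of Poisson algebras and is $\HH$-equivariant, $P$ is a Poisson $\HH$-prime of $\SL{n}$. The heart of the argument is the identity $Q = P[z^{\pm1}]$, for which only $\subseteq$ needs proof. Here the key observation is that an $\HH$-stable ideal of $\SL{n}[z^{\pm1}]$ is forced to be homogeneous for the $\mathbb{Z}$-grading by $z$-degree. Concretely, take $h_0 = (\alpha,1,\dots,1) \in \HH = (k^{\times})^{2n}$ with $\alpha \in k^{\times}$ of infinite multiplicative order (such $\alpha$ exists since $\mathrm{char}\,k = 0$): one checks from \eqref{eq:action of HH on SLn} that $h_0$ acts trivially on $\SL{n}$, while $h_0(z) = \theta^{-1}(h_0(Det)) = \alpha z$ because $h_0$ scales $Det$ by $\alpha_1\cdots\alpha_n\beta_1\cdots\beta_n = \alpha$. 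Now take $q = \sum_{i \in F} a_i z^i \in Q$ with $F \subset \mathbb{Z}$ finite and $a_i \in \SL{n}$; applying $h_0^m$ for $m = 0,1,\dots,|F|-1$ produces the elements $\sum_{i \in F} \alpha^{mi} a_i z^i \in Q$, and the generalized Vandermonde matrix $(\alpha^{mi})$ is invertible since the $\alpha^i$ ($i \in F$) are distinct, so each $a_i z^i \in Q$ and hence $a_i \in Q \cap \SL{n} = P$. Thus $q \in P[z^{\pm1}]$, giving $Q = P[z^{\pm1}] = \varphi(P)$, which establishes surjectivity and completes the bijection.

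The one point genuinely needing care is this homogeneity step: without the $\HH$-stability hypothesis the lemma is false (for instance $\langle z - \lambda\rangle$ with $\lambda \in k^{\times}$ is a prime of $\SL{n}[z^{\pm1}]$ that is not of the form $P[z^{\pm1}]$), so one must pin down an explicit $h_0 \in \HH$ acting trivially on $\SL{n}$ and by an element of infinite order on $z$, and then run the Vandermonde argument above. Everything else --- primeness, the Poisson-ideal conditions, and $\HH$-stability under extension and contraction --- is routine and parallels the quantum treatment in \cite{GBbook}.
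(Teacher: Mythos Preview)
Your proposal is correct and follows essentially the same route as the paper: both show the inverse is contraction $Q \mapsto Q \cap \SL{n}$, and both prove $Q = (Q\cap\SL{n})[z^{\pm1}]$ by exhibiting an element of $\HH$ (the paper takes $h = (2,1,\dots,1)$) that fixes $\SL{n}$ and scales $z$ by a non-root-of-unity, forcing $Q$ to be homogeneous in $z$-degree. The only cosmetic difference is that you invoke a Vandermonde matrix directly, whereas the paper obtains the same conclusion by an inductive subtraction argument (applying $f - 2^{-k_n}h.f$ to kill one term at a time); these are of course equivalent.
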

\begin{proof}
This proof is based on the non-commutative argument in \cite[Lemma~2.2]{quantumUFDs}.  Since $z$ is Poisson central and a $\HH$-eigenvector it is clear that $\varphi$ sends Poisson $\HH$-primes to Poisson $\HH$-primes and so $\varphi$ is well-defined.  

We claim that the inverse map is $Q \mapsto Q \cap \SL{n}$.  To prove this, we need to show that
\begin{equation}\label{eq:two equalitites of ideals for checking}\begin{aligned}
P[z^{\pm1}]\cap\SL{n} &= P \quad \forall P \in \HH\textrm{-}Pspec(\SL{n}) \\
(Q\cap \SL{n})[z^{\pm1}] &= Q \quad \forall Q \in \HH\textrm{-}Pspec(\SL{n}[z^{\pm1}])
\end{aligned}\end{equation}
and for this it will suffice to check the following statement: 
\begin{quote}For all $Q \in \HH$-$Pspec(\SL{n}[z^{\pm1}])$ and for all $f = f_1z^{k_1} + \dots + f_nz^{k_n} \in Q$, then $f_i \in Q \cap \SL{n}$ for all $i$.\end{quote}
The statement is clear when $n=1$, since $z$ is invertible in $\SL{n}[z^{\pm1}]$.  Now assume it is true for all sums of length $n-1$, and let
\[f = f_1z^{k_1} + \dots + f_nz^{k_n}\]
where we may assume without loss of generality that the $k_i$ are distinct and the $f_i \in \SL{n}$ for all $i$.  Let $h = (2,1,\dots,1) \in \HH$; observe from \eqref{eq: action of HH on SLnZ} and \eqref{eq:action of HH on SLn} that $h$ fixes all of $\SL{n}$ but acts on $z$ as multiplication by 2.  Since $Q$ is a $\HH$-stable ideal, we have
\[f - 2^{-k_n}h.f = \sum_{i=1}^n f_i(1-2^{k_i-k_n})z^{k_i} \in Q.\]
The final term in this sum is zero, leaving us with a sum of length $n-1$; by the inductive assumption, we therefore have $(1-2^{k_i-k_n})f_i \in Q$ for $1 \leq i \leq n-1$.  Since the $k_i$ are distinct and the $f_i$ are in $\SL{n}$, we can conclude that $f_i \in Q \cap \SL{n}$ for $1 \leq i \leq n-1$.

Now we have $f_nz^{k_n} = f - f_1z^{k_1} - \dots - f_{n-1}z^{k_{n-1}} \in Q$ and so $f_n \in Q \cap \SL{n}$ as required.

It is now easy to verify that the two equalities in \eqref{eq:two equalitites of ideals for checking} above are true, and the result follows.
\end{proof}
Now we are in a position to compare the $\HH$-primes of $\GL{n}$ and the $\HP$-primes of $\SL{n}$ directly, which we address in Proposition~\ref{res:bijection of Hprimes} next.  

Let $\pi: \ML{n} \rightarrow \SL{n}$ be the natural quotient map, which extends uniquely by localization theory to a Poisson homomorphism $\GL{n} \rightarrow \SL{n}$;  we will denote this map by $\pi$ as well.  The map $\theta$ continues to denote the isomorphism of Poisson algebras $\SL{n}[z^{\pm1}] \rightarrow \GL{n}$ from Lemma~\ref{res:poisson iso sln to gln}.

\begin{proposition}\label{res:bijection of Hprimes}
The mapping $P \mapsto \theta(P[z^{\pm1}])$ is a bijection of sets from the Poisson $\HP$-primes of $\SL{n}$ to the Poisson $\HH$-primes of $\GL{n}$, and the inverse map is given by $Q \mapsto \pi(Q)$.
\end{proposition}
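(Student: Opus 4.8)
The plan is to exhibit $P \mapsto \theta(P[z^{\pm1}])$ as the composite of three bijections that are already available, and then to check that the composite of their three inverses is exactly the contraction map $Q \mapsto \pi(Q)$.

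First I would assemble the chain. Lemma~\ref{res:bijection_lemma_1} identifies the Poisson $\HP$-primes of $\SL{n}$ (for the action \eqref{eq:action of HP on SLn}) with the Poisson $\HH$-primes of $\SL{n}$ (for the action \eqref{eq:action of HH on SLn}), so this step requires no work beyond recording it. Lemma~\ref{res:bijection_lemma_2} then gives a bijection $\varphi\colon P\mapsto P[z^{\pm1}]$ from the Poisson $\HH$-primes of $\SL{n}$ onto those of $\SL{n}[z^{\pm1}]$, with inverse $Q'\mapsto Q'\cap\SL{n}$ and satisfying $(Q'\cap\SL{n})[z^{\pm1}]=Q'$. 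Finally, the $\HH$-action on $\SL{n}[z^{\pm1}]$ in \eqref{eq: action of HH on SLnZ} was set up precisely so that $\theta$ is $\HH$-equivariant, and $\theta$ is a Poisson isomorphism by Lemma~\ref{res:poisson iso sln to gln}; hence $\theta$ and $\theta^{-1}$ transport Poisson $\HH$-primes in both directions, giving a bijection from the Poisson $\HH$-primes of $\SL{n}[z^{\pm1}]$ onto those of $\GL{n}$. Composing the three yields the desired bijection $P \mapsto \theta(P[z^{\pm1}])$.

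Next I would pin down the inverse. Running the three inverses backwards sends a Poisson $\HH$-prime $Q$ of $\GL{n}$ to $\theta^{-1}(Q)\cap\SL{n}$, so I must show this coincides with $\pi(Q)$. The crux is the identity $\pi\circ\theta=\mathrm{ev}_{z=1}$, the quotient of $\SL{n}[z^{\pm1}]$ by $(z-1)$: tracing $\theta$ on generators (Lemma~\ref{res:poisson iso sln to gln}) gives $\pi(\theta(z))=\pi(Det)=1$, $\pi(\theta(x_{1j}))=\pi(x_{1j}Det^{-1})=x_{1j}$, and $\pi(\theta(x_{ij}))=x_{ij}$ for $i\neq1$. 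Writing $\theta^{-1}(Q)=P[z^{\pm1}]$ with $P=\theta^{-1}(Q)\cap\SL{n}$ (legitimate by Lemma~\ref{res:bijection_lemma_2}, since $\theta^{-1}(Q)$ is a Poisson $\HH$-prime of $\SL{n}[z^{\pm1}]$), I then obtain
\[\pi(Q)=(\pi\circ\theta)\bigl(\theta^{-1}(Q)\bigr)=\mathrm{ev}_{z=1}\bigl(P[z^{\pm1}]\bigr)=P=\theta^{-1}(Q)\cap\SL{n},\]
because evaluating $z=1$ on $P[z^{\pm1}]=P\otimes k[z^{\pm1}]$ returns exactly $P$. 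This establishes $Q\mapsto\pi(Q)$ as the two-sided inverse of $P \mapsto \theta(P[z^{\pm1}])$.

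I do not expect a serious obstacle: once the three lemmas are in hand the argument is essentially bookkeeping. The one point needing genuine care — the \emph{main step}, such as it is — is verifying that $\pi\circ\theta$ is literally evaluation at $z=1$, since that is what upgrades the abstract composite inverse to the concrete map $Q\mapsto\pi(Q)$; here I would simply compute $\pi\circ\theta$ on the generators as above. I would also recall (it is already available from the discussion preceding the statement) that $\pi\colon\GL{n}\to\SL{n}$ is the unique Poisson-algebra extension of the quotient $\ML{n}\to\SL{n}=\ML{n}/\langle Det-1\rangle$ to the localization $\GL{n}=\ML{n}[Det^{-1}]$, which exists because $Det$ maps to the unit $1$ and which respects brackets by the formula \eqref{eq:extend Poisson bracket to localization} for the induced bracket on a localization.
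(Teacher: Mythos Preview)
Your proposal is correct and follows essentially the same approach as the paper: both compose Lemmas~\ref{res:bijection_lemma_1} and \ref{res:bijection_lemma_2} with the $\HH$-equivariance of $\theta$ to obtain the bijection. For the inverse, the paper simply cites the quantum analogue in \cite[Proposition~2.5]{GL1} and observes that the argument there (which works purely at the level of monomials and the $\HH$-action) carries over verbatim; your direct computation that $\pi\circ\theta=\mathrm{ev}_{z=1}$ and hence $\pi(Q)=\theta^{-1}(Q)\cap\SL{n}$ is a clean, self-contained way to fill in exactly what that citation is doing.
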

\begin{proof}
By Lemmas~\ref{res:bijection_lemma_1} and \ref{res:bijection_lemma_2}, the $\HP$-primes of $\SL{n}$ are in bijection with the $\HH$-primes of $\SL{n}[z^{\pm1}]$.  Further, it is clear from the definition of the $\HH$-action on $\SL{n}$ in \eqref{eq: action of HH on SLnZ} that $\theta$ commutes with the action of $\HH$, and so we easily obtain the promised bijection $\HP$-$Pspec(\SL{n}\rightarrow \HH$-$Pspec(\GL{n})$.  

In \cite[Proposition~2.5]{GL1}, it is proved that $Q \mapsto \pi(Q)$ is the inverse mapping to $P \mapsto \theta(P[z^{\pm1}])$ in the case of quantum $GL_n$ and $SL_n$; however, since their proof relies only on looking at the action of $\theta$ and $h \in \HH$ on monomials and makes no use of the $q$-commuting structure, we can observe that the same proof works without modification for the Poisson case.
\end{proof}

\section{Poisson primitive ideals}\label{s:poisson primitive ideals}
Once we have identified all of the $\HH$-primes in an algebra, the Stratification Theorem (Theorem~\ref{res:Stratification Theorem, quantum version}) gives us a way of understanding its prime and primitive ideals -- up to localization, at least.  By the Stratification Theorem we know that if $I_{\omega}$ is a $\HH$-prime in $\QGL{3}$, then the prime ideals in the stratum\nom{S@$spec_{\omega}(A)$}
\[spec_{\omega}(\QGL{3}) = \left\{P \in spec(\QGL{3}) : \bigcap_{h \in \HH} h(P) = I_{\omega}\right\}\]
correspond homeomorphically to the prime ideals in $Z\big(\qr{\QGL{3}}{I_{\omega}}\big[\mathcal{E}_{\omega}^{-1}\big]\big)$, where $\mathcal{E}_{\omega}$ denotes the set of all regular $\HH$-eigenvectors in $\QGL{3}/I_{\omega}$.

\begin{notation}
While the notation $\big(\qr{R}{I}\big)\big[E^{-1}\big]$ eliminates any possible ambiguity, the brackets are cumbersome and we will often simply write $\qr{R}{I}\big[E^{-1}\big]$ instead; this will always denote the localization of $R/I$ at the set $E \subset R/I$.
\end{notation}

Goodearl and Lenagan prove in \cite[\S3.2]{GL1} that we may replace $\mathcal{E}_{\omega}$ with a subset $E_{\omega}$, provided that $E_{\omega}$ is still an Ore set (in a Noetherian ring, this is equivalent to being a denominator set by \cite[Proposition~10.7]{GW1}) such that the localization is $\HH$-simple.  For each $\HH$-prime $I_{\omega}$, they construct an Ore set $E_{\omega}$ satisfying these properties which is generated by finitely many normal elements.  This allows them to compute the localizations and their centres explicitly, and hence pull back the generators of the primitive ideals in the localizations to generators in $\QGL{3}$ itself.

Our aim in this section is to build on the work of \cite{GL1} to obtain a situation where we can develop the quantum and Poisson results simultaneously.  We start by modifying the Ore sets of \cite{GL1} so that our localizations $\qr{\QGL{3}}{I_{\omega}}\big[E_{\omega}^{-1}\big]$ are always quantum tori of the form $k_{\mathbf{q}}[z_1^{\pm1},z_2^{\pm1},\dots, z_n^{\pm1}]$, i.e. localizations of quantum affine spaces at the set of all their monomials.  The correspondence between prime and primitive ideals of a quantum torus and Poisson prime/primitive ideals of its semi-classical limit is already well understood (see for example \cite{OhSymplectic,GLz2}), and combined with the following slight generalizations of the Poisson Stratification Theorem this allows us to easily pull back the results to $\GL{3}$.

\begin{proposition}\label{res:poisson strat generalization 1}
Let $R$ be a commutative Noetherian Poisson algebra upon which an algebraic torus $\HH = (k^{\times})^r$ acts rationally by Poisson automorphisms, and let $J$ be a Poisson $\HH$-prime in $R$.  Suppose that $E_J$ is a multiplicative set generated by $\HH$-eigenvectors in $R/J$ such that the localization $R_J := \qr{R}{J}\big[E_J^{-1}\big]$ is Poisson $\HH$-simple.  Then the stratum $Pspec_J(R) = \{P \in Pspec(R) : \bigcap_{h\in \HH} h(P) = J\}$ is homeomorphic to $Pspec(R_J)$ via localization and contraction.
\end{proposition}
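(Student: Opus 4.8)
The statement is a Poisson analogue of the ``localization and contraction'' half of the Stratification Theorem, but with the maximal Ore set $\mathcal{E}_J$ replaced by a smaller set $E_J$, and the claim is that shrinking the denominator set costs us nothing as long as Poisson $\HH$-simplicity of the localization is preserved. My plan is to reduce the statement directly to Theorem~\ref{res: stratification theorem, Poisson version}(i) rather than to reprove the homeomorphism from scratch. First I would fix notation: write $R_J^{\mathrm{big}} := \qr{R}{J}\big[\mathcal{E}_J^{-1}\big]$ for the localization at \emph{all} regular $\HH$-eigenvectors, for which Theorem~\ref{res: stratification theorem, Poisson version}(i) already gives a homeomorphism $Pspec_J(R) \cong Pspec(R_J^{\mathrm{big}})$ via localization and contraction. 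Since $E_J \subseteq \mathcal{E}_J$ (every $\HH$-eigenvector that is a unit mod $J$ is regular mod $J$ because $R/J$ is a commutative Poisson domain, being Poisson-prime in characteristic zero by \cite[Lemma~1.1]{Goodearl_Poisson}), there is a further localization map $R_J \to R_J^{\mathrm{big}}$, and this is in fact a localization of $R_J$ at the image of $\mathcal{E}_J$.

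The key step is then to show that this second localization map $R_J \to R_J^{\mathrm{big}}$ is an \emph{isomorphism}. The argument: $R_J$ is Poisson $\HH$-simple by hypothesis; the Poisson bracket extends uniquely to $R_J$ by \eqref{eq:extend Poisson bracket to localization}, and the $\HH$-action extends as well. Now take any $e \in \mathcal{E}_J$; its image in $R_J$ is a regular $\HH$-eigenvector, hence generates a proper Poisson $\HH$-stable ideal $eR_J$ (Poisson-stable because $e$ is an eigenvector and \eqref{eq:extend Poisson bracket to localization} combined with the Leibniz rule shows $\{eR_J, R_J\} \subseteq eR_J$ — this is essentially the computation that $\{e, r\}e^{-1} \in R_J$, i.e. $e$ is Poisson-normal once it is an eigenvector, cf. Lemma~\ref{res:Poisson normal elements}). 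By Poisson $\HH$-simplicity $eR_J = R_J$, so $e$ is already a unit in $R_J$. Therefore inverting the elements of $\mathcal{E}_J$ in $R_J$ changes nothing, i.e. $R_J \xrightarrow{\ \sim\ } R_J^{\mathrm{big}}$.

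With that isomorphism in hand the conclusion is immediate: $Pspec(R_J) \cong Pspec(R_J^{\mathrm{big}})$ canonically, and composing with the homeomorphism $Pspec_J(R) \cong Pspec(R_J^{\mathrm{big}})$ of Theorem~\ref{res: stratification theorem, Poisson version}(i) gives a homeomorphism $Pspec_J(R) \cong Pspec(R_J)$. I would then check that this composite is indeed ``localization and contraction'' as claimed: contraction along $R/J \hookrightarrow R_J$ followed by the comparison is compatible with contraction along $R/J \hookrightarrow R_J^{\mathrm{big}}$ because the triangle of localization maps commutes, so the explicit description of the homeomorphism is inherited from the full Stratification Theorem.

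The main obstacle, and the only place real work is needed, is the claim that a regular $\HH$-eigenvector generates a Poisson $\HH$-stable ideal in the localization — equivalently that regular eigenvectors are Poisson-normal after localizing. The $\HH$-stability is formal (eigenvectors generate homogeneous ideals for the induced $X(\HH)$-grading), but Poisson-normality requires knowing $\{e,r\} \in eR_J$ for all $r$; this should follow because $\{e,\cdot\}$ is a derivation and $e$ is an $\HH$-eigenvector with the bracket $\HH$-equivariant, forcing $\{e,r\}$ to lie in the right graded component, but one must argue it carefully — this is exactly the kind of lemma Goodearl proves in the quantum setting (normal eigenvectors) and which \cite{Goodearl_Poisson} should supply in the Poisson setting; if a clean citation is available I would quote it rather than reprove it.
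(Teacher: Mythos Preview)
Your reduction strategy has a real gap: the claim that $R_J \to R_J^{\mathrm{big}}$ is an \emph{isomorphism} is false in general, because regular $\HH$-eigenvectors need not be Poisson-normal (even after localizing). Take $R = k[x,y]$ with $\{x,y\} = xy$ and $\HH = k^{\times}$ acting diagonally by $t.(x,y) = (tx,ty)$; let $J = 0$ and $E_J$ the multiplicative set generated by $x$ and $y$. Then $R_J = k[x^{\pm1},y^{\pm1}]$ is Poisson $\HH$-simple (its only Poisson prime is $0$), but $x+y \in \mathcal{E}_J$ is not a unit in $R_J$, and $(x+y)R_J$ is not a Poisson ideal since $\{x+y,x\} = -xy$ is a unit. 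Your proposed justification --- that $\HH$-equivariance forces $\{e,r\}$ into ``the right graded component'' --- only gives $\{e,r\}$ the correct $\HH$-weight, which says nothing about membership in $eR_J$; there is no lemma in \cite{Goodearl_Poisson} or elsewhere asserting that eigenvectors are Poisson-normal, and the references to Lemma~\ref{res:Poisson normal elements} are misleading since that result is proved by hand for very specific elements.

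The paper's proof avoids this entirely: rather than comparing $R_J$ with $R_J^{\mathrm{big}}$, it argues directly that the set of Poisson primes $P/J$ with $\bigcap_h h(P/J) = 0$ coincides with the set of Poisson primes disjoint from $E_J$, then invokes ordinary localization theory. One direction is trivial (an element of $E_J$ in $P/J$ is a nonzero eigenvector, so the intersection is nonzero); for the other, if $\bigcap_h h(P/J) \neq 0$ then $P/J$ contains a nonzero Poisson $\HH$-prime, which must die in the Poisson $\HH$-simple localization $R_J$, forcing $P/J \cap E_J \neq \emptyset$. Your approach \emph{can} be salvaged if you weaken the goal from $R_J \cong R_J^{\mathrm{big}}$ to $Pspec(R_J) \cong Pspec(R_J^{\mathrm{big}})$: any Poisson prime $Q$ of $R_J$ has $\bigcap_h h(Q)$ a Poisson $\HH$-prime, hence zero, so $Q$ contains no eigenvector and in particular misses the image of $\mathcal{E}_J$ --- but note that this argument is essentially the paper's argument transported to $R_J$, so the detour through $R_J^{\mathrm{big}}$ buys nothing.
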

\begin{proof}
By standard ring theory (e.g. \cite[Theorem~10.20]{GW1}) there is an inclusion-preserving bijection given by extension and contraction between $\{P/J \in spec(R/J): P/J \cap E_J = \emptyset\}$ and $spec(R_J)$, and using the definition in \eqref{eq:extend Poisson bracket to localization} for the extension of a Poisson bracket to a localization it is easy to see that this restricts to a bijection on \textit{Poisson} primes.

We therefore need to prove that whenever $E_J$ satisfies the conditions of the proposition, we have an equality of sets
\begin{equation}\label{eq:equality of sets in stratification generalization}\{P/J \in Pspec(R/J) : \bigcap_{h \in \HH}h(P/J) = 0\} = \{P/J \in Pspec(R/J): P/J \cap E_J = \emptyset\};\end{equation}
this will be sufficient, since $Pspec_J(R)$ corresponds precisely to the first set in \eqref{eq:equality of sets in stratification generalization}.

If $P/J \in Pspec(R/J)$ satisfies $P/J \cap E_J \neq \emptyset$, then $P/J$ contains a $\HH$-eigenvector and it is clear that $\bigcap_{h \in \HH}h(P/J) \neq 0$.  Conversely, if $\bigcap_{h \in \HH}h(P/J) \neq 0$, then $P/J$ contains a non-trivial $\HH$-prime in $R/J$.  The ideal $P/J$ must therefore become trivial upon extension to $R_J = \qr{R}{J}\big[E_J^{-1}\big]$ since $R_J$ is Poisson $\HH$-simple, and so $P/J \cap E_J \neq \emptyset$.
\end{proof}
The Poisson Stratification Theorem also describes a homeomorphism between the Poisson primes of $\qr{R}{J}\big[\mathcal{E}_J^{-1}\big]$ (where $\mathcal{E}_J$ is the multiplicative set generated by all $\HH$-eigenvectors in $R/J$) and the primes of the Poisson centre $PZ\left(\qr{R}{J}\big[\mathcal{E}_J^{-1}\big]\right)$.  While it is routine to modify existing quantum proofs to replace $\mathcal{E}_J$ by $E_J$ in this result as well, we will not need this level of generality in this chapter.  As noted above, our localizations $\qr{R}{J}\big[E_J^{-1}\big]$ will always be semi-classical limits of quantum tori and so it suffices to use the following result by Oh.
\begin{proposition}\label{res:poisson strat generalization 2}
Let $R = k[z_1^{\pm1}, \dots, z_n^{\pm1}]$ be a commutative Laurent polynomial ring with a multiplicative Poisson bracket, i.e.
\[\{x_i. x_j\} = \lambda_{ij}x_ix_j \quad \lambda_{ij} \in k \textrm{ for all }i,j.\]
Then there is a homeomorphism between $Pspec(R)$ and $spec(PZ(R))$ given by contraction and extension, and this restricts to a homeomorphism $Pprim(R) \approx max(PZ(R))$.
\end{proposition}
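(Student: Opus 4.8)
\textbf{Proof plan for Proposition~\ref{res:poisson strat generalization 2}.} This is a known result of Oh; the plan is to cite it, but to sketch the argument for completeness since the statement is used repeatedly in what follows. First I would describe the Poisson centre explicitly. For $R = k[z_1^{\pm1},\dots,z_n^{\pm1}]$ with $\{z_i,z_j\} = \lambda_{ij}z_iz_j$, a monomial $z^{\mathbf{a}} = z_1^{a_1}\cdots z_n^{a_n}$ satisfies $\{z^{\mathbf{a}}, z_j\} = \left(\sum_i \lambda_{ij}a_i\right)z^{\mathbf{a}}z_j$, so $z^{\mathbf{a}}$ is Poisson central precisely when $\mathbf{a}$ lies in the kernel $N$ of the skew-symmetric integer matrix $(\lambda_{ij})$ (viewed as a map $\mathbb{Z}^n \to k^n$). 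One then checks that $PZ(R)$ is exactly the $k$-span of $\{z^{\mathbf{a}} : \mathbf{a}\in N\}$: any element of $PZ(R)$ decomposes into its monomial components, each of which must separately be Poisson central because the bracket acts diagonally on monomials and distinct monomials have distinct weights. Hence $PZ(R) \cong k[w_1^{\pm1},\dots,w_m^{\pm1}]$ is itself a Laurent polynomial ring, where $m = \mathrm{rank}(N)$ and the $w_i$ are monomials corresponding to a basis of $N$.

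Next I would establish the bijection on Poisson primes. The map is $P \mapsto P \cap PZ(R)$ (contraction) with inverse $\mathfrak{p} \mapsto \sqrt{\mathfrak{p}R}$ or more simply $\mathfrak{p}\mapsto \mathfrak{p}R$ if one arranges $PZ(R)$ to be a polynomial subring over which $R$ is free; the cleanest route is to exploit the fact that $R$ is a free $PZ(R)$-module and that the Poisson bracket makes $R$ ``$PZ(R)$-Poisson-simple'' in the appropriate sense. Concretely, after a monomial change of variables one can write $R = PZ(R)[u_1^{\pm1},\dots,u_{n-m}^{\pm1}]$ where the bracket among the $u_i$ is given by a non-degenerate skew matrix, so $R$ has no nontrivial Poisson ideals over $PZ(R)$; this is the standard ``Poisson torus is Poisson-simple modulo its centre'' fact. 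From Poisson-simplicity over the centre one deduces that every Poisson prime $P$ of $R$ is generated by its contraction $P\cap PZ(R)$, giving injectivity, and that $\mathfrak{p}R$ is Poisson prime for every prime $\mathfrak{p}$ of $PZ(R)$, giving surjectivity. Both maps are inclusion-preserving and mutually inverse, hence they are homeomorphisms for the Zariski topologies.

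Finally I would pin down the restriction to Poisson-primitive ideals. By the Poisson Dixmier--Moeglin equivalence (Theorem~\ref{res: dixmier moeglin, Poisson version}), a Poisson prime $P$ is Poisson-primitive iff $PZ(\mathrm{Fract}(R/P))$ is algebraic over $k$. Under the correspondence $P \leftrightarrow \mathfrak{p} = P\cap PZ(R)$, one has $R/P \cong (PZ(R)/\mathfrak{p}) \otimes_{\dots}$ a Poisson torus that is Poisson-simple over its centre $PZ(R)/\mathfrak{p} = \mathrm{Fract}$-wise the field $k(PZ(R)/\mathfrak{p})$; a short computation shows $PZ(\mathrm{Fract}(R/P))$ equals the fraction field of $PZ(R)/\mathfrak{p}$. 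This is algebraic over $k$ exactly when $\mathfrak{p}$ is a maximal ideal of $PZ(R)$ (using that $k$ is algebraically closed, so $\mathrm{max}(PZ(R))$ consists of the ideals with residue field $k$). Therefore $P$ is Poisson-primitive iff $\mathfrak{p}\in\mathrm{max}(PZ(R))$, which gives the asserted homeomorphism $Pprim(R) \approx \mathrm{max}(PZ(R))$.

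\textbf{Main obstacle.} The only genuinely delicate point is the Poisson-simplicity of $R$ over $PZ(R)$ --- i.e.\ that after extracting the Poisson centre the ``remaining'' Poisson torus has no nontrivial Poisson ideals. This requires the monomial change of coordinates diagonalizing the skew form $(\lambda_{ij})$ over $\mathbb{Q}$ (or $\mathbb{Z}$ up to finite index) and a direct check that a Poisson torus with non-degenerate bracket matrix is Poisson-simple; this is where one must be careful about whether one works over $\mathbb{Z}$ or $\mathbb{Q}$ and about the fact that fractional monomials are not available, though passing to a finite-index sublattice and noting it does not change the radical of an ideal handles it. Everything else is bookkeeping with monomial weights. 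Since this is all in the literature (Oh, \cite{OhSymplectic}; see also \cite{GLz2}), I would state the proposition with a reference and only indicate the computation of $PZ(R)$ explicitly, as that is the part used in the sequel.
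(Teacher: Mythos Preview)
Your proposal is correct and aligns with the paper's approach: the paper's proof is simply the citation ``\cite[Lemma~2.2, Corollary~2.3]{OhSymplectic}'' with no further argument. Your sketch of Oh's proof is accurate and more detailed than what the paper provides; the only minor remark is that invoking Theorem~\ref{res: dixmier moeglin, Poisson version} for the primitive part presupposes an $\mathcal{H}$-action and finitely many $\mathcal{H}$-primes, which are not part of the hypotheses here---Oh's Corollary~2.3 handles $Pprim(R)\approx \max(PZ(R))$ directly without that machinery, so it is cleaner to cite that rather than route through the Poisson Dixmier--Moeglin equivalence.
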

\begin{proof}
\cite[Lemma~2.2, Corollary~2.3]{OhSymplectic}.
\end{proof}
Now suppose that $R$ is a commutative affine Noetherian Poisson $k$-algebra which has a rational $\HH$-action and only finitely many Poisson $\HH$-primes, and suppose further that for a $\HH$-prime $J$ there is a multiplicative set of $\HH$-eigenvectors in $R/J$ such that the localization $\qr{R}{J}\big[E_J^{-1}\big]$ has the form given in Proposition~\ref{res:poisson strat generalization 2}.  The Dixmier-Moeglin equivalence (Theorem~\ref{res: dixmier moeglin, Poisson version}) applies to algebras of this type, and so we also obtain a homeomorphic correspondence between the Poisson primitive ideals in the stratum corresponding to $J$ and the maximal ideals of $PZ\left(\qr{R}{J}\big[E_J^{-1}\big]\right)$.

\subsection{The quantum case}\label{ss:ore sets, quantum,etc}
We begin by summarising the work of Goodearl and Lenagan in \cite{GL1}, which allows us to set up the appropriate notation and present the results in a convenient form for transferring to the Poisson case.

The original Ore sets from \cite[Figure~3]{GL1} are reproduced in Table~\ref{fig:Ore set gens,GL version} for the reader's convenience.  Let $\omega = (\omega_{+},\omega_{-}) \in S_3 \times S_3$; the Ore set $E_{\omega}$ corresponding to the ideal $I_{\omega}$ is generated by $E_{\omega_{+}} \cup E_{\omega_{-}}$ from Table~\ref{fig:Ore set gens,GL version}.  The elements in each $E_{\omega_{+}}$ are viewed as coset representatives in the factor ring $\QGL{3}/I_{\omega_{+},321}$, since for any $\omega_{-} \in S_3$ we have $I_{\omega_{+},321} \subseteq I_{\omega_{+},\omega_{-}}$; similarly, the elements of $E_{\omega_{-}}$ are viewed as coset representatives in $\QGL{3}/I_{321,\omega_{-}}$.

\begin{table}
\centering
\begin{tabular}{c|cc}
$\omega$ & $E_{\omega_{+}}$ & $E_{\omega_{-}}$ \\
\hline
321 & \ $X_{31}$, $[\wt{1}|\wt{3}]_q$\vphantom{$\int^{\int}$}& $[\wt{3}|\wt{1}]_q$, $X_{13}$ \\
231 & $X_{21}$, $X_{32}$& $[\wt{2}|\wt{1}]_q$, $X_{13}$\\
312 & $X_{31}$, $[\wt{2}|\wt{3}]_q$& $X_{23}$, $X_{12}$\\
132 & $X_{11}$, $X_{32}$, $[\wt{1}|\wt{1}]_q$& $[\wt{1}|\wt{1}]_q$, $X_{23}$, $X_{11}$\\
213 & $X_{21}$, $[\wt{3}|\wt{3}]_q$, $X_{33}$& $X_{33}$, $X_{12}$, $[\wt{3}|\wt{3}]_q$\\
123 & $X_{11}$, $X_{22}$, $X_{33}$& $X_{11}$, $X_{22}$, $X_{33}$
\end{tabular}
\caption{Original generators for Ore sets in $\QGL{3}$ (see \cite[Figure~3]{GL1}).}\label{fig:Ore set gens,GL version}
\end{table}

These Ore sets satisfy all of the required properties: the induced action of $\HH$ on the localization is rational, the localization map $\QGL{3}/I_{\omega} \rightarrow \qr{\QGL{3}}{I_{\omega}}\big[E_{\omega}^{-1}\big]$ is always injective, and the localization is $\HH$-simple (see \cite[\S3.2]{GL1}).

The generators in Table~\ref{fig:Ore set gens,GL version} have also been chosen to exploit the symmetries induced $\tau$, $\rho$ and $S$: as discussed in \cite[\S3.3]{GL1}, in most cases it is immediately clear that a map $I_{\omega_1} \rightarrow I_{\omega_2}$ in Figure~\ref{fig:H_primes_nice} will also map the corresponding Ore set $E_{\omega_1}$ to $E_{\omega_2}$, and hence induce (anti-)isomorphisms of the localizations
\[\qr{\QGL{3}}{I_{\omega_1}}\big[E_{\omega_1}^{-1}\big] \longrightarrow \qr{\QGL{3}}{I_{\omega_2}}\big[E_{\omega_2}^{-1}\big].\]
These symmetries become less obvious when $\omega_{\pm} = 231$ or 312, so our first aim will be to modify the generators of these sets slightly (without changing the overall Ore set) in order to make it clear that these symmetries do actually induce (anti-)isomorphisms in these cases.

First we observe by direct calculation that
\begin{align}\label{eq:eqn1 for Ore set changes}[\wt{2}|\wt{3}]_q[\wt{1}|\wt{2}]_q &= Det_qX_{31} - X_{11}[\wt{1}|\wt{3}]_qX_{33}\\
\label{eq:eqn 2 for Ore set changes}[\wt{3}|\wt{2}]_q[\wt{2}|\wt{1}]_q &= Det_qX_{13} - X_{11}[\wt{3}|\wt{1}]_qX_{33}\end{align}
We would like to replace $X_{31}$ by $[\wt{1}|\wt{2}]_q$ in the set of generators for $E_{312_{+}}$ from Table~\ref{fig:Ore set gens,GL version}.  Since we are viewing elements of $E_{312_{+}}$ as coset representatives modulo $I_{312,321} = \langle [\wt{1}|\wt{3}]_q\rangle$ as explained above, by reducing \eqref{eq:eqn1 for Ore set changes} mod $[\wt{1}|\wt{3}]_q$ it is clear that we may substitute $[\wt{1}|\wt{2}]_q$ for $X_{31}$ in the generating set for $E_{312_{+}}$ without changing the Ore set at all.  Similarly, we may replace $X_{13}$ by $[\wt{3}|\wt{2}]_q$ in $E_{231_{-}}$.  

We may therefore take the elements in Table~\ref{fig:Ore set gens} as the generators for our Ore sets instead of those in Table~\ref{fig:Ore set gens,GL version}, where $E_{\omega}$ is the multiplicative set generated by $E_{\omega_{+}}\cup E_{\omega_{-}}$ as before.

\begin{table}
\centering

\begin{tabular}{c|cc}
$\omega$ & $E_{\omega_{+}}$ & $E_{\omega{-}}$ \\
\hline
321 & \ $X_{31}$, $[\wt{1}|\wt{3}]_q$ \vphantom{$\int^{\int}$}& $[\wt{3}|\wt{1}]_q$, $X_{13}$ \\
231 & $X_{21}$, $X_{32}$& $[\wt{2}|\wt{1}]_q$, $[\wt{3}|\wt{2}]_q$\\
312 & $[\wt{1}|\wt{2}]_q$, $[\wt{2}|\wt{3}]_q$& $X_{23}$, $X_{12}$\\
132 & $X_{11}$, $X_{32}$, $[\wt{1}|\wt{1}]_q$& $[\wt{1}|\wt{1}]_q$, $X_{23}$, $X_{11}$\\
213 & $X_{21}$, $[\wt{3}|\wt{3}]_q$, $X_{33}$& $X_{33}$, $X_{12}$, $[\wt{3}|\wt{3}]_q$\\
123 & $X_{11}$, $X_{22}$, $X_{33}$& $X_{11}$, $X_{22}$, $X_{33}$
\end{tabular}
\caption{Modified generators for Ore sets in $\QGL{3}$.}\label{fig:Ore set gens}
\end{table}

We now obtain the following equalities (based on \cite[\S3.3]{GL1}) with no restriction on $\omega_{+}$ or $\omega_{-}$:
\begin{equation}\label{eq:maps between Ore sets, quantum}\begin{aligned}
\tau(E_{y,z}) &= E_{z^{-1},y^{-1}} \\
S(E_{y,z}) &= E_{y^{-1},z^{-1}} \\
\rho(E_{y,z}) &= E_{w_0 y^{-1} w_0,w_0 z^{-1} w_0}
\end{aligned}\end{equation}
As before, $w_0$ denotes the transposition $(13) \in S_3$.

The arrangement of maps between $\HH$-primes in Figure~\ref{fig:H_primes_nice} have been chosen to be compatible with \eqref{eq:maps between Ore sets, quantum}, so whenever there is a map from $\omega_1$ to $\omega_2$ in Figure~\ref{fig:H_primes_nice} this induces an isomorphism or anti-isomorphism 
\[\qr{\QGL{3}}{I_{\omega_1}}\big[E_{\omega_1}^{-1}\big] \rightarrow \qr{\QGL{3}}{I_{\omega_2}}\big[E_{\omega_2}^{-1}\big].\]

When considering the structure of the localization $\qr{\QGL{3}}{I_{\omega}}\big[E_{\omega}^{-1}\big]$ and its centre, it now suffices to consider one example from each orbit in Figure~\ref{fig:H_primes_nice} since the other cases in the same orbit can easily be obtained by applying the appropriate (anti-)isomorphisms.

We will now make two final changes to these Ore sets, to ensure that the generating sets are as simple as possible and that the localization we obtain is a quantum torus.  First, whenever the determinant $Det_q$ decomposes as a product $X_{11}[\wt{1}|\wt{1}]_q$ or $X_{11}X_{22}X_{33}$ modulo a $\HH$-prime $I_{\omega}$, it is redundant to include these factors in the Ore set since they are already invertible, so we remove them from our generating sets for simplicity.  Second, when computing the centres of each localization in \cite[\S4]{GL1}, Goodearl and Lenagan first invert up to 4 additional elements in order to obtain a quantum torus and hence simplify the computation of the centres; we will add these elements to our Ore sets as well.

These changes are summarised in Figure~\ref{fig:Ore_sets_for_tori}.

\begin{notation}
For the remainder of the chapter, $E_{\omega}$\nom{E@$E_{\omega}$ (quantum)} will denote the multiplicative set generated by the elements in Figure~\ref{fig:Ore_sets_for_tori} in the row corresponding to $I_{\omega}$, which are viewed as elements in the factor ring $\QGL{3}/I_{\omega}$.  Ore sets for the remaining 24 $\HH$-primes can be obtained by applying the appropriate combination of $\tau$, $\rho$ and $S$ from Figure~\ref{fig:H_primes_nice}.  In order to simplify the notation, we define\nom{A@$A_{\omega}$}
\begin{equation}\label{eq:notation for localization, quantum case}
A_{\omega}:=\qr{\QGL{3}}{I_{\omega}}\big[E_{\omega}^{-1}\big].
\end{equation} 
\end{notation}

Based on the computations in \cite[\S4]{GL1}, Figure~\ref{fig:localizations} lists the generators of the quantum torus $A_{\omega}$ for one example of $\omega$ from each orbit defined in Figure~\ref{fig:H_primes_nice}.  As always, generators for the algebras $A_{\omega}$ not listed in this figure can be obtained using $\tau$, $\rho$ and $S$ as appropriate, and the $q$-commuting relations between pairs of generators in a given ring $A_{\omega}$ can easily be computed using the relations in $\QML{3}$ and deleting any terms which appear in the ideal $I_{\omega}$.  

We also reproduce in Figure~\ref{fig:centres} the generators for the centres $Z(A_{\omega})$, which appear in \cite[Figure~5]{GL1}.  Observe that for $\omega = (123,123)$, the image of $Det_q$ in $\QGL{3}/I_{\omega}$ is $Det_q = X_{11}X_{22}X_{33}$ and the centre is generated by $X_{11}$, $X_{22}$ and $X_{33}$; we can therefore replace (for example) $X_{33}$ by $Det_q$ in the list of generators, and this we shall do.  We make a similar change when $\omega = (132,132)$ or $(123,132)$, so that $Det_q$ appears as a generator of the centre in all 36 cases; this will make it simpler to transfer our results to $\SL{3}$ in future sections.

It is the description of the centres $Z(A_{\omega})$ which are of the most use to us: they are commutative Laurent polynomial rings, and so when $k$ is algebraically closed the maximal ideals of a given algebra $Z(A_{\omega}) = k[Z_1^{\pm1}, \dots, Z_n^{\pm1}]$ are precisely those of the form
\begin{equation}\label{eq:maximal ideal in centre, quantum}\mathfrak{m}_{\lambda} = \langle Z_1 - \lambda_1, \dots, Z_n - \lambda_n\rangle, \quad \lambda = (\lambda_1, \dots, \lambda_n) \in k^{\times}, \ 1 \leq i \leq n.\end{equation}
From Figure~\ref{fig:centres}, we can observe that each $Z_i$ has the form $E_iF_i^{-1}$, where $E_i$ and $F_i$ are both normal elements of $\QGL{3}/I_{\omega}$.  The key result of \cite{GL1} is that for each $\omega \in S_3 \times S_3$ and each maximal ideal \eqref{eq:maximal ideal in centre, quantum} in $Z(A_{\omega})$, we have
\[\mathfrak{m}_{\lambda} \cap \qr{\QGL{3}}{I_{\omega}} = \langle E_1 - \lambda_1F_1, \dots, E_n - \lambda_nF_n\rangle,\]
(see \cite[\S5]{GL1}).  By the Stratification Theorem these describe all of the primitive ideals in $\QGL{3}$.

\subsection{From quantum to Poisson}\label{ss:quantum to Poisson}

Our eventual aim is to show that there is a natural bijection between $prim(\QGL{3})$ and $Pprim(\GL{3})$, and similarly for $SL_3$.  The next step is therefore to obtain a description of the Poisson primitive ideals in algebra $\GL{3}$; however, rather than simply repeat the analysis of \cite{GL1} and replace ``quantum'' by ``Poisson'' throughout, we will take a shortcut using Proposition~\ref{res:localization and SCL commute} and the close relationship between quantum and Poisson tori originally described in \cite{OhSymplectic}.

We will start by checking that the Ore sets $E_{\omega}$ lift to Ore sets in the formal $k[t^{\pm1}]$-algebra that governs the deformation process.  Using this, we will show that by taking the semi-classical limit of the quantum tori appearing in Figure~\ref{fig:localizations}, we obtain the same algebras as if we had localized the Poisson algebras $\GL{3}/I_{\omega}$ at the sets in Figure~\ref{fig:Ore_sets_for_tori} (now viewed as elements of the corresponding Poisson algebra).  This will give us Poisson $\HH$-simple localizations $\qr{\GL{3}}{I_{\omega}}\big[E_{\omega}^{-1}\big]$ with a structure which is already well understood from the quantum case, and we may use these algebras to describe the Poisson-prime and Poisson-primitive ideals of $\GL{3}$.

Recall from Definition~\ref{def:algebra B for quanum matrices} that $\RR$ is the $k[t^{\pm1}]$-algebra on 9 generators $Y_{ij}$, $1 \leq i,j \leq 3$ such that $\RR/(t-q)\RR \cong \QML{3}$ and $\RR/(t-1)\RR \cong \ML{3}$.  By a slight abuse of notation we will also denote by $I_{\omega}$ the ideals in $\RR$ corresponding to the 36 ideals in Figure~\ref{fig:H_primes_gens}, obtained by replacing $X_{ij}$ by $Y_{ij}$ and $[\wt{i}|\wt{j}]_q$ by $[\wt{i}|\wt{j}]_t$ in the generating sets.  

Since we have expanded our Ore sets $E_{\omega}$ to include elements which are not normal in $\GL{3}/I_{\omega}$, we have to work slightly harder to verify that the corresponding multiplicative sets in $\RR/I_{\omega}$ are also Ore sets.  We will approach this in a roundabout manner, by constructing iterated Ore extensions with exactly the properties that $\RR/I_{\omega}[E_{\omega}^{-1}]$ would have if it exists; hence by the universality of localization this algebra \textit{is} $\RR/I_{\omega}[E_{\omega}^{-1}]$ and $E_{\omega}$ must be an Ore set.

The following two results encapsulate the process we will use.

\begin{lemma}\label{res:extending maps to localizations}
Let $\alpha$ be an endomorphism and $\delta$ an $\alpha$-derivation on a ring $R$, and suppose that $X$ is an Ore set in $R$.  If $\alpha$ extends to an endomorphism of $R[X^{-1}]$ then $\delta$ extends to an $\alpha$-derivation of $R[X^{-1}]$.
\end{lemma}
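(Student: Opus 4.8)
The plan is to extend $\delta$ to $R[X^{-1}]$ by the only formula that can possibly work, namely the ``quotient rule'' forced on us by the Leibniz identity, and then verify that this is well-defined and indeed an $\alpha$-derivation. First I would record the necessary preliminary fact: if $x \in X$, then in $R[X^{-1}]$ the element $\alpha(x)$ is a unit. This is exactly where the hypothesis that $\alpha$ extends to an endomorphism of $R[X^{-1}]$ is used — an endomorphism of $R[X^{-1}]$ must send the units $x^{-1}$ to units, so $\alpha(x)^{-1}$ exists in $R[X^{-1}]$ for all $x \in X$. (Note that $\alpha(X)$ need not be contained in $X$, so this genuinely needs the extendability assumption and not just that $X$ is an Ore set.)

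Next I would define, for $x \in X$, the value $\delta(x^{-1}) := -\alpha(x)^{-1}\delta(x)x^{-1}$, which is the unique choice consistent with applying $\delta$ to the relation $x^{-1}x = 1$ using the twisted Leibniz rule $\delta(ab) = \alpha(a)\delta(b) + \delta(a)b$. More generally, for $r \in R$ and $x \in X$ one is forced to set
\[
\delta(rx^{-1}) := \alpha(r)\delta(x^{-1}) + \delta(r)x^{-1} = -\alpha(rx^{-1})\delta(x)x^{-1} + \delta(r)x^{-1},
\]
and since every element of $R[X^{-1}]$ can be written in the form $rx^{-1}$ with $r \in R$, $x \in X$ (using the right Ore condition), this prescribes $\delta$ on all of $R[X^{-1}]$. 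The key step is then the well-definedness check: if $rx^{-1} = r'x'^{-1}$ in $R[X^{-1}]$, one must show the two formulas agree. The cleanest way is to reduce to a common denominator: given two representations, use the Ore condition to find $s, s' \in R$ with $xs = x's' \in X$, so $r s = r' s'$ in $R$ as well (after clearing), and then a direct computation with the twisted Leibniz rule on $rx^{-1} = (rs)(xs)^{-1}$ shows the value of $\delta$ is independent of the representative. This is the main obstacle — it is a somewhat fiddly computation with $\alpha$-twists and one must be careful about left versus right fractions — but it is entirely routine and is the standard argument used to extend derivations to localizations (cf. the construction of Ore extensions of localizations in \cite[Chapter~2]{GW1}).

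Finally, with $\delta$ well-defined on $R[X^{-1}]$, I would verify that it is additive and satisfies $\delta(ab) = \alpha(a)\delta(b) + \delta(a)b$ for all $a,b \in R[X^{-1}]$; additivity is immediate from the formula once well-definedness is known, and the twisted Leibniz identity follows by writing $a = rx^{-1}$, $b = r'x'^{-1}$, again passing to a common denominator, and comparing both sides using the Leibniz identity already known to hold on $R$ and the definition of $\delta$ on inverses. That $\delta$ restricts to the original $\delta$ on $R \subseteq R[X^{-1}]$ is built into the formula (take $x = 1$). This completes the extension, and it is automatically the unique $\alpha$-derivation of $R[X^{-1}]$ restricting to $\delta$, since its values on $R$ and on $x^{-1}$ are forced.
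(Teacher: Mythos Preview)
Your proposal is correct and follows the same approach as the paper: derive the forced formula $\delta(x^{-1}) = -\alpha(x)^{-1}\delta(x)x^{-1}$ from the Leibniz rule applied to $xx^{-1}=1$ (or $x^{-1}x=1$), and observe that this makes sense precisely because $\alpha(x)^{-1}$ exists in $R[X^{-1}]$ by the extendability hypothesis. In fact you are more thorough than the paper, which simply records this formula and asserts that ``$\delta$ extends as required'' without carrying out the well-definedness or Leibniz-rule verifications you outline.
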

\begin{proof}
If $\alpha$ extends to $R[X^{-1}]$, then $\alpha(x)^{-1}$ is defined for all $x \in X$.  If $\delta$ also extended to $R[X^{-1}]$ then it would have to satisfy the following equality for any $x \in X$:
\[0 = \delta(1) = \delta(xx^{-1}) = \alpha(x)\delta(x^{-1}) + \delta(x)x^{-1},\]
and hence 
\[\delta(x^{-1}) = -\alpha(x)^{-1}\delta(x)x^{-1}\]
is uniquely determined.  Since $\alpha(x)^{-1}$ exists by assumption, this is well-defined and $\delta$ extends as required.
\end{proof}
\begin{corollary}\label{res:extending ore extensions to localizations}
If $R[z;\alpha,\delta]$ is an Ore extension of a ring $R$ and $X$ is an Ore set in $R$, then the extension $R[X^{-1}][z;\alpha,\delta]$ exists (with the natural extension of $\alpha$ and $\delta$ to $R[X^{-1}]$) if and only if $\alpha(x)^{-1}$ is defined for each $x \in X$.
\end{corollary}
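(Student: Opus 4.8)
\textbf{Proof proposal for Corollary~\ref{res:extending ore extensions to localizations}.}

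The plan is to derive this as a direct consequence of Lemma~\ref{res:extending maps to localizations} together with the universal property of Ore localization. First I would observe that the condition ``$\alpha(x)^{-1}$ is defined for each $x \in X$'' is precisely what is needed for $\alpha$ to extend to an endomorphism of $R[X^{-1}]$: indeed, the composite $R \xrightarrow{\alpha} R \hookrightarrow R[X^{-1}]$ sends each $x \in X$ to an element which is invertible in $R[X^{-1}]$ exactly when $\alpha(x)^{-1}$ exists there, so by the universal property of localization this composite factors (uniquely) through $R[X^{-1}]$, giving the extension $\widetilde{\alpha}$. Conversely, if $\alpha$ extends to $R[X^{-1}]$ then for $x \in X$ the element $x$ is a unit in $R[X^{-1}]$ and $\widetilde{\alpha}$ is a ring homomorphism, so $\widetilde{\alpha}(x) = \alpha(x)$ is a unit, i.e. $\alpha(x)^{-1}$ is defined. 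Thus ``$\alpha(x)^{-1}$ defined for all $x \in X$'' $\iff$ ``$\alpha$ extends to $R[X^{-1}]$''.

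Next, assuming this condition holds, Lemma~\ref{res:extending maps to localizations} gives a unique extension $\widetilde{\delta}$ of $\delta$ to an $\widetilde{\alpha}$-derivation of $R[X^{-1}]$. I would then simply form the Ore extension $R[X^{-1}][z;\widetilde{\alpha},\widetilde{\delta}]$, which exists for any ring equipped with an endomorphism and a compatible derivation. It remains only to check that this ring has the universal property identifying it with a localization of $R[z;\alpha,\delta]$ at the image of $X$: there is an obvious ring homomorphism $R[z;\alpha,\delta] \to R[X^{-1}][z;\widetilde{\alpha},\widetilde{\delta}]$ extending $R \hookrightarrow R[X^{-1}]$ and sending $z \mapsto z$ (this respects the commutation relation $zr = \alpha(r)z + \delta(r)$ since $\widetilde{\alpha}$ and $\widetilde{\delta}$ restrict to $\alpha$ and $\delta$ on $R$), it sends every element of $X$ to a unit, and any ring homomorphism out of $R[z;\alpha,\delta]$ inverting $X$ factors through it because elements of $R[X^{-1}][z;\widetilde{\alpha},\widetilde{\delta}]$ can be written using only $R$, $z$, and inverses of elements of $X$. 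Hence $X$ is an Ore set in $R[z;\alpha,\delta]$ and the localization is $R[X^{-1}][z;\widetilde{\alpha},\widetilde{\delta}]$ as claimed.

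I expect the only mildly delicate point to be the bookkeeping in the universality argument — specifically, checking that the natural map from $R[z;\alpha,\delta]$ really does satisfy the defining universal property of the Ore localization $R[z;\alpha,\delta][X^{-1}]$, so that one may legitimately conclude that $X$ is an Ore set there rather than merely that some ring inverting $X$ exists. This is routine but is the place where care is needed; everything else is a formal consequence of Lemma~\ref{res:extending maps to localizations} and the standard universal property of localization. In the application that follows in the paper, $R$ will be a factor $\RR/I_\omega$, $X$ the image of a generating set of $E_\omega$, and one would apply the corollary iteratively, adjoining one generator of $\RR/I_\omega$ at a time (each step being an Ore extension over the previous ring with the $x_{ij}$ already inverted), to conclude that $E_\omega$ is an Ore set in $\RR/I_\omega$ and that $\RR/I_\omega[E_\omega^{-1}]$ exists with the expected structure.
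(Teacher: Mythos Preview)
Your proposal is correct and its core matches the paper's argument exactly: the Ore extension $R[X^{-1}][z;\alpha,\delta]$ exists if and only if $\alpha$ extends to an endomorphism of $R[X^{-1}]$ and $\delta$ extends to an $\alpha$-derivation, and by Lemma~\ref{res:extending maps to localizations} this happens precisely when $\alpha(x)^{-1}$ is defined for each $x\in X$. That two-line argument is all the paper gives.

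Your second and third paragraphs go beyond what the corollary actually claims. The statement is only about the \emph{existence} of the Ore extension over the localized base ring $R[X^{-1}]$; it does not assert that this ring is the localization of $R[z;\alpha,\delta]$ at $X$, nor that $X$ is an Ore set in $R[z;\alpha,\delta]$. The universality argument you sketch is precisely the reasoning the paper deploys \emph{afterwards}, in Proposition~\ref{res:Ore sets lift to formal deformation ring}, where it builds the iterated Ore extension explicitly and then invokes universality of localization to conclude that $F_{\omega}$ is an Ore set. So your extra material is correct and useful, just misplaced: it belongs to the application rather than to the corollary itself.
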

\begin{proof}
The extension $R[X^{-1}][z;\alpha,\delta]$ exists if and only if $\alpha$ is an endomorphism of $R[X^{-1}]$ and $\delta$ is an $\alpha$-derivation of $R[X^{-1}]$.  By Lemma~\ref{res:extending maps to localizations}, this happens if and only if $\alpha$ is defined on $X^{-1}$.
\end{proof}

Let $F_{\omega}$ denote the multiplicative set generated in $\RR/I_{\omega}$ by taking the corresponding set of generators from column 3 of Figure~\ref{fig:Ore_sets_for_tori} and applying the rewriting rule $X_{ij} \mapsto Y_{ij}$, $[\wt{i}|\wt{j}]_q \mapsto [\wt{i}|\wt{j}]_t$.  We are now in a position to verify that the $F_{\omega}$ are indeed Ore sets in $\RR/I_{\omega}$.  We begin by considering our usual 12 cases, that is the ones listed explicitly in Figure~\ref{fig:Ore_sets_for_tori}.

\begin{proposition}\label{res:Ore sets lift to formal deformation ring}
For the 12 examples of $F_{\omega}$ induced by the elements listed in Figure~\ref{fig:Ore_sets_for_tori}, $F_{\omega}$ is an Ore set in $\RR/I_{\omega}$.
\end{proposition}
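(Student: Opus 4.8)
\textbf{Proof plan for Proposition~\ref{res:Ore sets lift to formal deformation ring}.}

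The plan is to exploit the fact that each localized quantum torus $A_\omega$ listed in Figure~\ref{fig:localizations} has been obtained from $\QGL{3}/I_\omega$ by inverting finitely many elements, and to show that the same sequence of inversions can be carried out one generator at a time inside $\RR/I_\omega$. The point is that $\RR/I_\omega$ is (after a suitable reordering of the generators modulo $I_\omega$) an iterated Ore extension $k[t^{\pm1}][w_1][w_2;\alpha_2,\delta_2]\cdots[w_m;\alpha_m,\delta_m]$ of the Laurent polynomial ring $k[t^{\pm1}]$, where the $w_i$ run through the surviving generators $Y_{ij}$ not killed by $I_\omega$: this is the ``deformed'' version of the usual iterated-Ore-extension presentation of $\QML{n}$, and it restricts to a presentation of the quotient because $I_\omega$ is generated by (quantum) minors that form an initial segment with respect to a suitable ordering. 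One then builds, step by step, an iterated Ore extension $(\RR/I_\omega)[F_\omega^{-1}]$ by invoking Corollary~\ref{res:extending ore extensions to localizations} at each stage: each time we wish to adjoin a new variable $w_{j}$, we only need to check that the automorphism/derivation datum extends over the localization of the lower-degree part already constructed, and by Corollary~\ref{res:extending ore extensions to localizations} this happens exactly when the relevant $\alpha$ sends each already-inverted element to an invertible element. Since the automorphisms involved are diagonal in the $q$-commuting monomial basis (they scale each generator by a power of $t$ and possibly by elements of the smaller torus), the image of any monomial in the inverted elements is again an invertible monomial, so the extension condition is automatic.

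First I would fix, for each of the twelve $\omega$ listed in Figure~\ref{fig:Ore_sets_for_tori}, an explicit ordering of the generators of $\RR/I_\omega$ realizing it as an iterated Ore extension over $k[t^{\pm1}]$, and identify the generators of $F_\omega$ among these (or, for the non-normal extra generators such as the minors $[\wt1|\wt2]_t$ or the elements added to turn $A_\omega$ into a torus, express them via the identities \eqref{eq:eqn1 for Ore set changes}--\eqref{eq:eqn 2 for Ore set changes} and their deformed analogues so that inverting them is equivalent to inverting a product of generators already under consideration). Next I would order the inversions so that at each stage the set already inverted is a multiplicative set of the lower-degree subalgebra; then Corollary~\ref{res:extending ore extensions to localizations} applies verbatim and produces, inductively, a ring $T_\omega$ which is simultaneously a localization of $\RR/I_\omega$ at $F_\omega$ and an iterated Ore extension of a Laurent polynomial ring in several commuting (over $k[t^{\pm1}]$) variables. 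Finally, by the universal property of Ore localization, the canonical map $\RR/I_\omega \to T_\omega$ exhibits $T_\omega$ as $(\RR/I_\omega)[F_\omega^{-1}]$, so in particular $F_\omega$ satisfies the (left and right) Ore condition and $F_\omega$ is an Ore set in $\RR/I_\omega$, as claimed. (Because $\RR/I_\omega$ is Noetherian, being an Ore set of regular elements is the same as being a denominator set by \cite[Proposition~10.7]{GW1}.)

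The main obstacle, I expect, is bookkeeping rather than anything conceptual: one has to verify for each of the twelve cases that the generators of $F_\omega$ can genuinely be arranged into a chain of inversions compatible with the iterated-Ore-extension structure, and that no generator of $F_\omega$ reduces to zero modulo $I_\omega$ (which would collapse the localization to the zero ring — this is exactly the caveat after Proposition~\ref{res:localization and quotient commute}, and is handled by noting that the generators of $F_\omega$ are, modulo $I_\omega$, nonzero monomials or minors, cf.\ the computations underlying Figure~\ref{fig:localizations}). The cases $\omega_\pm = 231$ or $312$ are the delicate ones, since there the natural Ore-set generators were the non-normal minors; here the deformed versions of \eqref{eq:eqn1 for Ore set changes}--\eqref{eq:eqn 2 for Ore set changes} (replace $q$ by $t$, $X_{ij}$ by $Y_{ij}$, $[\wt i|\wt j]_q$ by $[\wt i|\wt j]_t$, which is valid in $\RR$ by the same computation) show that inverting $[\wt1|\wt2]_t$ modulo $I_{312,\ast}$ is, up to units, inverting $Y_{31}$, reducing us to the normal-generator situation. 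Once this reduction is in place the rest is routine application of Corollary~\ref{res:extending ore extensions to localizations}.
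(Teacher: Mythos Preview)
Your overall strategy---realise the localization as an iterated Ore extension, adjoining generators and inverting elements in an interleaved fashion, with Corollary~\ref{res:extending ore extensions to localizations} justifying each extension over an already-localized base---is exactly the paper's approach, and it works cleanly for eleven of the twelve cases. Two points need correcting, one minor and one genuine.

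The minor point: you are conflating $F_\omega$ with the full Ore set. In this proposition $F_\omega$ is the multiplicative set generated only by the entries in the \emph{third} column of Figure~\ref{fig:Ore_sets_for_tori} (the ``Additional generators''), not the second. The minors $[\wt1|\wt2]_t$, $[\wt2|\wt3]_t$, $[\wt2|\wt1]_t$, $[\wt3|\wt2]_t$ that worry you live in column~2; they are normal modulo $I_\omega$ and are dealt with separately in Corollary~\ref{res:ore sets ACTUALLY DO LIFT}. So the discussion of \eqref{eq:eqn1 for Ore set changes}--\eqref{eq:eqn 2 for Ore set changes} and the ``delicate'' cases $\omega_\pm=231,312$ is addressing a non-issue for this proposition.

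The genuine gap is your claim that $\RR/I_\omega$ is, for every $\omega$, an iterated Ore extension in the surviving generators $Y_{ij}$ ``because $I_\omega$ is generated by minors that form an initial segment''. This fails for $\omega=(231,231)$: here $I_\omega=\langle Y_{31},[\wt3|\wt1]_t\rangle$, and setting $[\wt3|\wt1]_t=Y_{12}Y_{23}-tY_{13}Y_{22}$ to zero does not kill a generator but imposes a relation among four of them, so the quotient is not visibly an iterated Ore extension in the $Y_{ij}$. The paper sidesteps this by not working in the quotient at all for this case: one first builds $\RR[F_{231,231}^{-1}]$ (easy, since $Y_{33}$ and $[\wt1|\wt1]_t$ can be inverted in $\RR$ by the same iterated-extension trick), and then applies Proposition~\ref{res:localization and quotient commute} to pass to the quotient, checking via the grading that $I_{231,231}\cap F_{231,231}=\emptyset$ so the result is nonzero. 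Your plan needs this manoeuvre (or an equivalent one) for $(231,231)$; as written it does not cover that case.
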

\begin{proof}
As described above, our approach will be to construct $k[t^{\pm1}]$-algebras with precisely the properties that $\RR/I_{\omega}[F_{\omega}^{-1}]$ will have if it exists; by universality the localization therefore must exist, which is possible if and only if $F_{\omega}$ is an Ore set.

We begin with the case $\omega = (321,321)$, and we will compute this case in detail as all of the others follow by a very similar method.  Note that $I_{\omega} = (0)$, so we identify $\RR/(0)$ with $\RR$.  We need to show that the localization $\RR[Y_{11}^{-1},Y_{12}^{-1},Y_{21}^{-1},[\wt{3}|\wt{3}]_t^{-1}]$ exists.

We start by defining the following algebra:
\begin{equation*}R_1 := k[t^{\pm1},Y_{11}^{\pm1}][Y_{12}^{\pm1};\alpha_0][Y_{21}^{\pm1};\alpha_1],
\end{equation*}
where the $k[t^{\pm1}]$-linear automorphisms $\alpha_0$ and $\alpha_1$ are defined by
\begin{align*}
\alpha_0&: Y_{11} \mapsto t^{-1}Y_{11}, \\
\alpha_1&: Y_{11} \mapsto t^{-1} Y_{11},\ Y_{12} \mapsto Y_{12}.
\end{align*}
We next define
\begin{gather*}
R_2 := R_1[Y_{22};\alpha_2,\delta_2],\\
\alpha_2: Y_{11} \mapsto Y_{11},\ Y_{12} \mapsto t^{-1}Y_{12}, \ Y_{21} \mapsto t^{-1}Y_{22}; \\
\delta_2: Y_{11} \mapsto (t^{-1}-t)Y_{12}Y_{21}, \ Y_{12} \mapsto 0, \ Y_{21} \mapsto 0.
\end{gather*}
It is easy to see that $R_2 \cong \RR_2[Y_{11}^{-1},Y_{12}^{-1},Y_{21}^{-1}]$ (recall $\RR_2$ is the $k[t^{\pm1}]$-algebra giving rise to the deformation $\QML{2}$).  Since $[\wt{3}|\wt{3}]_t$ plays the role of the $2\times 2$ determinant in $\RR_2$, it is central and therefore invertible, and we define
\[R_3 = R_2[[\wt{3}|\wt{3}]_t^{-1}].\]
We have now inverted all the required elements; the next step is to verify that we can adjoin the remaining generators $\{Y_{13},Y_{31},Y_{23},Y_{32},Y_{33}\}$ to $R_3$ via Ore extensions in the appropriate way, checking at each step that the Ore extension makes sense on the inverted elements.  To do this we will rely heavily on Corollary~\ref{res:extending ore extensions to localizations}, which tells us that if $\alpha$ is an endomorphism and $\delta$ an $\alpha$-derivation on a ring $R$ and $X$ is an Ore set, then it suffices to check that $\alpha$ is defined on $X^{-1}$ in order to construct $R[X^{-1}][z;\alpha,\delta]$.

We define
\begin{equation}\label{eq:more endless ore extensions}\begin{gathered}
R_4 := R_3[Y_{13};\alpha_3][Y_{31};\alpha_4];\\
\alpha_3: Y_{11} \mapsto t^{-1}Y_{11}, \ Y_{12} \mapsto t^{-1}Y_{12}; \\
\alpha_4: Y_{11} \mapsto t^{-1}Y_{11}, \ Y_{21} \mapsto t^{-1}Y_{21};
\end{gathered}\end{equation}
where each $\alpha_i$ acts as the identity on any generators not listed in \eqref{eq:more endless ore extensions}.  These automorphisms are clearly defined on $Y_{11}^{-1}$, $Y_{12}^{-1}$ and $Y_{21}^{-1}$, and $\alpha_3([\wt{3}|\wt{3}]_t) = \alpha_4([\wt{3}|\wt{3}]_t) = t^{-1}[\wt{3}|\wt{3}]_t$ also poses no problems.  The algebra $R_4$ therefore makes sense, and we proceed to adjoin $Y_{23}$:
\begin{equation}\label{eq:endless ore extensions}\begin{gathered}
R_5:=R_4[Y_{23};\alpha_5,\delta_5];\\
\alpha_5: Y_{13} \mapsto t^{-1}Y_{13}, \ Y_{21} \mapsto t^{-1}Y_{21},\ Y_{22} \mapsto t^{-1}Y_{22}; \\
\delta_5: Y_{11} \mapsto (t^{-1}-t)Y_{13}Y_{21}, \ Y_{12} \mapsto (t^{-1}-t)Y_{13}Y_{22};
\end{gathered}\end{equation}
where $\alpha_5$ acts as the identity on any generators not listed in \eqref{eq:endless ore extensions}, and $\delta_5$ acts as 0 on any generators not listed.  We observe that $\alpha_5([\wt{3}|\wt{3}])^{-1} = t[\wt{3}|\wt{3}]_t^{-1}$ is defined, so $R_5$ is a genuine Ore extension.  Similarly, we set
\begin{gather*}
R_6:=R_5[Y_{32};\alpha_6,\delta_6];\\
\alpha_6: Y_{31} \mapsto t^{-1}Y_{31}, \ Y_{22} \mapsto t^{-1}Y_{22}, \ Y_{12} \mapsto t^{-1}Y_{12};\\
\delta_6: Y_{21} \mapsto (t^{-1}-t)Y_{22}Y_{31},\ Y_{11} \mapsto (t^{-1}-t)Y_{12}Y_{31}.
\end{gather*}
The only remaining variable to adjoin is $Y_{33}$, which proceeds in a very similar manner: define
\begin{gather*}
R_7:=R_6[Y_{33};\alpha_7,\delta_7];\\
\alpha_7: Y_{23} \mapsto t^{-1}Y_{23}, \ Y_{13}\mapsto t^{-1}Y_{13}, \ Y_{32}\mapsto t^{-1}Y_{32}, \ Y_{31}\mapsto t^{-1}Y_{31};\\
\delta_7: Y_{11} \mapsto (t^{-1}-t)Y_{13}Y_{31}, \ Y_{12}\mapsto (t^{-1}-t)Y_{13}Y_{32}, \ Y_{21}\mapsto (t^{-1}-t)Y_{23}Y_{31}, \\
Y_{22}\mapsto (t^{-1}-t)Y_{23}Y_{32}.
\end{gather*}
Here $\alpha_7$ acts as the identity on each of our elements of interest $Y_{11}$, $Y_{12}$, $Y_{21}$ and $[\wt{3}|\wt{3}]_t$, and so again by Corollary~\ref{res:extending ore extensions to localizations} the definition of $R_7$ makes sense.

Observe that the variables $\{Y_{ij}: 1 \leq i,j\leq 3\}$ in $R_7$ satisfy exactly the same relations as those in $\RR$, and so we have $R_7 = \RR[F_{321,321}^{-1}]$ as required.  Since the localization $\RR[F_{321,321}^{-1}]$ exists if and only if $F_{321,321}$ is an Ore set in $\RR$, the result is proved for this case.

The six cases $\omega = (321,312), (231,312), (321,132), (321,123), (132,312)$ and $(132,132)$ follow by almost identical methods: in each case $\RR/I_{\omega}$ can be identified with an iterated Ore extension in $\leq 8$ variables, and by choosing the order of the variables carefully the generators of $F_{\omega}$ can easily be inverted early on in the process to construct $\qr{\RR}{I_{\omega}}[F_{\omega}^{-1}]$.

In four more cases, namely $\omega = (123,312), (213,132), (123,132)$ and $(123,123)$, the set $F_{\omega}$ is empty and there is nothing to prove.  This leaves us with only the case $\omega = (231,231)$ to consider.  

The ideal $I_{231,231}$ is generated by $Y_{31}$ and $[\wt{3}|\wt{1}]_t$, and $F_{231,231}$ is the multiplicative set generated by $Y_{33}$ and $[\wt{1}|\wt{1}]_t$.  By a similar method to the above we may easily construct $\RR[F_{231,231}^{-1}]$, and by Proposition~\ref{res:localization and quotient commute} we have
\[\big(\RR[F_{231,231}^{-1}]\big)/I_{231,231} \cong \qr{\RR}{I_{231,231}}[F_{231,231}^{-1}].\]
All that remains is to check that we have not constructed the zero ring, i.e. that $I_{231,231} \cap F_{231,231} = \emptyset$, but this is easy to check using the grading on $\RR$.  The ring $\qr{\RR}{I_{231,231}}[F_{231,231}^{-1}]$ therefore exists, and $F_{231,231}$ is an Ore set.
\end{proof}
Let $Det_t$ denote the $3\times3$ determinant in $\RR$.  This is central in $\RR$, since the computations involved in verifying its centrality in $\QML{3}$ continue to be valid if we replace $q$ by $t$.  Similarly, $Det_t$ remains central and non-zero modulo each $I_{\omega}$, and we may form the algebras
\[\RR/I_{\omega}[F_{\omega}^{-1},Det_t^{-1}] \cong \big(\RR[Det_t^{-1}]\big)/I_{\omega}[F_{\omega}^{-1}].\]
Having now inverted $Det_t$, we may now use the (anti-)isomorphisms of Figure~\ref{fig:H_primes_nice} once again: this tells us that $F_{\omega}$ is in fact an Ore set in $\RR[Det_t^{-1}]/I_{\omega}$ for all $\omega \in S_3 \times S_3$.  We can now obtain the result we have been working towards:
\begin{corollary}\label{res:ore sets ACTUALLY DO LIFT}
Let $\RR$ and $F_{\omega}$ be as above, and let $F_{\omega}'$ denote the multiplicative set in $R:=\RR[Det_t^{-1}]/I_{\omega}$ generated by the corresponding elements in the \textit{second} column of Figure~\ref{fig:Ore_sets_for_tori}.  Then $R[F_{\omega}^{-1},F_{\omega}'^{-1}]$ exists, and $F_{\omega} \cup F_{\omega}'$ is an Ore set in $R$.
\end{corollary}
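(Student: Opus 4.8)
The plan is to bootstrap from Proposition~\ref{res:Ore sets lift to formal deformation ring}, which already shows that each $F_{\omega}$ is an Ore set in $\RR/I_{\omega}$, by adjoining the extra generators appearing in the second column of Figure~\ref{fig:Ore_sets_for_tori}. Recall that $F_{\omega}'$ consists of the lifts (under $X_{ij}\mapsto Y_{ij}$, $[\wt{i}|\wt{j}]_q\mapsto[\wt{i}|\wt{j}]_t$) of those elements of $E_{\omega_+}\cup E_{\omega_-}$ from Figure~\ref{fig:Ore set gens}, which we have \emph{not} already inverted when forming $\qr{\RR}{I_{\omega}}[F_{\omega}^{-1}]$; by the constructions in \cite{GL1}, in every case these extra elements are \emph{normal} modulo $I_{\omega}$ (they are products of the normal minors $X_{ij}$ with $i\neq j$, $[\wt{1}|\wt{3}]$, $[\wt{3}|\wt{1}]$, $[\wt{1}|\wt{1}]$, $[\wt{3}|\wt{3}]$, which were inverted first in the analysis of \cite{GL1}). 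So the heart of the argument is: once the non-normal elements of $F_{\omega}$ have been inverted, the remaining generators of $F_{\omega}'$ become normal, and inverting a normal element always produces a well-defined localization.

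First I would reduce to the $12$ representative cases listed explicitly in Figure~\ref{fig:Ore_sets_for_tori}, exactly as in the proof of Proposition~\ref{res:Ore sets lift to formal deformation ring}: once $Det_t$ is inverted (it is central and non-zero modulo each $I_{\omega}$, by the remark preceding this corollary), the $(\text{anti-})$isomorphisms induced by $\tau$, $\rho$, $S$ from Figure~\ref{fig:H_primes_nice} carry the pair $(F_{\omega_1},F_{\omega_1}')$ to $(F_{\omega_2},F_{\omega_2}')$ along every arrow, using the compatibility relations \eqref{eq:maps between Ore sets, quantum}. So it suffices to treat one $\omega$ per orbit. Second, for each representative I would take the iterated Ore extension $R$ that was built in the proof of Proposition~\ref{res:Ore sets lift to formal deformation ring} realising $\qr{\RR}{I_{\omega}}[F_{\omega}^{-1}]$ (after also inverting $Det_t$), and then check directly that each generator $p$ of $F_{\omega}'$ is normal in $R$: i.e. $pR=Rp$. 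This is verified by computing $p Y_{ij} p^{-1}$ for each generator $Y_{ij}$ of $R$, using the defining relations of $\RR$ with $q$ replaced by $t$ and deleting terms lying in $I_{\omega}$ — the same $q$-commutation identities from \cite[\S1.3]{GL1} that were used to establish normality of these minors in the quantum case go through verbatim with $t$ in place of $q$. Once $p$ is normal, $R[p^{-1}]$ exists automatically, and iterating over the (at most two) generators of $F_{\omega}'$ gives $R[F_{\omega}^{-1},F_{\omega}'^{-1}]$; by the universal property of localization, $F_{\omega}\cup F_{\omega}'$ is then an Ore set in $\qr{\RR[Det_t^{-1}]}{I_{\omega}}$. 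Finally, exactly as in the $(231,231)$ case of the previous proposition, I would invoke Proposition~\ref{res:localization and quotient commute} and a check that $I_{\omega}\cap(F_{\omega}\cup F_{\omega}')=\emptyset$ (using the $X(\HH)$-grading on $\RR$, under which all the relevant minors are nonzero homogeneous elements) to confirm we have not accidentally produced the zero ring.

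The main obstacle I anticipate is not conceptual but bookkeeping: verifying normality of the $F_{\omega}'$-generators requires knowing the precise commutation relations between those minors and the $Y_{ij}$ in $\qr{\RR}{I_{\omega}}$, and the list of identities in \cite[\S1.3]{GL1} is stated for $\QML{3}$ rather than for the factor ring or for $\RR$. The cleanest way around this is to observe that normality is inherited along quotients and localizations: each generator of $F_{\omega}'$ is the image of an element already shown (in \cite[\S2.4]{GL1}, or by the $t$-analogues of \eqref{eq:one of the quantum equalities}) to be normal in $\QML{3}$ — hence its $t$-lift is normal in $\RR$ by the same computation with $q\mapsto t$ — and normality passes to $\RR/I_{\omega}$ and then to any further localization, since if $p$ is normal in $\RR$ with $p Y_{ij}=\phi(Y_{ij})p$ for an automorphism $\phi$, the relation persists modulo $I_{\omega}$ (because $I_{\omega}$ is $\HH$-stable, hence $\phi$-stable for the diagonal-scaling $\phi$ that arises here) and after inverting any Ore set. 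That reduces the whole statement to the single observation that the elements in column~2 of Figure~\ref{fig:Ore_sets_for_tori} lift to normal elements of $\RR$, which is immediate from \cite[\S2.4]{GL1} and the identities \eqref{eq:Preln1}--\eqref{eq:Preln4}'s quantum counterparts. With that in hand the corollary follows without any genuinely new computation.
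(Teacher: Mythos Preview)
Your overall strategy matches the paper's proof exactly: $F_{\omega}$ is already an Ore set in $R=\RR[Det_t^{-1}]/I_{\omega}$ by Proposition~\ref{res:Ore sets lift to formal deformation ring} (plus the symmetries), the generators of $F_{\omega}'$ are normal in $R$, normal elements generate Ore sets, and the union of Ore sets is Ore by universality.  The paper's proof is in fact only three sentences along precisely these lines, and does not bother with the non-triviality check or the explicit reduction to twelve cases.

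There is, however, a genuine slip in your ``cleanest way around this''.  You assert that the column-2 generators lift to normal elements of $\RR$ itself (and describe them as ``normal minors $X_{ij}$ with $i\neq j$'').  This is false: for instance $Y_{12}$ is not normal in $\RR$, since $Y_{12}Y_{23}-Y_{23}Y_{12}=(t-t^{-1})Y_{13}Y_{22}\neq 0$ there, and similarly $Y_{21},Y_{23},Y_{32}$ and the minors $[\wt{2}|\wt{1}]_t,[\wt{3}|\wt{2}]_t$ fail to be normal before passing to the quotient.  What \emph{is} true --- and what both you (earlier in your sketch) and the paper actually use --- is that these elements become normal in $\RR/I_{\omega}$, because every obstruction term lands in $I_{\omega}$ (e.g.\ $Y_{13}\in I_{321,312}$ kills the obstruction for $Y_{12}$).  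So the argument cannot be reduced to a single normality check in $\RR$; you must verify normality in the quotient, exactly as the paper does by ``using the relations in $\RR$ and deleting any terms which are in $I_{\omega}$''.  With that correction your proposal is fine.
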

\begin{proof}
Since $F_{\omega}$ is an Ore set in $R = \RR[Det_t^{-1}]/I_{\omega}$ by Proposition~\ref{res:Ore sets lift to formal deformation ring} and comments following the proof, we need only check that $F_{\omega}'$ is an Ore set in $R[F_{\omega}^{-1}]$.  This is easy to check, however, since the generators of $F_{\omega}'$ are normal in $R$ (this can be seen by using the relations in $\RR$ and deleting any terms which are in $I_{\omega}$) and therefore it automatically forms an Ore set.  That the union of two Ore sets is an Ore set follows by the universality of localization.
\end{proof}

\begin{proposition}\label{res:the poisson localizations are scls}
For each $\omega \in S_3\times S_3$, let $E_{\omega}'$ be the multiplicative set in $\GL{3}/I_{\omega}$ generated by the set of elements indicated in Figure~\ref{fig:Ore_sets_for_tori}, viewed as elements in the Poisson algebra $\GL{3}/I_{\omega}$ rather than $\QGL{3}/I_{\omega}$.  Then the localization of $\GL{3}/I_{\omega}$ at the set $E_{\omega}'$ is precisely the semi-classical limit of the corresponding quantum torus $A_{\omega}$ in Figure~\ref{fig:localizations}.
\end{proposition}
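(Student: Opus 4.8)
The plan is to deduce this from the commutation results already established, namely Proposition~\ref{res:localization and SCL commute} (and Proposition~\ref{res:localization and quotient commute}), combined with the work done in Corollary~\ref{res:ore sets ACTUALLY DO LIFT}, which shows that the relevant Ore sets lift to the formal deformation ring $\RR[Det_t^{-1}]$. First I would set up the notation: let $B = \RR[Det_t^{-1}]$, so that $B$ is a $k[t^{\pm1}]$-algebra with $B/(t-1)B \cong \GL{3}$ and $B/(t-q)B \cong \QGL{3}$, and let $I_{\omega} \subset B$ be the ideal obtained by the rewriting rule $X_{ij} \mapsto Y_{ij}$, $[\wt{i}|\wt{j}]_q \mapsto [\wt{i}|\wt{j}]_t$ on the generators in Figure~\ref{fig:H_primes_gens}. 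Since these generators involve only minors (which are polynomial in the $Y_{ij}$ with coefficients in $\mathbb{Z}[t^{\pm1}]$), the ideal $I_{\omega}$ reduces modulo $t-q$ to the ideal $I_{\omega}$ of $\QGL{3}$ and modulo $t-1$ to the ideal $I_{\omega}$ of $\GL{3}$; moreover $t-q \notin I_{\omega}$ and $t-1 \notin I_{\omega}$ since $I_{\omega}$ is generated by elements homogeneous of positive degree with respect to the natural grading on $B$ in which $t$ has degree $0$.

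Next I would let $X = F_{\omega} \cup F_{\omega}'$ be the Ore set in $B/I_{\omega}$ from Corollary~\ref{res:ore sets ACTUALLY DO LIFT}; this lifts to an Ore set of regular elements $\widehat{X}$ in $B$ with $\widehat{X} \cap (t-q)B = \emptyset = \widehat{X} \cap (t-1)B$ (disjointness again follows from the grading argument: the generators of $X$ are homogeneous of positive degree). By Proposition~\ref{res:localization and quotient commute}, $B[\widehat{X}^{-1}]/(t-q)B[\widehat{X}^{-1}] \cong (B/I_{\omega})[X^{-1}]$ reduced mod $t-q$, which is precisely the quantum torus $A_{\omega}$ of Figure~\ref{fig:localizations}; likewise the reduction mod $t-1$ is $(\GL{3}/I_{\omega})[E_{\omega}'^{-1}]$. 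Now I apply Proposition~\ref{res:localization and SCL commute}, with $B$ replaced by $B/I_{\omega}$ (after first invoking Proposition~\ref{res:quotient and scl commute} to pass the quotient by $I_{\omega}$ inside the semi-classical limit): localizing the semi-classical limit $\GL{3}/I_{\omega}$ at $E_{\omega}'$ gives the same Poisson algebra as localizing the quantum algebra $A_{\omega}$ first and then taking the semi-classical limit. This is exactly the statement of the proposition.

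The only real subtlety — and the step I expect to require the most care — is checking that all the hypotheses of Proposition~\ref{res:localization and SCL commute} and Proposition~\ref{res:quotient and scl commute} genuinely hold for the composite ``quotient then localize'' setup: that none of $\{t-q : q \in S \cup\{1\}\}$ is invertible in $B/I_{\omega}$, that $B/I_{\omega}$ is still governed by $k[t^{\pm1}]$ in the required way (in particular that its reduction mod $t-1$ is commutative, which is clear since $\ML{3}/I_{\omega}$ is), and that the Ore set of regular elements condition is met. The non-invertibility of $t-q$ modulo $I_{\omega}$ follows because $I_{\omega}$ does not contain $t-q$ and $B/\langle I_{\omega}, t-q\rangle \cong \QGL{3}/I_{\omega} \neq 0$ (by Lemma~\ref{res:distinct H ideals}, $Det \notin I_{\omega}$, and the same holds for $Det_q$), so $t-q$ generates a proper ideal; a symmetric argument handles $t-1$. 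Regularity of the lifted Ore set elements modulo $I_{\omega}$ is inherited from the fact that $B/I_{\omega}$ is a domain in each case we care about (it is an iterated Ore extension, as was shown in the proof of Proposition~\ref{res:Ore sets lift to formal deformation ring}), so any nonzero element — in particular a product of the minors generating $X$ — is regular. Once these bookkeeping checks are in place, the conclusion is immediate from the two commutation propositions, and the structure of the resulting Poisson torus (its generators and their brackets) can simply be read off from Figure~\ref{fig:localizations} by replacing $q$-commutation relations $z_iz_j = q^{a_{ij}}z_jz_i$ with Poisson brackets $\{z_i,z_j\} = a_{ij}z_iz_j$, as in Example~\ref{ex:deformation of torus}.
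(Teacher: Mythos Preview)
Your proposal is correct and follows essentially the same route as the paper: first use Proposition~\ref{res:quotient and scl commute} to see that $\GL{3}/I_{\omega}$ is the semi-classical limit of $\QGL{3}/I_{\omega}$, then use Corollary~\ref{res:ore sets ACTUALLY DO LIFT} together with Proposition~\ref{res:localization and SCL commute} to pass the localization through the semi-classical limit. Your bookkeeping checks are more explicit than the paper's, which is fine; the detour lifting $X$ to an Ore set $\widehat{X}$ in $B$ itself is unnecessary (Corollary~\ref{res:ore sets ACTUALLY DO LIFT} only gives an Ore set in $B/I_{\omega}$, and that is all you need once you work with $B/I_{\omega}$ as the deformation algebra), but this does not affect the argument.
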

\begin{proof}
By lifting the generators of the $\HH$-prime $I_{\omega}$ to $\RR$, which is possible since the generators are always quantum minors, we can observe that $t-q$ and $t-1$ are not in the resulting ideal (here $q$ can be any non-zero non-root of unity in $k$).  By Proposition~\ref{res:quotient and scl commute} we must have that $\GL{3}/I_{\omega}$ is the semi-classical limit of $\QGL{3}/I_{\omega}$.

Now consider the elements of $E_{\omega}$ lifted to $\RR/I_{\omega}$; these are still Ore sets by Proposition~\ref{res:ore sets ACTUALLY DO LIFT} and we can easily check that the conditions of Proposition~\ref{res:localization and SCL commute} are satisfied.  By Proposition~\ref{res:localization and SCL commute}, the localization of the semi-classical limit $\GL{3}/I_{\omega}$ at the set $E_{\omega}'$ is therefore the same as the semi-classical limit of the localized algebra $\qr{\QGL{3}}{I_{\omega}}\big[E_{\omega}^{-1}\big]$.
\end{proof}
\begin{corollary}
The localization $\qr{\GL{3}}{I_{\omega}}\big[E_{\omega}^{-1}\big]$ is Poisson $\HH$-simple.
\end{corollary}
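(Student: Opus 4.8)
The plan is to reduce the statement to a weight-independence computation in a torus. First I would invoke Proposition~\ref{res:the poisson localizations are scls}: the localization $B_{\omega}:=\qr{\GL{3}}{I_{\omega}}\big[E_{\omega}^{-1}\big]$ is the semi-classical limit of the quantum torus $A_{\omega}$ of Figure~\ref{fig:localizations}. Since $A_{\omega}$ is a quantum torus $k_{\mathbf{q}}[z_1^{\pm1},\dots,z_m^{\pm1}]$, its semi-classical limit $B_{\omega}$ is the commutative Laurent polynomial ring $k[z_1^{\pm1},\dots,z_m^{\pm1}]$ carrying a multiplicative Poisson bracket, and both rings inherit the $\HH$-action descending from $\RR$ (which fixes $t$); in particular each generator $z_i$ is an $\HH$-eigenvector with the \emph{same} weight $\chi_i\in X(\HH)$ in $A_{\omega}$ and in $B_{\omega}$. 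Because a Poisson $\HH$-prime is in particular a non-zero proper $\HH$-stable ideal, it is enough to show that $B_{\omega}$ admits no non-trivial $\HH$-stable ideal.

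The key lemma I would isolate applies to any ring $T$ having a $k$-basis of ``monomials'' $z^{a}$ that are all units, on which $\HH$ acts rationally by scaling, $h.z^{a}=\chi^{a}(h)z^{a}$: such a $T$ has no non-trivial $\HH$-stable ideal if and only if $\chi_1,\dots,\chi_m$ are $\mathbb{Z}$-linearly independent in $X(\HH)$. The nontrivial direction is the usual minimal-support argument — given a homogeneous $f=\sum_{a\in S}c_a z^{a}$ in an $\HH$-stable ideal $J$ with $|S|$ minimal, rescale by a monomial (a unit) so that $0\in S$, observe that $f-h.f\in J$ has strictly smaller support for any $h\in\HH$, conclude $\chi^{a}\equiv 1$ on $S$, and then linear independence forces $|S|=1$ so that $J$ contains a unit; the converse notes that $\chi^{c}=1$ with $c\neq0$ makes $(z^{c}-1)T$ a proper non-zero $\HH$-stable ideal of the domain $T$, its generator being a genuine binomial.

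Finally I would feed in the input about $A_{\omega}$: it is $\HH$-simple by \cite[\S3.2]{GL1}, and since $A_{\omega}$ is a Noetherian domain this upgrades to ``no non-trivial $\HH$-stable ideal'' — given such an ideal, its radical is $\HH$-stable, $\HH$ permutes its finitely many minimal primes, and the intersection of any single $\HH$-orbit of them is a non-trivial $\HH$-prime. The lemma then gives that $\chi_1,\dots,\chi_m$ are $\mathbb{Z}$-linearly independent; as these are precisely the weights of the generators of $B_{\omega}$, the lemma applied to $B_{\omega}$ shows $B_{\omega}$ has no non-trivial $\HH$-stable ideal, hence no non-trivial Poisson $\HH$-prime, i.e.\ $\qr{\GL{3}}{I_{\omega}}\big[E_{\omega}^{-1}\big]$ is Poisson $\HH$-simple.

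The main obstacle I anticipate is purely organisational: carefully transferring ``$\HH$-simple'' (phrased via $\HH$-primes) to the cleaner statement about $\HH$-stable ideals, and confirming that the weights of the torus generators are genuinely unchanged under passage to the semi-classical limit — this is exactly where Proposition~\ref{res:the poisson localizations are scls} together with the fact that the $\HH$-action on $\RR$ fixes $t$ does the work. The monomial computation itself is routine and short.
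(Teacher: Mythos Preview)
Your argument is correct, but it takes a genuinely different route from the paper's proof. The paper dispatches the corollary in two lines: by Proposition~\ref{res:the poisson localizations are scls}, $B_{\omega}$ is a commutative Laurent polynomial ring with a multiplicative Poisson bracket $\{z_i,z_j\}=\pi_{ij}z_iz_j$, and then it simply cites \cite[Example~4.5]{Goodearl_Poisson}, which asserts that any Poisson algebra of this form (with the given rational $\HH$-action) is Poisson $\HH$-simple. No mention is made of $A_{\omega}$ or of the weights of the generators.

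By contrast, you reprove the cited fact from scratch via a weight-independence lemma, and you feed that lemma by transferring the $\mathbb{Z}$-linear independence of the generator weights from the quantum torus $A_{\omega}$ (where you know $\HH$-simplicity from \cite{GL1}) to $B_{\omega}$. This works, and in fact you prove something slightly stronger than needed: that $B_{\omega}$ has no non-trivial $\HH$-stable ideal at all, not merely no Poisson $\HH$-prime. Your approach is more self-contained and makes the mechanism transparent, at the cost of being longer and introducing an extra dependence on the quantum result. One small sharpening: when upgrading ``$\HH$-simple'' to ``no non-trivial $\HH$-stable ideal'' for $A_{\omega}$, the cleanest argument is that $\HH$ is connected, so it cannot permute the finitely many minimal primes over a given $\HH$-stable ideal nontrivially; each such minimal prime is therefore itself $\HH$-stable and hence an $\HH$-prime. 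This avoids having to argue that an intersection over an $\HH$-orbit is $\HH$-prime.
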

\begin{proof}
By Proposition~\ref{res:the poisson localizations are scls}, the algebra $\qr{\GL{3}}{I_{\omega}}\big[E_{\omega}^{-1}\big]$ is a commutative Laurent polynomial ring $k[z_1^{\pm1}, \dots, z_m^{\pm1}]$ with the multiplicative Poisson bracket $\{z_i,z_j\} = \pi_{ij}z_iz_j$ for some appropriate set of scalars $\{\pi_{ij}\}$.  (The precise values of the $\pi_{ij}$ do not matter here, but can be computed easily from the $q$-commuting structure of the corresponding $A_{\omega}$.)  By \cite[Example~4.5]{Goodearl_Poisson}, a Poisson algebra of this form is Poisson $\HH$-simple.
\end{proof}
\begin{notation}
From now on, we will use the notation $E_{\omega}$\nom{E@$E_{\omega}$ (Poisson)} interchangeably for the Ore set in $\QGL{3}/I_{\omega}$ and for the multiplicative set $E_{\omega}'$ defined in Proposition~\ref{res:the poisson localizations are scls}; it should always be clear from context whether we mean $E_{\omega}$ as a set in $\QGL{3}$ or $\GL{3}$.  We will retain the notation $A_{\omega} = \qr{\QGL{3}}{I_{\omega}}\big[E_{\omega}^{-1}\big]$, and define $B_{\omega} := \qr{\GL{3}}{I_{\omega}}\big[E_{\omega}^{-1}\big]$\nom{B@$B_{\omega}$} for the corresponding Poisson algebra.
\end{notation}

We may now tackle the proof that $\GL{3}$ admits only the 36 Poisson $\HH$-primes displayed in Figure~\ref{fig:H_primes_gens}, which has been postponed until now because it uses the Poisson $\HH$-simplicity of the localizations $B_{\omega}$.  We will first require one more lemma, which we prove next.
\begin{lemma}\label{res:small lemma for 36 H primes}
Let $P$ be a non-trivial Poisson $\HH$-prime in $\GL{3}$.  Then:
\begin{enumerate}[(i)]
\item If $X_{12}$ or $X_{23} \in P$ then $X_{13} \in P$ as well;
\item If $X_{21}$ or $X_{32} \in P$ then $X_{31} \in P$ as well.
\end{enumerate}
\end{lemma}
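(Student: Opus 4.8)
The plan is to mimic the argument used in Lemma~\ref{res:Det in prime ideal, Poisson} and in the five-equality lemma preceding it: use the Poisson bracket relations \eqref{eq:Preln1}--\eqref{eq:Preln4} to show that a generator lying in a Poisson $\HH$-prime forces certain products into the ideal, then invoke primeness. Since $P$ is a Poisson $\HH$-prime, it is in particular a prime ideal of $\GL{3}$ in the ordinary commutative sense (by the discussion in \S\ref{ss:background poisson algebras def} and \S\ref{ss:H action Poisson version}), and all primes of $\QGL{3}$ — hence, via the semi-classical limit correspondence already established, the relevant ideals here — are completely prime; this is what will let us pass from a product being in $P$ to one of the factors being in $P$. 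It will also be convenient to reduce the number of cases: the maps $\tau$, $\rho$ and $S$ permute $\HH$-stable subsets of $\GL{3}$, and $\tau$ in particular swaps $X_{12}\leftrightarrow X_{21}$, $X_{23}\leftrightarrow X_{32}$, $X_{13}\leftrightarrow X_{31}$, so part (ii) follows from part (i) by applying $\tau$. Thus it suffices to prove (i).

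First I would handle the case $X_{12}\in P$. Using the relevant identity from \eqref{eq:Preln2}--\eqref{eq:Preln3} (the bracket of $x_{12}$ with a suitable $2\times 2$ minor, e.g. $[\wt{3}|\wt{2}]$ or similar, expanded via the listed relations), one obtains an expression of the form $\{x_{12},\,-\,\}$ equal to a $\ML{3}$-linear combination in which $x_{13}$ appears multiplied by a minor that cannot lie in any non-trivial prime — here Lemma~\ref{res:Det in prime ideal, Poisson} rules out the $\wt{i}|\wt{i}$ minors, and a direct check rules out the relevant $1\times 1$ or off-diagonal minor. More concretely, since $x_{12}\in P$ and $P$ is Poisson-stable, $\{x_{12},x_{kl}\}\in P$ for all $k,l$; taking $x_{kl}=x_{23}$ gives $\{x_{12},x_{23}\}=2x_{13}x_{22}\in P$ (using the $i<k$, $j<l$ case of the bracket from \eqref{eq:poisson relns for nxn matrices}), hence $x_{13}\in P$ or $x_{22}\in P$; but if $x_{22}\in P$ then $Det\in P$ by Lemma~\ref{res:Det in prime ideal, Poisson}, contradicting that $P$ is a non-trivial \emph{proper} ideal (non-trivial $\HH$-primes of $\GL{3}$ cannot contain $Det$ — indeed $\GL{3}/\langle Det\rangle = 0$). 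So $x_{13}\in P$. The case $X_{23}\in P$ is symmetric: $\{x_{23},x_{12}\}$ again equals (up to sign and the anti-symmetry of the bracket) $2x_{13}x_{22}$, so the same dichotomy applies and $x_{13}\in P$.

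I expect the main obstacle to be bookkeeping rather than anything conceptual: one must be careful about which version of the bracket identities \eqref{eq:Preln1}--\eqref{eq:Preln4} to use, about the sign conventions for minors and their complements ($[\wt{i}|\wt{j}]$ versus $[I|J]$), and about confirming in each sub-case that the ``other factor'' produced by the bracket really does force $Det$ into the ideal (so that it can be excluded). The cleanest route, as sketched above, avoids the minors entirely for (i) by bracketing two generators $x_{12}$ and $x_{23}$ directly, since $\{x_{12},x_{23}\}=2x_{13}x_{22}$ is immediate from \eqref{eq:poisson relns for nxn matrices}; the only real input then is complete primeness together with Lemma~\ref{res:Det in prime ideal, Poisson}. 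I would present (i) this way and obtain (ii) by applying $\tau$, noting that $\tau$ is an automorphism of the commutative algebra $\GL{3}$ preserving $\HH$-stable ideals and sending $x_{12}\mapsto x_{21}$, $x_{23}\mapsto x_{32}$, $x_{13}\mapsto x_{31}$.
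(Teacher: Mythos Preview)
Your proposal is correct and matches the paper's argument almost exactly: the paper also computes $\{x_{12},x_{23}\}=2x_{13}x_{22}\in P$, uses primeness together with Lemma~\ref{res:Det in prime ideal, Poisson} to rule out $x_{22}\in P$, and then handles (ii) by the symmetric computation $\{x_{21},x_{32}\}=2x_{31}x_{22}$ (your reduction via $\tau$ is an equally valid shortcut). The only superfluous step is your appeal to complete primeness via the quantum side --- in the commutative ring $\GL{3}$ every prime ideal is already completely prime, so ordinary primeness suffices.
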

\begin{proof}
Since $P$ is closed under Poisson brackets, if $X_{12}$ or $X_{23} \in P$ then $\{X_{12},X_{23}\} = 2X_{13}X_{22} \in P$ as well.  The ideal $P$ is also assumed to be prime in the commutative sense and so $X_{13}$ or $X_{22} \in P$ as well, but by Lemma~\ref{res:Det in prime ideal, Poisson}, any Poisson prime in $\GL{3}$ containing $X_{22}$ also contains $Det$.  Since $P$ is non-trivial by assumption, we can conclude $X_{13} \in P$.  The statement $(ii)$ follows by a similar argument.
\end{proof}

\begin{theorem}\label{res:gl3 has only 36 h primes}
The Poisson algebra $\GL{3}$ admits only 36 Poisson $\HH$-primes, and these are the 36 ideals appearing in Figure~\ref{fig:H_primes_gens}.
\end{theorem}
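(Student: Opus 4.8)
The plan is to show that an arbitrary Poisson $\HH$-prime $P\neq\GL{3}$ must coincide with one of the 36 ideals displayed in Figure~\ref{fig:H_primes_gens}; since those 36 are already known to be pairwise distinct Poisson $\HH$-primes of $\GL{3}$ (Lemmas~\ref{res:Poisson H ideals}, \ref{res:distinct H ideals}, \ref{res:prime H ideals}), this will complete the proof. The argument runs parallel to the computation carried out for $\QML{3}$ and $\QGL{3}$ in \cite[\S3]{GL2}, with the quantum commutation identities replaced by the Poisson relations \eqref{eq:poisson relns for nxn matrices} and \eqref{eq:Preln1}--\eqref{eq:Preln4}, and with complete primeness replaced by ordinary primeness.

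The core of the proof is a finite analysis of the \emph{support pattern} of $P$. Writing $S(P)$ for the set of those generators $x_{ij}$ lying in $P$ together with those $2\times2$ minors $[\wt l\,|\,\wt m]$ lying in $P$, the aim of this step is to prove that $S(P)=S(I_\omega)$ for a unique $\omega\in S_3\times S_3$. Since $P$ is $\HH$-stable it is homogeneous for the $\mathbb{Z}^6$-grading of $\GL{3}$ induced by the torus action (\cite[Exercise~II.2.I]{GBbook}), so it is generated by $\HH$-eigenvectors; and since $P$ is proper and $Det$ is a unit in $\GL{3}$, contracting $P$ to $\ML{3}$ and applying Lemma~\ref{res:Det in prime ideal, Poisson} shows that none of $x_{11},x_{22},x_{33},[\wt1|\wt1],[\wt2|\wt2],[\wt3|\wt3]$ lies in $P$. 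I would then feed in the structural constraints: from \eqref{eq:poisson relns for nxn matrices}, $\{x_{ij},x_{lm}\}=2x_{im}x_{lj}$ when $i<l,\ j<m$, so primeness forces that if a corner entry of a $2\times2$ block lies in $P$ then so does one of the two off-diagonal entries of that block; from Lemma~\ref{res:small lemma for 36 H primes}, if $x_{12}$ or $x_{23}\in P$ then $x_{13}\in P$, and if $x_{21}$ or $x_{32}\in P$ then $x_{31}\in P$; and from \eqref{eq:Preln2}--\eqref{eq:Preln4} together with the Poisson normality of $[\wt1|\wt3]$ and $[\wt3|\wt1]$ (Lemma~\ref{res:Poisson normal elements}), one reads off which variables are forced into $P$ once a given $2\times2$ minor lies in it, and conversely. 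Running through the possibilities, and using the symmetries $\tau,\rho,S$ of Figure~\ref{fig:H_primes_nice} to reduce the bookkeeping to the twelve orbit representatives exactly as in \cite[\S3.6]{GL2}, one checks that the only patterns surviving all of these constraints are the 36 patterns $S(I_\omega)$.

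Once $S(P)=S(I_\omega)$ is known, note that this already gives $I_\omega\subseteq P$, because each $I_\omega$ is generated by elements of $S(I_\omega)$ (the generators in Figure~\ref{fig:H_primes_gens} are variables and $2\times2$ minors). To upgrade this to $P=I_\omega$ I would invoke the Poisson $\HH$-simplicity of $B_\omega=\qr{\GL{3}}{I_\omega}\big[E_\omega^{-1}\big]$ (the corollary to Proposition~\ref{res:the poisson localizations are scls}). If $P\cap E_\omega=\emptyset$, then $P B_\omega$ is a proper Poisson $\HH$-stable prime of $B_\omega$, hence $0$; since $\GL{3}/I_\omega$ is a domain embedding in $B_\omega$, this forces $P=I_\omega$. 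So it suffices to exclude $P\cap E_\omega\neq\emptyset$: every generator of $E_\omega$ (Figure~\ref{fig:Ore_sets_for_tori}) is either a variable $x_{ij}$, a $2\times2$ minor $[\wt l\,|\,\wt m]$, or a power of $Det$, and all generators of $E_\omega$ are regular, hence nonzero, modulo $I_\omega$; thus a generator of $E_\omega$ lying in $P$ would be a variable or minor not in $I_\omega$ (contradicting $S(P)=S(I_\omega)$), or else one of $[\wt i|\wt i]$, $Det$, which cannot lie in the proper Poisson $\HH$-prime $P$ by Lemma~\ref{res:Det in prime ideal, Poisson}. As $P$ is prime, no product of such non-members lies in $P$, so $P\cap E_\omega=\emptyset$ and $P=I_\omega$. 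Finiteness of the set of Poisson $\HH$-primes of $\GL{3}$ drops out as a byproduct.

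I expect the main obstacle to be the support-pattern analysis: each individual deduction is a one-line Poisson-bracket computation, but there are many cases to organise, and it is easy to overlook a constraint or a forced element. The orbit reduction under $\tau,\rho,S$ and the explicit catalogue of Figure~\ref{fig:H_primes_nice} are what make this step tractable, precisely as in \cite{GL2}.
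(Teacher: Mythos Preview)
Your proposal is correct but takes a genuinely different route from the paper.  The paper argues by iterative contradiction: assuming $P$ is a Poisson $\HH$-prime not among the 36, it uses the Poisson $\HH$-simplicity of each $B_J$ \emph{repeatedly} to climb the lattice of known $\HH$-primes.  Concretely, whenever $P$ properly contains a known $I_J$ of height $n$, the image $P/I_J$ is a nonzero Poisson $\HH$-prime of $\GL{3}/I_J$, so it meets $E_J$; the resulting Ore-set generator, filtered through Lemmas~\ref{res:Det in prime ideal, Poisson} and~\ref{res:small lemma for 36 H primes}, forces into $P$ a generator of some known $\HH$-prime of height $n+1$ (using the symmetries $\tau,\rho,S$ when $J$ is not one of the twelve representatives).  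Iterating contradicts finite Krull dimension.

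By contrast, you front-load the entire case analysis into the support-pattern classification (your step~1) and then invoke $\HH$-simplicity only once (your step~2).  Your approach is closer in spirit to the quantum computation in \cite{GL2} and is logically sound; in particular your step~2 is clean, and the observation that $S(P)=S(I_\omega)$ forces $P\cap E_\omega=\emptyset$ is exactly right.  What your route buys is a more explicit picture of why each pattern is forced; what the paper's route buys is economy --- at each height the check reduces to ``which Ore-set generator did $P$ swallow, and which height-$(n{+}1)$ ideal does that produce'', so the bookkeeping is local and short.  Your step~1, by contrast, is the full classification done by hand across all heights at once.  One small caveat: \cite[\S3.6]{GL2} is the distinctness argument, not the pattern classification; the actual classification in \cite{GL2} relies on deleting-derivations machinery, so your Poisson analogue will genuinely have to be worked out rather than simply cited.
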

\begin{proof}
Suppose $P$ is a non-trivial Poisson $\HH$-prime in $\GL{3}$ which is not one of the 36 appearing in Figure~\ref{fig:H_primes_gens}; we will use the Ore sets from Figure~\ref{fig:Ore_sets_for_tori} to show that this is a contradiction.  The key observation is that if $J$ is a Poisson $\HH$-prime appearing in Figure~\ref{fig:H_primes_gens} such that $J \subset P$, then $P$ is a non-trivial Poisson $\HH$-prime in $\GL{3}/J$ and therefore must contain one of the elements in the Ore set associated to $J$ since the localization is Poisson $\HH$-simple.

We start the process by noting that $0 \subset P$, and so by the observation above $P$ must contain one of the elements in the first row of Figure~\ref{fig:Ore_sets_for_tori}.  By Lemma~\ref{res:Det in prime ideal, Poisson} $P$ cannot contain either $X_{11}$ or $[\wt{3}|\wt{3}]$ since it is non-trivial, while by Lemma~\ref{res:small lemma for 36 H primes} if $P$ contains $X_{12}$ or $X_{21}$ then it must also contain $X_{13}$ or $X_{31}$ respectively.  Therefore $P$ must contain one of $X_{13}$, $X_{31}$, $[\wt{1}|\wt{3}]$ or $[\wt{3}|\wt{1}]$, each of which generate a Poisson $\HH$-prime appearing in Figure~\ref{fig:H_primes_gens}.

We may now iterate this argument: suppose $P$ contains a known $\HH$-prime $J$ of height $n$ (i.e. one from Figure~\ref{fig:H_primes_gens}).  If $J$ appears in Figure~\ref{fig:Ore_sets_for_tori} then $P$ must contain one of the elements listed in the row corresponding to $J$, and by applying Lemmas~\ref{res:Det in prime ideal, Poisson} and \ref{res:small lemma for 36 H primes} as above we find that $P$ must contain a known $\HH$-prime of height $n+1$ as well.

On the other hand, if $J$ does not appear in Figure~\ref{fig:Ore_sets_for_tori} then there exists some combination $\varphi$ of the maps $\tau$, $\rho$ and $S$ from Figure~\ref{fig:H_primes_nice} such that $\varphi(J)$ \textit{does} appear in Figure~\ref{fig:Ore_sets_for_tori} and $\varphi(J) \subset \varphi(P)$.  The ideal $\varphi(P)$ must be a Poisson $\HH$-prime, since $\tau$, $\rho$ and $S$ are Poisson morphisms that preserve $\HH$-stable subsets, and $\varphi(P)$ cannot appear in Figure~\ref{fig:H_primes_gens} otherwise $P$ would as well.

Now, by a similar argument $\varphi(P)$ must contain a known $\HH$-prime $K$ of height $n+1$ as above, and so $\varphi^{-1}(K) \subset P$ where $K$ is a known $\HH$-prime of height $n+1$.  

This process must terminate since $\GL{3}$ has finite Krull dimension; this is a contradiction and so the unknown Poisson $\HH$-prime $P$ does not exist.
\end{proof}

Combining Proposition~\ref{res:the poisson localizations are scls} and Theorem~\ref{res:gl3 has only 36 h primes}, we are now well on our way towards understanding the Poisson primitive ideals of $\GL{3}$.  The next step is to understand the similarity between $Z(A_{\omega})$ and $PZ(B_{\omega})$.
\begin{proposition}\label{res:centres of gl and qgl localizations agree}
For each $\omega \in S_3 \times S_3$, the Poisson centre of $B_{\omega}$ is equal to the algebra obtained by taking the centre of the corresponding quantum algebra $A_{\omega}$ and renaming the generators by applying the rule $X_{ij} \mapsto x_{ij}$, $[\wt{i}|\wt{j}]_q \mapsto [\wt{i}|\wt{j}]$.
\end{proposition}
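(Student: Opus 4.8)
Looking at this statement, I need to prove that the Poisson centre of $B_{\omega} = \qr{\GL{3}}{I_{\omega}}\big[E_{\omega}^{-1}\big]$ is obtained from the centre $Z(A_{\omega})$ of the quantum torus by the "preservation of notation" rewriting rule.

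\medskip

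The plan is to use Proposition~\ref{res:the poisson localizations are scls}, which tells us that $B_{\omega}$ is the semi-classical limit of the quantum torus $A_{\omega}$. So both $A_{\omega}$ and $B_{\omega}$ have the same underlying set of generators $\{z_1, \dots, z_m\}$ (obtained from the minors listed in Figure~\ref{fig:localizations} via the rewriting rule), and $B_{\omega}$ is a commutative Laurent polynomial ring $k[z_1^{\pm1}, \dots, z_m^{\pm1}]$ equipped with the multiplicative Poisson bracket $\{z_i, z_j\} = \pi_{ij} z_i z_j$, where the scalars $\pi_{ij}$ are determined by the $q$-commutation relations $z_i z_j = q^{a_{ij}} z_j z_i$ in $A_{\omega}$ (explicitly, if we lift to the deformation ring $\RR$ one has $z_i z_j = t^{a_{ij}} z_j z_i$, and the induced bracket is $\{z_i, z_j\} = a_{ij} z_i z_j$, so $\pi_{ij} = a_{ij}$). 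The first step is therefore to record this explicitly: $A_{\omega}$ is a quantum torus $k_{\mathbf{q}}[z_1^{\pm1}, \dots, z_m^{\pm1}]$ and $B_{\omega}$ its semi-classical limit with bracket governed by the same exponent matrix.

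\medskip

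The main computation is then to check that for a monomial $z^{\mathbf{n}} = z_1^{n_1}\cdots z_m^{n_m}$, being \emph{central} in $A_{\omega}$ is equivalent to being \emph{Poisson-central} in $B_{\omega}$, and that both centres are spanned by such central monomials. For the quantum torus, $z^{\mathbf{n}}$ is central if and only if $\sum_j a_{ij} n_j = 0$ for all $i$ (since $z^{\mathbf{n}} z_i = q^{\sum_j a_{ij} n_j} z_i z^{\mathbf{n}}$ and $q$ is not a root of unity); since $Z(A_{\omega})$ is itself a Laurent polynomial ring, it is spanned by exactly these central monomials --- this is precisely why the generators in Figure~\ref{fig:centres} are monomials $E_i F_i^{-1}$. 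For the Poisson torus, using that $\{\cdot, \cdot\}$ is a biderivation and $\{z^{\mathbf{n}}, z_i\} = \big(\sum_j a_{ij} n_j\big) z^{\mathbf{n}} z_i$, we get that $z^{\mathbf{n}}$ is Poisson-central iff $\sum_j a_{ij} n_j = 0$ for all $i$ --- the \emph{same} linear condition. One must also argue that $PZ(B_{\omega})$ is spanned by Poisson-central monomials: since the $\HH$-action is by Poisson automorphisms and $B_{\omega}$ decomposes as a direct sum of $\HH$-eigenspaces each spanned by a single monomial, any Poisson-central element is a sum of Poisson-central monomials (the bracket respects the grading). Hence the lattice of exponent vectors $\mathbf{n}$ appearing in $Z(A_{\omega})$ coincides with that appearing in $PZ(B_{\omega})$, and applying the rewriting rule $X_{ij}\mapsto x_{ij}$, $[\wt{i}|\wt{j}]_q \mapsto [\wt{i}|\wt{j}]$ to a generating set of $Z(A_{\omega})$ yields a generating set of $PZ(B_{\omega})$.

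\medskip

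Finally, as usual it suffices to verify this for one representative $\omega$ in each of the $12$ orbits of Figure~\ref{fig:H_primes_nice}: the maps $\tau$, $\rho$, $S$ are (anti-)isomorphisms of both the quantum and the Poisson structures, they act compatibly on the generators via the rewriting rule (since they are defined by the same formulas on generators in both settings), and they send Ore sets $E_{\omega_1}$ to $E_{\omega_2}$ by \eqref{eq:maps between Ore sets, quantum}; hence they carry $Z(A_{\omega_1}) \to Z(A_{\omega_2})$ and $PZ(B_{\omega_1}) \to PZ(B_{\omega_2})$ compatibly, and the result for one orbit representative propagates to the whole orbit. The only mild subtlety --- the step I expect to be the main obstacle --- is the claim that $PZ(B_{\omega})$ contains \emph{no more} than the monomials coming from $Z(A_{\omega})$; this requires the grading argument above (Poisson-central elements are sums of Poisson-central homogeneous components, and homogeneous components of a Laurent polynomial torus are monomials), rather than any genuinely new computation. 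Everything else is bookkeeping with the explicit data in Figures~\ref{fig:localizations} and~\ref{fig:centres}.
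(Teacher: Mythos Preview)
Your proposal is correct and follows essentially the same approach as the paper: identify $A_{\omega}$ as a quantum torus with exponent matrix $(a_{ij})$, observe that $B_{\omega}$ is its semi-classical limit with bracket $\{w_i,w_j\}=a_{ij}w_iw_j$, and then show that the central monomials and Poisson-central monomials are governed by the same sublattice $\{\mathbf{n}:\sum_j a_{ij}n_j=0\ \forall i\}$. Two minor remarks: the paper cites \cite{OhSymplectic} (Lemma~2.1 and \S2.5) for the facts that $Z(A_\omega)$ and $PZ(B_\omega)$ are spanned by monomials, rather than giving your direct $\HH$-grading argument; and your final reduction to the $12$ orbit representatives via $\tau,\rho,S$ is unnecessary here, since the monomial argument already works uniformly for every $\omega$ without any case analysis.
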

\begin{proof}
Fix an $\omega \in S_3 \times S_3$.  From Figure~\ref{fig:localizations}, $A_{\omega} = k_{\mathbf{q}}[W_1^{\pm1}, \dots, W_n^{\pm1}]$ is a quantum torus, where $\mathbf{q} = (a_{ij})$ is an additively antisymmetric matrix and $W_iW_j = q^{a_{ij}}W_jW_i$.  (The values of the $a_{ij}$ can easily be calculated as in \cite{GL1}, but since their precise values have no impact on the proof and will not be used subsequently we do not define them here.)  By Proposition~\ref{res:the poisson localizations are scls}, $B_{\omega}$ is the semi-classical limit of $A_{\omega}$, and we can therefore easily compute its Poisson bracket as follows.  Write $B_{\omega} = k[w_1^{\pm1}, \dots, w_n^{\pm1}]$, where $w_i$ is the image of $W_i$ under the rewriting map $X_{ij} \mapsto x_{ij}$, and now we can observe that
\begin{align*}
\{w_i,w_j\} &= \frac{1}{t-1}(W_iW_j - W_jW_i) \quad \mod (t-1) \\
&= \frac{1}{t-1}(t^{a_{ij}}-1)W_jW_i \quad \mod (t-1)\\
&= (1 + t + \dots + t^{a_{ij}-1})W_iW_j \quad \mod (t-1) \\
&= a_{ij}w_iw_j.
\end{align*}
Let $\mathbb{Z}^n$ be the free abelian group of rank $n$ with basis $\{e_i\}_{i=1}^n$.  Then we may define two maps as follows:
\begin{align*}
\sigma&: \mathbb{Z}^n \times \mathbb{Z}^n \rightarrow k^{\times}: (e_i, e_j) \mapsto q^{a_{ij}}, \\
u&: \mathbb{Z}^n \times \mathbb{Z}^n \rightarrow k: (e_i, e_j) \mapsto a_{ij}.
\end{align*}
These define an alternating bicharacter and an antisymmetric biadditive map respectively (for definitions, see \cite[\S2]{OhSymplectic}; that $\sigma$ and $u$ satisfy the required properties follows directly from the $q$-commuting structure of $A_{\omega}$ and the Poisson structure of $B_{\omega}$). We define two subsets of $\mathbb{Z}^n$ as follows:
\begin{align*}
\mathbb{Z}^n_{\sigma} = \{\lambda \in \mathbb{Z}^n: \sigma(\lambda, \mu) = 1 \ \forall \mu \in \mathbb{Z}^n\},\\
\mathbb{Z}^n_u = \{\lambda \in \mathbb{Z}^n: u(\lambda, \mu) = 0 \ \forall \mu \in \mathbb{Z}^n\}.
\end{align*}
By \cite[\S2.5]{OhSymplectic}, the centre of $A_{\omega}$ is generated by the monomials $\{W^{\lambda}: \lambda \in \mathbb{Z}^n_{\sigma}\}$ (where we use the standard multi-index notation for monomials), while by \cite[Lemma~2.1]{OhSymplectic} the Poisson centre of $B_{\omega}$ is generated by the monomials $\{w^{\lambda}: \lambda \in \mathbb{Z}^n_u\}$.

It is clear from our definitions of $\sigma$ and $u$ that $\mathbb{Z}^n_{\sigma} = \mathbb{Z}^n_u$ in this case, and the result now follows.
\end{proof}
With this description for the Poisson centres $PZ(B_{\omega})$ in hand, we are now in a position to apply the Poisson Stratification theorem and obtain a description of the Poisson-primitive ideals of $\GL{3}$.
\begin{theorem}\label{res:thm, desc of poisson primitives up to localization, gl3}
Let $\omega \in S_3 \times S_3$, and let $I_{\omega}$ be the corresponding Poisson $\HH$-prime of $\GL{3}$ listed in Figure~\ref{fig:H_primes_gens}.  Then the Poisson-primitive ideals in the stratum 
\[Pprim_{\omega}(\GL{3}) = \left\{P \in Pprim(\GL{3}) : \bigcap_{h \in \HH} h(P) = I_{\omega}\right\}\]
correspond precisely to ideals of the form
\begin{equation}\label{eq:first description of poisson primitives}\left(\sum_{i=1}^n (z_i - \lambda_i)B_{\omega}\right) \cap \GL{3}/I_{\omega}, \quad (\lambda_1, \dots, \lambda_n) \in (k^{\times})^n,\end{equation}
where $PZ(B_{\omega}) = k[z_1^{\pm1}, \dots, z_n^{\pm1}]$ is the Poisson centre of $B_{\omega}$, and the generators $z_i$ ($1 \leq i \leq n$) are given in Figure~\ref{fig:centres}.

Conversely, every Poisson primitive ideal of $\GL{3}$ has this form (for an appropriate choice of $\omega$).
\end{theorem}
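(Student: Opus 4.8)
The statement is an instance of the Poisson Stratification Theorem (Theorem~\ref{res: stratification theorem, Poisson version}) and the Poisson Dixmier--Moeglin equivalence (Theorem~\ref{res: dixmier moeglin, Poisson version}) combined with the concrete analysis assembled in the preceding subsections. The plan is to verify that $\GL{3}$ (equipped with the Poisson bracket induced from \eqref{eq:poisson relns for nxn matrices}) satisfies the hypotheses of those theorems, identify the stratum localizations with the algebras $B_\omega$, and then read off the Poisson-primitive ideals from the maximal ideals of $PZ(B_\omega)$. The only genuinely new input needed is that $\GL{3}$ is affine, Noetherian, has finitely many Poisson $\HH$-primes, and that the $B_\omega$ have the expected shape; all of these are already established in the excerpt.

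First I would record that $\GL{3}$ is an affine Noetherian commutative Poisson $k$-algebra with a rational action of the torus $\HH = (k^\times)^6$ (from \S\ref{ss:translating H primes to SL3} and the remarks preceding Lemma~\ref{res:Poisson H ideals}), that $k$ is algebraically closed of characteristic zero, and that by Theorem~\ref{res:gl3 has only 36 h primes} it has exactly $36$ Poisson $\HH$-primes, in particular finitely many. Hence both Theorem~\ref{res: stratification theorem, Poisson version} and Theorem~\ref{res: dixmier moeglin, Poisson version} apply, and using the generalizations Proposition~\ref{res:poisson strat generalization 1} and Proposition~\ref{res:poisson strat generalization 2} we may replace the full eigenvector set $\mathcal{E}_{I_\omega}$ by the smaller multiplicative set $E_\omega$: indeed, by the Corollary following Proposition~\ref{res:the poisson localizations are scls} the localization $B_\omega = \qr{\GL{3}}{I_\omega}\big[E_\omega^{-1}\big]$ is Poisson $\HH$-simple, and by Proposition~\ref{res:the poisson localizations are scls} it is a commutative Laurent polynomial ring with multiplicative Poisson bracket, so Proposition~\ref{res:poisson strat generalization 2} gives a homeomorphism $Pspec(B_\omega) \approx spec(PZ(B_\omega))$ restricting to $Pprim(B_\omega) \approx \max(PZ(B_\omega))$.

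Next I would chain the correspondences. By Proposition~\ref{res:poisson strat generalization 1}, the stratum $Pspec_\omega(\GL{3})$ is homeomorphic to $Pspec(B_\omega)$ via localization and contraction, and by the Poisson Dixmier--Moeglin equivalence (condition (iv), maximality in the stratum) together with Proposition~\ref{res:poisson strat generalization 2} this restricts to a bijection between $Pprim_\omega(\GL{3})$ and $\max(PZ(B_\omega))$. By Proposition~\ref{res:centres of gl and qgl localizations agree}, $PZ(B_\omega) = k[z_1^{\pm1},\dots,z_n^{\pm1}]$ where the $z_i$ are the generators obtained from Figure~\ref{fig:centres} by the rewriting rule $X_{ij}\mapsto x_{ij}$, $[\wt{i}|\wt{j}]_q\mapsto[\wt{i}|\wt{j}]$; since $k$ is algebraically closed, every maximal ideal of this Laurent polynomial ring is $\langle z_1-\lambda_1,\dots,z_n-\lambda_n\rangle$ for a unique $(\lambda_1,\dots,\lambda_n)\in(k^\times)^n$ (note the $\lambda_i$ must be nonzero because the $z_i$ are units). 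Contracting such an ideal along the localization map $\GL{3}/I_\omega \to B_\omega$ and identifying $\GL{3}/I_\omega$ with its image, one obtains exactly $\left(\sum_{i=1}^n (z_i-\lambda_i)B_\omega\right)\cap \GL{3}/I_\omega$, which is the form \eqref{eq:first description of poisson primitives}. Conversely, since $\GL{3} = \bigsqcup_\omega Pspec_\omega(\GL{3})$ and hence $Pprim(\GL{3}) = \bigsqcup_\omega Pprim_\omega(\GL{3})$, every Poisson-primitive ideal arises this way for exactly one $\omega$.

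The main obstacle is not any single deep step but the bookkeeping: one must be careful that the contraction of a maximal ideal of $PZ(B_\omega)$ really gives a \emph{Poisson}-primitive ideal of $\GL{3}/I_\omega$ lying in the stratum of $I_\omega$ (rather than a smaller stratum), and that distinct $\lambda$ give distinct ideals. The first point follows from the fact that the homeomorphisms in Propositions~\ref{res:poisson strat generalization 1} and \ref{res:poisson strat generalization 2} are inclusion-preserving and strata-preserving, so a maximal element of $spec(PZ(B_\omega))$ pulls back to a maximal element of $Pspec_\omega(\GL{3})$, which by Theorem~\ref{res: dixmier moeglin, Poisson version}(iv)$\Leftrightarrow$(ii) is precisely a Poisson-primitive ideal; the second point is immediate from injectivity of the correspondence. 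Thus the proof is essentially a matter of assembling the already-proven ingredients in the right order.
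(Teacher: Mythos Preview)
Your proposal is correct and follows essentially the same route as the paper: invoke Propositions~\ref{res:poisson strat generalization 1} and~\ref{res:poisson strat generalization 2} to obtain the chain of homeomorphisms $Pspec_\omega(\GL{3})\approx Pspec(B_\omega)\approx spec(PZ(B_\omega))$, apply the Poisson Dixmier--Moeglin equivalence to restrict to primitives and maximal ideals, and then use Proposition~\ref{res:centres of gl and qgl localizations agree} together with algebraic closedness of $k$ to describe those maximal ideals explicitly. Your write-up is in fact slightly more detailed than the paper's in verifying the hypotheses (affineness, rationality of the $\HH$-action, finiteness of Poisson $\HH$-primes via Theorem~\ref{res:gl3 has only 36 h primes}) and in spelling out the converse direction via the partition $Pprim(\GL{3})=\bigsqcup_\omega Pprim_\omega(\GL{3})$.
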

\begin{proof}
By Propositions~\ref{res:poisson strat generalization 1} and \ref{res:poisson strat generalization 2} there are homeomorphisms
\begin{equation}\label{eq:reminder of homeomorphisms}Pspec_{\omega}(\GL{3}) \approx Pspec(B_{\omega}) \approx spec(PZ(B_{\omega})),\end{equation}
given by localization/contraction and contraction/extension respectively.  By the Poisson Dixmier-Moeglin equivalence (Theorem~\ref{res: dixmier moeglin, Poisson version}), this restricts to a homeomorphism 
\[Pprim_{\omega}(\GL{3}) \approx max(PZ(B_{\omega})).\]
In Proposition~\ref{res:centres of gl and qgl localizations agree} we have described the Poisson centre $PZ(B_{\omega})$: it is the Laurent polynomial ring $k[z_1^{\pm1}, \dots, z_n^{\pm1}]$ in the generators $z_i$ listed in Figure~\ref{fig:centres} (viewed as elements of $\GL{3}$ rather than $\QGL{3}$).  Since $k$ is algebraically closed, the maximal ideals of $PZ(B_{\omega})$ are precisely those of the form
\[\sum_{i=1}^n(z_i-\lambda_i)PZ(B_{\omega}), \quad (\lambda_1, \dots, \lambda_n) \in (k^{\times})^n.\]
By applying the homeomorphisms in \eqref{eq:reminder of homeomorphisms} we therefore obtain the description of the Poisson-primitive ideals given in \eqref{eq:first description of poisson primitives}.
\end{proof}
In \S\ref{s:pulling back to gens in the ring} we will build on this result to obtain generating sets in $\GL{3}$ (rather than in a localization) for the Poisson-primitive ideals.  First, however, we will turn our attention briefly to $\SL{3}$ and use our existing results to say something about the properties of the quotients $\SL{3}/I_{\omega}$.

\subsection{$\SL{3}/I_{\omega}$ is a UFD for each $\omega$}\label{ss:UFDs}

One consequence of the previous section is that we can use the localizations $\qr{\SL{3}}{I_{\omega}}\big[E_{\omega}^{-1}\big]$ to learn more about the structure of various factor rings of $\SL{3}$.  In particular, we will show that $\SL{3}/I_{\omega}$ is a UFD for each of the 36 Poisson $\HH$-primes $I_{\omega}$.  

Recall from Definition~\ref{def:NC UFD} that a Noetherian UFD is a prime Noetherian ring $A$ such that every height 1 prime ideal is generated by a single \textit{prime element}, i.e. some $p \in A$ such that $pA = Ap$ and $A/pA$ is a domain.  When $A$ is commutative this coincides with the standard definition of UFD \cite[Corollary~2.4]{Chatters1}.

We will make extensive use of the following two results, which are generalizations of Nagata's lemma \cite[Lemma~19.20]{Eisenbud1}.
\begin{lemma}\label{res:nc_nagata} \cite[Lemma~1.4]{quantumUFDs} \ 
Let $A$ be a prime Noetherian ring and $x$ a nonzero, non-unit, normal element of $A$ such that $\langle x \rangle$ is a completely prime ideal of $A$.  Denote by $Ax^{-1}$ the localization of $A$ at powers of $x$.  Then:
\begin{enumerate}[(i)]
\item If $P$ is a prime ideal of $A$ not containing $x$ and such that the prime ideal $PAx^{-1}$ of $Ax^{-1}$ is principal, then $P$ is principal.
\item If $Ax^{-1}$ is a Noetherian UFD, then so is $A$.
\end{enumerate}
\end{lemma}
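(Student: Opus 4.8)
The plan is to prove both parts by reducing everything to the statement that the relevant primes are \emph{principal}, meaning generated by a single normal element, and then to feed this into Definition~\ref{def:NC UFD}(1). First I would record the basic infrastructure. Since $x$ is normal, $xA=Ax$ is a two-sided ideal and $\{1,x,x^2,\dots\}$ is multiplicatively closed with $x^nA=Ax^n$. A short annihilator computation shows $x$ is regular: the right annihilator of a normal element is a two-sided ideal (if $xa=0$ and $b\in A$, then $xb=b'x$ by normality, so $xba=b'xa=0$), hence in the prime ring $A$ it must vanish, and similarly on the left. Thus $\{x^n\}$ is an Ore set of regular elements and $Ax^{-1}$ exists. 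I would then invoke the standard correspondence \cite[Theorem~10.20]{GW1}: contraction and extension give mutually inverse bijections between the primes of $Ax^{-1}$ and the primes of $A$ disjoint from $\{x^n\}$; because $x$ is normal, a prime $P$ meets $\{x^n\}$ precisely when $x\in P$, and for such $P$ one has $PAx^{-1}\cap A=P$.

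For part (i), starting from a normal generator of the principal ideal $PAx^{-1}$, I would clear denominators (legitimate since $x$ is a unit in $Ax^{-1}$) to obtain $p\in A$ with $pAx^{-1}=PAx^{-1}$, and then divide out $x$-factors so that $p\notin\langle x\rangle$. The heart of the argument is a \emph{Nagata-style division}: for $w\in P=pAx^{-1}\cap A$ one has $wx^n\in pA$ for some $n$, and I would show $w\in pA$ by downward induction on $n$. The inductive step reduces modulo $\langle x\rangle$: from $wx=pb\in xA$ and the fact that $A/\langle x\rangle$ is a \emph{domain} (this is exactly where complete primeness of $\langle x\rangle$ is used), one deduces $b\in xA$, and cancelling the regular normal element $x$ while tracking the automorphism $\tau$ defined by $xa=\tau(a)x$ returns $w\in pA$. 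Since $p$ is normal in $Ax^{-1}$, running the symmetric argument on the right yields $P=pA=Ap$, so $P$ is principal.

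For part (ii), assume $Ax^{-1}$ is a Noetherian UFD; I would verify the two requirements of Definition~\ref{def:NC UFD}(1) for $A$. First, $A$ has a height-one prime, namely $\langle x\rangle$: it is nonzero, proper and completely prime by hypothesis, and it has height one by the noncommutative principal ideal theorem for a single normal regular generator. Then for an arbitrary height-one prime $P$ of $A$ I would split into two cases. If $x\in P$ then $\langle x\rangle\subseteq P$ with both of height one, forcing $P=\langle x\rangle$, generated by the prime element $x$. If $x\notin P$, then $PAx^{-1}$ is a height-one prime of the UFD $Ax^{-1}$ (heights are preserved under localization away from $\{x^n\}$), hence principal, and part (i) gives $P=pA=Ap$. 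To see that $p$ is a genuine prime element in the sense of Definition~\ref{def:nc prime element}, I would note that $A/P$ embeds in $Ax^{-1}/PAx^{-1}$ (as $x$ is regular mod $P$), and the latter is a domain since $PAx^{-1}$ is generated by a prime element of the UFD; thus $P$ is completely prime. Every height-one prime of $A$ is therefore generated by a prime element, so $A$ is a Noetherian UFD.

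I expect the main obstacle to lie in the careful bookkeeping of part (i): transferring the normality of the generator from $Ax^{-1}$ down to $A$ and carrying the twisting automorphism $\tau$ through each cancellation of $x$, together with the subtle point of guaranteeing a generator with $p\notin\langle x\rangle$. The latter requires $\bigcap_n x^nA=0$, which I would extract from the Artin--Rees/Krull intersection property for the polynormal ideal $\langle x\rangle$ in the Noetherian prime ring $A$. Once the division lemma is established cleanly, part (ii) is essentially formal.
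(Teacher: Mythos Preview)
The paper does not give its own proof of this lemma: it is quoted verbatim from \cite[Lemma~1.4]{quantumUFDs} and used as a black box, so there is nothing in the thesis to compare your argument against.

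That said, your sketch is the standard Nagata-style argument and is essentially the one in the cited reference. The key steps --- clearing denominators to get a generator $p\in A$, peeling off factors of $x$ until $p\notin\langle x\rangle$, and the downward induction on $n$ from $wx^n\in pA$ to $w\in pA$ using that $A/\langle x\rangle$ is a domain --- are all correct, as is the reduction of (ii) to (i) via the principal ideal theorem and the localisation correspondence. The one place to be careful is exactly the one you flag: ensuring the $x$-stripping terminates. You can avoid appealing to Artin--Rees by noting that $I:=\bigcap_n x^nA$ satisfies $I=xI$ (cancel the regular element $x$), and then, since $I$ is finitely generated in the Noetherian ring $A$, a standard determinant-trick argument produces $a\in A$ with $(1-ax)I=0$; as $1-ax\ne 0$ (else $x$ is a unit) and $A$ is prime, $I=0$.
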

\begin{proposition}\label{res:nc_iterated_nagata} \cite[Proposition~1.6]{quantumUFDs} \ 
Let $A$ be a prime Noetherian ring and suppose that $d_1, \dots, d_t$ are nonzero normal elements of $A$ such that the ideals $d_1A,\dots, d_tA$ are completely prime and pairwise distinct.  Denote by $T$ the right quotient ring of $A$ with respect to the right denominator set generated by $d_1, \dots, d_t$.  Then if $T$ is a Noetherian UFD, so is $A$.
\end{proposition}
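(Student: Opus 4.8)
The plan is to deduce the statement from the single-element criterion Lemma~\ref{res:nc_nagata}(ii) by inverting the $d_i$ one at a time and running a downward induction from $T$ to $A$. First I would record the structural facts that make the intermediate localizations well behaved. Each $d_i$ is a nonzero normal element of the prime ring $A$, hence regular (a nonzero normal element of a prime ring is a non-zero-divisor), and $A/d_iA$ is a domain by complete primeness. Consequently the multiplicative set generated by any subcollection of the $d_i$ consists of regular normal elements, so it satisfies the Ore condition and is a denominator set; each localization $A[d_{i_1}^{-1},\dots,d_{i_s}^{-1}]$ therefore exists, is again prime (it embeds in, and is a localization of, the prime ring $A$) and Noetherian, and regularity/normality of elements is preserved under passing to it.

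Before starting the induction I would reorder the generators. Consider the finite poset on $\{1,\dots,t\}$ induced by inclusion of the ideals $d_iA$, and replace $d_1,\dots,d_t$ by a relabelling coming from a linear extension in which larger ideals receive smaller indices; explicitly, I arrange that $d_iA\subseteq d_jA$ forces $i>j$. This uses only that the ideals are pairwise distinct, so every inclusion among them is strict. Since the denominator set generated by the $d_i$ does not depend on their order, this reordering costs nothing. Now set $A^{(0)}=A$ and $A^{(j)}=A[d_1^{-1},\dots,d_j^{-1}]$ for $1\le j\le t$, so that $A^{(t)}=T$ and $A^{(j)}=A^{(j-1)}[d_j^{-1}]$.

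The inductive step is: if $A^{(j)}$ is a Noetherian UFD then so is $A^{(j-1)}$. I would apply Lemma~\ref{res:nc_nagata}(ii) to the prime Noetherian ring $A^{(j-1)}$ and the element $x=d_j$. By the remarks above $d_j$ is a nonzero regular normal element of $A^{(j-1)}$, and $A^{(j-1)}[d_j^{-1}]=A^{(j)}$ is a Noetherian UFD by the inductive hypothesis; it remains to check that $d_jA^{(j-1)}$ is a proper completely prime ideal. Using that localization commutes with passage to quotients (Proposition~\ref{res:localization and quotient commute}), applied with the ideal $d_jA$ and the denominator set generated by $d_1,\dots,d_{j-1}$, gives
\[
A^{(j-1)}/d_jA^{(j-1)}\;\cong\;(A/d_jA)\big[\overline{d_1}^{\,-1},\dots,\overline{d_{j-1}}^{\,-1}\big],
\]
where $\overline{d_i}$ denotes the image of $d_i$ in the domain $A/d_jA$. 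A localization of a domain is either a domain or the zero ring, so this quotient is completely prime as soon as it is nonzero, which is exactly the assertion that $d_j$ is a non-unit of $A^{(j-1)}$.

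The non-unit condition is the \emph{one real obstacle}, and it is the only place where the hypotheses genuinely enter. The displayed quotient is zero precisely when some product of $\overline{d_1},\dots,\overline{d_{j-1}}$ vanishes in $A/d_jA$, i.e. when some product of $d_1,\dots,d_{j-1}$ lies in $d_jA$; since $d_jA$ is completely prime this happens if and only if $d_i\in d_jA$, that is $d_iA\subseteq d_jA$, for some $i<j$. Our choice of ordering excludes exactly this, so $A^{(j-1)}/d_jA^{(j-1)}\neq 0$, the element $d_j$ is a non-unit, and Lemma~\ref{res:nc_nagata}(ii) applies. Running the induction downward from $A^{(t)}=T$, a Noetherian UFD by assumption, through $A^{(t-1)},\dots,A^{(1)}$ to $A^{(0)}=A$ then shows that $A$ is a Noetherian UFD, as required.
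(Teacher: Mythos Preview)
The paper does not give its own proof of this proposition; it is quoted verbatim from \cite[Proposition~1.6]{quantumUFDs} and used as a black box. Your argument is correct and is the natural way to recover it from Lemma~\ref{res:nc_nagata}(ii): invert the $d_i$ one at a time and descend. The reordering via a linear extension of the inclusion order on the $d_iA$ is exactly the device needed to ensure $d_j$ remains a non-unit in $A^{(j-1)}$, and your reduction of that to ``no $d_i$ with $i<j$ lies in $d_jA$'' via complete primeness is the right move. The one step you pass over---that $d_j$ stays normal in $A^{(j-1)}$---holds because the two-sided ideal $d_jA=Ad_j$ extends to a two-sided ideal of the localization at the (two-sided) Ore set generated by the normal elements $d_1,\dots,d_{j-1}$, and that extension is simultaneously $d_jA^{(j-1)}$ and $A^{(j-1)}d_j$.
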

Note that when $A$ is a commutative ring the conditions of Proposition~\ref{res:nc_iterated_nagata} reduce to requiring $A$ to be a Noetherian domain, and from Lemma~\ref{res:nc_nagata} we recover the standard statement of Nagata's lemma.

In \cite[Theorem~5.2]{GBrown}, Brown and Goodearl prove that $\QSL{3}$ is a Noetherian UFD.  However, their proof does \textit{not} generalize directly to the commutative case as it makes use of stratification theory, which cannot be used to understand the commutative ring structure of $\SL{3}$ as it does not ``see'' the non-Poisson prime ideals.

To illustrate this, we begin by proving the following general proposition for quantum algebras; this underpins the proof that $\QSL{3}$ is a Noetherian UFD but is not expanded upon in \cite{GBrown}.
\begin{proposition}\label{res:quantum UFD prop}
Let $A$ be a prime Noetherian ring with a $\HH$-action, which satisfies the conditions of the Stratification Theorem (Theorem~\ref{res:Stratification Theorem, quantum version}), has only finitely many $\HH$-primes and such that all prime ideals are completely prime.  Let $I$ be a $\HH$-prime in $A$.  Then the quotient $A/I$ is a Noetherian UFD if and only if each height 1 $\HH$-prime in $A/I$ is generated by a single normal element. 
\end{proposition}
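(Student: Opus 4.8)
The forward direction (if $A/I$ is a Noetherian UFD then every height~1 $\HH$-prime is generated by a single normal element) is immediate: by Definition~\ref{def:NC UFD}, every height~1 prime of a Noetherian UFD is of the form $pR$ for a prime element $p$, and a prime element is in particular normal. So the content is the converse, and the plan is to reduce the general case to the $\HH$-simple localization using the machinery of the Stratification Theorem together with Nagata-type lemmas (Lemma~\ref{res:nc_nagata}, Proposition~\ref{res:nc_iterated_nagata}).

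First I would replace $A$ by $A/I$, so we may assume $I = 0$ and $A$ is $\HH$-prime; we must show $A$ is a Noetherian UFD given that every height~1 $\HH$-prime of $A$ is generated by a normal element. The key point is that the $\HH$-primes of $A$ that sit \emph{inside} a given height~1 prime are exactly the height~1 $\HH$-primes (since $\HH$ has finitely many primes and everything is completely prime, the intersection $\bigcap_{h\in\HH}h(P)$ for $P$ a height~1 prime is again a prime, hence either $0$ or a height~1 $\HH$-prime). Let $d_1,\dots,d_t$ be normal generators of the finitely many height~1 $\HH$-primes $Q_1,\dots,Q_t$ of $A$; these are nonzero normal elements generating completely prime, pairwise distinct ideals, so Proposition~\ref{res:nc_iterated_nagata} applies: it suffices to prove that the localization $T := A[\mathcal{D}^{-1}]$, where $\mathcal{D}$ is the denominator set generated by $d_1,\dots,d_t$, is a Noetherian UFD.

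Next I would argue that $T$ is in fact $\HH$-simple: any nonzero $\HH$-prime of $A$ contains one of the height~1 $\HH$-primes $Q_i$ (descend a chain of $\HH$-primes, which is finite), hence meets $\mathcal{D}$ and becomes trivial in $T$. An $\HH$-simple Noetherian ring in which all primes are completely prime has every nonzero prime of height $\geq 1$, but more to the point: by the Stratification Theorem applied with $J = 0$, or by a direct argument, an $\HH$-simple ring of this type has the property that $\spec(T)$ is homeomorphic to $\spec(Z(T))$ with $Z(T)$ a (commutative) Laurent polynomial ring over a field, which is a UFD; pulling back, every height~1 prime of $T$ is generated by a single central — hence normal — element, and one checks it is a prime element. (Alternatively one invokes that a Noetherian $\HH$-simple ring whose prime spectrum is controlled by its centre in this way is automatically a Noetherian UFD.) Thus $T$ is a Noetherian UFD, and Proposition~\ref{res:nc_iterated_nagata} finishes the proof.

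\textbf{Main obstacle.} The delicate step is the claim that the localization $T$ at the height~1 $\HH$-primes is $\HH$-simple \emph{and} a Noetherian UFD. The $\HH$-simplicity requires knowing that the height~1 $\HH$-primes are ``cofinal from below'' among nonzero $\HH$-primes — this uses finiteness of $\Hspec(A)$ and that $\HH$-primes form a catenary-type poset, which is where the hypotheses of the Stratification Theorem are genuinely needed. Showing $T$ is a UFD then rests on the Stratification Theorem's identification of $\spec(T)$ with $\spec(Z(T))$ (Theorem~\ref{res:Stratification Theorem, quantum version}(ii),(iii)) together with the fact that height~1 primes of $T$ contract from height~1 primes of the Laurent polynomial ring $Z(T)$, which are principal and generated by prime elements; lifting principality back up through the localization map and verifying the generators are normal prime elements of $T$ is routine but must be done carefully, as the correspondence goes through both contraction/extension maps of the theorem.
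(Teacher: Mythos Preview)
Your proposal is correct and follows essentially the same approach as the paper: localize $A/I$ at normal generators of the height~1 $\HH$-primes, argue the resulting localization is $\HH$-simple so that the Stratification Theorem identifies its spectrum with that of a commutative Laurent polynomial centre (a UFD), and then pull the UFD property back via Proposition~\ref{res:nc_iterated_nagata}. You are right to flag the $\HH$-simplicity step as the delicate point---the paper handles it with a terse ``by definition'', so your explicit justification (every nonzero $\HH$-prime contains a height~1 $\HH$-prime, using finiteness of $\Hspec{A/I}$) is a welcome clarification rather than a divergence.
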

\begin{proof}
Since prime ideals are completely prime, it suffices to check that every height 1 prime is principal; the condition that each height 1 $H$-primes in $A/I$ is generated by a single normal element is therefore clearly necessary. 

However, we will now show that this condition is also sufficient.  Indeed, assume that all height 1 $\HH$-primes in $A/I$ (of which there are only finitely many) are principally generated, and let $\{u_1, \dots, u_n\}$ be a set of normal generators for them.  We therefore only need to focus on the height 1 primes which do not contain a non-zero $\HH$-prime in $A/I$, i.e. we need to show that all of the height 1 primes in the set
\begin{equation}\label{eq:ht1 primes with trivial H prime}
X = \{P/I: P \textrm{ is prime and }\bigcap_{h \in \HH}h(P/I) = 0\}
\end{equation}
are principally generated. 

By the Stratification Theorem, there are homeomorphisms
\begin{equation}\label{eq:homeos from Stratification}X \approx spec\left(\qr{A}{I}\big[E^{-1}\big]\right) \approx spec\left(Z\big(\qr{A}{I}\big[E^{-1}\big]\big)\right) \\
\end{equation}
where the first homeomorphism is given by localization and contraction, and the second is given by contraction and extension.  Here $E$ is any denominator set of regular $\HH$-eigenvectors in $A/I$ such that the localization $\qr{A}{I}\big[E^{-1}\big]$ is $\HH$-simple.  Further, by the Stratification Theorem $Z\big(\qr{A}{I}\big[E^{-1}\big]\big)$ is always a Laurent polynomial ring in finitely many variables.

Let $E_I$ be the multiplicative set in $A/I$ generated by $\{u_1, \dots, u_n\}$; since the $u_i$ are normal and $A/I$ is Noetherian this is automatically a denominator set, and by definition the localization $\qr{A}{I}\big[E_I^{-1}\big]$ must be $\HH$-simple.  However, it now follows immediately that every height 1 prime in $\qr{A}{I}\big[E_I^{-1}\big]$ must be principally generated, since by \eqref{eq:homeos from Stratification} all primes of $\qr{A}{I}\big[E_I^{-1}\big]$ are centrally generated and the centre is a commutative UFD.

Finally, since $E_I$ was generated by the set $\{u_1, \dots, u_n\}$ which satisfies the conditions of Proposition~\ref{res:nc_iterated_nagata}, we can apply this result to conclude that all height 1 primes in the set $X$ must be principal as well.
\end{proof}
In \cite{GBrown}, Brown and Goodearl prove that each height 1 $\HH$-prime in $\QSL{3}/I_{\omega}$ is generated by a single normal normal element, thus proving that $\QSL{3}/I_{\omega}$ is a Noetherian UFD for each $\HH$-prime $I_{\omega}$ \cite[Theorem~5.2]{GBrown}.  It is now clear from the proof that a Poisson version of Proposition~\ref{res:quantum UFD prop} is not sufficient to verify that $\GL{3}/I$ or $\SL{3}/I$ are commutative UFDs for Poisson $\HH$-primes $I$, since this approach can only tell us about the height 1 \textit{Poisson} primes.  

Instead, for each Poisson $\HH$-prime $I_{\omega}$ of $\SL{3}$, we will show that the generators of the corresponding Ore set $E_{\omega}$ from Figure~\ref{fig:Ore_sets_for_tori} satisfies the conditions of Nagata's lemma (Proposition~\ref{res:nc_iterated_nagata}).  Since the localizations $\qr{\SL{3}}{I_{\omega}}\big[E_{\omega}^{-1}\big]$ are isomorphic to Laurent polynomial rings over $k$ by Proposition~\ref{res:the poisson localizations are scls}, it will then follow that $\SL{3}/I_{\omega}$ must be a UFD for each $\omega$ as well.

\begin{proposition}\label{res:UFD Poisson SL3 quotients}
For any Poisson $\HH$-prime $I_{\omega}$ in $\SL{3}$, the quotient $\SL{3}/I_{\omega}$ is a commutative UFD.
\end{proposition}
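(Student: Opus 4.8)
The strategy is to reduce the claim to the commutative version of Nagata's lemma (Proposition~\ref{res:nc_iterated_nagata}), exactly as outlined in the paragraph preceding the statement. For a fixed $\omega \in S_3 \times S_3$, the ring $\SL{3}/I_{\omega}$ is a commutative Noetherian domain: it is Noetherian since it is a quotient of an affine algebra, and it is a domain because $I_{\omega}$ is a prime ideal of $\SL{3}$ (this follows from Lemma~\ref{res:prime H ideals}, Proposition~\ref{res:bijection of Hprimes}, or a direct check that the listed generators cut out an irreducible variety). By Proposition~\ref{res:the poisson localizations are scls}, the localization $B_{\omega} = \qr{\SL{3}}{I_{\omega}}\big[E_{\omega}^{-1}\big]$ is a commutative Laurent polynomial ring $k[z_1^{\pm1}, \dots, z_n^{\pm1}]$ over $k$ (we use here that the $\SL{3}$-localizations are governed by the same Ore sets and semi-classical-limit computations as the $\GL{3}$ ones, via the isomorphism $\SL{3}[z^{\pm1}] \cong \GL{3}$ of Lemma~\ref{res:poisson iso sln to gln}). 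A Laurent polynomial ring over a field is a commutative UFD. Thus we are in exactly the situation of Nagata's lemma: if the generators of $E_{\omega}$ are nonzero normal elements generating pairwise-distinct completely prime (in the commutative setting, just prime) ideals, then $\SL{3}/I_{\omega}$ inherits the UFD property from $B_{\omega}$.

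First I would record, once and for all, that in the commutative setting the hypotheses of Proposition~\ref{res:nc_iterated_nagata} simplify drastically: ``normal'' is automatic, and ``$dA$ completely prime'' just means ``$d$ is a prime element of the domain $\SL{3}/I_{\omega}$''. So for each of the relevant $\omega$ I need to check that each generator appearing in the $E_{\omega}$-column of Figure~\ref{fig:Ore_sets_for_tori} (read in $\SL{3}/I_{\omega}$ rather than $\QGL{3}/I_{\omega}$, and translated to $\SL{3}$ via the quotient map $\pi$ and the isomorphism $\theta$) is a prime element of $\SL{3}/I_{\omega}$, and that the principal ideals they generate are pairwise distinct. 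Next I would invoke the $\tau$, $\rho$, $S$ symmetry: since these maps are automorphisms of the commutative algebra $\SL{3}$ (Poisson structure irrelevant here) permuting the ideals $I_{\omega}$ according to Figure~\ref{fig:H_primes_nice}, and they carry the Ore sets $E_{\omega}$ to one another by the rules in \eqref{eq:maps between Ore sets, quantum}, it suffices to treat one representative $\omega$ from each of the $12$ orbits. For the $25$ ideals generated only by $1\times1$ minors the quotient $\SL{3}/I_{\omega}$ is visibly a polynomial ring modulo $Det - 1$ (or a localization thereof) in few variables, and the relevant generators $x_{ij}$, $[\wt i|\wt j]$ are easily seen to be prime elements by a direct degree/irreducibility argument; the handful of remaining orbits, where a $2\times2$ minor or a product like $[\wt1|\wt1]$ enters, are handled one at a time by exhibiting $\SL{3}/I_{\omega}$ as a localized polynomial extension of a coordinate ring of the form $\ML{2}/\langle Det_2 - c\rangle$ or similar, in which the generator in question is a coordinate function.

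The main obstacle is the bookkeeping in this last group of cases: for a few $\omega$, showing that (say) $[\wt1|\wt1]$ or $x_{22}$ really is a prime element of $\SL{3}/I_{\omega}$ and that the resulting principal ideals are distinct requires understanding the quotient ring concretely, which is precisely where the quantum proof in \cite{GBrown} used stratification theory — a tool unavailable here because it is blind to non-Poisson primes. I expect to get around this by the same trick used for $\SL{3}$ throughout the chapter: pass to the commutative algebra structure only (forgetting the Poisson bracket), identify $\SL{3}/I_{\omega}$ with an explicit iterated (localized) polynomial extension over $k$ using the generators and relations, and read off primality of the relevant generator as primality of an indeterminate in a polynomial ring, which is standard. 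Finitely many cases, each elementary once set up; then Proposition~\ref{res:nc_iterated_nagata} closes the argument and $\SL{3}/I_{\omega}$ is a commutative UFD. The corresponding statement for $\GL{3}/I_{\omega}$, if needed, follows since $\GL{3}/I_{\omega}$ is a localization of $\SL{3}/I_{\omega}$ at the image of $z$ (Lemma~\ref{res:poisson iso sln to gln}) and localizations of UFDs are UFDs.
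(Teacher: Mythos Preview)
Your proposal is correct and follows essentially the same route as the paper: reduce to the commutative Nagata lemma (Proposition~\ref{res:nc_iterated_nagata}) via the Laurent-polynomial localization, use the $\tau,\rho,S$ symmetries to cut down to one representative per orbit, and then verify case-by-case that the generators of $E_\omega$ are prime elements generating distinct ideals in $\SL{3}/I_\omega$. The paper organizes the case analysis slightly differently---it first observes that the ``column~2'' generators of Figure~\ref{fig:Ore_sets_for_tori} already generate known $\HP$-primes (hence are prime and distinct for free), and then handles the ``column~3'' generators in four groups by explicitly identifying each quotient $\SL{3}/(I_\omega,\text{generator})$ with a polynomial extension of $\GL{2}$ or a localized polynomial ring---but this is exactly the concrete identification you anticipate in your final paragraph.
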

\begin{proof}
Let $\omega \in S_3 \times S_3$, and let $E_{\omega}$ denote the multiplicative set of $\HH$-eigenvectors for $\GL{3}$ defined in Figure~\ref{fig:Ore_sets_for_tori}.  Let $\pi$ be the natural map $\GL{3} \rightarrow \SL{3}$, and observe that $\pi(E_{\omega})$ defines a set of $\HP$-eigenvectors in $\SL{3}$.  Using the fact from Proposition~\ref{res:localization and quotient commute} that quotient and localization commute, we see that
\begin{align*}\qr{\SL{3}}{I_{\omega}}\big[\pi(E_{\omega})^{-1}\big] &\cong \Big(\GL{3}/(I_{\omega},Det-1)\Big)\big[\pi(E_{\omega})^{-1}\big]\\
& \cong \Big(\qr{\GL{3}}{I_{\omega}}\big[E_{\omega}^{-1}\big]\Big)/(Det-1)\\
& \cong B_{\omega}/(Det-1)
\end{align*}
Now consider the generators for $B_{\omega}$ given in Figure~\ref{fig:localizations}.  In each case, $Det$ appears as a generator or can be obtained by a change of variables: for example, when $\omega = (321,132)$ the image of $Det$ is $x_{11}[\wt{1}|\wt{1}]$ and both $x_{11}$ and $[\wt{1}|\wt{1}]$ appear as generators, so we may replace either $x_{11}$ or $[\wt{1}|\wt{1}]$ by $Det$ without affecting the structure of $B_{\omega}$.

It is now clear that $B_{\omega}/(Det-1)$ is a Laurent polynomial ring in $n-1$ variables whenever $B_{\omega}$ is a Laurent polynomial ring in $n$ variables.  These are commutative UFDs, and we apply the generalization of Nagata's lemma in Proposition~\ref{res:nc_iterated_nagata}: if we can show that the generators of the Ore set $\pi(E_{\omega})$ each generate distinct prime ideals in $\SL{3}/I_{\omega}$, then $\SL{3}/I_{\omega}$ will itself be a UFD.

Since we are only concerned with the commutative algebra structure of $\SL{3}/I_{\omega}$ we may ignore the Poisson structure and view the Poisson (anti-)isomorphisms in Figure~\ref{fig:H_primes_nice} just as isomorphisms of commutative rings; hence up to isomorphism there are only 12 cases to consider.  

Let $I_{\omega}$ be one of the 12 ideals in Figure~\ref{fig:Ore_sets_for_tori} (which correspond precisely to the 12 isomorphism classes).  First, when $\omega = (321,321)$ or $(123,123)$, we have $\SL{3}/I_{\omega} \cong \SL{3}$ or $k[x_{11}^{\pm1},x_{22}^{\pm1}]$ respectively, and these are clearly both UFDs.  For the remaining 10 cases, let $X_{\omega}$ be the set of generators for the corresponding Ore set from Figure~\ref{fig:Ore_sets_for_tori}; we will show that the elements of $X_{\omega}$ generate pairwise distinct prime ideals.

We first consider the elements appearing in the second column of Figure~\ref{fig:Ore_sets_for_tori}; we will show that they generate distinct $\HP$-primes.  Whenever $\omega \neq (231,231)$, if $g_{\omega}$ appears in column 2 and row $I_{\omega}$ then $I_{\omega} + \langle g_{\omega} \rangle$ is easily seen to be a $\HP$-prime in $\SL{3}$; hence $\langle g_{\omega} \rangle$ is prime in $\SL{3}/I_{\omega}$.  Similarly, any two such ideals are distinct in $\SL{3}$ and hence distinct in $\SL{3}/I_{\omega}$ as well.

For $\omega = (231,231)$, we need to check that $Q_1:=I_{\omega} + \langle [\wt{2}|\wt{1}] \rangle$ and $Q_2:= I_{\omega} + \langle [\wt{3}|\wt{2}] \rangle$ are genuine $\HP$-primes in $\SL{3}$.  We note that
\[Q_1 = \langle [\wt{3}|\wt{1}],[\wt{2}|\wt{1}],x_{13}\rangle = S(P),\]
where $P:= \langle x_{12},x_{13},[\wt{1}|\wt{3}]\rangle$ is a $\HP$-prime, and $S$ is as always the antipode map.  As observed in \S\ref{s:H primes background,defs}, we have $\rho^2 = S^2 = id$ since $\SL{3}$ is commutative; using Figure~\ref{fig:H_primes_nice}, it is now clear that $S(P) = \rho^{-2} \circ S^{-1}(P) = I_{231,132}$ and hence $Q_1 = I_{231,132}$.  By a similar argument, $Q_2 = I_{231,213}$.

Thus in each of the 10 cases of interest to us, the elements in the second column of Figure~\ref{fig:Ore_sets_for_tori} generate distinct height 1 $\HP$-primes in $\SL{3}/I_{\omega}$.  We now consider the elements in the third column of Figure~\ref{fig:Ore_sets_for_tori} for each case; these split into four broad groups, which we treat separately.

\textbf{Case I}: $\omega$ = (123,132), (213,132) or (123,312). 

In each of these cases third column of Figure~\ref{fig:Ore_sets_for_tori} is empty and there is nothing to check.

\textbf{Case II}: $\omega$ = (132,132), (132,312), (321,123), (321,132) or (231,312).

The relevant information from Figure~\ref{fig:Ore_sets_for_tori} can be summarised as follows:
\begin{center}\begin{tabular}{c|ccccc}
\multirow{2}{*}{$\omega$}
&
\begin{smallarray}{m132132}
{
 \circ & \bullet & \bullet \\
 \bullet & \circ & \circ \\
 \bullet & \circ & \circ \\
};
\end{smallarray}
&
\begin{smallarray}{m132312}
{
 \circ & \circ & \bullet \\
 \bullet & \circ & \circ \\
 \bullet & \circ & \circ \\
};
\end{smallarray}
&
\begin{smallarray}{m321123}
{
 \circ & \bullet & \bullet \\
\circ & \circ & \bullet \\
\circ & \circ & \circ \\
};
\end{smallarray}
&
\begin{smallarray}{m321132}
{
 \circ & \bullet & \bullet \\
\circ & \circ & \circ \\
\circ & \circ & \circ \\
};
\end{smallarray}
&
\begin{smallarray}{m231312}
{
 \circ & \circ & \bullet \\
\circ & \circ & \circ \\
\bullet & \circ & \circ \\
};
\end{smallarray}
\\
 & $(132,132)$ & $(132,312)$ & $(321,123)$ & $(321,132)$ & $(231,312)$ \\ \hline
Elements to check & $x_{33}$ & $x_{33}$ & $x_{21}$ & $x_{32}$, $x_{33}$ & $x_{33}$, $[\wt{1}|\wt{1}]$
\end{tabular}\end{center} 

Consider first $\omega$ = (132,132).  We need to show that $x_{33}$ is prime in $\SL{3}/I_{132,132}$; this is equivalent to checking that $B := \SL{3}/(I_{132,132}, x_{33})$ is a domain.  Observe that $Det = x_{11}x_{23}x_{32} = 1$ in $B$, and so
\[B \cong k[x_{11}^{\pm1},x_{22},x_{23}^{\pm1}]\]
is easily seen to be a domain.

The other cases proceed similarly: in each case, we observe that $Det$ becomes a monomial modulo the element we would like to check is prime, so the quotient is simply a localization of a polynomial ring.  The only case requiring some care is checking that $[\wt{1}|\wt{1}]$ is prime in (231,312): here $Det = -x_{12}x_{21}x_{33} = 1$  and so
\[B:=\SL{3}/(I_{231,312},[\wt{1}|\wt{1}]) \cong k[x_{11},x_{21}^{\pm1}, x_{22},x_{23}, x_{32},x_{33}^{\pm1}]/(x_{22}x_{33}-x_{23}x_{32}).\]
However, since $x_{33}$ is invertible we can observe that $x_{22} = x_{33}^{-1}x_{23}x_{32}$ and hence
\[B \cong k[x_{11},x_{12}^{\pm1},x_{21}^{\pm1},x_{23}, x_{32},x_{33}^{\pm1}]\]
is a domain.

\textbf{Case III}: $\omega$ = (231,231).

\begin{center}\begin{tabular}{c}
\begin{smallarray}{m231231}
{
 \circ &  &  \\
\circ &  &  \\
\bullet & \circ & \circ \\
};
\MyZ(1,2)
\end{smallarray}
\\
$(231,231)$
\end{tabular}\end{center} 
We need to verify that $x_{33}$ and $[\wt{1}|\wt{1}]$ are prime in $\SL{3}/I_{\omega}$, i.e. that
\[B_1:=\SL{3}/(I_{231,231},x_{33}), \quad B_2 := \SL{3}/(I_{231,231},[\wt{1}|\wt{1}])\]
are both domains.  For $B_1$, we observe that the image of the determinant $Det$ in this quotient ring is $-x_{32}[\wt{3}|\wt{2}]$; this will allow us to identify a subalgebra of $B_1$ with $\GL{2}$ as follows.  Write $\GL{2} = k[a,b,c,d][(ad-bc)^{-1}]$, then identify $\{x_{11},x_{13},x_{21},x_{23}\}$ in $B_1$ with $\{a,b,c,d\}$ in $\GL{2}$, and $-x_{32}$ with $(ad-bc)^{-1}$.

Under this identification we obtain an isomorphism of commutative algebras
\[B_1 \cong \GL{2}[x_{12},x_{22}]/(x_{11}d-bx_{22}).\]
As a polynomial extension of a UFD, $\GL{2}[x_{12},x_{22}]$ is a UFD itself, so we can apply Eisenstein's criterion to see that $(x_{11}d-bx_{22})$ generates a prime ideal in $\GL{2}[x_{12},x_{22}]$.  Hence $B_1$ is a domain as required.

Similarly, we obtain the isomorphism
\[B_2 \cong \GL{2}[x_{11},x_{22},x_{23}]/(ax_{23}-bx_{22}, x_{22}d - x_{23}c).\]
In order to simplify this quotient, we can observe that 
\begin{equation}\label{eq:simplifying an ideal in ufd proof}\begin{aligned}
d(ax_{23} - bx_{22}) + b(x_{22}d-x_{23}c) &= (ad-bc)x_{23}, \\
c(ax_{23} - bx_{22}) + a(x_{22}d-x_{23}c) &= (ad-bc)x_{22}.
\end{aligned}\end{equation}
Since $ad-bc$ is invertible, we see from \eqref{eq:simplifying an ideal in ufd proof} that $(ax_{23}-bx_{22},x_{22}d-x_{23}c)$ is nothing but the ideal $(x_{22},x_{23})$.  It is now clear that $B_2 \cong \GL{2}[x_{11}]$ is a domain, as required.

\textbf{Case IV}: $\omega$ = (321,312). 

\begin{center}\begin{tabular}{c}
\begin{smallarray}{m321312}
{
 \circ & \circ & \bullet \\
\circ & \circ & \circ \\
\circ & \circ & \circ \\
};
\end{smallarray}
\\
$(321,312)$
\end{tabular}\end{center} 

There are three elements that we need to check are prime: $x_{11}$, $x_{21}$ and $[\wt{3}|\wt{3}]$.  By a similar analysis to Case III, we obtain
\begin{align*}
\SL{3}/(I_{321,312},x_{11}) &\cong \GL{2}[x_{22},x_{32}], \\
\SL{3}/(I_{321,312},[\wt{3}|\wt{3}]) &\cong \GL{2}[x_{21},x_{22},x_{33}]/(ax_{22}-bx_{21}),
\end{align*}
which we have already observed are domains.  Finally, we consider
\begin{align*}B&:=\SL{3}/(I_{321,312},x_{21}) \\
&\cong \ML{3}/(Det-1,x_{13},x_{21})\\
& \cong k[x_{11},x_{12},x_{22},x_{23},x_{31},x_{32},x_{33}]/(x_{11}[\wt{1}|\wt{1}]  + x_{12}x_{23}x_{31} - 1).\end{align*}
We are factoring by an element which is linear as a polynomial in $x_{11}$, with coefficients in $k[x_{12},x_{22},x_{23},x_{31},x_{32},x_{33}]$.  It will therefore be irreducible (and hence prime since we are working in a commutative polynomial ring) if $[\wt{1}|\wt{1}] = x_{22}x_{33} - x_{23}x_{32}$ and $x_{12}x_{23}x_{31}-1$ have no non-invertible factors in common, but this is immediately clear.  $B$ is therefore a domain, as required.

This covers all of the 10 isomorphism classes of algebras $\SL{3}/I_{\omega}$ under consideration: in each case, the set $X_{\omega}$ satisfies the conditions of Proposition~\ref{res:nc_iterated_nagata} as required.  We therefore conclude that $\SL{3}/I_{\omega}$ is a UFD for any $\omega \in S_3 \times S_3$.
\end{proof}
It is quite easy to prove a similar result for $\GL{3}/I_{\omega}$, although we will not do so here.  While it has so far been easier to work with $\GL{3}$ rather than $\SL{3}$, the advantages of $\SL{3}$ would become clear if we moved on to consider the Poisson-prime ideals rather than the Poisson-primitives: since the Poisson centres $PZ\left(\qr{\SL{3}}{I_{\omega}}\big[E_{\omega}^{-1}\big]\right)$ are Laurent polynomial rings on at most \textit{two} variables (rather than three variables for $GL_3$) the non-maximal prime ideals will have height $\leq 1$ and can therefore be understood (to some extent at least).

Our first aim, however, is to understand the Poisson-primitive ideals in terms of generators within $\GL{3}$ or $\SL{3}$ themselves rather than a localization of these rings.  This is the focus of the next section.

\section{Pulling back to generators in the ring}\label{s:pulling back to gens in the ring}

While Theorem~\ref{res:thm, desc of poisson primitives up to localization, gl3} gives a full description of the Poisson-primitive ideals of $\GL{3}$, it only tells us the generators of each ideal up to localization.  The aim of this section is to obtain a description for the Poisson-primitive ideals in terms of generating sets in $\GL{3}$ itself, in a similar manner to the quantum case covered in \cite{GL1}.

For $\omega \in S_3\times S_3$ write $PZ(B_{\omega}) = k[z_1^{\pm1},\dots, z_n^{\pm1}]$, where the $z_i$ have the form listed in Figure~\ref{fig:centres}.  Each $z_i$ is written in the form $e_if_i^{-1}$, where $e_i$ and $f_i$ are both elements of the Ore set $E_{\omega}$.  Since $k$ is algebraically closed, the maximal ideals of $PZ(B_{\omega})$ are precisely those of the form
\[M_{\lambda} = \langle z_1 - \lambda_1, \dots, z_n - \lambda_n \rangle, \quad \lambda = (\lambda_1, \dots, \lambda_n) \in (k^{\times})^n.\]
and so by Proposition~\ref{res:poisson strat generalization 2} the Poisson primitive ideals of $B_{\omega}$ are simply the extensions of these ideals to $B_{\omega}$, i.e.
\[P_{\lambda} := M_{\lambda}B_{\omega}, \quad \lambda = (\lambda_1, \dots, \lambda_n) \in (k^{\times})^n.\]
By Proposition~\ref{res:poisson strat generalization 1} and Theorem~\ref{res: dixmier moeglin, Poisson version}, these in turn correspond homeomorphically to the Poisson primitive ideals
\[Pprim_{\omega}(\GL{3}) = \{P/I_{\omega} : P \in Pprim(\GL{3}), P \supseteq I_{\omega}, \bigcap_{h \in \HH}h(P/I_{\omega}) = 0\}\]
Our aim is therefore to find generators for the Poisson primitive ideals $P_{\lambda} \cap \GL{3}/I_{\omega}$ for $\lambda \in (k^{\times})^n$ and for each $\omega \in S_3\times S_3$.  Similarly to \cite{GL1}, we will show that
\begin{equation}\label{eq:prediction of generators of primitives}P_{\lambda} \cap \qr{\GL{3}}{I_{\omega}} = \langle e_1 - \lambda_1 f_1, \dots, e_n - \lambda_n f_n\rangle.\end{equation}
The first step, quite naturally, is to check that the ideals on the RHS of \eqref{eq:prediction of generators of primitives} are closed under the Poisson bracket in $\GL{3}/I_{\omega}$.
\begin{lemma}\label{res:predicted primitives are Poisson ideals}
Let $PZ(B_{\omega}) = k[(e_1f_1^{-1})^{\pm1}, \dots, (e_nf_n^{-1})^{\pm1}]$, for $\omega \in S_3 \times S_3$ and the choices of $e_if_i^{-1}$ given in Figure~\ref{fig:centres}.  Then for any $\lambda = (\lambda_1, \dots, \lambda_n) \in (k^{\times})^n$, the ideal
\begin{equation}\label{eq:def of ideal, pullback of primitive}Q_{\lambda}:=\langle e_1 - \lambda_1 f_1, \dots, e_n - \lambda_n f_n\rangle\end{equation}
is a Poisson ideal in $\GL{3}/I_{\omega}$.
\end{lemma}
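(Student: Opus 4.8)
The plan is to exploit the fact that $PZ(B_\omega)$ is a \emph{Poisson centre}, so each generator $z_i = e_i f_i^{-1}$ of $PZ(B_\omega)$ Poisson-commutes with everything in $B_\omega$. Concretely, fix $\omega$ and $\lambda$, and work inside the localization $B_\omega = \qr{\GL{3}}{I_\omega}\big[E_\omega^{-1}\big]$, where $e_i, f_i \in E_\omega$ are invertible. For each $i$ we have $\{z_i, \cdot\} = 0$ on $B_\omega$, i.e. $\{e_i f_i^{-1}, b\} = 0$ for all $b \in B_\omega$. Expanding this using the localization formula \eqref{eq:extend Poisson bracket to localization} and the fact that $f_i$ is Poisson-normal (being a minor, by Lemma~\ref{res:Poisson normal elements}, or in the worst case by a direct check from the relations \eqref{eq:Preln1}--\eqref{eq:Preln4}) gives
\[
\{e_i, b\} f_i^{-1} - \{f_i, b\} e_i f_i^{-2} = 0,
\]
hence $\{e_i, b\} f_i = \{f_i, b\} e_i$, and so $f_i \{e_i, b\} = e_i \{f_i, b\}$ in $B_\omega$ for all $b$. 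This is the key identity.

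From this I would compute, for an arbitrary $b \in B_\omega$,
\[
\{e_i - \lambda_i f_i, b\} = \{e_i, b\} - \lambda_i \{f_i, b\}.
\]
Using $f_i\{e_i,b\} = e_i\{f_i,b\}$, multiply through appropriately: since $f_i$ is normal we may write $\{f_i, b\} = f_i c$ for some $c \in B_\omega$, and then $\{e_i, b\} = e_i c$ as well (cancelling the normal invertible $f_i$), giving
\[
\{e_i - \lambda_i f_i, b\} = e_i c - \lambda_i f_i c = (e_i - \lambda_i f_i)c \in (e_i - \lambda_i f_i) B_\omega.
\]
Thus each generator $e_i - \lambda_i f_i$ of $Q_\lambda B_\omega$ is Poisson-normal in $B_\omega$, so $Q_\lambda B_\omega$ is a Poisson ideal of $B_\omega$. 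It follows that $Q_\lambda B_\omega \cap \qr{\GL{3}}{I_\omega}$ is a Poisson ideal of $\qr{\GL{3}}{I_\omega}$; the final step is to observe that $Q_\lambda$ (the ideal generated in $\qr{\GL{3}}{I_\omega}$) is already closed under the Poisson bracket — one checks directly that $\{e_i - \lambda_i f_i, x_{kl}\}$ lies in $Q_\lambda$ for each generator $x_{kl}$ of $\GL{3}/I_\omega$, which follows from the same normality computation carried out at the level of $\GL{3}/I_\omega$ rather than $B_\omega$ (the elements $e_i, f_i$ and the relevant brackets all make sense there), and then extends to all of $\qr{\GL{3}}{I_\omega}$ by the Leibniz rule.

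The potential obstacle is the normality claim: in Figure~\ref{fig:centres} the $e_i$ and $f_i$ are products of generators and $2\times 2$ minors, which need not be normal in $\GL{3}/I_\omega$ individually, only in $B_\omega$ after localizing. So the cleanest route is to do the whole argument inside $B_\omega$ (where everything in $E_\omega$ is invertible and the normality of the $f_i$ is either immediate or reduces to Lemma~\ref{res:Poisson normal elements} together with centrality of $Det$), conclude $Q_\lambda B_\omega$ is Poisson, and then \emph{contract}: since $Q_\lambda \subseteq Q_\lambda B_\omega \cap \qr{\GL{3}}{I_\omega}$ and the latter is Poisson, and since the reverse inclusion $Q_\lambda B_\omega \cap \qr{\GL{3}}{I_\omega} = Q_\lambda$ will be needed anyway for the description \eqref{eq:prediction of generators of primitives} (and can be established separately, e.g. by a Hilbert-series or freeness argument as in \cite{GL1}), we get that $Q_\lambda$ itself is a Poisson ideal. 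If one wants to avoid invoking that contraction equality here, the alternative is to verify $\{e_i - \lambda_i f_i, x_{kl}\} \in Q_\lambda$ by brute force from the relations \eqref{eq:Preln1}--\eqref{eq:Preln4} for each of the twelve orbit representatives in Figure~\ref{fig:H_primes_nice}, using the symmetries $\tau, \rho, S$ to reduce the casework — tedious but entirely routine.
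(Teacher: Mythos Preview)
Your core computation is exactly the paper's: use Poisson centrality of $ef^{-1}$ in $B_\omega$ to get $\{e,a\}f=\{f,a\}e$, then combine with Poisson normality of $f$ to conclude that $e-\lambda f$ is Poisson normal. The place where you hedge, however, is precisely the point the paper nails down cleanly, and it lets the argument stay entirely inside $\GL{3}/I_\omega$ without any contraction from $B_\omega$ or brute-force casework.

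The key observation you are missing is that the denominators $f_i$ in Figure~\ref{fig:centres} are not arbitrary products of minors: reading the table, each $f_i$ is either $1$ or a single generator $x_{13}$, $x_{31}$, $x_{12}$, $x_{23}$, $x_{21}$, or $x_{32}$. Now $x_{13}$ and $x_{31}$ are Poisson normal in $\GL{3}$ outright (they generate Poisson primes), while by direct computation $x_{21},x_{32}$ are Poisson normal modulo $x_{31}$ and $x_{12},x_{23}$ are Poisson normal modulo $x_{13}$. One then checks from Figure~\ref{fig:H_primes_gens} that whenever $x_{21}$ or $x_{32}$ appears as a denominator the ideal $I_\omega$ already contains $x_{31}$, and similarly for the other pair. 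Thus each $f_i$ is Poisson normal in $\GL{3}/I_\omega$ itself, not merely in $B_\omega$; the remaining 24 cases follow by applying $\tau,\rho,S$. With this in hand your computation gives, for $a\in\GL{3}/I_\omega$,
\[
\{e-\lambda f,a\}f=\{f,a\}e-\lambda\{f,a\}f=(e-\lambda f)\{f,a\}=(e-\lambda f)r_af
\]
with $r_a\in\GL{3}/I_\omega$, and since $\GL{3}/I_\omega$ is a domain one cancels $f$ to get $\{e-\lambda f,a\}=(e-\lambda f)r_a\in Q_\lambda$. This is the paper's argument; your detour through $B_\omega$ and the contraction $Q_\lambda B_\omega\cap\GL{3}/I_\omega=Q_\lambda$ is unnecessary (and would be awkward here, since that equality is the content of the \emph{next} proposition).
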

\begin{proof}
We first observe that each $f_i$ is Poisson normal in $\GL{3}/I_{\omega}$ (see Definition~\ref{def:Poisson central, normal} for the definition of Poisson normal).  This is clear when $f_i = x_{13}$ or $x_{31}$, since these both generate Poisson primes in $\GL{3}$.  By direct computation we see that $x_{21}$ and $x_{32}$ are Poisson normal modulo $x_{31}$, while $x_{12}$ and $x_{23}$ are Poisson normal modulo $x_{13}$.  This covers all the denominators appearing in Figure~\ref{fig:centres}, and since Poisson isomorphisms and anti-isomorphisms must map Poisson normal elements to Poisson normal elements, the same conclusion follows for the other 24 cases.

Let $e - \lambda f$ be one of the generators appearing in \eqref{eq:def of ideal, pullback of primitive}; we will not need to work with more than one generator at once so we may dispense with the subscripts.  Since $ef^{-1}$ is Poisson central in $B_{\omega} = \qr{\GL{3}}{I_{\omega}}\big[E_{\omega}^{-1}\big]$, for all $a \in \GL{3}/I_{\omega}$ we have
\[0= \{ef^{-1},a\} = \{e,a\}f^{-1} - \{f,a\}ef^{-2},\]
and hence
\[\{e,a\}f = \{f,a\}e.\]
Combining this with the fact that $f$ is Poisson normal in $\GL{3}/I_{\omega}$, we see that
\begin{align*}
\{e-\lambda f, a\}f &= \{e,a\}f - \lambda\{f,a\}f\\
&= \{f,a\}e - \lambda \{f,a\}f \\
&= (e-\lambda f)\{f,a\} \\
&= (e-\lambda f)r_af
\end{align*}
for some element $r_a \in \GL{3}/I_{\omega}$ which depends on $a$.  Since $\GL{3}/I_{\omega}$ is a domain, we thus obtain $\{e-\lambda f,a\} = (e-\lambda f)r_a$ for any $a \in \GL{3}/I_{\omega}$.  It is now clear that the ideal in \eqref{eq:def of ideal, pullback of primitive} is always a Poisson ideal in $\GL{3}/I_{\omega}$.
\end{proof}

\begin{proposition}\label{res:checking the generators of primitives in GL3}
Let $\omega \in S_3 \times S_3$ and write $PZ(B_{\omega}) \cong k[(e_1f_1^{-1})^{\pm1}, \dots, (e_nf_n^{-1})^{\pm1}]$, where the values of $e_if_i^{-1}$ are from Figure~\ref{fig:centres} as before.  Then $P_{\lambda} \cap \qr{\GL{3}}{I_{\omega}} = Q_{\lambda}$, where $Q_{\lambda}$ is defined as in \eqref{eq:def of ideal, pullback of primitive}.
\end{proposition}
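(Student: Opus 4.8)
The plan is to establish the equality $P_\lambda \cap \qr{\GL{3}}{I_\omega} = Q_\lambda$ by proving the two inclusions separately, using the homeomorphisms of the Poisson Stratification Theorem together with the explicit structure of the localizations $B_\omega$ described in Proposition~\ref{res:the poisson localizations are scls} and Proposition~\ref{res:centres of gl and qgl localizations agree}.

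For the inclusion $Q_\lambda \subseteq P_\lambda \cap \qr{\GL{3}}{I_\omega}$, first I would note that each generator $e_i - \lambda_i f_i$ lies in $\qr{\GL{3}}{I_\omega}$ by definition, so it suffices to show $e_i - \lambda_i f_i \in P_\lambda = M_\lambda B_\omega$. Since $f_i$ is invertible in $B_\omega$ (it is a generator of the Ore set $E_\omega$) we may write $e_i - \lambda_i f_i = (z_i - \lambda_i)f_i$ inside $B_\omega$, where $z_i = e_i f_i^{-1}$; as $z_i - \lambda_i \in M_\lambda$, this gives $e_i - \lambda_i f_i \in P_\lambda$ immediately. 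By Lemma~\ref{res:predicted primitives are Poisson ideals} the ideal $Q_\lambda$ is already a Poisson ideal, which is reassuring but not strictly needed for this inclusion.

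For the reverse inclusion $P_\lambda \cap \qr{\GL{3}}{I_\omega} \subseteq Q_\lambda$, the key point is that $P_\lambda \cap \qr{\GL{3}}{I_\omega}$ is a Poisson-primitive ideal of $\qr{\GL{3}}{I_\omega}$ (this is exactly the content of the Stratification/Dixmier--Moeglin correspondence, cf. Theorem~\ref{res:thm, desc of poisson primitives up to localization, gl3}), and that $Q_\lambda \subseteq P_\lambda \cap \qr{\GL{3}}{I_\omega}$ with $Q_\lambda$ a Poisson ideal. I would argue that $Q_\lambda$ is itself Poisson-prime by showing that $\qr{\GL{3}}{I_\omega}/Q_\lambda$ is a domain: modulo $Q_\lambda$ each $e_i$ becomes $\lambda_i f_i$, and since $Q_\lambda$ does not meet the multiplicative set $E_\omega$ (which one checks directly for each of the 12 representative cases using the grading on $\RR$, exactly as in the proof of Proposition~\ref{res:Ore sets lift to formal deformation ring}, and then transfers to the remaining 24 via the symmetries $\tau$, $\rho$, $S$), the localization $(\qr{\GL{3}}{I_\omega}/Q_\lambda)[E_\omega^{-1}]$ is isomorphic to $B_\omega/P_\lambda \cong PZ(B_\omega)/M_\lambda \cong k$ after quotienting the appropriate variables; hence $\qr{\GL{3}}{I_\omega}/Q_\lambda$ embeds in a field and is a domain. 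Therefore $Q_\lambda$ is a Poisson-prime ideal contracted from $B_\omega$. Then both $Q_\lambda$ and $P_\lambda \cap \qr{\GL{3}}{I_\omega}$ lie in the stratum over $I_\omega$ and extend to ideals of $B_\omega$ lying over the maximal ideal $M_\lambda$ of $PZ(B_\omega)$; by the bijectivity in the Stratification Theorem (extension and contraction are mutually inverse on this stratum) they must coincide. Equivalently: $Q_\lambda B_\omega \subseteq P_\lambda$, both are proper, and contracting $P_\lambda$ lands back in $Q_\lambda$ by the injectivity of the localization map.

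The main obstacle I anticipate is the verification that $Q_\lambda \cap E_\omega = \emptyset$ and, relatedly, the precise identification of the contraction $Q_\lambda B_\omega \cap \qr{\GL{3}}{I_\omega}$ with $Q_\lambda$ itself rather than some larger ideal; this is where the explicit generator lists in Figure~\ref{fig:centres} and Figure~\ref{fig:Ore_sets_for_tori} have to be used case by case, and one must be careful that the ``change of variables'' replacing $e_i$ by $\lambda_i f_i$ genuinely produces a Laurent polynomial ring in fewer variables (so that the quotient is a domain) rather than collapsing to zero. As in the quantum treatment of \cite{GL1}, reducing to the 12 orbit representatives of Figure~\ref{fig:H_primes_nice} via the Poisson (anti-)automorphisms $\tau$, $\rho$, $S$ makes this manageable, since these maps send $E_\omega$ to $E_{\omega'}$ by \eqref{eq:maps between Ore sets, quantum} and send the generating sets of Figure~\ref{fig:centres} to one another, so an equality proved for one $\omega$ in an orbit holds throughout the orbit.
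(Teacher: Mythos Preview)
Your overall strategy has the right shape --- reduce to showing that $Q_\lambda$ is prime, so that $E_\omega$ consists of regular elements modulo $Q_\lambda$ and then standard extension/contraction gives $Q_\lambda B_\omega \cap \GL{3}/I_\omega = Q_\lambda$ --- but your argument for primality is circular. You want to deduce that $\GL{3}/I_\omega\big/Q_\lambda$ is a domain by mapping it to its localization $B_\omega/P_\lambda$; however, the natural map from a commutative ring to a localization at a multiplicative set $S$ is injective \emph{only when $S$ consists of non-zero-divisors}. The condition $Q_\lambda \cap E_\omega = \emptyset$ you propose to check (via a grading argument) ensures merely that the images of the $E_\omega$-generators are \emph{nonzero} modulo $Q_\lambda$, not that they are regular there; so you cannot conclude the map is an embedding without already knowing the quotient is a domain. (Separately, $B_\omega/P_\lambda$ is not isomorphic to $k$: it is a Laurent polynomial ring in $\dim B_\omega - n$ variables, e.g.\ six variables when $\omega = (321,321)$. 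It is a domain, but that alone does not rescue the argument.)

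The paper instead establishes primality of $Q_\lambda$ directly, case by case over the twelve orbit representatives. One first uses the $\HH$-action to rescale $\lambda_1$ (and sometimes further $\lambda_i$) to $1$, so that in eight of the cases the quotient $\GL{3}/(I_\omega + Q_\lambda)$ can be identified explicitly with a visibly integral ring --- typically an $\SL{3}/I_\omega$, an $\SL{2}$-type quotient, or a polynomial ring modulo an irreducible element checked by Eisenstein. For the four remaining cases $\omega = (321,321)$, $(321,312)$, $(231,231)$, $(132,312)$ no such tidy description is available; primality is verified by a Magma computation (Appendix~\ref{s:magma_gl3}), with the remark that one could also adapt the hands-on quantum argument of \cite{GL1}. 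Only once $Q_\lambda$ is known to be prime does the paper invoke \cite[Theorem~10.18]{GW1} to conclude $Q_\lambda B_\omega \cap \GL{3}/I_\omega = Q_\lambda$.
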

\begin{proof}
Note that when we extend the ideal $Q_{\lambda}$ to $B_{\omega}$ by localization, we get $Q_{\lambda}B_{\omega} = P_{\lambda}$.  Our aim is therefore to prove that
\[Q_{\lambda}B_{\omega} \cap \qr{\GL{3}}{I_{\omega}} = Q_{\lambda},\]
which is equivalent by \cite[Theorem~10.18]{GW1} to verifying that the elements of $E_{\omega} \subset \GL{3}/I_{\omega}$ are regular modulo $Q_{\lambda}$.  Since $\GL{3}$ is commutative, it suffices to show that $Q_{\lambda}$ is a prime ideal in $\GL{3}/I_{\omega}$.

We will consider the 12 cases listed in Figure~\ref{fig:centres}; the other 24 follow will then follow by the isomorphisms and anti-isomorphisms $\tau$, $\rho$ and $S$.

We deal first with the cases where $Q_{\lambda} = \langle Det - \lambda_1\rangle$, that is the four cases
\[\omega = (231,312), \ (321,132), \ (123,312), \ (213,132),\]
Note that $Q_{\lambda}$ is a prime ideal if and only if $h(Q_{\lambda})$ is, where $h = (\lambda_1,1,1,1,1,1) \in \HH$.  We have $h(Q_{\lambda}) = \langle Det-1\rangle$, and $\GL{3}/(I_{\omega},h(Q_{\lambda})) \cong \SL{3}/I_{\omega}$; since (the image of) the $\HH$-prime $I_{\omega}$ is prime in $\SL{3}$ by Proposition~\ref{res:bijection of Hprimes}, this is a domain and we are done.

We next consider the four cases
\begin{center}\begin{tabular}{cccc}
\begin{smallarray}{m321123}
{
 \circ & \bullet & \bullet \\
\circ & \circ & \bullet \\
\circ & \circ & \circ \\
};
\end{smallarray}
&
\begin{smallarray}{m132132}
{
 \circ & \bullet & \bullet \\
 \bullet & \circ & \circ \\
 \bullet & \circ & \circ \\
};
\end{smallarray}
&
\begin{smallarray}{m123132}
{
 \circ & \bullet & \bullet \\
 \bullet & \circ & \circ \\
 \bullet & \bullet & \circ \\
};
\end{smallarray}
&
\begin{smallarray}{m123123}
{
 \circ & \bullet & \bullet \\
 \bullet & \circ & \bullet \\
 \bullet & \bullet & \circ \\
};
\end{smallarray}
\\
$(321,123)$ & $(132,132)$ & $(123,132)$ & $(123,123)$
\end{tabular}\end{center}
where in each case we can describe the structure of $\GL{3}/(I_{\omega},Q_{\lambda})$ explicitly and verify that it is a domain.

(321,123): We have $Q_{\lambda} = \langle Det - \lambda_1, x_{22}[\wt{1}|\wt{3}] - \lambda_2x_{31} \rangle$, and we may assume that $\lambda_1, \lambda_2 = 1$ by applying the automorphism $h = (1,\sqrt{\lambda_2},\lambda_1\sqrt{\lambda_2}^{-1},1,1,1)$ to $Q_{\lambda}$.  The image of $Det$ modulo $I_{321,123}$ is $x_{11}x_{22}x_{33}$, so we have 
\begin{equation}\label{eq:random ring that needs to be a domain}\begin{aligned}\GL{3}/(I_{321,123},h(Q_{\lambda})) &\cong \ML{3}/(I_{321,123},x_{11}x_{22}x_{33}-1, x_{22}[\wt{1}|\wt{3}] - x_{31}) \\
& \cong k[x_{11}^{\pm1},x_{21},x_{22}^{\pm1},x_{31},x_{32}]/(x_{22}[\wt{1}|\wt{3}] - x_{31}).\end{aligned}\end{equation}
We can write the generator of the quotient ideal as
\begin{equation}\label{eq:random gen of ideal somewhere}x_{22}[\wt{1}|\wt{3}] - x_{31} = -x_{31}(x_{22}^2 + 1) + x_{22}x_{21}x_{32},\end{equation}
which is linear as a polynomial in $x_{31}$ over the commutative UFD $k[x_{11}^{\pm1},x_{21},x_{22}^{\pm1},x_{32}]$.  Since it is clear that the coefficients $(x_{22}^2 + 1) = (x_{22} + i)(x_{22} - i)$ and $x_{22}x_{21}x_{32}$ have no non-invertible factors in common, \eqref{eq:random gen of ideal somewhere} is irreducible and hence prime.  The ring \eqref{eq:random ring that needs to be a domain} is therefore a domain, as required.

(132,132): In this case $Q_{\lambda} = \langle Det - \lambda_1, x_{11} - \lambda_2, x_{23}-\lambda_3x_{32}\rangle$, which we replace by $h(Q_{\lambda}) = \langle Det - 1, x_{11} - 1, x_{23} - \lambda_3 x_{32} \rangle$ under the action of $h = (\lambda_2,1,\lambda_1\lambda_2^{-1},1,1,1)$.  We also observe that $\SL{3}/I_{132,132} \cong \GL{2}$ as commutative algebras, where we identify $(x_{22},x_{23},x_{32},x_{33})$ with $(a,b,c,d)$ and $x_{11}$  with $(ad-bc)^{-1}$.  Now
\begin{align*}\GL{3}/(I_{132,132},h(Q_{\lambda})) & \cong \SL{3}/(I_{132,132},x_{11} - 1, x_{23} - \lambda_3 x_{32})\\
&\cong \GL{2}/((ad-bc)^{-1} - 1, b - \lambda_3 c) \\
& \cong \SL{2}/(b-\lambda_3 c),
\end{align*}
and this is a domain since $b-\mu c$ is a prime ideal in $\SL{2}$ for all $\mu \in k^{\times}$.

(123,132): We have $Q_{\lambda} = \langle Det-\lambda_1, x_{11} - \lambda_2\rangle$, and as always we may assume without loss of generality that $\lambda_1 = 1$.  Since the image of $Det$ in $\GL{3}/I_{123,132}$ is $x_{11}x_{22}x_{33}$, it is now clear that
\begin{align*}
\GL{3}/(I_{123,132},Q_{\lambda}) \cong \SL{3}/(I_{123,132}, x_{11} - \lambda_2) \cong k[x_{11}^{\pm1},x_{22}^{\pm1},x_{23}]/(x_{11}- \lambda_2)
\end{align*}
is a domain.

(123,123): With $Q_{\lambda} = \langle Det-\lambda_1, x_{11} - \lambda_2, x_{22} - \lambda_3 \rangle$, we obtain
\[\GL{3}/(I_{123,123},Q_{\lambda}) \cong k[x_{11}^{\pm1},x_{22}^{\pm1}]/(x_{11} - \lambda_2, x_{22} - \lambda_3)\]
in the same manner as the previous case.

Four cases remain:
\begin{center}\begin{tabular}{cccc}
\begin{smallarray}{m321321}
{
 \circ & \circ & \circ \\
\circ & \circ & \circ \\
\circ & \circ & \circ \\
};
\end{smallarray}
&
\begin{smallarray}{m321312}
{
 \circ & \circ & \bullet \\
\circ & \circ & \circ \\
\circ & \circ & \circ \\
};
\end{smallarray}
&
\begin{smallarray}{m231231}
{
 \circ &  &  \\
\circ &  &  \\
\bullet & \circ & \circ \\
};
\MyZ(1,2)
\end{smallarray}
&
\begin{smallarray}{m132312}
{
 \circ & \circ & \bullet \\
 \bullet & \circ & \circ \\
 \bullet & \circ & \circ \\
};
\end{smallarray}
\\
$(321,321)$ & $(321,312)$ & $(231,231)$ & $(132,312)$
\end{tabular}\end{center}
In \cite{GL1} these cases are dealt with by showing that the generators of the Ore set $E_{\omega}$ are regular modulo $Q_{\lambda}'$, where they make clever use of the maps $\tau$, $\rho$ and $S$ to verify that certain elements are not in various ideals, and repeatedly apply the observation that if an element $x$ is regular modulo $\langle y \rangle$ then $y$ is regular modulo $\langle x \rangle$.  This proof is almost entirely independent of the specific quantum algebra setting and can be applied to $\GL{3}$ and $\SL{3}$ with minimal modification.  

Verifying this is long and tedious and not especially illuminating, however, so we will simply observe that in our commutative setting we can check that the ideals $Q_{\lambda}'$ are indeed prime using the Magma computer algebra system.  The relevant code is reproduced in Appendix~\ref{s:magma_gl3}.

Hence $Q_{\lambda}$ is prime in each of the 12 cases from Figure~\ref{fig:centres}, and the remaining 24 cases are handled by the (anti-)isomorphisms displayed in Figure~\ref{fig:H_primes_nice}.
\end{proof}
Proposition~\ref{res:checking the generators of primitives in GL3} gives us explicit sets of generators for the ideals $P_{\lambda} \cap \GL{3}/I_{\omega}$, which by the Poisson Stratification Theorem correspond precisely to the Poisson-primitive ideals in the stratum $Pprim_{\omega}(\GL{3})$.  Since we have chosen our elements carefully to ensure that the first generator is always $Det - \lambda_1$, we also obtain the corresponding description of Poisson-primitive ideals in $\SL{3}$.  This is summarised in the following theorem.
\begin{theorem}\label{res:Poisson primitives of gl3 and sl3}
Let $\omega \in S_3 \times S_3$, and let $PZ(B_{\omega}) = k[(e_1f_1^{-1})^{\pm1}, \dots, (e_nf_n^{-1})^{\pm1}]$ as in Figure~\ref{fig:centres}.  Then
\begin{enumerate}[(i)]
\item The Poisson-primitive ideals in the stratum $Pprim_{\omega}(\GL{3})$ corresponding to the $\HH$-prime $I_{\omega}$ have the form
\begin{equation}\label{eq:gens of primitive in gl3, corollary statement}I_{\omega} + \langle e_1 - \lambda_1 f_1, \dots, e_n - \lambda_n f_n \rangle, \quad (\lambda_1, \dots, \lambda_n) \in (k^{\times})^n,\end{equation}
and these are all the Poisson-primitive ideals in this stratum.
\item The Poisson-primitive ideals in the stratum $Pprim_{\omega}(\SL{3})$ corresponding to the $\HP$-prime $I_{\omega}$ (now viewed as an ideal in $\SL{3}$) are precisely those of the form
\[I_{\omega} + \langle e_2 - \lambda_2 f_2, \dots, e_n - \lambda_n f_n\rangle, \quad (\lambda_2, \dots, \lambda_n) \in (k^{\times})^{n-1}.\] 
\end{enumerate}
\end{theorem}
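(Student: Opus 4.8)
The plan is to deduce both parts directly from the machinery already assembled. For part $(i)$: by Theorem~\ref{res:thm, desc of poisson primitives up to localization, gl3} the Poisson-primitive ideals in the stratum $Pprim_{\omega}(\GL{3})$ are exactly the contractions $P_{\lambda}\cap\GL{3}/I_{\omega}$ where $P_{\lambda}=M_{\lambda}B_{\omega}$ and $M_{\lambda}=\langle z_1-\lambda_1,\dots,z_n-\lambda_n\rangle$ runs over the maximal ideals of $PZ(B_{\omega})=k[z_1^{\pm1},\dots,z_n^{\pm1}]$, with $z_i=e_if_i^{-1}$ as recorded in Figure~\ref{fig:centres}. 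Proposition~\ref{res:checking the generators of primitives in GL3} identifies this contraction precisely as $Q_{\lambda}=\langle e_1-\lambda_1f_1,\dots,e_n-\lambda_nf_n\rangle$ inside $\GL{3}/I_{\omega}$, i.e. as the ideal generated in $\GL{3}$ by $I_{\omega}$ together with the $e_i-\lambda_if_i$. Since $z_i-\lambda_i=(e_i-\lambda_if_i)f_i^{-1}$ and $f_i\in E_{\omega}$ is invertible in $B_{\omega}$, letting $(\lambda_1,\dots,\lambda_n)$ range over $(k^{\times})^n$ hits every maximal ideal of $PZ(B_{\omega})$ exactly once; Lemma~\ref{res:predicted primitives are Poisson ideals} confirms each $Q_{\lambda}$ is genuinely a Poisson ideal, so no spurious ideals arise and none are missed. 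This gives the description \eqref{eq:gens of primitive in gl3, corollary statement} and exhausts the stratum.

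For part $(ii)$: the key point is the compatibility built into our choice of generators, namely that in every row of Figure~\ref{fig:centres} the first central generator is $z_1 = Det$ (equivalently $e_1 = Det$, $f_1 = 1$), which was arranged precisely for this purpose in \S\ref{ss:ore sets, quantum,etc} and \S\ref{ss:quantum to Poisson}. Combined with Proposition~\ref{res:bijection of Hprimes}, which shows $\pi:\GL{3}\to\SL{3}$ carries the Poisson $\HH$-prime $I_{\omega}$ of $\GL{3}$ onto the Poisson $\HP$-prime $I_{\omega}$ of $\SL{3}$ (and is inverse to $P\mapsto\theta(P[z^{\pm1}])$), one sees that a Poisson-primitive ideal of $\GL{3}$ in the stratum over $I_{\omega}$ contracts along $\pi$ — equivalently, is cut down by $Det-1$ — to a Poisson-primitive ideal of $\SL{3}$ in the corresponding stratum, and conversely every Poisson-primitive of $\SL{3}$ over $I_{\omega}$ arises this way. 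Concretely, in $\SL{3}/I_{\omega}$ one has $Det=1$, so the generator $e_1-\lambda_1f_1 = Det-\lambda_1$ becomes $1-\lambda_1$; choosing the representative $\lambda_1=1$ (which we may, since applying a suitable torus element $h\in\HH$ rescales $Det$ and fixes the rest of the generators, exactly as in the $\lambda_1$-normalisations inside the proof of Proposition~\ref{res:checking the generators of primitives in GL3}) kills this generator and leaves $I_{\omega}+\langle e_2-\lambda_2f_2,\dots,e_n-\lambda_nf_n\rangle$, with $(\lambda_2,\dots,\lambda_n)\in(k^{\times})^{n-1}$.

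The only genuinely nontrivial input is making the $\SL{3}$ bookkeeping watertight: one must check that, modulo $Det-1$, the remaining generators $e_i-\lambda_if_i$ ($i\geq 2$) still cut out a \emph{Poisson} ideal of $\SL{3}/I_{\omega}$ and that the resulting ideal is prime (so that the $f_i$ remain regular and the contraction/extension correspondence of Proposition~\ref{res:poisson strat generalization 1} applies on the $SL_3$ side). The first is immediate from Lemma~\ref{res:predicted primitives are Poisson ideals} applied in $\SL{3}/I_{\omega}$ rather than $\GL{3}/I_{\omega}$ — the proof there only used that the $f_i$ are Poisson normal and that the $e_if_i^{-1}$ are Poisson central, both of which survive passage to the quotient by $Det-1$ since $Det$ is Poisson central. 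The second follows from Proposition~\ref{res:checking the generators of primitives in GL3}: its proof already reduces the $\GL{3}$ primality check to showing $\SL{3}/(I_{\omega},Q_{\lambda})$ is a domain (after the $\lambda_1=1$ normalisation), so the same computation yields primality in $\SL{3}/I_{\omega}$ directly. Thus part $(ii)$ is obtained by specialising $\lambda_1=1$ in part $(i)$ and transporting along $\pi$, and the theorem follows.
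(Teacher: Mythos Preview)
Your proposal is correct and follows the same approach as the paper. The paper's own proof is the one-line ``Combine Proposition~\ref{res:checking the generators of primitives in GL3} and Theorem~\ref{res:thm, desc of poisson primitives up to localization, gl3},'' with part~(ii) justified only by the preceding remark that the first generator was arranged to always be $Det-\lambda_1$; you have simply filled in the $\SL{3}$ reduction (via $\pi$, the $\lambda_1=1$ normalisation, and the observation that the primality checks in Proposition~\ref{res:checking the generators of primitives in GL3} already take place in $\SL{3}/I_{\omega}$) more explicitly than the paper does.
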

\begin{proof}
Combine Proposition~\ref{res:checking the generators of primitives in GL3} and Theorem~\ref{res:thm, desc of poisson primitives up to localization, gl3}.
\end{proof}
Comparing this result to the quantum case in \cite{GL1}, the following corollary is now immediate:
\begin{corollary}\label{res:bijection between primitives,H-prime section}
Let $k$ be algebraically closed and $q \in k^{\times}$ not a root of unity.  Let $A$ denote $\QGL{3}$ or $\QSL{3}$, and let $B$ denote the semi-classical limit of $A$.  Then there is a bijection of sets between $prim(A)$ and $Pprim(B)$, which is induced by the ``preservation of notation map'' 
\[A \rightarrow B: \ X_{ij} \mapsto x_{ij}, \ [\wt{i}|\wt{j}]_q \mapsto [\wt{i}|\wt{j}].\]
\end{corollary}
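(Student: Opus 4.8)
The plan is to assemble Corollary~\ref{res:bijection between primitives,H-prime section} directly from Theorem~\ref{res:Poisson primitives of gl3 and sl3} together with the explicit description of the primitive ideals of $\QGL{3}$ and $\QSL{3}$ in \cite{GL1}. The key observation is that \cite{GL1} gives generating sets for $prim(\QGL{3})$ and $prim(\QSL{3})$ of exactly the same shape as the Poisson-primitive generating sets in Theorem~\ref{res:Poisson primitives of gl3 and sl3}: for each $\omega \in S_3 \times S_3$, the quantum primitives in the stratum $prim_{\omega}(\QGL{3})$ are the ideals $I_{\omega} + \langle E_1 - \lambda_1 F_1, \dots, E_n - \lambda_n F_n \rangle$ for $(\lambda_1, \dots, \lambda_n) \in (k^{\times})^n$, where $Z(A_{\omega}) = k[(E_1F_1^{-1})^{\pm 1}, \dots, (E_nF_n^{-1})^{\pm 1}]$ and the $E_i$, $F_i$ are the quantum minors/generators appearing in Figure~\ref{fig:centres}. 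The ``preservation of notation map'' $X_{ij} \mapsto x_{ij}$, $[\wt{i}|\wt{j}]_q \mapsto [\wt{i}|\wt{j}]$ carries each such quantum generating set term-by-term to the corresponding Poisson generating set from Theorem~\ref{res:Poisson primitives of gl3 and sl3}(i), by Proposition~\ref{res:centres of gl and qgl localizations agree} (which guarantees $PZ(B_{\omega})$ has generators obtained from those of $Z(A_{\omega})$ under exactly this renaming).

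First I would fix the bijection explicitly. For each $\omega$, define $\Phi_{\omega} : prim_{\omega}(A) \to Pprim_{\omega}(B)$ by sending $I_{\omega} + \langle E_i - \lambda_i F_i : 1 \le i \le n \rangle$ to $I_{\omega} + \langle e_i - \lambda_i f_i : 1 \le i \le n \rangle$, where $e_i$, $f_i$ are the images of $E_i$, $F_i$ under the preservation-of-notation map; then take $\Phi = \bigsqcup_{\omega} \Phi_{\omega}$, which makes sense because both $spec(A) = \bigsqcup_{\omega} spec_{\omega}(A)$ (Theorem~\ref{res:Stratification Theorem, quantum version} plus the count of $36$ $\HH$-primes in \cite{GL1}) and $Pspec(B) = \bigsqcup_{\omega} Pspec_{\omega}(B)$ (Theorem~\ref{res:gl3 has only 36 h primes} for $GL_3$, Proposition~\ref{res:bijection of Hprimes} for $SL_3$) partition the spectra over the same index set, and primitives lie in strata. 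I would then check that $\Phi_{\omega}$ is well-defined and bijective: the quantum primitives in $prim_{\omega}(A)$ are indexed by $(k^{\times})^n$ via the maximal ideals $\mathfrak{m}_{\lambda}$ of $Z(A_{\omega})$ (Theorem~\ref{res:DM equiv, quantum version} together with the structure of $Z(A_{\omega})$ as a Laurent polynomial ring and the computation $\mathfrak{m}_{\lambda} \cap \qr{\QGL{3}}{I_{\omega}} = \langle E_i - \lambda_i F_i \rangle$ from \cite[\S5]{GL1}), and the Poisson primitives in $Pprim_{\omega}(B)$ are indexed by the same $(k^{\times})^n$ via Theorem~\ref{res:Poisson primitives of gl3 and sl3}(i); $\Phi_{\omega}$ is the identity map on this parameter set, hence a bijection. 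The $SL_3$ case is identical, using Theorem~\ref{res:Poisson primitives of gl3 and sl3}(ii) and the corresponding quantum statement for $\QSL{3}$ from \cite{GL1}, with parameter set $(k^{\times})^{n-1}$.

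The one genuinely substantive point — and the step I expect to be the main obstacle — is verifying that the $e_i$, $f_i$ appearing in $PZ(B_{\omega})$ are really the term-by-term images of the $E_i$, $F_i$ appearing in $Z(A_{\omega})$, so that $\Phi$ truly is ``induced by the preservation of notation map'' rather than merely some abstract bijection. This is where Proposition~\ref{res:centres of gl and qgl localizations agree} does the work: it shows $PZ(B_{\omega})$ is obtained from $Z(A_{\omega})$ precisely by the renaming $X_{ij} \mapsto x_{ij}$, $[\wt{i}|\wt{j}]_q \mapsto [\wt{i}|\wt{j}]$, via the identification $\mathbb{Z}^n_{\sigma} = \mathbb{Z}^n_u$ of the relevant lattices of central monomials. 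Combined with the deliberate choice in \S\ref{ss:ore sets, quantum,etc} to make $Det_q$ (resp.\ $Det$) appear as the first central generator in all $36$ cases, this matches generating sets on the nose and also makes the $SL_3$ statement fall out by deleting the first generator. I would also remark that this bijection is not claimed to be a homeomorphism — that would require the further analysis flagged in the introduction and in Proposition~\ref{res:UFD Poisson SL3 quotients,intro} — so the proof genuinely stops at the level of a set bijection, and nothing more needs to be checked beyond well-definedness, injectivity, surjectivity, and compatibility with the notation map described above.
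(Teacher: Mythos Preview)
Your proposal is correct and follows the same approach as the paper: the paper's proof consists of nothing more than the citation ``\cite[Theorem~5.5]{GL1}, Theorem~\ref{res:Poisson primitives of gl3 and sl3}'', and your argument is precisely an unpacking of why those two results together yield the bijection, with Proposition~\ref{res:centres of gl and qgl localizations agree} supplying the match between generating sets. Your additional commentary on well-definedness, the stratum-by-stratum parametrisation by $(k^{\times})^n$, and the $SL_3$ reduction is all accurate and simply makes explicit what the paper leaves implicit.
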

\begin{proof}
\cite[Theorem~5.5]{GL1}, Theorem~\ref{res:Poisson primitives of gl3 and sl3}.
\end{proof}
Corollary~\ref{res:bijection between primitives,H-prime section} strongly suggests that Conjecture~\ref{conj:goodearl} should be true for $\QGL{3}$ and $\QSL{3}$.  The remaining step would be to prove that the bijection in Corollary~\ref{res:bijection between primitives,H-prime section} in fact defines a homeomorphism (with respect to the Zariski topology) between $prim(A)$ and $Pprim(B)$.

This could be accomplished in two ways: one is to simply prove directly that the bijection on primitives is a homeomorphism.  It is not at all clear how to go about doing this, however, so an alternative approach would be to first extend the bijection in Corollary~\ref{res:bijection between primitives,H-prime section} to a bijection $spec(A) \rightarrow Pspec(B)$; by \cite[Lemma~9.4]{GoodearlSummary} it would then suffice to check that this bijection and its inverse preserved inclusions of primes.  

Unfortunately the intermediate step of extending the bijection from primitives to primes would be a necessary part of this approach, since the statement of \cite[Lemma~9.4]{GoodearlSummary} is no longer true if we replace ``primes'' by ``primitives''.  An elementary illustration of this has been observed by Goodearl: if we take $R$ to be a commutative noetherian $k$-algebra with trivial Poisson bracket, then we have $prim(R) = Pprim(R) = max(R)$ and any bijection $prim(R) \rightarrow Pprim(R)$ will (trivially) preserve inclusions.  It is clear that not all such bijections will be homeomorphisms, however.

Instead, the first step towards proving Conjecture~\ref{conj:goodearl} for $SL_3$ would be to obtain generators for the prime ideals of $\QSL{3}$ (respectively Poisson-prime generators of $\SL{3}$).  Assuming (as seems quite likely) that the bijection of Corollary~\ref{res:bijection between primitives,H-prime section} extends to a bijection on primes, we would then need to check that this map and its inverse both preserve inclusions among primes -- not a simple task to approach directly, with no easy way to tell if a prime from one stratum is contained within a prime from another stratum and 36 distinct strata to consider!  We focus first on $SL_3$ here since the (Poisson-)primes within a given strata will always have height at most 2; hence most of the (Poisson-)prime ideals are already described in Theorem~\ref{res:Poisson primitives of gl3 and sl3} and those that remain are known to be principally generated by Proposition~\ref{res:UFD Poisson SL3 quotients} (resp. \cite[Theorem~5.2]{GBrown}).

In future work we hope to develop a Poisson version of the results of \cite{GBrown}, which would allow us to ``patch together'' the topologies of each stratum $Pspec_{\omega}(\SL{3})$ (which are well-understood) into a picture of the Zariski topology on the whole space $Pspec(\SL{3})$.  We then hope to use these results to tackle the question of whether this (as yet only conjectured) bijection $spec(\QSL{3}) \rightarrow Pspec(\SL{3})$ preserves inclusions, although this approach will still involve significant amounts of computation.  This approach will not generalise easily even to other fairly low-dimensional examples such as $\QML{3}$ (230 $\HH$-primes) or $\QML{4}$ (6902 $\HH$-primes), and new techniques will clearly be required to tackle the general case.

\appendix

\chapter{Computations in Magma}\label{c:appendix}
As illustrated quite neatly by Chapter~\ref{c:fixed_rings_chapter}, computation with non-commutative fractions is often difficult and messy.  This appendix details the techniques used to make some of these computations feasible on a computer: we describe both the theory that makes it possible and the code written for the computer algebra system Magma by the author to implement these techniques.  We also provide an example to illustrate some of the limitations of this approach.

As in Chapter~\ref{c:fixed_rings_chapter}, our main tool is the embedding of the $q$-division ring $D$ into the larger division ring of Laurent power series: recall that this is the ring of Laurent power series of the form
\[k_q(y)(\!(x)\!) = \left\{\sum_{i \geq n} a_i(y)x^i : n \in \mathbb{Z}, a_i(y) \in k(y)\right\},\]
subject to the relation $xy = qyx$.  We will continue to assume that $q$ is not a root of unity.

In this larger ring, operations such as the multiplication of two elements or finding the inverse of an element can be performed term by term on the coefficients.  In particular, we can compute that the product of two elements is
\begin{equation}\label{eq:multiplication coefficients}
\sum_{i \geq n}a_i x^i \sum_{j \geq m}b_j x^j = \sum_{k \geq 0}c_k x^{m+n+k} \textrm{ where }c_k = \sum_{r =0}^k a_{n+r}\alpha^{n+r}(b_{m+k-r}).
\end{equation}
Similarly, we find that for an element of the form $1 + \sum_{i \geq 1} b_i x^i$, the inverse is
\begin{equation}\label{eq:inverse coefficients}\left(1 + \sum_{i \geq 1} b_i x^i\right)^{-1} = 1 + \sum_{i \geq 0} c_i x^i, \textrm{ where } c_i = -\left(\sum_{j=1}^i b_j \alpha^{j}(c_{i-j})\right).\end{equation}
Using this, we can find the inverse of a general element $\sum_{i \geq n}a_i x^i$ by writing it in the form $a_nx^n \sum_{i \geq 0}\alpha^{-n}(a_{i+n}/a_n)x^i$; the resulting sum is now in the correct form to apply \eqref{eq:inverse coefficients}.

These computations can all be done by hand, of course, but since evaluating the coefficients at each step involves only commutative terms it is now a much simpler matter to delegate this process to a computer.  We can view elements of $k_q(y)(\!(x)\!)$ as infinite sequences of commutative terms, with addition defined pointwise and multiplication defined term-by-term by \eqref{eq:multiplication coefficients}; phrased in this manner, it is now possible to write the Magma functions which simulate the ring structure of $k_q(y)(\!(x)\!)$ without explicit reference to its non-commutativity.

In deference to the computer's dislike of infinite things we are unfortunately required to work with \textit{truncated} sequences, but for many applications this turns out to be sufficient: to eliminate a pair of potential $q$-commuting elements $f$ and $g$, for example, we need only compute the first few terms of the expression $fg-qgf$ and see whether the result is non-zero.  However, while it is easy to convert fractions to power series using \eqref{eq:inverse coefficients}, we would like to be able to pull our computations back to fractions at the end as well (where possible).  The results of the next section prove that this is indeed possible under certain circumstances.

\section{The theory behind $q$-commuting computations}\label{s:magma_theory}
One of the more useful things we can do with the Magma environment described above is to input an element of the form
\[g = \lambda y + \sum_{i \geq 1} g_i x^i, \quad \lambda \in k^{\times}, a_i \in k(y)\]
and construct an element $f \in k_q(y)(\!(x)\!)$ such that $fg = qgf$.  The catch is that even if $g \in k_q(x,y)$ there is no guarantee that the constructed element $f$ will also represent a fraction.  

The following two theorems aim to tackle this problem by describing under what conditions a power series $f$ will be the image of a fraction from $k_q(x,y)$.  They have the added benefit of being constructive, i.e. if $f \in k_q(x,y)$ they return elements $u, v \in k_q[x,y]$ such that $f = v^{-1}u$.  These results are classical in the commutative case and generalize easily to the case of an Ore extension by an automorphism, but since they don't seem to appear in the literature in this non-commutative form we provide the full proof here.

Let $K$ be a field, $K[x;\alpha]$ the Ore extension by an automorphism $\alpha$, and $K(x;\alpha)$, $K[[x;\alpha]]$ the ring of fractions and ring of power series respectively.  For the specific case of the $q$-division ring, we can take $K = k(y)$ and $\alpha: y \mapsto qy$.

\begin{theorem}\label{res:rec reln thm}The power series $\sum_{i \geq 0} a_ix^i \in K[\![x; \alpha]\!]$ represents a rational function $Q^{-1}P$ in $K(x;\alpha)$ if and only if there exists some integer $n$, and some constants $c_1, \dots, c_n \in K$ (of which some could be zero) such that for all $i \geq 0$ the coefficients of the power series satisfy the linear recurrence relation
\begin{equation}\label{eq:def of rec reln}a_{i+n} = c_1\alpha(a_{i+(n-1)}) + c_2\alpha^2(a_{i+(n-2)}) + \dots + c_n\alpha^n(a_i).\end{equation}
If this is the case, then $P$ is a polynomial of degree $\leq n-1$ which is constructed explicitly in the proof, and $Q = 1 - \sum_{i=1}^n c_ix^i$.\end{theorem}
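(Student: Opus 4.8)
The plan is to prove both directions by working directly with the recurrence relation and the ring structure of $K[x;\alpha]$, treating the power series $f = \sum_{i\geq 0}a_i x^i$ as an element of $K[\![x;\alpha]\!]$ and the putative denominator $Q = 1 - \sum_{i=1}^n c_i x^i$ as a specific polynomial. The key observation driving everything is that multiplication in $K[x;\alpha]$ twists coefficients by powers of $\alpha$: if $Q = 1 - \sum_{i=1}^n c_i x^i$ with $c_i \in K$, then the coefficient of $x^m$ in the product $Qf$ is $a_m - \sum_{i=1}^{\min(n,m)} c_i \alpha^i(a_{m-i})$, because $c_i x^i$ acting on $a_{m-i}x^{m-i}$ produces $c_i \alpha^i(a_{m-i}) x^m$. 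This is exactly the expression that appears (up to sign) in the recurrence \eqref{eq:def of rec reln}.

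For the ``only if'' direction I would argue as follows. Suppose $f = Q^{-1}P$ in $K(x;\alpha)$ for some $P, Q \in K[x;\alpha]$ with $Q$ having nonzero constant term; after multiplying $P$ and $Q$ on the left by the inverse of that constant term (a unit in $K$) we may assume $Q$ has the form $1 - \sum_{i=1}^n c_i x^i$ for some $n$ and some $c_i \in K$ (padding with zero coefficients if necessary so that $\deg Q = n$). Then $Qf = P$ in $K[\![x;\alpha]\!]$. Comparing coefficients of $x^m$ for $m \geq n$, the right-hand side $P$ contributes nothing (since $\deg P$ will be shown to be $\leq n-1$), so we get $a_m - \sum_{i=1}^{n} c_i \alpha^i(a_{m-i}) = 0$ for all $m \geq n$, which is precisely \eqref{eq:def of rec reln} after reindexing $m = i+n$. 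The bound $\deg P \leq n-1$ follows because $P = Qf$ and the recurrence forces the $x^m$-coefficient of $Qf$ to vanish for $m \geq n$; so $P$ is read off as the truncation of $Qf$ to degrees $0$ through $n-1$, giving the explicit construction promised in the statement.

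For the ``if'' direction, suppose the $a_i$ satisfy \eqref{eq:def of rec reln} for some $n$ and $c_1,\dots,c_n \in K$. Set $Q = 1 - \sum_{i=1}^n c_i x^i \in K[x;\alpha]$ and define $P$ to be the degree-$\leq (n-1)$ polynomial whose $x^m$-coefficient (for $0 \leq m \leq n-1$) is $a_m - \sum_{i=1}^{m} c_i \alpha^i(a_{m-i})$. I would then check that $Qf = P$ in $K[\![x;\alpha]\!]$: for $m \leq n-1$ this holds by the definition of $P$, and for $m \geq n$ the coefficient of $x^m$ in $Qf$ is $a_m - \sum_{i=1}^{n} c_i \alpha^i(a_{m-i})$, which is zero by the recurrence. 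Since $Q$ has constant term $1$, it is invertible in $K[\![x;\alpha]\!]$ (by formula \eqref{eq:inverse coefficients}) and also, being a nonzero element of the domain $K[x;\alpha]$ whose Ore quotient is $K(x;\alpha)$, it is invertible in $K(x;\alpha)$; hence $f = Q^{-1}P$ represents a rational function as claimed.

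The main subtlety, and the step I would be most careful about, is the noncommutative bookkeeping: because $K[x;\alpha]$ is a \emph{left} Ore extension with $xr = \alpha(r)x$, one must fix conventions about whether $Q$ multiplies $f$ on the left or the right, and correspondingly whether we write the fraction as $Q^{-1}P$ or $PQ^{-1}$ — the statement uses $Q^{-1}P$, so $Q$ acts on the left, and the twists that appear are $\alpha^i$ applied to the \emph{lower-index} coefficients, matching \eqref{eq:def of rec reln}. A second point requiring a short argument is the reduction in the ``only if'' direction to a denominator with constant term exactly $1$: one needs that an arbitrary denominator $Q'$ appearing in a left fraction can be taken with nonzero constant term (if $x \mid Q'$ on the left, then $x \mid P'$ as well since $f$ is a power series with no negative terms, and we can cancel powers of $x$, which are units in $K[\![x;\alpha]\!]$), and then left-multiply by the inverse of the constant term. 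Everything else is a routine coefficient comparison using \eqref{eq:multiplication coefficients}.
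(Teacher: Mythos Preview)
Your ``if'' direction is correct and is essentially what the paper does, just without isolating the coefficient computation as a separate lemma. The paper proves Lemma~\ref{res:rec reln technical lemma} first, computing $(c_1x+\dots+c_nx^n)\sum a_ix^i$ explicitly, and then reads off both directions from it; you do the same coefficient comparison inline.

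Your ``only if'' direction has a circular step. You assume $Q$ is normalised to $1-\sum_{i=1}^n c_ix^i$ with $n=\deg Q$, then assert that $P$ contributes nothing in degree $\geq n$ ``since $\deg P$ will be shown to be $\leq n-1$'', and then justify $\deg P\leq n-1$ by saying ``the recurrence forces the $x^m$-coefficient of $Qf$ to vanish for $m\geq n$''. But you are trying to \emph{derive} the recurrence from $Qf=P$, so you cannot invoke it to bound $\deg P$. In an arbitrary left-fraction representation there is no reason for $\deg P<\deg Q$.

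The fix is trivial: simply choose $n\geq\max(\deg Q,\ 1+\deg P)$ from the outset and pad $Q$ with zero coefficients up to degree $n$. Then $\deg P\leq n-1$ holds by fiat, and your coefficient comparison in degrees $\geq n$ gives the recurrence directly. (The statement of the theorem only asserts the existence of \emph{some} $n$ and $c_1,\dots,c_n$; it does not claim that the given $P$ in an arbitrary representation has small degree.) With this one-line repair your argument is complete and in fact cleaner than the paper's, which instead uses the division algorithm to write $P=QS+V$ with $\deg V<\deg Q$, observes that $Q^{-1}V$ and $Q^{-1}P$ differ in only finitely many terms, and argues separately that modifying finitely many coefficients does not affect the existence of a recurrence. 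Your padding trick avoids that detour entirely.
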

The exposition of this proof follows closely the one from \cite{notes}.  We first require a technical lemma:
\begin{lemma}\label{res:rec reln technical lemma}Let $c_1, \dots, c_n$ be a set of elements of $K$; define a polynomial $c_1x + c_2x^2 + \dots + c_nx^n$, and let $\sum_{i \geq 0} a_i x^i$ be a power series in $K[\![x]\!]$.  Then
\begin{equation}\label{eq:rec reln technical lemma eqn}(c_1x + \dots + c_nx^n)\sum_{i \geq 0}a_ix^i = R + \sum_{i \geq 0} (c_1 \alpha(a_{i+n-1}) + \dots + c_n \alpha^n(a_i))x^{i+n},\end{equation}
where $R$ is a polynomial of degree at most $n-1$.\end{lemma}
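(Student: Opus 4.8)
The plan is to prove Lemma~\ref{res:rec reln technical lemma} by a direct but careful expansion of the left-hand side, keeping track of the non-commutativity through the twisting automorphism $\alpha$. The single fact that does all the work is the commutation rule in $K[\![x;\alpha]\!]$: for any $a \in K$ and any $j \geq 0$ we have $x^j a = \alpha^j(a) x^j$. So first I would write
\[
(c_1 x + \dots + c_n x^n)\sum_{i \geq 0} a_i x^i = \sum_{j=1}^n c_j x^j \sum_{i \geq 0} a_i x^i = \sum_{j=1}^n \sum_{i \geq 0} c_j \alpha^j(a_i) x^{i+j},
\]
using the commutation rule to move each $x^j$ past each coefficient $a_i$. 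The point is that $c_j$ is a \emph{left} coefficient, so it does not get twisted; only the $a_i$ picks up $\alpha^j$.

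Next I would reindex the double sum by the total power $k = i+j$ of $x$. Splitting into the terms with $k < n$ and the terms with $k \geq n$: the former contribute a polynomial $R$ of degree at most $n-1$ (each such term has $x$-degree $i+j \leq n-1$ since $j \leq n$ forces $i \leq n-1-j$ there, and in any case $k \leq n-1$), while the latter, after writing $k = i+n$ with $i \geq 0$, collect the coefficient of $x^{i+n}$ as $\sum_{j=1}^n c_j \alpha^j(a_{i+n-j})$. Spelling this out, for $i \geq 0$ the coefficient of $x^{i+n}$ is
\[
c_1 \alpha(a_{i+n-1}) + c_2 \alpha^2(a_{i+n-2}) + \dots + c_n \alpha^n(a_i),
\]
which is exactly the summand appearing on the right-hand side of \eqref{eq:rec reln technical lemma eqn}. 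I should double-check the edge case: when $k \geq n$ all values $j = 1, \dots, n$ give a legitimate index $i = k - j \geq 0$, so the full sum over $j$ does appear for each such $k$, confirming that no boundary terms are lost.

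The only mildly delicate point — the ``main obstacle,'' such as it is — is bookkeeping: making sure the split at $k = n$ is done correctly so that $R$ really has degree $\leq n-1$ and every term with $x$-degree $\geq n$ is accounted for with the claimed coefficient, including verifying that each $j \in \{1,\dots,n\}$ contributes for every $k \geq n$. This is purely a reindexing argument with no analytic subtlety, since everything is a formal manipulation in $K[\![x;\alpha]\!]$ and the sums converge in the $x$-adic topology. Once the lemma is in hand, it will be applied in the proof of Theorem~\ref{res:rec reln thm} by recognizing that $\left(1 - \sum_{i=1}^n c_i x^i\right)\sum_{i\geq 0} a_i x^i$ equals $\sum_{i\geq 0} a_i x^i$ minus the expression in \eqref{eq:rec reln technical lemma eqn}, so that the product is a polynomial of degree $\leq n-1$ precisely when the recurrence \eqref{eq:def of rec reln} holds for all $i \geq 0$; that polynomial is then the numerator $P$.
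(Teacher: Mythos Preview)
Your proposal is correct and follows essentially the same approach as the paper: both proofs expand the product using the commutation rule $x^j a = \alpha^j(a)x^j$ and then separate terms of $x$-degree $< n$ (forming $R$) from those of degree $\geq n$. The only cosmetic difference is that the paper splits each $c_j x^j \sum_i a_i x^i$ into a finite head and a tail before reindexing, whereas you expand everything first and then reindex by the total degree $k = i+j$; these are the same computation in a different order.
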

\begin{proof}
We start by multiplying out the left hand side of \eqref{eq:rec reln technical lemma eqn} as follows:
\begin{align*}
 (c_1x + \dots + c_nx^n)\sum_{i \geq 0}a_ix^i &= c_1x\left(\sum_{i=0}^{n-2}a_i x^i + \sum_{i \geq n-1}a_ix^i\right) \\
&{} \qquad + c_2x^2\left(\sum_{i=0}^{n-3}a_ix^i + \sum_{i \geq n-2}a_i x^i\right) \\
&{} \qquad \quad \dots  \\
&{} \qquad + c_{n-1}x^{n-1}\left(\sum_{i=0}^0 a_ix^i + \sum_{i \geq 1} a_ix^i\right) \\
&{} \qquad + c_nx^n\left(0 + \sum_{i \geq0} a_i x^i\right)
\end{align*}
After moving all powers of $x$ to the right and re-indexing the second sum on each line so that it starts from $i=0$, we obtain
\begin{align*}
 (c_1x + \dots + c_nx^n)\sum_{i \geq 0}a_ix^i &= c_1\sum_{i=0}^{n-2}\alpha(a_i) x^{i+1} + \sum_{i \geq 0}c_1\alpha(a_{i+n-1})x^{i+n} \\
&{} \qquad + c_2\sum_{i=0}^{n-3}\alpha^2(a_i)x^{i+2} + \sum_{i \geq 0}c_2\alpha^2(a_{i+n-2}) x^{i+n}\\
&{} \qquad \quad \dots  \\
&{} \qquad + c_{n-1}\alpha^{n-1}(a_0)x^{n-1} + \sum_{i \geq 0}c_{n-1}\alpha^{n-1}(a_{i+1})x^{i+n} \\
&{} \qquad + \sum_{i \geq0} c_n\alpha^n(a_i) x^{i+n}
\end{align*}
By defining
\[R := c_1\sum_{i =0}^{n-2}\alpha(a_i)x^{i+1} + c_2\sum_{i =0}^{n-3}\alpha^2(a_i)x^{i+2} + \dots + c_{n-1}\alpha^{n-1}(a_0)x^{n-1}\]
it is now clear that
\[(c_1x + \dots + c_nx^n)\sum_{i \geq 0}a_ix^i = R + \sum_{i \geq 0} (c_1 \alpha(a_{i+n-1}) + \dots + c_n \alpha^n(a_i))x^{i+n}.\]
as required.
\end{proof}
\begin{proof}[Proof of Theorem~\ref{res:rec reln thm}]
Let $\sum_{i \geq 0}a_ix^i \in K[\![x;\alpha]\!]$ and suppose that this power series satisfies a linear recurrence relation of the form
\[a_{i+n} = c_1\alpha(a_{i+(n-1)}) + c_2\alpha^2(a_{i+(n-2)}) + \dots + c_n\alpha^n(a_i)\]
for all $i\geq 0$.  We will construct a left fraction $Q^{-1}P \in K(x;\alpha)$ such that the image of $Q^{-1}P$ in $K[\![x;\alpha]\!]$ is $\sum_{i \geq 0}a_ix^i$.

Define $Q := 1 - \sum_{i=1}^n c_ix^i$, and observe that
\[ (1-Q)\sum_{i \geq0}a_i x^i = (c_1x + c_2x^2 + \dots +c_nx^n)\sum_{i \geq0}a_ix^i.\]
This is in the correct form to apply Lemma~\ref{res:rec reln technical lemma}, and so we have
\begin{align*}
 (1-Q)\sum_{i \geq0}a_i x^i &= R + \sum_{i \geq0}(c_1\alpha(a_{i+n-1}) + \dots + c_n\alpha^n(a_i))x^{i+n} \\
&= R + \sum_{i \geq0}a_{n+i}x^{n+i} \textrm{\quad (by assumption)}\\
&= R + \sum_{i \geq n}a_ix^i \\
&= R - \sum_{i=0}^{n-1}a_ix^i + \sum_{ i\geq0}a_ix^i
\end{align*}
where $R$ is a polynomial in $K[x;\alpha]$ of degree $\leq n-1$.  After simplifying this becomes
\[Q\sum_{i \geq0}a_i x^i = -R + \sum_{i=0}^{n-1}a_ix^i\]
and hence
\[\sum_{i \geq0}a_i x^i = Q^{-1}P\]
where $P:= -R + \sum_{i=0}^{n-1}a_ix^i$.

Conversely, let $F = Q^{-1}P \in K(x;\alpha)$; we need to show that for $F = \sum_{i \geq m}f_ix^i \in K(\!(x;\alpha)\!)$, the sequence $(f_i)_{i \geq m}$ satisfies a recurrence relation of the form \eqref{eq:def of rec reln}.  We will do this by performing a series of reductions on the fraction $F$, none of them affecting whether it admits a recurrence relation or not, until $F$ is in a form that is easier to work with.

We first claim that it suffices to consider only the case where $P,Q \in K[x;\alpha]$ are not divisible by $x$, i.e. they have non-zero constant terms.  This is immediately clear for $P$ since the powers of $x$ are written on the right, so suppose that $x \nmid P$ and $Q = Q'x^{-m}$ with $x\nmid Q'$.  We can now observe that $F = x^mQ'^{-1}P$ and hence
\[Q'^{-1}P =  x^{-m}F = \sum_{i \geq m}\alpha^{-m}(f_i)x^{i-m} = \sum_{i \geq 0}\alpha^{-m}(f_{i+m})x^i.\]
Since $\alpha$ is an automorphism it is clear that the sequence $(\alpha^{-m}(f_{i+m}))_{i \geq 0}$ will satisfy a recurrence relation if and only if the original sequence $(f_{i})_{i \geq m}$ did.  We can therefore assume that 
\[F = Q^{-1}P = \sum_{i \geq 0}f_i x^i\]
where $Q = r_0 + r_1x + \dots + r_nx^n \in K[x;\alpha]$ satisfies $r_0 \neq 0$.

Next we replace $Q^{-1}P$ by a fraction $Q^{-1}V$ satisfying $deg_x(V) < deg_x(Q)$, in such a way that at most finitely many terms of the sequence $(f_i)$ are changed.  (Note that while this will change the recurrence relation itself, it will not affect the \textit{existence} of the recurrence relation: if we change the first $n$ terms in a sequence that admits a recurrence relation, we can always obtain a recurrence relation for the new sequence simply by appending $n$ zeroes to the old set of recurrence constants.)  

If $deg(P) \geq deg(Q)$ then we can use the division algorithm to write $P = QS + V$, where $deg(V) < deg(Q)$ and $S$ is a polynomial: now
\begin{equation}\label{eq:rec reln theorem misc eqn 2}Q^{-1}V = Q^{-1}P - S\end{equation}
and since $S$ is a polynomial we can see that the power series representations of $Q^{-1}V$ and $Q^{-1}P$ differ by at most the first $deg_x(S)$ terms.

Since the constant term $r_0$ of $Q$ is non-zero, we can scale \eqref{eq:rec reln theorem misc eqn 2} by $r_0$ to obtain
\[r_0(Q^{-1}P-S) = r_0Q^{-1}V = T^{-1}V = \sum_{i \geq 0} a_ix^i\]
where $T:=Q(x)r_0^{-1}=1 - c_1x - \dots - c_nx^n$ for some $c_i \in K$.  Since the power series for $Q^{-1}P$ and $Q^{-1}V$ differ by finitely many terms, and scaling the fraction by an element of $K$ does not affect the existence of a recurrence relation, we see that the sequence $(a_i)$ satisfies a recurrence relation if and only if the original sequence $(f_i)$ does.

We are now in a position to show that $T^{-1}V = \sum_{i \geq 0}a_i x^i$ satisfies a linear recurrence relation of the form \eqref{eq:def of rec reln}.  Indeed, we can rearrange the equality $T^{-1}V = \sum_{i \geq 0}a_i x^i$ to obtain
\[(1-T)\sum_{i \geq0} a_i x^i = -V + \sum_{i \geq 0}a_ix^i\]
and then apply Lemma~\ref{res:rec reln technical lemma} to rewrite this as
\[R + \sum_{ i \geq 0}(c_1\alpha(a_{i+n-1}) + \dots + c_n\alpha^n(a_i)) x^{i+n} = -V + \sum_{ i \geq 0}a_i x^i,\]
where $R$ is a polynomial of degree $<deg_x(T)$.  Rearranging this, we obtain
\begin{equation}\label{eq:rec reln thm proof misc equation 1}R + V = \sum_{ i \geq 0} a_i x^i - \sum_{ i \geq 0} (c_1\alpha(a_{i+n-1}) + \dots + c_n\alpha^n(a_i)) x^{i+n}.\end{equation}
Since $deg_x(V) < deg_x(Q)  = deg_x(T) =n$, the left hand side of \eqref{eq:rec reln thm proof misc equation 1} is zero in degree $n$ and above.  Hence by comparing coefficients of $x^{i+n}$ for $i \geq 0$, we obtain the required recurrence relation \eqref{eq:def of rec reln}.
\end{proof}

\begin{remark}
A very similar version of this proof yields a recurrence relation for right fractions $PQ^{-1}$; in this case, it helps to work with right coefficients, i.e. power series of the form $\sum_{i \geq n}x^i a_i$.
\end{remark}

While Theorem~\ref{res:rec reln thm} is extremely useful for turning a power series with a recurrence relation into a left fraction, it gives no indication as to how the recurrence relation should be found in the first place.  The following theorem, due to Kronecker in the commutative case, attempts to address this problem.
\begin{theorem}\label{res:rec reln det thm} A power series $\sum_{i \geq0}a_ix^i$ satisfies a linear recurrence relation
\[a_{i+n} = c_1\alpha(a_{i+(n-1)}) + c_2\alpha^2(a_{i+(n-2)}) + \dots + c_k\alpha^k(a_i)\]
if and only if there exists some $m \geq 1$ such that the determinants of the matrices
\begin{equation*}
 \Delta_k = \left[ \begin{array}{ccccc}
         \alpha^k(a_0) & \alpha^{k-1}(a_{1}) & \dots & \alpha(a_{k-1}) & a_{k} \\
	 \alpha^k(a_{1}) & \alpha^{k-1}(a_{2}) & \dots &\alpha(a_{k}) & a_{k+1} \\
	 \vdots & &\ddots & &\vdots\\
	 \alpha^k(a_{k}) & \alpha^{k+1}(a_{k+1}) & \dots& \alpha(a_{2k-1}) & a_{2k}
        \end{array} \right]
\end{equation*}
are zero for all $k \geq m$.\end{theorem}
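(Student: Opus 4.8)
The plan is to reduce this statement to the classical commutative theorem of Kronecker by a careful twisting argument, exactly as Theorem~\ref{res:rec reln thm} reduced the non-commutative recurrence question to its commutative analogue. The key observation is that the condition ``$\sum a_ix^i$ satisfies the recurrence $a_{i+k} = c_1\alpha(a_{i+k-1}) + \dots + c_k\alpha^k(a_i)$ for all $i$'' is essentially a linear-dependence statement among the ``rows'' $r_i := (a_i, \alpha(a_{i+1}), \alpha^2(a_{i+2}), \dots)$ once we apply the appropriate power of $\alpha$ to line everything up; the matrices $\Delta_k$ in the statement are precisely the matrices whose rows are suitable $\alpha$-twists of $(a_j, a_{j+1}, \dots, a_{j+k})$, and whose vanishing determinant detects exactly such a linear dependence.

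First I would establish the easy direction: if the power series satisfies a recurrence of length $k$ (with constants $c_1,\dots,c_k$), then for every $j \geq 0$ the identity $a_{j+k} = \sum_{s=1}^k c_s\alpha^s(a_{j+k-s})$ shows that the last column of $\Delta_k$ (after row $j$) is a $K$-linear combination of the preceding columns, \emph{after applying a uniform power of $\alpha$}; more precisely one checks that the columns of $\Delta_k$ satisfy a fixed linear relation with coefficients $\alpha^{?}(c_s)$, so $\det \Delta_k = 0$. Taking $m := k$ handles every $\Delta_k$ with $k \geq m$ by padding the recurrence with leading zeros (changing $k$ to $k+1$ appends a zero constant and a column/row, and the determinant stays zero). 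I would be careful here that $\alpha$ being an automorphism is what lets the twists be undone, so no information is lost.

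For the converse, suppose $\det \Delta_k = 0$ for all $k \geq m$. I would untwist: apply $\alpha^{-j}$ appropriately to the $j$-th row of $\Delta_k$ (legitimate since $\alpha$ is invertible and $\det$ is unchanged by applying an automorphism of $K$ to all entries, up to a unit) to convert $\Delta_k$ into an honest Hankel matrix with entries $a_{i+j}$ over the field $K$. Then $\det \Delta_m = 0$ means the Hankel matrix $H_m = (a_{i+j})_{0\le i,j\le m}$ is singular, so its columns are $K$-linearly dependent; the standard Kronecker argument (see e.g.\ the reference \cite{notes} used for Theorem~\ref{res:rec reln thm}, or any treatment of Hankel determinants) then produces constants $c_1,\dots,c_m \in K$ giving a linear recurrence $a_{i+m} = \sum_{s} c_s a_{i+m-s}$ valid for the first few indices; the vanishing of \emph{all} $\det\Delta_k$ for $k\ge m$ is exactly what upgrades this to a recurrence valid for all $i$ (this propagation step is the heart of Kronecker's theorem: each new vanishing determinant forces the next coefficient to obey the relation). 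Finally I would retwist: pushing the recurrence $a_{i+m} = \sum_s c_s a_{i+m-s}$ back through $\alpha$ yields a recurrence of the stated form $a_{i+m} = \sum_s c_s'\alpha^s(a_{i+m-s})$ with $c_s'$ the corresponding $\alpha$-twists of $c_s$.

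The main obstacle I anticipate is making the twisting/untwisting bookkeeping fully rigorous: one must track precisely which power of $\alpha$ is applied to each row and column of $\Delta_k$, verify that the ``untwisted'' matrix really is a genuine Hankel matrix over $K$ (so that commutative Kronecker applies verbatim), and confirm that the retwisted constants $c_s'$ give a recurrence in exactly the normalized form demanded by the statement and by Theorem~\ref{res:rec reln thm}. The propagation step (vanishing of all $\Delta_k$, $k\ge m$, implies the recurrence holds for \emph{all} $i$, not just small $i$) is standard in the commutative setting but should be stated carefully since it is what ties the two theorems together; once it is in hand, combining with Theorem~\ref{res:rec reln thm} immediately gives the practical algorithm (search for the first $m$ with $\det\Delta_m = 0$, read off the recurrence, then build the fraction $Q^{-1}P$).
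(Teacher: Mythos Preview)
Your untwisting step is the gap. You write that applying $\alpha^{-j}$ to the $j$-th row of $\Delta_k$ is ``legitimate since $\alpha$ is invertible and $\det$ is unchanged by applying an automorphism of $K$ to all entries, up to a unit.'' But those are two different operations: applying a single field automorphism to \emph{every} entry preserves vanishing of the determinant, while applying \emph{different} powers of $\alpha$ to different rows is neither that nor an elementary row operation, and it does not preserve $\det$ in general. Concretely, the $(i,j)$ entry of $\Delta_k$ is $\alpha^{k-j}(a_{i+j})$, which depends on $j$ through the exponent of $\alpha$ and not just on $i+j$; no row-by-row (or column-by-column) twist produces an honest Hankel matrix $(a_{i+j})$ while respecting the determinant. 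The same obstruction blocks the retwisting at the end: if you set $b_i = \alpha^{\pm i}(a_i)$ and translate the twisted recurrence into one for the $b_i$, the resulting coefficients acquire an $i$-dependent power of $\alpha$, so you never land on a constant-coefficient commutative recurrence to which classical Kronecker applies.

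The paper does not reduce to the commutative case at all; it runs Kronecker's induction directly on the twisted matrices. One takes $m$ minimal with $|\Delta_k|=0$ for $k\ge m$, extracts constants $c_1,\dots,c_m$ from the column dependence in $\Delta_m$, and then for each $r>m$ performs genuine elementary column operations on $\Delta_r$ of the form $C_{r+1-t}\mapsto C_{r+1-t}-\sum_s \alpha^t(c_s)C_{r+1-t-s}$. These are honest $K$-linear column operations (the coefficients lie in $K$), so $|\Delta_r|$ is preserved; after them the top-right block vanishes by the inductive hypothesis and the determinant factors as $\alpha^{r-m+1}(|\Delta_{m-1}|)\prod_i\alpha^i(P_{r+m})$. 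Minimality of $m$ gives $|\Delta_{m-1}|\neq 0$, forcing $P_{r+m}=0$. The moral is that the $\alpha$-twists are absorbed into the \emph{coefficients} of the column operations rather than stripped off the entries, and that is what makes the argument go through.
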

\begin{proof}
Again, this follows the commutative proof closely; we base our exposition on \cite[Lemma~III]{AlgNumbersBook}.  Observe that $\Delta_k$ is a $(k+1)\times(k+1)$ matrix.

It is easy to see that if the power series satisfies a recurrence relation of length $n$ then for all $k \geq n$, the final column of $\Delta_k$ is linearly dependent on the previous $n$ columns and the determinant is zero.

Conversely, suppose $|\Delta_k| = 0$ for all $k \geq m$ for some integer $m$, and assume that $m$ is minimal with this property.  Since the columns of the matrix $\Delta_m$ must be linearly dependant, there exist fixed elements $c_1, \dots, c_m \in K$ such that
\begin{equation}\label{eq:def of c_i, rec reln det proof}
C_{m+1} - c_1C_m - \dots - c_mC_1 = 0,
\end{equation}
where $C_i$ denotes the $i$th column of $\Delta_m$.  We will prove that $\sum_{i \geq 0}a_i x^i$ satisfies a recurrence relation with constants $c_1, \dots, c_m$.

Define
\begin{equation}\label{eq:def of P, rec reln det proof}
P_{j+m} = a_{j+m} - c_1\alpha(a_{j+m-1}) - c_2\alpha^2(a_{j+m-2}) - \dots - c_m\alpha^m(a_j)
\end{equation}
for $j \geq 0$; we will prove by induction that $P_{j+m} = 0$ for all $j \geq 0$.

The base case $P_m = 0$ follows immediately from \eqref{eq:def of c_i, rec reln det proof}, so suppose it is true for all $j < r$ for some $r$.  If $r \leq m$ then $P_{r+m} = 0$ also follows immediately from \eqref{eq:def of c_i, rec reln det proof}, so we can assume that $r > m$.

By performing column operations on $\Delta_r$ and recalling that its determinant is zero (our initial premise was that $|\Delta_k| = 0$ for all $k \geq m$), we will be able to show that $P_{r+m} = 0$ as well.  Observe that we can write $\Delta_r$ as follows:
\begin{equation*}
\Delta_r = 
\left[
\begin{array}{ccc:cccc}
A & & & \alpha^{r-m}(a_m) &  \cdots &  \alpha(a_{r-1}) & a_r \\
& & &\vdots & \ddots & & \vdots \\  \hdashline

 \alpha^{r}(a_{m})  & & & \alpha^{r-m}(a_{2m}) &\cdots &\alpha(a_{r+m-1}) & a_{r+m} \\
 \vdots & \ddots & & \vdots &\ddots & & \vdots \\
 \alpha^r(a_r)& & & \alpha^{r-m}(a_{r+m}) & \cdots & \alpha(a_{2r-1}) & a_{2r}
 \end{array}
\right]
\end{equation*}
where the block denoted by $A$ is $\alpha^{r-m+1}(\Delta_{m-1})$.  As before, let $C_i$ denote the $i$th column of $\Delta_r$, and recall that $\Delta_r$ is an $(r+1)\times(r+1)$ matrix.  Working from right to left, we will perform column operations on the columns in the right-hand blocks, that is columns $C_{r+1}$ down to $C_{m+1}$.  The column operations are:
\begin{align*}
C_{r+1} &\mapsto C_{r+1} - c_1 C_r - c_2C_{r-1} - \dots - c_m C_{r-m} \\
C_{r} &\mapsto C_{r} - \alpha(c_1)C_{r-1} - \alpha(c_2)C_{r-2} - \dots - \alpha(c_m)C_{r-m-1} \\
& \vdots \\
C_{m+1} &\mapsto C_{m+1} - \alpha^{r-m}(c_1)C_{m} - \alpha^{r-m}(c_2)C_{m-1} - \dots - \alpha^{r-m}(c_m)C_{0}
\end{align*}
Having done this, we obtain the matrix
\begin{equation*}
\Delta = 
\left[
\begin{array}{ccc:cccc}
A & & & \alpha^{r-m}(P_m) &  \cdots &  \alpha(P_{r-1}) & P_r \\
& & &\vdots & \ddots & & \vdots \\  \hdashline

 \alpha^{r}(a_{m})  & & & \alpha^{r-m}(P_{2m}) &\cdots &\alpha(P_{r+m-1}) & P_{r+m} \\
 \vdots & \ddots & & \vdots &\ddots & & \vdots \\
 \alpha^r(a_r)& & & \alpha^{r-m}(P_{r+m}) & \cdots & \alpha(P_{2r-1}) & P_{2r}
 \end{array}
\right]
\end{equation*}
which still satisfies $|\Delta| = 0$.  Further, by the inductive assumption $P_{j+m} = 0$ for $j < r$ and so the top-right block of $\Delta$ is identically zero, as are all entries above the reverse-diagonal in the bottom-right block.  The determinant $|\Delta|$ can now easily be seen to be
\[|\Delta| = \alpha^{r-m+1}(\Delta_{m-1})\prod_{i=0}^{r-m} \alpha^i(P_{r+m}) = 0.\]
Since $m$ was assumed to be the minimal integer such that $|\Delta_k| = 0$ for all $k \geq m$, while $K$ is a field and $\alpha$ is an automorphism, we conclude that $P_{r+m} = 0$ as required.\end{proof}

It is worth taking a moment to note the limitations of these results, before we attempt to use them.  First, since we cannot evaluate infinitely many terms of a power series or check the determinants of infinitely many matrices, any results obtained in this fashion will always be an approximation.  These techniques should be viewed as a means of checking computations and finding inspiration for the correct elements to write down; any properties that they should satisfy, e.g $q$-commuting, will then need to be proved using other methods.

Second, applying this theory to a given power series will always yield one single left (or right) fraction.  This will be a problem if, for example, the element in question is a product of several smaller fractions: combining the result into one fraction will often make it hopelessly large and complicated, and factorizing the result into understandable factors is almost always impossible.  We give an example of this problem in \S\ref{s:magma_example} below.

Third, results are limited by the computational power available.  The coefficients of a power series can get large very quickly and hence evaluating the determinants in Theorem~\ref{res:rec reln det thm} quickly becomes impossible.

\section{Magma code for computations in $k_q(y)(\!(x)\!)$}\label{s:magma_code}
In this section we provide the Magma code used to simulate computations in $k_q(y)(\!(x)\!)$.  The code should be pasted directly into the Magma terminal, after which the functions described below can be used as required.

Elements of $k_q(y)(\!(x)\!)$ are represented by two-element lists \texttt{[*n,F*]}.  The integer \texttt{n} denotes the lowest power of $x$ appearing in the series, while \texttt{F} is a sequence of coefficients in $k(y)$.  Hence for example the element $\sum_{i \geq n}a_ix^i$ would be stored as
\begin{verbatim}
[*n,[a_n,a_{n+1},a_{n+2}, ... , a_r]*]
\end{verbatim}
where $r$ can be arbitrarily large.  Note that while $a_n$ can be zero, this will cause some functions to break.

The following functions and procedures are provided below:
\begin{itemize}
\item \texttt{inverseL}: Takes a truncated power series and returns its inverse.
\item \texttt{productL}: Takes two truncated power series $F$, $G$ and returns their product $FG$.
\item \texttt{findz}: Computes the element $z$ from \cite[Proposition~3.3]{AC1} for a given element $G$.
\item \texttt{checkrationalL}: Takes a truncated power series $F$ and uses Theorem~\ref{res:rec reln det thm} to check whether $F$ represents a fraction, up to a given bound.
\item \texttt{findrationalL}: If \texttt{checkrationalL} returns true, this constructs polynomials $P,Q \in k_q(y)[x;\alpha]$ such that $F = Q^{-1}P$.
\item \texttt{checkpowerrationalL}: Given $F$, $P$ and $Q$ from \texttt{findrationalL}, double-checks that $F = Q^{-1}P$ up to a given bound.
\item \texttt{qelement}: Given an element $G$ of the form \eqref{eq:standard form of g to construct f}, constructs an element $F$ such that $FG = qGF$ as described in \S\ref{ss:autos of q-comm structures}. (Note that $F$ need not be a fraction even if $G$ is.)
\end{itemize}
\begin{verbatim}
// Note that Magma indexes sequences, lists, etc from 1 not 0; 
// this leads to weird indexing in some of the loops.

// Change these for a different field, different generator names
// if needed.
// t represents \hat{q}, the square root of q.
field<w>:=CyclotomicField(3);
K<t,y>:=FunctionField(field,2);
q:=t^2;
gen:=Name(K,2);
alpha:= hom< K -> K | t, q*gen>;
beta:= hom< K -> K | t, q^(-1)*gen>; 

// Magma interprets 1 and 0 as integers rather than the identity
// elements in K; use these when the distinction matters.
zero:=K!0;
one:=K!1;

// Inverts a sequence Z of x-degree 0 (first term non-zero).
// Output is named Y.
// m indicates how many terms of the inverse to compute.
// In general, use inverseL below instead.
procedure inverse(Z,~Y,m)
m1:=#(Z);
if m gt m1 then for i:=1 to (m-m1) do Z[m1+i]:=0; end for; m1:=m; 
end if;
if m lt m1 then m1:=m; end if;
Y:= [];
Y[1]:= 1/(Z[1]);
for i:= 1 to m1-1 do
	var1:=zero;
	for j:= 1 to i do
		var1:= var1 - Y[1]*Z[i-j+2]*(alpha^(i-j+1))(Y[j]);
	end for;
	Y[i+1]:=var1;
	i;
end for;
end procedure;

// Takes a sequence Z of lowest x-power r and returns its inverse L.
// Computes the first m terms.
procedure inverseL(Z,r,~L,m)
n:=#(Z);
Z1:=[];
if r lt 0 then a:=alpha^(-r); else a:=beta^(r); end if;
for j:=1 to n do
	Z1[j]:=a(Z[j]);
end for;
inverse(Z1,~Y,m);
L:=[*-r,Y*];
end procedure;

// Takes 2 elements as input: [*r,Y*] and [*s,Z*].
// Returns their product L, computes the first m terms.
// If Y and Z are precise (i.e. polynomial rather than truncated 
// power series) use m=0 to get a precise, untruncated answer.
procedure productL(Y,r,Z,s,~L,m)
n1:=#(Y); n2:=#(Z);
if m eq 0 then
n:=n1+n2;
else n:=m;
end if;
for i:=1 to (n-n1) do Y[n1+i]:=0; end for;
for i:=1 to (n-n2) do Z[n2+i]:=0; end for;
P:=[];
for i:=1 to n do
	var1:=zero;
	for j:=1 to i do
		if j+r-1 lt 0 then 
			a:=beta^(1-j-r); else a:=alpha^(j+r-1); 
		end if;
		var1:=var1 + Y[j]*a(Z[i-j+1]);
	end for;
P[i]:=var1;
i;
end for;
L:=[*r+s,P*];
end procedure;

// This computes the element z from Artamonov and Cohn's paper.
// Input: sequence b, which must be in the form given in the paper; 
// s should be 1 or -1 corresponding to power of y in first term
// of b.
// Computes the first n terms.
procedure findz(b,s,~Z,n)
n1:=#(b);
if n gt n1 then 
	for i:=1 to (n-n1+1) do
		b[n1+i]:=zero;
	end for;
end if;
z:=[];
z[1]:=gen^(-s)*(1-q^s)^(-1)*b[2];
for i:=2 to n do
	var1:=zero;
	for j:=1 to i-1 do
		var1:= var1 + z[j]*(alpha^j)(b[i-j+1]);
	end for;
	z[i]:=gen^(-s)*(1-q^(i*s))^(-1)*(b[i+1] + var1);
	i;
end for;
Insert(~z,1,one);
Z:=[*0,z*];
end procedure;

// Takes a truncated series and checks whether it satisfies the 
// conditions to represent a fraction.
// Input: the sequence P to be checked; checks matrix size a to n
// (a must be at least 2), and from b to m iterations of each size
// (minimum 1, just set b=m=1 if you're not sure about this).
// Prints "Yes" every time a determinant is zero.
// This procedure checks for rationality as a left fraction.
procedure checkrationalL(P,a,n,b,m)
for i:=a to n do
	for r:=b to m do
		M:=ZeroMatrix(K,i,i);
		for j:=1 to i do
			for k:=1 to i do
				M[j,k]:=(alpha^(i-k))(P[j+k+r-2]);
			end for;
		end for;
		d:=Determinant(M);
		if d eq 0 then i, r, "Yes"; else i, r, "No"; end if;
	end for;
end for;
end procedure

// Having run checkrationalL and found some zero determinants,
// this tries to pull the power series back to a fraction.
// Input sequence S to be pulled back; the two numbers from 
// checkrationalL next to the first "Yes" it printed become 
// a,b here (same order).
// Output: denominator Q, numerator P (it's a left fraction Q^{-1}P) 
// and c the set of recurrence relation coefficients.
// Note that this procedure assumes S has x-degree 0, if not 
// simply multiply numerator on the right by the appropriate power 
// of x afterwards.
procedure findrationalL(S,a,b,~Q,~P,~c)
M:=ZeroMatrix(K,a,a);
for j:=1 to a do
	for k:=1 to a do
		M[j,k]:=(alpha^(a-k))(S[j+k+b-2]);
	end for;
end for;
M1:=ZeroMatrix(K,a-1,1);
for i:=1 to (a-1) do
	M1[i,1]:=M[i,a];
end for;
M2:=Submatrix(M,1,1,(a-1),(a-1));
d:=Determinant(M2); // checking that this is invertible
if d eq 0 then 
	"Matrix is not invertible, check your values of a and b."; 
else M3:=M2^(-1);
M4:=M3*M1;
// M4 is the c_i in reverse order.
c:=[];
for i:=1 to (a-1) do
	c[i]:= M4[a-i,1];
end for;
Insert(~c,1,-1); // insert c_0 = 1 for later, so c_i = c[i+1]
"c found, computing P and Q...";
if b gt 1 then for i:=#c+1 to #c+b do c[i]:=zero; end for; end if;
n:=a-1+b-1;  // for ease of notation
q1:=[];
q1[1]:=one;
for i:=2 to n+1 do
	q1[i]:=-c[i];
end for;
Q:=[*0,q1*];
p1:=[];
for i:=1 to n do
	p1[i]:=zero;
end for;
for j:=1 to n do   //recall that S is the original sequence.
	for i:=0 to (n-j) do
		p1[i+j]:= p1[i+j] - c[j]*(alpha^(j-1))(S[i+1]);
	end for;
end for;
P:=[*0,p1*];
Remove(~c,1);
end if;
"done.";
end procedure;

// Check that the fraction from findrationalL is correct.
// Input sequence S, elements Q and P from findrationalL.
// If procedure returns true, then the two expressions agree
// (up to the point they were truncated).
function checkpowerrationalL(S,Q,P)
if #S le 25 then n:=#S-5; else n:=25; end if;
inverseL(Q[2],Q[1],~Q1,n+5);
productL(Q1[2],Q1[1],P[2],P[1],~T,n+5);
t:=T[2];
u:=[];
for i:=1 to n do
u[i]:=t[i]-S[i];
end for;
v1:=[];
for i:=1 to n do
v1[i]:=zero;
end for;
c:= u eq v1;
return c;
end function;

// Given a sequence g of x-degree 0 and first coefficient ay 
// (where a is a scalar), constructs a power series F which 
// q-commutes with g.
// Input: sequence g, f1 a choice for the first coefficient of F 
// (can be anything in k(y)), returns an element F.
// Computes the first n terms.
// Note that there is no guarantee that F will represent a fraction;
// however, changing the choice of f1 will not affect whether F 
// represents a fraction or not.
procedure qelement (g, f1, ~F, n)
if #g lt n then n:=#g; end if;
f:=[];
f[1]:=f1;
for i:=2 to n do
	a:=zero;
	for j:=1 to i-1 do
		a:=a + q*g[j+1]*(alpha^j)(f[i-j]) - f[j]*(alpha^j)(g[i-j+1]);
	end for;
	f[i]:=a/(g[1]*(q^i-q));
	i;
end for;
F:=[*1,f*];
end procedure;
\end{verbatim}

\section{An example of a computation in $k_q(x,y)$}\label{s:magma_example}
It is noted in Remark~\ref{rem:order 3 snark} that the generators of the fixed ring $k_q(x,y)^{\sigma}$ can be expressed as a pair of single left fractions.  The purpose of this example is to illustrate how a comparatively simple element can balloon into something hopelessly complicated when expressed as a single fraction.

Recall the setup of \S\ref{s:more fixed rings}.  We define
\begin{gather*}
a = x + \omega y + \omega^2\hat{q}y^{-1}x^{-1}, \quad b = x^{-1} + \omega y^{-1} + \omega^2\hat{q}yx, \\
\pa = x + y + \hat{q}y^{-1}x^{-1}, \quad \pb = x^{-1} + y^{-1} + \hat{q}yx,\\
g = a^{-1}b, \quad f = \pb - \omega^2\pa g + (\omega^2-\omega)\hat{q}^{-1}(\omega^2g^2 + \hat{q}^2g^{-1}).
\end{gather*}
By Proposition~\ref{res:order_3_q_comm_proof}, we know that $fg = qgf$.  However, the element $f$ was originally constructed using the {\tt qelement} function from \S\ref{s:magma_code} and only appeared in its current form after much fruitless searching.  The original form of this element (which we denote by $f'$) is defined next.

Denominator:

$v:=\Big( q^{86}y^{19} + q^{77}y^{16} + (\omega  + 1)q^{76}y^{{16}} + 2\omega q^{{75}}y^{16} + (2\omega  + 1)q^{74}y^{16} + (\omega  - 1)q^{73}y^{16} - 4q^{72}y^{16} + (-\omega  - 2)q^{71}y^{16} + (-2\omega  - 1)q^{70}y^{16} + 
            (-2\omega  - 2)q^{69}y^{16} - \omega q^{68}y^{16} + q^{67}y^{16} + (\omega  - 1)q^{66}y^{13} - q^{65}y^{13} + (-\omega  - 1)q^{64}y^{13} + (-3\omega  - 5)q^{63}y^{13} + (-4\omega  - 3)q^{62}y^{13} - 
            6\omega q^{61}y^{13} - 5\omega q^{60}y^{13} + (-3\omega  + 3)q^{59}y^{13} + {10}q^{58}y^{13} + (3\omega  + 6)q^{57}y^{13} + (5\omega  + 5)q^{56}y^{13} + (6\omega  + 6)q^{55}y^{13} + (4\omega  + 1)q^{54}y^{13} - 
            q^{54}y^{10} + (3\omega  - 2)q^{53}y^{13} + (-\omega  - 2)q^{53}y^{10} + \omega q^{52}y^{13} - 3\omega q^{52}y^{10} - q^{51}y^{13} + (-3\omega  + 1)q^{51}y^{10} + (-\omega  - 2)q^{50}y^{13} + 4q^{50}y^{10} + (4\omega  
            + {11})q^{49}y^{10} + ({10}\omega  + {11})q^{48}y^{10} + ({15}\omega  + 5)q^{47}y^{10} + ({15}\omega  + 2)q^{46}y^{10} + (9\omega  - 8)q^{45}y^{10} - {20}q^{44}y^{10} + (-9\omega  - {17})q^{43}y^{10} + (-{15}\omega  - 
            {13})q^{42}y^{10} + (-{15}\omega  - {10})q^{41}y^{10} + (-{10}\omega  + 1)q^{40}y^{10} + (-4\omega  + 7)q^{39}y^{10} + 4q^{38}y^{10} + (\omega  - 1)q^{38}y^7 + (3\omega  + 4)q^{37}y^{10} - q^{37}y^7 + (3\omega  +
            3)q^{36}y^{10} + (-\omega  - 1)q^{36}y^7 + (\omega  - 1)q^{35}y^{10} + (-3\omega  - 5)q^{35}y^7 - q^{34}y^{10} + (-4\omega  - 3)q^{34}y^7 - 6\omega q^{33}y^7 - 5\omega q^{32}y^7 + (-3\omega  + 3)q^{31}y^7
            + {10}q^{30}y^7 + (3\omega  + 6)q^{29}y^7 + (5\omega  + 5)q^{28}y^7 + (6\omega  + 6)q^{27}y^7 + (4\omega  + 1)q^{26}y^7 + (3\omega  - 2)q^{25}y^7 + \omega q^{24}y^7 - q^{23}y^7 + (-\omega  - 
            2)q^{22}y^7 + q^{21}y^4 + (\omega  + 1)q^{20}y^4 + 2\omega q^{19}y^4 + (2\omega  + 1)q^{18}y^4 + (\omega  - 1)q^{17}y^4 - 4q^{16}y^4 + (-\omega  - 2)q^{15}y^4 + (-2\omega  - 1)q^{14}y^4 + (-2\omega  
            - 2)q^{13}y^4 - \omega q^{12}y^4 + q^{11}y^4 + q^2y\Big)$ 

            $+ 
             \Big(-\omega q^{84}y^{18} - \omega q^{83}y^{18} + q^{82}y^{18} + (\omega  + 2)q^{81}y^{18} + (\omega  + 1)q^{80}y^{18} + \omega q^{79}y^{18} + (\omega  - 1)q^{78}y^{18} - q^{77}y^{18} + (-\omega  - 1)q^{76}y^{18} + (-\omega  - 
            1)q^{75}y^{18} + q^{75}y^{15} + q^{74}y^{15} + (2\omega  + 2)q^{73}y^{15} + (4\omega  + 3)q^{72}y^{15} + (4\omega  + 1)q^{71}y^{15} + (5\omega  - 2)q^{70}y^{15} + (3\omega  - 5)q^{69}y^{15} + (-2\omega  - 
            8)q^{68}y^{15} + (-7\omega  - {10})q^{67}y^{15} + (-9\omega  - 7)q^{66}y^{15} + (-9\omega  - 2)q^{65}y^{15} + (-7\omega  + 3)q^{64}y^{15} + (-2\omega  + 6)q^{63}y^{15} + (\omega  + 1)q^{63}y^{12} + (3\omega  + 
            8)q^{62}y^{15} + \omega q^{62}y^{12} + (5\omega  + 7)q^{61}y^{15} + (\omega  - 3)q^{61}y^{12} + (4\omega  + 3)q^{60}y^{15} - 5q^{60}y^{12} + (4\omega  + 1)q^{59}y^{15} + (-6\omega  - 8)q^{59}y^{12} + 
            2\omega q^{58}y^{15} + (-{13}\omega  - {11})q^{58}y^{12} - q^{57}y^{15} + (-{16}\omega  - 6)q^{57}y^{12} - q^{56}y^{15} + (-{16}\omega  + 5)q^{56}y^{12} + (-9\omega  + {16})q^{55}y^{12} + (6\omega  + {26})q^{54}y^{12} + 
            ({21}\omega  + {30})q^{53}y^{12} + ({30}\omega  + {23})q^{52}y^{12} + ({30}\omega  + 7)q^{51}y^{12} + ({21}\omega  - 9)q^{50}y^{12} + (6\omega  - {20})q^{49}y^{12} + (-\omega  - 1)q^{49}y^9 + (-9\omega  - {25})q^{48}y^{12} -
            \omega q^{48}y^9 + (-{16}\omega  - {21})q^{47}y^{12} + (-\omega  + 3)q^{47}y^9 + (-{16}\omega  - {10})q^{46}y^{12} + 5q^{46}y^9 + (-{13}\omega  - 2)q^{45}y^{12} + (6\omega  + 8)q^{45}y^9 + (-6\omega  + 
            2)q^{44}y^{12} + ({13}\omega  + {11})q^{44}y^9 + 5q^{43}y^{12} + ({16}\omega  + 6)q^{43}y^9 + (\omega  + 4)q^{42}y^{12} + ({16}\omega  - 5)q^{42}y^9 + (\omega  + 1)q^{41}y^{12} + (9\omega  - {16})q^{41}y^9 + 
            \omega q^{40}y^{12} + (-6\omega  - {26})q^{40}y^9 + (-{21}\omega  - {30})q^{39}y^9 + (-{30}\omega  - {23})q^{38}y^9 + (-{30}\omega  - 7)q^{37}y^9 + (-{21}\omega  + 9)q^{36}y^9 + (-6\omega  + {20})q^{35}y^9 + (9\omega  +
            {25})q^{34}y^9 + ({16}\omega  + {21})q^{33}y^9 - q^{33}y^6 + ({16}\omega  + {10})q^{32}y^9 - q^{32}y^6 + ({13}\omega  + 2)q^{31}y^9 + (-2\omega  - 2)q^{31}y^6 + (6\omega  - 2)q^{30}y^9 + (-4\omega  - 
            3)q^{30}y^6 - 5q^{29}y^9 + (-4\omega  - 1)q^{29}y^6 + (-\omega  - 4)q^{28}y^9 + (-5\omega  + 2)q^{28}y^6 + (-\omega  - 1)q^{27}y^9 + (-3\omega  + 5)q^{27}y^6 - \omega q^{26}y^9 + (2\omega  + 
            8)q^{26}y^6 + (7\omega  + {10})q^{25}y^6 + (9\omega  + 7)q^{24}y^6 + (9\omega  + 2)q^{23}y^6 + (7\omega  - 3)q^{22}y^6 + (2\omega  - 6)q^{21}y^6 + (-3\omega  - 8)q^{20}y^6 + (-5\omega  - 
            7)q^{19}y^6 + (-4\omega  - 3)q^{18}y^6 + (-4\omega  - 1)q^{17}y^6 - 2\omega q^{16}y^6 + q^{15}y^6 + q^{14}y^6 + \omega q^{14}y^3 + \omega q^{13}y^3 - q^{12}y^3 + (-\omega  - 2)q^{11}y^3 + (-\omega  - 
            1)q^{10}y^3 - \omega q^9y^3 + (-\omega  + 1)q^8y^3 + q^7y^3 + (\omega  + 1)q^6y^3 + (\omega  + 1)q^5y^3\Big)x$

            $ + 
        \Big(-q^{88}y^{20} + (\omega  - 1)q^{87}y^{20} + (\omega  - 1)q^{86}y^{20} - q^{85}y^{20} + (-\omega  - 1)q^{84}y^{20} + (-\omega  - 1)q^{83}y^{20} + (-\omega  - 1)q^{81}y^{17} + (-\omega  - 1)q^{80}y^{17} + (-2\omega  - 
            2)q^{79}y^{17} + (-3\omega  - 1)q^{78}y^{17} + (-4\omega  - 1)q^{77}y^{17} + (-5\omega  - 1)q^{76}y^{17} - 4\omega q^{75}y^{17} + (-4\omega  + 3)q^{74}y^{17} + (-4\omega  + 5)q^{73}y^{17} + (-2\omega  + 
            6)q^{72}y^{17} - \omega q^{72}y^{14} + (3\omega  + 9)q^{71}y^{17} + (-\omega  + 1)q^{71}y^{14} + (6\omega  + {10})q^{70}y^{17} + (\omega  + 3)q^{70}y^{14} + (7\omega  + 8)q^{69}y^{17} + (3\omega  + 6)q^{69}y^{14} + 
            (6\omega  + 3)q^{68}y^{17} + (5\omega  + 7)q^{68}y^{14} + (5\omega  + 1)q^{67}y^{17} + (8\omega  + 6)q^{67}y^{14} + 3\omega q^{66}y^{17} + ({10}\omega  + 3)q^{66}y^{14} - q^{65}y^{17} + ({10}\omega  + 1)q^{65}y^{14} 
            - q^{64}y^{17} + (8\omega  - 1)q^{64}y^{14} + (6\omega  - 3)q^{63}y^{14} + (7\omega  - 2)q^{62}y^{14} + (8\omega  - 2)q^{61}y^{14} + ({10}\omega  - 5)q^{60}y^{14} + ({11}\omega  - {10})q^{59}y^{14} - \omega q^{59}y^{11}
            + (6\omega  - {16})q^{58}y^{14} + (\omega  + 1)q^{58}y^{11} + (-3\omega  - {23})q^{57}y^{14} + (3\omega  + 2)q^{57}y^{11} + (-{12}\omega  - {26})q^{56}y^{14} + (4\omega  + 1)q^{56}y^{11} + (-{19}\omega  - {23})q^{55}y^{14}
            + (3\omega  - 5)q^{55}y^{11} + (-{22}\omega  - {16})q^{54}y^{14} - {11}q^{54}y^{11} + (-{19}\omega  - 8)q^{53}y^{14} + (-9\omega  - {18})q^{53}y^{11} - {12}\omega q^{52}y^{14} + (-{20}\omega  - {21})q^{52}y^{11} + (-5\omega  
            + 6)q^{51}y^{14} + (-{29}\omega  - {21})q^{51}y^{11} + (-2\omega  + 6)q^{50}y^{14} + (-{33}\omega  - {12})q^{50}y^{11} + 3q^{49}y^{14} - {31}\omega q^{49}y^{11} + (2\omega  + 2)q^{48}y^{14} + (-{26}\omega  + 
            {12})q^{48}y^{11} + (\omega  + 1)q^{47}y^{14} + (-{15}\omega  + {22})q^{47}y^{11} + (-5\omega  + {31})q^{46}y^{11} + (5\omega  + {36})q^{45}y^{11} + ({15}\omega  + {37})q^{44}y^{11} - \omega q^{44}y^8 + ({26}\omega  + 
            {38})q^{43}y^{11} - 2\omega q^{43}y^8 + ({31}\omega  + {31})q^{42}y^{11} + 3q^{42}y^8 + ({33}\omega  + {21})q^{41}y^{11} + (2\omega  + 8)q^{41}y^8 + ({29}\omega  + 8)q^{40}y^{11} + (5\omega  + {11})q^{40}y^8 + 
            ({20}\omega  - 1)q^{39}y^{11} + ({12}\omega  + {12})q^{39}y^8 + (9\omega  - 9)q^{38}y^{11} + ({19}\omega  + {11})q^{38}y^8 - {11}q^{37}y^{11} + ({22}\omega  + 6)q^{37}y^8 + (-3\omega  - 8)q^{36}y^{11} + ({19}\omega  - 
            4)q^{36}y^8 + (-4\omega  - 3)q^{35}y^{11} + ({12}\omega  - {14})q^{35}y^8 + (-3\omega  - 1)q^{34}y^{11} + (3\omega  - {20})q^{34}y^8 - \omega q^{33}y^{11} + (-6\omega  - {22})q^{33}y^8 + (\omega  + 1)q^{32}y^{11} 
            + (-{11}\omega  - {21})q^{32}y^8 + (-{10}\omega  - {15})q^{31}y^8 + (-8\omega  - {10})q^{30}y^8 + (-7\omega  - 9)q^{29}y^8 + (-6\omega  - 9)q^{28}y^8 + (-8\omega  - 9)q^{27}y^8 - q^{27}y^5 + (-{10}\omega  - 
            9)q^{26}y^8 - q^{26}y^5 + (-{10}\omega  - 7)q^{25}y^8 + (-3\omega  - 3)q^{25}y^5 + (-8\omega  - 2)q^{24}y^8 + (-5\omega  - 4)q^{24}y^5 + (-5\omega  + 2)q^{23}y^8 + (-6\omega  - 3)q^{23}y^5 + 
            (-3\omega  + 3)q^{22}y^8 + (-7\omega  + 1)q^{22}y^5 + (-\omega  + 2)q^{21}y^8 + (-6\omega  + 4)q^{21}y^5 + (\omega  + 2)q^{20}y^8 + (-3\omega  + 6)q^{20}y^5 + (\omega  + 1)q^{19}y^8 + (2\omega  + 
            8)q^{19}y^5 + (4\omega  + 9)q^{18}y^5 + (4\omega  + 7)q^{17}y^5 + (4\omega  + 4)q^{16}y^5 + (5\omega  + 4)q^{15}y^5 + (4\omega  + 3)q^{14}y^5 + (3\omega  + 2)q^{13}y^5 + 2\omega q^{12}y^5 + 
            \omega q^{11}y^5 + \omega q^{10}y^5 + \omega q^8y^2 + \omega q^7y^2 - q^6y^2 + (-\omega  - 2)q^5y^2 + (-\omega  - 2)q^4y^2 - q^3y^2\Big)x^2$

            $ + 
        \Big(q^{92}y^{22} + q^{91}y^{22} + \omega q^{87}y^{19} + (2\omega  + 1)q^{86}y^{19} + (2\omega  + 1)q^{85}y^{19} + (\omega  - 2)q^{84}y^{19} + (\omega  - 3)q^{83}y^{19} + (\omega  - 2)q^{82}y^{19} - 2q^{81}y^{19} + (-2\omega  - 
            2)q^{80}y^{19} + (-2\omega  - 1)q^{79}y^{19} - q^{78}y^{19} - q^{78}y^{16} - 3q^{77}y^{16} + (-\omega  + 1)q^{76}y^{19} + (-2\omega  - 4)q^{76}y^{16} + (-2\omega  - 1)q^{75}y^{19} + (-6\omega  - 
            7)q^{75}y^{16} + (-\omega  - 1)q^{74}y^{19} + (-{11}\omega  - 8)q^{74}y^{16} + q^{73}y^{19} + (-{15}\omega  - 4)q^{73}y^{16} + q^{72}y^{19} + (-{15}\omega  + 2)q^{72}y^{16} + (-9\omega  + {11})q^{71}y^{16} + 
            {21}q^{70}y^{16} + ({10}\omega  + {24})q^{69}y^{16} + ({18}\omega  + {19})q^{68}y^{16} + (-\omega  - 1)q^{68}y^{13} + ({22}\omega  + {14})q^{67}y^{16} + (-2\omega  - 1)q^{67}y^{13} + ({20}\omega  + 5)q^{66}y^{16} + (-2\omega  
            + 1)q^{66}y^{13} + ({12}\omega  - 6)q^{65}y^{16} + 5q^{65}y^{13} + (4\omega  - {11})q^{64}y^{16} + (3\omega  + {10})q^{64}y^{13} + (-2\omega  - {12})q^{63}y^{16} + (8\omega  + {14})q^{63}y^{13} + (-6\omega  - 
            {11})q^{62}y^{16} + ({17}\omega  + {15})q^{62}y^{13} + (-8\omega  - 7)q^{61}y^{16} + ({26}\omega  + {14})q^{61}y^{13} + (-6\omega  - 3)q^{60}y^{16} + ({30}\omega  + 7)q^{60}y^{13} + (-3\omega  - 1)q^{59}y^{16} + 
            ({27}\omega  - 9)q^{59}y^{13} + (-\omega  + 1)q^{58}y^{16} + ({17}\omega  - {26})q^{58}y^{13} + (-\omega  + 1)q^{57}y^{16} + (-\omega  - {41})q^{57}y^{13} + (-\omega  - 1)q^{56}y^{16} + (-{23}\omega  - {51})q^{56}y^{13} - 
            q^{55}y^{16} + (-{42}\omega  - {49})q^{55}y^{13} - q^{55}y^{10} + (-{52}\omega  - {34})q^{54}y^{13} - 2q^{54}y^{10} + (-{50}\omega  - {13})q^{53}y^{13} - 2q^{53}y^{10} + (-{37}\omega  + {10})q^{52}y^{13} + (-2\omega  - 
            3)q^{52}y^{10} + (-{16}\omega  + {29})q^{51}y^{13} + (-4\omega  - 5)q^{51}y^{10} + (3\omega  + {36})q^{50}y^{13} + (-6\omega  - 7)q^{50}y^{10} + ({16}\omega  + {33})q^{49}y^{13} + (-9\omega  - 8)q^{49}y^{10} + 
            ({22}\omega  + {25})q^{48}y^{13} + (-{15}\omega  - 8)q^{48}y^{10} + ({21}\omega  + {16})q^{47}y^{13} + (-{21}\omega  - 5)q^{47}y^{10} + ({15}\omega  + 7)q^{46}y^{13} + (-{22}\omega  + 3)q^{46}y^{10} + (9\omega  + 
            1)q^{45}y^{13} + (-{16}\omega  + {17})q^{45}y^{10} + (6\omega  - 1)q^{44}y^{13} + (-3\omega  + {33})q^{44}y^{10} + (4\omega  - 1)q^{43}y^{13} + ({16}\omega  + {45})q^{43}y^{10} + (2\omega  - 1)q^{42}y^{13} + ({37}\omega  
            + {47})q^{42}y^{10} - 2q^{41}y^{13} + ({50}\omega  + {37})q^{41}y^{10} - 2q^{40}y^{13} + ({52}\omega  + {18})q^{40}y^{10} - q^{39}y^{13} + ({42}\omega  - 7)q^{39}y^{10} - q^{39}y^7 + ({23}\omega  - 
            {28})q^{38}y^{10} + \omega q^{38}y^7 + (\omega  - {40})q^{37}y^{10} + (\omega  + 2)q^{37}y^7 + (-{17}\omega  - {43})q^{36}y^{10} + (\omega  + 2)q^{36}y^7 + (-{27}\omega  - {36})q^{35}y^{10} + (3\omega  + 2)q^{35}y^7 + 
            (-{30}\omega  - {23})q^{34}y^{10} + (6\omega  + 3)q^{34}y^7 + (-{26}\omega  - {12})q^{33}y^{10} + (8\omega  + 1)q^{33}y^7 + (-{17}\omega  - 2)q^{32}y^{10} + (6\omega  - 5)q^{32}y^7 + (-8\omega  + 6)q^{31}y^{10} + 
            (2\omega  - {10})q^{31}y^7 + (-3\omega  + 7)q^{30}y^{10} + (-4\omega  - {15})q^{30}y^7 + 5q^{29}y^{10} + (-{12}\omega  - {18})q^{29}y^7 + (2\omega  + 3)q^{28}y^{10} + (-{20}\omega  - {15})q^{28}y^7 + (2\omega  + 
            1)q^{27}y^{10} + (-{22}\omega  - 8)q^{27}y^7 + \omega q^{26}y^{10} + (-{18}\omega  + 1)q^{26}y^7 + (-{10}\omega  + {14})q^{25}y^7 + {21}q^{24}y^7 + (9\omega  + {20})q^{23}y^7 + ({15}\omega  + {17})q^{22}y^7 + 
            q^{22}y^4 + ({15}\omega  + {11})q^{21}y^7 + q^{21}y^4 + ({11}\omega  + 3)q^{20}y^7 + \omega q^{20}y^4 + (6\omega  - 1)q^{19}y^7 + (2\omega  + 1)q^{19}y^4 + (2\omega  - 2)q^{18}y^7 + (\omega  + 2)q^{18}y^4 
            - 3q^{17}y^7 - q^{16}y^7 - q^{16}y^4 + (2\omega  + 1)q^{15}y^4 + 2\omega q^{14}y^4 - 2q^{13}y^4 + (-\omega  - 3)q^{12}y^4 + (-\omega  - 4)q^{11}y^4 + (-\omega  - 3)q^{10}y^4 + (-2\omega  - 
            1)q^9y^4 + (-2\omega  - 1)q^8y^4 + (-\omega  - 1)q^7y^4 + q^3y + q^2y\Big)x^3$

            $ + 
       \Big( (-\omega  - 1)q^{93}y^{21} - \omega q^{92}y^{21} + (-\omega  + 1)q^{91}y^{21} + (-\omega  + 3)q^{90}y^{21} + (-\omega  + 2)q^{89}y^{21} + (\omega  + 2)q^{88}y^{21} + (3\omega  + 2)q^{87}y^{21} + (2\omega  + 2)q^{86}y^{21} + (\omega 
            + 1)q^{85}y^{18} + (-\omega  - 1)q^{84}y^{21} + (\omega  + 2)q^{84}y^{18} - q^{83}y^{21} + (\omega  + 3)q^{83}y^{18} + (4\omega  + 4)q^{82}y^{18} + (8\omega  + 5)q^{81}y^{18} + ({10}\omega  + 5)q^{80}y^{18} + 
            ({11}\omega  + 2)q^{79}y^{18} + ({11}\omega  - 4)q^{78}y^{18} + (8\omega  - {10})q^{77}y^{18} + (3\omega  - {14})q^{76}y^{18} + \omega q^{76}y^{15} + (-5\omega  - {17})q^{75}y^{18} + 2\omega q^{75}y^{15} + (-{12}\omega  - 
            {18})q^{74}y^{18} + (\omega  - 2)q^{74}y^{15} + (-{15}\omega  - {15})q^{73}y^{18} + (-2\omega  - 6)q^{73}y^{15} + (-{13}\omega  - 7)q^{72}y^{18} + (-3\omega  - {10})q^{72}y^{15} - 9\omega q^{71}y^{18} + (-9\omega  - 
            {13})q^{71}y^{15} + (-6\omega  + 3)q^{70}y^{18} + (-{13}\omega  - {12})q^{70}y^{15} + (-2\omega  + 3)q^{69}y^{18} + (-{19}\omega  - {12})q^{69}y^{15} + (2\omega  + 3)q^{68}y^{18} + (-{23}\omega  - 9)q^{68}y^{15} + 
            (2\omega  + 2)q^{67}y^{18} + (-{26}\omega  - 6)q^{67}y^{15} + (-{22}\omega  + 3)q^{66}y^{15} - q^{65}y^{18} + (-{21}\omega  + 9)q^{65}y^{15} - q^{64}y^{18} + (-{16}\omega  + {21})q^{64}y^{15} + (-8\omega  + 
            {28})q^{63}y^{15} + \omega q^{63}y^{12} + (4\omega  + {37})q^{62}y^{15} - q^{62}y^{12} + ({17}\omega  + {39})q^{61}y^{15} + (-3\omega  - 2)q^{61}y^{12} + ({26}\omega  + {38})q^{60}y^{15} + (-4\omega  - 1)q^{60}y^{12} + 
            ({33}\omega  + {28})q^{59}y^{15} + (-5\omega  + 3)q^{59}y^{12} + ({31}\omega  + {17})q^{58}y^{15} + (-4\omega  + {11})q^{58}y^{12} + ({24}\omega  + 6)q^{57}y^{15} + (5\omega  + {22})q^{57}y^{12} + ({13}\omega  - 
            2)q^{56}y^{15} + ({19}\omega  + {31})q^{56}y^{12} + (6\omega  - 7)q^{55}y^{15} + ({34}\omega  + {36})q^{55}y^{12} - 7q^{54}y^{15} + ({50}\omega  + {34})q^{54}y^{12} + (2\omega  - 2)q^{53}y^{15} + ({58}\omega  + 
            {20})q^{53}y^{12} + \omega q^{52}y^{15} + ({56}\omega  - 1)q^{52}y^{12} + ({45}\omega  - {24})q^{51}y^{12} - q^{50}y^{15} + ({25}\omega  - {47})q^{50}y^{12} - q^{50}y^9 + (\omega  - 1)q^{49}y^{15} - {65}q^{49}y^{12} + 
            (-\omega  - 2)q^{49}y^9 - q^{48}y^{15} + (-{25}\omega  - {72})q^{48}y^{12} - q^{48}y^9 + (-{45}\omega  - {69})q^{47}y^{12} + (-{56}\omega  - {57})q^{46}y^{12} + (-\omega  - 1)q^{46}y^9 + (-{58}\omega  - {38})q^{45}y^{12}
            + (-2\omega  - 4)q^{45}y^9 + (-{50}\omega  - {16})q^{44}y^{12} - 7q^{44}y^9 + (-{34}\omega  + 2)q^{43}y^{12} + (-6\omega  - {13})q^{43}y^9 + (-{19}\omega  + {12})q^{42}y^{12} + (-{13}\omega  - {15})q^{42}y^9 + 
            (-5\omega  + {17})q^{41}y^{12} + (-{24}\omega  - {18})q^{41}y^9 + (4\omega  + {15})q^{40}y^{12} + (-{31}\omega  - {14})q^{40}y^9 + (5\omega  + 8)q^{39}y^{12} + (-{33}\omega  - 5)q^{39}y^9 + (4\omega  + 3)q^{38}y^{12} 
            + (-{26}\omega  + {12})q^{38}y^9 + (3\omega  + 1)q^{37}y^{12} + (-{17}\omega  + {22})q^{37}y^9 - q^{36}y^{12} + (-4\omega  + {33})q^{36}y^9 + (-\omega  - 1)q^{35}y^{12} + (8\omega  + {36})q^{35}y^9 + ({16}\omega  + 
            {37})q^{34}y^9 - q^{34}y^6 + ({21}\omega  + {30})q^{33}y^9 - q^{33}y^6 + ({22}\omega  + {25})q^{32}y^9 + ({26}\omega  + {20})q^{31}y^9 - 2\omega q^{31}y^6 + ({23}\omega  + {14})q^{30}y^9 + (-2\omega  + 
            1)q^{30}y^6 + ({19}\omega  + 7)q^{29}y^9 + (2\omega  + 5)q^{29}y^6 + ({13}\omega  + 1)q^{28}y^9 + (6\omega  + 9)q^{28}y^6 + (9\omega  - 4)q^{27}y^9 + (9\omega  + 9)q^{27}y^6 + (3\omega  - 7)q^{26}y^9
            + ({13}\omega  + 6)q^{26}y^6 + (2\omega  - 4)q^{25}y^9 + {15}\omega q^{25}y^6 + (-\omega  - 3)q^{24}y^9 + ({12}\omega  - 6)q^{24}y^6 + (-2\omega  - 2)q^{23}y^9 + (5\omega  - {12})q^{23}y^6 + (-\omega  - 
            1)q^{22}y^9 + (-3\omega  - {17})q^{22}y^6 + (-8\omega  - {18})q^{21}y^6 + (-{11}\omega  - {15})q^{20}y^6 + (-{11}\omega  - 9)q^{19}y^6 + (-{10}\omega  - 5)q^{18}y^6 + (-8\omega  - 3)q^{17}y^6 - 
            4\omega q^{16}y^6 + (-\omega  + 2)q^{15}y^6 - q^{15}y^3 + (-\omega  + 1)q^{14}y^6 + \omega q^{14}y^3 - \omega q^{13}y^6 - 2\omega q^{12}y^3 + (-3\omega  - 1)q^{11}y^3 + (-\omega  + 1)q^{10}y^3 + (\omega  + 
            3)q^9y^3 + (\omega  + 4)q^8y^3 + (\omega  + 2)q^7y^3 + (\omega  + 1)q^6y^3 + \omega q^5y^3\Big)x^4$ 

            $+ 
        \Big(\omega q^{99}y^{23} + \omega q^{98}y^{23} - q^{97}y^{23} + (-\omega  - 2)q^{96}y^{23} + (-\omega  - 2)q^{95}y^{23} - q^{94}y^{23} - q^{93}y^{20} - q^{92}y^{20} + (-2\omega  - 2)q^{91}y^{20} + (-4\omega  - 3)q^{90}y^{20} + 
            (-4\omega  - 2)q^{89}y^{20} - 5\omega q^{88}y^{20} + (-5\omega  + 2)q^{87}y^{20} + (-5\omega  + 2)q^{86}y^{20} + (-3\omega  + 4)q^{85}y^{20} + 7q^{84}y^{20} + (-\omega  - 1)q^{84}y^{17} + (2\omega  + 
            9)q^{83}y^{20} - \omega q^{83}y^{17} + (4\omega  + 8)q^{82}y^{20} + (-\omega  + 3)q^{82}y^{17} + (6\omega  + 6)q^{81}y^{20} + (-\omega  + 6)q^{81}y^{17} + (7\omega  + 4)q^{80}y^{20} + (3\omega  + 9)q^{80}y^{17} + 
            (6\omega  + 2)q^{79}y^{20} + ({10}\omega  + {13})q^{79}y^{17} + 3\omega q^{78}y^{20} + ({18}\omega  + {14})q^{78}y^{17} - 2q^{77}y^{20} + ({23}\omega  + 9)q^{77}y^{17} - 2q^{76}y^{20} + {22}\omega q^{76}y^{17} - 
            q^{75}y^{20} + ({15}\omega  - {12})q^{75}y^{17} + (6\omega  - {21})q^{74}y^{17} + (-5\omega  - {26})q^{73}y^{17} + (\omega  + 1)q^{73}y^{14} + (-{13}\omega  - {24})q^{72}y^{17} + (\omega  + 1)q^{72}y^{14} + (-{16}\omega  - 
            {20})q^{71}y^{17} + \omega q^{71}y^{14} + (-{16}\omega  - {15})q^{70}y^{17} + (2\omega  - 1)q^{70}y^{14} + (-{14}\omega  - 9)q^{69}y^{17} + (2\omega  - 3)q^{69}y^{14} + (-{14}\omega  - 5)q^{68}y^{17} - 8q^{68}y^{14} +
            (-{12}\omega  - 3)q^{67}y^{17} + (-5\omega  - {14})q^{67}y^{14} - 8\omega q^{66}y^{17} + (-{15}\omega  - {20})q^{66}y^{14} + (-4\omega  + 5)q^{65}y^{17} + (-{27}\omega  - {24})q^{65}y^{14} + (-\omega  + 7)q^{64}y^{17} + 
            (-{36}\omega  - {18})q^{64}y^{14} + (\omega  + 6)q^{63}y^{17} + (-{39}\omega  - 2)q^{63}y^{14} + (2\omega  + 4)q^{62}y^{17} + (-{31}\omega  + {20})q^{62}y^{14} + (3\omega  + 3)q^{61}y^{17} + (-{12}\omega  + {41})q^{61}y^{14}
            + (3\omega  + 2)q^{60}y^{17} + ({14}\omega  + {56})q^{60}y^{14} + q^{60}y^{11} + (\omega  + 1)q^{59}y^{17} + ({39}\omega  + {58})q^{59}y^{14} + (-\omega  + 2)q^{59}y^{11} + ({53}\omega  + {45})q^{58}y^{14} + 4q^{58}y^{11}
            + ({53}\omega  + {23})q^{57}y^{14} + (3\omega  + 6)q^{57}y^{11} + ({42}\omega  + 1)q^{56}y^{14} + (7\omega  + 7)q^{56}y^{11} + ({25}\omega  - {16})q^{55}y^{14} + ({11}\omega  + 7)q^{55}y^{11} + (8\omega  - {26})q^{54}y^{14}
            + ({14}\omega  + 6)q^{54}y^{11} + (-4\omega  - {27})q^{53}y^{14} + ({14}\omega  + 1)q^{53}y^{11} + (-{11}\omega  - {24})q^{52}y^{14} + ({13}\omega  - 6)q^{52}y^{11} + (-{13}\omega  - {19})q^{51}y^{14} + ({11}\omega  - 
            {13})q^{51}y^{11} + (-{14}\omega  - {13})q^{50}y^{14} + (4\omega  - {23})q^{50}y^{11} + (-{14}\omega  - 8)q^{49}y^{14} + (-8\omega  - {34})q^{49}y^{11} + (-{11}\omega  - 4)q^{48}y^{14} + (-{25}\omega  - {41})q^{48}y^{11} -
            7\omega q^{47}y^{14} + (-{42}\omega  - {41})q^{47}y^{11} + (-3\omega  + 3)q^{46}y^{14} + (-{53}\omega  - {30})q^{46}y^{11} + 4q^{45}y^{14} + (-{53}\omega  - 8)q^{45}y^{11} + (\omega  + 3)q^{44}y^{14} + (-{39}\omega  + 
            {19})q^{44}y^{11} - \omega q^{44}y^8 + q^{43}y^{14} + (-{14}\omega  + {42})q^{43}y^{11} + (-3\omega  - 1)q^{43}y^8 + ({12}\omega  + {53})q^{42}y^{11} - 3\omega q^{42}y^8 + ({31}\omega  + {51})q^{41}y^{11} + (-2\omega  + 
            2)q^{41}y^8 + ({39}\omega  + {37})q^{40}y^{11} + (-\omega  + 5)q^{40}y^8 + ({36}\omega  + {18})q^{39}y^{11} + (\omega  + 8)q^{39}y^8 + ({27}\omega  + 3)q^{38}y^{11} + (4\omega  + 9)q^{38}y^8 + ({15}\omega  - 
            5)q^{37}y^{11} + (8\omega  + 8)q^{37}y^8 + (5\omega  - 9)q^{36}y^{11} + ({12}\omega  + 9)q^{36}y^8 - 8q^{35}y^{11} + ({14}\omega  + 9)q^{35}y^8 + (-2\omega  - 5)q^{34}y^{11} + ({14}\omega  + 5)q^{34}y^8 +
            (-2\omega  - 3)q^{33}y^{11} + ({16}\omega  + 1)q^{33}y^8 + (-\omega  - 1)q^{32}y^{11} + ({16}\omega  - 4)q^{32}y^8 - \omega q^{31}y^{11} + ({13}\omega  - {11})q^{31}y^8 - \omega q^{30}y^{11} + (5\omega  - {21})q^{30}y^8 +
            (-6\omega  - {27})q^{29}y^8 + (-{15}\omega  - {27})q^{28}y^8 - q^{28}y^5 + (-{22}\omega  - {22})q^{27}y^8 - 2q^{27}y^5 + (-{23}\omega  - {14})q^{26}y^8 - 2q^{26}y^5 + (-{18}\omega  - 4)q^{25}y^8 + (-3\omega 
            - 3)q^{25}y^5 + (-{10}\omega  + 3)q^{24}y^8 + (-6\omega  - 4)q^{24}y^5 + (-3\omega  + 6)q^{23}y^8 + (-7\omega  - 3)q^{23}y^5 + (\omega  + 7)q^{22}y^8 - 6\omega q^{22}y^5 + (\omega  + 4)q^{21}y^8 + 
            (-4\omega  + 4)q^{21}y^5 + (\omega  + 1)q^{20}y^8 + (-2\omega  + 7)q^{20}y^5 + \omega q^{19}y^8 + 7q^{19}y^5 + (3\omega  + 7)q^{18}y^5 + (5\omega  + 7)q^{17}y^5 + (5\omega  + 7)q^{16}y^5 + (5\omega  + 
            5)q^{15}y^5 + (4\omega  + 2)q^{14}y^5 + (4\omega  + 1)q^{13}y^5 + 2\omega q^{12}y^5 - q^{11}y^5 - q^{10}y^5 - q^9y^2 + (\omega  - 1)q^8y^2 + (\omega  - 1)q^7y^2 - q^6y^2 + (-\omega  - 
            1)q^5y^2 + (-\omega  - 1)q^4y^2\Big)x^5 $

            $+ 
        \Big(q^{106}y^{25} + q^{100}y^{22} + (\omega  + 1)q^{99}y^{22} + 2\omega q^{98}y^{22} + (2\omega  + 1)q^{97}y^{22} + (\omega  - 1)q^{96}y^{22} - 4q^{95}y^{22} + (-\omega  - 2)q^{94}y^{22} + (-2\omega  - 1)q^{93}y^{22} + 
            (-2\omega  - 2)q^{92}y^{22} + (\omega  - 1)q^{92}y^{19} - \omega q^{91}y^{22} - q^{91}y^{19} + q^{90}y^{22} + (-\omega  - 1)q^{90}y^{19} + (-3\omega  - 5)q^{89}y^{19} + q^{88}y^{22} + (-4\omega  - 3)q^{88}y^{19} + 
            q^{87}y^{22} - 6\omega q^{87}y^{19} - 5\omega q^{86}y^{19} + (-3\omega  + 3)q^{85}y^{19} + {10}q^{84}y^{19} + (3\omega  + 6)q^{83}y^{19} - q^{83}y^{16} + (5\omega  + 6)q^{82}y^{19} + (-\omega  - 2)q^{82}y^{16} + 
            (7\omega  + 8)q^{81}y^{19} - 3\omega q^{81}y^{16} + (7\omega  + 2)q^{80}y^{19} + (-3\omega  + 1)q^{80}y^{16} + (7\omega  - 1)q^{79}y^{19} + 4q^{79}y^{16} + 4\omega q^{78}y^{19} + (4\omega  + {11})q^{78}y^{16} + (\omega 
            - 6)q^{77}y^{19} + ({10}\omega  + {11})q^{77}y^{16} + (-2\omega  - 8)q^{76}y^{19} + ({15}\omega  + 5)q^{76}y^{16} + (-3\omega  - 3)q^{75}y^{19} + ({15}\omega  + 2)q^{75}y^{16} + (-4\omega  - 3)q^{74}y^{19} + 
            ({10}\omega  - 9)q^{74}y^{16} + (-3\omega  - 2)q^{73}y^{19} + (\omega  - {22})q^{73}y^{16} + (-\omega  + 1)q^{72}y^{19} + (-{10}\omega  - {19})q^{72}y^{16} + q^{71}y^{19} + (-{19}\omega  - {19})q^{71}y^{16} + (-{22}\omega  - 
            {18})q^{70}y^{16} + (\omega  - 1)q^{70}y^{13} + q^{69}y^{19} + (-{20}\omega  - 2)q^{69}y^{16} - q^{69}y^{13} + (-{15}\omega  + 7)q^{68}y^{16} + (-\omega  - 1)q^{68}y^{13} + (-8\omega  + 7)q^{67}y^{16} + (-3\omega  - 
            5)q^{67}y^{13} + {17}q^{66}y^{16} + (-4\omega  - 3)q^{66}y^{13} + (6\omega  + {19})q^{65}y^{16} + (-6\omega  - 1)q^{65}y^{13} + (9\omega  + {10})q^{64}y^{16} + (-6\omega  - 3)q^{64}y^{13} + ({11}\omega  + 
            {11})q^{63}y^{16} + (-7\omega  + 1)q^{63}y^{13} + ({11}\omega  + 8)q^{62}y^{16} + (-6\omega  + {11})q^{62}y^{13} + (9\omega  - 1)q^{61}y^{16} + {11}q^{61}y^{13} + (6\omega  - 1)q^{60}y^{16} + (9\omega  + 
            {20})q^{60}y^{13} + (2\omega  - 2)q^{59}y^{16} + ({20}\omega  + {28})q^{59}y^{13} + (-\omega  - 7)q^{58}y^{16} + ({29}\omega  + {17})q^{58}y^{13} + (-2\omega  - 4)q^{57}y^{16} + ({33}\omega  + 5)q^{57}y^{13} + (-2\omega  -
            1)q^{56}y^{16} + ({25}\omega  - 6)q^{56}y^{13} + q^{56}y^{10} + (-2\omega  - 2)q^{55}y^{16} + ({10}\omega  - {30})q^{55}y^{13} + (\omega  + 1)q^{55}y^{10} - \omega q^{54}y^{16} + (-{10}\omega  - {40})q^{54}y^{13} + 
            2\omega q^{54}y^{10} + q^{53}y^{16} + (-{25}\omega  - {31})q^{53}y^{13} + (2\omega  + 1)q^{53}y^{10} + (-{33}\omega  - {28})q^{52}y^{13} + (2\omega  - 2)q^{52}y^{10} + (-{29}\omega  - {12})q^{51}y^{13} + (\omega  - 
            6)q^{51}y^{10} + (-{20}\omega  + 8)q^{50}y^{13} + (-2\omega  - 4)q^{50}y^{10} + (-9\omega  + {11})q^{49}y^{13} + (-6\omega  - 7)q^{49}y^{10} + {11}q^{48}y^{13} + (-9\omega  - {10})q^{48}y^{10} + (6\omega  + 
            {17})q^{47}y^{13} + (-{11}\omega  - 3)q^{47}y^{10} + (7\omega  + 8)q^{46}y^{13} - {11}\omega q^{46}y^{10} + (6\omega  + 3)q^{45}y^{13} + (-9\omega  + 1)q^{45}y^{10} + (6\omega  + 5)q^{44}y^{13} + (-6\omega  + 
            {13})q^{44}y^{10} + (4\omega  + 1)q^{43}y^{13} + {17}q^{43}y^{10} + (3\omega  - 2)q^{42}y^{13} + (8\omega  + {15})q^{42}y^{10} + \omega q^{41}y^{13} + ({15}\omega  + {22})q^{41}y^{10} - q^{40}y^{13} + ({20}\omega  + 
            {18})q^{40}y^{10} + q^{40}y^7 + (-\omega  - 2)q^{39}y^{13} + ({22}\omega  + 4)q^{39}y^{10} + {19}\omega q^{38}y^{10} + q^{38}y^7 + ({10}\omega  - 9)q^{37}y^{10} + (\omega  + 2)q^{37}y^7 + (-\omega  - {23})q^{36}y^{10}
            + (3\omega  + 1)q^{36}y^7 + (-{10}\omega  - {19})q^{35}y^{10} + (4\omega  + 1)q^{35}y^7 + (-{15}\omega  - {13})q^{34}y^{10} + 3\omega q^{34}y^7 + (-{15}\omega  - {10})q^{33}y^{10} + (2\omega  - 6)q^{33}y^7 + 
            (-{10}\omega  + 1)q^{32}y^{10} + (-\omega  - 7)q^{32}y^7 + (-4\omega  + 7)q^{31}y^{10} + (-4\omega  - 4)q^{31}y^7 + 4q^{30}y^{10} + (-7\omega  - 8)q^{30}y^7 + (3\omega  + 4)q^{29}y^{10} + (-7\omega  - 
            5)q^{29}y^7 + (3\omega  + 3)q^{28}y^{10} + (-7\omega  + 1)q^{28}y^7 + (\omega  - 1)q^{27}y^{10} + (-5\omega  + 1)q^{27}y^7 - q^{26}y^{10} + (-3\omega  + 3)q^{26}y^7 + {10}q^{25}y^7 + (3\omega  + 
            6)q^{24}y^7 + (5\omega  + 5)q^{23}y^7 + (6\omega  + 6)q^{22}y^7 + q^{22}y^4 + (4\omega  + 1)q^{21}y^7 + q^{21}y^4 + (3\omega  - 2)q^{20}y^7 + \omega q^{19}y^7 + q^{19}y^4 - q^{18}y^7 + (\omega  + 
            1)q^{18}y^4 + (-\omega  - 2)q^{17}y^7 + 2\omega q^{17}y^4 + (2\omega  + 1)q^{16}y^4 + (\omega  - 1)q^{15}y^4 - 4q^{14}y^4 + (-\omega  - 2)q^{13}y^4 + (-2\omega  - 1)q^{12}y^4 + (-2\omega  - 
            2)q^{11}y^4 - \omega q^{10}y^4 + q^9y^4 + q^3y\Big)x^6$

Numerator:

$u:=\Big(q^{86}y^{19} + q^{77}y^{16} + (\omega  + 1)q^{76}y^{16} + 2\omega q^{75}y^{16} + (2\omega  + 1)q^{74}y^{16} + (\omega  - 1)q^{73}y^{16} - 4q^{72}y^{16} + (-\omega  - 2)q^{71}y^{16} + (-2\omega  - 1)q^{70}y^{16} + 
            (-2\omega  - 2)q^{69}y^{16} - \omega q^{68}y^{16} + q^{67}y^{16} + (\omega  - 1)q^{66}y^{13} - q^{65}y^{13} + (-\omega  - 1)q^{64}y^{13} + (-3\omega  - 5)q^{63}y^{13} + (-4\omega  - 3)q^{62}y^{13} - 
            6\omega q^{61}y^{13} - 5\omega q^{60}y^{13} + (-3\omega  + 3)q^{59}y^{13} + {10}q^{58}y^{13} + (3\omega  + 6)q^{57}y^{13} + (5\omega  + 5)q^{56}y^{13} + (6\omega  + 6)q^{55}y^{13} + (4\omega  + 1)q^{54}y^{13} - 
            q^{54}y^{10} + (3\omega  - 2)q^{53}y^{13} + (-\omega  - 2)q^{53}y^{10} + \omega q^{52}y^{13} - 3\omega q^{52}y^{10} - q^{51}y^{13} + (-3\omega  + 1)q^{51}y^{10} + (-\omega  - 2)q^{50}y^{13} + 4q^{50}y^{10} + (4\omega  
            + {11})q^{49}y^{10} + ({10}\omega  + {11})q^{48}y^{10} + ({15}\omega  + 5)q^{47}y^{10} + ({15}\omega  + 2)q^{46}y^{10} + (9\omega  - 8)q^{45}y^{10} - {20}q^{44}y^{10} + (-9\omega  - {17})q^{43}y^{10} + (-{15}\omega  - 
            {13})q^{42}y^{10} + (-{15}\omega  - {10})q^{41}y^{10} + (-{10}\omega  + 1)q^{40}y^{10} + (-4\omega  + 7)q^{39}y^{10} + 4q^{38}y^{10} + (\omega  - 1)q^{38}y^7 + (3\omega  + 4)q^{37}y^{10} - q^{37}y^7 + (3\omega  +
            3)q^{36}y^{10} + (-\omega  - 1)q^{36}y^7 + (\omega  - 1)q^{35}y^{10} + (-3\omega  - 5)q^{35}y^7 - q^{34}y^{10} + (-4\omega  - 3)q^{34}y^7 - 6\omega q^{33}y^7 - 5\omega q^{32}y^7 + (-3\omega  + 3)q^{31}y^7
            + {10}q^{30}y^7 + (3\omega  + 6)q^{29}y^7 + (5\omega  + 5)q^{28}y^7 + (6\omega  + 6)q^{27}y^7 + (4\omega  + 1)q^{26}y^7 + (3\omega  - 2)q^{25}y^7 + \omega q^{24}y^7 - q^{23}y^7 + (-\omega  - 
            2)q^{22}y^7 + q^{21}y^4 + (\omega  + 1)q^{20}y^4 + 2\omega q^{19}y^4 + (2\omega  + 1)q^{18}y^4 + (\omega  - 1)q^{17}y^4 - 4q^{16}y^4 + (-\omega  - 2)q^{15}y^4 + (-2\omega  - 1)q^{14}y^4 + (-2\omega  
            - 2)q^{13}y^4 - \omega q^{12}y^4 + q^{11}y^4 + q^2y\Big)x $

            $+ 
       \Big( (-\omega  - 1)q^{90}y^{21} + (-\omega  - 1)q^{89}y^{21} - \omega q^{83}y^{18} + q^{82}y^{18} + q^{81}y^{18} - \omega q^{80}y^{18} + 2q^{79}y^{18} + 2q^{78}y^{18} + 2q^{77}y^{18} + (4\omega  + 5)q^{76}y^{18} + (5\omega  +
            4)q^{75}y^{18} + (\omega  + 1)q^{75}y^{15} + 3\omega q^{74}y^{18} + (3\omega  - 1)q^{73}y^{18} + (2\omega  - 1)q^{72}y^{18} + (3\omega  + 1)q^{72}y^{15} + (-\omega  - 2)q^{71}y^{18} + 2\omega q^{71}y^{15} + (-\omega  - 
            1)q^{70}y^{18} + 2\omega q^{70}y^{15} + (4\omega  - 1)q^{69}y^{15} + (\omega  - 5)q^{68}y^{15} + (-\omega  - 6)q^{67}y^{15} - 5q^{66}y^{15} + (-5\omega  - 9)q^{65}y^{15} + (-7\omega  - 8)q^{64}y^{15} + (-2\omega  
            - 1)q^{64}y^{12} + (-5\omega  - 5)q^{63}y^{15} + q^{63}y^{12} + (-{10}\omega  - 8)q^{62}y^{15} + (\omega  + 1)q^{62}y^{12} + (-{11}\omega  - 6)q^{61}y^{15} - \omega q^{61}y^{12} - 7\omega q^{60}y^{15} + (\omega  - 
            1)q^{60}y^{12} + (-7\omega  + 1)q^{59}y^{15} - 2q^{59}y^{12} + (-5\omega  + 3)q^{58}y^{15} + (-7\omega  - 5)q^{58}y^{12} + (\omega  + 7)q^{57}y^{15} + (-7\omega  - 1)q^{57}y^{12} + (2\omega  + 5)q^{56}y^{15}
            + (-4\omega  + 3)q^{56}y^{12} + (\omega  + 2)q^{55}y^{15} + (-7\omega  + 6)q^{55}y^{12} + (3\omega  + 2)q^{54}y^{15} + (\omega  + {13})q^{54}y^{12} + (2\omega  + 1)q^{53}y^{15} + (6\omega  + {13})q^{53}y^{12} + (5\omega 
            + {11})q^{52}y^{12} - \omega q^{52}y^9 + (9\omega  + {13})q^{51}y^{12} + (-\omega  + 1)q^{51}y^9 + ({13}\omega  + {13})q^{50}y^{12} + (3\omega  + 3)q^{50}y^9 + ({11}\omega  + {10})q^{49}y^{12} + (3\omega  + 
            2)q^{49}y^9 + ({18}\omega  + {13})q^{48}y^{12} + 3\omega q^{48}y^9 + ({21}\omega  + 7)q^{47}y^{12} + (6\omega  - 1)q^{47}y^9 + ({14}\omega  - 4)q^{46}y^{12} + (\omega  - 5)q^{46}y^9 + ({10}\omega  - 9)q^{45}y^{12} 
            + (-4\omega  - 7)q^{45}y^9 + (3\omega  - {14})q^{44}y^{12} - 4q^{44}y^9 + (-8\omega  - {17})q^{43}y^{12} + (-\omega  - 3)q^{43}y^9 + (-{10}\omega  - {11})q^{42}y^{12} + (-3\omega  - 3)q^{42}y^9 + (-7\omega  - 
            4)q^{41}y^{12} + (3\omega  - 4)q^{41}y^9 + (-6\omega  - 1)q^{40}y^{12} + (-2\omega  - {10})q^{40}y^9 + (-2\omega  + 2)q^{39}y^{12} + (-{10}\omega  - {13})q^{39}y^9 + (2\omega  + 3)q^{38}y^{12} + (-{11}\omega  - 
            {12})q^{38}y^9 + (\omega  + 1)q^{37}y^{12} + (-{16}\omega  - {10})q^{37}y^9 + (-{17}\omega  - 2)q^{36}y^9 + (-2\omega  - 1)q^{36}y^6 + (-8\omega  + 6)q^{35}y^9 - \omega q^{35}y^6 + (-4\omega  + 6)q^{34}y^9 +
            q^{34}y^6 + (-2\omega  + 7)q^{33}y^9 + (-2\omega  + 2)q^{33}y^6 + (4\omega  + 7)q^{32}y^9 + (\omega  + 3)q^{32}y^6 + (2\omega  + 2)q^{31}y^9 + (4\omega  + 4)q^{31}y^6 + (-\omega  + 1)q^{30}y^9 + (\omega  
            + 2)q^{30}y^6 + (\omega  + 3)q^{29}y^9 + (2\omega  + 2)q^{29}y^6 + (\omega  + 2)q^{28}y^9 + (5\omega  + 2)q^{28}y^6 + q^{27}y^9 + 2q^{27}y^6 + (2\omega  + 2)q^{26}y^9 + (2\omega  + 3)q^{26}y^6 
            + (2\omega  + 1)q^{25}y^9 + (6\omega  + 3)q^{25}y^6 + (4\omega  + 2)q^{24}y^6 + (3\omega  + 1)q^{23}y^6 + (5\omega  - 1)q^{22}y^6 + (\omega  - 3)q^{21}y^6 + (\omega  - 1)q^{20}y^6 + (\omega  - 
            2)q^{19}y^6 + \omega q^{19}y^3 + (-\omega  - 3)q^{18}y^6 - q^{18}y^3 + (-\omega  - 2)q^{17}y^6 + (-2\omega  - 2)q^{17}y^3 - q^{16}y^6 + (-\omega  - 2)q^{16}y^3 + (-\omega  - 1)q^{15}y^6 + (-2\omega  - 
            1)q^{15}y^3 - \omega q^{14}y^6 - 3\omega q^{14}y^3 + q^{13}y^3 + (\omega  + 1)q^{12}y^3 - \omega q^{11}y^3 + (-\omega  + 1)q^8y^3 + q^7y^3 + \omega \Big)x^2 $

            $+ 
        \Big(\omega q^{95}y^{23} - q^{89}y^{20} - q^{88}y^{20} + (-2\omega  - 1)q^{87}y^{20} + (-\omega  - 1)q^{86}y^{20} + (-\omega  - 1)q^{85}y^{20} - 2\omega q^{84}y^{20} - 2\omega q^{81}y^{20} - \omega q^{81}y^{17} + q^{80}y^{20} - 
            \omega q^{80}y^{17} + (\omega  + 2)q^{79}y^{20} + 2q^{78}y^{20} + (-\omega  + 3)q^{78}y^{17} + (\omega  + 1)q^{77}y^{20} + (-\omega  + 3)q^{77}y^{17} + \omega q^{76}y^{20} + (4\omega  + 4)q^{76}y^{17} + (4\omega  + 
            6)q^{75}y^{17} + (4\omega  + 5)q^{74}y^{17} + (8\omega  + 1)q^{73}y^{17} + (5\omega  + 1)q^{72}y^{17} + (3\omega  - 1)q^{71}y^{17} + q^{71}y^{14} + (4\omega  - 4)q^{70}y^{17} + 2\omega q^{70}y^{14} - 
            5q^{69}y^{17} + 2\omega q^{69}y^{14} + (-3\omega  - 5)q^{68}y^{17} - q^{68}y^{14} - 5q^{67}y^{17} + (\omega  - 4)q^{67}y^{14} + (-4\omega  - 4)q^{66}y^{17} + (2\omega  - 3)q^{66}y^{14} + (-4\omega  - 
            2)q^{65}y^{17} + (-3\omega  - 3)q^{65}y^{14} + (-\omega  - 2)q^{64}y^{17} + (-\omega  - 7)q^{64}y^{14} + (-2\omega  - 1)q^{63}y^{17} + (-\omega  - 8)q^{63}y^{14} + (-3\omega  - 1)q^{62}y^{17} + (-{10}\omega  - 
            8)q^{62}y^{14} + (-{13}\omega  - {13})q^{61}y^{14} - \omega q^{60}y^{17} + (-{13}\omega  - 8)q^{60}y^{14} - q^{60}y^{11} + (-\omega  + 1)q^{59}y^{17} + (-{19}\omega  + 1)q^{59}y^{14} + (-2\omega  - 1)q^{59}y^{11} + 
            (-{12}\omega  + 7)q^{58}y^{14} + (-3\omega  - 1)q^{58}y^{11} + (-\omega  + {14})q^{57}y^{14} + (-3\omega  - 1)q^{57}y^{11} + (3\omega  + {21})q^{56}y^{14} + (-3\omega  + 3)q^{56}y^{11} + ({11}\omega  + {16})q^{55}y^{14} 
            + (-3\omega  + 4)q^{55}y^{11} + ({15}\omega  + {10})q^{54}y^{14} + (3\omega  + 7)q^{54}y^{11} + (9\omega  + 7)q^{53}y^{14} + (4\omega  + {11})q^{53}y^{11} + (9\omega  + 3)q^{52}y^{14} + (6\omega  + {11})q^{52}y^{11} +
            (7\omega  - 1)q^{51}y^{14} + ({15}\omega  + 9)q^{51}y^{11} + 2\omega q^{50}y^{14} + ({16}\omega  + {12})q^{50}y^{11} + 4\omega q^{49}y^{14} + ({13}\omega  + 4)q^{49}y^{11} + (4\omega  - 2)q^{48}y^{14} + ({16}\omega  - 
            5)q^{48}y^{11} - 2q^{47}y^{14} + ({11}\omega  - 7)q^{47}y^{11} - 2q^{46}y^{14} + (-\omega  - {14})q^{46}y^{11} + q^{46}y^8 - 2q^{45}y^{14} + (-2\omega  - {21})q^{45}y^{11} + (\omega  + 1)q^{45}y^8 + 
            (-2\omega  - 1)q^{44}y^{14} + (-{10}\omega  - {16})q^{44}y^{11} + (3\omega  + 1)q^{44}y^8 - \omega q^{43}y^{14} + (-{15}\omega  - {12})q^{43}y^{11} + 3\omega q^{43}y^8 + (-{12}\omega  - {13})q^{42}y^{11} + (3\omega  - 
            4)q^{42}y^8 + (-{12}\omega  - 6)q^{41}y^{11} + (\omega  - 7)q^{41}y^8 + (-{13}\omega  - 1)q^{40}y^{11} + (-6\omega  - {10})q^{40}y^8 + (-6\omega  - 1)q^{39}y^{11} + (-{11}\omega  - {13})q^{39}y^8 + (-6\omega  + 
            1)q^{38}y^{11} + (-{12}\omega  - 8)q^{38}y^8 + (-4\omega  + 6)q^{37}y^{11} - {15}\omega q^{37}y^8 + (\omega  + 4)q^{36}y^{11} + (-{10}\omega  + 3)q^{36}y^8 + (\omega  + 3)q^{35}y^{11} + (-2\omega  + {10})q^{35}y^8
            + (\omega  + 3)q^{34}y^{11} + (\omega  + {15})q^{34}y^8 + (3\omega  + 2)q^{33}y^{11} + (4\omega  + 9)q^{33}y^8 + \omega q^{32}y^{11} + (9\omega  + 8)q^{32}y^8 - \omega q^{31}y^{11} + (5\omega  + {10})q^{31}y^8 + (7\omega 
            + 7)q^{30}y^8 - \omega q^{30}y^5 + ({10}\omega  + 4)q^{29}y^8 - \omega q^{29}y^5 + (8\omega  + 6)q^{28}y^8 + 2q^{28}y^5 + (8\omega  + 1)q^{27}y^8 + (\omega  + 3)q^{27}y^5 + (7\omega  - 4)q^{26}y^8 + 
            (3\omega  + 3)q^{26}y^5 + (\omega  - 5)q^{25}y^8 + (4\omega  + 3)q^{25}y^5 + (-\omega  - 4)q^{24}y^8 + (4\omega  + 1)q^{24}y^5 + (-\omega  - 4)q^{23}y^8 + (4\omega  - 2)q^{23}y^5 + (-3\omega  - 
            2)q^{22}y^8 + (2\omega  - 2)q^{22}y^5 - \omega q^{21}y^8 + (-\omega  - 4)q^{21}y^5 + (-\omega  - 6)q^{20}y^5 + (-3\omega  - 4)q^{19}y^5 + (-6\omega  - 3)q^{18}y^5 + (-4\omega  - 3)q^{17}y^5 - 
            3\omega q^{16}y^5 + (-3\omega  + 2)q^{15}y^5 + (-\omega  + 1)q^{14}y^5 + q^{13}y^5 + q^{12}y^5 + (\omega  + 1)q^{11}y^5 + (-\omega  - 1)q^{11}y^2 + (-\omega  - 1)q^{10}y^2 - \omega q^9y^2 + q^8y^2 + 
            (\omega  + 1)q^7y^2 + (\omega  + 1)q^6y^2 + \omega q^5y^2 + \omega q^4y^2\Big)x^3$

            $ + 
        \Big(\omega q^{94}y^{22} + \omega q^{93}y^{22} + (-\omega  - 1)q^{91}y^{22} + q^{89}y^{22} + q^{88}y^{22} - q^{88}y^{19} - q^{87}y^{19} + (-\omega  - 1)q^{86}y^{19} + (-2\omega  - 2)q^{85}y^{19} + (-\omega  - 1)q^{84}y^{19} + 
            (-2\omega  + 1)q^{83}y^{19} + (-2\omega  + 1)q^{82}y^{19} + \omega q^{81}y^{19} + (\omega  + 2)q^{80}y^{19} + (-\omega  + 1)q^{79}y^{19} + (-\omega  - 1)q^{79}y^{16} + (\omega  - 1)q^{78}y^{19} + (-\omega  - 1)q^{78}y^{16}
            + (2\omega  + 1)q^{77}y^{19} + (-\omega  + 2)q^{77}y^{16} + q^{76}y^{19} + (-2\omega  + 2)q^{76}y^{16} + (\omega  - 2)q^{75}y^{19} + 2q^{75}y^{16} + (\omega  - 2)q^{74}y^{19} + (4\omega  + 6)q^{74}y^{16} + (-\omega  
            - 1)q^{73}y^{19} + (3\omega  + 6)q^{73}y^{16} + (-2\omega  - 2)q^{72}y^{19} + (5\omega  + 1)q^{72}y^{16} + (-\omega  - 1)q^{71}y^{19} + (6\omega  + 2)q^{71}y^{16} - \omega q^{70}y^{19} + (2\omega  + 2)q^{70}y^{16}
            - \omega q^{69}y^{19} + (\omega  - 4)q^{69}y^{16} + (3\omega  - 3)q^{68}y^{16} + (\omega  + 1)q^{68}y^{13} - 3\omega q^{67}y^{16} + (\omega  + 1)q^{67}y^{13} + (-3\omega  - 3)q^{66}y^{16} - 2q^{66}y^{13} - 
            3q^{65}y^{16} + (2\omega  - 1)q^{65}y^{13} + (-3\omega  + 3)q^{64}y^{16} + (\omega  + 1)q^{64}y^{13} + (-4\omega  + 1)q^{63}y^{16} + (-\omega  - 3)q^{63}y^{13} + (2\omega  + 2)q^{62}y^{16} + (3\omega  - 
            2)q^{62}y^{13} + (2\omega  + 6)q^{61}y^{16} + (2\omega  + 1)q^{61}y^{13} + (\omega  + 5)q^{60}y^{16} + (-2\omega  - 6)q^{60}y^{13} + (6\omega  + 3)q^{59}y^{16} - 9q^{59}y^{13} + (6\omega  + 4)q^{58}y^{16} + 
            (-3\omega  - 4)q^{58}y^{13} + 2\omega q^{57}y^{16} + (-9\omega  - 7)q^{57}y^{13} + (2\omega  - 2)q^{56}y^{16} + (-4\omega  - 7)q^{56}y^{13} + (2\omega  - 1)q^{55}y^{16} + (-2\omega  + 2)q^{55}y^{13} + (\omega  + 
            1)q^{55}y^{10} + (-\omega  - 1)q^{54}y^{16} + (-5\omega  + 1)q^{54}y^{13} + (-\omega  - 1)q^{54}y^{10} + (-\omega  - 1)q^{53}y^{16} + (\omega  - 5)q^{53}y^{13} + (-\omega  - 2)q^{53}y^{10} + (2\omega  - 
            2)q^{52}y^{13} - \omega q^{52}y^{10} + (-7\omega  - 4)q^{51}y^{13} - 4\omega q^{51}y^{10} + (-7\omega  - 9)q^{50}y^{13} + (-3\omega  - 2)q^{50}y^{10} + (-4\omega  - 3)q^{49}y^{13} + 4q^{49}y^{10} - 
            9\omega q^{48}y^{13} + (-3\omega  + 3)q^{48}y^{10} + (-6\omega  - 2)q^{47}y^{13} + (-4\omega  - 1)q^{47}y^{10} + (\omega  + 2)q^{46}y^{13} + (\omega  + 5)q^{46}y^{10} + (-2\omega  + 3)q^{45}y^{13} + (-4\omega  + 
            8)q^{45}y^{10} + (-3\omega  - 1)q^{44}y^{13} + (-2\omega  + 6)q^{44}y^{10} + (\omega  + 1)q^{43}y^{13} + (6\omega  + {13})q^{43}y^{10} + (-\omega  + 2)q^{42}y^{13} + (9\omega  + {18})q^{42}y^{10} - 
            2\omega q^{41}y^{13} + (9\omega  + 9)q^{41}y^{10} + (\omega  + 1)q^{40}y^{13} + ({18}\omega  + 9)q^{40}y^{10} + (\omega  + 1)q^{40}y^7 + (\omega  + 1)q^{39}y^{13} + ({13}\omega  + 6)q^{39}y^{10} + (\omega  + 2)q^{39}y^7 
            + (6\omega  - 2)q^{38}y^{10} + \omega q^{38}y^7 + (8\omega  - 4)q^{37}y^{10} + 4\omega q^{37}y^7 + (5\omega  + 1)q^{36}y^{10} + 4\omega q^{36}y^7 + (-\omega  - 4)q^{35}y^{10} - 5q^{35}y^7 + (3\omega  - 
            3)q^{34}y^{10} - 6q^{34}y^7 + 4\omega q^{33}y^{10} + (-\omega  - 4)q^{33}y^7 + (-2\omega  - 3)q^{32}y^{10} + (-6\omega  - 7)q^{32}y^7 - 4q^{31}y^{10} + (-4\omega  - 6)q^{31}y^7 - q^{30}y^{10} + (-\omega 
            - 1)q^{30}y^7 + (-2\omega  - 1)q^{29}y^{10} + (-5\omega  - 4)q^{29}y^7 + (-\omega  - 1)q^{28}y^{10} + (-4\omega  - 5)q^{28}y^7 + (\omega  + 1)q^{27}y^{10} + (-\omega  - 1)q^{27}y^7 + (-6\omega  - 
            4)q^{26}y^7 + (-7\omega  - 6)q^{25}y^7 + (-4\omega  - 1)q^{24}y^7 - 6\omega q^{23}y^7 - \omega q^{23}y^4 - 5\omega q^{22}y^7 - \omega q^{22}y^4 + 4q^{21}y^7 + 2q^{21}y^4 + 4q^{20}y^7 + 
            2q^{20}y^4 + q^{19}y^7 + q^{19}y^4 + (2\omega  + 1)q^{18}y^7 + (3\omega  + 3)q^{18}y^4 + (\omega  + 1)q^{17}y^7 + (2\omega  + 2)q^{17}y^4 + (2\omega  + 2)q^{15}y^4 + (3\omega  + 3)q^{14}y^4 + 
            \omega q^{13}y^4 + 2\omega q^{12}y^4 + 2\omega q^{11}y^4 - q^{10}y^4 - q^9y^4 + (-\omega  - 1)q^4y + (-\omega  - 1)q^3y\Big)x^4$

             $+ 
        \Big(q^{10}0y^{24} + q^{94}y^{21} + (\omega  + 1)q^{93}y^{21} + 2\omega q^{92}y^{21} + (2\omega  + 1)q^{91}y^{21} + \omega q^{90}y^{21} - 2q^{89}y^{21} - 2q^{86}y^{21} + (\omega  - 1)q^{86}y^{18} + (-\omega  - 1)q^{85}y^{21} 
            - q^{85}y^{18} + (-\omega  - 1)q^{84}y^{21} + (-\omega  - 2)q^{83}y^{21} + (-\omega  - 3)q^{83}y^{18} - \omega q^{82}y^{21} + (-\omega  - 2)q^{82}y^{18} - \omega q^{81}y^{21} + (-2\omega  - 1)q^{81}y^{18} + (-2\omega  - 
            4)q^{80}y^{18} + (-4\omega  - 4)q^{79}y^{18} - 5\omega q^{78}y^{18} + (-5\omega  - 3)q^{77}y^{18} - q^{77}y^{15} - 5\omega q^{76}y^{18} + (-\omega  - 2)q^{76}y^{15} + (-4\omega  + 4)q^{75}y^{18} - 
            2\omega q^{75}y^{15} + (-\omega  + 3)q^{74}y^{18} - 2\omega q^{74}y^{15} + (\omega  + 5)q^{73}y^{18} - 2\omega q^{73}y^{15} + (\omega  + 8)q^{72}y^{18} + (-2\omega  + 4)q^{72}y^{15} + (5\omega  + 4)q^{71}y^{18} + 
            4q^{71}y^{15} + (6\omega  + 4)q^{70}y^{18} + 2q^{70}y^{15} + (4\omega  + 4)q^{69}y^{18} + (-\omega  + 7)q^{69}y^{15} + (3\omega  - 1)q^{68}y^{18} + (3\omega  + 9)q^{68}y^{15} + (3\omega  - 1)q^{67}y^{18} + 
            (7\omega  + 9)q^{67}y^{15} + ({10}\omega  + {15})q^{66}y^{15} - q^{65}y^{18} + ({16}\omega  + {11})q^{65}y^{15} - q^{64}y^{18} + ({21}\omega  + 3)q^{64}y^{15} - q^{64}y^{12} + ({14}\omega  - 1)q^{63}y^{15} + 
            q^{63}y^{12} + (7\omega  - {12})q^{62}y^{15} + (2\omega  + 3)q^{62}y^{12} + (\omega  - {19})q^{61}y^{15} + (3\omega  + 1)q^{61}y^{12} + (-8\omega  - {13})q^{60}y^{15} + (3\omega  + 1)q^{60}y^{12} + (-{13}\omega  - 
            {13})q^{59}y^{15} + (4\omega  + 1)q^{59}y^{12} + (-8\omega  - {10})q^{58}y^{15} + (6\omega  - 4)q^{58}y^{12} + (-8\omega  - 1)q^{57}y^{15} + (\omega  - 6)q^{57}y^{12} + (-7\omega  - 1)q^{56}y^{15} + (-\omega  - 
            6)q^{56}y^{12} + (-3\omega  - 3)q^{55}y^{15} + (-\omega  - {13})q^{55}y^{12} + (-3\omega  + 2)q^{54}y^{15} + (-6\omega  - {12})q^{54}y^{12} + (-4\omega  + 1)q^{53}y^{15} + (-{13}\omega  - {12})q^{53}y^{12} - 
            \omega q^{52}y^{15} + (-{12}\omega  - {15})q^{52}y^{12} + 2q^{51}y^{15} + (-{16}\omega  - {10})q^{51}y^{12} + 2q^{50}y^{15} + (-{21}\omega  - 2)q^{50}y^{12} + \omega q^{49}y^{15} + (-{14}\omega  - 1)q^{49}y^{12} - 
            q^{49}y^9 + (-7\omega  + {11})q^{48}y^{12} + (-2\omega  - 3)q^{48}y^9 + (-5\omega  + {16})q^{47}y^{12} + (-4\omega  - 1)q^{47}y^9 + (4\omega  + {13})q^{46}y^{12} + (-4\omega  - 1)q^{46}y^9 + ({12}\omega  + 
            {16})q^{45}y^{12} + (-5\omega  + 1)q^{45}y^9 + (9\omega  + {15})q^{44}y^{12} + (-4\omega  + 7)q^{44}y^9 + ({11}\omega  + 6)q^{43}y^{12} + (\omega  + 8)q^{43}y^9 + ({11}\omega  + 4)q^{42}y^{12} + (6\omega  + 
            8)q^{42}y^9 + (7\omega  + 3)q^{41}y^{12} + (4\omega  + {10})q^{41}y^9 + (4\omega  - 3)q^{40}y^{12} + (7\omega  + 7)q^{40}y^9 + (3\omega  - 3)q^{39}y^{12} + ({10}\omega  + 5)q^{39}y^9 + (-\omega  - 
            3)q^{38}y^{12} + (8\omega  + 9)q^{38}y^9 + (-\omega  - 3)q^{37}y^{12} + (9\omega  + 4)q^{37}y^9 + (-\omega  - 2)q^{36}y^{12} + ({15}\omega  + 1)q^{36}y^9 - \omega q^{35}y^{12} + ({10}\omega  - 2)q^{35}y^9 + 
            (3\omega  - {10})q^{34}y^9 + (\omega  + 1)q^{34}y^6 - {15}q^{33}y^9 + \omega q^{33}y^6 + (-8\omega  - {12})q^{32}y^9 + \omega q^{32}y^6 + (-{13}\omega  - {11})q^{31}y^9 + (\omega  - 1)q^{31}y^6 + (-{10}\omega  - 
            6)q^{30}y^9 + (2\omega  - 3)q^{30}y^6 + (-7\omega  + 1)q^{29}y^9 - 3q^{29}y^6 + (-4\omega  + 3)q^{28}y^9 + (-3\omega  - 4)q^{28}y^6 + 3q^{27}y^9 + (-3\omega  - 6)q^{27}y^6 + (\omega  + 
            3)q^{26}y^9 + (-4\omega  - 3)q^{26}y^6 + (\omega  + 1)q^{25}y^9 + (-6\omega  - 1)q^{25}y^6 + \omega q^{24}y^9 + (-4\omega  - 1)q^{24}y^6 + (-2\omega  + 2)q^{23}y^6 + (-2\omega  + 4)q^{22}y^6 + (\omega  +
            4)q^{21}y^6 + (3\omega  + 4)q^{20}y^6 + (3\omega  + 3)q^{19}y^6 + (3\omega  + 1)q^{18}y^6 + 2\omega q^{17}y^6 - q^{16}y^6 + q^{16}y^3 - q^{15}y^6 + q^{15}y^3 + (\omega  + 1)q^{14}y^3 + (\omega  + 
            1)q^{13}y^3 + \omega q^{12}y^3 - q^{11}y^3 + (-\omega  - 1)q^{10}y^3 + (-\omega  - 1)q^9y^{30}\Big)x^5$ 

            $+\Big((-\omega  - 1)q^{95}y^{23} + (-\omega  - 1)q^{94}y^{23} + (-\omega  - 1)q^{89}y^{20} + (-2\omega  - 1)q^{88}y^{20} + (-\omega  + 2)q^{87}y^{20} + (-\omega  + 3)q^{86}y^{20} + 3q^{85}y^{20} + (4\omega  + 5)q^{84}y^{20} + 
            (5\omega  + 4)q^{83}y^{20} + 2\omega q^{82}y^{20} + 2\omega q^{81}y^{20} + (\omega  + 2)q^{81}y^{17} + 2\omega q^{80}y^{20} + (2\omega  + 3)q^{80}y^{17} - q^{79}y^{20} + (2\omega  + 1)q^{79}y^{17} + \omega q^{78}y^{20} + 
            (5\omega  + 2)q^{78}y^{17} + \omega q^{77}y^{20} + (7\omega  + 1)q^{77}y^{17} - q^{76}y^{20} + (3\omega  - 5)q^{76}y^{17} + (\omega  - 7)q^{75}y^{17} - 7q^{74}y^{17} + (-6\omega  - {11})q^{73}y^{17} + (-8\omega  - 
            {10})q^{72}y^{17} + (\omega  + 1)q^{72}y^{14} + (-5\omega  - 5)q^{71}y^{17} + (3\omega  + 2)q^{71}y^{14} + (-8\omega  - 7)q^{70}y^{17} + (2\omega  - 2)q^{70}y^{14} + (-9\omega  - 5)q^{69}y^{17} + (-\omega  - 
            6)q^{69}y^{14} - 5\omega q^{68}y^{17} + (-4\omega  - 7)q^{68}y^{14} + (-6\omega  - 1)q^{67}y^{17} + (-{11}\omega  - {10})q^{67}y^{14} + (-5\omega  + 1)q^{66}y^{17} + (-{17}\omega  - 8)q^{66}y^{14} + (-\omega  + 
            4)q^{65}y^{17} + (-{14}\omega  + 3)q^{65}y^{14} + 2q^{64}y^{17} + (-9\omega  + {10})q^{64}y^{14} + 2q^{63}y^{17} + (-4\omega  + {14})q^{63}y^{14} + (\omega  + 3)q^{62}y^{17} + (7\omega  + {21})q^{62}y^{14} + 
            ({13}\omega  + {18})q^{61}y^{14} + ({10}\omega  + {11})q^{60}y^{14} + (\omega  + 1)q^{59}y^{17} + ({13}\omega  + {13})q^{59}y^{14} + (\omega  + 2)q^{59}y^{11} + ({13}\omega  + 9)q^{58}y^{14} + (2\omega  + 2)q^{58}y^{11} + 
            ({11}\omega  + 5)q^{57}y^{14} + \omega q^{57}y^{11} + ({13}\omega  + 6)q^{56}y^{14} + (2\omega  + 1)q^{56}y^{11} + ({13}\omega  + 1)q^{55}y^{14} + (3\omega  + 1)q^{55}y^{11} + (6\omega  - 7)q^{54}y^{14} + (\omega  - 
            1)q^{54}y^{11} + (3\omega  - 4)q^{53}y^{14} + (2\omega  + 2)q^{53}y^{11} + (-\omega  - 7)q^{52}y^{14} + (7\omega  + 4)q^{52}y^{11} + (-5\omega  - 7)q^{51}y^{14} + (7\omega  - 2)q^{51}y^{11} - 2\omega q^{50}y^{14}
            + (6\omega  - 4)q^{50}y^{11} + (-\omega  + 1)q^{49}y^{14} + (6\omega  - 8)q^{49}y^{11} - q^{48}y^{14} + (-2\omega  - {17})q^{48}y^{11} + (\omega  + 1)q^{47}y^{14} + (-{10}\omega  - {16})q^{47}y^{11} + \omega q^{46}y^{14} 
            + (-{12}\omega  - {11})q^{46}y^{11} + (-\omega  - 2)q^{45}y^{14} + (-{13}\omega  - {10})q^{45}y^{11} - q^{45}y^8 + (-{10}\omega  - 2)q^{44}y^{11} + (-\omega  - 1)q^{44}y^8 + (-4\omega  + 3)q^{43}y^{11} - 
            \omega q^{43}y^8 + (-3\omega  - 3)q^{42}y^{11} + (-2\omega  - 1)q^{42}y^8 + (-3\omega  - 1)q^{41}y^{11} + (-3\omega  - 1)q^{41}y^8 - 4\omega q^{40}y^{11} + (-2\omega  + 1)q^{40}y^8 + (-7\omega  - 
            4)q^{39}y^{11} + (-\omega  + 1)q^{39}y^8 + (-5\omega  + 1)q^{38}y^{11} + (-3\omega  + 1)q^{38}y^8 + (-\omega  + 6)q^{37}y^{11} + (-\omega  + 5)q^{37}y^8 + 3q^{36}y^{11} + (\omega  + 3)q^{36}y^8 + (2\omega  +
            3)q^{35}y^{11} + (2\omega  + 4)q^{35}y^8 + (3\omega  + 3)q^{34}y^{11} + (3\omega  + 6)q^{34}y^8 + (\omega  - 1)q^{33}y^{11} + (3\omega  + 2)q^{33}y^8 - q^{32}y^{11} + 2\omega q^{32}y^8 + (2\omega  + 
            5)q^{31}y^8 + (2\omega  + 2)q^{30}y^8 + (2\omega  + 1)q^{29}y^8 + (4\omega  + 4)q^{28}y^8 + (3\omega  + 1)q^{27}y^8 + \omega q^{27}y^5 + (2\omega  - 2)q^{26}y^8 + (\omega  - 1)q^{26}y^5 + 
            \omega q^{25}y^8 - q^{24}y^8 + (-\omega  - 2)q^{23}y^8 - q^{23}y^5 + (\omega  + 1)q^{22}y^5 + \omega q^{21}y^5 - 3q^{20}y^5 + (-\omega  - 2)q^{19}y^5 + (-2\omega  - 1)q^{18}y^5 + (-2\omega  - 
            2)q^{17}y^5 - \omega q^{16}y^5 + q^{15}y^5 + q^9y^2\Big)x^6$

             $+\Big(\omega q^{88}y^{22} + \omega q^{82}y^{19} - q^{81}y^{19} + (-2\omega  - 2)q^{80}y^{19} + (-\omega  - 2)q^{79}y^{19} + (-2\omega  - 1)q^{78}y^{19} - 4\omega q^{77}y^{19} + (-\omega  + 1)q^{76}y^{19} + (\omega  + 2)q^{75}y^{19} + 
            2q^{74}y^{19} + (-2\omega  - 1)q^{74}y^{16} + (\omega  + 1)q^{73}y^{19} - \omega q^{73}y^{16} + \omega q^{72}y^{19} + q^{72}y^{16} + (-2\omega  + 3)q^{71}y^{16} + (\omega  + 4)q^{70}y^{16} + (6\omega  + 6)q^{69}y^{16} 
            + (5\omega  + 5)q^{68}y^{16} + (6\omega  + 3)q^{67}y^{16} + {10}\omega q^{66}y^{16} + (3\omega  - 3)q^{65}y^{16} - \omega q^{65}y^{13} - 5q^{64}y^{16} + (-\omega  + 1)q^{64}y^{13} - 6q^{63}y^{16} + (3\omega  + 
            3)q^{63}y^{13} + (-3\omega  - 4)q^{62}y^{16} + (4\omega  + 3)q^{62}y^{13} + (-5\omega  - 3)q^{61}y^{16} + 4\omega q^{61}y^{13} + (-\omega  - 1)q^{60}y^{16} + (7\omega  - 4)q^{60}y^{13} - \omega q^{59}y^{16} + (\omega  -
            {10})q^{59}y^{13} + (-\omega  + 1)q^{58}y^{16} + (-{10}\omega  - {15})q^{58}y^{13} + (-{13}\omega  - {15})q^{57}y^{13} + (-{17}\omega  - 9)q^{56}y^{13} - {20}\omega q^{55}y^{13} + (-8\omega  + 9)q^{54}y^{13} + (2\omega  + 
            {15})q^{53}y^{13} + (5\omega  + {15})q^{52}y^{13} + (-2\omega  - 1)q^{52}y^{10} + ({11}\omega  + {10})q^{51}y^{13} - \omega q^{51}y^{10} + ({11}\omega  + 4)q^{50}y^{13} + q^{50}y^{10} + 4\omega q^{49}y^{13} + (-2\omega  + 
            3)q^{49}y^{10} + (\omega  - 3)q^{48}y^{13} + (\omega  + 4)q^{48}y^{10} - 3q^{47}y^{13} + (6\omega  + 6)q^{47}y^{10} + (-2\omega  - 1)q^{46}y^{13} + (5\omega  + 5)q^{46}y^{10} - \omega q^{45}y^{13} + (6\omega  + 
            3)q^{45}y^{10} + {10}\omega q^{44}y^{10} + (3\omega  - 3)q^{43}y^{10} - 5q^{42}y^{10} - 6q^{41}y^{10} + (-3\omega  - 4)q^{40}y^{10} + (-5\omega  - 3)q^{39}y^{10} + (-\omega  - 1)q^{38}y^{10} + \omega q^{38}y^7 
            - \omega q^{37}y^{10} - q^{37}y^7 + (-\omega  + 1)q^{36}y^{10} + (-2\omega  - 2)q^{36}y^7 + (-\omega  - 2)q^{35}y^7 + (-2\omega  - 1)q^{34}y^7 - 4\omega q^{33}y^7 + (-\omega  + 1)q^{32}y^7 + (\omega  + 
            2)q^{31}y^7 + 2q^{30}y^7 + (\omega  + 1)q^{29}y^7 + \omega q^{28}y^7 + \omega q^{22}y^4\Big)x^7$

Defining $f' = v^{-1}u$, it can be verified computationally using Magma that $\sigma(f') = f'$, $f'g = qgf'$ and $k(x,y)^{\sigma} = k_q(f',g)$.

\section{Computation of prime ideals in $\GL{3}$}\label{s:magma_gl3}

In Theorem~\ref{res:checking the generators of primitives in GL3}, we use computation in Magma to verify that certain ideals are prime.  The ideals in question are 
\[Q_{\lambda} = \langle e_1 - \lambda_1 f_1, \dots, e_n - \lambda_n f_n\rangle \subset \GL{3}/I_{\omega}\]
as defined in \eqref{eq:prediction of generators of primitives}.  

Since $Q_{\lambda} \subseteq Q_{\lambda}B_{\omega} \cap \GL{3}/I_{\omega} = P_{\lambda} \cap \GL{3}/I_{\omega}$ and $P_{\lambda}$ is known to be a non-trivial ideal in $B_{\omega}$, we can conclude that $D \not\in Q_{\lambda}$.  The ideal $Q_{\lambda}$ is therefore prime in $\GL{3}/I_{\omega}$ if and only if $I_{\omega} + Q_{\lambda}$ is prime in $\GL{3}$ if and only if $(I_{\omega} + Q_{\lambda}) \cap \ML{3}$ is prime in $\ML{3}$.

We are only interested in the commutative algebra structure of $\ML{3}$ rather than the Poisson algebra structure, so we may view $\ML{3}$ as a polynomial ring in 9 variables.  It is now easy to verify that the four ideals $I_{\omega} + Q_{\lambda}$ are prime in $\ML{3}$ for the appropriate values of $\omega$, which we do as follows.

\begin{verbatim}
> field<i>:=CyclotomicField(4);
> K<x11,x12,x13,x21,x22,x23,x31,x32,x33>:=PolynomialRing(field,9);
                            
> Det:=x11*(x22*x33-x23*x32) - x12*(x21*x33-x23*x31) \
   + x13*(x21*x32-x22*x31);
> M13:=x21*x32 - x22*x31;
> M31:=x12*x23 - x22*x13;
> M21:=x12*x33 - x13*x32;
> M32:=x11*x23 - x13*x21;
> 
> // (321,321)
> I1:=ideal<K|Det-1,M13-x13,M31-x31>;
> 
> // (321,312)
> I2:=ideal<K|Det-1,x13,x12*x23-x31>;
> 
> // (231,231)
> I3:=ideal<K|Det-1,x31,M31,M21-x21,M32-x32>;
> 
> // (132,312)
> I4:=ideal<K|Det-1,x13,x21,x31,x11*x32-x23>;
> 
> IsPrime(I1);
true
> IsPrime(I2);
true
> IsPrime(I3);
true
> IsPrime(I4);
true
\end{verbatim}

\chapter{$\HH$-prime Figures}\label{c:H-prime figures}
\begin{figure}[h!]
\setlength{\tabcolsep}{2pt}

\centering
\begin{tabular}{cc|cccccc}

& $\omega_{-}$ & \multirow{2}{*}{321} & \multirow{2}{*}{231} & \multirow{2}{*}{312} & \multirow{2}{*}{132}& \multirow{2}{*}{213}& \multirow{2}{*}{123} \\ 
$\omega_{+}$& &&&&&& \\ \hline 

\multicolumn{2}{c|} {\raisebox{1em}{321}} &

\begin{smallarray}{m321321}
{
 \circ & \circ & \circ \\
\circ & \circ & \circ \\
\circ & \circ & \circ \\
};
\end{smallarray}
&

\begin{smallarray}{m321231}
{
 \circ & & \\
\circ & & \\
\circ & \circ & \circ \\
};
\MyZ(1,2)
\end{smallarray}

&
\begin{smallarray}{m321312}
{
 \circ & \circ & \bullet \\
\circ & \circ & \circ \\
\circ & \circ & \circ \\
};
\end{smallarray}
&

\begin{smallarray}{m321132}
{
 \circ & \bullet & \bullet \\
\circ & \circ & \circ \\
\circ & \circ & \circ \\
};
\end{smallarray}
&

\begin{smallarray}{m321213}
{
 \circ & \circ & \bullet \\
\circ & \circ & \bullet \\
\circ & \circ & \circ \\
};
\end{smallarray}

&
\begin{smallarray}{m321123}
{
 \circ & \bullet & \bullet \\
\circ & \circ & \bullet \\
\circ & \circ & \circ \\
};
\end{smallarray}
\\

\multicolumn{2}{c|}{\raisebox{1em}{231}}
&
\begin{smallarray}{m231321}
{
 \circ & \circ & \circ \\
\circ & \circ & \circ \\
\bullet & \circ & \circ \\
};
\end{smallarray}

&
\begin{smallarray}{m231231}
{
 \circ &  &  \\
\circ &  &  \\
\bullet & \circ & \circ \\
};
\MyZ(1,2)
\end{smallarray}

&
\begin{smallarray}{m231312}
{
 \circ & \circ & \bullet \\
\circ & \circ & \circ \\
\bullet & \circ & \circ \\
};
\end{smallarray}
&

\begin{smallarray}{m231132}
{
 \circ & \bullet & \bullet \\
\circ & \circ & \circ \\
\bullet & \circ & \circ \\
};
\end{smallarray}

&
\begin{smallarray}{m231213}
{
 \circ & \circ & \bullet \\
\circ & \circ & \bullet \\
\bullet & \circ & \circ \\
};
\end{smallarray}

&
\begin{smallarray}{m231123}
{
 \circ & \bullet & \bullet \\
\circ & \circ & \bullet \\
\bullet & \circ & \circ \\
};
\end{smallarray}

\\

\multicolumn{2}{c|}{\raisebox{1em}{312}} 
&
\begin{smallarray}{m312321}
{
 \circ & \circ & \circ \\
 &  & \circ \\
 &  & \circ \\
};
\MyZ(2,1)
\end{smallarray}

&
\begin{smallarray}{m312231}
{
 \circ &  &  \\
 &  &  \\
 &  & \circ \\
};
\MyZ(1,2)
\MyZ(2,1)
\end{smallarray}

&
\begin{smallarray}{m312312}
{
 \circ & \circ & \bullet \\
 &  & \circ \\
 &  & \circ \\
};
\MyZ(2,1)
\end{smallarray}

&
\begin{smallarray}{m312132}
{
 \circ & \bullet & \bullet \\
 &  & \circ \\
 &  & \circ \\
};
\MyZ(2,1)
\end{smallarray}

&
\begin{smallarray}{m312213}
{
 \circ & \circ & \bullet \\
 &  & \bullet \\
 &  & \circ \\
};
\MyZ(2,1)
\end{smallarray}

&
\begin{smallarray}{m312123}
{
 \circ & \bullet & \bullet \\
 &  & \bullet \\
 &  & \circ \\
};
\MyZ(2,1)
\end{smallarray}

\\

\multicolumn{2}{c|}{\raisebox{1em}{132}} 

&
\begin{smallarray}{m132321}
{
 \circ & \circ & \circ \\
 \bullet & \circ & \circ \\
 \bullet & \circ & \circ \\
};
\end{smallarray}
&
\begin{smallarray}{m132231}
{
 \circ &  &  \\
 \bullet &  &  \\
 \bullet & \circ & \circ \\
};
\MyZ(1,2)
\end{smallarray}

&
\begin{smallarray}{m132312}
{
 \circ & \circ & \bullet \\
 \bullet & \circ & \circ \\
 \bullet & \circ & \circ \\
};
\end{smallarray}
&
\begin{smallarray}{m132132}
{
 \circ & \bullet & \bullet \\
 \bullet & \circ & \circ \\
 \bullet & \circ & \circ \\
};
\end{smallarray}
&
\begin{smallarray}{m132213}
{
 \circ & \circ & \bullet \\
 \bullet & \circ & \bullet \\
 \bullet & \circ & \circ \\
};
\end{smallarray}
&
\begin{smallarray}{m132123}
{
 \circ & \bullet & \bullet \\
 \bullet & \circ & \bullet \\
 \bullet & \circ & \circ \\
};
\end{smallarray}
\\

\multicolumn{2}{c|}{\raisebox{1em}{213}} 

&

\begin{smallarray}{m213321}
{
 \circ & \circ & \circ \\
 \circ & \circ & \circ \\
 \bullet & \bullet & \circ \\
};
\end{smallarray}
&
\begin{smallarray}{m213231}
{
 \circ &  &  \\
 \circ &  &  \\
 \bullet & \bullet & \circ \\
};
\MyZ(1,2)
\end{smallarray}

&
\begin{smallarray}{m213312}
{
 \circ & \circ & \bullet \\
 \circ & \circ & \circ \\
 \bullet & \bullet & \circ \\
};
\end{smallarray}
&
\begin{smallarray}{m213132}
{
 \circ & \bullet & \bullet \\
 \circ & \circ & \circ \\
 \bullet & \bullet & \circ \\
};
\end{smallarray}
&
\begin{smallarray}{m213213}
{
 \circ & \circ & \bullet \\
 \circ & \circ & \bullet \\
 \bullet & \bullet & \circ \\
};
\end{smallarray}
&
\begin{smallarray}{m213123}
{
 \circ & \bullet & \bullet \\
 \circ & \circ & \bullet \\
 \bullet & \bullet & \circ \\
};
\end{smallarray}
\\

\multicolumn{2}{c|}{\raisebox{1em}{123}}
&
\begin{smallarray}{m123321}
{
 \circ & \circ & \circ \\
 \bullet & \circ & \circ \\
 \bullet & \bullet & \circ \\
};
\end{smallarray}
&
\begin{smallarray}{m123231}
{
 \circ & & \\
 \bullet &  &  \\
 \bullet & \bullet & \circ \\
};
\MyZ(1,2)
\end{smallarray}

&
\begin{smallarray}{m123312}
{
 \circ & \circ & \bullet \\
 \bullet & \circ & \circ \\
 \bullet & \bullet & \circ \\
};
\end{smallarray}
&
\begin{smallarray}{m123132}
{
 \circ & \bullet & \bullet \\
 \bullet & \circ & \circ \\
 \bullet & \bullet & \circ \\
};
\end{smallarray}
&
\begin{smallarray}{m123213}
{
 \circ & \circ & \bullet \\
 \bullet & \circ & \bullet \\
 \bullet & \bullet & \circ \\
};
\end{smallarray}
&
\begin{smallarray}{m123123}
{
 \circ & \bullet & \bullet \\
 \bullet & \circ & \bullet \\
 \bullet & \bullet & \circ \\
};
\end{smallarray}

\end{tabular}

\caption{Generators for $\HH$-primes in $\QGL{3}$ and $\GL{3}$.}\label{fig:H_primes_gens}
\end{figure}

This figure is reproduced from \cite[Figure~1]{GL1} and represents the 36 $\HH$-primes in $\QGL{3}$ and $\GL{3}$.  Each ideal is represented pictorially by a $3 \times 3$ grid of dots: a black dot in position $(i,j)$ denotes the element $X_{ij}$, and a square represents a $2\times2$ (quantum) minor in the natural way.  For example, the ideal in position $(231,231)$ denotes the ideal generated by $X_{31}$ and $[\wt{3}|\wt{1}]_q$ in $\QGL{3}$, or the ideal generated by $x_{31}$ and $[\wt{3}|\wt{1}]$ in $\GL{3}$, as appropriate.

These ideals are indexed by $\omega = (\omega_{+}, \omega_{-}) \in S_3 \times S_3$, following the notation of \cite{GL1}.

\clearpage
\begin{figure}
 \setlength{\tabcolsep}{12pt}
\centering

\begin{tabular}{ccccccc}
\hline
\begin{smallarray}{m321321}
{
 \circ & \circ & \circ \\
\circ & \circ & \circ \\
\circ & \circ & \circ \\
};
\end{smallarray}
& & & & & &

\\[-4px]
\small(321,321)
& & & & & &

\\
\hline
\begin{smallarray}{m321312}
{
 \circ & \circ & \bullet \\
\circ & \circ & \circ \\
\circ & \circ & \circ \\
};
\end{smallarray}
&
$\stackrel{\longrightarrow}{\tau}$
&
\begin{smallarray}{m231321}
{
 \circ & \circ & \circ \\
\circ & \circ & \circ \\
\bullet & \circ & \circ \\
};
\end{smallarray}

&
$\stackrel{\longrightarrow}{S}$
&
\begin{smallarray}{m312321}
{
 \circ & \circ & \circ \\
 &  & \circ \\
 &  & \circ \\
};
\MyZ(2,1)
\end{smallarray}
&
$\stackrel{\longrightarrow}{\tau}$
& 
\begin{smallarray}{m321231}
{
 \circ & & \\
\circ & & \\
\circ & \circ & \circ \\
};
\MyZ(1,2)
\end{smallarray}
\\[-4px]
\small(321,312)
& &
\small(231,321)
& & 
\small(312,321)
& &
\small(321,231)
\\

\hline
\begin{smallarray}{m231231}
{
 \circ &  &  \\
\circ &  &  \\
\bullet & \circ & \circ \\
};
\MyZ(1,2)
\end{smallarray}
&
$\stackrel{\longrightarrow}{\tau}$
& 
\begin{smallarray}{m312312}
{
 \circ & \circ & \bullet \\
 &  & \circ \\
 &  & \circ \\
};
\MyZ(2,1)
\end{smallarray}
& & & & 
\\[-4px]
\small(231,231)
& & 
\small(312,312)
& & & & 
\\
\hline
\begin{smallarray}{m231312}
{
 \circ & \circ & \bullet \\
\circ & \circ & \circ \\
\bullet & \circ & \circ \\
};
\end{smallarray}
&$\stackrel{\longrightarrow}{S}$& 
\begin{smallarray}{m312231}
{
 \circ &  &  \\
 &  &  \\
 &  & \circ \\
};
\MyZ(1,2)
\MyZ(2,1)
\end{smallarray}
& & & &
\\[-4px]
\small(231,312)
& & 
\small(312,231)
& & & &
\\
\hline
\begin{smallarray}{m321132}
{
 \circ & \bullet & \bullet \\
\circ & \circ & \circ \\
\circ & \circ & \circ \\
};
\end{smallarray}
& 
$\stackrel{\longrightarrow}{\tau}$
&
\begin{smallarray}{m132321}
{
 \circ & \circ & \circ \\
 \bullet & \circ & \circ \\
 \bullet & \circ & \circ \\
};
\end{smallarray}
&
$\stackrel{\longrightarrow}{\rho}$
&
\begin{smallarray}{m213321}
{
 \circ & \circ & \circ \\
 \circ & \circ & \circ \\
 \bullet & \bullet & \circ \\
};
\end{smallarray}
&
$\stackrel{\longrightarrow}{\tau}$
&
\begin{smallarray}{m321213}
{
 \circ & \circ & \bullet \\
\circ & \circ & \bullet \\
\circ & \circ & \circ \\
};
\end{smallarray}
\\[-4px]
\small(321,132)
& & 
\small(132,321)
& &
\small(213,321)
& & 
\small(321,213)
\\
\hline
\begin{smallarray}{m321123}
{
 \circ & \bullet & \bullet \\
\circ & \circ & \bullet \\
\circ & \circ & \circ \\
};
\end{smallarray}
&
$\stackrel{\longrightarrow}{\tau}$
&
\begin{smallarray}{m123321}
{
 \circ & \circ & \circ \\
 \bullet & \circ & \circ \\
 \bullet & \bullet & \circ \\
};
\end{smallarray}
& & & & 
\\[-4px]
\small(321,123)
& &
\small(123,321)
& & & &
\\
\hline
\begin{smallarray}{m132312}
{
 \circ & \circ & \bullet \\
 \bullet & \circ & \circ \\
 \bullet & \circ & \circ \\
};
\end{smallarray}
& 
$\stackrel{\longrightarrow}{\tau}$
&
\begin{smallarray}{m231132}
{
 \circ & \bullet & \bullet \\
\circ & \circ & \circ \\
\bullet & \circ & \circ \\
};
\end{smallarray}
&
$\stackrel{\longrightarrow}{\rho}$
&
\begin{smallarray}{m231213}
{
 \circ & \circ & \bullet \\
\circ & \circ & \bullet \\
\bullet & \circ & \circ \\
};
\end{smallarray}
&
$\stackrel{\longrightarrow}{\tau}$
&
\begin{smallarray}{m213312}
{
 \circ & \circ & \bullet \\
 \circ & \circ & \circ \\
 \bullet & \bullet & \circ \\
};
\end{smallarray}
\\[-4px]
\small(132,312)
& & 
\small(231,132)
& &
\small(231,213)
& &
\small(213,312)
\\
\parbox[c][3em][c]{3em}{$S \circ \rho \downarrow$}
& &
$S \circ \rho \downarrow$
& &
$S \circ \rho \downarrow$
& &
$S \circ \rho \downarrow$
\\
\begin{smallarray}{m213231}
{
 \circ &  &  \\
 \circ &  &  \\
 \bullet & \bullet & \circ \\
};
\MyZ(1,2)
\end{smallarray}
&
$\stackrel{\longrightarrow}{\tau}$
&
\begin{smallarray}{m312213}
{
 \circ & \circ & \bullet \\
 &  & \bullet \\
 &  & \circ \\
};
\MyZ(2,1)
\end{smallarray}
&
$\stackrel{\longrightarrow}{\rho}$
&
\begin{smallarray}{m312132}
{
 \circ & \bullet & \bullet \\
 &  & \circ \\
 &  & \circ \\
};
\MyZ(2,1)
\end{smallarray}
&
$\stackrel{\longrightarrow}{\tau}$
&
\begin{smallarray}{m132231}
{
 \circ &  &  \\
 \bullet &  &  \\
 \bullet & \circ & \circ \\
};
\MyZ(1,2)
\end{smallarray}
\\[-4px]
\small(213,231)
& &
\small(312,213)
& &
\small(312,132)
& &
\small(132,231)
\\
\hline

\begin{smallarray}{m132132}
{
 \circ & \bullet & \bullet \\
 \bullet & \circ & \circ \\
 \bullet & \circ & \circ \\
};
\end{smallarray}
&
$\stackrel{\longrightarrow}{\rho}$
&
\begin{smallarray}{m213213}
{
 \circ & \circ & \bullet \\
 \circ & \circ & \bullet \\
 \bullet & \bullet & \circ \\
};
\end{smallarray}
& & & &
\\[-4px]
\small(132,132)
& &
\small(213,213)
& & & &
\\
\hline
\begin{smallarray}{m123312}
{
 \circ & \circ & \bullet \\
 \bullet & \circ & \circ \\
 \bullet & \bullet & \circ \\
};
\end{smallarray}
&
$\stackrel{\longrightarrow}{\tau}$
&
\begin{smallarray}{m231123}
{
 \circ & \bullet & \bullet \\
\circ & \circ & \bullet \\
\bullet & \circ & \circ \\
};
\end{smallarray}
&
$\stackrel{\longrightarrow}{S}$
&
\begin{smallarray}{m312123}
{
 \circ & \bullet & \bullet \\
 &  & \bullet \\
 &  & \circ \\
};
\MyZ(2,1)
\end{smallarray}
&
$\stackrel{\longrightarrow}{\tau}$
&
\begin{smallarray}{m123231}
{
 \circ & & \\
 \bullet &  &  \\
 \bullet & \bullet & \circ \\
};
\MyZ(1,2)
\end{smallarray}
\\[-4px]
\small(123,312)
& & 
\small(231,123)
& &
\small(312,123)
& &
\small(123,231)
\\
\hline

\begin{smallarray}{m213132}
{
 \circ & \bullet & \bullet \\
 \circ & \circ & \circ \\
 \bullet & \bullet & \circ \\
};
\end{smallarray}
& 
$\stackrel{\longrightarrow}{\tau}$
&
\begin{smallarray}{m132213}
{
 \circ & \circ & \bullet \\
 \bullet & \circ & \bullet \\
 \bullet & \circ & \circ \\
};
\end{smallarray}
& & & &
\\[-4px]
\small(213,132)
& & 
\small(132,213)
& & & &
\\
\hline
\begin{smallarray}{m123132}
{
 \circ & \bullet & \bullet \\
 \bullet & \circ & \circ \\
 \bullet & \bullet & \circ \\
};
\end{smallarray}
&
$\stackrel{\longrightarrow}{\tau}$
&
\begin{smallarray}{m132123}
{
 \circ & \bullet & \bullet \\
 \bullet & \circ & \bullet \\
 \bullet & \circ & \circ \\
};
\end{smallarray}
&
$\stackrel{\longrightarrow}{\rho}$
&
\begin{smallarray}{m213123}
{
 \circ & \bullet & \bullet \\
 \circ & \circ & \bullet \\
 \bullet & \bullet & \circ \\
};
\end{smallarray}
&
$\stackrel{\longrightarrow}{\tau}$
&
\begin{smallarray}{m123213}
{
 \circ & \circ & \bullet \\
 \bullet & \circ & \bullet \\
 \bullet & \bullet & \circ \\
};
\end{smallarray}
\\[-4px]
\small(123,132)
& & 
\small(132,123)
& &
\small(213,123)
& & 
\small(123,213)
\\
\hline
\begin{smallarray}{m123123}
{
 \circ & \bullet & \bullet \\
 \bullet & \circ & \bullet \\
 \bullet & \bullet & \circ \\
};
\end{smallarray}
& & & & & &
\\[-4px]
\small(123,123)
& & & & & & \\
\hline
\end{tabular}
\caption{$\HH$-primes grouped by orbit}\label{fig:H_primes_nice}
\end{figure}

\clearpage
\begin{figure}
\centering
\begin{tabular}{ccm{2em}m{12em}m{10em}}
& $(\omega_{+},\omega_{-})$ && Simplified generators for original Ore set & Additional generators \\
\hline
\begin{smallarray}{m321321}
{
 \circ & \circ & \circ \\
\circ & \circ & \circ \\
\circ & \circ & \circ \\
};
\end{smallarray} & 
$(321,321)$ && $X_{31}$, $X_{13}$, $[\wt{1}|\wt{3}]_q$, $[\wt{3}|\wt{1}]_q$ &$X_{11}$, $X_{12}$, $X_{21}$, $[\wt{3}|\wt{3}]_q$ \\ 
\begin{smallarray}{m321312}
{
 \circ & \circ & \bullet \\
\circ & \circ & \circ \\
\circ & \circ & \circ \\
};
\end{smallarray} &
$(321,312)$ && $X_{31}$, $[\wt{1}|\wt{3}]_q$, $X_{23}$, $X_{12}$ & $X_{11}$, $X_{21}$, $[\wt{3}|\wt{3}]_q$ \\ 
\begin{smallarray}{m231231}
{
 \circ &  &  \\
\circ &  &  \\
\bullet & \circ & \circ \\
};
\MyZ(1,2)
\end{smallarray} &
$(231,231)$ && $X_{21}$, $X_{32}$, $[\wt{2}|\wt{1}]_q$, $[\wt{3}|\wt{2}]_q$ & $X_{33}$, $[\wt{1}|\wt{1}]_q$ \\
\begin{smallarray}{m231312}
{
 \circ & \circ & \bullet \\
\circ & \circ & \circ \\
\bullet & \circ & \circ \\
};
\end{smallarray} & 
$(231,312)$ && $X_{21}$, $ X_{32}$, $ X_{23}$, $X_{12}$ &  $[\wt{1}|\wt{1}]_q$, $X_{33}$\\
\begin{smallarray}{m321132}
{
 \circ & \bullet & \bullet \\
\circ & \circ & \circ \\
\circ & \circ & \circ \\
};
\end{smallarray} & 
$(321,132)$ && $X_{31}$, $[\wt{1}|\wt{3}]_q$, $X_{23}$ & $X_{32}$, $X_{33}$\\
\begin{smallarray}{m321123}
{
 \circ & \bullet & \bullet \\
\circ & \circ & \bullet \\
\circ & \circ & \circ \\
};
\end{smallarray} & 
$(321,123)$ && $X_{31}$, $[\wt{1}|\wt{3}]_q$ & $X_{21}$\\
\begin{smallarray}{m132312}
{
 \circ & \circ & \bullet \\
 \bullet & \circ & \circ \\
 \bullet & \circ & \circ \\
};
\end{smallarray} & 
$(132,312)$ && $X_{32}$, $ X_{23}$, $X_{12}$ & $X_{33}$ \\
\begin{smallarray}{m132132}
{
 \circ & \bullet & \bullet \\
 \bullet & \circ & \circ \\
 \bullet & \circ & \circ \\
};
\end{smallarray} & 
$(132,132)$ && $X_{32}$, $X_{23}$ & $X_{33}$\\
\begin{smallarray}{m123312}
{
 \circ & \circ & \bullet \\
 \bullet & \circ & \circ \\
 \bullet & \bullet & \circ \\
};
\end{smallarray} & 
$(123,312)$ && $X_{23}$, $X_{12}$ & \\
\begin{smallarray}{m213132}
{
 \circ & \bullet & \bullet \\
 \circ & \circ & \circ \\
 \bullet & \bullet & \circ \\
};
\end{smallarray} &
$(213,132)$ && $X_{21}$, $X_{23}$ & \\
\begin{smallarray}{m123132}
{
 \circ & \bullet & \bullet \\
 \bullet & \circ & \circ \\
 \bullet & \bullet & \circ \\
};
\end{smallarray} &
$(123,132)$ && $X_{23}$ & \\
\begin{smallarray}{m123123}
{
 \circ & \bullet & \bullet \\
 \bullet & \circ & \bullet \\
 \bullet & \bullet & \circ \\
};
\end{smallarray} &
$(123,123)$ &&  &
\end{tabular}
\caption{Definitive sets of generators for the Ore sets $E_{\omega}$.}\label{fig:Ore_sets_for_tori}
\end{figure}
$E_{\omega}$ is defined to be the multiplicative set generated by all of the elements in the row corresponding to $I_{\omega}$; those cases not listed explicitly here can be obtained by applying the appropriate combination of $\tau$, $\rho$ and $S$ from Figure~\ref{fig:H_primes_nice}.  Elements of $E_{\omega}$ are considered as coset representatives in $\GL{3}/I_{\omega}$.

$E_{\omega}$ also denotes the corresponding multiplcative set in $\GL{3}/I_{\omega}$, where we replace each $X_{ij}$ with $x_{ij}$ and $[\wt{i}|\wt{j}]_q$ with $[\wt{i}|\wt{j}]$.

\clearpage
\begin{figure}
\centering
\begin{tabular}{ccp{3em}p{22em}}
& $(\omega_{+},\omega_{-})$ &  & Localization is quantum torus on these generators \\
\hline
\begin{smallarray}{m321321}
{
 \circ & \circ & \circ \\
\circ & \circ & \circ \\
\circ & \circ & \circ \\
};
\end{smallarray} & 
$(321,321)$ & & 
$X_{11}$, $X_{12}$, $X_{13}$, $X_{21}$, $X_{31}$, $[\wt{3}|\wt{3}]_q$, $[\wt{3}|\wt{1}]_q$, $[\wt{1}|\wt{3}]_q$, $Det_q$
\\ 
\begin{smallarray}{m321312}
{
 \circ & \circ & \bullet \\
\circ & \circ & \circ \\
\circ & \circ & \circ \\
};
\end{smallarray} &
$(321,312)$ &  & 
$X_{11}$, $X_{12}$, $X_{21}$, $X_{23}$, $X_{31}$, $[\wt{3}|\wt{3}]_q$, $[\wt{1}|\wt{3}]_q$, $Det_q$
\\ 
\begin{smallarray}{m231231}
{
 \circ &  &  \\
\circ &  &  \\
\bullet & \circ & \circ \\
};
\MyZ(1,2)
\end{smallarray} &
$(231,231)$ &  & 
$X_{13}$, $X_{21}$, $X_{32}$, $X_{33}$, $[\wt{2}|\wt{1}]_q$, $[\wt{1}|\wt{1}]_q$, $Det_q$
\\
\begin{smallarray}{m231312}
{
 \circ & \circ & \bullet \\
\circ & \circ & \circ \\
\bullet & \circ & \circ \\
};
\end{smallarray} & 
$(231,312)$ & & 
$X_{12}$, $X_{21}$, $X_{23}$, $X_{32}$, $X_{33}$, $[\wt{1}|\wt{1}]_q$, $Det_q$
\\
\begin{smallarray}{m321132}
{
 \circ & \bullet & \bullet \\
\circ & \circ & \circ \\
\circ & \circ & \circ \\
};
\end{smallarray} & 
$(321,132)$ & & 
$X_{11}$, $X_{23}$, $X_{31}$, $X_{32}$, $X_{33}$, $[\wt{1}|\wt{3}]_q$, $[\wt{1}|\wt{1}]_q$
\\
\begin{smallarray}{m321123}
{
 \circ & \bullet & \bullet \\
\circ & \circ & \bullet \\
\circ & \circ & \circ \\
};
\end{smallarray} & 
$(321,123)$ & & 
$X_{11}$, $X_{21}$, $X_{22}$, $X_{31}$, $[\wt{1}|\wt{3}]_q$, $X_{33}$
\\
\begin{smallarray}{m132312}
{
 \circ & \circ & \bullet \\
 \bullet & \circ & \circ \\
 \bullet & \circ & \circ \\
};
\end{smallarray} & 
$(132,312)$ &  & 
$X_{11}$, $X_{12}$, $X_{23}$, $X_{32}$, $X_{33}$, $[\wt{1}|\wt{1}]_q$
\\
\begin{smallarray}{m132132}
{
 \circ & \bullet & \bullet \\
 \bullet & \circ & \circ \\
 \bullet & \circ & \circ \\
};
\end{smallarray} & 
$(132,132)$ & & 
$X_{11}$, $X_{23}$, $X_{32}$, $X_{33}$, $[\wt{1}|\wt{1}]_q$
\\
\begin{smallarray}{m123312}
{
 \circ & \circ & \bullet \\
 \bullet & \circ & \circ \\
 \bullet & \bullet & \circ \\
};
\end{smallarray} & 
$(123,312)$ &  & 
$X_{11}$, $X_{12}$, $X_{22}$, $X_{23}$, $X_{33}$
\\
\begin{smallarray}{m213132}
{
 \circ & \bullet & \bullet \\
 \circ & \circ & \circ \\
 \bullet & \bullet & \circ \\
};
\end{smallarray} &
$(213,132)$ &  & 
$X_{11}$, $X_{21}$, $X_{22}$, $X_{23}$, $X_{33}$
\\
\begin{smallarray}{m123132}
{
 \circ & \bullet & \bullet \\
 \bullet & \circ & \circ \\
 \bullet & \bullet & \circ \\
};
\end{smallarray} &
$(123,132)$ &   & 
$X_{11}$, $X_{22}$, $X_{23}$, $X_{33}$
\\
\begin{smallarray}{m123123}
{
 \circ & \bullet & \bullet \\
 \bullet & \circ & \bullet \\
 \bullet & \bullet & \circ \\
};
\end{smallarray} &
$(123,123)$ & & 
$X_{11}$, $X_{22}$, $X_{33}$
\end{tabular}
\caption{Generators for the quantum tori $A_{\omega} = \qr{\QGL{3}}{I_{\omega}}\big[E_{\omega}^{-1}\big]$.}\label{fig:localizations}
\end{figure}
The localizations $\QGL{3}/I_{\omega}$ at the Ore sets $E_{\omega}$ listed in Figure~\ref{fig:Ore_sets_for_tori} are computed in \cite[\S4]{GL1}, and we reproduce this information here for convenience.  

$A_{\omega}$ is always isomorphic to a quantum torus $k_{\mathbf{q}}[R_1^{\pm1}, \dots, R_m^{\pm1}]$, and Figure~\ref{fig:localizations} lists a choice for the generators $R_i$ for each case (those not listed explicitly here can be obtained by applying the appropriate combination of $\tau$, $\rho$ and $S$ from Figure~\ref{fig:H_primes_nice}).  The $q$-commuting relations $R_iR_j = q^{a_{ij}}R_jR_i$ are not needed for this thesis, but can easily be computed from the relations in $\QGL{3}$.

By Proposition~\ref{res:the poisson localizations are scls}, Figure~\ref{fig:localizations} also describes sets of generators for the Poisson algebras $B_{\omega}$, subject to replacing $X_{ij}$ with $x_{ij}$, $[\wt{i}|\wt{j}]_q$ with $[\wt{i}|\wt{j}]$ and $Det_q$ with $Det$.

\clearpage
\begin{figure}
\centering
\begin{tabular}{ccm{2em}l}
& $(\omega_{+},\omega_{-})$ & &  Generators of the centre \\
\hline
\begin{smallarray}{m321321}
{
 \circ & \circ & \circ \\
\circ & \circ & \circ \\
\circ & \circ & \circ \\
};
\end{smallarray} & 
$(321,321)$ & &  
$Det_q$,\ \ $[\wt{1}|\wt{3}]X_{13}^{-1}$,\ \ $[\wt{3}|\wt{1}]X_{31}^{-1}$\\ 
\begin{smallarray}{m321312}
{
 \circ & \circ & \bullet \\
\circ & \circ & \circ \\
\circ & \circ & \circ \\
};
\end{smallarray} &
$(321,312)$ & & 
$Det_q$,\ \ $X_{12}X_{23}X_{31}^{-1}$ \\ 
\begin{smallarray}{m231231}
{
 \circ &  &  \\
\circ &  &  \\
\bullet & \circ & \circ \\
};
\MyZ(1,2)
\end{smallarray} &
$(231,231)$ & &  
$Det_q$,\ \ $[\wt{2}|\wt{1}]X_{21}^{-1}$,\ \ $[\wt{3}|\wt{2}]X_{32}^{-1}$\\
\begin{smallarray}{m231312}
{
 \circ & \circ & \bullet \\
\circ & \circ & \circ \\
\bullet & \circ & \circ \\
};
\end{smallarray} & 
$(231,312)$ & & 
$Det_q$\\
\begin{smallarray}{m321132}
{
 \circ & \bullet & \bullet \\
\circ & \circ & \circ \\
\circ & \circ & \circ \\
};
\end{smallarray} & 
$(321,132)$ & & 
$Det_q$\\
\begin{smallarray}{m321123}
{
 \circ & \bullet & \bullet \\
\circ & \circ & \bullet \\
\circ & \circ & \circ \\
};
\end{smallarray} & 
$(321,123)$ & & 
$Det_q$,\ \ $X_{22}[\wt{1}|\wt{3}]X_{31}^{-1}$\\
\begin{smallarray}{m132312}
{
 \circ & \circ & \bullet \\
 \bullet & \circ & \circ \\
 \bullet & \circ & \circ \\
};
\end{smallarray} & 
$(132,312)$ & &  
$Det_q$,\ \ $X_{11}X_{32}X_{23}^{-1}$\\
\begin{smallarray}{m132132}
{
 \circ & \bullet & \bullet \\
 \bullet & \circ & \circ \\
 \bullet & \circ & \circ \\
};
\end{smallarray} & 
$(132,132)$ & & 
$Det_q$,\ \ $X_{11}$,\ \ $X_{23}X_{32}^{-1}$\\
\begin{smallarray}{m123312}
{
 \circ & \circ & \bullet \\
 \bullet & \circ & \circ \\
 \bullet & \bullet & \circ \\
};
\end{smallarray} & 
$(123,312)$ & &  
$Det_q$\\
\begin{smallarray}{m213132}
{
 \circ & \bullet & \bullet \\
 \circ & \circ & \circ \\
 \bullet & \bullet & \circ \\
};
\end{smallarray} &
$(213,132)$ & &  
$Det_q$\\
\begin{smallarray}{m123132}
{
 \circ & \bullet & \bullet \\
 \bullet & \circ & \circ \\
 \bullet & \bullet & \circ \\
};
\end{smallarray} &
$(123,132)$ & &   
$Det_q$,\ \ $X_{11}$ \\
\begin{smallarray}{m123123}
{
 \circ & \bullet & \bullet \\
 \bullet & \circ & \bullet \\
 \bullet & \bullet & \circ \\
};
\end{smallarray} &
$(123,123)$ & & 
$Det_q$,\ \ $X_{11}$,\ \ $X_{22}$
\end{tabular}
\caption{Generators for the centres of the localizations $A_{\omega}$.}\label{fig:centres}
\end{figure}

By the Stratification Theorem, the centre of $A_{\omega}$ is always a Laurent polynomial ring.  This figure lists a set of generators for $Z(A_{\omega})$, reproduced from the results of \cite[\S5]{GL1}.  The other 24 cases may be obtained by applying the appropriate combinations of $\tau$, $\rho$, and $S$ from Figure~\ref{fig:H_primes_nice}; implicitly, we ignore any extra factors of the central element $Det_q$ which might appear after applying $S$.

By Proposition~\ref{res:centres of gl and qgl localizations agree}, this figure also lists generating sets for the Poisson centres $PZ(B_{\omega})$, subject only to replacing $X_{ij}$ by $x_{ij}$, $[\wt{i}|\wt{j}]_q$ by $[\wt{i}|\wt{j}]$ and $Det_q$ by $Det$.

\chapter*{Reviews for ``The $q$-Division Ring, Quantum Matrices and Semi-classical Limits''}
\def\baselinestretch{1.0}\normalsize

\blurb{This thesis introduces and discusses the objects mentioned in the title. Some new results are proved.}{Christopher Tedd (Logician)}

\blurb{I was with you up to ``Index of Notation''.}{Dr. Andrew Taylor PhD}

\blurb{I'm quite bored. What's the point of an algebraic structure you can't eat or cuddle?}{Stumpy (A Mighty Stegosaur)}

\blurb{This piece of work had so much potential. Yet I am still finding myself utterly disappointed in the lack of Harry Potter citations. I expected better from someone at this level.}{Daisy Fields (Humanities Student)}

\blurb{such limit. \\very quantum. \\wow.}{Matthew Taylor}

\blurb{A+++, would read again.}{user5937}

\clearpage
\def\baselinestretch{1.5}\normalsize

\bibliographystyle{plain}
\bibliography{thesis_corrected}

\end{document}